\newcommand{\vtr}{\vartheta^{rig}} %rigidified forgetting tails map.
\newcommand{\BG}{\B G} %classifying stack of G
\newcommand{\coker}{\operatorname{coker}}
\newcommand{\im}{\operatorname{im}}
\newcommand{\rk}{\operatorname{rank}}
\newcommand{\aut}{\operatorname{Aut}}
\newcommand{\wt}{\operatorname{wt}}
\newcommand{\diag}{\operatorname{diag}}
\newcommand{\mop}{J^{-1}}
\newcommand{\irightarrow}{\stackrel{\sim}{\longrightarrow}}
\newcommand{\B}{{\mathscr B}}
\newcommand{\D}{{\mathscr D}}
\newcommand{\ch}{{\mathscr H}}
\newcommand{\M}{{\mathscr M}}
\newcommand{\N}{{\mathscr N}}
\renewcommand{\O}{{\mathscr{O}}}
\newcommand{\U}{{\mathscr U}}
\newcommand{\V}{{\mathscr V}}
\newcommand{\W}{{\mathscr W}}
\newcommand{\C}{{\mathbb{C}}}
\newcommand{\Q}{{\mathbb{Q}}}
\newcommand{\R}{{\mathbb{R}}}
\newcommand{\Z}{{\mathbb{Z}}}
\newcommand{\eul}{{e}} %topological euler class
\newcommand{\w}{{\mathbf w}}
\newcommand{\wit}{{\D}} %%witten map in derived category
\newcommand{\z}{{\mathbf z}}
\newcommand{\bu}{{\mathbf u}}
\newcommand{\bphi}{\boldsymbol{\phi}}
\newcommand{\comb}{\operatorname{Comb}}
\newcommand{\ind}{\operatorname{index}}
\newcommand{\bpat}{\bar{\partial}}
\newcommand{\pat}{{\partial}}
\newcommand{\Vd}{{V_{deform,\sigma}}}
\newcommand{\Vm}{{V_{map,\sigma}}}
\newcommand{\Vmt}{{V^t_{map,{\sigma}}}}
\newcommand{\Vr}{{V_{resol,\sigma}}}
\newcommand{\papp}{{{\mathbf \phi}_{app,y,\zeta}}}
\newcommand{\uapp}{{{\mathbf u}_{app,y,\zeta}}}
\newcommand{\ts}{\tilde{s}}
\newcommand{\bb}{{\mathbf b}}
\newcommand{\cC}{\mathscr C} %an orbicurve
\newcommand{\LL}{{\mathscr L}} %an orbi-linebundle
\newcommand{\MM}{\overline{\M}} %the moduli space
\newcommand{\WW}{\overline{\W}}
\newcommand{\bgamma}{\protect{\boldsymbol{\gamma}}} %the complete "type" of the W-structure--the image of the total evaluation map.
\newcommand{\grp}{G} %the diagonal group of auts, formerly called H (H is too confusing, since it is also cohomology and also often the state space of the theory).
\newcommand{\frkc}{\mathfrak{C}} % used to indicate the entire data of a W-curve.
\newcommand{\hGamma}{\hat{\Gamma}}
\newcommand{\barGamma}{\overline{\Gamma}}
\newcommand{\bone}{\mathbf{e}}
\newcommand{\bvkappa}{{\boldsymbol{\varkappa}}}
\newcommand{\MMr}{\WW^{\mathrm{rig}}}
\newcommand{\MMrs}{\WW^{\mathrm{s}}}
\newcommand{\st}{st} %stabilization
\newcommand{\so}{so} %softening map (forget rigidification}
\newcommand{\str}{st^{\mathrm{rig}}} %rigidified stabilization map
\newcommand{\chat}{\hat{c}}
\newtheorem{thm}{Theorem}[subsection]
\newtheorem{lm}[thm]{Lemma}
\newtheorem{prop}[thm]{Proposition}
\newtheorem{crl}[thm]{Corollary}
\theoremstyle{definition}
\newtheorem{rem}[thm]{Remark}
\newtheorem{df}[thm]{Definition}
\newtheorem{ex}[thm]{Example}
\theoremstyle{remark}
\begin{document}

\date{\today}

\title[Witten equation]{The Witten equation and its virtual fundamental cycle}
\author{Huijun Fan}
\thanks{Partially Supported by NSFC 10401001, NSFC 10321001, and NSFC 10631050}
\address{School of Mathematical Sciences, Peking University, Beijing 100871, China}
\email{fanhj@math.pku.edu.cn}
\author{Tyler Jarvis}
\thanks{Partially supported by the National Science Foundation and the Institut Mittag-Leffler (Djursholm, Sweden)}
\address{Department of Mathematics, Brigham Young University, Provo, UT 84602, USA}
\email{jarvis@math.byu.edu}
\author{Yongbin Ruan}
\thanks{Partially supported by the National Science Foundation and
the Yangtz Center of Mathematics at Sichuan University}
\address{Department of Mathematics, University of Michigan Ann Arbor, MI 48105 U.S.A
and the Yangtz Center of Mathematics at Sichuan University,
Chengdu, China} \email{ruan@umich.edu}

\maketitle

\begin{abstract}
We study a system of nonlinear elliptic PDEs associated with a
quasi-homogeneous polynomial. These equations were proposed by Witten
as the replacement for the Cauchy-Riemann equation in
the singularity (Landau-Ginzburg) setting. We introduce a perturbation to
the equation and construct a virtual cycle for the moduli space of
its solutions. Then, we study the wall-crossing of the deformation
of the virtual cycle under perturbation and match it to classical
Picard-Lefschetz theory. An extended virtual cycle is obtained for
the original equation. Finally, we prove that the extended virtual
cycle satisfies a set of axioms similar to those of Gromov-Witten
theory and $r$-spin theory.
\end{abstract}

\tableofcontents

\section{Introduction}

This is the second installment in a series of papers devoted to
the mathematical theory of the Witten equation and its applications.
The Witten equation is a system of nonlinear elliptic PDEs
associated to a quasi-homogeneous polynomial $W$. It has the simple
form
$$\bar{\partial}u_i+\overline{\frac{\partial W}{\partial
u_i}}=0,$$
where $W$ is a quasi-homogeneous polynomial (called the \emph{super-potential}), and $u_i$ is
    interpreted as a section of an appropriate orbifold line bundle
    on a Riemann surface $\cC$.
    Some simple examples are
    \begin{description}
    \item[($A_r$-case)] $\bar{\partial}u+\bar{u}^r=0.$
    \item[($D_n$-case)]
    $\bar{\partial}u_1+n\bar{u}^{n-1}_1+\bar{u}_2^2=0,
    \bar{\partial}u_2+2\bar{u}_1\bar{u}_2=0.$
    \item[($E_7$-case)]
    $\bar{\partial}u_1+3\bar{u}^2_1+\bar{u}_2^3=0,
    \bar{\partial}u_2+3\bar{u}_1\bar{u}^2_2=0$.
    \end{description}
    To understand the importance of the Witten equation, we have to go back to the famous
    Landau-Ginzburg/Calabi-Yau correspondence \cite{Mar}, \cite{VW}, \cite{Wi4}. Most  known examples
    of Calabi-Yau three-folds are constructed as a hypersurface or a complete intersection of
    toric varieties. The defining equations of these three-folds have a natural interpretation in terms of
    Landau-Ginzburg/singularity theory. To compute Gromov-Witten invariants is basically to
    solve the Cauchy-Riemann equation for maps from Riemann surfaces. It is generally difficult to solve the Cauchy-Riemann
    equation on a nonlinear space such as a Calabi-Yau manifold. If we compare the Cauchy-Riemann
    equation on a Calabi-Yau manifold to the anti-self-dual equation in Donaldson theory,
    the Witten equation plays the role of the Seiberg-Witten equation. It takes a while to set up, but we
    expect that its invariants will be much easier to compute.  
    
A program was launched
by the authors in 2001 to establish a mathematical theory of these
equations and
 their various consequences in geometry. We have achieved some initial success, although a lot
 more remains to be done. In \cite{FJR2}, we formulated the
algebraic-geometric foundations of the theory, its main properties
in terms of axioms, as well as the applications to mirror symmetry.
In the same paper and in
\cite{FJR3} we solve the famous Witten conjecture for integrable hierarchies associated to the simple singularities $D_n$ and $E_{6,7,8}$ (the conjecture was proved in \cite{FSZ} for $A_n$ singularities). The paper \cite{FJR4} shows that our theory for the $A_{r-1}$ singularity agrees with the $r$-spin
curve theory of \cite{JKV1}.

In this article, we establish the analytic foundations of the theory.

\subsection{Subtleties of the Witten equation}

A casual investigation of the Witten equation reveals that the
Witten equation is much more subtle than its simple appearance
would suggest. Suppose $u_i\in \Omega^0(\LL_i)$. A simple
computation shows
$$\bar{\partial}u_i\in \Omega^{0,1}(\LL_i), \quad \overline{\frac{\partial W}{\partial
u_i}}\in \Omega^{0,1}_{log}(\overline{\LL}^{-1}_i),$$ where log means a
$(0,1)$ form with possible singularities of order $\leq 1$. Namely,
the Witten equation has singular coefficients! 
This is a fundamental
phenomenon for the application of  the Witten equation. A Calabi-Yau
manifold has cohomological information which we expect to discover
from the Witten equation. At  first sight, it seems to be difficult
to reproduce it from the Witten equation, but it has become clear that the
singularity of the Witten equation is the key to producing such
cohomological data in our theory. Unfortunately, the appearance of
singularities makes the Witten equation very difficult to study. This is
the main reason it took us such a long time to construct the theory.

Another subtle issue is the fact that we need an isomorphism
$\overline{\LL}^{-1}_i\cong \LL_i$ for the two terms of the Witten
equation living in the same space. The required isomorphism can be
obtained by a choice of metric. A nontrivial fact is that such a
metric can be constructed uniformly from a metric of the underlying
Riemann surface. Then the question is, which metric should we choose
on the Riemann surface? We should mention that a different choice of
metric often leads to a completely different looking theory,
including a different dimension for its moduli space. Apparently,
there is no physical guidance for the correct metric to 
choose. We have experimented with both smooth and cylindrical metric
near marked points, and now we understand that \emph{both} choices are
important for the theory. In \cite{FJR1} we studied the theory for
the smooth metric. In this article, our main choice is a cylindrical
metric.

\subsection{Outline of the construction and results}

Throughout this paper we will use the notation and definitions of \cite{FJR2}, with the notable exception that what was called \emph{Ramond} in \cite{FJR2} will be called \emph{broad} here, and what was called \emph{Neveu-Schwarz} will be called \emph{narrow} here.

 Let $\WW_{g,k}(\gamma_1,
\dots, \gamma_k)$ be the moduli space of $W$-curves with orbifold structure $\gamma_i$ at the marked point $z_i$ (see
Section 2). Technically, it is more convenient for us to work on the
moduli space $\MMr_{g,k}(\gamma_1,
\dots, \gamma_k)$ of rigidified $W$-structures. These are the background data for the Witten
equation. One can show by the Witten Lemma that in the case that all the coefficients are non-singular (we call this case \emph{narrow}), the Witten equation has only
the trivial solution. Unfortunately, it is impossible to
obtain a meaningful theory using only the zero solution. 
Hence, as in the study of singularity theory, we need to consider the perturbed Witten
equation given by the perturbed superpotential $W+W_0$, where $W_0$ is a
linear perturbation term such that $W_{\gamma}+W_{0\gamma}$ is a
holomorphic Morse function for every $\gamma$. Here  $W_{\gamma}$ and $W_{0\gamma}$ are the restrictions of $W$ and $W_0$ to the fixed point set
$(\C^N)^{\gamma}$ (also denoted $\C^{N_{\gamma}}$). 
The crucial part of the analysis is to
show that a solution of the perturbed Witten equation converges to a
critical point of $W_{\gamma_i}+W_{0\gamma_i}$. This enables us to
construct a moduli space
$$
\MMr_{g,k}(\kappa_{j_1}, \dots, \kappa_{j_k}),$$
where $\kappa_{j_i}$ is a critical point of
$W_{\gamma_i}+W_0|_{\C^{N}_{\gamma_i}}$. We call $W_0$ {\em
strongly regular} if (i) $W_{\gamma_i}+W_{0\gamma_i}$ is
holomorphic Morse, and (ii) the critical values of
$W_{\gamma_i}+W_{0\gamma_i}$ have distinct imaginary parts. Our
first important result is Theorem \ref{thm-stro-kura}, which can
be written in the following simple form:
 \begin{thm}
 If $W_0$ is strongly regular, then $\MMr_{g,k}(\kappa_{j_1}, \dots, \kappa_{j_k})$
 is compact and has a virtual fundamental cycle $[\MMr_{g,k}(\kappa_{j_1}, \dots, \kappa_{j_k})]^{vir}$
 of  degree
$$
2\left((\chat_W-3)(g-1)+k -\sum_i \iota_{\gamma_i}\right)-\sum_i N_{\gamma_i}.
$$
Here $\chat_W$ is the central charge, $\iota_{\gamma_i}$ is the degree shifting  number
defined in \cite[Def 3.2.3]{FJR2}, and $N_{\gamma_i}$ is the complex dimension of the fixed point locus $(\C^N)^{\gamma_i}\subseteq \C^N$  of $\gamma_i$.
\end{thm}

It turns out to be notionally convenient to map the above virtual
cycle from $H_*(\MMr_{g,k}(\kappa_{j_1}, \dots, \kappa_{j_k}),\Q)$
into 
$H_*(\MMr_{g,k}(\gamma_{j_1}\dots, \gamma_{j_k}),\Q)$, even though
       the former is not a subspace of the latter in any way. This is the
       first step of our construction. We have not yet seen the
       cohomology data we hope for.
        Then, a crucial
       new phenomenon comes into play when we study how the above
       virtual cycle changes when we vary the perturbation. It
       turns out that the above virtual cycle {\it does} depend on the
       perturbation. It will change when $W_0$ fails to be
       strongly regular. The ``wall crossing formula" proved in Theorem \ref{crl-cobo-sing} shows the
       following \emph{quantum Picard-Lefschetz theorem}:

       \begin{thm}
       When $W_0$ varies, $[\MMr_{g,k}(\kappa_{j_1}, \dots, \kappa_{j_k})]^{vir}$
       transforms in the same way as the so called Lefschetz thimble
       $S_{j_i}$ attached to the critical point $k_{j_i}$.
       \end{thm}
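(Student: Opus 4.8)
The plan is to reduce the statement to an \emph{elementary} wall-crossing and then to analyze the resulting degeneration of the moduli space by a compactness-and-gluing argument modeled on Floer theory, with the classical Picard-Lefschetz formula entering as the count of rigid gradient trajectories at the wall.

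First I would choose a generic one-parameter family $\{W_0^t\}_{t\in(-\epsilon,\epsilon)}$ of perturbations that crosses the given wall transversally at $t=0$, arranged so that only condition (ii) of strong regularity fails at $t=0$, and so that exactly one pair of critical values --- say $c_{j_i}(t)$ and $c_{j_i'}(t)$ of $W_{\gamma_i}+W^t_{0\gamma_i}$ at the $i$-th marked point --- exchanges the ordering of their imaginary parts as $t$ passes through $0$, all remaining strong-regularity conditions staying in force. Since a general wall-crossing factors as a composite of such elementary ones, it suffices to treat this case. Abbreviate by $\mathbf M_t$ the moduli space $\MMr_{g,k,W}(\kappa_{j_1},\dots,\kappa_{j_k})$ built from $W_0^t$, so that by Theorem~\ref{thm-stro-kura} each $\mathbf M_t$ with $t\ne0$ is compact and carries a virtual cycle $[\mathbf M_t]^{vir}$ of the stated degree. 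The goal is to exhibit the parametrized space $\mathbf M_{[-\epsilon,\epsilon]}=\bigcup_t\{t\}\times\mathbf M_t$ as a virtual cobordism between its fibres over $+\epsilon$ and $-\epsilon$, up to a boundary term concentrated over $t=0$.

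The core of the argument is the analysis of the failure of compactness of $\mathbf M_{[-\epsilon,\epsilon]}$ as $t\to0$. Near the $i$-th marked point the metric is cylindrical, and along a long neck $[-T,T]\times S^1$ the Witten equation degenerates, as $T\to\infty$, to the gradient-flow equation of $\mathrm{Re}(W_{\gamma_i}+W^t_{0\gamma_i})$ on $\C^{N_{\gamma_i}}_{\gamma_i}$ (up to a fixed overall phase). Since $\mathrm{Im}$ of a holomorphic function is constant along the gradient trajectories of its real part, a non-constant trajectory can join two critical points only when their critical values share an imaginary part --- which is exactly what strong regularity forbids, and which explains both why $\mathbf M_t$ is compact for $t\ne0$ and why $\mathbf M_{[-\epsilon,\epsilon]}$ can break only over the wall. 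Consequently a sequence of solutions with $t_n\to0$ may converge, after passing to a subsequence, to a broken configuration: a solution whose $i$-th asymptotic constraint has jumped to $\kappa_{j_i'}$, together with a non-constant gradient trajectory from $\kappa_{j_i'}$ to $\kappa_{j_i}$ taken modulo translation. The energy quantization and the exponential-decay estimates underlying the construction of the moduli space (and the asymptotic analysis of the companion paper \cite{FJR2}) show that no energy leaks elsewhere, that this is the only degeneration, and that the broken limit lies in the fibre product
\[
\mathbf M_0^{\,i\to j_i'}\ \times_{\mathrm{ev}_i}\ \mathcal M\big(\kappa_{j_i'},\kappa_{j_i}\big),
\]
where $\mathbf M_0^{\,i\to j_i'}$ denotes $\mathbf M_0$ with its $i$-th decoration $\kappa_{j_i}$ replaced by $\kappa_{j_i'}$ and $\mathcal M(\kappa_{j_i'},\kappa_{j_i})$ is the moduli space of gradient trajectories. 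A matching gluing theorem --- using surjectivity of the linearized Witten operator on the degenerating neck after the obvious asymptotic matching --- realizes every such broken configuration as a genuine limit, so $\mathbf M_{[-\epsilon,\epsilon]}$ compactifies to a virtual cobordism-with-boundary whose boundary is $\mathbf M_{+\epsilon}\sqcup\mathbf M_{-\epsilon}$ together with one copy of the above fibre product for each ordered pair of colliding critical points.

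Taking virtual cycles and invoking the functoriality of the Kuranishi/virtual-cycle construction under gluing, the cobordism yields
\[
[\mathbf M_{+\epsilon}]^{vir}-[\mathbf M_{-\epsilon}]^{vir}=\pm\,[\mathbf M_0^{\,i\to j_i'}]^{vir}\cdot\#\mathcal M\big(\kappa_{j_i'},\kappa_{j_i}\big).
\]
The last, purely classical, step identifies the coefficient: at the wall, where $c_{j_i}$ and $c_{j_i'}$ have equal imaginary part, the signed count $\#\mathcal M(\kappa_{j_i'},\kappa_{j_i})$ of rigid gradient trajectories between the two critical points equals the intersection number $S_{j_i}\cdot S_{j_i'}$ of the corresponding Lefschetz thimbles --- this is the Morse-theoretic description of vanishing cycles in Picard-Lefschetz theory. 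Substituting, the wall-crossing formula becomes $S_{j_i}\mapsto S_{j_i}\pm(S_{j_i}\cdot S_{j_i'})\,S_{j_i'}$ in the $i$-th slot, which is precisely the Picard-Lefschetz monodromy; running the same analysis in each of the $k$ slots and using multilinearity of the virtual cycle in the decorations gives the full statement. The main obstacle is the analytic input: establishing the sharp compactness-and-gluing theorem along the degenerating cylindrical neck for an equation with singular logarithmic coefficients --- including the energy quantization and the Fredholm surjectivity that make gluing possible --- and then matching the orientations produced by the analytic gluing with the signs dictated by the topological Picard-Lefschetz formula. The remaining bookkeeping --- reduction to elementary walls, virtual-cycle functoriality, and multilinearity --- is formal once this local model is secured.
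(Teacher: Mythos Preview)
Your proposal is correct and follows essentially the same route as the paper's proof of Theorem~\ref{crl-cobo-sing}: build the parametrized moduli space over a generic one-parameter crossing, compactify by adding broken configurations consisting of a solution with shifted asymptotic plus a connecting trajectory, and read off the wall-crossing coefficient as the Picard-Lefschetz intersection number. The only notable difference is terminological and in one analytic subtlety: where you pass directly to the gradient-flow ODE on $\C^{N_{\gamma_i}}_{\gamma_i}$, the paper works with the full soliton PDE on $\R\times S^1$ (Equation~\ref{soli-witt-equa}), distinguishes general solitons from $\theta$-independent \emph{BPS solitons}, and uses the residual $S^1$-action to show that only the BPS solitons---which are exactly your gradient trajectories---contribute to the boundary of the parametrized Kuranishi structure (Theorem~\ref{thm-cobo-sing}, Lemma~\ref{lm-mors-smal}); the analytic input you flag as the main obstacle is supplied here in Sections~\ref{sec:witten}--\ref{sec:construct} rather than in \cite{FJR2}.
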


The ``wall crossing formula'' of the above virtual cycle can
       be neatly packaged into the following formula. Let
       $\{\tilde{S_i}\}$ be a basis for the space of dual Lefschetz thimbles. To
       simplify the notation, we assume that there is only one
       marked point with the orbifold decoration $\gamma$. Then, the
       wall crossing formula of $[\MMr_{g,1}(\kappa_i)]^{vir}$
       shows precisely that
       $$\sum_j [\MMr_{g,1}(\kappa_j)]^{vir}\otimes
       \tilde{S_j},$$
       viewed as a class in $H_*(\MMr_{g,1}(\gamma), \Q)\otimes
       H_{N_{\gamma}}(\C^{N_{\gamma}}, W^{\infty}_{\gamma},
       \Q)$, is independent of the perturbation. Now, we define
       $$[\MMr_{g,1}(\gamma)]^{vir}=\sum_j [\MMr_{g,1}(\kappa_j)]^{vir}\otimes
       \tilde{S_j}.$$
       The above definition can be generalized with multiple
       marked points in an obvious way. It is clear that
       $$[\MMr_{g,k}(\gamma_1, \dots, \gamma_k)]^{vir}\in
       H_*(\MMr_{g,k}(\gamma_1, \dots, \gamma_k), \Q)\otimes \prod_i H_{N_{\gamma_i}}(\C^{N_{\gamma_i}},
       W^{\infty}_{\gamma_i}, \Q)$$
       has  degree
       $$2\left((\chat_w-3)(g-1)+k -\sum_i \iota_{\gamma_i}\right).$$

 \begin{crl}
    $[\MMr_{g,k}(\gamma_1, \dots, \gamma_k)]^{vir}$ is
    independent of the perturbation $W_0$.
    \end{crl}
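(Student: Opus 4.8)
The plan is to deduce the corollary formally from the two theorems above, by a connectedness argument in the space of perturbations together with one linear-algebra identity. First I would reduce to the case of a single marked point: the extended cycle is the evident product over the marked points, and a path of perturbations in general position at one marked point is automatically in general position at the others, so it suffices to treat $k=1$ with orbifold decoration $\gamma$. The linear perturbations $W_0$ form a vector space, hence a connected set, and inside it the strongly regular ones form a dense open subset whose complement is a finite union of walls of two types: the locus where $W_\gamma+W_{0\gamma}$ fails to be holomorphic Morse (a complex hypersurface, hence of real codimension $\geq 2$) and the locus where two critical values of $W_\gamma+W_{0\gamma}$ acquire the same imaginary part (of real codimension $1$).

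Given strongly regular $W_0$ and $W_0'$, join them by a path $t\mapsto W_0^t$, $t\in[0,1]$, that avoids the non-Morse locus and meets the codimension-one walls transversally, at finitely many interior times $t_1<\dots<t_m$ and crossing only one wall at each. On each complementary subinterval $W_0^t$ stays in a single chamber, so by the cobordism invariance of the virtual cycle under deformation of $W_0$ within a chamber (part of the analysis underlying Theorems \ref{thm-stro-kura} and \ref{crl-cobo-sing}) the family $\{[\MMr_{g,1,W}(\kappa_j)]^{vir}\}_j$ is locally constant there, hence constant on that subinterval. At a crossing $t=t_\ell$ the quantum Picard--Lefschetz theorem (Theorem \ref{crl-cobo-sing}) says that the jump of this family is governed by precisely the Picard--Lefschetz transformation $M$ of the Lefschetz thimbles $\{S_j\}$ of $W_\gamma+W_{0\gamma}$: if $S_j\mapsto\sum_k M_{jk}S_k$, then $[\MMr_{g,1,W}(\kappa_j)]^{vir}\mapsto\sum_k M_{jk}[\MMr_{g,1,W}(\kappa_k)]^{vir}$.

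Now the dual thimbles $\{\tilde S_j\}$ are, by definition, the basis of $H_{N_\gamma}(\C^{N_\gamma}_\gamma, W^\infty_\gamma,\Q)$ dual to $\{S_j\}$ under the intersection pairing, so across the same wall they transform by the contragredient matrix $M^{-T}=(M^{-1})^T$. Consequently the contracted class $\sum_j [\MMr_{g,1,W}(\kappa_j)]^{vir}\otimes\tilde S_j$ --- which is the image, under $S_j\mapsto[\MMr_{g,1,W}(\kappa_j)]^{vir}$, of the canonical basis-independent element $\sum_j S_j\otimes\tilde S_j$ of $H_{N_\gamma}(\C^{N_\gamma}_\gamma,W^\infty_\gamma,\Q)^{\otimes 2}$ determined by the intersection form --- is unchanged at each crossing, because
$$
\sum_j\Bigl(\sum_k M_{jk}S_k\Bigr)\otimes\Bigl(\sum_l M^{-T}_{jl}\tilde S_l\Bigr)=\sum_{k,l}\bigl(M^T M^{-T}\bigr)_{kl}\,S_k\otimes\tilde S_l=\sum_k S_k\otimes\tilde S_k.
$$
Together with constancy on the subintervals, this shows that $[\MMr_{g,1,W}(\gamma)]^{vir}$ at $W_0$ and at $W_0'$ coincide, and the general $k$ case follows in the same way, one wall at a time (a wall belonging to the $i$-th marked point transforms only the $i$-th tensor slot). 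The degree statement recorded just before the corollary then follows from Theorem \ref{thm-stro-kura}, since tensoring a class with the $N_{\gamma_i}$-dimensional cycles $\tilde S_{j_i}$ raises the total homological degree by $\sum_i N_{\gamma_i}$.

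The ingredients that are elementary or already supplied by the earlier results --- connectedness of the perturbation space, transversality of a generic path to the wall stratification, and local constancy of the virtual cycle inside a chamber --- present no difficulty. The only point requiring care, and the real content of the argument, is the precise matching invoked above between the analytic wall-crossing of the Witten-equation virtual cycle and the classical Picard--Lefschetz transformation of the thimbles: this was established in the proof of Theorem \ref{crl-cobo-sing} by identifying the gluing parameter of a broken solution (a principal component glued to a soliton joining two critical points) with the classical vanishing cycle, and by keeping track of orientations and signs so that the two transformation matrices coincide on the nose rather than merely up to conjugacy. Granting that matching, the corollary is the purely formal consequence sketched above.
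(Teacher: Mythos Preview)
Your argument is correct and follows the paper's own route: connect two strongly regular perturbations by a generic path, use cobordism invariance within chambers (Corollary~\ref{crl-cobo-nonsing}) and the wall-crossing formula (Theorem~\ref{crl-cobo-sing}) at each crossing, and observe that $\sum_j [\MMr(\kappa_j)]^{vir}\otimes\tilde S_j$ is the canonical Casimir element, hence basis-independent --- which the paper packages as Proposition~\ref{prop-linear} (braid-group equivariance). One small caution: the phrase ``the extended cycle is the evident product over the marked points'' is not literally true (the moduli space couples all marked points), so the reduction to $k=1$ is not a reduction in the strict sense; but your later remark that a wall at the $i$-th marked point transforms only the $i$-th tensor slot is the correct statement and is what the paper actually uses.
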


%       $W_0$ is only part of the perturbation data. 
Eventually, we want to work on
       $\WW_{g,k}$. In Section~\ref{sec:rigmoduli} we show that the \emph{softening} map $so: \MMr_{g,k}\rTo \WW_{g,k}$ that forgets all the rigidifications is a finite, representable morphsim. We can define
$$
[\WW_{g,k}(\gamma_1, \dots,
\gamma_k)]^{vir}:=\frac{1}{\deg(so)}(so)_*[\MMr_{g,k}(\gamma_1,
\dots, \gamma_k)]^{vir}.
$$
This new virtual cycle is independent of rigidification, which
implies that
    $$[\WW_{g,k}(\gamma_1, \dots, \gamma_k)]^{vir}\in
       H_*(\WW_{g,k}(\gamma_1, \dots, \gamma_k), \Q)\otimes \prod_i H_{N_{\gamma_i}}(\C^{N_{\gamma_i}},
       W^{\infty}_{\gamma_i}, \Q)^G.$$

Similarly, we can define the virtual cycle $[\WW(\Gamma)]^{vir}$
corresponding to a decorated dual graph $\Gamma$ (see \cite[\S2.2.2]{FJR2}).

Besides the quantum Picard-Lefschetz theory, another main aim of
this paper is to prove Theorem 1.0.2 in our paper \cite{FJR2}. This
theorem shows that the virtual cycles constructed in this paper
satisfy a set of axioms similar to the standard Gromov-Witten
theory. We state it below, after the following simple definition.

\begin{df}
For any (decorated) graph $\Gamma$, let $E(\Gamma)$ denote the set of edges of $\Gamma$, let $T(\Gamma)$ denote the set of tails, and let $V(\Gamma)$ denote the set of vertices.

Furthermore, let $D_{\Gamma}$ be defined as follows:
\begin{equation}
D_{\Gamma}:=\chat_W(g-1)+\sum_{j=1}^k \iota_{\gamma_j}.
\end{equation}
\end{df}
Note that when $D_{\Gamma}$ is an integer, then $-D_{\Gamma}$ is the sum of the indices of the $W$-structure bundles $$-D_{\Gamma} = \sum_{i=1}^N \ind(\LL_i) .$$   

\begin{thm}\label{thm:main}
The following axioms are satisfied for $[\WW(\Gamma)]^{vir}$:
\begin{enumerate}
\item \textbf{Dimension:}\label{ax:dimension} If $D_\Gamma$ is not
an integer, 
then $\left[\WW(\Gamma)\right]^{vir}=0$. Otherwise, the cycle
$\left[\WW(\Gamma)\right]^{vir}$ has degree
\begin{equation}\label{eq:dimension}
6g-6+2k-2D_\Gamma -2\#E(\Gamma)=2\left((\chat-3)(1-g) + k -\#E(\Gamma) - \sum_{\tau\in
T(\Gamma)} \iota_{\tau}\right).
\end{equation}
So the cycle lies in $H_r(\WW(\Gamma),\Q)\otimes \prod_{\tau \in
T(\Gamma)}
H_{N_{\gamma_{\tau}}}(\C^N_{\gamma_{\tau}},W^{\infty}_{\gamma_\tau},
\Q),$ where
$$
r:=6g-6+2k-2\#E(\Gamma) -2D  -\sum_{\tau\in T(\Gamma)}N_{\gamma_\tau} =
2\left((\hat{c}-3)(1-g)+k -\#E(\Gamma) -\sum_{\tau\in
T(\Gamma)}\iota(\gamma_{\tau})- \sum_{\tau\in
T(\Gamma)}\frac{N_{\gamma_\tau}}{2}\right).
$$

\item \label{ax:symm}\textbf{Symmetric group invariance}: There is
a natural $S_k$-action on $\WW_{g,k}$ obtained by permuting the
tails.  This action induces an action on homology.  That is, for any
$\sigma \in S_k$ we have
$$\sigma_*: H_*(\WW_{g,k},\Q)\otimes \prod_i
H_{N_{\gamma_{i}}}(\C^N_{\gamma_i}, W^{\infty}_{\gamma_i}, \Q)^G \to
H_*(\WW_{g,k},\Q)\otimes \prod_i
H_{N_{\gamma_{\sigma(i)}}}(\C^N_{\gamma_{\sigma(i)}},
W^{\infty}_{\gamma_{\sigma(i)}}, \Q)^G.$$ For any decorated graph
$\Gamma$, let $\sigma\Gamma$ denote the graph obtained by applying
$\sigma$ to the  tails of $\Gamma$.

We have
\begin{equation}\sigma_*\left[\WW(\Gamma)\right]^{vir} = \left[\WW(\sigma\Gamma)\right]^{vir}.\end{equation}

\item \textbf{Degenerating connected graphs:} Let $\Gamma$ be a
connected, genus-$g$, stable, decorated $W$-graph.

The cycles $\left[\WW(\Gamma)\right]^{vir}$ and
$\left[\WW_{g,k}(\bgamma)\right]^{vir}$ are related by
\begin{equation}
\label{eq:cvgamma} \left[\WW(\Gamma)\right]^{vir}= %\prod_{e\in E(\Gamma)} |\gamma_{e,+}|
\tilde{i}^*\left[\WW_{g,k}(\bgamma)\right]^{vir},
\end{equation}
where $\tilde{i} : \WW(\Gamma) \to \WW_{g,k}(\bgamma)$ is the
canonical inclusion map. \item \textbf{Disconnected graphs:} Let
$\Gamma =\coprod_{i} \Gamma_i$ be a stable, decorated $W$-graph
which is the disjoint union of connected $W$-graphs $\Gamma_i$. The
classes $\left[\WW(\Gamma)\right]^{vir}$ and
$\left[\WW(\Gamma_i)\right]^{vir}$ are related by
\begin{equation}
\left[\WW(\Gamma)\right]^{vir}= \left[\WW(\Gamma_1)\right]^{vir}
\times \dots \times \left[\WW(\Gamma_d)\right]^{vir}.
\end{equation}
\smallskip

\item \textbf{The Topological Euler class for 
narrow sectors:}  Suppose that all the decorations on tails of
$\Gamma$ are
  \emph{narrow}, meaning that $\C^N_{\gamma_i}=0$, and so we can
  omit $H_{N_{\gamma_{i}}}(\C^N_{\gamma_i}, W^{\infty}_{\gamma_i}, \Q)=\Q$ from our notation.

Consider the universal $W$-structure $(\LL_1,\dots,\LL_N)$ on the
universal curve $\pi:\cC \rTo{}{} \W(\Gamma)$ and the two-term
complex of sheaves
$$\pi_*(|\LL_i|){\rTo} R^1\pi_*(|\LL_i|).$$
There is a family of maps
$$W_i=\frac{\partial W}{\partial x_i}: \pi_*(\bigoplus_j |\LL_j|)
\rTo \pi_*(K\otimes |\LL_i|^*)\cong R^1\pi_*(|\LL_i|)^*.$$ The above
two-term complex is quasi-isomorphic to a complex of vector bundles
\cite{PV}
$$E^0_i\rTo^{d_i} E^1_i$$
such that
$$\ker( d_i)\rTo \coker(d_i)$$
is isomorphic to the original two-term complex.
    $W_i$ is naturally extended (denoted by the same notation) to
    $$\bigoplus_i E^0_i\rTo (E^1_i)^*.$$
    Choosing an Hermitian metric on $E^1_i$ defines an

isomorphism $\bar{E}^{1*}_i\cong E^1_i.$ Define the \emph{Witten map} to be the following
    $$\wit=\bigoplus (d_i+\bar{W}_i):
    \bigoplus_i E^0_i\rTo\bigoplus_i \bar{E}^{1*}_i\cong \bigoplus_i E^1_i.$$
 Let $\pi_j: \bigoplus_i E^j_i\rTo \MM$ be
    the projection map. The Witten map defines a proper section
    (denoted by the same notation)
    of the bundle $\wit: \bigoplus_i E^0_i\rTo \pi^*_0 \bigoplus_i
    E^1_i.$ The above data  defines a topological Euler
    class $\eul(\wit: \bigoplus_i E^0_i\rTo \bigoplus_i
    E^1_i)$. Then,
    $$[\WW(\Gamma)]^{vir}=(-1)^{D}\eul(\wit: \pi^*_0 \bigoplus_i E^1_i\rTo \bigoplus_i
    E^0_i)\cap [\WW(\Gamma)].$$

    The above axiom implies two subcases.
\begin{enumerate}
\item{\bf Concavity}:\label{ax:convex}\footnote{This axiom was
  called \emph{convexity} in \cite{JKV1} because the original form of
  the construction outlined by Witten in the $A_{r-1}$ case involved
  the Serre dual of $\LL$, which is convex precisely when our $\LL$ is
  concave.}

    Suppose that all tails of $\Gamma$ are narrow.  If
$\pi_*\left(\bigoplus_{i=1}^t\LL_i\right)=0$, then the virtual cycle
is given by capping the top Chern class of the dual $\left(R^1 \pi_*
\left(\bigoplus_{i=1}^t\LL_i\right)\right)^*$ of the pushforward
with the usual fundamental cycle of the moduli space:
\begin{equation}\begin{split}
\left[\WW(\Gamma)\right]^{vir}& = c_{top}\left(\left(R^1\pi_*\bigoplus_{i=1}^t\LL_i \right)^*\right) \cap \left[\WW(\Gamma)\right]\\
& = (-1)^D c_{D}\left(R^1\pi_*\bigoplus_{i=1}^t\LL_i \right) \cap
\left[\WW(\Gamma)\right].
\end{split}
\end{equation}
\item   {\bf  Index zero:} \label{ax:wittenmap} Suppose that  $\dim (\W(\Gamma))=0$
and all the decorations on tails  are narrow.

If the pushforwards $\pi_* \left(\bigoplus\LL_i\right)$ and
$R^1\pi_* \left(\bigoplus \LL_i\right)$ are both vector bundles of
the same rank, then the virtual cycle is just the degree
$\deg(\wit)$ of the Witten map times the fundamental cycle:
$$\left[\WW(\Gamma)\right]^{vir} = \deg(\wit)\left[\WW(\Gamma)\right].$$
\end{enumerate}

\item\textbf{Composition law:}\label{ax:cutting} Given any
genus-$g$ decorated stable $W$-graph $\Gamma$ with $k$ tails, and
given any edge $e$ of $\Gamma$, let $\Gamma_{cut}$ denote the
graph obtained by ``cutting'' the edge $e$ and replacing it with
two unjoined tails $\tau_+$ and $\tau_-$ decorated with $\gamma_+$
and $\gamma_-$, respectively.

In view of the gluing/cutting commutative diagram:
\begin{equation}\label{eq:CuttingDiagr}
\begin{diagram}
        &           &F          &\rTo^{pr_2}    &\WW(\Gamma)\\
        &\ldTo^q    \\
\WW(\Gamma_{cut}) &&\dTo_{pr_1}&              &\dTo_{\st_\Gamma}          \\
    &\rdTo^{\st_{\Gamma_{cut}}}     \\
        &           &\MM({\Gamma_{cut}}) & \rTo^{\rho} & \MM({\Gamma})\\
\end{diagram},
\end{equation}
the fiber product
$$F:=\MM(\Gamma_{cut})\times_{\MM(\Gamma)} \WW(\Gamma)$$
has morphisms
 $$ \WW({\Gamma_{cut}})\lTo^{q} F
\rTo{pr_2}\WW(\Gamma).$$

We have
\begin{equation}\label{eq:cutting}
\left\langle
[\WW(\Gamma_{cut})]^{vir}\right\rangle_{\pm}=\frac{1}{\deg(q)}q_*pr_2^*\left(\left[\WW(\Gamma)\right]^{vir}\right),
\end{equation}
where
\begin{multline}\langle\,\,\,\,  \rangle_{\pm}:H_*(\WW(\Gamma_{cut})\otimes\prod_{\tau \in T(\Gamma)} H_{N_{\gamma_{\tau}}}(\C^N_{\gamma_{\tau}},W^{\infty}_{\gamma_\tau}, \Q) \otimes  H_{N_{\gamma_{+}}}(\C^N_{\gamma_{+}},W^{\infty}_{\gamma_+}, \Q)\otimes H_{N_{\gamma_{-}}}(\C^N_{\gamma_{-}},W^{\infty}_{\gamma_-}, \Q) \rTo \\
 H_*(\MM_W(\Gamma_{cut})\otimes\prod_{\tau \in T(\Gamma)} H_{N_{\gamma_{\tau}}}(\C^N_{\gamma_{\tau}},W^{\infty}_{\gamma_\tau}, \Q)
\end{multline}
is the contraction of the last two factors via the pairing
$$
\langle\, , \rangle:
H_{N_{\gamma_{+}}}(\C^N_{\gamma_{+}},W^{\infty}_{\gamma_+},
\Q)\otimes
H_{N_{\gamma_{-}}}(\C^N_{\gamma_{-}},W^{\infty}_{\gamma_-}, \Q)
\rTo \Q.
$$

 \item \textbf{Forgetting tails:}\label{ax:tails}
\begin{enumerate}
\item
 Let $\Gamma$
have its $i$th tail decorated with $J^{-1}$, where $J$ is the
exponential grading element of $G$. Further, let $\Gamma'$ be the
decorated $W$-graph obtained from $\Gamma$ by forgetting the $i$th
tail and its decorations.  Assume that $\Gamma'$ is stable, and
denote the forgetting tails morphism by $$\vartheta: \WW(\Gamma) \to
\WW(\Gamma').$$  We have
\begin{equation}
\left[\WW(\Gamma)\right]^{vir}
=\vartheta^*\left[\WW(\Gamma')\right]^{vir}.
\end{equation}
    \item In the case of $g=0$ and $k=3$, then the space $\WW(\gamma_1, \gamma_2, J^{-1})$ is empty if
    $\gamma_1\gamma_2\neq 1$ and $\WW_{0,3}(\gamma, \gamma^{-1}, J^{-1})=\BG$.  We omit
     $H_{N_{J^{-1}}}(\C^N_{J^{-1}},W^{\infty}_{J^{-1}}, \Q)^G = \Q$ from the notation.  In this case, the cycle
            $$\left[\WW_{0,3}(\gamma, \gamma^{-1}, J^{-1})\right]^{vir}\in
           H_*(\BG,\Q)\otimes  H_{N_{\gamma}}(\C^N_{\gamma},W^{\infty}_{\gamma}, \Q)^G \otimes
        H_{N_{\gamma^{-1}}}(\C^N_{\gamma^{-1}},W^{\infty}_{\gamma^{-1}}, \Q)^G$$ is
        the fundamental cycle of $\BG$
        times the Casimir element. Here the Casimir element
        is defined as follows. Choose a basis $\{\alpha_i\}$
        of $H_{N_{\gamma}}(\C^N_{\gamma},W^{\infty}_{\gamma},
        \Q)^G,$ and a basis $\{ \beta_j\}$ of $H_{N_{\gamma^{-1}}}(\C^N_{\gamma^{-1}},W^{\infty}_{\gamma^{-1}},
        \Q)^G$. Let $\eta_{ij}=\langle \alpha_1, \beta_j\rangle $ and $(\eta^{ij})$ be the inverse
        matrix of $(\eta_{ij})$. The Casimir element is
        defined as $\sum_{ij}\alpha_i\eta^{ij}\otimes \beta_j.$
\end{enumerate}

\item \textbf{Sums of Singularities:}

     If $W_1 \in \C[z_1,\dots,z_t]$ and $W_2 \in
C[z_{t+1},\dots,z_{t+t'}]$ are two quasi-homogeneous polynomials
with diagonal automorphism groups $G_1$ and $G_2$, and if we write
$W=W_1+W_2$, then the diagonal automorphism group of $W$ is $G =
G_1 \times G_2$. Further, the state space $\ch_W$ is naturally
isomorphic to the tensor product
\begin{equation}
\ch_W = \ch_{W_1} \otimes \ch_{W_2},
\end{equation}
and the space $\WW_{g,k}$ is naturally isomorphic to the fiber
product $$\WW_{g,k} = {(\WW_1)}_{g,k} \times_{\MM_{g,k}}
{(\WW_2)}_{g,k}.$$ Indeed, since any $G$-decorated stable graph
$\Gamma$ is equivalent to the choice of a $G_1$-decorated graph
$\Gamma_1$ and $G_2$-decorated  graph $\Gamma_2$ with the the same
underlying graph $\barGamma$, we have
\begin{equation}
\WW(\Gamma) = {(\WW_1)}(\Gamma_1) \times_{\MM(\barGamma)}
(\WW_{2})(\Gamma_2).
\end{equation}
The natural inclusion
$$\WW_{g,k} = {(\WW_1)}_{g,k} \times_{\MM_{g,k}}
{(\WW_2)}_{g,k}\rInto^{\Delta} {(\WW_1)}_{g,k} \times
{(\WW_2)}_{g,k}$$ together with the isomorphism of middle homology
induces a homomorphism
\begin{eqnarray*}
\Delta^*&:&\left(H_*({(\WW_1)}_{g,k},\Q)\otimes \prod_{i=1}^k
H_{N_{\gamma_{i,1}}}(\C^N_{\gamma_{i,1}},(W_1)^{\infty}_{\gamma_{i,1}},
\Q)^G\right) \\
& &\otimes \left(H_*({(\WW_2)}_{g,k},\Q)\otimes \prod_{i=1}^k
H_{N_{\gamma_{i,2}}}(\C^N_{\gamma_{i,2}},(W_2)^{\infty}_{\gamma_{i,2}},
\Q)^G\right) \\
& &\rTo H_*(\overline{(\W_1+\W_2)}_{g,k},\Q)\otimes \prod_{i=1}^k
H_{N_{(\gamma_{i,1},\gamma_{i,2})}}(\C^N_{(\gamma_{i,1},\gamma_{i,2})},(W_1+W_2)^{\infty}_{(\gamma_{i,1},\gamma_{i,2})},
\Q)^{G_1\times G_2}.
\end{eqnarray*}

The virtual cycle satisfies
\begin{equation}
\Delta^*\left(\left[{(\WW_1)}_{g,k}\right]^{vir}\otimes
\left[{(\WW_2)}_{g,k}\right]^{vir}\right) =
\left[\overline{(\W_1+\W_2)}_{g,k}\right]^{vir}.
\end{equation}

\item \textbf{Deformation Invariance:} Let $W_\tau\in
\C[z_1,\dots,z_N]$ be a family of non-degenerate quasi-homogeneous
polynomials depending smoothly on a parameter $\tau\in S$, where
$S$ is a path connected domain in $\C^m$. Suppose that $G$ is the
common automorphism group of $W_\tau$, then the virtual cycle
$[\WW_\tau(\Gamma)]^{vir}$ associated to $(W_\tau,G)$ is independent
of $\tau$.

\item \textbf{$G_{max}$-Invariance:} The virtual cycle $[\WW(\Gamma)]^{vir}$ associated to $(W,G)$ is
$G_{max}$-invariant (refer to the proof of this axiom for the
explanation of the $G_{max}$ action).
\end{enumerate}
\end{thm}

    The paper is organized as follows. In Section \ref{sec:rigmoduli}, we will
    summarize the algebro-geometric set up of \cite{FJR2} and
    some of the easy consequences. The perturbed Witten equation and its nonlinear analysis on a
    smooth curve or orbicurve will be defined in Section \ref{sec:witten}. In Section \ref{sec:compact},
     the perturbed Witten map and equation will be defined on the moduli space of rigidified $W$-curves.
     We will prove the Gromov compactness theorem for the space of $W$-sections. The virtual cycle will be
    constructed in Section \ref{sec:construct}. The main theorems will be
    proved in the last section.

\section{Rigidified $W$-curves and their
moduli}\label{sec:rigmoduli}

      Recall that the variable $u_i$ of the Witten equation is supposed to
      be a section of certain orbifold line bundles $\LL_i$ 
over the
      Riemann surface. The fact that $u_i$ satisfies the  Witten
      equation forces a certain condition on $\LL_i$. $\LL_i$ satisfying the required condition
      is called {\em a $W$-structure}. The $W$-structure is the
      background data of the Witten equation. A detailed construction of
      the moduli of $W$-structures has been worked out in our
      previous paper \cite{FJR2}. It is technically convenient to work over
      the moduli space of rigidified $W$-structures. In this
      section, we summarize the construction of \cite{FJR2}.

Let us recall the definition of the non-degenerate quasi-homogeneous
polynomials in \cite{FJR2}.

\begin{df}
A \emph{quasi-homogeneous} (or \emph{weighted homogeneous})
\emph{polynomial} $W \in \mathbb{C} [x_1, \dots,
x_N]$\glossary{N@$N$
  & The number of variables in the quasi-homogeneous polynomial $W$
  and the number of line bundles in the $W$-structure}\glossary{W@$W$
  & A quasi-homogeneous polynomial} is a polynomial for which there
exist positive rational degrees $q_1, \dots, q_N \in \Q^{>0}$, such
that for any $\lambda \in \mathbb{C}^*,$
\begin{equation}\label{eq:qhomocharge}
W(\lambda^{q_1}x_1, \dots, \lambda^{q_N}x_N) =\lambda W(x_1, \dots,
x_N).
\end{equation}
 We will call $q_j$ the \emph{weight}\glossary{qj@$q_j$ & The weight (charge)
   of the variable $x_j$}\glossary{qy@$q_y$ & The weight of the
   variable $y$} of $x_j$.
   We define
$d$ and $n_i$ for $i\in \{1,\dots,N\}$ to be the unique positive
integers such that $(q_1,\dots,q_N) = (n_1/d, \dots, n_N/d)$ with
$\gcd(d,n_1,\dots,n_N) =1$.

\end{df}

\begin{df}\label{df:nondegenerate} We call $W$ \emph{nondegenerate}
if \begin{enumerate}
\item\label{it:nondegen-unique}
All weights $q_i$ satisfy $q_i\le 1/2$ and are uniquely determined
by $W$ for all $i \in \{1,\dots,N\}$, and
\item The hypersurface defined by $W$ in weighted
projective space is non-singular, (or equivalently, the affine
hypersurface defined by $W$ has an isolated singularity at the
origin).
\item If $W$ has a pair of variables $x_{i_1},x_{i_2}$ with weights $q_{i_1},q_{i_2}=1/2$, then $W$ is
required to have no cross term $x_{i_1}x_{i_2}$.
  \end{enumerate}
    \end{df}

\begin{rem} The only reason we ruled out the case that $W$ has 
cross
terms $x_{i_1}x_{i_2}$ is to simplify our analysis of the corresponding P.D.E. On the
other hand, this case has a normal form with only square terms, after
a holomorphically equivalent transformation, and will not affect the
topological properties of the singularities.
\end{rem}

Assume that the variables $x_{i_1},\dots,x_{i_l}$ in the
non-degenerate quasi-homogeneous polynomial $W(x_1,\dots,x_N)$ have
weights $q_{i_j}=1/2,j=1,\dots,l$. This case is called 
the
borderline
case. Since we do not allow the existence of the monomial
$x_{i_j}x_{i_k}, j\neq k$ in $W$. $W$ can be written as the form:
\begin{equation}\label{eq:form-half-iden}
W(x_1,\dots,x_N)=\sum_{j=1}^l
(x_{i_j}\hat{W}_{i_j}+c_{i_j}x_{i_j}^2),
\end{equation}
where $\hat{W}_{i_j}$ do not contain the variables
$x_{i_j},j=1,\dots,l$ because of degree reasons.

We will show that in the borderline case the condition (2) in the
definition of nondegeneracy of $W$ gives a restriction to the
admissible form of formula (\ref{eq:form-half-iden}). In detail, we
have

\begin{lm}\label{lm:poly-half-form} Suppose that $W(x_1,\dots,x_N)$ is a non-degenerate quasi-homogeneous
polynomial such that the variables $\{x_{i_j},j=1,\dots,l\}$ have
weights $1/2$, then $W$ has only the following two cases:
\begin{enumerate}

\item $l=N-1$. $W$ has the form
$$
W(\dots,u,\dots)=(\sum_{j=1}^{N-1}c_{i_j}x_{i_j})u^n+(\sum_{j=1}^{N-1}b_{i_j}x^2_{i_j}),n\ge
2.
$$
In particular, if $l=1$, $W$ is just the $D^T_{n+1}$ singularity.

\item $l=N$, then
$$
W(x_1,\dots,x_N)=\sum_{j=1}^N c_j x_j^2.
$$
\end{enumerate}
\end{lm}

\begin{proof} Consider the case that $l\le N-2$. We want to show that the identities
$$
\frac{\pat W}{\pat x_i}=0, \quad \text{ for $i=1,\dots,N$}
$$
have a nontrivial zero locus.

The above equations have the form
\begin{numcases}{}
\frac{\pat W}{\pat x_{i_j}}=2 c_{i_j} x_{i_j}+ \hat{W}_{i_j}=0, \;\;
j=1,\dots,l \\
\frac{\pat W}{\pat x_k}=\sum_{j=1}^l x_{i_j}\frac{\pat
\hat{W}_{i_j}}{\pat x_k}=0,  \;\; k \not\in \{i_1,\dots,i_l\}.
\end{numcases}

Now we can set
\begin{numcases}{}
2 c_{i_j} x_{i_j}+ \hat{W}_{i_j}=0, \;\;
j=1,\dots,l \\
\sum_{j=1}^l x_{i_j} \hat{W}_{i_j}=0.
\end{numcases}

Thus the solution space is at least a dimension $1$ space in $\C^N$,
which contradicts with the non-degeneracy of $W$.

The proofs for the other cases are straightforward.
\end{proof}

\begin{df}\label{df:W-structure}
For any non-degenerate, quasi-homogeneous polynomial $W \in
\C[x_1, \dots$,$ x_N]$, we define a \emph{$W$-structure} on an
orbicurve $\cC$ to be  the data of an $N$-tuple $(\LL_1, \dots,
\LL_N)$ of orbifold line bundles on $\cC$ and
 isomorphisms $\varphi_j:W_j(\LL_1, \dots, \LL_N) \irightarrow K_{\cC,log}$ for every  $j\in \{1,\dots,s\}$, where
by $W_j(\LL_1, \dots, \LL_N)$ we mean the $j$th monomial
  of $W$ in $\LL_i$:
   $$W_j(\LL_1, \dots, \LL_N)= \LL^{\otimes b_{1,j}}_1 \otimes \dots
\otimes \LL^{\otimes b_{N,j}}_N.$$

Note that for each point $p \in \cC$,  an orbifold line bundle
$\LL$ on $\cC$ induces a representation $G_p \rTo \aut(\LL) $.
Moreover, a $W$-structure on $\cC$ will induce a representation
$\rho_{p}: G_p \rTo \grp \subseteq U(1)^N$. For all our
$W$-structures we require that this representation $\rho_p$ be
faithful at every point.
\end{df}

The moduli space $\WW_{g,k}$ of $W$-structures was constructed in
\cite{FJR2} along the lines of the Delign-Mumford moduli space. It
is a smooth non-effective orbifold. There is an obvious finite map
$$\st: \WW_{g,k}\rTo \MM_{g,k}.$$
It was shown in \cite{FJR2} that
$$\WW_{g,k}=\coprod_{(\gamma_1, \dots,
\gamma_k)}\WW_{g,k}(\gamma_1, \dots, \gamma_k),$$
    where $\WW_{g,k}(\gamma_1, \dots, \gamma_k)$ is the moduli
    space of $W$-structures with the orbifold structure at the $i$-th
    marked point decorated by $\gamma_i$.

It is convenient for us to work on the rigidified $W$-structure.
Let $p$ be a
    marked point. A {\em rigidification} (at $p$) is a homomorphism
    $$\psi: j^*_{p_{\pm}}(\LL_1\oplus \dots
    \oplus\LL_N){\rTo}  [\C^N/G_{p_{\pm}}].$$

A more geometric way to understand the rigidification is follows.
Suppose the fiber of the $W$-structure at the marked point is
$L_1\oplus L_2\dots\oplus L_N/G_p$. The rigidification can be
thought of as a $G_p$-equivariant map $\psi: \oplus_i L_i\rTo {\mathbb
C}^N$ commuting with the $W$-structure. For any element $g\in G_p$,
$g\psi$ is considered to be an equivalent rigidification.
Alternatively, $\psi$ is equivalent to a choice of basis $e_i\in
L_i$ such that $W_j(e_1, \dots, e_N)=dz/z$ and $g(e_1), \dots
g(e_N)$ is considered to be an equivalent choice. In particular, if
$L_i$ is fixed by $G_p$, 
there is a unique choice of basis $e_i$ with $W_j(e_i, \dots,
e_N)=dz/z$. On the other hand, the choice of basis on the
un-fixed variables is only unique up to the action of $G_p$. It
is clear that the group $G/G_p$ acts transitively on the set of
rigidifications within a single orbit. Let $\WW^{rig_p}(\Gamma)$ be
the moduli space of equivalence classes of $W$-structures with a
rigidification at $p$. $G/G_p$ acts on $\WW^{rig_p}(\Gamma)$ by
changing the rigidification. It is clear that
$$\WW^{rig_p}(\Gamma)/(G/G_p)=\WW(\Gamma).$$
We use $\MMr_{g,k}$ to denote the moduli space of rigidified
$W$-structures at all the marked points.

Forgetting the rigidification gives a morphism
$$\so:\MMr_{g,k} \rTo \WW_{g,k},$$
which we call \emph{softening}. The morphism $\so$ is quasi-finite
since one can always construct a rigidification of any
unrigidified $W$-structure,  and $G^k$ acts transitively, but
usually not effectively, on the set of rigidifications.  It easy
to see that $\so$ is proper and of Deligne-Mumford type.
Furthermore, $\so$ is representable, since the automorphisms of
any rigidified $W$-curve are a subgroup of the automorphisms of
the corresponding unrigidified $W$-curve.

 Now we describe the gluing. To simplify notation, we ignore the orbifold
    structures at other marked points and denote the type of the marked point $p_+, p_-$ being glued by $\gamma_+,
    \gamma_-$. Because the resulting orbicurve must be balanced,  we require that
    $\gamma_-=\gamma^{-1}_+$. Let
    $$\psi_{\pm}: j^*_{p_{\pm}}(\LL_1\oplus \dots
    \oplus\LL_N){\rTo}  [\C^N/G_{p_{\pm}}]$$
    be the rigidifications. Moreover, the residues at $p_+, p_-$
    are opposite to each other. The obvious  identification will
    not preserve the rigidifications. Here, we fix once and for all an
    isomorphism
    $$I: \C^N\rTo \C^N$$
    such that $W(I(x))=-W(x)$. $I$ can be explicitly constructed
    as follows. Suppose that $q_i=n_i/d$ for common denominator
    $d$. Choose $\xi^d=-1$. Then,
 \begin{equation}
 I(x_1, \dots,  x_N)=(\xi^{k_1}x_1, \dots, \xi^{k_N}x_N).
 \end{equation}
If $I'$ is another choice, then $I^{-1}I'\in G_W$.
    Furthermore, $I^2\in G_W$ as well. The identification by $I$
    induces a $W$-structure on the nodal orbifold Riemann surface with a
    rigidification at the nodal point. Forgetting the rigidification at the node yields the lifted gluing morphisms
\begin{equation}\label{eq:glueTreeW}
\widetilde{\rho}_{tree,\gamma}:\MMr_{g_1, k_1+1}(\gamma)\times
\MMr_{g_2,
    k_2+1}(\gamma^{-1})\rTo \WW_{g_1+g_2, k_1+k_2},
\end{equation}\glossary{rhotildetree@$\widetilde{\rho}_{tree}$ & The gluing-trees morphism for rigidified $W$-curves}

\begin{equation}\label{eq:glueLoopW}
\widetilde{\rho}_{loop,\gamma}: \MMr_{g, k+2}(\gamma,
\gamma^{-1})\rTo \WW_{g, k},
\end{equation}\glossary{rhotildeloop@$\widetilde{\rho}_{loop}$ & The gluing-loops morphism for rigidified $W$-curves}
and
$$\tilde{\rho}: \MMr_W(\hGamma)\rTo \WW(\Gamma),$$
where $\tilde{\rho}$ is defined by gluing the rigifications at the
extra tails and forgetting the rigidification at the node.

Next, we summarize some of the basic properties of $\MMr_{g,k}$.
They are  easy consequences of the existence of a universal family.
We leave the proof to the interested reader. As a warm-up, we start
with the Deligne-Mumford moduli space $\MM_{g,k}$.

\subsection{The structure of the Deligne-Mumford space $\MM_{g,k}$}\label{sec1}

Let $\M_{g,k}$ be the moduli space of Riemann surfaces of genus
$g$ and having $k$ marked points (assuming $k+2g\ge 3$). The
Deligne-Mumford compactification $\MM_{g,k}$ of $\M_{g,k}$ is the
set of all isomorphism classes of stable nodal Riemann surfaces in
$\MM_{g,k}$. The following is well-known and an easy consequence
of the existence of a universal family.

 The moduli space $\MM_{g,k}$ is a
stratified space, indexed by a set $\comb(g,k)$. Each element in
$\comb(g,k)$ is represented by a triple $(\Gamma,g_\nu, o)$
satisfying the following requirement:

\begin{itemize}

\item[(1)] $\Gamma$ is the dual graph of some nodal curve $\cC$
with each vertex assigned the genus $g_\nu$ of the component of
$\cC$ corresponding to $\nu$.

\item[(2)] $o$ is a map from $\{1,\dots, k\}$ to the set
$V(\Gamma)$.

\item[(3)] Let $k_\nu$ be the number of elements of the set
$T(\nu)$, then
$$
k_\nu+2g_\nu\ge 3\;\;\text{and}\;\;\sum_\nu g_\nu+\rk
H_1(\Gamma,\Q)=g.
$$

\end{itemize}

From the definition, it is easy to see that the number of
combinatorial types $(\Gamma,g_\nu, o)$ in $\comb(g,k)$ is finite.

Each stable curve $\cC\in \MM_{g,k}$ has a dual graph $\Gamma$ and
corresponds to a combinatorial type $(\Gamma, g_\nu,o)$.

There is a partial order $\succ$ on $\comb(g,k)$ defined as
follows. Let $(\Gamma, g_\nu, o)\in \comb(g,k)$. We consider
$(\Gamma_\nu, g_{\nu w}, o_\nu)\in \comb(g_\nu, k_\nu)$ for some
vertices $\nu=\nu_1,\dots, \nu_a$ of $\Gamma$. Now we replace the
vertex $\nu$ by the graph $\Gamma_\nu$, and join the edges
containing $\nu$ to $o_\nu(j)$; then we obtain a new graph
$\tilde{\Gamma}$. $\tilde{g}_\nu$ is determined by $g_\nu$ and
$g_{\nu w}$ in a natural way. If $o(j)\neq \nu_i, i=1,\dots,
a$,let $\tilde{o}(j)=o(j)$; if $o(j)=\nu_i$, then the $j$th marked
point corresponds to some $j'\in \{1,\dots,k_{\nu_i}\}$, and we
let $\tilde{o}(j)=o_{\nu_i}(j')$. So we get an element
$(\tilde{\Gamma},\tilde{g}_\nu,\tilde{o})\in \comb(g,k)$. If
$(\tilde{\Gamma},\tilde{g}_\nu,\tilde{o})$ is obtained in this way
from $(\Gamma, g_\nu, o)$, we define $(\Gamma, g_\nu,
o)\succ(\tilde{\Gamma},\tilde{g}_\nu,\tilde{o})$. There is a
minimal element in $\comb(g,k)$ which satisfies, for any vertex
$\nu$, $g_\nu=0$ and $k_\nu=3$.

\begin{df}[Cylindrical metric] We can view a disc neighborhood of a
nodal or marked point as a half-infinite  cylinder $S^1\times
[T_0, \infty)$ for some $T_0>0$ plus the infinity point $z$. We
say that a metric is cylindrical near a neighborhood of a nodal or
marked point if we take the flat metric $dw\otimes d\bar{w}$ on
the cylinder $S^1\times [T_0, \infty)$, where $w=s+i\theta$ is the
cylindric coordinate.
\end{df}

\begin{rem} Other uniform metrics near the marked points can also be chosen; for instance, one can
choose the smooth metric or hyperbolic metric. See the discussion
in \cite{CL}.
\end{rem}

   % We can globalize the above construction as follow.  $\O(z_i)$ is a line bundle with a section
    %vanishing at $z_i$. We can use the section to identify a disc neighborhood of $z_i$ in
    %$\Sigma$ to a disc neighborhood of fiber $\O(z_i)_{z_i}$. The advantage of this approach is that
   % $\O(z_i)$ varies smoothly with respect to $\Sigma$. Choose a metric on $\O(z_i)$ and let
   % $S(E_i)$ be the unit circle bundle and $D(E_i)$ be the unit disc bundle.
    %Each point $v\in D(E_i)-0$ can be expressed in cylindric
    %coordinate $(v/|v|, -log|v|)$, where $v/|v|\in S(E_i)$. Then,
    %$dv/v\otimes dv/v$ is a cylindric flat metric on fiber of $D(E_i)-0$
    %varying smoothly with respect to stable curves. The
    %restricting to $D(E_i)_{z_i}$ gives the cylindric metric on  a
    %disc neighborhood of $z_i$ on $\Sigma$.

 The moduli stack $\MM_{g,k}$ has the following
structure:

\begin{prop}\label{deli-mumf}

Let $\M_{g,k}(\Gamma)$ be the set of all stable curves having
combinatorial type $(\Gamma, g_\nu, o)$, then

\begin{itemize}

\item For each marked point $z_i$, there is an orbifold disc
bundle $D_i\rTo \MM_{g,k}$ such that the fiber at $\Sigma$
is a disc neighborhood of $z_i$. Furthermore, there is a smooth
family of metrics on $D_i$ parameterized by $\Sigma\in \MM_{g,k}$
such that in a uniformly small neighborhood of the zero point the
metric is a cylindrical metric.

\item $\MM_{g,k}$ is a compact complex orbifold of (complex)
dimension $3g-3+k$ which admits a stratification with finitely
many strata, and each stratum is of the form $\M_{g,k}(\Gamma)$.
There is a minimal stratum containing only one point $(\cC,\z)$.

\item There is a fiber bundle $\C_{g,k}(\Gamma)\rTo
\M_{g,k}(\Gamma)$ which has the following property. For each
$x=(\cC_x, \z_x)\in \M_{g,k}(\Gamma)$, there is a neighborhood of
$x$ in $\M_{g,k}(\Gamma)$ of the form $U_x=V_x/G_x$, where
$G_x=\aut(\cC_x,\z_x)$ such that the inverse image of $U_x$ in
$\U_{g,k}(\Gamma)$ is diffeomorphic to $V_x\times \cC_x/G_x$.
There is a complex structure on each fiber such that the fiber of
$y=(\cC_y,\z_y)$ is identified with $(\cC_y,\z_y)$ itself,
together with a K\"ahler metric $\mu_y$ which is cylindrical in a
neighborhood of the nodal points and varies smoothly in $y$.

\item $\M_{g,k}(\Gamma')$ is contained in the compactification of
$\M_{g,k}(\Gamma)$ in $\MM_{g,k}$ only if $(\Gamma, g_\nu,o)\succ
(\Gamma',g'_\nu,o')$.

\item Different strata are patched together in a way which is
described in the following local model of a neighborhood of a
stable curve in $\MM_{g,k}$. A neighborhood of $x=(\cC,\z)$ in
$\MM_{g,k}$ is parametrized by
$$
\frac{V_x\times (\oplus_z ([T_0,\infty]\times
S^1)_z)}{\aut(\cC,\z) },
$$
where $z=\pi_\nu(z_\nu)=\pi_w(z_w)$ (here it may happen that
$\nu=w$) runs over all nodal points of $\cC$, and $T_0$ is a
positive number. Each $y\in V_x$ represents a stable curve
$(\cC_y, \z_y)$ homeomorphic to $(\cC, \z)$ with a K\"ahler metric
$\mu_y$ which is cylindrical in a neighborhood of the nodal
points. Given $y\in V_x$, a stable curve
$(\cC_{y,\zeta},\z_{y,\zeta})$ is obtained as follows. Each
component $\cC_\nu$ of $\cC_y$ is given a K\"ahler metric $\mu_y$
which is cylindrical in a neighborhood of the nodal points. For
each $\zeta_z=(s_z,\theta_z)\in [T_0,\infty)\times S^1$, there is
a biholomorphic map $\Psi_{\zeta_z}: ([T_0,\infty)\times
S^1)_{z_\nu}\rTo ([T_0,\infty)\times S^1)_{z_w}$ defined by
$(s_w,\theta_w)=(s_z,\theta_z)-(s_\nu,\theta_\nu)$, where
$([T_0,\infty)\times S^1)_{z_\nu}$ is the cylindrical neighborhood
of $z_\nu$ in the component $\cC_\nu$. Let $s_z=2T$; then for
sufficiently large $T$, the map $\Psi_{\xi_z}$ is a biholomorphism
between $[\frac{1}{2}T, \frac{3}{2}T]\times S^1$ and
$[\frac{1}{2}T, \frac{3}{2}T]\times S^1$. We glue $\cC_\nu$ and
$\cC_w$ by this biholomorphism. If $\zeta_z=\infty$, we do not
make any changes. 

Thus we obtain
 $(\cC_{y,\zeta},\z_{y,\zeta})$. Moreover, there is a K\"ahler
 metric $\mu_{y,\zeta}$ on $\cC_{y,\zeta}$ which coincides with
 the K\"ahler metric $\mu_y$ on $\cC_y$ outside a neighborhood
 of the nodal points, and varies smoothly in $\zeta$. Each
 $\gamma\in \aut(\cC,\z)$ takes $(\cC_y,\z_y)$ to
 $(\cC_{\gamma y}, \z_{\gamma y})$ isometrically, so it acts on
 $\oplus_z([T_0,\infty]\times S^1)_{z}$. $\gamma$
 induces an isomorphism between $(\cC_{y,\zeta},\z_{y,\zeta})$
 and $(\cC_{\gamma(y,\zeta)},\z_{\gamma(y,\zeta)})
$, which is also an isometry.

\end{itemize}
\end{prop}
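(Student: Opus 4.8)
The statement to be proved is the ``structure of $\MM_{g,k}$'' proposition: the existence of the orbifold disc bundles $D_i$ with their uniform cylindrical metrics, the compactness and stratified structure of $\MM_{g,k}$, the fiber-bundle structure of the universal curve over each stratum with its smoothly-varying cylindrical K\"ahler metrics, the incompatibility criterion for closures of strata, and the local gluing model near a stable curve. As the text itself indicates (``They are easy consequences of the existence of a universal family''), my plan is \emph{not} to reprove Deligne--Mumford from scratch but to assemble the statement from the universal family $\pi : \cCb_{g,k} \to \MM_{g,k}$ together with the standard deformation theory of nodal curves. The one genuinely new ingredient here is the uniformity and smoothness of the \emph{cylindrical} metrics, so that is where I will put the real work.

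First I would recall the local structure of $\MM_{g,k}$ near a point $x = (\cC,\z)$: Kuranishi/deformation theory gives a versal deformation whose base is (an open neighbourhood of $0$ in) $\Ext^1(\Omega_\cC(\z),\O_\cC)$, on which $G_x = \aut(\cC,\z)$ acts, and the nodal directions are coordinatized by the smoothing parameters, one per node $z$, valued in a disc. Re-parametrizing each smoothing disc by $q_z = e^{-(s_z + i\theta_z)}$ with $s_z = 2T \to \infty$ gives exactly the cylindrical coordinates $([T_0,\infty]\times S^1)_z$ of the statement, and identifies the boundary stratum $q_z = 0$ (i.e. $\zeta_z = \infty$) with leaving the node unsmoothed. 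This yields simultaneously the local chart $V_x \times (\oplus_z([T_0,\infty]\times S^1)_z)/G_x$, the stratification indexed by $\comb(g,k)$ (a stratum being the locus where a prescribed subset of nodes is unsmoothed, with the $\succ$ relation recording which degenerations can be approached), the finiteness of strata (finiteness of $\comb(g,k)$, already noted), and the closure criterion $\M_{g,k}(\Gamma') \subset \overline{\M_{g,k}(\Gamma)} \Rightarrow \Gamma \succ \Gamma'$, since smoothing can only merge components, never split them. Compactness and $\dim_\C \MM_{g,k} = 3g-3+k$ are quoted from Deligne--Mumford.

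The metric statements I would handle as follows. On each fiber $\cC_y$ over $V_x$ choose a K\"ahler metric $\mu_y$ that is standard-cylindrical $ds^2 + d\theta^2$ in a fixed neighbourhood of each node and of each marked point, and that varies smoothly with $y$; this is possible because the nodal/marked-point neighbourhoods are canonically trivialized by the chosen coordinates, so one can glue a fixed flat cylinder metric to any smoothly-varying metric on the compact complement via a partition of unity. For the smoothed curve $\cC_{y,\zeta}$: where $\Psi_{\zeta_z}$ identifies the two half-cylinders by the \emph{translation} $(s_w,\theta_w) = (s_z,\theta_z) - (s_\nu,\theta_\nu)$, the flat cylinder metric is translation-invariant, so it descends without modification to the glued neck; away from the necks one keeps $\mu_y$; the only interpolation needed is at the two collars $[\tfrac12 T,\tfrac32 T]\times S^1$ where the biholomorphism $\Psi_{\zeta_z}$ is used, and there both metrics are already the flat one, so in fact $\mu_{y,\zeta}$ can be taken to agree with $\mu_y$ outside a neighbourhood of the nodes exactly as claimed, and it visibly varies smoothly in $(y,\zeta)$ (the gluing length $2T = s_z$ is a smooth parameter). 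The disc bundle $D_i \to \MM_{g,k}$ is then the subbundle of $\cCb_{g,k}$ cut out by the marked-point cylindrical coordinate $\{s \ge T_0\}\cup\{z_i\}$, glued over the charts $U_x$; the metrics assemble because the transition functions are isometries of the flat cylinder. Finally the $G_x$-equivariance in the last bullet — $\gamma$ carrying $(\cC_y,\z_y)$ to $(\cC_{\gamma y},\z_{\gamma y})$ isometrically and acting compatibly on $\oplus_z([T_0,\infty]\times S^1)_z$ and on the smoothed curves — follows because $G_x$ acts on the versal base preserving the nodal coordinates up to the induced permutation/rotation of nodes, and our metric construction was made $G_x$-equivariantly (average over $G_x$ if necessary, which preserves being flat-cylindrical near nodes).

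The main obstacle is none of the topology — that is all packaged in the cited existence of the universal family — but rather arranging the cylindrical metrics to be \emph{simultaneously} (i) exactly flat on a uniform neighbourhood of every node and marked point, (ii) smooth in all parameters $y$ and $\zeta$ including as $\zeta_z \to \infty$ (the degenerate limit), and (iii) compatible with gluing $\Psi_{\zeta_z}$ and with the $G_x$-action, across overlapping Kuranishi charts so that the $D_i$ and the family metric $\mu_{y,\zeta}$ are globally well defined on the orbifold. Concretely the care is in the collar $[\tfrac12T,\tfrac32T]\times S^1$: one must check that the plumbing coordinate change is the flat translation up to terms that die as $T\to\infty$, so that no bump-function interpolation spoils smoothness at the boundary stratum; this is the standard plumbing-construction estimate and I would cite it (e.g. Wolpert, or \cite{CL} as the text does) rather than redo it. Everything else is bookkeeping with dual graphs and the partial order $\succ$.
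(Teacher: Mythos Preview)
Your proposal is correct and aligned with the paper's intent: the paper does not actually prove this proposition, stating only that it is ``well-known and an easy consequence of the existence of a universal family'' and explicitly leaving the proof to the reader. Your sketch fills in precisely the details the paper omits---deformation theory for the local chart and stratification, plumbing coordinates reparametrized cylindrically for the gluing model, and the flat-cylinder metric glued via translation-invariance (with the citation to \cite{CL} that the paper itself makes)---so there is nothing to compare against beyond confirming that your argument is the standard one the authors had in mind.
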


%%broken title
\subsection{The structure of the moduli space $\MMr_{g,k}%(\bgamma)
$}

The structure of $\MMr_{g,k, W}$ can be described in the same way as
that of $\WW_{g,k}$. However, there are some new ingredients from
the rigidified $W$-structure.

\begin{rem}[Decorated dual graph] As shown in \cite{FJR2},
the dual graph describing a stratum of $\MMr_{g,k}$ has an
additional decoration $\gamma\in G$ at each tail and a pair of
decorations $(\gamma, \gamma^{-1})$ at each internal edge. We use
the same $\Gamma$ to denote the original dual graph together with
the additional decoration, and $\MMr(\Gamma)$ to denote the space of
rigidified $W$-structures  whose combinatorial type is described by
$\Gamma$. Whether $\Gamma$ contains the additional decoration should
be clear from the context.
\end{rem}

\begin{rem}[Canonical trivialization over the cylindrical coordinate]
  Given a marked point $z_j$, a rigidification defines a basis $e_i$ of $\LL_i$ corresponding to
  the fixed variables. Hence, it defines a canonical trivialization
  of $\LL_i|_{z_j}$. We can extend this canonical trivialization
  over the cylindrical coordinate at $z_j$. For $\LL_i$ corresponding to a variable moved by the local group,
  $\LL_i$ has nontrivial orbifold structure at $z_j$. A
  rigidification  defines a canonical trivialization over the cylindrical coordinate in the orbifold sense.
  \end{rem}

  \begin{rem}[Gluing rigidifed $W$-structures]
  Suppose that $\Sigma=\Sigma_1\bigvee\Sigma_2$ is a nodal curve
  obtained by gluing the marked point $p\in \Sigma_1, q\in \Sigma_2$.
  Given rigidified $W$-structures $(\LL^1_i, \psi^1_j)$,$ (\LL^2_i,
  \psi^2_j)$ over $\Sigma_1, \Sigma_2$, there is a canonical way
  to glue them to get a rigidified $W$-structure on $\Sigma$ with an
  additional rigidification at the node. The rigidified $W$-structure
  on $\Sigma$ is obtained by forgetting the rigidification at the
  node. Alternatively, a rigidified $W$-structure of $\Sigma$ is
  obtained by an  isomorphism $I: \LL^1_i|_p\cong \LL^2_i|_q$ preserving the $W$-structure while
  forgetting the rigidifications at $p,q$.
  \end{rem}

\begin{prop}\label{deli-mumf}

\begin{itemize}

\item $\MMr_{g,k}$ is a compact complex orbifold which is a
finite cover of $\WW_{g,k}$ and admits a stratification with
finitely many strata, and each stratum is of the form
$\MMr_{g,k}(\Gamma)$.

\item There is a trivialization of the restriction of $\LL_i$ to
each fiber of $D_i$. Furthermore, the trivialization can be chosen
smoothly depending on $\Sigma$.

\item There are  two  bundles (universal families). The first one
is $\C(\Gamma)\rTo \MMr(\Gamma),$ which is the pull-back of
the  universal family of $\MM(\Gamma)$. Then there is a collection
of (orbifold) universal line bundles
$\overline{\U}_i(\Gamma)\rTo \C(\Gamma),$ whose fiber is
$\LL_i$.

\item $\MMr(\Gamma')$ is contained in the compactification of
$\MMr(\Gamma)$ in $\MMr_{g,k}$ only if $\Gamma\succ \Gamma'$.

\item Different strata are patched together in a way which is
described in the following local model of a neighborhood of a
nodal rigidified $W$-structure in $\MMr_{g,k}$. Let $x=(\cC, \z)$
be the underlying nodal stable Riemann surface. Recall that a
neighborhood of $x=(\cC,\z)$ in $\MM_{g,k}$ is parameterized by
$$
\frac{V_x\times (\oplus_z ([T_0,\infty]\times
S^1)_z)}{\aut(\cC,\z) },
$$
where $z=\pi_\nu(z_\nu)=\pi_w(z_w)$. An element $(\cC_y,\z_y)$ in
a neighborhood of $(\cC_x, \z_x)$ is obtained by gluing
cylindricals neighborhoods of corresponding marked points on each
component. Now, the gluing of $\LL_i$ is obtained as follows.
Recall that a rigidified $W$-structure on $\cC_x$ is obtained by
identifying the fiber of $\LL_i$ at the corresponding marked point
and forgetting the corresponding rigidifications. Then, the
canonical trivialization of $\LL_i$ near the marked point extends
the identification at the marked point to the cylindrical
neighborhoods which glue to a rigidified $W$-structure on $(\cC_y,
\z_y)$.

\end{itemize}
\end{prop}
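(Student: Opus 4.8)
The plan is to deduce all five assertions from the construction in \cite{FJR1} of the smooth proper Deligne--Mumford stack $\MM_{g,k,W}$ together with its universal $W$-structure, from the structure of $\MM_{g,k}$ recalled in Section \ref{sec1}, and from the properties of the softening morphism $\so\colon\MMr_{g,k,W}\to\MM_{g,k,W}$ established above (representable, proper, quasi-finite, with $G^k$ acting transitively on its fibers). Concretely, each statement is transported from $\MM_{g,k,W}$, and ultimately from $\MM_{g,k}$, through the finite morphisms $\so$ and $\st$, while the extra data --- the rigidification and the orbifold line bundles $\LL_i$ --- are described fiberwise.

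For the first and fourth bullets I would argue as follows. A rigidification at a marked point $p$ is unique up to the finite group $G/G_p$, so the fibers of $\so$ are finite; since $\so$ is representable and proper, $\MMr_{g,k,W}$ is again a compact complex orbifold, finite over $\MM_{g,k,W}$ and hence over $\MM_{g,k}$ via $\st\circ\so$. Its stratification is the one pulled back from $\MM_{g,k,W}$, refined by the locally constant $G$-decorations on tails and edges; because $\comb(g,k)$ is finite and $G$ is finite there are only finitely many decorated graphs $\Gamma$, and each stratum has the asserted form $\MMr_{g,k}(\Gamma)$. For the closure relation, if $\MMr(\Gamma')$ meets the closure of $\MMr(\Gamma)$, then applying the finite (hence closed) map $\st\circ\so$ and invoking the corresponding statement for $\MM_{g,k}$ forces $(\Gamma,g_\nu,o)\succ(\Gamma',g'_\nu,o')$ on underlying graphs; continuity of the monodromy representations $\rho_p$ along a degenerating family then shows that the decorations of $\Gamma'$ are exactly those induced from $\Gamma$ under the contraction, which is precisely the partial order $\Gamma\succ\Gamma'$ on decorated graphs.

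For the second and third bullets: the universal curve $\cC(\Gamma)\to\MMr(\Gamma)$ is the pullback along $\st$ of the universal curve over $\MM_{g,k}(\Gamma)$, and the universal $W$-structure of \cite{FJR1} provides the orbifold line bundles $\overline{\U}_i(\Gamma)\to\cC(\Gamma)$ with fiber $\LL_i$. The trivialization of $\LL_i$ over the disc bundle $D_i$ I would obtain pointwise from the rigidification: at each $\Sigma$ a rigidification determines a basis $e_i$ of the fiber of $\LL_i$ over the $i$th marked point --- uniquely in the Ramond case, and in the orbifold sense up to $G_p$ in the Neveu--Schwarz case --- and the canonical trivialization over the cylindrical coordinate extends this over the whole fiber of $D_i$. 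Since the cylindrical coordinate varies smoothly in $\Sigma$ by the structure proposition for $\MM_{g,k}$ and the rigidification is part of the moduli data, so does this trivialization.

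The real work is the fifth bullet, and this is where I expect the main obstacle. The goal is to upgrade the local model of $\MM_{g,k}$ near $x=(\cC,\z)$, namely $\bigl(V_x\times\bigoplus_z([T_0,\infty]\times S^1)_z\bigr)/\aut(\cC,\z)$, to a local model for $\MMr_{g,k,W}$ by describing how the $\LL_i$ glue as the smoothing parameters $\zeta_z$ vary. Over $V_x$ one already has the universal $W$-structure from \cite{FJR1}, so the new content is the family version of the gluing described above: on each component the bundle $\LL_i$ carries the canonical trivialization near the half-node coming from the rigidification there, and for finite $\zeta_z=(s_z,\theta_z)$ the biholomorphism $\Psi_{\zeta_z}$ of cylindrical neighborhoods is covered, in these trivializations, by the fixed isomorphism $I$ with $W(I(x))=-W(x)$ that matches the opposite residues, while for $\zeta_z=\infty$ nothing is glued; this produces a rigidified $W$-structure on $\cC_{y,\zeta}$ after forgetting the rigidification at the node. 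I would then check independence of the choice of $I$ (another choice differs by an element of $G_W$, which only alters the rigidification at the node, which is forgotten), $\aut(\cC,\z)$-equivariance (automorphisms act isometrically on the cylindrical neighborhoods and compatibly with the $W$-structure), and holomorphic dependence on $(y,\zeta)$. The delicate point --- the step requiring the most care --- is the Neveu--Schwarz case, where $\LL_i$ has nontrivial isotropy at the half-nodes and the canonical trivialization exists only in the orbifold sense, up to the local group $G_{p_\pm}$: here the gluing must be carried out $G_{p_\pm}$-equivariantly, and one uses the balancing condition $\gamma_-=\gamma_+^{-1}$ precisely to see that the orbifold structures on the two sides match across the smoothing, so that the glued object is genuinely a rigidified $W$-structure on $\cC_{y,\zeta}$. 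Granting this, passing to the quotient by $\aut(\cC,\z)$ yields the asserted neighborhood of $(\cC,\z)$ in $\MMr_{g,k,W}$.
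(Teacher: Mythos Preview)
The paper does not actually prove this proposition. Immediately before stating it the authors write: ``They are easy consequences of the existence of a universal family. We leave the proof to the interested readers.'' So there is nothing to compare against; your proposal is precisely the kind of argument the authors are inviting the reader to supply, and its overall shape --- transporting the structure of $\MM_{g,k}$ through the finite maps $\st$ and $\so$, and handling the line bundles via rigidifications and canonical trivializations --- is what they have in mind.

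One small correction in your treatment of the fifth bullet: you invoke the fixed isomorphism $I$ with $W(I(x))=-W(x)$ to cover the gluing biholomorphism $\Psi_{\zeta_z}$. That map $I$ appears earlier in the paper in the construction of the lifted gluing morphisms $\tilde{\rho}$, which take two \emph{separate} rigidified $W$-curves and produce a nodal one by matching their rigidifications at the marked points being glued. In the smoothing described here the situation is different: you start from a nodal rigidified $W$-structure on $\cC_x$, and part of that data is already an identification of the fibers of $\LL_i$ at each node (this is exactly what the proposition says: ``a rigidified $W$-structure on $\cC_x$ is obtained by identifying the fiber of $\LL_i$ at the corresponding marked point and forgetting the corresponding rigidifications''). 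The content of the fifth bullet is simply that the canonical trivializations on the two cylindrical ends extend this given pointwise identification to an identification over the overlap region, yielding the glued $W$-structure on $\cC_{y,\zeta}$. No further choice of $I$ enters, and consequently there is no ambiguity to argue away. With this adjustment your argument goes through.
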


\section{The Witten equation}\label{sec:witten}

We have laid down the algebraic-geometric foundations in the last
     several sections. Now we turn our attention to the analytic
     aspects of our theory. Even though we formulate our axioms in
     algebraic-geometric terms, the analytic aspect of the theory is at
     the heart of our construction.
     It remains a challenging problem to have a completely
     algebraic treatment of our theory.

     Roughly speaking, the moduli space of $W$-structures
     $\WW_{g,k}$ plays the role of Deligne-Mumford
     moduli space in ordinary Gromov-Witten theory. To construct
     the moduli space of stable maps, we use the solution of the
     Cauchy-Riemann equation $\bar{\partial}_J f=0,$ where  $f$ is
     a map from a Riemann surface. In our theory, we replace $f$
     by a section $s_i$ of $\LL_i$ or equivalently $|\LL_i|$. The
     replacement of the Cauchy-Riemann equation is the Witten equation,
     which we will describe in this section.

\subsection{The perturbed Witten equation}
\subsubsection{Cylindrical metric and Witten-equation}
In this section, we fix a rigidified $W$-structure \\
$(\LL_1, \dots, \LL_N,\varphi_1, \dots,
\varphi_s,\dots,\psi_1,\dots,\psi_k)$. Let $\gamma_l\in G$ be
the group element generating the orbifold structure at the marked
point $z_l$. Recall that

\begin{align*}
W_i(|\LL_1|),\dots,|\LL_N|)\cong K_{log}\otimes
\O(-\sum^k_{l=1}\sum^N_{j=1}b_{i j}\Theta_j^{\gamma_l}z_l).
\end{align*}

Let $D=-\sum^k_{l=1}\sum^N_{j=1}b_{i
j}(\Theta^{\gamma_l}_j-q_j)z_l$ be a divisor; then there is a
canonical meromorphic section $s_0$ with divisor $D$. This section
provides the identification
$$
K_\Sigma\otimes\O(D)\stackrel{s_0^{-1}}{\cong} K_\Sigma(D),
$$
where $K_\Sigma(D)$ is the sheaf of local possibly meromorphic
sections of $K_{\Sigma}$ with zero or pole at $D$.

A rigidification $\psi_l$ at $z_l$ actually gives a local
trivialization of the orbifold line bundle $\oplus \LL_i$, or
equivalently determines a   basis set $\eta_1,\dots, \eta_N$.
Taking a good coordinate system $\{z\}$ near $z_l$ , we can assume
that the holomorphic basis $e_i$ of $|\LL_i|$ corresponding to
$\eta_i$ satisfies the relation providing by $\varphi_i$,
$$
W_i(e_1,\dots, e_N)=\frac{dz}{z}z^{\sum^N_{j=1}b_{i
j}\Theta^{\gamma_l}_j}.
$$

The relation between $e_i$ and $\eta_i$ will be fully discussed
when we define the Witten equation on orbicurves.

Choose the cylindrical metric on the line bundle $K_\Sigma(D)$
such that $|\frac{dz}{z}|=1$. The above identity together with the
nondegeneracy of $W$ imply that the modulus
$|e_i|=|z|^{\Theta^{\gamma_l}_j}$ near marked points. In
particular, if $x_i$ is a fixed variable, then $|e_i|=1$. Let
$u_j=\tilde{u}_j e_j$; then it is easy to see that
\begin{equation}
\overline{\frac{\partial W}{\partial u_j}}\in
\overline{K}_{log}\otimes |\overline{\LL_j}|^{-1}.
\end{equation}
The bundle $|\overline{\LL_j}|^{-1}$ is isomorphic to $ |\LL_j|$
topologically. But there is no canonical isomorphism. However, we
can choose an isomorphism compatible with the metric. It induces
an isomorphism $I_1:\Omega(\Sigma, |\overline{\LL_j}|^{-1}\otimes
\Lambda^{0,1})\rTo \Omega(\Sigma, |\LL_j|\otimes
\Lambda^{0,1})$ such that for a section $v=\tilde{v}e'_j$, we have
$$
I_1(\tilde{v}\bar{e}'_j\otimes
d\bar{z})=\tilde{v}|e'_j|^2e_j\otimes d\bar{z},
$$
where $e'_j$ is the holomorphic basis of $|\LL_j|^{-1}$ such that
$e'_j\cdot e_j=1$.

It is obvious that $I_1$ is  a metric-preserving isomorphism
between the corresponding two spaces and that it is independent of
the choice of the local charts.

Since $I_1(\overline{\frac{\partial W}{\partial u_j}})\in
\overline{K}_{log}\otimes |\LL_j|$, the so-called Witten equation
is defined below as the first order system for the sections
$u_1,\dots,u_N$:
$$
\bar{\partial}u_j+I_1\left(\overline{\frac{\partial W}{\partial
u_j}}\right)=0, \quad\mbox{ for all } j=1,\dots, N.
$$

\begin{rem} In this paper, we choose the cylindrical metric near
the marked points; in [FJR] we used the smooth metric near marked
points, i.e., let $|dz|=1$ near marked points. Different choices
give different equations near marked points. This will produce
different theories. We will discuss the relations between the
theories in future work.

\end{rem}

Let $u_i=\tilde{u}_i e_i, \forall i$. Then near a marked point,
the Witten equation can be written locally as
\begin{equation}\label{lgw-equ}
\frac{\bpat \tilde{u}_i}{\pat\bar{z}}+\overline{\sum_j \frac{\pat
W_j(\tilde{u}_1,\dots, \tilde{u}_N)}{\pat
\tilde{u}_i}z^{\sum_{s=1}^{N}b_{js}(\Theta^{\gamma}_s-q_s)}}|z|^{-2\Theta_i^{\gamma}}=0,
\end{equation}
for $i=1,\dots,N$.

\

{\bf Note:} For simplicity, we often drop "$\;\tilde{ }\;$" from
$\tilde{u}_i$ when discussing the local equation near the marked
point. The reader can easily distinguish from the context when we
are talking about the sections or the coordinate functions.

\

Near $z_l$,  $W$ has the decomposition $W=W_{\gamma_l}+W_N$.
Without loss of generality, we can assume the fixed point set
$\C^N_{\gamma_l}=\{(x_1, \dots, x_{N_l}, 0, \dots, 0)\}$. Then,
the line bundles $\LL_i$ for $i\leq N_l$ have no orbifold
structure at $z_l$ and $\Theta^{\gamma_l}_i=0$. Now we drop the
subscript $l$  without any confusion. (\ref{lgw-equ}) can be
rewritten as
\begin{equation}\label{lgw-r-equ}
\bar{z}\frac{\bpat u_i}{\pat \bar{z}}+\overline{\frac{\pat
W_{\gamma}}{\pat u_i}}+\overline{\sum_{W_j: W_j \text{in} W_N }
\frac{\pat W_j(u_1,\dots, u_N)}{\pat
u_i}z^{\sum_{s=1}^{N}b_{js}(\Theta^{\gamma}_s)}}=0,
\end{equation}
for any $0\le i \le N_l$, and
\begin{equation}\label{lgw-n-equ}
\bar{z}\frac{\bpat u_i}{\pat \bar{z}}+(\overline{\sum_{W_j: W_j
\text{in} W_N } \frac{\pat W_j(u_1,\dots, u_N)}{\pat
u_i}z^{\sum_{s=1}^{N}b_{js}(\Theta^{\gamma}_s)}})|z|^{-2\Theta^{\gamma}_i}=0,
\end{equation}
for any $i\ge N_l$.

\begin{ex}[$D_n$-equation]\

In this case, the quasi-homogeneous polynomial is
$W(x,y)=x^n+xy^2$, so $W_1=x^n, W_2=xy^2$, and $b_{11}=n,
b_{12}=0; b_{21}=1, b_{22}=2$. The fractional degree is
$(q_1,q_2)=(\frac{1}{n},\frac{n-1}{2n})$. $G$ is generated by
$J^{-1}=(e^{\frac{2\pi i}{n}}, e^{\frac{2\pi (n-1)i}{2n}})$. There
are some cases near a marked point:

\begin{enumerate}
\item[{Case 1.}] $\gamma=(1,1)$. We have
$\Theta_1^{\gamma}=\Theta_2^{\gamma}=0$ and the equation becomes
\begin{equation}
\left\{
\begin{array}{l}
\bar{z}\frac{\bpat u_1}{\pat
\bar{z}}+\overline{nu_1^{n-1}}+\overline{u_2^2}=0\\
 \\
\bar{z}\frac{\bpat u_2}{\pat \bar{z}}+\overline{2u_1u_2} =0.
\end{array}
\right.
\end{equation}
\item[{Case 2.}] $\gamma=J^{-n}$ and $n$ is even. Then
$\Theta_1^{\gamma}=0$ and $\Theta_2^{\gamma}=\frac{1}{2}>0$.  In
this case the Witten equation is
\begin{equation}
\left\{
\begin{array}{l}
\bar{z}\frac{\bpat u_1}{\pat
\bar{z}}+\overline{nu_1^{n-1}}+\overline{u_2^2z}=0.\\
 \\
\bar{z}\frac{\bpat u_2}{\pat \bar{z}}+\overline{2u_1u_2
z}|z|^{-1}=0.
\end{array}
\right.
\end{equation}

\item[{Case 3.}] $\gamma\neq (1,1), J^{-n}$, where $n$ is odd.
This is the  narrow-case and $\Theta_1(\gamma)>0,
\Theta_2^{\gamma}>0$, the equation is
\begin{equation}
\left\{
\begin{array}{l}
\bar{z}\frac{\bpat u_1}{\pat
\bar{z}}+(\overline{nu_1^{n-1}z^{n\Theta_1^{\gamma}}}+\overline{u_2^2z^{\Theta_1^{\gamma}
+2\Theta_2^{\gamma}}})|z|^{-2\Theta_1^{\gamma}}=0\\
 \\
\bar{z}\frac{\bpat u_2}{\pat \bar{z}}+\overline{2u_1u_2
z^{\Theta_1^{\gamma}
+2\Theta_2^{\gamma}}}|z|^{-2\Theta_2^{\gamma}}=0.
\end{array}
\right.
\end{equation}
\end{enumerate}
\end{ex}

\subsubsection{The perturbed Witten equation}

When a variable is fixed by the local group at a point, we call the variable a \emph{broad variable} and we call the point a \emph{broad point}.  

When there are broad points, the Witten Lemma fails and
there are nontrivial solutions to  the Witten equation. Then, we
have to study the moduli space of solutions of the  Witten
equation. The first step is to study the asymptotic behavior of
the solution at the marked points. The moduli problem makes sense
only if the solution has nice asymptotic behavior. From the
analytic point of view, the Witten equation is highly degenerate
in the same way that zero is a highly degenerate critical point of
$W$. There is a sophisticated $L^2$-moduli space theory by
Morgan-Mrowka-Ruberman \cite{MMR} and Taubes \cite{Ta} in the
literature
 to deal with the situation being considered. However, we choose to perturb the
 equation,
 which simplifies the analysis immensely. An unexpected bonus of our
 approach is the appearance of vanishing cycles. Of course, we have to pay a price by
 studying the dependence of our invariants  on the perturbation.

  Our strategy is to modify the
Witten equation for the broad variable without changing the
equation for the narrow variable. We only need to choose linear
perturbations.

\begin{df} Let $W$ be a quasi-homogeneous polynomial. A linear polynomial $W_0=\sum_i b_i x_i$ is
called $W$-regular if $W+W_0$ has only nondegenerate critical
points. Namely, $W+W_0$ is a holomorphic Morse function. $W_0$ is
called strongly $W$-regular if $W_0$ is regular and for any two
different critical points $\kappa^i\neq \kappa^j$,
$\mbox{Im}(W+W_0)(\kappa^i)\neq \mbox{Im}(W+W_0)(\kappa^j)$.
\end{df}

\begin{thm}\label{thm-para-chamber} Given a quasi-homogeneous polynomial $W$. The set of $W$-regular $W_0$ forms a non-empty path connected open
   submanifold of all $W_0$. The subset of $W$-regular $W_0$ where two or more critical points have
   the same $\mbox{Im}(W+W_0)$ is a union of real hypersurfaces separating the set of $W$-regular $W_0$
   into a system of chambers.
\end{thm}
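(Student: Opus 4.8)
The plan is to recognize the critical points of $W+W_0$ as the fibers of the gradient map $\nabla W=(\partial_1 W,\dots,\partial_N W)\colon\C^N\to\C^N$ and to deduce both assertions from the structure of that map. Write $W_0=\sum_i b_ix_i$, so the space of all $W_0$ is $\C^N$ with coordinate $\bb=(b_1,\dots,b_N)$. A point $\kappa\in\C^N$ is a critical point of $W+W_0$ exactly when $\nabla W(\kappa)=-\bb$, and it is nondegenerate exactly when $\det(\operatorname{Hess}W)(\kappa)\ne 0$, i.e.\ when $\kappa$ is not a ramification point of $\nabla W$. Since $W$ is quasi-homogeneous with an isolated critical point at the origin, its Jacobian ideal is $\mathfrak m$-primary, and the standard graded-Nakayama argument shows that $\nabla W$ is a finite flat morphism of degree $\mu:=\dim_\C\C[x_1,\dots,x_N]/(\partial_1W,\dots,\partial_N W)$ (the Milnor number), $\C^*$-equivariant for the weighted actions with positive weights $q_i$ on the source and $1-q_i$ on the target. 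By generic smoothness in characteristic zero (and purity of the branch locus) the branch locus $B\subset\C^N$ of $\nabla W$ is a proper algebraic hypersurface (empty when $\mu=1$), and $\nabla W$ restricts to a $\mu$-sheeted covering over $\C^N\setminus B$.

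For the first assertion, $W_0$ is $W$-regular precisely when no point of the fiber $(\nabla W)^{-1}(-\bb)$ ramifies, i.e.\ precisely when $-\bb\notin B$; hence the set of $W$-regular $W_0$ equals $U:=\C^N\setminus(-B)$, the complement of a complex hypersurface. First I would note that $U$ is open and non-empty; being the complement of a set of real codimension $\ge 2$ in the connected manifold $\C^N$ it is connected, and any connected open subset of $\C^N\cong\R^{2N}$ is a path-connected open submanifold. Over $U$ the critical points of $W+W_0$ form, by base change along $\bb\mapsto-\bb$, a $\mu$-sheeted covering $\pi\colon\widetilde U\to U$; in particular every $W$-regular $W+W_0$ has exactly $\mu$ critical points, all nondegenerate.

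For the second assertion I would introduce the holomorphic critical-value function $v\colon\widetilde U\to\C$, $v(\bb,\kappa)=(W+W_0)(\kappa)$. Working locally on $U$, choose holomorphic sections $\kappa^1(\bb),\dots,\kappa^\mu(\bb)$ of $\pi$ with critical values $v_l(\bb)=(W+W_0)(\kappa^l(\bb))$. Differentiating and using the critical-point equation $\partial_mW(\kappa^l)=-b_m$ to cancel the interior terms yields
$$\frac{\partial v_l}{\partial b_k}(\bb)=\kappa^l_k(\bb),\qquad\text{i.e.}\qquad \nabla_{\bb}v_l=\kappa^l.$$
Consequently, for $i\ne j$ the holomorphic function $v_i-v_j$ has \emph{nowhere-vanishing} differential on $U$ (because $\kappa^i(\bb)\ne\kappa^j(\bb)$), hence so does the real function $\mathrm{Im}(v_i-v_j)$ by the Cauchy--Riemann equations; therefore $\{\mathrm{Im}(v_i-v_j)=0\}$ is a smooth real hypersurface in $U$. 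These local hypersurfaces are permuted by the monodromy of $\pi$ and glue to a well-defined, locally finite family of real-analytic hypersurfaces whose union $\mathcal W\subset U$ is exactly the locus of $W$-regular $W_0$ for which two critical values share an imaginary part. Since $\mathcal W$ is a locally finite union of real hypersurfaces, $U\setminus\mathcal W$ — the strongly $W$-regular locus — is open and dense, and $\mathcal W$ separates $U$ into the connected components of $U\setminus\mathcal W$, which are the chambers.

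The routine points — openness, connectedness of complements of complex hypersurfaces, local finiteness — I would not dwell on. The real content, and the step I expect to be the crux, is twofold: the structural input that $\nabla W$ is finite flat with proper branch locus, where quasi-homogeneity and the isolated-singularity hypothesis are genuinely used; and the transversality identity $\nabla_{\bb}v_l=\kappa^l$, which is precisely what upgrades each wall from a real-analytic set of the expected dimension to an honest smooth hypersurface, and thereby guarantees a genuine chamber decomposition.
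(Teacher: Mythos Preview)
Your proof is correct and follows essentially the same approach as the paper. Both arguments identify the $W$-regular locus as the complement of the branch locus of the gradient map (the paper packages this as the projection of an incidence variety, you as the branch locus of the finite flat map $\nabla W$), and the crux in both is the same differential computation $\partial v_l/\partial b_k=\kappa^l_k$, which yields $D(\operatorname{Im}(v_i-v_j))\ne 0$ from $\kappa^i\ne\kappa^j$. Your treatment is slightly more careful on one point: the paper tacitly asserts that the components $Y_i$ of the critical-point variety are each isomorphic to the base, effectively assuming the covering is globally split, whereas you correctly work with local sections and let monodromy permute the local walls.
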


\begin{proof}

Let
$$X=\{(x_1, \dots, x_N; b_1, \dots, b_N); \frac{\partial
W}{\partial x_i}+b_i=0, \det(\frac{\partial^2 W}{\partial
x_i\partial x_j})=0\}\subset \C^N\times \C^N.$$
    We claim that $X$ is an affine variety of complex dimension $N-1$.
    It is clear that the
    set of regular $W_0$ is $\C^N-\pi_2(X)$, where $\pi_2:
    \C^N\times \C^N\rTo \C^N$ is defined by
    $(x_1, \dots, x_N, b_1, \dots, b_N)\rTo (b_1, \dots,
    b_N)$. Since $\pi_2$ is an algebraic map, $\pi_2(X)$ is an
    algebraic subset of dimension $\leq N-1$. Therefore,
    $\C^N-\pi_2(X)$ is an open connected submanifold.

    Let
    $$F=(\frac{\partial W}{\partial x_1}+b_1, \dots, \frac{\partial W}{\partial x_N}+b_N ):
    \C^N\times \C^N\rTo \C^{N}.$$
    Let $\tilde{x}_i, \tilde{b}_j$ be the corresponding tangent
    vectors. Then,
    $$DF_{x, b}(\tilde{x}_1, \dots, \tilde{x}_N, \tilde{b}_1,
    \dots, \tilde{b}_N)=(\sum_{j}\frac{\partial^2 W}{\partial
x_1\partial x_j}\tilde{x}_j+\tilde{b}_1, \dots,
\sum_{j}\frac{\partial^2 W}{\partial x_N\partial
x_j}\tilde{x}_j+\tilde{b}_N)$$
    is obviously surjective. Hence, $F^{-1}(0)$ is a smooth affine
    variety. $X=F^{-1}(0)\cap \det(\frac{\partial^2 W}{\partial
x_i\partial x_j})^{-1}(0)$ is a hypersurface of $F^{-1}(0)$ of
    dimension $N-1$ unless
    $\det(\frac{\partial^2 W}{\partial
x_i\partial x_j})$ vanishes on some component of $F^{-1}(0)$.
    On the other hand, $(x_1, \dots, x_N)$ is a nondegenerate
    critical points iff $(x_1, \dots, x_N)$ is a regular value of
    the projection map $\pi_1: F^{-1}(0)\rTo \C^t$. Hence,
    the set of nondegenerate critical points is dense and $\det(\frac{\partial^2 W}{\partial
x_i\partial x_j})$ can not vanish on any open subset of
$F^{-1}(0)$.

    It is known that the cardinality of the set of nondegenerate critical points
    is
    precisely the dimension of the middle homology of the Milnor fiber and
    hence a constant number. The manifold of nondegenerate critical points
    $Y=F^{-1}(0)-\det(\frac{\partial^2 W}{\partial
x_i\partial x_j})^{-1}(0)$ is a disjoint union of components
$Y=\bigcup_{1\leq i\leq \mu_W} Y_i$. Furthermore, each component
$Y_i$ is isomorphic to $\C^N-\pi_2(X)$. Let
    $$f_i: \C^N-\pi_2(X)\rTo Y_i\stackrel{\mbox{Im}(W+W_0)}{\rTo}
    \R.$$
     Note that
     $D \mbox{Im}(W+W_0)(\tilde{x}_1, \dots, \tilde{x}_N)=0$ at critical points.
     $$D \mbox{Im} (W+W_0)(\tilde{b}_1, \dots, \tilde{b}_N)=\mbox{Im} (x_1\tilde{b}_, \dots, x_N\tilde{b}_N).$$
     Let $(x^i_1, \dots, x^i_N, b_1, \dots, b_N)\in Y_i$.
     $$D(f_i-f_j)(\tilde{b}_1, \dots,
     \tilde{b}_N)=\mbox{Im}((x^i_1-x^j_1)\tilde{b}_1, \dots,
     (x^i_t-x^j_N)\tilde{b}_N)$$
     is surjective for $i\neq j$. Therefore,  zero is a regular
     value of $f_i-f_j$ and $(f^i-f^j)^{-1}(0)$ is a smooth real
     hypersurface. We have finished the proof.

     \end{proof}

Suppose that $\gamma$ parameterizes the orbifold structure at the
marked point. The quasi-homogeneous polynomial can be decomposed
into the sum of the polynomials $W_{\gamma}$ and $W_N$. Take a
fixed $W_\gamma$-regular polynomial
$$
W_0=W_0(x_{1},\dots, x_{N_l})=\sum_{j=1}^{N_l}b_{j} x_{j}.
$$
 There exist line bundles $O_j(l), j=1,\dots,N_l$ over the orbicurve $\tilde{\Sigma}$ such that
\begin{enumerate}
\item $O_{j}(l)$ has no orbifold structure near the marked point
$z_l$;

\item the following isomorphisms hold:
$$
O_{j}(l)\otimes \LL_{j}\cong K_{\log}.
$$
\end{enumerate}
Hence, near the marked point $z_l$ we have the local isomorphisms
$$
|O_{j}(l)|\otimes |\LL_{j}|\cong K_{\log}.
$$
If $j\leq N_l$, we define the section $\beta_{j}=\beta_T(z)
dz/z\otimes e'_j\in \Omega^0(|O_{j}|),$ and $\beta_T(z)$ is given
by
\begin{equation}
\beta_T(z)=\left\{
\begin{array}{ll}
1 & \;\text{if}\; |z|\le e^{-T-1}\\
0 & \; \text{if}\; |z|\ge e^{-T}.
\end{array}
\right.
\end{equation}
Let
$$
W_{0,\beta}(u_{1},\dots, u_{N_l})=\sum_{j=1}^{N_l} b_{j}
\beta_{j} u_{j}.
$$
Hence the global perturbed Witten equation is defined as
\begin{equation}\label{plgw-equ}
\bar{\partial}u_i+I_1\left(\overline{\frac{\partial W}{\partial
u_i}+\frac{\pat W_{0,\beta}}{\pat u_i}}\right)=0, \quad\mbox{ for
all } i=1,\dots, N.
\end{equation}

Locally near a marked point $z_l$, the perturbed Witten-equation
has the following form:
\begin{equation}\label{plgw-r-equ}
\bar{z}\frac{\bpat u_i}{\pat \bar{z}}+\overline{\frac{\pat
W_{\gamma}}{\pat u_i}}+\beta_T\overline{\frac{\pat W_0}{\pat
u_i}}+\overline{\sum_{W_j: W_j \text{in} W_N } \frac{\pat
W_j(u_1,\dots, u_N)}{\pat
u_i}z^{\sum_{s=1}^{N}b_{js}(\Theta^{\gamma}_s)}}=0,
\end{equation}
for any $i\leq N_l$, and
\begin{equation}\label{plgw-n-equ}
\bar{z}\frac{\bpat u_i}{\pat \bar{z}}+(\overline{\sum_{W_j: W_j
\text{in} W_N } \frac{\pat W_j(u_1,\dots, u_N)}{\pat
u_i}z^{\sum_{s=1}^{N}b_{js}(\Theta^{\gamma}_s)}})|z|^{-2\Theta^{\gamma}_i}=0,
\end{equation}
for any $i>N_l$. Here $u_i, \beta_T$ in the equations are
understood not as sections, but as the coordinate functions with
respect to each basis of  the line bundles.

If we set $v_i=u_i$, for all $i\leq N_l$ and $v_i=u_i
z^{\Theta^{\gamma}_i}$, then the above two equations can be
changed to a more symmetric form:
\begin{equation}\label{plgw-r-equ1}
\bar{z}\frac{\bpat v_i}{\pat \bar{z}}+\overline{\frac{\pat
(W_{\gamma}+W_{0,\beta})}{\pat v_i}}+\overline{\frac{\pat
W_N(v_1,\dots, v_N)}{\pat v_i}}=0,\;\forall i\leq N_l
\end{equation}
and
\begin{equation}\label{plgw-n-equ1}
\bar{z}\frac{\bpat v_i}{\pat \bar{z}}+\overline{\frac{\pat
W_N(v_1,\dots, v_N)}{\pat v_i}}=0, \;\forall i>N_l,
\end{equation}

or simply written as one equation

\begin{equation}\label{plgw-equ1}
\bar{z}\frac{\bpat v_i}{\pat \bar{z}}+\overline{\frac{\pat
(W+W_{0,\beta})}{\pat v_i}}=0, \; i=1,\dots,N.
\end{equation}

Those sections $v_i$ for $i>N_l$ are not well defined in any
neighborhood containing $z_l$. Hence the equations
(\ref{plgw-r-equ1}), (\ref{plgw-n-equ1}) and (\ref{plgw-equ1})
only hold in any fan-shaped domain of $z_l$. However, the absolute
value $|v_i|$ of each $v_i$ is well-defined.

Take a cylindrical coordinate near a marked point $z_l$, i.e., let
$z=e^{-(s+i\theta)}$; then the equations
(\ref{plgw-r-equ1})+(\ref{plgw-n-equ1}) can be rewritten as the
simpler form:
\begin{align}
&\frac{\pat v_i}{\pat s}+\sqrt{-1}\frac{\pat v_i}{\pat
\theta}-2\overline{\frac{\pat (W+W_{0,\beta})}{\pat v_i}}=0,\;
\forall i\leq N_l\label{plgw-r-equ2}\\
&\frac{\pat v_i}{\pat s}+\sqrt{-1}\frac{\pat v_i}{\pat
\theta}-2\overline{\frac{\pat W_{N}}{\pat v_i}}=0,\; \forall
i>N_l\label{plgw-n-equ2}.
\end{align}
Set $v_i=x_i+\sqrt{-1}y_i$. Let
$$
H_{R0}:=2 Re(W_{\gamma}+W_{0,\beta}),\; H_N:=2 \mbox{Re}(W_N),
$$
then $\overline{\frac{\pat (W+W_{0,\beta})}{\pat
v_i}}=\overline{\frac{\pat (W+W_{0,\beta})}{\pat v_i}+\frac{\pat
\overline{(W+W_{0,\beta})}}{\pat v_i}}=\frac{\bpat }{\pat
\bar{v}_i}(H_{R0}+H_N)$; the equation (\ref{plgw-r-equ2}) becomes
$$
\frac{\pat x_i}{\pat s}+\sqrt{-1}\frac{\pat y_i}{\pat
s}+\sqrt{-1}(\frac{\pat x_i}{\pat \theta}+\sqrt{-1}\frac{\pat
y_i}{\pat \theta})-(\frac{\pat}{\pat x_i}+\sqrt{-1}\frac{\pat
}{\pat y_i})(H_{R0}+H_{N})=0,
$$
i.e.,
\begin{equation}
\left\{
\begin{array}{l}
\frac{\pat x_i}{\pat s}-\frac{\pat y_i}{\pat
\theta}-\frac{\pat}{\pat x_i}(H_{R0}+H_{N})=0\\
 \\
\frac{\pat y_i}{\pat s}+\frac{\pat x_i}{\pat
\theta}-\frac{\pat}{\pat y_i}(H_{R0}+H_{N})=0, \;\forall i\leq
N_l.
\end{array}
\right.
\end{equation}

If we define $v_R=(x_{1},\dots,x_{N_l}, y_{1},\dots, y_{N_l})$,
and let
$$
J_R=\left(
\begin{matrix}
0& -I_{N_l}\\
I_{N_l}&0
\end{matrix}\right),
$$
then the above equation is just
\begin{equation}
\frac{\pat v_R}{\pat s}+J_R \frac{\pat v_R}{\pat \theta}-\nabla_R
H_{R0}=\nabla_R H_{N},
\end{equation}
where $\nabla_R$ is the gradient operator with respect to the
variables $\{x_{1},\dots,x_{N_l}, y_{1},\dots, y_{N_l}\}$.

Similarly, if we set $v_N=(x_{N_l+1},\dots, x_{N},
y_{N_l+1},\dots, y_{N})$  and
$$
J_N=\left(
\begin{matrix}
0& -I_{N-N_l}\\
I_{N-N_l}&0
\end{matrix}\right),
$$
then the equation (\ref{plgw-n-equ2}) becomes
\begin{equation}
\frac{\pat v_N}{\pat s}+J_N \frac{\pat v_N}{\pat \theta}=\nabla_N
H_N,
\end{equation}
where $\nabla_N$ is the gradient operator with respect to the
variables $\{x_{N_l+1},\dots, x_{N}, y_{N_l+1},\dots, y_{N} \}$.
Thus the equations (\ref{plgw-r-equ2}) and (\ref{plgw-n-equ2}) are
changed to
\begin{equation}\label{holo-equ}
\left\{
\begin{array}{l}
\frac{\pat v_R}{\pat s}+J_R \frac{\pat v_R}{\pat \theta}-\nabla_R
H_{R0}(v_R)=\nabla H_{N}(v_R, v_N)\\
 \\
\frac{\pat v_N}{\pat s}+J_N \frac{\pat v_N}{\pat \theta}=\nabla_N
H_N(v_R, v_N).
\end{array}
\right.
\end{equation}

Later we will show that the first equation of (\ref{holo-equ}) is
just the perturbation equation of the trajectory equation of a
Hamiltonian system which has frequently appeared in symplectic
geometry, and the second one is the perturbation equation of the
Cauchy-Riemann equation of pseudo-holomorphic curves.

\begin{df} Sections $(u_1,\dots, u_N)$ are said to be the
solutions of the perturbed Witten equation (\ref{plgw-equ}); if
they satisfy the following conditions:
\begin{enumerate}

\item for each $j, u_j\in
L^2_{1,loc}(\Sigma\setminus\{z_1,\dots,z_k\},
|\LL_j|),I_1\left(\overline{\frac{\partial W}{\partial
u_j}+\frac{\partial W_{0,\beta}}{\partial u_j}}\right)\in
[L^2_{loc}(\Sigma\setminus\{z_1,\dots,z_k\}, |\LL_j|\otimes
\Lambda^{0,1})]$;

\item $(u_1,\dots,u_N)$ satisfy the perturbed  Witten equation
(\ref{plgw-equ}) almost everywhere;

\item near each marked point, the integral
$$
\sum_j\int^\infty_0\int_{S^1} |\frac{\pat u_j}{\pat s}|^2 d\theta
ds<\infty.
$$
\end{enumerate}
\end{df}

\subsubsection{ Witten equations over orbicurves}\label{subsub-witten-orbi}

In the last section, we defined the  Witten equation on a
smooth Riemann surface. However, since our moduli theory will be
constructed on orbicurves, actually we should define our
 Witten equation on orbifolds. In this part, we will define the
 Witten equation over orbifolds and will show that the resolution
of these solutions of  the Witten equation just satisfy the
previously defined  Witten equation on a Riemann surface and vice
versa.

Because $W_j(\LL_1,\dots, \LL_N)\cong K_{\log}$, we can choose a
holomorphic basis $\eta_i$ of the orbifold line bundle $\LL_i$
such that $W_j(\eta_1,\dots, \eta_N)=dz$ when away from the
marked points and take the smooth metric in this part. If $z_l$ is
a marked point, we can assume that the orbifold structure near
$z_l$ is given by $(\tilde\Delta, \pi, G_{z_l}=\Z/m)$, where
$\tilde\Delta$ is a disc with coordinate $z$, 
$\tilde\Delta\rTo \Delta$ is given by $\pi(z)=z^m$, and
$G_{z_l}$ acts by $z\rTo \exp(2\pi i/m)z$. The orbifold
structure of $\LL_i$ is given by the uniformizing system
$\tilde\Delta\times \C\longrightarrow [(\tilde\Delta\times
\C)/G_{z_l}]$, with the action of $G_{z_l}$ given by
$(z,w)\rTo (\exp(2\pi i/m)z, \exp(2\pi v_i i/m)w)$ with
$m>v_i\ge 0$. Now the sheaf of sections of $\LL_i$ is generated as
an $\O_{\tilde\Delta}$-module by an element $\eta_i$ such that for
each $j$ there is
\begin{equation}\label{isorm-base-W}
W_j(\eta_1,\dots,\eta_N)=mdz/z.
\end{equation}
$G_{z_l}$ acts on $\eta_i$ by $\eta_i\rTo \exp(-2\pi v_i
i/m)\eta_i$. The invariant sections are of the form
$u_i(z)=z^{v_i}\tilde{g}(z^m)\eta_i$. Let $\tilde{u}_i$ be the
coordinate function of $u_i(z)$ with respect to $\eta_i$; then
$\tilde{u}_i(z)=z^{v_i}\tilde{g}_i(z^m)$, which is equivariant. Let
$w=z^m$ be the coordinate in $\Delta$. The sheaf of sections of
$|\LL_i|$ is generated by the section $e_i(w)=z^{v_i}\eta_i(z)$.
So $u_i=\tilde{g}_i(z^m)e_i(z^m)$ and the corresponding coordinate
function is $\tilde{g}_i(z^m)$ and actually we have the relation
between two coordinate functions:
\begin{equation}\label{chang-two-coord}
\tilde{u}_i(z)=z^{v_i}\tilde{g}_i(z^m)=w^{\Theta_i}\tilde{g}_i(w).
\end{equation}

Now we choose the cylindrical metric in the small charts of all of
the marked points on the resolved surface, i.e., let $|dw/w|=1$.
This metric induces the metric on $\tilde{\Sigma}$ such that $|m
dz/z|=1$. By (\ref{isorm-base-W}) this metric induces the metric
on each orbifold line bundle such that $|\eta_i|=1, \forall i,$ in
small charts of all marked points. Let $\eta'_i$ be the dual basis
of $\eta_i$ on the dual line bundle $\LL_i^{-1}$. We define a
metric preserving isomorphism:
$$
\tilde{I}_1: \Omega(\tilde{\Sigma}, \overline{\LL}_j^{-1}\otimes
\Lambda^{0,1})\rTo \Omega(\tilde{\Sigma}, \LL_j\otimes
\Lambda^{0,1})
$$
such that for $v_i=\tilde{v}_i \overline{\eta'_i}$, there holds
$$
\tilde{I}_1(v_i\otimes d\bar{z})=\tilde{v}_i |\eta'_i|^2
\eta_i\otimes d\bar{z}.
$$

It is easy to see if $u_i\in \Omega(\tilde{\Sigma}, \LL_i)$, then
$$
\tilde{I}_1\left(\overline{\frac{\pat W}{\pat u_i}}\right)\in
\Omega(\tilde{\Sigma}, \LL_i\otimes \Lambda^{0,1}).
$$
Now the  Witten equation on orbifolds is defined as
\begin{equation}\label{lgw-orbi}
\bpat u_i+\tilde{I}_1\left(\overline{\frac{\pat W}{\pat
u_i}}\right)=0, \forall i=1,\dots,N.
\end{equation}

We consider the local expression of the Witten equation in a
neighborhood of the marked point $z_l$. Assume that
$u_i=\tilde{u}_i \eta_i, i=1,\dots, N$, are solutions of equation
(\ref{lgw-orbi}) in this uniformizing system of $z_l$; then the
coordinate function $\tilde{u}_i$ satisfies
\begin{equation}\label{lgw-orbi3}
\frac{\bpat \tilde{u}_i}{\pat\bar{z}}+\overline{\frac{\pat
W(\tilde{u}_1,\dots,\tilde{u}_N)}{\pat
\tilde{u}_i}\frac{m}{z}}=0.
\end{equation}

Replacing $\tilde{u}_i$ in (\ref{lgw-orbi3}) by the equality
(\ref{chang-two-coord}), we find out that
\begin{equation}\label{lgw-orbi2}
\frac{\bpat \tilde{g}_i}{\pat\bar{w}}+\overline{\sum_j \frac{\pat
W_j(\tilde{g}_1,\dots, \tilde{g}_N)}{\pat
\tilde{g}_i}w^{\sum_{s=1}^{N}b_{js}(\Theta^{\gamma}_s-q_s)}}|w|^{-2\Theta^{\gamma}_i}=0,
\end{equation}
where $w=z^m$. This is just  equation (\ref{lgw-equ}) when
evaluated at $z^m$.

Conversely, if one knows that the function $\tilde{g}_i$ of
$|\LL_i|$ satisfies  equation (\ref{lgw-orbi2}), then
$\hat{u}_i(w)=w^{\Theta_i}\tilde{g}_i(w)$ satisfies
\begin{equation}\label{lgw-orbi4}
\frac{\bpat \hat{u}_i}{\pat\bar{w}}+\overline{\frac{\pat
W(\hat{u}_1,\dots,\hat{u}_N)}{\pat \hat{u}_i}\frac{1}{w}}=0.
\end{equation}

Notice that  equation (\ref{lgw-orbi4}) holds locally at a broad
point. 
On the other hand, the solutions of this Witten equation in
the neighborhood of any orbifold point can be viewed as solutions
satisfying some twisted periodic boundary condition. Equation
(\ref{lgw-orbi4}) is changed to  equation (\ref{lgw-orbi3}) when
the potential $W$ is replaced by $mW$. This observation is
beneficial in estimating of the solutions of (\ref{lgw-orbi3}).

Similarly, we can define the perturbed Witten equation if there
exist any broad marked points. The definition is completely the
same as before, since there is no orbifold structure for these
broad line bundles.

\
\subsection{Interior estimate of solutions}\

\

When away from the marked points, the perturbed  Witten equation
can be written in the form
\begin{equation}\label{bdd-lgw}
\bar{\partial}u_j+\varphi_j\left(\overline{\frac{\partial
W}{\partial u_j}+\beta_j\frac{\partial W_0}{\partial
u_j}}\right)=0, \quad\mbox{ for all } j=1,\dots, N,
\end{equation}
where $\varphi_j$ is a smooth positive function. Here we
understand that for $i>N_l, \beta_i\equiv 0$ and for $i\leq N_l,
\beta_i$ is defined as before. The polynomial $W_0$ can be chosen
differently corresponding to different marked points, but they
should satisfy the following uniform control:

\begin{equation}\label{w0-bdd}
\left|\frac{\pat W_0}{\pat u_i}\right|\le b.
\end{equation}

To study the solutions, we also need the following lemma.

\begin{lm}[\cite{FJR1}, Theorem 5.7]\label{thm-new}
Let $W \in \mathbb{C}[x_1, \dots, x_N]$ be a non-degenerate,
quasi-homogeneous polynomial with weights $q_i:=\wt(x_i)<1$ for
each variable $x_i,i=1, \dots,N$. Then for any $t$-tuple $(u_1,
\dots, u_N) \in \mathbb{C}^N$ we have

\[ |u_i| \leq C \left(\sum^N_{i=1}\left|\frac{\partial W}{\partial x_i}(u_1, \dots,
u_N)\right|+1 \right)^{\delta_i},\] where
$\delta_i=\frac{q_i}{\min_j(1-q_j)}$ and the constant $C$ depends
only on $W$. If $q_i\leq 1/2$ for all $ i \in \{1, \dots, N\}$,
then $\delta_i\le 1$ for all $ i \in \{1, \dots, N\}$. If
$q_i<1/2$ for all $ i \in \{1, \dots, N\}$, then $\delta_i<1$ for
all $ i \in \{1, \dots, N\}$.
\end{lm}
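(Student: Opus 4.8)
The plan is to reduce the global bound to a compactness statement on a weighted unit sphere, using that the partial derivatives of $W$ are themselves quasi-homogeneous and that non-degeneracy of $W$ forces $\nabla W$ to vanish only at the origin. First, the two assertions about $\delta_i$ are immediate: if $q_i\le 1/2$ for every $i$, then $1-q_j\ge 1/2\ge q_i$ for all $i,j$, so $\delta_i=q_i/\min_j(1-q_j)\le 1$; replacing each $\le$ by $<$ throughout gives the strict version.

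For the main inequality, I would introduce the weighted gauge $\rho(u):=\sum_{i=1}^N |u_i|^{1/q_i}$, which is continuous (since $q_i<1$) and satisfies the scaling identity $\rho(\lambda^{q_1}u_1,\dots,\lambda^{q_N}u_N)=\lambda\,\rho(u)$ for $\lambda>0$. Let $S:=\{u:\rho(u)=1\}$; it is compact (each $|u_i|\le 1$ on $S$) and does not contain $0$. Differentiating the quasi-homogeneity relation $W(\lambda^{q_1}x_1,\dots,\lambda^{q_N}x_N)=\lambda\,W(x_1,\dots,x_N)$ with respect to $x_i$ shows that $\partial_i W$ is quasi-homogeneous of weighted degree $1-q_i$, i.e.
\[\partial_i W(\lambda^{q_1}x_1,\dots,\lambda^{q_N}x_N)=\lambda^{1-q_i}\,\partial_i W(x_1,\dots,x_N).\]
Non-degeneracy of $W$ gives $\nabla W(\xi)\ne 0$ for $\xi\ne 0$, so by compactness $m:=\min_{\xi\in S}\ \max_{1\le i\le N}\ |\partial_i W(\xi)|>0$.

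Now fix $u\ne 0$ and write $u_i=\lambda^{q_i}\xi_i$ with $\lambda:=\rho(u)>0$ and $\xi\in S$; then $|u_i|=\lambda^{q_i}|\xi_i|\le\lambda^{q_i}$ because $|\xi_i|\le 1$. Choose $i_0$ with $|\partial_{i_0}W(\xi)|\ge m$. If $\lambda\ge 1$, then since $1-q_{i_0}\ge\min_j(1-q_j)$, the scaling identity gives
\[\sum_{i=1}^N\bigl|\partial_i W(u)\bigr|\ \ge\ \bigl|\partial_{i_0}W(u)\bigr|\ =\ \lambda^{1-q_{i_0}}\bigl|\partial_{i_0}W(\xi)\bigr|\ \ge\ m\,\lambda^{\min_j(1-q_j)},\]
whence $\lambda\le\bigl(m^{-1}\sum_i|\partial_i W(u)|\bigr)^{1/\min_j(1-q_j)}$ and therefore $|u_i|\le\lambda^{q_i}\le m^{-\delta_i}\bigl(\sum_i|\partial_i W(u)|\bigr)^{\delta_i}$. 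If instead $\lambda<1$, then $|u_i|\le\lambda^{q_i}<1\le\bigl(\sum_i|\partial_i W(u)|+1\bigr)^{\delta_i}$. Taking $C:=\max_i\max\{1,m^{-\delta_i}\}$, which depends only on $W$, and noting the case $u=0$ is trivial, both cases yield the claimed estimate.

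The argument is elementary scaling once the setup is in place; the step I would flag as the crux is the compactness fact $m>0$, which is exactly where non-degeneracy of $W$ enters, together with the bookkeeping of exponents in the two regimes: one must use $1-q_{i_0}\ge\min_j(1-q_j)$ to bound $\lambda^{1-q_{i_0}}$ below by $\lambda^{\min_j(1-q_j)}$ when $\lambda\ge 1$, and the innocuous $+1$ on the right-hand side is precisely what absorbs the regime $\lambda<1$, where $\sum_i|\partial_i W(u)|$ may be arbitrarily small while $|u_i|$ is already less than $1$.
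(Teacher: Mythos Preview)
Your argument is correct. The paper does not give its own proof of this lemma; it is cited from \cite{FJR}, Theorem~5.7, so there is no in-text proof to compare against. Your weighted-sphere compactness argument is a standard and clean way to obtain such estimates for quasi-homogeneous polynomials, and every step checks out: the scaling $\rho(\lambda^{q_1}u_1,\dots,\lambda^{q_N}u_N)=\lambda\rho(u)$, the quasi-homogeneity of $\partial_iW$ with degree $1-q_i$, the positivity of $m$ by non-degeneracy and compactness, and the split into $\lambda\ge1$ and $\lambda<1$ with the $+1$ absorbing the small-$\lambda$ regime. One tiny remark: continuity of $\rho$ holds for any positive $q_i$, not just $q_i<1$, so that parenthetical is unnecessary; otherwise nothing to add.
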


So by this lemma we have
\begin{equation}\label{W-bdd}
|u_i| \leq C \left(\sum^N_{i=1}\left|\frac{\partial W}{\partial
x_i}(u_1, \dots, u_N)\right|+1 \right)^{\delta_0},
\;\delta_0=\max\{\delta_1,\dots,\delta_N\}.
\end{equation}

In the rest of this paper, we always assume that $q_i<1/2$ for any
$i$; therefore we have $\delta_0<1$.

\begin{thm}\label{bdd-inn-thm} Let $(u_1,\dots, u_N)$ be the solutions of the
perturbed  Witten equation (\ref{plgw-equ}); then for any $m\in
\Z$, there holds
$$
||u_j||_{C^{m}(B_R)}\le C,
$$
where $B_R\subset \Sigma\setminus \{z_1,\dots,z_k\}$, and $C$ is
a constant  depending only on the metric in $B_{2R}$, the
fractional degree $q_i$, $ b$ and $R$.
\end{thm}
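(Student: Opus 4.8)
The plan is to establish an interior $C^m$ estimate by a bootstrap argument that starts from an $L^\infty$ bound (which comes essentially for free from the algebraic Lemma~\ref{thm-new}) and then upgrades regularity order by order using elliptic estimates for the $\bar\partial$-operator. First I would observe that away from the marked points the perturbed Witten equation has the form \eqref{bdd-lgw}, namely $\bar\partial u_j = -\varphi_j\,\overline{(\partial_{u_j}W + \beta_j\,\partial_{u_j}W_0)}$ with $\varphi_j$ a fixed smooth positive function determined by the metric, and with the perturbation satisfying the uniform bound \eqref{w0-bdd}. The strategy is to fix concentric balls $B_R \subset B_{2R} \subset \Sigma\setminus\{z_1,\dots,z_k\}$ and prove estimates on shrinking balls, absorbing constants that depend only on the metric on $B_{2R}$, on the weights $q_i$, on $b$, and on $R$.

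The first step is the $L^\infty$ bound. The solution has finite energy by the third defining condition (the integral of $|\partial_s u_j|^2$ is finite near each marked point), and combined with the interior mean-value / elliptic estimate for $\bar\partial$ together with the algebraic growth bound \eqref{W-bdd}, one gets that $|u_j|$ is bounded on $B_{2R}$ by a constant of the stated type; here the hypothesis $q_i < 1/2$, hence $\delta_0 < 1$, is exactly what makes the a priori bound $|u_i| \le C(\sum_i|\partial_{x_i}W| + 1)^{\delta_0}$ usable to close the estimate rather than merely propagate it. Once $\|u_j\|_{L^\infty(B_{2R})} \le C$, the right-hand side $F_j := -\varphi_j\overline{(\partial_{u_j}W + \beta_j\partial_{u_j}W_0)}$ is bounded in $L^\infty(B_{2R})$, since $W$ and $W_0$ are polynomials, $\beta_j$ is smooth with values in $[0,1]$, and $|\partial_{u_i}W_0|\le b$.

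The second step is the bootstrap. Since $\bar\partial u_j = F_j$ with $F_j \in L^p(B_{2R})$ for every $p$ (being bounded), Calderón–Zygmund/$L^p$ estimates for $\bar\partial$ give $u_j \in W^{1,p}(B_{3R/2})$ for all $p$, hence $u_j \in C^{0,\alpha}(B_{3R/2})$ for all $\alpha<1$ by Sobolev embedding. Then $F_j$, being a polynomial in the $u_j,\bar u_j$ times the smooth factors $\varphi_j,\beta_j$, lies in $C^{0,\alpha}$, so Schauder estimates for $\bar\partial$ upgrade $u_j$ to $C^{1,\alpha}(B_{3R/2 - \epsilon})$; differentiating the equation, $\partial u_j$ and $\partial_{\bar z} u_j$ satisfy equations of the same type with right-hand sides now involving one derivative of $u_j$, which is controlled, so another application of Schauder gives $C^{2,\alpha}$, and so on. Iterating, one obtains $\|u_j\|_{C^m(B_R)} \le C$ for every $m$, with $C$ depending only on the listed data; at each stage the constants depend on the metric on $B_{2R}$ (through $\varphi_j$ and the elliptic constants), on $q_i$ and $b$ (through the initial $L^\infty$ bound), and on $R$ (through the radii of the nested balls).

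The main obstacle I expect is the very first step, getting the a priori $L^\infty$ bound near, but not at, the marked points: one must combine the finite-energy hypothesis with the degenerate algebraic estimate \eqref{W-bdd} and an elliptic/mean-value argument in a way that genuinely closes, rather than just reshuffling, the bound. The reason $\delta_0 < 1$ is assumed throughout is precisely to make this closure possible; with $\delta_0 = 1$ one would be at the borderline and the estimate would be far more delicate. The subsequent bootstrap is routine elliptic theory for $\bar\partial$ applied to a nonlinearity that is polynomial in $(u,\bar u)$, so the only care needed there is to track the domains of the nested balls and confirm that no constant secretly depends on the distance to a marked point beyond the allowed dependence on $R$.
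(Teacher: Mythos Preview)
Your bootstrap in Step~2 is fine and matches the paper, but Step~1 has a real gap: you have not explained how to obtain any a~priori bound on $u_j$ in $B_{2R}$, and the ingredients you list do not combine to give one. The finite-energy condition (the integral of $|\partial_s u_j|^2$ near each marked point) is a hypothesis at the punctures, not on the interior ball $B_{2R}$; it plays no role in the paper's interior estimate. The algebraic inequality \eqref{W-bdd} bounds $|u_i|$ in terms of $\sum_j|\partial W/\partial u_j|$, but $\partial W/\partial u_j$ is itself a polynomial in the $u_i$, so invoking a mean-value or elliptic estimate for $\bar\partial$ just reshuffles the unknowns: you cannot bound the right-hand side of $\bar\partial u_j = F_j$ without already knowing $u$ is bounded. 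You correctly flag this as the main obstacle, but you do not supply the mechanism that breaks the circularity.

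The paper's device is different from anything you wrote. One multiplies the equation by $\partial W/\partial u_j$, sums over $j$, and integrates against a cutoff $\psi^\beta$. The left side becomes $\int \partial_{\bar z}W\,\psi^\beta$, which after Stokes involves $|W|$ against $|\partial_{\bar z}\psi|$; the quasi-homogeneous Euler relation $W=\sum_i q_i u_i\,\partial_{u_i}W$ together with \eqref{W-bdd} then controls this by $\sum_i|\partial W/\partial u_i|^{1+\delta_0}$. Because $\delta_0<1$, Young's inequality absorbs this into the main term $\int \psi^\beta\sum_j|\partial W/\partial u_j|^2$, yielding an unconditional bound $\int_{B_R}\sum_j|\partial W/\partial u_j|^2\le C$ with $C$ depending only on the listed data. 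From there one gets $u_j\in L^2_1$, then $L^p$ by Sobolev, then the bootstrap you describe. The Euler identity and the weighted-cutoff integration by parts are the missing ideas; without them Step~1 does not close.
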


\begin{proof}
Multiplying (\ref{bdd-lgw}) by $\frac{\pat W}{\pat u_j}$ and
taking the sum, one has
\begin{equation}\label{bdd-pro1}
\frac{\bpat W}{\pat \bar{z}}+\left( \sum_i \varphi_i\left|
\frac{\pat W}{\pat u_i}\right|^2+\sum_i \varphi_i\beta_i
\frac{\pat W}{\pat u_i}\overline{\frac{\pat W_0}{\pat
u_i}}\right)=0.
\end{equation}

Take a small ball $B_{2R}(z_0)$ and a test function $\psi$ such
that $\psi\in C^\infty_0(B_{2R}(z)), \psi\equiv 1$ on $ B_R(z)$,
and $\psi \equiv 0$ outside $B_{2R}(z)$, and $|\nabla \psi|\le
\frac{C}{R}$.

Multiplying (\ref{bdd-pro1}) by $\psi^\beta
\frac{\sqrt{-1}}{2}dz\wedge d\bar{z}$ and integrating over $B$,
one has
\begin{align}\label{bdd-pro2}
&\int_B \frac{\bpat W}{\pat \bar{z}}\psi^\beta
\frac{\sqrt{-1}}{2}dz\wedge d\bar{z}+\int_B  \sum_j \varphi_j
\left|\frac{\pat W}{\pat u_j}\right|^2|dz d\bar{z}|\psi^\beta \nonumber\\
&+\int_B  \sum_j \varphi_j\beta_j \frac{\pat W}{\pat
u_j}\overline{\frac{\pat W_0}{\pat u_j}}\psi^\beta=0.
\end{align}
Now there holds
\begin{align*}
&\int \frac{\bpat W}{\pat \bar{z}}\psi^\beta
\frac{\sqrt{-1}}{2}dz\wedge d\bar{z}=-\int
\frac{\sqrt{-1}}{2}\psi^\beta d(Wdz)\\
&=\int \frac{\sqrt{-1}}{2}\beta \psi^{\beta-1} d\psi\wedge
Wdz=\int \pat_{\bar{z}}\psi \beta W \psi^{\beta-1}.
\end{align*}
By (\ref{bdd-pro2}) and (\ref{w0-bdd}), one obtains for large
$\beta$ that
\begin{align*}
&\int_B \psi^\beta \sum_j \varphi_j\left| \frac{\pat W}{\pat
u_j}\right|^2 \le \int_B |\pat_{\bar{z}}\psi| \beta|W|
\psi^{\beta-1}+\int_B  \sum_j \varphi_j\left|\frac{\pat W}{\pat
u_j}\right| b\psi^\beta\\
&\le \int_B \beta \psi^{\beta-1}|\pat_{\bar{z}}\psi| \sum_i q_i
|u_i| |\frac{\pat W}{\pat u_i}|+b\max_j \varphi_j\sum_j
\left|\frac{\pat W}{\pat
u_j}\right| \psi^\beta\\
&\le C\int_B \psi^{\beta-1}|\bpat \psi|\cdot \left(\sum_i \left|
\frac{\pat W}{\pat u_i} \right|^{1+\delta_0}\right)+C_2\int_B
|\bpat
\psi|\\
&+C_1 \max_j \varphi_j\int_B \sum_j \left|\frac{\pat W}{\pat u_j}\right|^{1+\delta_0}\psi^\beta+C_1 R^2\\
&\le \varepsilon \int \sum_j \left|\frac{\pat W}{\pat
u_j}\right|^2 \psi^\beta + C'_\varepsilon \int
\psi^{(\beta-\frac{1+\delta_0}{2}\beta-1)\frac{2}{1-\delta_0}}|\pat_{\bar{z}}\psi|^{\frac{2}{1-\delta_0}}+C_2 R\\
&+\varepsilon  \int \sum_j \left|\frac{\pat W}{\pat
u_j}\right|^2\psi^\beta+C''_\varepsilon R^2+C_1 R^2\\
&\le 2\varepsilon \int_B \sum_j \left|\frac{\pat W}{\pat
u_j}\right|^{2}\psi^\beta+C'_\varepsilon
R^{-\frac{2\delta_0}{1-\delta_0}}+C''_\varepsilon R^2+C_1 R^2+C_2
R.
\end{align*}
Then take $\varepsilon$ small enough such that
\begin{equation}\label{bdd-pro3}
\int_{B_R} \sum_j \left|\frac{\pat W}{\pat u_j}\right|^2\le C(q_i,
 b, \varphi_j, R).
\end{equation}

On the other hand, for the non-homogeneous $\bpat$-equation we
have the interior estimate
\begin{equation}
||u_j||_{L^2_1(B_{\frac{R}{2}})}\le C\left(\sum_i ||\frac{\pat
W}{\pat u_i}||_{L^2(B_R)}+\sum_i ||\frac{\pat W_0}{\pat
u_i}||_{L^2(B_R)}+||u_j||_{L^2(B_R)}\right).
\end{equation}

By (\ref{W-bdd}), there holds
$$
||u_j||^2_{L^2}\le C\int \left(\sum_j \left|\frac{\pat W}{\pat
u_j}\right|+1\right)^{2\delta_0}\le C_1 \sum_j ||\frac{\pat
W}{\pat u_j}||^2_{L^2}+C_2,
$$
and hence by (\ref{bdd-pro3}) we obtain
$$
||u_j||_{L^2_1(B_{\frac{R}{2}})}\le C_1 \sum_j ||\frac{\pat
W}{\pat u_j}||_{L^2}+C_2\le C.
$$
Applying the embedding theorem, there holds
\begin{equation}\label{bdd-pro4}
||u_j||_{L^p}\le C||u_j||_{L^2_1}\le C,\;\forall 1<p<\infty.
\end{equation}
Hence, for any $p>1$, we have
\begin{equation}\label{bdd-pro5}
||\frac{\pat W}{\pat u_j}||^p_{L^p}\le C,
\end{equation}
where $C$ depends on $||u_i||_{L^2}$ and $||\frac{\pat W}{\pat
u_j}||_{L^2}$.

Now using the $L^p$-estimate of the $\bpat$-equation,
(\ref{bdd-pro4}) and (\ref{bdd-pro5}), eventually we have
$$
||u_j||_{L^p_1(B_{\frac{R}{2}})}\le C\left(\sum_j ||\frac{\pat
W}{\pat u_j}||_{L^p(B_R)}+||\frac{\pat W_0}{\pat
u_j}||_{L^p(B_R)}+||u_j||_{L^p(B_R)}\right)\le C.
$$
Using the embedding theorem again, we have
$$
||u_j||_{C^\alpha(B_R)}\le C.
$$
Furthermore, by the bootstrap argument, we have the following
estimate for any $m\in \Z$:
$$
||u_j||_{C^m}\le C.
$$
\end{proof}

\

\subsection{Asymptotic behavior}\label{subsec:asym-beha}

\

Set the complex variable $\xi=s+i\theta$; then the equations
(\ref{plgw-n-equ2}) and (\ref{plgw-r-equ2}) can be written as
\begin{equation}\label{plgw-equ2}
\bpat_\xi v_i-2\overline{\frac{\pat(W+W_{0,\beta})}{\pat
v_i}}=0,\;\forall i=1,\dots,N.
\end{equation}
Note that the function $v_i$ is only locally defined; hence the
above relation only holds in a bounded domain $D\subset S^1\times
[0,\infty)$ such that $D$ is contractible.

\begin{thm}\label{bdd-bdry-thm} Let $B_R\subset S^1\times [0,\infty)$ be a ball with
radius $R$ such that $B_{2R}$ is contractible. Suppose that
$v_1,\dots, v_N$ satisfy (\ref{plgw-equ2}) in $B_{2R}$; then for
any $m>0$, there is a constant $C$ depending only on $q_i, b, R,m$
such that
$$
||v_j||_{C^m(B_R)}\le C,
$$
where the derivatives taken in the $C^m$ modulus  correspond to
the variables $s, \theta$.
\end{thm}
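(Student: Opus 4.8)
The plan is to repeat the argument of Theorem~\ref{bdd-inn-thm} almost verbatim, now on the flat half-cylinder. The key point is that after the substitution $v_i=u_iz^{\Theta^{\gamma}_i}$ the equation near the marked point became the clean system (\ref{plgw-equ2}): a $\bpat$-equation in the coordinate $\xi=s+\sqrt{-1}\theta$ whose inhomogeneous term is the holomorphic gradient of $W+W_{0,\beta}$, with \emph{no} leftover singular weights $|z|^{-2\Theta^{\gamma}_i}$. On a ball inside $S^1\times[0,\infty)$ the metric is the standard flat one, so the interior $L^2$- and $L^p$-estimates for $\bpat$ on Euclidean discs apply with constants depending only on the radius. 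Accordingly I would run the same three stages: (i) an $L^2$-bound on $\pat W/\pat v_j$ over a ball slightly larger than $B_R$; (ii) a bootstrap from $L^2$ to $L^p$ for all $p$, using Lemma~\ref{thm-new}; (iii) a further bootstrap from $L^p$ to $C^m$. Contractibility of $B_{2R}$ is used precisely to guarantee that all the $v_i$ (some of which are twisted by fractional powers of $z$) are single-valued on $B_{2R}$, so that (\ref{plgw-equ2}) genuinely holds there.

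For stage (i), I would multiply (\ref{plgw-equ2}) by $\pat W/\pat v_j$ --- only $W$, not $W+W_{0,\beta}$ --- and sum over $j$; since $W$ is holomorphic in its arguments this gives
$$\bpat_\xi W(v_1,\dots,v_N)=2\sum_j\Big|\frac{\pat W}{\pat v_j}\Big|^2+2\sum_j\frac{\pat W}{\pat v_j}\,\overline{\frac{\pat W_{0,\beta}}{\pat v_j}},$$
and the cross term is bounded by $b\sum_j|\pat W/\pat v_j|$ via (\ref{w0-bdd}). Note that the cutoff $\beta_T$ enters only through this uniform bound --- its $\bar\xi$-derivative never appears, because we multiplied by $\pat W/\pat v_j$ alone --- so this is the precise analogue of (\ref{bdd-pro1}). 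I would then test against $\psi^\beta$ for a cutoff $\psi$ supported in $B_{2R}$ with $\psi\equiv1$ on $B_{3R/2}$ and $|\nabla\psi|\le C/R$, integrate by parts as in the proof of Theorem~\ref{bdd-inn-thm}, use the quasi-homogeneity bound $|W(v)|\le C(\sum_j|\pat W/\pat v_j|+1)^{1+\delta_0}$ (Euler's identity $W=\sum_jq_jv_j\,\pat W/\pat v_j$ combined with (\ref{W-bdd})), and apply Young's inequality to absorb; here $\delta_0<1$, which holds under the standing hypothesis $q_i<1/2$, is exactly what makes the absorption work. The outcome is $\int_{B_{3R/2}}\sum_j|\pat W/\pat v_j|^2\le C(q_i,b,R)$.

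Stages (ii) and (iii) are then routine. I would feed the $L^2$-bound into the interior $L^2$-estimate for the inhomogeneous $\bpat$-equation on the flat disc, using (\ref{W-bdd}) to control $\|v_j\|_{L^2}$ in terms of $\|\pat W/\pat v_j\|_{L^2}$; this bounds $\|v_j\|_{L^2_1}$ on a slightly smaller ball, whence $v_j\in L^p$ for every $p<\infty$ (dimension two), whence $\pat W/\pat v_j\in L^p$; the $L^p$-estimate for $\bpat$ then promotes $v_j$ to $L^p_1\hookrightarrow C^\alpha$. Then I would bootstrap on a sequence of nested balls shrinking down to $B_R$: $\pat W/\pat v_j\in C^\alpha$, and $\pat W_{0,\beta}/\pat v_j$ is smooth with derivatives bounded by fixed data, so the Schauder-type interior estimates for $\bpat$ upgrade $v_j$ to $C^{k,\alpha}$ for every $k$, in particular to $C^m(B_R)$, with the constant at each stage depending only on $q_i$, $b$, $R$ and the number of derivatives taken.

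I do not expect a genuinely new analytic obstacle here: the content is the same as in Theorem~\ref{bdd-inn-thm}, only transcribed into cylindrical coordinates, where the equation is in fact slightly cleaner because the weights have been absorbed. The one place that requires attention is the bookkeeping of constants --- one must check that the perturbation enters only through the uniform bound $b$ of (\ref{w0-bdd}) and through $\beta_T$, whose $s$-derivatives are bounded by absolute constants independent of the parameter $T$ --- together with the observation that translation invariance of the flat cylinder in $s$ makes the estimate uniform over all admissible balls $B_R$, as the statement requires.
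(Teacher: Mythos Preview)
Your proposal is correct and follows essentially the same route as the paper: multiply (\ref{plgw-equ2}) by $\partial W/\partial v_j$ (not $\partial(W+W_{0,\beta})/\partial v_j$), sum, test against $\psi^\beta$, integrate by parts via Stokes, and then refer back to the estimates of Theorem~\ref{bdd-inn-thm} for the absorption and bootstrap. In fact the paper's proof is shorter than yours --- after the Stokes step it simply says ``proceed in the same way as in the proof of Theorem~\ref{bdd-inn-thm}'' --- so your write-up is a faithful unpacking of that cross-reference.
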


\begin{proof} Multiplying the two sides of (\ref{plgw-equ2}) by
$\frac{\pat W}{\pat v_i}$ and taking the sum over $i$ from $1$ to
$N$, we have
$$
\pat_{\bar{\xi}} W=2\sum_i \left|\frac{\pat W}{\pat v_i}
\right|^2+2\sum_i \overline{\frac{\pat W_{0,\beta}}{\pat
v_i}}\frac{\pat W}{\pat v_i}.
$$
Multiplying the above equality by $\psi^\beta \frac{\sqrt{-1}}{2}
d\xi\wedge d\bar{\xi}$ and integrating over $B_{2R}$, one has
$$
\int_{B_{2R}} \pat_{\bar{\xi}}W (\psi^\beta \frac{\sqrt{-1}}{2}
d\xi\wedge d\bar{\xi})=2\int_{B_{2R}} \sum_i\left|\frac{\pat
W}{\pat v_i} \right|^2 \psi^\beta |d\xi d\bar{\xi}|+2\int_{B_{2R}}
\sum_i \overline{\frac{\pat W_{0,\beta}}{\pat v_i}}\frac{\pat
W}{\pat v_i} \psi^\beta |d\xi d\bar{\xi}|.
$$
Using Stokes' theorem for the $\bpat$ operator, we have
$$
2\int \sum_i\left|\frac{\pat W}{\pat v_i} \right|^2 \psi^\beta
|d\xi d\bar{\xi}|=\int \pat_{\bar{\xi}}\psi \beta W
\psi^{\beta-1}-2\int_{B_{2R}} \sum_i \overline{\frac{\pat
W_{0,\beta}}{\pat v_i}}\frac{\pat W}{\pat v_i} \psi^\beta |d\xi
d\bar{\xi}|.
$$
Now we can proceed in the same way as in the proof of Theorem
\ref{bdd-inn-thm} to obtain the conclusion.
\end{proof}

Define $\hat{v}_i=u_i e^{-\Theta^{\gamma_l}_is}$ for any $i$; then
the  $\hat{v}_i$ are well-defined functions in the cylinder
$S^1\times [0,\infty)$. We have the relations:
\begin{align}
&\hat{v}_i=v_i=u_i,\;\forall i\leq N_l;\\
&|\hat{v}_i|=|v_i|=|u_i|e^{-\Theta_i^{\gamma_l}s},\;\forall i>N_l.
\end{align}

The equation (\ref{plgw-n-equ}) of $u_i$ for any $i>N_l$ can be
rewritten as
$$
\pat_{\bar{\xi}}u_i+\overline{\frac{\pat W_N(v_1,\dots,v_t)}{\pat
v_i}}e^{\Theta_i(s+i\theta)}=0.
$$
Hence $\hat{v_i}$ for $i>N_l$ satisfies
\begin{equation}
(\pat_s+\sqrt{-1}\pat_\theta+\Theta_i)\hat{v}_i=-\overline{\frac{\pat
W_N(v_1,\dots,v_N)}{\pat v_i}}e^{i \Theta_i\theta}.
\end{equation}

\begin{lm}\label{inte-1} Let $u_1,\dots, u_N$ be the solutions of the equations
(\ref{plgw-n-equ}) and (\ref{plgw-r-equ}) defined in the cylinder
$S^1\times [0, \infty)$. Then there holds
$$
\int^\infty_0 \int_{S^1} \sum_{i\leq N_l} \left| \frac{\pat
(W_{\gamma}+W_{0,\beta})}{\pat u_i} \right|^2+\sum_{i>N_l}
\left|\sum_j \frac{\pat W_j(u_1,\dots,u_N)}{\pat u_i} e^{-\sum_l
b_{jl}\Theta_l (s+i\theta)+\Theta_i s} \right|^2 ds d\theta<\infty.
$$
\end{lm}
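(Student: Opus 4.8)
The plan is to exploit the two structural features of the perturbed Witten equation near a marked point: the exponential weight $e^{\Theta^\gamma_i s}$ carried by the NS coordinates (with $\Theta^\gamma_i>0$), and the Hamiltonian form of the equation in cylindrical coordinates. First, since $\beta_T\equiv 1$ on $\{s\ge T+1\}$ while $S^1\times[0,T+1]$ is compact — on which the integrand is bounded by the interior estimates of Theorem~\ref{bdd-bdry-thm} — it suffices to bound the integral over $S^1\times[T+1,\infty)$, where $W_{0,\beta}=W_0$, so that $W+W_{0,\beta}=(W_\gamma+W_0)+W_N$. I would use throughout: the uniform bound $|v_i|\le C_0$ on the cylinder (equivalently $|\hat v_i|\le C_0$) together with all its $s,\theta$-derivatives, from Theorem~\ref{bdd-bdry-thm}; and condition~(3) in the definition of a solution, namely $\sum_j\int_0^\infty\int_{S^1}|\pat_s u_j|^2<\infty$. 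Because $u_i=v_i$ for $i\le N_l$, the first summand in the integrand is $\sum_{i\le N_l}|\pat_{v_i}(W_\gamma+W_0)(v_R)|^2$, while (via $v_i=u_i z^{\Theta^\gamma_i}$, or equivalently the $\hat v_i$-equation derived just above) the second summand is $\sum_{i>N_l}|\pat_{v_i}W_N(v)|^2$.

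The first step is to show $\hat v_i\in L^2(S^1\times[0,\infty))$ for every $i>N_l$. Put $\tilde E_i(s)=\int_{S^1}|u_i(s,\cdot)|^2\,d\theta$; Cauchy--Schwarz gives $|\tfrac{d}{ds}\sqrt{\tilde E_i(s)}|\le\|\pat_s u_i(s,\cdot)\|_{L^2(S^1)}$, and since the right-hand side lies in $L^2(ds)$ by condition~(3), $\tilde E_i(s)$ grows at most linearly. As $\Theta^\gamma_i>0$, this forces $\int_0^\infty\int_{S^1}|\hat v_i|^2=\int_0^\infty e^{-2\Theta^\gamma_i s}\tilde E_i(s)\,ds<\infty$; differentiating $\hat v_i=u_i e^{-\Theta^\gamma_i s}$ then gives $\pat_s\hat v_i\in L^2$ as well. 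Moreover, since every monomial of $W_N$ contains a factor among the NS variables, $|\pat_{v_k}W_N(v)|\le C\sum_{j>N_l}|\hat v_j|$ whenever $k\le N_l$; hence $\pat_{v_k}W_N(v)\in L^2$ for $k\le N_l$, and likewise $\nabla_R H_N(v)\in L^2$.

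The second step is the Ramond estimate. Writing $g=W_\gamma+W_0$ (a holomorphic Morse function), I would multiply equation~\eqref{plgw-r-equ2} by $\pat_{v_i}g(v_R)$, sum over $i\le N_l$, and take real parts. Using the chain rule for the holomorphic functions $g$ and $W_N$, this yields a pointwise identity of the form
\begin{equation*}
\sum_{i\le N_l}|\pat_{v_i}g(v_R)|^2=\tfrac12\,\pat_s\big(\operatorname{Re}g(v_R)\big)-\tfrac12\,\pat_\theta\big(\operatorname{Im}g(v_R)\big)-\operatorname{Re}\!\!\sum_{i\le N_l}\pat_{v_i}g(v_R)\,\overline{\pat_{v_i}W_N(v)}.
\end{equation*}
Integration over $\theta\in S^1$ kills the $\pat_\theta$-term; integration over $s\in[T+1,S]$ then leaves a boundary term bounded uniformly in $S$ (because $g(v_R)$ is bounded) plus the pairing with $\pat_{v_i}W_N(v)$, which after Cauchy--Schwarz is $\le\varepsilon\int\!\!\int\sum_{i\le N_l}|\pat_{v_i}g(v_R)|^2+C_\varepsilon\int\!\!\int\sum_{i\le N_l}|\pat_{v_i}W_N(v)|^2$ with the second factor finite by Step~1. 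Choosing $\varepsilon$ small and absorbing gives $\int_{T+1}^\infty\int_{S^1}\sum_{i\le N_l}|\pat_{v_i}g(v_R)|^2<\infty$. Equation~\eqref{plgw-r-equ2} also gives $|\pat_\theta v_i|\le 2|\pat_{v_i}g(v_R)|+2|\pat_{v_i}W_N(v)|+|\pat_s v_i|$, so $\pat_\theta v_i\in L^2$ for $i\le N_l$.

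The third step, for the NS coordinates, is parallel: multiply equation~\eqref{plgw-n-equ2} by $\pat_{v_i}W_N(v)$, sum over $i>N_l$, and take real parts. The chain-rule splittings of $\pat_s W_N(v)$ and $\pat_\theta W_N(v)$ — all quantities globally defined on the cylinder, though the individual $v_i$, $i>N_l$, are only local — produce a total $s$-derivative of $\operatorname{Re}W_N(v)$, a total $\theta$-derivative of $\operatorname{Im}W_N(v)$, and a leftover term bounded by $C\sum_{k\le N_l}|\pat_{v_k}W_N(v)|\,(|\pat_s v_k|+|\pat_\theta v_k|)$. Integrating over $S^1$ and then $s\in[T+1,S]$: the first term gives a uniformly bounded boundary contribution ($W_N(v)$ is bounded), the second integrates to zero over $S^1$, and the leftover is in $L^1$ of the cylinder since $\pat_{v_k}W_N(v)\in L^2$ (Step~1) and $\pat_s v_k,\pat_\theta v_k\in L^2$ (condition~(3) and Step~2) — so no absorption is needed. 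Hence $\int_{T+1}^\infty\int_{S^1}\sum_{i>N_l}|\pat_{v_i}W_N(v)|^2<\infty$, and combined with the compact region this proves the lemma. I expect the principal obstacle to be a built-in circularity: the Witten equation expresses the $\pat_\theta$-derivatives precisely in terms of the nonlinearities whose $L^2$-norm is wanted, so the naive $\bpat$-estimate closes up trivially; the two devices above — extracting $\hat v_i\in L^2$ from the positive weight $\Theta^\gamma_i>0$, and then using the Hamiltonian structure so that the term pairing $\pat_\theta v$ with $\nabla(W_\gamma+W_0)$ (resp.\ $\nabla W_N$) becomes, after integration over the circle, a boundary term plus a genuinely lower-order contribution — are what break it. This is the $W$-equation analogue of the fact that finite-energy trajectories of a Floer-type equation decay to critical points.
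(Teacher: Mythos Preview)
Your argument is correct, but it takes a noticeably more roundabout route than the paper's. The paper does not split into Ramond and Neveu--Schwarz pieces at all: it multiplies the \emph{single} equation~\eqref{plgw-equ2} by $\partial_{v_i}(W+W_{0,\beta})$ and sums over \emph{all} $i$, obtaining
\[
\partial_{\bar\xi}\bigl(W+W_{0,\beta}\bigr)=2\sum_{i=1}^N\Bigl|\frac{\partial(W+W_{0,\beta})}{\partial v_i}\Bigr|^2+\Bigl(\sum_{j\le N_l} b_j v_j\Bigr)\partial_{\bar\xi}\beta_T.
\]
Since $W+W_{0,\beta}$ is globally defined on the cylinder, integrating over $S^1\times[0,T_0]$ kills the $\partial_\theta$-part and leaves only boundary terms at $s=0$ and $s=T_0$ (plus a harmless contribution supported on $[T,T+1]$ from $\partial_s\beta_T$); these are uniformly bounded in $T_0$ by the $C^0$ and $C^1$ bounds of Theorem~\ref{bdd-bdry-thm}. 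That is the whole proof.

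By contrast, you separate $W+W_0=g+W_N$ and run two energy identities, which forces you to control the cross terms $\partial_{v_k}W_N$ (for $k\le N_l$) and $\partial_\theta v_k$; that in turn requires your Step~1, where you invoke condition~(3) of the definition of a solution (the $L^2$ bound on $\partial_s u_j$) to get $\hat v_i\in L^2$ for $i>N_l$. None of this extra machinery is needed once one realises that multiplying by the gradient of the \emph{full} potential makes all the cross terms disappear automatically. Your approach is sound and the intermediate facts you prove ($\hat v_i\in L^2$, $\partial_\theta v_i\in L^2$ for $i\le N_l$) are true, but they are detours here; the paper's single identity is both shorter and avoids using condition~(3) altogether.
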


\begin{proof} Note that $\sum_l b_{jl} \Theta_l=0 \mod \Z$; hence
$$
\sum_j \frac{\pat W_j(u_1,\dots,u_N)}{\pat u_i} e^{-\sum_l
b_{jl}\Theta_l (s+i\theta)+\Theta_i s}
$$
is well defined on the whole cylinder and we can represent it as
the combination of the locally defined functions $v_i, i=1,\dots,
N$, as follows:
\begin{align}
&\sum_j \frac{\pat W_j(u_1,\dots,u_N)}{\pat u_i} e^{-\sum_l
b_{jl}\Theta_l (s+i\theta)+\Theta_i s}=\sum_j \frac{\pat
W_j(v_1,\dots,v_N)}{\pat v_i} e^{-i \Theta_i\theta}\nonumber\\
&=\frac{\pat W_N(v_1,\dots,v_N)}{\pat v_i} e^{-i \Theta_i\theta}.
\end{align}
 So what we need to prove is actually the following estimate:
\begin{equation}
\int^\infty_0 \int_{S^1} \sum_{i\leq N_l} \left| \frac{\pat
(W_{\gamma}+W_{0,\beta})}{\pat v_i} \right|^2+\sum_{i>N_l} \left|
\frac{\pat W_N(v_1,\dots,v_N)}{\pat v_i} \right|^2 ds
d\theta<\infty,
\end{equation}
or in equivalent form:
\begin{equation}\label{asym-n-proof1}
\int^\infty_0 \int_{S^1} \sum_{i=1}^N \left| \frac{\pat
(W+W_{0,\beta})}{\pat v_i} \right|^2<\infty.
\end{equation}
To prove (\ref{asym-n-proof1}), we multiply the two sides of the
equation (\ref{plgw-equ2}) by $\frac{\pat (W+W_{0,\beta})}{\pat
v_i}$ and take the sum of $i$ from $1$ to $N$, then there holds
$$
\pat_{\bar{\xi}} (W+W_{0,\beta})=2\sum_i \left| \frac{\pat
(W+W_{0,\beta})}{\pat v_i} \right|^2+(\sum_{j=1}^{N_l} b_j
v_j)\pat_{\bar{\xi}}\beta_T.
$$
Integrating the above equality over $S^1\times [0,T_0]$ for
$T_0>T,$ where $T$ appears in the definition of $\beta_T$, and
noting that $(W+W_{0,\beta})(v_1,\dots, v_N)$ is a well-defined
function on the cylinder, one has
$$
\int^{T_0}_0 \pat_s \int_{S^1} W+\int^{T_0}_0
\int_{S^1}\sum_{j=1}^{N_l} b_j \pat_s v_j
\beta_T=2\int^{T_0}_0\int_{S^1} \sum_i \left| \frac{\pat
(W+W_{0,\beta})}{\pat v_i} \right|^2.
$$
Hence
\begin{align*}
&2\int^{T_0}_0\int_{S^1} \sum_i \left| \frac{\pat
(W+W_{0,\beta})}{\pat v_i} \right|^2 \\
&=\int_{S^1} W(T_0, \theta)d\theta-\int_{S^1} W(0,
\theta)d\theta+\int^{T_0}_T \int_{S^1}\sum_j b_j \pat_s
v_j+\int^T_{T-1}\int_{S^1}
\sum_j b_j \pat_s v_j \beta_T.\\
&=\int_{S^1}(W+W_0)(T_0,\theta) d\theta-\int_{S^1}
W_0(T,\theta)-\int_{S^1} W(0,\theta)+\int^T_{T-1}\int_{S^1} \sum_j
b_j \pat_s v_j \beta_T.
\end{align*}
By Theorem \ref{bdd-inn-thm} and Theorem \ref{bdd-bdry-thm}, one
knows that $|v_i|$ and $|\pat_s v_i|$ are uniformly bounded for
any $i=1,\dots, N$. Thus the inequality (\ref{asym-n-proof1})
holds; in particular, the upper bound is independent of $T$.
\end{proof}

\begin{lm}\label{homo-int-esti} Let $0<\Theta<1$. Suppose that $v$ is a smooth bounded
solution of the equation
\begin{equation}\label{homo-cyln-equa}
(\pat_s+\sqrt{-1}\pat_\theta+\Theta)v=0
\end{equation}
satisfying the integrability condition
$$
\int^\infty_0 \int_{S^1} |\pat_s v|^2<\infty.
$$
Then for any $T\in (0,\infty)$, there holds
\begin{align}
\int^\infty_T \int_{S^1} |v|^2\le \frac{1}{\Theta^2} \int^\infty_T
\int_{S^1} |\pat_s v|^2
\end{align}
and
\begin{equation}
|v(s,\theta)|\le \sqrt{\frac{2}{\Theta}}e^{-\Theta
s}(\int^\infty_0 \int_{S^1} |\pat_s
v|^2)^{\frac{1}{2}}(1-e^{-2T})^{-\frac{1}{2}},\;\forall s\in
[T,\infty).
\end{equation}

\end{lm}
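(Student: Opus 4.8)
The plan is to diagonalize the equation by Fourier analysis in the angular variable on the half--cylinder $S^1\times[0,\infty)$. Writing $v(s,\theta)=\sum_{n\in\Z}v_n(s)e^{in\theta}$ and inserting this into (\ref{homo-cyln-equa}), each coefficient obeys the scalar ODE $v_n'(s)=(n-\Theta)v_n(s)$, so $v_n(s)=v_n(0)e^{(n-\Theta)s}$. Since $|v_n(s)|\le\sup|v|<\infty$ for every $s$ and $n-\Theta>0$ whenever $n\ge 1$ (here $0<\Theta<1$ is used), boundedness of $v$ forces $v_n\equiv 0$ for all $n\ge 1$; equivalently $\int_0^\infty\!\int_{S^1}|\partial_s v|^2<\infty$ already rules out any nonzero mode with $n\ge 1$. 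Conceptually this is the statement that $w:=v\,e^{\Theta s}$ satisfies the Cauchy--Riemann equation $\bar\partial_\xi w=0$ in $\xi=s+i\theta$, hence is holomorphic on the half--cylinder with growth $|w|\le Ce^{\Theta s}$, $\Theta<1$, so that in the coordinate $q=e^{-\xi}$ it extends holomorphically across the puncture $q=0$ and its Laurent series carries no positive frequencies. In every case only the modes $n\le 0$ survive, and for these $(\Theta-n)^2\ge\Theta^2$ --- a spectral gap that is the heart of the argument.

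Granting this, the first inequality is immediate and holds slicewise. For each fixed $s$, Parseval gives $\int_{S^1}|v|^2\,d\theta = 2\pi\sum_{n\le 0}|v_n(s)|^2$ and $\int_{S^1}|\partial_s v|^2\,d\theta = 2\pi\sum_{n\le 0}(\Theta-n)^2|v_n(s)|^2\ge 2\pi\Theta^2\sum_{n\le 0}|v_n(s)|^2=\Theta^2\int_{S^1}|v|^2\,d\theta$; integrating over $s\in[T,\infty)$ yields $\int_T^\infty\!\int_{S^1}|v|^2\le\Theta^{-2}\int_T^\infty\!\int_{S^1}|\partial_s v|^2$.

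For the pointwise bound I would first compute the total energy of the surviving modes explicitly: $\int_0^\infty\!\int_{S^1}|\partial_s v|^2 = 2\pi\sum_{n\le 0}(\Theta-n)^2|v_n(0)|^2\int_0^\infty e^{2(n-\Theta)s}\,ds = \pi\sum_{n\le 0}(\Theta-n)|v_n(0)|^2\ge\pi\Theta\sum_{n\le 0}|v_n(0)|^2$, so $\sum_{n\le 0}|v_n(0)|^2\le(\pi\Theta)^{-1}\int_0^\infty\!\int_{S^1}|\partial_s v|^2$. Then for $s\ge T$, writing $e^{(n-\Theta)s}=e^{-\Theta s}e^{ns}$ and applying Cauchy--Schwarz over the modes $n\le 0$,
$$
|v(s,\theta)|\le\sum_{n\le 0}|v_n(0)|\,e^{(n-\Theta)s}\le e^{-\Theta s}\Bigl(\sum_{n\le 0}|v_n(0)|^2\Bigr)^{1/2}\Bigl(\sum_{n\le 0}e^{2ns}\Bigr)^{1/2},
$$
together with $\sum_{n\le 0}e^{2ns}=(1-e^{-2s})^{-1}\le(1-e^{-2T})^{-1}$. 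Combining these gives the pointwise estimate with a numerical constant $(\pi\Theta)^{-1/2}$, which is no larger than the asserted $\sqrt{2/\Theta}$ since $1/\sqrt\pi\le\sqrt 2$ (any change of normalization of $S^1$ is absorbed harmlessly).

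It remains only to note what needs checking. The interchange of $\partial_s$ with $\int_{S^1}(\,\cdot\,)e^{-in\theta}\,d\theta$, the termwise differentiation of the Fourier series, and the absolute convergence of $\sum_{n\le 0}|v_n(s)|$ for $s>0$ are all routine since $v$ is smooth and bounded --- and in fact all its $s,\theta$--derivatives are bounded on interior cylinders by Theorems \ref{bdd-inn-thm} and \ref{bdd-bdry-thm}. The only substantive step, and the place where the hypotheses genuinely enter, is the elimination of the modes $n\ge 1$; once that spectral gap is secured, everything reduces to summing geometric series, and I anticipate no real obstacle.
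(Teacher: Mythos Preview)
Your proof is correct and follows essentially the same route as the paper: Fourier decompose in $\theta$, use boundedness (with $0<\Theta<1$) to kill the exponentially growing modes, and exploit the resulting spectral gap $(\Theta-n)^2\ge\Theta^2$ for the surviving modes. Your integral estimate is actually established slicewise in $s$ (slightly sharper than needed), and your pointwise constant $(\pi\Theta)^{-1/2}$ differs from the paper's $\sqrt{2/\Theta}$ only by the $S^1$ normalization, as you correctly anticipated.
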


\begin{proof} Since $v(s,\theta)$ is a smooth function defined in
$S^1\times [0,\infty)$, by Fourier analysis, any smooth solution
of (\ref{homo-cyln-equa}) has the following form:
$$
v(s,\theta)=\sum^\infty_{n=-\infty} C_n e^{-(n+\Theta)s}\cdot
e^{-in\theta}.
$$
Since $v$ is assumed to be bounded, hence $C_n=0,\forall n<0$.
Thus the bounded smooth solution has the form
$$
v(s,\theta)=\sum_{n=0}^\infty C_n e^{-(n+\Theta)s}\cdot
e^{-in\theta}.
$$
Now we have the integral estimate
\begin{align}
&\int^\infty_T \int_{S^1} |v|^2=\sum_{n=0}^\infty |C_n|^2
\int^\infty_T e^{-2(n+\Theta)s}ds=\frac{1}{2}\sum_{n=0}^\infty
|C_n|^2(\Theta+n)^{-1}e^{-2(n+\Theta)T}\nonumber\\
&\le \frac{1}{2}\sum_{n=0}^\infty
|C_n|^2(\Theta+n)e^{-2(n+\Theta)T}(\Theta+n)^{-2}\le
\frac{1}{\Theta^2}\int^\infty_T\int_{S^1} |\pat_s v|^2
\end{align}
and the pointwise estimate
\begin{align}
&|v(s,\theta)|\le \sum_{n=0}^\infty |C_n|e^{-(n+\Theta)s}\le
e^{-\Theta
s}(\sum^\infty_{n=0}|C_n|^2)^{\frac{1}{2}}(\sum^\infty_{n=0}
e^{-2nT})^{\frac{1}{2}}\nonumber\\
&\le \sqrt{\frac{2}{\Theta}}e^{-\Theta
s}(\int^\infty_0\int_{S^1}|\pat_s
v|^2)^{\frac{1}{2}}(1-e^{-2T})^{-\frac{1}{2}}.
\end{align}
\end{proof}

\

Extend the function $-\overline{\frac{\pat
W_N(v_1,\dots,v_N)}{\pat v_i}}e^{i \Theta_i\theta}$ symmetrically
to $(-\infty,\infty)$, and  view it as the free term of the
following equation for $v$ defined in $(-\infty,\infty)$:
\begin{equation}
(\pat_s+\sqrt{-1}\pat_\theta+\Theta_i)v=-\overline{\frac{\pat
W_N(v_1,\dots,v_N)}{\pat v_i}}e^{i \Theta_i\theta}.
\end{equation}
This equation has a unique bounded solution in the whole interval
$(-\infty,\infty)$ (see [D]). Since the ``free term" is $L^2$
integrable by Lemma \ref{inte-1}, we have the Fourier expansion
$$
-\overline{\frac{\pat W_N(v_1,\dots,v_N)}{\pat v_i}}e^{i
\Theta_i\theta}=\sum_n \rho_n(s) e^{-in\theta}.
$$
The unique bounded solution
$$ \hat{v}_{i,0}(s,\theta)=-\sum_{n=0}^\infty
e^{-(n+\Theta_i)s}\int^\infty_s e^{(n+\Theta_i)\tau}\rho_n
(\tau)d\tau e^{-in\theta}+\sum_{n=-\infty}^{-1}
e^{(n+\Theta_i)s}\int^s_{-\infty}
e^{-(n+\Theta_i)\tau}\rho_n(\tau)d\tau e^{in\theta}.
$$

\begin{lm}\label{inho-int-esti} Let $T>1$. For any $i>N_l$, $\hat{v}_{i,0}$ satisfies the
integral estimate
\begin{equation}
||\hat{v}_{i,0}||_{L^2_1(S^1\times [T,\infty])}\le C
 \left|\left| \frac{\pat
W_N}{\pat v_i}\right| \right|_{L^2(S^1\times [T-1,\infty))},
\end{equation}
where $C$ is a constant.
\end{lm}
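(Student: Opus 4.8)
Since $i>N_l$, the equation governing $\hat v_{i,0}$ is the constant-coefficient first-order system $(\pat_s+\sqrt{-1}\pat_\theta+\Theta_i)\hat v_{i,0}=-\overline{\pat W_N/\pat v_i}\,e^{i\Theta_i\theta}$ with $0<\Theta_i<1$, and the plan is to diagonalize it in the angular variable. Writing the (symmetrically extended) right-hand side as $\sum_n\rho_n(s)e^{-in\theta}$ and $\hat v_{i,0}=\sum_n e_n(s)e^{-in\theta}$, the system decouples into the scalar ODEs $e_n'+(n+\Theta_i)e_n=\rho_n$ for $n\in\Z$, whose bounded solutions are precisely the exponential-kernel integrals appearing in the formula for $\hat v_{i,0}$ displayed above. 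What makes the estimate work is that, since $\Theta_i\notin\Z$, the eigenvalues stay off zero: $|n+\Theta_i|\ge\delta:=\min(\Theta_i,1-\Theta_i)>0$ for all $n$, and moreover $|n|/|n+\Theta_i|\le1/\delta$.

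The first, routine, step is the one-variable estimate. On $\R$ the operator $\rho_n\mapsto e_n$ solving $e_n'+(n+\Theta_i)e_n=\rho_n$ is convolution with a Green's function whose Fourier symbol is $(i\xi+n+\Theta_i)^{-1}$, of modulus at most $|n+\Theta_i|^{-1}$; by Plancherel $\|e_n\|_{L^2(\R)}\le\delta^{-1}\|\rho_n\|_{L^2(\R)}$. From the ODE, $\|e_n'\|_{L^2}\le\|\rho_n\|_{L^2}+|n+\Theta_i|\,\|e_n\|_{L^2}\le2\|\rho_n\|_{L^2}$, and the $\theta$-derivative of $e_ne^{-in\theta}$ has $L^2$-norm $|n|\,\|e_n\|_{L^2}\le\delta^{-1}\|\rho_n\|_{L^2}$. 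Hence $\|e_ne^{-in\theta}\|_{L^2_1}\le C(\delta)\|\rho_n\|_{L^2}$ with $C(\delta)$ independent of $n$, and summing over $n$ with Parseval on both sides gives the crude global bound $\|\hat v_{i,0}\|_{L^2_1(S^1\times\R)}\le C\,\|\pat W_N/\pat v_i\|_{L^2(S^1\times[0,\infty))}$.

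To sharpen this to the statement of the lemma I would cut off: take $\chi(s)$ smooth with $\chi\equiv1$ on $[T,\infty)$, $\chi\equiv0$ on $(-\infty,T-1]$, $|\chi'|\le C$. Then $w:=\chi\hat v_{i,0}$ solves $(\pat_s+\sqrt{-1}\pat_\theta+\Theta_i)w=-\chi\,\overline{\pat W_N/\pat v_i}\,e^{i\Theta_i\theta}+\chi'\,\hat v_{i,0}$; the first term is supported in $S^1\times[T-1,\infty)$ and equals the true source there, while the second is supported in the collar $S^1\times[T-1,T]$. Applying the mode estimate of the previous step to $w$ yields $\|\hat v_{i,0}\|_{L^2_1(S^1\times[T,\infty))}\le\|w\|_{L^2_1}\le C\big(\|\pat W_N/\pat v_i\|_{L^2(S^1\times[T-1,\infty))}+\|\hat v_{i,0}\|_{L^2(S^1\times[T-1,T])}\big)$, so the lemma reduces to controlling the collar term $\|\hat v_{i,0}\|_{L^2(S^1\times[T-1,T])}$ by $\|\pat W_N/\pat v_i\|_{L^2(S^1\times[T-1,\infty))}$.

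I expect this last reduction to be the main obstacle. For the modes with $n+\Theta_i>0$, whose Green's kernel is forward-pointing (it integrates $\rho_n$ only over $\{\tau\ge s\}$), the collar contribution is bounded directly by $\rho_n|_{[T-1,\infty)}$, the unit-width buffer producing a harmless extra factor $e^{-|n+\Theta_i|}$. For the backward modes one must use the symmetric extension: the part of the kernel that reaches $\tau<0$ is rewritten through the reflected source, whose reflection lies at distance $\ge T-1>0$ from the collar, and the residual piece is absorbed using the a priori $C^0$-bounds of Theorems~\ref{bdd-inn-thm} and~\ref{bdd-bdry-thm}, the pointwise exponential decay of Lemma~\ref{homo-int-esti}, and the global $L^2$-bound of Lemma~\ref{inte-1}, after which one sums over $n$ against the summable weights $|n+\Theta_i|^{-2}e^{-2|n+\Theta_i|}$ by Cauchy--Schwarz. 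This collar estimate is where the real work lies and where the hypothesis $T>1$ is used; the remainder is just the standard spectral-gap argument for $\pat_s+\sqrt{-1}\pat_\theta+\Theta_i$.
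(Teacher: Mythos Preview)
Your mode-by-mode approach is different from, and considerably more elaborate than, the paper's. The paper proceeds in two short steps: it first cites Lemma~3.22 of \cite{Do} for the half-cylinder $L^2$ bound $\|\hat v_{i,0}\|_{L^2(S^1\times[T,\infty))}\le C\,\|\pat W_N/\pat v_i\|_{L^2(S^1\times[T,\infty))}$, and then upgrades $L^2$ to $L^2_1$ on $[T,\infty)$ by the standard interior estimate for the $\bpat$-type operator, which is where the unit buffer $[T-1,T]$ is spent. No cutoff or collar analysis is used at all; the localization to a half-line is entirely inside the cited lemma.

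Your global $L^2_1$ bound and the cutoff reduction to a collar term are correct, but the collar step does not close as you describe it. Two points. First, the kernel directions are reversed: for $\lambda=n+\Theta_i>0$ the unique bounded solution of $e_n'+\lambda e_n=\rho_n$ on $\R$ is the \emph{backward} convolution $e_n(s)=\int_{-\infty}^s e^{-\lambda(s-\tau)}\rho_n(\tau)\,d\tau$ (this is what stays bounded as $s\to-\infty$; the forward integral $\int_s^\infty e^{\lambda(\tau-s)}\rho_n$ need not even converge for $\rho_n\in L^2$), while the forward-looking modes are those with $n\le-1$. Second, and this is the real gap, whichever modes are backward-looking carry on the collar $[T-1,T]$ a contribution from the source over all of $[0,T-1]$, and that contribution cannot be bounded by $\|\rho\|_{L^2([T-1,\infty))}$ with a $T$-independent constant: if the source is supported in $[0,1]$ the right-hand side of the lemma vanishes for $T>2$ while the backward modes still have a nonzero decaying tail on $[T-1,T]$. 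Invoking the a priori $C^0$ bounds on $\pat W_N/\pat v_i$ gives only a fixed constant, not something dominated by the shrinking tail norm $\|\pat W_N/\pat v_i\|_{L^2([T-1,\infty))}$. So the half-line $L^2$ estimate is genuinely an independent input here, not a consequence of cutting off the global bound; the paper simply imports it as a black box and then bootstraps.
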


\begin{proof}
By Lemma 3.22 of [D], we have
$$
||\hat{v}_{i,0}||_{L^2(S^1\times [T,\infty))}\le C\int^\infty_T
\int_{S^1}\left| \frac{\pat W_N}{\pat v_i} \right|^2,
$$
where $C$ depends only on $ \Theta_i^{\gamma_l}$. Hence by $L^2$
interior estimates of the Cauchy-Riemann operator, for any
$1<p<\infty$, there holds
\begin{align*}
&||\hat{v}_{i,0}||_{L^2_1(S^1\times [T,\infty))}\\
&\le C \left(||\hat{v}_{i,0}||_{L^2(S^1\times
[T-1,\infty))}+\left|\left| \frac{\pat W_N}{\pat v_i}
\right|\right|_{L^2(S^1\times [T-1,\infty))}\right)\\
&\le C \left|\left| \frac{\pat W_N}{\pat v_i}
\right|\right|_{L^2(S^1\times [T-1,\infty))}.
\end{align*}
\end{proof}

\begin{thm}\label{conv-n-solu} For any $i>N_l$, $\hat{v}_i(s,\theta)$ and its
derivatives of any order tend to zero uniformly as $s\to +\infty.$
\end{thm}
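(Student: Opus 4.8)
The plan is to reduce the statement to the decay of the right-hand side of the equation satisfied by $\hat{v}_i$, and then to propagate that decay to $\hat{v}_i$ itself through the splitting $\hat v_i=\hat v_{i,0}+w_i$ set up just above, using Lemmas~\ref{homo-int-esti} and~\ref{inho-int-esti} together with interior elliptic regularity. Recall that for $i>N_l$ the function $\hat v_i$ is globally defined on $S^1\times[0,\infty)$, that $|\hat v_i|=|v_i|$ is uniformly bounded there (Theorems~\ref{bdd-inn-thm} and~\ref{bdd-bdry-thm}), and that it solves
$$
(\partial_s+\sqrt{-1}\,\partial_\theta+\Theta_i)\hat v_i=\Xi_i,\qquad
\Xi_i:=-\,\overline{\frac{\partial W_N(v_1,\dots,v_N)}{\partial v_i}}\;e^{\sqrt{-1}\,\Theta_i\theta},
$$
where $\Xi_i$ --- equivalently the combination $\sum_j\frac{\partial W_j}{\partial u_i}e^{-\sum_l b_{jl}\Theta_l(s+\sqrt{-1}\theta)+\Theta_i s}$ appearing in Lemma~\ref{inte-1} --- is a \emph{globally} defined smooth function on the whole cylinder, even though the individual $v_j$ with $j>N_l$ are only defined on contractible subdomains.

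\emph{Step 1: $\Xi_i\to 0$ in every $C^m$ as $s\to\infty$.} By Lemma~\ref{inte-1} we have $\int_0^\infty\!\int_{S^1}|\Xi_i|^2<\infty$, hence $\int_{S^1\times[T,\infty)}|\Xi_i|^2\to 0$ as $T\to\infty$. On the other hand, the uniform bounds $\|v_j\|_{C^m(S^1\times[0,\infty))}\le C_m$ of Theorems~\ref{bdd-inn-thm} and~\ref{bdd-bdry-thm}, together with the fact that $W_N$ is a polynomial, give uniform bounds $\|\Xi_i\|_{C^{m}(S^1\times[0,\infty))}\le C'_m$ for every $m$. A function on the half-cylinder that is square-integrable and has a uniform $C^{m+1}$-bound necessarily tends to $0$ in $C^m$ on the ends $S^1\times[s,\infty)$: the $C^0$ case holds because a fixed-size lower bound for $|\Xi_i|$ at points escaping to infinity would, by the uniform Lipschitz bound, force a fixed positive contribution to $\int|\Xi_i|^2$ over disjoint balls; the $C^m$ case then follows by the interpolation inequality bounding $\|\partial^\alpha\Xi_i\|_\infty$ on an end by a positive power of $\|\Xi_i\|_\infty$ on that end times a fixed power of $\|\Xi_i\|_{C^{m+1}}$. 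Thus $\Xi_i\to 0$ in $C^m$ uniformly for all $m$.

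\emph{Step 2: transfer to $\hat v_i$.} Write $\hat v_i=\hat v_{i,0}+w_i$ on $S^1\times[0,\infty)$, where $\hat v_{i,0}$ is the bounded solution on the whole line constructed above. Since the symmetric extension of $\Xi_i$ agrees with $\Xi_i$ on $[0,\infty)$, $w_i:=\hat v_i-\hat v_{i,0}$ is a bounded solution of the homogeneous equation $(\partial_s+\sqrt{-1}\,\partial_\theta+\Theta_i)w_i=0$ there; by the Fourier computation in the proof of Lemma~\ref{homo-int-esti} one has $w_i=\sum_{n\ge0}C_n e^{-(n+\Theta_i)s}e^{-\sqrt{-1}\,n\theta}$ with $\sum_n|C_n|^2=\tfrac1{2\pi}\|w_i(0,\cdot)\|_{L^2(S^1)}^2<\infty$, from which $\|\partial_s^a\partial_\theta^b w_i\|_{C^0(S^1\times[s,\infty))}\le C_{a,b}\,e^{-\Theta_i s}$ for all $a,b$ and $s\ge1$. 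For $\hat v_{i,0}$, Lemma~\ref{inho-int-esti} combined with Lemma~\ref{inte-1} gives
$$
\|\hat v_{i,0}\|_{L^2_1(S^1\times[T,\infty))}\le C\,\Big\|\tfrac{\partial W_N}{\partial v_i}\Big\|_{L^2(S^1\times[T-1,\infty))},
$$
which tends to $0$ as $T\to\infty$. The operator $\partial_s+\sqrt{-1}\,\partial_\theta+\Theta_i$ is a zeroth-order perturbation of $\bar\partial$, so the usual interior $L^p$ and Schauder estimates apply on balls $B_R$ with translation-independent constants. Starting from the $L^2_1$-decay above (hence, by two-dimensional Sobolev embedding, $L^p$-decay for all finite $p$) and using that the right-hand side $\Xi_i$ decays in every $C^m$ by Step~1, a standard bootstrap shows $\hat v_{i,0}\to 0$ in $C^m(S^1\times[s,\infty))$ for every $m$. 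Adding the two pieces, $\hat v_i$ and all its derivatives tend to $0$ uniformly as $s\to+\infty$.

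The principal difficulty is Step~1: upgrading the mere $L^2$-integrability of the free term (Lemma~\ref{inte-1}) and the uniform a priori $C^m$-bounds into genuine uniform $C^m$-decay along the cylinder, while keeping the argument coherent despite the $v_j$ ($j>N_l$) being defined only locally --- one must phrase everything in terms of the globally defined combination $\Xi_i$ and the moduli $|v_j|$. Once Step~1 is available, Step~2 is a routine application of the homogeneous estimate and interior elliptic regularity.
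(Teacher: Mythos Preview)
Your proof is correct and rests on the same splitting $\hat v_i=\hat v_{i,0}+w_i$ as the paper, but the two arguments diverge in how the homogeneous piece $w_i$ is handled. The paper controls $w_i$ through Lemma~\ref{homo-int-esti}, for which it needs $\int|\partial_s w_i|^2<\infty$; this is obtained by combining Lemma~\ref{inho-int-esti} (giving $\|\partial_s\hat v_{i,0}\|_{L^2}<\infty$) with the hypothesis $\int|\partial_s u_i|^2<\infty$ built into the \emph{definition} of a solution. You bypass that hypothesis entirely: from boundedness of $w_i$ alone you read off the Fourier expansion $w_i=\sum_{n\ge0}C_ne^{-(n+\Theta_i)s-in\theta}$ and get exponential $C^m$-decay directly. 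Your Step~1 (upgrading the $L^2$-integrability of $\Xi_i$ plus uniform $C^{m+1}$-bounds to uniform $C^m$-decay) is also not isolated in the paper; there the bootstrap is run on $\hat v_i$ itself, using that once $\hat v_i\in L^p$ on the tail one can bound $\|\tfrac{\partial W_N}{\partial v_i}\|_{L^p}$ and iterate. Both routes work; yours is a bit more self-contained in that it does not invoke the a~priori integrability condition~(3), while the paper's is shorter because it skips the separate interpolation argument for $\Xi_i$.
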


\begin{proof} The bounded function
$\hat{v}_i-\hat{v}_{i,0}$ satisfies the homogeneous equation
$$
(\pat_s+\sqrt{-1}\pat_\theta+\Theta_i)v=0.
$$
On the other hand, by Lemma \ref{inte-1}, Lemma \ref{inho-int-esti}
and the definition of the solution $u_i$ we have
\begin{align}
&||\pat_s(\hat{v}_i-\hat{v}_{i,0})||_{L^2(S^1\times
[T,\infty))}\le ||\pat_s\hat{v}_i||_{L^2(S^1\times
[T,\infty))}+||\pat_s\hat{v}_{i,0}||_{L^2(S^1\times [T,\infty))}\\
&\le ||\pat_s\hat{v}_i||_{L^2(S^1\times
[T-1,\infty))}+C\left|\left| \frac{\pat W_N}{\pat v_i}\right|
\right|_{L^2(S^1\times[T-1,\infty))}\le \infty.
\end{align}
Hence by Lemma \ref{homo-int-esti}, we obtain
\begin{align}
&||\hat{v}_i||_{L^2(S^1\times [T,\infty))} \le ||
\hat{v}_{i,0}||_{L^2(S^1\times
[T,\infty))}+||\hat{v}_i-\hat{v}_{i,0}||_{L^2(S^1\times
[T,\infty))}\\
&\le \frac{1}{\Theta_i}
||\pat_s(\hat{v}_i-\hat{v}_{i,0})||_{L^2(S^1\times [T,\infty))}+||
\hat{v}_{i,0}||_{L^2(S^1\times
[T,\infty))}\nonumber\\
&\le C ||\pat_s\hat{v}_i||_{L^2(S^1\times
[T-1,\infty))}+C\left|\left| \frac{\pat W_N}{\pat v_i}\right|
\right|_{L^2(S^1\times[T-1,\infty))}.
\end{align}

By $L^2$ estimates and the above inequality, there holds
\begin{align}
&||\hat{v}_i||_{L^2_1(S^1\times [T,\infty))}\nonumber\\
&\le C\left(||\hat{v}_i||_{L^2(S^1\times [T-1,\infty))}+
\left|\left| \frac{\pat W_N}{\pat v_i}\right|
\right|_{L^2(S^1\times[T-1,\infty))}\right)\nonumber\\
&\le C\left( ||\pat_s\hat{v}_i||_{L^2(S^1\times [T-1,\infty))}+
\left|\left| \frac{\pat W_N}{\pat v_i}\right|
\right|_{L^2(S^1\times[T-1,\infty))}\right).
\end{align}

Notice that the term on the right hand side tends to zero when
$T\to\infty$.

By the embedding theorem, any $L^p$ norm of $\hat{v}_i$ can be
controlled by the $L^2_1$ norm of $\hat{v}_i$; therefore, the $L^p$
norm of $|\frac{\pat W_N}{\pat v_i}|$ is also been controlled.
Using $L^p$ estimates of the Cauchy-Riemann operator and
furthermore by the embedding theorem and Schauder estimates, any
$C^m$ norm of $\hat{v}_i$ can be controlled by the $L^2_1$ norm of
$\hat{v}_i$. Therefore, any $C^m$ norm of $\hat{v}_i$ in
$[T,\infty)$ tends to zero as $T\to \infty.$

\end{proof}

Now we turn to the study of the solution $u_i=v_i=\hat{v}_i$ for
$i\leq N_l$.

\begin{lm}\label{ramo-conv}
Suppose that $u_i, i\leq N_l$ is the solution of
(\ref{plgw-r-equ2}) and (\ref{plgw-n-equ2}); then $\pat_s
u_i(s,\theta)$, $\pat_\theta u_i(s, \theta) \rTo 0$
uniformly for $\theta\in S^1$ as $s\rTo \infty$.
\end{lm}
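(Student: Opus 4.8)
The plan is to derive the statement from three ingredients already in hand: the finite-energy condition built into the definition of a solution, the uniform interior estimates of Theorem \ref{bdd-bdry-thm}, and the square-integrability of the gradient term supplied by Lemma \ref{inte-1}. Throughout I would use that $u_i=v_i$ for $i\le N_l$, and that it suffices to work on the region $s\ge T+1$, where $\beta_T\equiv 1$, so that equation (\ref{plgw-r-equ2}) reads
\[
\pat_s u_i+\sqrt{-1}\,\pat_\theta u_i=2\,\overline{\frac{\pat(W+W_{0,\beta})}{\pat u_i}},\qquad i\le N_l .
\]

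First I would record two $L^2$ bounds on the first derivatives of $u_i$. The finite-energy condition in the definition of a solution gives at once $\int_0^\infty\!\int_{S^1}|\pat_s u_i|^2\,d\theta\,ds<\infty$. For the angular derivative, the displayed equation yields the pointwise bound $|\pat_\theta u_i|\le|\pat_s u_i|+2\,\bigl|\pat(W+W_{0,\beta})/\pat u_i\bigr|$; since the gradient term on the right is square-integrable over $S^1\times[0,\infty)$ by Lemma \ref{inte-1} --- indeed this is exactly the estimate (\ref{asym-n-proof1}) established in its proof --- it follows that $\pat_\theta u_i\in L^2(S^1\times[0,\infty))$ as well.

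Next I would bring in the uniform elliptic estimates. Applying Theorem \ref{bdd-bdry-thm} on balls $B_1(s_0,\theta_0)\subset B_2(s_0,\theta_0)$ for all $s_0\ge 2$ gives $\|u_j\|_{C^m(B_1(s_0,\theta_0))}\le C$ with $C$ independent of $(s_0,\theta_0)$; in particular $\pat_s u_i$ and $\pat_\theta u_i$ have uniformly bounded $C^1$-norms on $S^1\times[1,\infty)$. The proof then finishes with the elementary fact that a function $f$ on $S^1\times[1,\infty)$ with uniformly bounded $C^1$-norm and finite $L^2$-norm must tend to $0$ uniformly in $\theta$ as $s\to\infty$: otherwise there are $\varepsilon>0$ and points $(s_n,\theta_n)$ with $s_n\to\infty$ and $|f(s_n,\theta_n)|\ge\varepsilon$, and the $C^1$-bound forces $|f|\ge\varepsilon/2$ on balls of a fixed radius about them; after passing to a subsequence so that these balls are disjoint, $\int\!\int|f|^2$ diverges, a contradiction. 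Applying this to $f=\pat_s u_i$ and to $f=\pat_\theta u_i$ gives the lemma.

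The only step that is not pure bookkeeping is the control of the \emph{angular} derivative $\pat_\theta u_i$: it is not directly bounded by the energy, so one must trade it for $\pat_s u_i$ plus the gradient of $W+W_{0,\beta}$ via the first-order Witten equation, and the latter is $L^2$-integrable only because of the energy identity behind Lemma \ref{inte-1}. Note that $\pat(W+W_{0,\beta})/\pat u_i$ is not yet known to decay pointwise --- that would follow only once the $u_i$ are shown to converge to a critical point of $W_\gamma+W_0$ --- so the argument must be routed through the $L^2$ bound rather than a pointwise one.
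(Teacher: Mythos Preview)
Your proof is correct, but it takes a different route from the paper's. The paper differentiates the equation (\ref{plgw-r-equ2}) in $s$, obtaining a second-order equation for $\pat_s u_i$; it then computes $\Delta(\pat_s u_i)$ explicitly, derives a sub-mean-value inequality for $|\pat_s u_i|^2$, and applies the maximum principle to bound $|\pat_s u_i|^2(s,\theta)$ by $C\int_{s-1}^{s+1}\int_{S^1}|\pat_s u_i|^2$, which tends to zero by the finite-energy condition. The argument is then repeated for $\pat_\theta u_i$.

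Your approach sidesteps the Laplacian computation entirely by invoking the $C^m$-bounds of Theorem~\ref{bdd-bdry-thm} for $m\ge 2$: those already give a uniform Lipschitz bound on $\pat_s u_i$ and $\pat_\theta u_i$, after which the elementary $L^2+\text{Lipschitz}\Rightarrow$ uniform decay argument suffices. This is shorter and avoids the somewhat involved bookkeeping with $X_{ij}$, $X_{ijk}$, $L_i$ in the paper. It also makes explicit a point the paper leaves implicit: to conclude for $\pat_\theta u_i$ ``in the same way,'' one needs $\pat_\theta u_i\in L^2(S^1\times[0,\infty))$, and your trade via the first-order equation together with (\ref{asym-n-proof1}) is exactly how one supplies this. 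The paper's approach, on the other hand, is more self-contained at the PDE level and would work even if only low-order a priori bounds were available.
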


\begin{proof}
Take the derivative $\pat_s$ of the equation (\ref{plgw-r-equ2});
then we get
\begin{equation}\label{proof-expo-deca1}
\pat_s (\pat_s u_i)+\sqrt{-1}\pat_\theta (\pat_s u_i)-2\sum_{j\leq
N_l}\overline{\frac{\pat^2 (W_{\gamma}+W_{0,\beta})}{\pat u_i \pat
u_j}} \overline{\pat_s u_j}=\pat_s (2\overline{\frac{\pat
W_{N}(v_1,\dots, v_N)}{\pat v_i}}).
\end{equation}
For simplicity, we set $X_{ij}=2\frac{\pat^2
(W_{\gamma}+W_{0,\beta})}{\pat u_i \pat u_j}$, which is a complex
symmetric matrix, and set $L_i(s,\theta)=\pat_s
(2\overline{\frac{\pat W_{N}}{\pat v_i}})$. So
(\ref{proof-expo-deca1}) becomes
\begin{equation}\label{proof-expo-deca2}
\pat_s (\pat_s u_i)+\sqrt{-1}\pat_\theta (\pat_s
u_i)-\overline{X_{ij}(\pat_s u_j)}=L_i.
\end{equation}
Here we also omit the summation sign if no confusion arises.

We have
\begin{align}
&\Delta (\pat_s u_i)=\overline{
(\pat_s+\sqrt{-1}\pat_\theta)(X_{ij}(\pat_s u_j))}+(\pat_s-\sqrt{-1}\pat_\theta)L_i\nonumber\\
&=\overline{X_{ijk}(\pat_s u_k+\sqrt{-1}\pat_\theta
u_k)(\pat_s u_j)}+\overline{X_{ij}}(X_{jk}(\pat_s u_k)+\bar{L}_j)+(\pat_s-\sqrt{-1}\pat_\theta)L_i\nonumber\\
&=\overline{X_{ijk}(\pat_s u_k+\sqrt{-1}\pat_\theta u_k)}\cdot
\overline{\pat_s u_j}+(\overline{X_{ij}}X_{jk})\cdot (\pat_s
u_k)+\overline{X_{ij}}\cdot
\bar{L}_j+(\pat_s-\sqrt{-1}\pat_\theta)L_i.
\end{align}
Denote by $F_i$ the term on the right hand side. We know that
$F_i$ is a bounded term.

We have
\begin{align}
&\Delta |(\pat_s u_i)|^2=4\Delta(\pat_s u_i)\cdot\overline{(\pat_s
u_i)}+4\Delta\overline{(\pat_s u_i)}\cdot (\pat_s u_i)+4|\bpat
(\pat_s u_i)|^2+4|\pat (\pat_s u_i)|^2\nonumber\\
&\ge 4(\bar{F}_i\cdot (\pat_s u_i)+F_i\cdot\overline{\pat_s u_i}).
\end{align}

By the maximum principle, we have
\begin{align}
&|(\pat_s u_i)|^2(s,\theta)\le
C(\int^{s+1}_{s-1}\int_{S^1}|(\pat_s
u_i)|^2+\int^{s+1}_{s-1}\int_{S^1}|F_i|^2
|(\pat_s u_i)|^2)\\
&\le C_1 \int^{s+1}_{s-1}\int_{S^1}|(\pat_s u_i)|^2.
\end{align}
This shows that $\pat_s u_i$ converges to zero uniformly for
$\theta\in S^1$. In the same way, one can prove that the
conclusion is also true for $\pat_\theta u_i$.
\end{proof}

\begin{thm}\label{conv-r-solu} Let $u_j, j\leq N_l$ be the solution of (\ref{plgw-r-equ2})
and (\ref{plgw-n-equ2}), then there holds
$||u_{j}(s,\theta)-\kappa_{j}||_{C^0(S^1)}\rTo 0$ as $s\to
0$, where $\kappa=(\kappa_{1}, \dots, \kappa_{N_l})$ is one of
the critical points of $W_{\gamma}+W_{0}$.
\end{thm}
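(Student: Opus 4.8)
The plan is to reduce the asymptotics of the PDE to that of an ODE for the $\theta$-averages. By Lemma~\ref{ramo-conv} we already know that $\pat_\theta u_i(s,\theta)$ and $\pat_s u_i(s,\theta)$ tend to $0$ uniformly in $\theta$ as $s\to\infty$ (the marked point), so the oscillation of $u_i(s,\cdot)$ over $S^1$ is bounded by $\pi\sup_\theta|\pat_\theta u_i(s,\cdot)|\to 0$; hence, setting $\bar u_i(s):=\frac{1}{2\pi}\int_{S^1}u_i(s,\theta)\,d\theta$, we have $\|u_i(s,\cdot)-\bar u_i(s)\|_{C^0(S^1)}\to 0$. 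It therefore suffices to prove that $\bar u(s)=(\bar u_1(s),\dots,\bar u_{N_l}(s))$ converges, as $s\to\infty$, to a critical point of $W_\gamma+W_0$.

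First I would extract the limiting ODE. For $s$ large enough that $\beta_T\equiv 1$ near $z_l$, equation~(\ref{plgw-r-equ2}) reads, for $i\le N_l$ and $v_i=u_i$,
$$\pat_s u_i+\sqrt{-1}\,\pat_\theta u_i-2\,\overline{\frac{\pat(W_\gamma+W_0)}{\pat u_i}}-2\,\overline{\frac{\pat W_N(v_1,\dots,v_N)}{\pat v_i}}=0 .$$
Averaging over $S^1$ annihilates the $\pat_\theta$-term; since every nonzero monomial of $\pat W_N/\pat v_i$ (for $i\le N_l$) still contains at least one Neveu--Schwarz variable $v_j$ with $j>N_l$, Theorem~\ref{conv-n-solu} forces $\int_{S^1}\overline{\pat W_N/\pat v_i}\,d\theta\to 0$; and the smoothness of $W$ together with $\|u(s,\cdot)-\bar u(s)\|_{C^0}\to 0$ lets us replace $u(s,\theta)$ by $\bar u(s)$ in the remaining term up to an $o(1)$ error. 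This yields
$$\frac{d\bar u_i}{ds}=2\,\overline{\frac{\pat(W_\gamma+W_0)}{\pat u_i}}(\bar u(s))+o(1),\qquad s\to\infty,$$
i.e., after rescaling $s$, an (ascending) gradient flow for $H_{R0}=2\,\mathrm{Re}(W_\gamma+W_0)$ on $\C^{N_l}\cong\R^{2N_l}$ perturbed by a term vanishing at infinity. Its rest points are exactly the critical points of the holomorphic Morse function $W_\gamma+W_0$, which form a finite, hence isolated, set (as in Theorem~\ref{thm-para-chamber}).

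The final step is the standard dynamical-systems convergence argument. The trajectory $s\mapsto\bar u(s)$ is bounded (Theorems~\ref{bdd-inn-thm} and~\ref{bdd-bdry-thm}) and uniformly continuous (indeed $\pat_s\bar u(s)\to 0$), so its $\omega$-limit set $\Lambda:=\bigcap_{S\ge 0}\overline{\{\bar u(s):s\ge S\}}$ is nonempty, compact, and connected. Passing to the limit in the displayed ODE along a sequence realizing any point of $\Lambda$, and using $\pat_s\bar u\to 0$, shows that $\overline{\nabla(W_\gamma+W_0)}$ vanishes on $\Lambda$; thus $\Lambda$ is a connected subset of the discrete critical set, hence a single point $\kappa=(\kappa_1,\dots,\kappa_{N_l})$. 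Therefore $\bar u(s)\to\kappa$, and combining with $\|u_i(s,\cdot)-\bar u_i(s)\|_{C^0(S^1)}\to 0$ gives $\|u_j(s,\theta)-\kappa_j\|_{C^0(S^1)}\to 0$, as claimed. I expect this last step—ruling out that $\bar u(s)$ recurs along several distinct critical points—to be the main obstacle; it is precisely the Morse (hence isolatedness) hypothesis on $W_\gamma+W_0$, together with the connectedness of $\Lambda$, that makes it work, so that no {\L}ojasiewicz-type gradient inequality is required.
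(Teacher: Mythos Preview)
Your argument is correct and uses the same ingredients as the paper (Lemma~\ref{ramo-conv}, Theorem~\ref{conv-n-solu}, boundedness, and the discreteness of the critical set), but is organized a bit more directly. The paper does not average first; instead it applies Arzel\`a--Ascoli to the full profiles $u_i(s_n,\cdot)\in C^m(S^1)$, passes to the limit equation
\[
\sqrt{-1}\,\partial_\theta u_i^\infty - 2\,\overline{\frac{\partial(W_\gamma+W_0)}{\partial u_i}}=0\quad\text{on }S^1,
\]
and shows by an energy identity that $u_i^\infty$ is constant and critical. The final uniqueness step in the paper is an intermediate-value argument on $\fint_{S^1}u(s,\theta)\,d\theta$, which is exactly your $\omega$-limit-set connectedness argument in disguise. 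Your route---using Lemma~\ref{ramo-conv} to kill the oscillation and then working with the averaged ODE from the outset---bypasses the Arzel\`a--Ascoli and limit-equation analysis, so it is slightly leaner; the paper's route, on the other hand, yields the limit equation on $S^1$ as an intermediate result, which is of some independent interest. One small remark: the connectedness of the $\omega$-limit set holds for any bounded continuous trajectory and does not actually require $\partial_s\bar u\to 0$; you only need $\partial_s\bar u\to 0$ to conclude that every $\omega$-limit point is critical.
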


\begin{proof}  Let $s_n$ be any sequence tending to
infinity as $n\to\infty$. By Theorem \ref{bdd-bdry-thm};
$||u_i(s_n,\cdot)||_{C^m(S^1)}$ is uniformly bounded for any $m\in
\Z$. Hence by the Arzel\`a-Ascoli theorem there is a subsequence
$s_{n_k}$ such that $u_i(s_{n_k},\theta)$ converges uniformly in
$C^{m-1}$ norm to a limit function $u_i^\infty(\theta)$.\

$u_i(s_{n_k},\theta)$ satisfies the following relation:
$$
\pat_s u_i(s_{n_k},\theta)+\sqrt{-1}\pat_\theta
u_i(s_{n_k},\theta)-2\overline{\frac{\pat (W_{\gamma}+W_{0})}{\pat
u_i}}=2\overline{\frac{\pat W_N(v_1,\dots, v_N)}{\pat v_i}}.
$$
By Theorem \ref{conv-n-solu} and Lemma \ref{ramo-conv}, we know
that
$$
\pat_s u_i(s_{n_k})\rTo 0,\;\frac{\pat W_N(v_1,\dots,
v_N)}{\pat v_i}\rTo 0,
$$
as $n_k\to \infty$. So let $k\to \infty$ in the above equation; we
can obtain the limit equation
\begin{align}
&\sqrt{-1}\pat_\theta u_i^\infty-2\overline{\frac{\pat
(W_{\gamma}+W_{0})(u_1^\infty, \dots, u_{N_l}^\infty)}{\pat
u_i}}=0,\; \forall i\leq N_l\label{limi-r-equ2}.
\end{align}

Multiply $\frac{\pat (W_{\gamma}+W_0)}{\pat u_i}$ to the two sides
of (\ref{limi-r-equ2}) and take the sum; then there holds
\begin{align}
\sum_{i\leq N_l} \sqrt{-1} \frac{\pat u_i^\infty}{\pat \theta}
\frac{\pat (W_{\gamma}+W_0)}{\pat u_i}=\sum_{i\leq N_l}
2\left|\frac{\pat (W_{\gamma}+W_0)}{\pat u_i}\right|^2.
\end{align}
Integrating, we obtain
$$
\int_{S^1} \sum_{i\leq N_l} 2\left|\frac{\pat
(W_{\gamma}+W_0)}{\pat u_i}\right|^2d\theta=0,
$$
and so
$$
\sqrt{-1}\pat_\theta u_i^\infty=\frac{\pat (W_{\gamma}+W_0)}{\pat
u_i}=0, \;\forall i\leq N_l.
$$

Thus the solutions $u_i^\infty=\kappa_i, i\leq N_l$ are constants,
and $(\kappa_1, \dots, \kappa_{N_l})$ is just one of the critical
points of the polynomial $W_{\gamma}+W_0$. Notice that
$W_{\gamma}+W_0$ is a holomorphic Morse function and has finitely
many critical points, and so the values that $u_i^\infty$ can
attain are also  finitely many. Assume that $\kappa^1, \dots,
\kappa^m$ are different critical points. Define
$$
r=\min_{l\neq k}|\kappa^l-\kappa^k|.
$$

Now if we assume that the conclusion of this theorem does not
hold, then there is a sequence $s_n\to\infty$ such that
$u_i(s_n,\theta)$ does not converge. Hence there are at least two
subsequences $s_{n_k}$ and $s_{n_k'}$ tending to infinity such
that
$$
||u(s_{n_k},\theta)-\kappa^l||_{C^0}\to
0,\;||u(s_{n_k'},\theta)-\kappa^{l'}||_{C^0}\to
0,\;\text{as}\;k,k'\to \infty,
$$
where $\kappa^l\neq \kappa^{l'}$, and we set
$u=(u_1,\dots,u_{N_l})$. In particular, for sufficiently large
$k,k'$, we have
$$
|\fint_{S^1} u(s_{n_k},\theta)d\theta-\kappa^l|\le
\frac{r}{4},\;|\fint_{S^1}
u(s_{n_k'},\theta)d\theta-\kappa^{l'}|\le \frac{r}{4}.
$$
Now by continuity for sufficiently large $k,k'$, there exists
$s_{k''}\in [s_{n_k},s_{n_k'} ]$ such that
$$
\frac{r}{4}\le |\fint_{S^1}
u(s_{k''},\theta)d\theta-\kappa^{l'}|\le \frac{r}{3}.
$$
Again by the Arzel\`a-Ascoli theorem, there exists a subsequence
of $s_{k''} $, still denoted by $s_{k''}$ such that $u_i(s_{k''},
\theta )$ converges uniformly to some $\kappa^{l''}_i$ which
should satisfy
$$
\frac{r}{4}\le |\kappa^{l''}-\kappa^{l'}|\le \frac{r}{3}.
$$
However, we know that there is no $\kappa^{l''}$ whose distance
from $\kappa^{l'}$ is between $\frac{r}{4}$ and $\frac{r}{3}$. This
is a contradiction.
\end{proof}

As one application of our analysis of asymptotic behavior, we have
the following conclusion:

\begin{thm}[\textbf{Witten Lemma}]\label{lm-witt-noda} Suppose that $z_k$ is the unique
broad marked point on $\Sigma$, and $(u_1,\dots,u_{N_l})$ are all
the possible broad solutions near $p$ such that their local
coordinate functions $(\tilde{u}_1,\dots, \tilde{u}_{N_l})$
converge to a critical point $\kappa$ of $W_\gamma+W_{0}$. Then we
have
\begin{equation}
\sum_i ||\bpat u_i||^2_{L^2(\Sigma)}=\sum_i ||\frac{\pat
(W+W_{0,\beta})}{\pat u_i}||^2_{L^2(\Sigma)}=\pi
(W_\gamma+W_0)(\kappa)-\frac{1}{2}\int^{T+1}_T \sum_i
(b_i\tilde{u}_i\pat_s \beta_T) ds d\theta.
\end{equation}
If there is no broad marked point, then the equation has only the
zero solution.
\end{thm}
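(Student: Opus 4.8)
The strategy is an energy identity obtained by pairing the perturbed Witten equation \eqref{plgw-equ} with the $1$-form $\partial_{u_i}(W+W_{0,\beta})$ and applying Stokes' theorem on $\Sigma$, the global counterpart of the cylinder computation already performed in the proof of Lemma~\ref{inte-1}. The first equality is immediate: \eqref{plgw-equ} reads $\bpat u_i=-I_1\bigl(\overline{\partial_{u_i}(W+W_{0,\beta})}\bigr)$, and $I_1$ is metric-preserving, so $\|\bpat u_i\|_{L^2(\Sigma)}=\|\partial_{u_i}(W+W_{0,\beta})\|_{L^2(\Sigma)}$ for each $i$.

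For the second equality, write $\mathcal W:=W+W_{0,\beta}$ and view it as a section of $K_{\Sigma,\log}$, i.e., a $1$-form on $\Sigma$ with at most simple poles at the marked points. By the chain rule $d\mathcal W=\bpat\mathcal W=\sum_i\partial_i\mathcal W\cdot\bpat u_i+\Xi$, where $\Xi$ collects the terms in which $\bpat$ hits the cutoff $\beta_T$ explicitly; since $\partial_s\beta_T$ is supported in $\{T\le s\le T+1\}$ near the Ramond point $p$, so is $\Xi$. Substituting the Witten equation and using the pointwise identity $\partial_i\mathcal W\cdot I_1(\overline{\partial_i\mathcal W})=-2\sqrt{-1}\,|\partial_i\mathcal W|^2\,dA$ --- which follows from the explicit formula for $I_1$ and is metric-independent --- we get $d\mathcal W=-2\sqrt{-1}\sum_i|\partial_i\mathcal W|^2\,dA+\Xi$ away from the marked points. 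Integrating over $\Sigma$ with a small disc $D_\epsilon(z_l)$ removed around each marked point, applying Stokes, and letting $\epsilon\to0$, the boundary contribution at $z_l$ is $-\sqrt{-1}\int_{S^1}\mathcal W_{\mathrm{coeff}}(s,\theta)\,d\theta$ in cylindrical coordinates as $s\to\infty$. By Theorem~\ref{conv-n-solu} all Neveu--Schwarz coordinates tend to $0$, so this limit is $W(0)=0$ at every Neveu--Schwarz marked point; at $p$, Theorems~\ref{conv-r-solu} and \ref{conv-n-solu} give $(u_1,\dots,u_{N_l})\to\kappa$ and vanishing of the NS coordinates while $\beta_T\equiv1$ for large $s$, so $\mathcal W_{\mathrm{coeff}}\to(W_\gamma+W_0)(\kappa)$ and the contribution is $-2\pi\sqrt{-1}\,(W_\gamma+W_0)(\kappa)$. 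A direct computation in cylindrical coordinates (with $\tilde u_i=u_i$ for the Ramond variables) gives $\int_\Sigma\Xi=-\sqrt{-1}\int_T^{T+1}\int_{S^1}\sum_i b_i\tilde u_i\,\partial_s\beta_T\,ds\,d\theta$; rearranging the resulting identity yields precisely the stated formula. Finiteness of everything and the legitimacy of the limit $\epsilon\to0$ are guaranteed by Theorems~\ref{bdd-inn-thm} and \ref{bdd-bdry-thm} and Lemma~\ref{inte-1}.

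When $\Sigma$ has no Ramond marked point there is no perturbation, so $W_{0,\beta}\equiv0$, hence $\mathcal W=W$ and $\Xi=0$; every marked point is Neveu--Schwarz, so the right side of the identity above is $0$ and $\sum_i\|\partial_i W\|_{L^2(\Sigma)}^2=0$. Therefore $\partial_i W(u_1(x),\dots,u_N(x))=0$ for all $i$ and all $x$ (the $u_i$ are smooth by Theorem~\ref{bdd-inn-thm}); since $W$ is non-degenerate quasi-homogeneous, its only critical point is the origin, so $u_i\equiv0$ for all $i$.

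The step I expect to be the main obstacle is the bookkeeping in the Stokes argument --- keeping the orientations and normalizations of the boundary circles consistent between cylindrical and holomorphic coordinates, and confirming that $\int_\Sigma\Xi$ collapses to exactly $-\tfrac12\int_T^{T+1}\sum_i b_i\tilde u_i\,\partial_s\beta_T$ rather than an integration-by-parts variant of it. Much of this is already carried out on the cylinder in the proof of Lemma~\ref{inte-1}, so the real new point is checking that the exterior part of $\Sigma$ contributes nothing beyond cancelling the interior ``seam'' boundary term at $s=0$; the passage to the limit $\epsilon\to0$ also uses the decay rates in Theorem~\ref{conv-n-solu} and Lemma~\ref{homo-int-esti}, but these are already available.
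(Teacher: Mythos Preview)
Your proposal is correct and follows essentially the same route as the paper's proof: both compute the $L^2$-pairing $\sum_i(\bpat u_i,\,I_1(\overline{\partial_i(W+W_{0,\beta})}))$, recognize the integrand as $\bpat\mathcal W$ minus the cutoff term $\Xi$, apply Stokes' theorem on $\Sigma$ with small discs around the marked points removed, and evaluate the boundary limits via Theorems~\ref{conv-n-solu} and~\ref{conv-r-solu}. Your packaging (treating $\mathcal W$ as a global $(1,0)$-form and writing $d\mathcal W$ directly) is cosmetically different from the paper's disc-by-disc $L^2$ computation, but the underlying argument is identical; for the no-Ramond case your pointwise conclusion ``$\partial_iW(u(x))=0$ for all $x$, hence $u\equiv0$ by nondegeneracy'' is in fact slightly cleaner than the paper's, which interposes an $L^2$-integrability remark near the marked points before invoking nondegeneracy.
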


\begin{proof} Let $\{z_1,\dots,z_k\}$ be $k$ marked points and
$z_k$ be the only broad marked point. Our perturbed Witten
equation is
$$
\bpat u_i+I_1(\overline{\frac{\pat (W+W_{o,\beta})}{\pat u_i}})=0.
$$
Here the $u_i$ are global sections on $\Sigma$. Let
$u_i=\tilde{u}_i e_i$ in the disc $D_r(z_l)$ (with radius $r$ and
centered at $z_l$) of the marked point $z_l$. Then
\begin{align*}
&\sum_i (\bpat u_i, I_1(\overline{\frac{\pat (W+W_{o,\beta})}{\pat
u_i}}))_{L^2(\Sigma)}=\sum_i \int_{\Sigma\setminus \cup_{l=1}^{k}
D_r(z_l)} (\bpat u_i,
I_1(\overline{\frac{\pat W}{\pat u_i}}))+\\
&\sum_{l=1}^{k-1}  \sum_i \int_{D_r(z_l)} (\frac{\bpat
\tilde{u}_i}{\pat \bar{z}}d\bar{z}\otimes e_i, \sum_j
\overline{\frac{\pat W_j(\tilde{u}_1,\dots, \tilde{u}_N)}{\pat
\tilde{u}_i}z^{\sum^N_{s=1}b_{js}
(\Theta^{\gamma_l}_s-q_s)}}|e'_i|^2
e_i\otimes d\bar{z})\\
&+\sum_i \int_{D_r(z_k)} (\frac{\bpat \tilde{u}_i}{\pat
\bar{z}}d\bar{z}\otimes e_i, \sum_j \overline{(\frac{\pat
W_j(\tilde{u}_1,\dots, \tilde{u}_N)}{\pat
\tilde{u}_i}z^{\sum^N_{s=1}b_{js}
(\Theta^{\gamma_l}_s)}+b_i\beta_T)\frac{1}{z}}|e'_i|^2 e_i\otimes
d\bar{z})\\
&=-\lim_{r=|z|\to 0}\pi \sum_{l=1}^{k-1} \sum
W_j(\tilde{u}_1(z),\dots, \tilde{u}_N(z))z^{\sum^N_{s=1}b_{js}
(\Theta^{\gamma_l}_s)}\\
&-\lim_{r=|z|\to 0} \pi (\sum W_j(\tilde{u}_1(z),\dots,
\tilde{u}_N(z))z^{\sum^N_{s=1}b_{js} (\Theta^{\gamma_l}_s)}+\sum_i
b_i \beta_T \tilde{u}_i)+\sum_i \frac{\sqrt{-1}}{2}\int
\frac{dz\wedge d\bar{z}}{z}b_i \bpat \beta_T \tilde{u}_i \\
&=-\pi (W_\gamma+W_0)(\kappa)+\frac{1}{2}\int^{T+1}_T \sum_i
(b_i\tilde{u}_i\pat_s \beta_T) ds d\theta.
\end{align*}
We have used Stokes' theorem in the second equality and Lemma
\ref{conv-n-solu}, \ref{conv-r-solu} in the last equality. Using
the equation, we obtain
$$
\sum_i ||\bpat u_i||^2_{L^2(\Sigma)}=\sum_i ||\frac{\pat W}{\pat
u_i}||^2_{L^2(\Sigma)}=\pi
(W_\gamma+W_0)(\kappa)-\frac{1}{2}\int^{T+1}_T \sum_i
(b_i\tilde{u}_i\pat_s \beta_T) ds d\theta.
$$
If there is no broad marked point, we have the equality
$$
\sum_i ||\bpat u_i||_{L^2(\Sigma)}=\sum_i ||\frac{\pat W}{\pat
u_i}||_{L^2(\Sigma)}=0.
$$
Since near each marked point the norm $|u_i|$ of the section $u_i$
equals  $|\hat{v}_i|$ , which is an $L^2$-integrable function in
view of the proof of Theorem \ref{conv-n-solu}, $u_i$ is a
constant section near each marked point. Furthermore, $\frac{\pat
W}{\pat u_i}=0$; then non-degeneracy of $W$ forces $u_i\equiv 0$
for each $i=1,\dots, N$.
\end{proof}

Now it is easy to prove the following two corollaries.

\begin{crl}\label{crl-wittenlemma1} Suppose that $\Sigma=\Sigma^+\cup\Sigma^-$ is a nodal
curve having two components $\Sigma^\pm$ and the nodal point $p$ is
a broad nodal point. If all the marked points are narrow
points, then we have the estimates
\begin{equation}
\sum_i ||\bpat u_i||^2_{L^2(\Sigma)}=\sum_i ||\frac{\pat
(W+W_{0,\beta})}{\pat u_i}||^2_{L^2(\Sigma)}\le C\max_i|b_i|.
\end{equation}
\end{crl}

\begin{crl}\label{crl-wittenlemma2} The non-perturbed Witten equation has only the trivial
solution.
\end{crl}

\subsection{Exponential decay}\label{subsec:expo-deca}

\

Let $u_i,i=1,\dots,N,$ be the solutions of the perturbed Witten
equation near a marked point $z_l$. Setting as before $v_i=u_i
z^{\Theta^{\gamma_l}_i}=u_i e^{-\Theta^{\gamma_l}_i(s+i\theta)}$
and $\hat{v}_i=u_i e^{-\Theta^{\gamma_l}_i s}$, we have
\begin{equation}
\hat{v}_i=v_i e^{i\Theta^{\gamma_l}_i \theta}.
\end{equation}

Note that $\hat{v}_i$ is a well-defined function on $S^1\times
[0,\infty),$ but $v_i$ is only a locally defined function. The
locally defined section $v_i$ satisfies the equation
\begin{equation}
\pat_s v_i+\sqrt{-1}\pat_\theta v_i-2\overline{\frac{\pat
(W+W_0)}{\pat v_i}}=0.
\end{equation}

Suppose that the quasi-homogeneous polynomial $W$ has the form
$$
W(u_1,\dots, u_N)=\sum_j W_j=\sum_j c_j\prod_{k=1}^{N}
u_k^{b_{jk}}.
$$
Set $\Theta^\gamma(j):=\sum_k b_{jk} \Theta^\gamma_k\in \Z$. It is
known that $W_j$ is broad at $z_l$ iff $\Theta^\gamma(j)=0$, and
$W$ is broad at $z_l$ iff there exsits one monomial $W_{j_0}$
such that $\Theta^\gamma(j_0)=0$.

Now an easy computation shows that $\hat{v}_i$ satisfies the
equation
\begin{equation}\label{expo-decay-equ1}
\pat_s \hat{v}_i+\sqrt{-1}\pat_\theta \hat{v}_i+\Theta^\gamma_i
\hat{v}_i= \overline{\frac{\pat
\hat{W}(\hat{v}_1,\dots,\hat{v}_N,\theta)}{\pat \hat{v}_i}},
\end{equation}
where
\begin{equation}
\hat{W}(\hat{v}_1,\dots,\hat{v}_N,\theta):=2\left(\sum_j
W_j(\hat{v}_1,\dots,\hat{v}_N)e^{-i
\Theta^\gamma(j)\theta}+W_0(\hat{v}_1,\dots,\hat{v}_{N_l})\right).
\end{equation}

Define $\hat{H}=2\mbox{Re} \hat{W}$, and let
$\hat{v}_i=\hat{x}_i+\sqrt{-1}\hat{y}_i$. We want to change the
complex system $(\ref{expo-decay-equ1})$ into a real system. By
$(\ref{expo-decay-equ1})$, we have
$$
\pat_s(\hat{x}_i+\sqrt{-1}\hat{y}_i)+\sqrt{-1}\pat_\theta(\hat{x}_i+\sqrt{-1}\hat{y}_i)
+\Theta^\gamma_i(\hat{x}_i+\sqrt{-1}\hat{y}_i)=\frac{1}{2}(\pat_{\hat{x}_i}+\sqrt{-1}\pat_{\hat{y}_i})(2\hat{H}),
$$
for $i=1,\dots,N$. So we obtain
\begin{equation}\label{expo-decay-equ2}
\left(\begin{array}{l} \pat_s \hat{x}_i-\pat_\theta
\hat{y}_i+\Theta^\gamma_i \hat{x}_i=\pat_{\hat{x}_i}\hat{H}\\
 \\
\pat_s \hat{y}_i+\pat_\theta \hat{x}_i+\Theta^\gamma_i
\hat{y}_i=\pat_{\hat{y}_i}\hat{H},
\end{array}
\right.
\end{equation}
for $i=1,\dots, N$.

Define the following quantities:
\begin{align*}
&\hat{v}_R:=(\hat{x}_1,\dots,
\hat{x}_{N_l},\hat{y}_1,\dots,\hat{y}_{N_l})^T,\;\;\nabla_R=(\pat_{\hat{x}_1},\dots,
\pat_{\hat{x}_{N_l}},\pat_{\hat{y}_1},\dots,\pat_{\hat{y}_{N_l}})^T\\
&\hat{v}_N:=(\hat{x}_{N_l+1},\dots,
\hat{x}_{N},\hat{y}_{N_l+1},\dots,\hat{y}_{N})^T,\;\;\nabla_N=(\pat_{\hat{x}_{N_l+1}},\dots,
\pat_{\hat{x}_{N}},\pat_{\hat{y}_{N_l+1}},\dots,\pat_{\hat{y}_{N}})^T\\
&J_R:=\left(
\begin{matrix}
 0& -I_R\\
I_R&0
\end{matrix}\right),\;\;I_R\; is\; the\; N_l\times N_l\;
\text{identity matrix}\\
&J_N:=\left(
\begin{matrix} 0& -I_N\\
I_N&0
\end{matrix}\right),\;\;I_N \;is\; the\; (N-N_l)\times (N-N_l)\;
\text{identity matrix} \\
&\Theta_N:=\diag(\Theta^\gamma_{N_l+1},\dots,\Theta^\gamma_N)\\
&A_N:=\left(\begin{matrix}
 \Theta_N& 0\\
0&\Theta_N.
\end{matrix}\right)
\end{align*}

Then the system (\ref{expo-decay-equ2}) can be written as follows:
\begin{equation}\label{expo-decay-equ3}
\left(
\begin{array}{l}
\pat_s \hat{v}_R+J_R\cdot \pat_\theta \hat{v}_R=\nabla_R \hat{H}\\
 \\
\pat_s \hat{v}_N+J_N\cdot \pat_\theta \hat{v}_N+A_N\cdot \hat{v}_N=\nabla_N \hat{H}.\\
\end{array}
\right.
\end{equation}

Set
\begin{align*}
&\hat{\mathbf{v}}:=\left(\begin{matrix}\hat{v}_R\\
\hat{v}_N\end{matrix}\right),\;\;\mathbf{\nabla}:=\left(\begin{matrix}\nabla_R\\
\nabla_N\end{matrix}\right)\\
&J:=\left(\begin{matrix}J_R&0\\
0&J_N\end{matrix}\right),\;\; A:=\left(\begin{matrix}0&0\\
0&A_N\end{matrix}\right).
\end{align*}

Now we can write the equation (\ref{expo-decay-equ3}) in the
simple form
\begin{equation}\label{expo-decayequ}
\pat_s \mathbf{v}+J\cdot \pat_\theta \mathbf{v}+A\cdot \mathbf{v}=\mathbf{\nabla}\hat{H}.\\
\end{equation}
Here $J$ is an almost complex structure in $\R^{2N}$, since
$J^2=-I$.

Take the derivative $\pat_s$ of the two sides of
(\ref{expo-decayequ}) and let $\w=\pat_s \hat{\mathbf{v}}$; we
obtain the system for $\w$:
\begin{equation}\label{expo-decayequ-deri}
\pat_s \w+J\cdot \pat_\theta \w+S\cdot \w=0,\\
\end{equation}
where $S:=(A-\mathbf{\nabla}^2\hat{H})$.

The following lemma gives a formulation of the asymptotic behavior
of the coefficient matrix $S=A-\mathbf{\nabla}^2\hat{H}$.

\begin{lm}\label{expo-decay-cond} The following conclusions hold when $s\to
\infty$:
\begin{itemize}
\item[(1)] $ S(s,\theta)\rTo S^\infty:=
\begin{pmatrix} -\nabla^2_R 2\mbox{Re}(W_\gamma+W_0)|_{(\kappa_1,\dots,\kappa_{N_l})}&0\\
0&\Theta_N-\nabla^2_N
2\mbox{Re}(W_N)|_{(\kappa_1,\dots,\kappa_{N_l})}
\end{pmatrix},
$ where $(\kappa_1,\dots,\kappa_{N_l})$ is some critical point of
the polynomial $W_\gamma+W_0$.

\item[(2)]$\sup_{\theta\in S^1} ||\pat_s S(s,\theta)||\to
0$, as $s\to \infty$.

\item[(3)]$\sup_{S^1\times [0,\infty)}||\pat_\theta
S(s,\theta)||<\infty$.
\end{itemize}
\end{lm}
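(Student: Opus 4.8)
The plan is to derive all three statements from the asymptotic results already proved for solutions, combined with the elementary observation that, in its arguments, $\hat{H}=2\mbox{Re}\,\hat{W}$ is a polynomial in $\hat{v}$ and $\overline{\hat{v}}$ whose coefficients are trigonometric polynomials in $\theta$ (the finitely many phases $e^{-i\Theta^\gamma(j)\theta}$). Consequently $\nabla^2\hat{H}$, $\nabla^3\hat{H}$ and $\partial_\theta\nabla^2\hat{H}$ are continuous in $(\hat{v},\theta)$ and bounded on any bounded set of $\hat{v}$-values, uniformly in $\theta\in S^1$.

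For (1): by Theorem \ref{conv-n-solu} the NS-components $\hat{v}_i$, $i>N_l$, and all their derivatives tend to $0$ uniformly in $\theta$ as $s\to\infty$, while by Theorem \ref{conv-r-solu} the Ramond components $u_i=\hat{v}_i$, $i\le N_l$, converge uniformly to a critical point $\kappa=(\kappa_1,\dots,\kappa_{N_l})$ of $W_\gamma+W_0$. Hence $\hat{v}(s,\theta)\to\hat{v}^\infty:=(\kappa_1,\dots,\kappa_{N_l},0,\dots,0)$ uniformly, so $S(s,\theta)=A-\nabla^2\hat{H}(\hat{v}(s,\theta),\theta)\to A-\nabla^2\hat{H}(\hat{v}^\infty,\theta)$ uniformly in $\theta$. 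It remains to evaluate $\nabla^2\hat{H}$ at $\hat{v}^\infty$ block by block. A monomial $W_j$ of $W$ carries trivial phase exactly when $\Theta^\gamma(j)=0$, which happens exactly when it involves no NS variable (each NS variable contributes $\Theta^\gamma_i\in(0,1)$ while $\Theta^\gamma(j)$ is a non-negative integer), i.e.\ exactly when it belongs to $W_\gamma$; these are the only monomials contributing to the Ramond block, which together with the vanishing of $A$ there yields the entry $-\nabla_R^2\,2\mbox{Re}(W_\gamma+W_0)|_\kappa$. The mixed Ramond--NS block vanishes: a nonzero contribution there would require a monomial of NS-degree exactly one, forcing $\Theta^\gamma(j)=\Theta^\gamma_i\in(0,1)$, which contradicts $\Theta^\gamma(j)\in\Z$. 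On the NS block, $A$ restricts to $A_N$, the monomials of $W_\gamma+W_0$ contribute nothing, and those of $W_N$ of NS-degree $\ge3$ still carry an NS factor vanishing at $\hat{v}^\infty$, so a direct computation identifies this block with $A_N-\nabla_N^2\,2\mbox{Re}(W_N)|_{\hat{v}^\infty}$; this gives the stated form of $S^\infty$.

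For (2): differentiating in $s$, $\partial_s S=-\nabla^3\hat{H}(\hat{v},\theta)\cdot\partial_s\hat{v}$ (there is no explicit $s$-dependence in $\hat{H}$). Now $\partial_s\hat{v}$ tends to $0$ uniformly in $\theta$ --- on the Ramond components by Lemma \ref{ramo-conv}, on the NS components by Theorem \ref{conv-n-solu} --- while $\nabla^3\hat{H}(\hat{v}(s,\theta),\theta)$ is uniformly bounded since $\hat{v}$ is uniformly bounded by Theorem \ref{bdd-bdry-thm}; hence $\sup_\theta\|\partial_s S(s,\theta)\|\to0$. For (3): $\partial_\theta S=-\nabla^3\hat{H}(\hat{v},\theta)\cdot\partial_\theta\hat{v}-(\partial_\theta\nabla^2\hat{H})(\hat{v},\theta)$. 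The first term is bounded because $\partial_\theta\hat{v}$ is uniformly bounded (Theorem \ref{bdd-bdry-thm} applied to $v$, together with the bounded exponential factor relating $v_i$ and $\hat{v}_i=v_ie^{i\Theta^\gamma_i\theta}$), and the second because the only $\theta$-dependence of $\hat{H}$ enters through the phases $e^{-i\Theta^\gamma(j)\theta}$, whose $\theta$-derivatives are bounded and are multiplied by polynomial coefficients in the bounded quantity $\hat{v}$. Thus $\sup_{S^1\times[0,\infty)}\|\partial_\theta S\|<\infty$.

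The substantive point is the block-by-block identification of $S^\infty$ in (1); (2) and (3) are soft consequences of the convergence and boundedness statements. Two things require care: one must use Lemma \ref{ramo-conv} (rather than only the $C^m$-bounds of Theorem \ref{bdd-bdry-thm}) for the decay of $\partial_s u_i$ on the Ramond block, and one must keep track of which monomials of $W$ survive in each block of the limiting Hessian, using the split $W=W_\gamma+W_N$ and the integrality of $\Theta^\gamma(j)$.
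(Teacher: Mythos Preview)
Your proof is correct and follows essentially the same approach as the paper's. The paper's argument is terser: for (1) it invokes the convergence $\hat v\to(\kappa,0)$ and the fact that ``there are at least two Neveu--Schwarz sections in each monomial of $W_N$'' to kill the mixed block, while for (2) and (3) it simply cites Theorem~\ref{bdd-bdry-thm}, Theorem~\ref{conv-n-solu} and Lemma~\ref{ramo-conv}. Your integrality argument (a monomial of NS-degree one would force $\Theta^\gamma(j)\in(0,1)$, contradicting $\Theta^\gamma(j)\in\Z$) is precisely the proof of that key fact, and your chain-rule bookkeeping for (2) and (3) makes explicit what the paper leaves implicit.
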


\begin{proof}
When $s\to \infty$, we have $(\hat{v}_1,\dots,
\hat{v}_{N_l},\hat{v}_{N_l+1},\dots, \hat{v}_N)\to
(\kappa_1,\dots,\kappa_{N_l},0,\dots,0)$. Note that the
polynomial $W_\gamma+W_0$ only contains the variables
$\hat{v}_1,\dots, \hat{v}_{N_l}$; so we have two cases:
\begin{itemize}
\item for $1\le i, j\le 2N_l,$ there holds $(\nabla^2
\hat{H})_{ij}=(\nabla^2_R 2\mbox{Re}(W_\gamma+W_0+W_N))_{ij}$;

\item if $i,j$ do not satisfy case 1, then $(\nabla^2
\hat{H})_{ij}=(\nabla\nabla_N 2\mbox{Re}(W_N))_{ij}$.
\end{itemize}

There are at least two narrow sections in each
monomial of $W_N$, so if $1\le i\le 2N_l, N_l<j$ or $1\le j\le
2N_l, N_l<i$, then $(\nabla^2 \hat{H})_{ij}\to 0$. Hence
(1) is proved.

The proofs of (2) and (3) are easily obtained by Theorem
\ref{bdd-bdry-thm}, Theorem \ref{conv-n-solu} and Lemma
\ref{ramo-conv}.
\end{proof}

System (\ref{expo-decayequ-deri}) for $\w$ becomes the standard
system that appears frequently in symplectic geometry, whose
decaying behavior has been studied in detail. For example, we can
cite Lemma 2.11 of D. Salamon's lecture in [ET] as below (after
minor changes in notation):

\begin{lm}\label{expo-decay-salamon} Let $J$ be a $2N\times 2N$ real matrix such that
$J^2=-I$, and $S(s,\theta)$ be a $2N\times 2N$ matrix function
defined on $S^1\times [0,\infty)$ satisfying
$$
\lim_{s\to \infty}\sup_{\theta\in S^1}||\pat_s
S(s,\theta)||=0,\;\;\sup_{s,\theta}||\pat_\theta
S(s,\theta)||<\infty.
$$
Define the operator $D=\pat_s+J\pat_\theta+S$. If the unbounded
self-adjoint operator $J\pat_\theta+S: L^2(S^1,
\R^{2N})\rTo L^2(S^1, \R^{2N})$ is invertible, then there
exists a constant $\delta>0$ such that the following holds. For
every $C^2$-function $\xi: S^1\times [0,\infty)\rTo
\R^{2N}$ which satisfies $D\xi=0$ and does not diverge to $\infty$
as $s\to\infty$ there exists a constant $c>0$ such that for
sufficiently large $s$ and any $\theta\in S^1$,
$$
|\xi(s,\theta)|<ce^{-\delta s}.
$$
\end{lm}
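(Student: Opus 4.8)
The plan is to view $\xi$ as a path $s\mapsto\xi(s,\cdot)$ in the Hilbert space $H:=L^2(S^1,\R^{2N})$ and run the classical energy--comparison argument. Write $A(s):=J\pat_\theta+S(s,\cdot)$, so that $D\xi=0$ becomes the ODE $\pat_s\xi=-A(s)\xi$ in $H$. Since $J^T=-J$ (so $J\pat_\theta$ is symmetric on $H$) and each $S(s,\cdot)$ is a symmetric matrix, every $A(s)$ is an unbounded self-adjoint operator on $H$ with compact resolvent. Set $f(s):=\tfrac12\|\xi(s,\cdot)\|_H^2$. Then $f'(s)=-\langle\xi,A(s)\xi\rangle$, and differentiating once more and using self-adjointness of $A(s)$ together with $\pat_s\xi=-A(s)\xi$ gives
\[ f''(s)=2\,\|A(s)\xi\|_H^2-\langle\xi,(\pat_sS)(s,\cdot)\,\xi\rangle. \]

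The first and main step is to extract from this the differential inequality $f''\ge\mu^2 f$ for large $s$. The invertibility hypothesis means $0$ is not in the spectrum of the limiting operator $A_\infty:=J\pat_\theta+S^\infty$; since $A_\infty$ has compact resolvent this is equivalent to a spectral gap $\|A_\infty\eta\|_H\ge c_0\|\eta\|_H$ for some $c_0>0$ and all $\eta$ in the domain. Using that $S(s,\cdot)\to S^\infty$ uniformly in $\theta$ (in the application this is Lemma \ref{expo-decay-cond}(1)) and that $\sup_\theta\|\pat_sS(s,\cdot)\|\to0$, one finds an $s_0$ such that for $s\ge s_0$ one has $\|A(s)\eta\|_H\ge\tfrac34 c_0\|\eta\|_H$ and $\|(\pat_sS)(s,\cdot)\|\le\tfrac14 c_0^2$. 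Substituting into the formula for $f''$ yields $f''(s)\ge\mu^2 f(s)$ on $[s_0,\infty)$ with, say, $\mu^2=c_0^2$.

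Next I would run the scalar comparison. Because $\xi$ does not diverge, $f$ is bounded on $[s_0,\infty)$, say $f\le M$. Put $\psi:=f'+\mu f$; then $\psi'=f''+\mu f'\ge\mu^2 f+\mu f'=\mu\psi$, so $e^{-\mu s}\psi(s)$ is nondecreasing. If $\psi(s_1)>0$ for some $s_1\ge s_0$, then $\psi(s)\ge\psi(s_1)e^{\mu(s-s_1)}$, forcing $f'(s)\ge\psi(s_1)e^{\mu(s-s_1)}-\mu M\to+\infty$ and hence $f(s)\to\infty$, contradicting boundedness. Therefore $\psi\le0$ on $[s_0,\infty)$, i.e. $(e^{\mu s}f)'=e^{\mu s}\psi\le0$, so $f(s)\le f(s_0)e^{-\mu(s-s_0)}$ and $\|\xi(s,\cdot)\|_{L^2(S^1)}\le Ce^{-\mu s/2}$. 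To pass from $L^2$-decay to the pointwise bound, note that $\pat_s+J\pat_\theta$ has the same principal symbol as a Cauchy--Riemann operator, so $D\xi=0$ is a first-order elliptic system whose coefficients $S,\pat_\theta S$ are uniformly bounded; the interior $L^p$-estimate on the cylinders $S^1\times(s-1,s+1)$ (with constants independent of $s$), Sobolev embedding, and, if $C^m$-bounds are wanted, differentiating the equation and iterating (using $\pat_sS\to0$), give $\|\xi\|_{C^0(S^1\times[s-\frac12,s+\frac12])}\le C\|\xi\|_{L^2(S^1\times[s-1,s+1])}\le C'e^{-\mu s/2}$. Taking any $0<\delta<\mu/2$ gives $|\xi(s,\theta)|<c\,e^{-\delta s}$ for large $s$.

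I expect the only genuinely delicate step to be the first one: converting the operator-theoretic invertibility of the \emph{limiting} operator $A_\infty$ into a \emph{uniform} lower bound $\|A(s)\eta\|_H\ge\text{const}\cdot\|\eta\|_H$ valid for all large $s$, and then absorbing the error term $\langle\xi,(\pat_sS)\xi\rangle$ into it — this is precisely where the hypotheses $S(s,\cdot)\to S^\infty$ and $\sup_\theta\|\pat_sS\|\to0$ are used essentially, and where one must be careful that the perturbation $A(s)-A_\infty$ is small as an operator on $H$, not merely pointwise. The scalar ODE comparison and the elliptic bootstrap that follow are routine.
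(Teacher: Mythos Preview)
The paper does not give its own proof of this lemma; it is quoted verbatim (``after minor changes in notations'') from Salamon's lecture notes, Lemma~2.11. Your argument is precisely the standard proof one finds there: establish the convexity inequality $f''\ge\mu^2 f$ for $f(s)=\tfrac12\|\xi(s,\cdot)\|_{L^2}^2$ using the spectral gap of the asymptotic operator, run the scalar comparison to get $L^2$-exponential decay, then bootstrap to pointwise decay by interior elliptic estimates. The proof is correct.

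One remark worth making explicit. You use uniform convergence $S(s,\cdot)\to S^\infty$ to transfer the spectral bound from $A_\infty$ to $A(s)$ for large $s$, and you rightly flag that this comes from Lemma~\ref{expo-decay-cond}(1) in the application rather than from the hypotheses of the lemma as literally stated: $\sup_\theta\|\partial_s S\|\to 0$ by itself does not force $S$ to converge. Salamon's original formulation includes the convergence hypothesis; the paper's transcription is slightly loose on this point, and the phrase ``the self-adjoint operator $J\partial_\theta+S$'' is meant to refer to the limiting operator. Your handling of this is correct and your identification of this step as the only genuinely delicate one is accurate.
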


\begin{lm} Let $W_0(u_1,\dots, u_{N_l})=\sum_{k=1}^{N_l}  b_k u_k$ be any
$W_{\gamma_l}$-regular polynomial at a broad marked point $z_l$.
Then if $\sum_{k=1}^{N_l} |b_k|$ is sufficiently small, there
exist constants $C, T,\delta>0$ such that for any $(s,\theta)\in
S^1\times [T,\infty)$,
$$
|\pat_s \hat{v}_i|< C e^{-\delta s}, i=1,\dots, N.
$$
\end{lm}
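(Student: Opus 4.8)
The plan is to apply Salamon's decay lemma (Lemma~\ref{expo-decay-salamon}) to $\w=\pat_s\hat{\mathbf{v}}$, which by~(\ref{expo-decayequ-deri}) satisfies $D\w=0$ with $D=\pat_s+J\pat_\theta+S$. Three of the four hypotheses of that lemma are already in hand: $\lim_{s\to\infty}\sup_\theta\|\pat_s S\|=0$ and $\sup_{s,\theta}\|\pat_\theta S\|<\infty$ are exactly parts (2) and (3) of Lemma~\ref{expo-decay-cond}; $\w$ is smooth by the interior and boundary estimates of Theorems~\ref{bdd-inn-thm} and~\ref{bdd-bdry-thm} together with elliptic bootstrapping; and $\w$ does not diverge as $s\to\infty$ since $\pat_s\hat v_i\to0$ uniformly in $\theta$ (Theorem~\ref{conv-n-solu} for $i>N_l$, Lemma~\ref{ramo-conv} for $i\le N_l$). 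Granting the remaining hypothesis, Salamon's lemma furnishes $c,\delta>0$ and a $T$ such that $|\w(s,\theta)|\le c e^{-\delta s}$ for $s\ge T$; since $|\pat_s\hat v_i|\le|\w|$ for every $i$, this is exactly the asserted bound (with $C=c$).

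So the whole matter reduces to the fourth hypothesis: invertibility of the asymptotic operator $J\pat_\theta+S^\infty$ on $L^2(S^1,\R^{2N})$, with $S^\infty$ the block-diagonal matrix of Lemma~\ref{expo-decay-cond}(1). As $J$ and $S^\infty$ are block diagonal in the Ramond/Neveu--Schwarz splitting $\R^{2N}=\R^{2N_l}\oplus\R^{2(N-N_l)}$, this operator is the direct sum of $\mathcal L_R:=J_R\pat_\theta+S^\infty_R$, where $S^\infty_R:=-\nabla^2_R\,2\mbox{Re}(W_\gamma+W_0)|_\kappa$, and $\mathcal L_N:=J_N\pat_\theta+A_N-\nabla^2_N\,2\mbox{Re}(W_N)|_\kappa$, and it suffices to show both summands are invertible. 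Expanding $\mathcal L_R$ in Fourier modes $e^{in\theta}$, on which it acts as the matrix $inJ_R+S^\infty_R$: the $n=0$ block is $S^\infty_R$ itself, which is invertible because $W_\gamma+W_0$ is holomorphic Morse at $\kappa$ (that is the definition of $W_{\gamma_l}$-regular), so its real part $2\mbox{Re}(W_\gamma+W_0)$ is a nondegenerate real Morse function at $\kappa$; and for $|n|\ge1$ we have $inJ_R+S^\infty_R=inJ_R\bigl(I+\tfrac{1}{in}J_R^{-1}S^\infty_R\bigr)$, which is invertible once $\|S^\infty_R\|<1$, because $\|J_R^{-1}\|=1$.

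This last point is where the smallness of $\sum_k|b_k|$ is used. As $\sum_k|b_k|\to0$ the critical point $\kappa$ of $W_\gamma+W_0$ tends to $0$, the only critical point of $W_\gamma$ --- a quasi-homogeneous polynomial with an isolated singularity at the origin has no other critical point, since a critical point would propagate along the weighted $\C^{\ast}$-action down to $0$. Moreover $\nabla^2 W_\gamma(0)=0$, because $W$ has no quadratic monomial: the weights obey $q_i<1/2$, hence $2q_i<1$ and $q_i+q_j<1$ for all $i,j$. Therefore $S^\infty_R=-\nabla^2_R\,2\mbox{Re}(W_\gamma)|_\kappa\to0$, so $\|S^\infty_R\|<1$ once $\sum_k|b_k|$ is small, and $\mathcal L_R$ is invertible. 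The block $\mathcal L_N$ is treated in parallel: $J_N\pat_\theta+A_N$ acts on $e^{in\theta}$ with eigenvalues $\Theta^{\gamma_l}_i+n$ ($n\in\Z$, $i>N_l$), which never vanish since $0<\Theta^{\gamma_l}_i<1$, so $J_N\pat_\theta+A_N$ is invertible with spectral gap $\min_{i>N_l}\min(\Theta^{\gamma_l}_i,1-\Theta^{\gamma_l}_i)>0$; and $\nabla^2_N\,2\mbox{Re}(W_N)|_\kappa\to0$ as $\kappa\to0$, because every monomial of $W_N$ contains at least two Neveu--Schwarz factors (a product of exactly two distinct such variables being once more excluded by $q_i+q_j<1$), so every Neveu--Schwarz second derivative of $W_N$ keeps a factor that vanishes at $\kappa$. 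For $\sum_k|b_k|$ small this perturbation is below the spectral gap, so $\mathcal L_N$ is invertible; with both blocks invertible, Lemma~\ref{expo-decay-salamon} applies.

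The crux of the argument --- and the step I expect to require the most care --- is the spectral bookkeeping just described: showing $\kappa\to0$ and $\|S^\infty\|\to0$ as the perturbation shrinks and comparing these against the explicit spectral gaps of $J_R\pat_\theta$ and $J_N\pat_\theta+A_N$, with due attention to the degenerate configurations ($W_N$ empty, $N_l=N$, and so on). Once the asymptotic operator is known to be invertible, the exponential decay itself is a routine invocation of Salamon's lemma.
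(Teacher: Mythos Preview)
Your argument is correct and follows the same route as the paper: verify the hypotheses of Lemma~\ref{expo-decay-salamon} for $\w=\pat_s\hat{\mathbf v}$, reducing everything to invertibility of $J\pat_\theta+S^\infty$, then establish that invertibility by showing $S^\infty$ is nondegenerate and $\|S^\infty\|<1$ once the perturbation is small. The paper carries out the last step with a Poincar\'e inequality on $S^1$ rather than your explicit Fourier decomposition, and treats the full matrix $S^\infty$ at once rather than splitting into Ramond and Neveu--Schwarz blocks, but these are the same computation in different dress.

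One small slip in your Neveu--Schwarz block: it is not true that ``every Neveu--Schwarz second derivative of $W_N$ keeps a factor that vanishes at $\kappa$.'' A monomial such as $u_R u_i u_j$ with $u_R$ Ramond and $u_i,u_j$ Neveu--Schwarz (which is allowed: it has exactly two NS factors and is not excluded by $q_i+q_j<1$) gives $\partial^2/\partial u_i\partial u_j=u_R$, which is $\kappa_R\neq0$ at $\kappa$. The conclusion $\nabla^2_N\,2\mbox{Re}(W_N)|_\kappa\to0$ is nonetheless correct, for the simpler reason you already used in the Ramond block: since every $q_i<1/2$, every monomial of $W$ has total degree $\ge3$, so every second partial of $W$ has degree $\ge1$ and vanishes at the origin; as $\kappa\to0$ the evaluation point $(\kappa,0)\to0$ and the Hessian entries tend to $0$.
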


\begin{proof} By Theorem \ref{bdd-bdry-thm} and (2), (3) of Lemma
\ref{expo-decay-cond}, we know that all the hypotheses in Lemma
\ref{expo-decay-salamon} except invertibility are satisfied. So we
only need to show that the operator $J\pat_\theta+S: L^2(S^1,
\R^{2N})\rTo L^2(S^1, \R^{2N})$ is invertible when
$\sum_{k=1}^{N_l} |b_k|$ is sufficiently small. However, it is
easy to see that if $\sum_{k=1}^{N_l} |b_k|$ is sufficiently
small, the absolute value of any critical point of
$W_{\gamma_l}+W_0$ also becomes  arbitrarily small. Hence we can
take sufficiently small $\kappa_1,\dots, \kappa_{N_l}$ such that
\begin{equation}
||S^\infty||<1,\;\; and \; S^\infty\;\text{is a non-degenerate
matrix}.
\end{equation}
Assume $\w\in L^2_1(S^1,\R^{2N})$ is  a solution of
$J\pat_\theta\w+S^\infty\w=0$. By Poincare inequality, there is
$$
||\w-\fint_{S^1}\w||_{L^2}\le ||\pat_\theta
(\w-\fint_{S^1}\w)||_{L^2}.
$$
Since the mean value $\fint_{S^1}\w$ vanishes by the equation and
the matrix $S^\infty$ is non-degenerate, we obtain
$$
||\w||_{L^2}\le ||\pat_\theta \w||_{L^2}=||S^\infty \w||_{L^2}.
$$
Since $||S^\infty||<1$, we conclude that $\w=0$. Therefore the
invertiblity is proved. Applying Lemma \ref{expo-decay-salamon} to
our case shows that we are done.
\end{proof}

By the above lemma, it is easy to obtain the following theorem.

\begin{thm}\label{thm:asymp}
 Let $W_0(u_1,\dots, u_{N_l})=\sum_{k=1}^{N_l}  b_k u_k$ be any
$W_{\gamma_l}$-regular polynomial at the broad marked point
$z_l$. Then if $\sum_{k=1}^{N_l} |b_k|$ is sufficiently small,
then there exist constants $C, T,\delta>0$ such that for any
$(s,\theta)\in S^1\times [T,\infty)$,
\begin{equation}
\begin{aligned}
&|\hat{v}_i-\kappa_i|< C e^{-\delta s},
\text{for}\;i=1,\dots,N_l,\\
&|\hat{v}_i|< C e^{-\delta s}, \text{for}\;i=N_l+1,\dots,N,
\end{aligned}
\end{equation}
where $(\kappa_1,\dots,\kappa_{N_l})$ is some critical point of
$W_{\gamma_l}+W_0$.
\end{thm}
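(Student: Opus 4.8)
The plan is to integrate the pointwise decay estimate for $\pat_s\hat{v}_i$ supplied by the preceding lemma against the known limiting behavior of $\hat{v}_i$ as $s\to\infty$. Fix the Ramond marked point $z_l$ and assume $\sum_{k=1}^{N_l}|b_k|$ is small enough that the preceding lemma applies, so that there are constants $C,T,\delta>0$ with $|\pat_s\hat{v}_i(s,\theta)|<Ce^{-\delta s}$ for all $(s,\theta)\in S^1\times[T,\infty)$ and every $i=1,\dots,N$. Each $\hat{v}_i$ is smooth on the cylinder by Theorem \ref{bdd-bdry-thm}, so for fixed $\theta$ and $s\ge T$ the fundamental theorem of calculus on $[s,S]$, together with the absolute convergence guaranteed by the exponential bound, gives
$$
\hat{v}_i(s,\theta)-\lim_{S\to\infty}\hat{v}_i(S,\theta)=-\int_s^\infty \pat_s\hat{v}_i(\tau,\theta)\,d\tau .
$$

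The second ingredient is the identification of the limit. For $i\le N_l$ we have $\hat{v}_i=v_i=u_i$, and Theorem \ref{conv-r-solu} shows $u_i(s,\theta)\to\kappa_i$ uniformly in $\theta\in S^1$, where $(\kappa_1,\dots,\kappa_{N_l})$ is a critical point of $W_{\gamma_l}+W_0$; this is exactly the critical point featuring in $S^\infty$ in Lemma \ref{expo-decay-cond}, since in both places it is characterized as the $s\to\infty$ limit of the Ramond components. For $i>N_l$, Theorem \ref{conv-n-solu} gives $\hat{v}_i(s,\theta)\to 0$ uniformly in $\theta$. Substituting these limits into the displayed identity, we obtain for $s\ge T$
$$
|\hat{v}_i(s,\theta)-\kappa_i|\le\int_s^\infty|\pat_s\hat{v}_i(\tau,\theta)|\,d\tau\le\int_s^\infty Ce^{-\delta\tau}\,d\tau=\frac{C}{\delta}\,e^{-\delta s}\qquad(i\le N_l),
$$
and identically $|\hat{v}_i(s,\theta)|\le\frac{C}{\delta}e^{-\delta s}$ for $i>N_l$. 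Renaming $C/\delta$ as $C$ yields the claim.

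There is no real obstacle here: all the analytic content --- the pointwise decay of $\pat_s\hat{v}_i$, the $C^0$-convergence of the Ramond components to a critical point, and the uniform decay of the Neveu-Schwarz components --- has already been proved. The only point deserving a line of care is that the constant limit is genuinely a critical point of $W_{\gamma_l}+W_0$ and agrees with the $\kappa$ appearing in the exponential-decay lemma, which is precisely what Theorem \ref{conv-r-solu} provides. If one wanted decay of the higher derivatives of $\hat{v}_i-\kappa_i$ as well, the same integration argument applied to the derivative estimates obtained by bootstrapping Theorem \ref{bdd-bdry-thm} with the decay just established would suffice, but this is not needed for the statement as stated.
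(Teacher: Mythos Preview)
Your proof is correct and is exactly what the paper intends: the paper merely writes ``By the above lemma, it is easy to obtain'' the theorem, and your argument---integrating the exponential bound $|\pat_s\hat{v}_i|<Ce^{-\delta s}$ from $s$ to $\infty$ and identifying the limits via Theorems~\ref{conv-n-solu} and~\ref{conv-r-solu}---is precisely the intended filling-in of that remark.
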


\subsection{A Liouville type theorem for the $A_1$-equation}\

\

If $W=u^2$, the Witten equation becomes
\begin{equation}
\bpat u+I_1(\overline{2u})=0.
\end{equation}

This is a linear equation and the existence of solutions is
related to the spectrum problem. The previous interior estimate
does not hold here since it holds for $W$ with all weights
$q_i<1/2$. Since $W=x^2$ is already a holomorphic Morse function,
it is not useful to do linear perturbation again. To avoid the
spectrum, we choose the following global perturbed equation
\begin{equation}\label{equa-A1-case}
\bpat u+I_1(\overline{(2+\epsilon)u})=0.
\end{equation}

We have the following Liouville type theorem

\begin{thm} For generic perturbation parameter $\epsilon\in \R$, the
perturbed $A_1$ equation (\ref{equa-A1-case}) has only the zero
solution.
\end{thm}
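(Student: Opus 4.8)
The plan is to use that equation (\ref{equa-A1-case}) is linear in $u$, so that ``only the zero solution'' is the statement that the real-linear operator $D_\epsilon u:=\bpat u+I_1\!\left(\overline{(2+\epsilon)u}\right)=\bpat u+(2+\epsilon)I_1(\bar u)$ is injective on the space of finite-energy solutions, and to prove this for all $\epsilon\neq-2$ (in particular for generic $\epsilon$) by the energy identity of the Witten Lemma. The first step is to pin down the asymptotics of solutions at the marked points without invoking the nonlinear machinery of Section \ref{sec:witten}. Since $D_\epsilon$ is linear, near a marked point $z_l$ the equation---written in the cylindrical coordinate $s+\sqrt{-1}\theta$ and in the normalized function $\hat v=ue^{-\Theta^{\gamma_l}s}$---becomes a \emph{constant}-coefficient first-order system of the shape (\ref{expo-decayequ}) with $\mathbf{\nabla}\hat{H}$ replaced by a fixed linear term; a Fourier-mode computation then shows that the associated asymptotic operator on $L^2(S^1)$ is self-adjoint with indicial roots $\pm\sqrt{(n-\Theta^{\gamma_l})^2+4(2+\epsilon)^2}$ ($n\in\Z$), hence invertible exactly when $2+\epsilon\neq0$. (This is the spectrum the equation must avoid; the sole bad value is $\epsilon=-2$.) It follows, for every $\epsilon\neq-2$, that each finite-energy solution decays exponentially to $0$ at every $z_l$, that $D_\epsilon$ is Fredholm on suitable weighted Sobolev spaces, and that elliptic regularity makes any solution smooth on $\Sigma\setminus\{z_1,\dots,z_k\}$ with each local coordinate function $\tilde u$ extending continuously by $\tilde u(z_l)=0$.

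With this decay available, the second step is to run the integration-by-parts computation from the proof of the Witten Lemma (Theorem \ref{lm-witt-noda}), now with no $W_0$ and no cut-off $\beta_T$. Pairing (\ref{equa-A1-case}) with $I_1\!\left(\overline{\pat W/\pat u}\right)$ and applying Stokes' theorem on $\Sigma$ with small disks around the $z_l$ excised gives
\[
\left(\bpat u,\;I_1\!\left(\overline{\tfrac{\pat W}{\pat u}}\right)\right)_{\!L^2(\Sigma)}=-\pi\sum_l W_{\gamma_l}(\kappa_l),
\]
and since $W=u^2$ and $\kappa_l=\lim u=0$ the right-hand side vanishes. On the other hand (\ref{equa-A1-case}) itself reads $\bpat u=-(2+\epsilon)I_1(\bar u)=-\tfrac{2+\epsilon}{2}I_1\!\left(\overline{\pat W/\pat u}\right)$, so the same inner product equals $-\tfrac{2+\epsilon}{2}\big\|I_1\!\left(\overline{\pat W/\pat u}\right)\big\|_{L^2}^2$. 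Comparing the two evaluations and using $\epsilon\neq-2$ forces $I_1\!\left(\overline{\pat W/\pat u}\right)=0$; since $I_1$ is an isomorphism this gives $\pat W/\pat u=2u=0$, i.e.\ $u\equiv0$.

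I expect the first step to be the real obstacle. The interior estimate Theorem \ref{bdd-inn-thm}, and with it the convergence and exponential-decay theorems of Section \ref{sec:witten}, all require $q_i<1/2$, which fails here because $\wt(u)=\tfrac12$; hence none of those statements applies verbatim and the asymptotic behavior of solutions must be re-derived from scratch. What makes this tractable is precisely the linearity of (\ref{equa-A1-case}): near a marked point the problem is governed by an explicit constant-coefficient operator, so decay, the Fredholm property and interior regularity all follow from elementary Fourier analysis rather than from the nonlinear estimates, and the single new ingredient---invertibility of the asymptotic operator---holds precisely when $2+\epsilon\neq0$, which is what the hypothesis ``generic'' records. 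Once that reduction is carried out the remainder is the Witten-Lemma identity above. (Equivalently, the second step can be packaged abstractly: $\{D_\epsilon\}_{\epsilon\in\R}$ is a real-analytic family of Fredholm operators, so $\dim_\R\ker D_\epsilon$ is upper semicontinuous and attains its minimum off a discrete subset of $\R$, and the energy identity shows that this minimum equals $0$.)
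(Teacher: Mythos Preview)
Your proposal is correct and follows essentially the same two-step route as the paper: (i) show exponential decay of solutions at the marked points via invertibility of the asymptotic operator, then (ii) apply the Witten Lemma energy identity to conclude $u\equiv0$. The paper compresses step (i) into a direct appeal to Lemma~\ref{expo-decay-salamon} (Salamon's decay lemma) and simply says ``for generic $\epsilon$'', whereas you carry out the Fourier computation explicitly and thereby sharpen ``generic'' to the precise condition $\epsilon\neq-2$; you are also more careful than the paper in flagging that the nonlinear interior estimates of Section~\ref{sec:witten} require $q_i<\tfrac12$ and hence do not apply here, so that the linear constant-coefficient analysis near the punctures must stand on its own.
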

\begin{proof} Let $[1,\infty]$ be the cylindrical neighborhood of the marked
point $p=\infty$. Choose the cylindrical coordinate
$\zeta=s+i\theta$, then (\ref{equa-A1-case}) becomes
\begin{equation}
\bpat_\zeta u-2(\overline{(2+\epsilon)u})=0.
\end{equation}
Since the integral $\int^\infty_0\int_{S^1} |\frac{\pat u}{\pat
s}|d\theta ds<\infty$, we can use Lemma \ref{expo-decay-salamon}
to show that if $\epsilon\in \R$ is generic, then the solution
converges to zero exponentially. Applying the Witten lemma to
Equation (\ref{equa-A1-case}), we are done.
\end{proof}

\subsection{An estimate for the borderline case}

If $W(x_1,\dots,x_N)$ is a quasi-homogeneous polynomial with some
variable $x_i$ having weight $q_i=1/2$, then the condition
$\delta_i<1$ in Theorem \ref{thm-new} does not hold for any variable
$x_i$. Therefore we can't obtain the $C^0$ norm estimate for the
solutions $\bu$ of the perturbed Witten equations. In fact, the
$C^0$ norm of the solutions of the (perturbed) Witten equations may
not be controlled a priori. A Typical example is the $D_{n+1}^T$
singularity: $x^ny+y^2$. Hence in this case, the $C^0$ estimates in
the previous sections do not hold anymore. We will modify the
definition of solutions and only consider the bounded solutions.
Later we will show those bounded solutions a priori have a uniform
bound and so the solution set is compact in related topology.

\begin{df}\label{df: solu-half-case} Sections $(u_1,\dots, u_N)$ are said to be the
solutions of the perturbed Witten equation (\ref{plgw-equ}) if they
satisfy the following conditions:
\begin{enumerate}

\item for each $j, u_j\in
L^2_{1,loc}(\Sigma\setminus\{z_1,\dots,z_k\}, |\LL_j|)\cap
L^\infty(\Sigma,|\LL_j|).$

\item $(u_1,\dots,u_N)$ satisfy the perturbed  Witten equation
(\ref{plgw-equ}) almost everywhere;

\item near each marked point, the integral
$$
\sum_j\int^\infty_0\int_{S^1} |\frac{\pat u_j}{\pat s}|^2 d\theta
ds<\infty.
$$
\end{enumerate}
\end{df}

\textbf{An Estimate of the $D_n^T$ equation.} Consider the $D_n^T$
equation:\
\begin{numcases}{}
\bpat u_1+I_1(n\overline{u_1^{n-1}u_2+\bar{b}_1\beta_1})=0\\
\bpat u_2+I_1(\overline{u_1^n+2u_2+\bar{b}_2\beta_2})=0,
\end{numcases}
where $b_1,b_2$ are perturbation parameters and $\beta_1,\beta_2$
are cut-off functions near the marked points.

In a holomorphic coordinates system $\{e_1,e_2\}$, this equation can
be written as the following form (we still use $u_i$ to represent
the coordinate functions)

\begin{numcases}{}
\bpat u_1+\phi_1^2 n\overline{u_1^{n-1}u_2}+b_1\beta_1\phi_1^2=0\label{eq-dtn-1}\\
\bpat
u_2+\phi_2^2\overline{u_1^n+2u_2}+b_2\beta_2\phi_2^2=0\label{eq-dtn-2},
\end{numcases}
where $\phi_1=|e_1|^{-1}=|dz|^{-1/2n}$ and
$\phi_2=|e_2|^{-1}=|dz|^{-1/2}.$

Let $\lambda>0$, and set
\begin{numcases}{}
v_1=\lambda^{\frac{1}{n}} u_1(z)\nonumber\\
v_2=\lambda u_2(z)\nonumber.
\end{numcases}
Then $(v_1,v_2)$ satisfy

\begin{numcases}{}
\bpat v_1+\phi_1^2 s_0 n\overline{v_1^{n-1}v_2}+\lambda^{\frac{1}{n}}b_1\beta_1\phi_1^2=0\label{eq-dtn-3}\\
\bpat v_2+\phi_2^2\overline{v_1^n+2v_2}+\lambda
b_2\beta_2\phi_2^2=0\label{eq-dtn-4},
\end{numcases}

where $s_0=\lambda^{\frac{2}{n}-2}$.

Define
\begin{numcases}{}
w_1= v_1^{n}\nonumber\\
w_2=v_2\nonumber;
\end{numcases}
we have (setting $\beta=n-1/n$)

\begin{numcases}{}
\bpat w_1+\phi_1^2 s_0 n^2|w_1|^{2\beta}\bar{w}_2+\lambda^{\frac{1}{n}}nb_1\beta_1\phi_1^2w_1^\beta=0.\label{eq-dtn-5}\\
\bpat w_2+\phi_2^2\overline{w_1+2w_2}+\lambda
b_2\beta_2\phi_2^2=0\label{eq-dtn-6}.
\end{numcases}

\begin{lm}\label{lm:dtn-1} Suppose that $s_0\ge 1$ and $||w_2||_{B_R(z)}<\infty$ for some $z\in \Sigma$,
 then for any $1<p<\infty$ there exists $C$ depending only on
$p,||w_2||_{L^2(B_R(z))}$ such that the solutions of
(\ref{eq-dtn-5})-(\ref{eq-dtn-6}) satisfies
\begin{align}
&\int_{B_{\frac{R}{8}}(z)} s_0
n^2|w_1|^{2\beta}|w_2|^2\varphi^2+|w_1|^2\varphi^2\le
C(||w_2||_{L^2(B_R(z))})\\
&||w_2||_{L^2_1(B_{\frac{R}{8}})},\;||w_2||_{L^p(B_{\frac{R}{8}})}\le
C(||w_2||_{L^2(B_R(z))}).
\end{align}
\end{lm}

\begin{proof} Multiplying (\ref{eq-dtn-5}) and (\ref{eq-dtn-6}) by $w_2,w_1$
respectively,  we obtain
\begin{numcases}{}
w_2\bpat w_1+\phi_1^2 s_0 n^2|w_1|^{2\beta}|w_2|^2+\lambda^{\frac{1}{n}}nb_1\beta_1\phi_1^2w_2w_1^\beta=0\\
w_1\bpat w_2+\phi_2^2 |w_1|^2+2\phi_2^2 \bar{w}_2
w_1+\lambda\beta_2\phi_2^2 w_1=0.
\end{numcases}
Sum the two identities together, and we have
\[
\bpat (w_1 w_2)+\phi_1^2 s_0 n^2|w_1|^{2\beta}|w_2|^2+\phi_2^2
|w_1|^2+\phi_2^2 2\bar{w}_2
w_1+\lambda^{\frac{1}{n}}nb_1\beta_1\phi_1^2w_2w_1^\beta+\lambda\beta_2\phi_2^2
w_1=0.
\]

Let $\varphi$ be a cut-off function  supported in $B_R(z)$.
Multiplying the above identity by $\varphi^2$ and integrating on
$B_R(z)$, we have
\begin{align}
\int \phi_1^2 s_0
n^2|w_1|^{2\beta}|w_2|^2\varphi^2&+\phi_2^2|w_1|^2\varphi^2=-\int
(w_1w_2)2\varphi \bpat
\varphi+\phi_2^2 2w_1\bar{w}_2\varphi^2\label{ineq-dtn-6}\\
&-\int
(\lambda^{\frac{1}{n}}nb_1\beta_1\phi_1^2w_2w_1^\beta+\lambda\beta_2\phi_2^2
w_1)\varphi^2\\
&\le C\int |w_1 w_2 \varphi| (|\bpat\varphi|+1)+\varepsilon\int\phi_2^2|w_1|^2+C+C||w_2||_{L^2(B_R(z))}\\
&\le \varepsilon \int \phi_2^2
|w_1|^{2}\varphi^2+C_\varepsilon(\phi_2^{-1}) \int |\varphi
w_2|^2(|\bpat \varphi|+1)^2\nonumber.
\end{align}
Therefore we obtain
\begin{equation}
\int
\phi_1^2s_0n^2|w_1|^{2\beta}|w_2|^2\varphi^2+\phi_2^2|w_1|^2\varphi^2\le
C(\varphi,||w_2||_{L^2(B_R(z))}),
\end{equation}
where $C(\varphi,||w_2||_{L^2(B_R(z))})$ is a constant depending
only on $\varphi$ and the $L^2$-norm of $w_2$.

Applying the $L^2$ inner estimate to (\ref{eq-dtn-6}), we have
\begin{align}
||w_2||_{L^2_1(B_{\frac{R}{4}})}\le&
C(||w_2||_{L^2(B_{\frac{R}{2}})}+||(w_1+w_2)||_{L^2}+C)\nonumber\\
\le &C(||w_2||_{L^2}+|| w_1||_{L^2}+C)\le
C(\varphi,||w_2||_{L^2(B_R(z))}).
\end{align}

By the Sobolev embedding theorem, we have
\begin{equation}
||w_2||_{L^p(B_{\frac{R}{8}})}\le
C(\varphi,||w_2||_{L^2(B_R(z))}),\;\forall 1<p<\infty.
\end{equation}
\end{proof}

Now we let $\lambda=1$, and return to the discussion of equations
(\ref{eq-dtn-1})-(\ref{eq-dtn-2}).

\begin{crl} Let $(u_1,u_2)$ be the solutions of
(\ref{eq-dtn-1})-(\ref{eq-dtn-2}), if
$||u_2||_{L^2(B_R(z))}<\infty$, then we have
\begin{align}
||u_1||_{L^{2n}(B_{\frac{R}{8}})}, ||u_2||_{L^2_1(B_{\frac{R}{8}})},
||u_2||_{L^p(B_{\frac{R}{8}})}\le
C(||u_2||_{L^2(B_R)})\label{ineq-dtn-7}.
\end{align}
\end{crl}

\begin{lm} Let $(u_1,u_2)$ be the solutions of
(\ref{eq-dtn-1})-(\ref{eq-dtn-2}), if $||u_2||_{B_R(z)}<\infty$,
then for any $k$, there exists a constant $C$ depending only on the
norm $||u_2||_{L^2(B_R(z))}$ such that
\begin{align*}
&||u_1||_{C^k(B_{\frac{R}{8}})}, ||u_2||_{C^k(B_{\frac{R}{8}})}<C.
\end{align*}
\end{lm}

\begin{proof}
Applying the $L^2$ interior estimate to the equation
(\ref{eq-dtn-1}), we have
$$
||u_1||_{L^2_1(B_{\frac{R}{4}})}\le
C(||u_1||_{L^2(B_{\frac{R}{2}})}+||nu_1^{n-1}u_2||_{L^2(B_R(\frac{R}{2}))}+C)\le
C(||u_2||_{L^2(B_R)}),
$$
where we used the inequalities (\ref{ineq-dtn-7}). By
the  Sobolev
embedding theorem, we know that
\begin{equation}
||u_1||_{L^p(B_{\frac{R}{8}})}\le C(||u_2||_{L^2(B_R)}).
\end{equation}

By the $L^p$-estimate, we have
\[
||u_1||_{L^p_1(B_{\frac{R}{4}})}\le
C(||u_1||_{L^p(B_{\frac{R}{2}})}+||nu_1^{n-1}u_2||_{L^p(B_R(\frac{R}{2}))}+C)\le
C(||u_1||_{L^p(B_R)},||u_2||_{L^p(B_R)})\le C(||u_2||_{L^2(B_R)}).
\]
By the Schauder estimate and the bootstrap argument, we can reach the
conclusion.
\end{proof}

\begin{crl}[regularity] Any solutions $(u_1,u_2) $of the equations
(\ref{eq-dtn-1})-(\ref{eq-dtn-2}) are smooth. Furthermore, if for
fixed $r>0$ and any point $z\in \cC$, the $L^2$-norm
$||u_2||_{L^2(B_r(z))}$ of the solutions $(u_1,u_2)$ are uniformly
bounded, then any $C^k$-norm of $(u_1,u_2)$ are uniformly bounded.
\end{crl}

Once we know the $C^m$ norm of a solution $\bu$ is bounded by its
$C^0$ norm, the proof about the asymptotic behavior of the solutions
near the marked points in Subsection \ref{subsec:asym-beha} and
\ref{subsec:expo-deca} can go through without change. Hence we have
the following result.

\begin{thm}[Asymptotic estimate] Suppose that $\bu$ is a solution of
the perturbed Witten equation, then Theorem \ref{thm:asymp} holds.
\end{thm}

\begin{thm}[compactness] For any $k\in \N$, there exists a constant
$C$ depending only on $k$ such that for any solutions $(u_1,u_2)$ of
(\ref{eq-dtn-1})-(\ref{eq-dtn-2}), there is the uniform bound
\begin{equation}
||u_1||_{C^k},||u_2||_{C^k}\le C.
\end{equation}
\end{thm}

\begin{proof} By the asymptotic behavior of the solutions near the
marked points, the $C^0$ norm of the solution $u_2$ in each pair
$(u_1,u_2)$ will be bounded by a uniform constant near the infinity
part. Hence the value of $\sup_{z\in \Sigma} ||u_2||_{L^2(B_r(z))}$
can only be achieved in the interior of the Riemann surface
$\Sigma$.

Now we will prove the theorem by contradiction. Assume that the
compactness does not hold, then there is a sequence of solutions
$(u_{m1},u_{m2})$ satisfying the equations
(\ref{eq-dtn-1})-(\ref{eq-dtn-2}), and
\begin{equation}
\sup_{z\in \Sigma}
||u_{m2}||_{L^2(B_r(z))}=||u_{m2}||_{L^2(B_r(z_m))}=\lambda_m^{-1}\to
\infty, z_m\in \Sigma.
\end{equation}

Assume that $z_m\to p\in\Sigma$ and we have for large $m$:
\begin{equation}
\lambda_m^{2}\int_{B_{2r}(p)}|u_{m2}|^2\ge 1.
\end{equation}

Do the scaling transformation

\begin{numcases}{}
v_{m1}=\lambda_m^{\frac{1}{n}} u_{m1}(z)\nonumber\\
v_{m2}=\lambda_m u_{m2}(z)\nonumber,
\end{numcases}

then $v_{m1},v_{m2}$ satisfy the following equation (where
$s_{0m}=\lambda_m^{\frac{2}{n}-2}$):
\begin{numcases}{}
\bpat v_{m1}+\phi_1^2 s_{0m}n\overline{v_{m1}^{n-1}v_{m2}}
+\lambda^{\frac{1}{n}}b_1\beta_1\phi_1^2=0\label{eq-dtn-7}\\
\bpat
v_{m2}+\phi_2^2\overline{(v_{m1}^n+2v_{m2})}+\lambda\beta_2\phi_2^2=0\label{eq-dtn-8}.
\end{numcases}

By Lemma \ref{lm:dtn-1}, we have for any fixed $0<R\le r$ and point
$z$,
\begin{align}
&\int_{B_{\frac{R}{8}}(z)} \phi_1^2 s_{0m}
n^2|v_{m1}|^{2(n-1)}|v_{m2}|^2\varphi^2+\phi_2^2|v_{m1}|^{2n}\varphi^2\le
C(||v_{m2}||_{L^2(B_R(z))})\le C_R\label{ineq-dtn-8}\\
&||v_{m2}||_{L^2_1(B_{\frac{R}{8}})},\;||v_{m2}||_{L^p(B_{\frac{R}{8}})}\le
C(||v_{m2}||_{L^2(B_R(z))})\le C_R\label{ineq-dtn-9}.
\end{align}

Hence there are functions $v_2^\infty \in L^2_{1,loc}(\Sigma),
v_1^\infty \in L^{2n}_{loc}(\Sigma)$ such that the following
sequences converge:
\begin{align*}
&\bpat v_{m2}\rightharpoonup \bpat v_2^\infty \;\text{weakly
in}\;L^2_{loc},\;\;v_{m2}\to v_2^\infty\;\text{strongly in}\; L^p_{loc},1<p<\infty.\\
&v_{m1}\rightharpoonup v_1^\infty\;\text{weakly in}\;L^{2n}_{loc}.\\
&v_{m1}^{n-1}v_{m2}\to 0,\;\text{strongly in}\;L^2_{loc}.
\end{align*}

Hence we get the limit equation for $v_2^\infty$:
\begin{equation}
\bpat
v_2^\infty+\phi_2^2\overline{(v_1^\infty+2v_2^\infty)}=0,\;\text{a.e}.
\end{equation}

Multiplying $2v_2^\infty$ and using the fact that $v_1^\infty
v_2^\infty=0$ almost everywhere, we get
\begin{equation}
\bpat (v_2^\infty)^2+\phi_2^2 4|v_2^\infty|^2=0.
\end{equation}
Since the $L^p$-norm of $v_2^\infty$ is bounded, by regularity
theory and approximation analysis, $v_2^\infty$ is a bounded smooth
solution tending to zero in the ends. By Witten Lemma, we know that
$v_2^\infty\equiv 0$, but this contradicts the fact that
$||v_2^\infty||_{L^2(B_{2r}(p))}\ge 1$.

\end{proof}

\section{Compactness}\label{sec:compact}

\subsection{Sequence convergence of the moduli space $\MMr_{g,k}%(\bgamma)
$}\ 

\

Traditionally, the compactness theorem of various moduli spaces is
formulated in terms of possible geometric limits of its sequence. In
Section two,  we gave a description of the topology of
$\MMr_{g,k}(\bgamma)$ in terms of its local neighborhood. Here, we
take a different point of view it in terms of converging sequences.

Take a sequence of rigidified $W$-curves $\frkc^n=(\cC^n,p_1,
\dots,p_k, \LL^n_1,\dots, \LL^n_t, \varphi^n_1, \dots, \varphi^n_s,
\psi^n_1,\dots,\psi^n_k)$ in $\MMr_{g,k}(\bgamma)$. Here we assume
that the $W$-structure $(\LL^n_1,\dots, \LL^n_t,\varphi^n_1, \dots,
\varphi^n_s)$ induces the data $\bgamma=(\gamma_1,\dots,
\gamma_k)$, i.e., we assume the sequence of $W$-curves has type
$\bgamma$. The underlying curves $(\cC^n,p_1, \dots,p_k)$ have dual
graph $\Gamma^n$. Since the combinatorial types of the dual graphs
are finite in number, we can assume that $\Gamma^n=\Gamma$ for all
$n$. Similarly, we can let $\psi^n_i=\psi_i$ at each marked point
$p_i$, because the set of rigidifications at one marked point is
finite and characterized by a group element in $G$. Hence it is
sufficient to consider the sequence of $W$-curves $\frkc^n$ in
$\MMr_W(\Gamma)$, where $\Gamma$ is the $G$-decorated stable graph
with each tail labelled by a group element in $G$ and having the
same rigidification. We can further assume that $\frkc^n\in
\MMr_W(\Gamma)$, where $\Gamma$ is also a fully $G$-decorated stable
graph. In fact, each half-edge $\tau$ of $\Gamma$ corresponds to an
orbifold point $p_\tau$ of the normalization of the underlying
curve, thus has a corresponding choice of $\gamma_\tau\in G$. Since
$G$ is a finite group, we can assume that all $W$-curves attain the
same group element $\gamma_\tau\in G$. Notice that we don't fix the
rigidification at the nodal points. We start from the following
obvious lemma:

\begin{lm} Suppose that the sequence of $W$-curves $\frkc^n\in
\MMr_W(\Gamma)$ converges to $\frkc$ in $\MMr_W(\Gamma)$. Then
$\str(\frkc^n)\to \str(\frkc)$ in $\MM_{g,k}(\Gamma)$.
\end{lm}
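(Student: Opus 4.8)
The plan is to deduce this directly from the continuity of $\str$. First I would recall that $\str$ factors as $\st\circ\so$, where $\so:\MMr_{g,k,W}\to\MM_{g,k,W}$ is the softening morphism and $\st:\MM_{g,k,W}\to\MM_{g,k}$ is the stabilization morphism; both are morphisms of compact complex orbifolds (in fact algebraic, cf.\ Section~\ref{sec:rigmoduli}), hence continuous for the underlying analytic topologies. Since a continuous map sends convergent sequences to convergent sequences, $\frkc^n\to\frkc$ in $\MMr_W(\Gamma)$ immediately yields $\str(\frkc^n)\to\str(\frkc)$ in $\MM_{g,k}$.

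The second step is to check that the convergence takes place inside the stratum $\MM_{g,k}(\Gamma)$, and not merely in $\MM_{g,k}$. For this I would observe that forgetting the orbifold line bundles $\LL_i$, the isomorphisms $\varphi_j$, and the rigidifications $\psi_l$ leaves the underlying pointed nodal curve --- and hence its dual graph and combinatorial type --- unchanged, so $\str$ restricts to a morphism $\MMr_W(\Gamma)\to\MM_{g,k}(\Gamma)$ between strata (see the last bullets of Proposition~\ref{deli-mumf} and of the structure proposition for $\MM_{g,k}$ in Subsection~\ref{sec1}). Since $\frkc^n$ and $\frkc$ were assumed to lie in the single stratum $\MMr_W(\Gamma)$, their images lie in $\MM_{g,k}(\Gamma)$, and continuity of this restricted morphism for the subspace topology gives the claim.

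The only thing that requires any verification, and it is routine, is that $\str$ is continuous in the explicit local models of the two stratified spaces set up in Section~\ref{sec:rigmoduli}: near a nodal point $x=(\cC,\z)$ a chart for $\MMr_W(\Gamma)$ is obtained from the chart $\bigl(V_x\times\bigoplus_z([T_0,\infty]\times S^1)_z\bigr)/\aut(\cC,\z)$ for $\MM_{g,k}$ by adjoining the gluing data for the $\LL_i$ at the nodes --- data that is discrete, since $G$ is finite --- and in these coordinates $\str$ is simply the projection discarding the $W$-structure, which is evidently continuous. So I do not expect any genuine obstacle here; the content of the lemma is exactly the bookkeeping fact that ``forgetting the $W$-structure'' is continuous and preserves the combinatorial type, which is why it is labelled obvious.
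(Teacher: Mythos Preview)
Your argument is correct and is exactly what the paper intends: the lemma is stated there as ``obvious'' with no proof, and the content is precisely that $\str=\st\circ\so$ is a morphism of compact complex orbifolds (hence continuous) which forgets only the discrete $W$-structure data and so preserves the combinatorial type. There is nothing to add.
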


In general when considering the convergence in $\MMr_{g,k}(\bgamma)$
the limit curve of the sequence of underlying curves of $W$-curves
$\frkc^n$ may change the combinatorial type. So to define the
convergence in $\MMr_{g,k}(\bgamma)$, we have to study the
degeneracy behavior of the line bundles and the $W$-structures.

Consider a sequence of the rigidified $W$-curves $\frkc^n$ such
that $st^{rig}(\frkc^n)=\cC^n\rTo \cC$ degenerating only
along a circle to a nodal point $z\in \cC$. Hence by the
description of Proposition \ref{deli-mumf}, there is a sequence of
gluing parameters $\zeta^n=(s^n,\theta^n)\in [T_0,\infty)\times
S^1$( $s^n\to \infty$) such that $\cC^n=\cC_{0,\zeta^n}$. $\cC^n$
is obtained through gluing the corresponding cylinders
$U_n=[\frac{1}{2}T^n,\frac{3}{2}T^n]$ ($s^n=2T^n$) in two
different components of $\cC$ by a biholomorphic map.

The canonical bundle $K_{\cC^n}$ when restricted to  the cylinder
$U_n$ can be trivialized. Fixing a trivialization, the
$W$-structure gives the isomorphism $\varphi^n_j:
W_j(\LL^n_1,\dots,\LL^n_t)\rTo \C$.

\begin{lm} For each $n$, the line bundles $\LL_i^n$ are flat and
the structure group of the $N$-tuple $(\LL^n_1,\dots,\LL^n_N)$ on
$U_n$ is $G$.
\end{lm}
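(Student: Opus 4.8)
The plan is to exploit that the cylindrical metric is genuinely \emph{flat} on the neck $U_n$ and to transport this flatness through the $W$-structure isomorphisms. First I would note that $U_n$ lies in the smooth locus of $\cC^n$ and contains none of the marked points $p_1,\dots,p_k$, so $K_{\cC^n}|_{U_n}=K_{\log}|_{U_n}$; and in the cylindrical coordinate $w=s+\sqrt{-1}\theta$ the metric is $dw\otimes d\bar w$, so its Gauss curvature vanishes identically on $U_n$. Hence the Chern connection of $K_{\cC^n}$ is flat there, with $dz/z$ (equivalently $dw$) a flat unit frame and trivial holonomy around the neck circle.

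Next I would push this down to the individual $\LL_i^n$. As recalled in Section~\ref{sec:rigmoduli}, near either of the two marked points being glued the $W$-structure gives distinguished holomorphic frames $e_i$ with $W_j(e_1,\dots,e_N)$ a standard multiple of $dz/z$, and the computation there (which uses the non-degeneracy of $W$) yields $\|e_i\|=|z|^{\Theta_i^{\gamma}}$ for the induced metric. Consequently, near either end of $U_n$ an arbitrary holomorphic frame $\sigma_i$ of $\LL_i^n$ satisfies $\|\sigma_i\|=|f_i|\,|z|^{\Theta_i^{\gamma}}$ for some nowhere-zero holomorphic $f_i$, so $\log\|\sigma_i\|^2$ is harmonic and the Chern curvature $F_i=-\pat\bpat\log\|\sigma_i\|^2$ vanishes on all of $U_n$. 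Thus each $\LL_i^n|_{U_n}$ is flat. (Alternatively, one chooses metrics on the $\LL_i^n$ making all $\varphi_j^n$ isometries and uses non-degeneracy to invert the exponent matrix in $\sum_i b_{ij}F_i=F_{K_{\log}}=0$.)

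Finally I would identify the holonomy. A flat Hermitian line bundle on the cylinder $U_n\simeq S^1$ is determined by its holonomy in $U(1)$ around the core circle, and holonomy is multiplicative under tensor products; writing $\lambda_i\in U(1)$ for the holonomy of $\LL_i^n|_{U_n}$, the holonomy of $W_j(\LL_1^n,\dots,\LL_N^n)$ is $\prod_i\lambda_i^{b_{ij}}$. But $\varphi_j^n$ identifies $W_j(\LL^n)|_{U_n}$ with the flat trivial bundle $K_{\log}|_{U_n}$, so $\prod_i\lambda_i^{b_{ij}}=1$ for every $j$; this is exactly the defining condition for $(\lambda_1,\dots,\lambda_N)\in G_W=G$. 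Hence the holonomy representation $\pi_1(U_n)=\Z\to U(1)^N$ of the $N$-tuple factors through $G$, and together with flatness this says precisely that the structure group of $(\LL_1^n,\dots,\LL_N^n)$ on $U_n$ is $G$. (In fact $(\lambda_1,\dots,\lambda_N)$ coincides with the node decoration $\gamma=\gamma_\tau$ up to the twist $I$, but that refinement is not needed here.)

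The main obstacle I anticipate is the bookkeeping in the second step: one must pin down the metrics on the $\LL_i^n$ canonically and compatibly with the cylindrical metric on $K_{\cC^n}$, and the linear system relating $\log\|\cdot\|$ on the $\LL_i$ to that on $K_{\log}$ can be overdetermined when there are more monomials $W_j$ than variables, so consistency (and flatness of the resulting metrics) has to be read off from non-degeneracy, exactly as in the earlier derivation of $\|e_i\|=|z|^{\Theta_i^{\gamma}}$. A minor technical point is that a nowhere-zero holomorphic function on the annulus $U_n$ has the form $z^m e^{g(z)}$, which makes the $\lambda_i$ roots of unity; but once flatness is established this is also automatic from the finiteness of $G$.
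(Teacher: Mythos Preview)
Your approach is correct, but it takes a genuinely different route from the paper's. The paper's argument is purely holomorphic and obtains both conclusions at once: having fixed the trivialization of $K_{\cC^n}|_{U_n}$, it chooses in each small chart $U_\alpha\subset U_n$ a holomorphic basis $e_i^n(\alpha)$ of $\LL_i^n$ with $W_j\bigl(e_1^n(\alpha),\dots,e_N^n(\alpha)\bigr)=1$ for every monomial $W_j$; the transition functions between two such adapted bases then satisfy the same monomial relations and hence lie in $G$ by definition of the diagonal symmetry group, and since $G$ is \emph{finite} these transitions are locally constant --- flatness and the structure-group statement drop out together, with no metrics, curvature, or holonomy computation. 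Your approach instead separates the two conclusions, establishing flatness via Chern--Weil (the relations $\sum_i b_{ji}F_i=F_{K_{\log}}=0$ together with non-degeneracy of the exponent matrix force each $F_i=0$) and then identifying the holonomy afterwards. The paper's route is shorter and sidesteps precisely the metric-compatibility bookkeeping you flag as the main obstacle; your route, while heavier, makes the curvature mechanism explicit and connects more directly to the Hermitian structures used later in the analysis. One small remark: your variant (a) in the second step --- inferring vanishing curvature on all of $U_n$ from $\log\|\sigma_i\|^2$ being harmonic ``near either end'' --- does not stand on its own without first specifying the metric globally on the neck; it is really your variant (b) that carries the argument.
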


\begin{proof} Let $e^n_i(\alpha), i=1,\dots, N$ be the basis of $\LL^n_i$
in a small chart $U_\alpha$ of $U_n$ such that
$W_j(e^n_1(\alpha),\dots,e^n_N(\alpha))=1$ for all $j$. Take an
atlas $\{U_\alpha\}$ of $U_n$ such that all the line bundles can
be trivialized in each chart and the chosen basis satisfies the
above relation. Now it is easy to see that the transition function
on each $U_\alpha\cap U_\beta$ must lie in the group $G$. Hence
each $\LL^n_i$ is a flat line bundle.

\end{proof}

On $U_n$, each $\LL^n_i$ is completely classified by its holonomy
$$
\rho^n_i: \Z=\pi_1(U_n)\rTo U(1),
$$
and $\rho^n_i$ maps the generator to $e^{2\pi c^n_i/d}$ for some
$0\le c^n_i<d$ such that $\gamma^n=(e^{2\pi
c^n_1/d},\dots,e^{2\pi c^n_t/d})\in G$. Such a flat line bundle
can be explicitly constructed as follows. Let
$(\widetilde{U_n})^d$ be a degree-$d$ unramified cover of $U_n$.
The line bundle is of the form
$\LL^n_i=(\widetilde{U_n})^d\times_{e^{2\pi c^n_i/d}}\C$. After
choosing a subsequence, we can assume that for all $n$, $\LL^n_i$
corresponds to a fixed value $e^{2\pi c_i/d}$. When $n$ goes to
infinity, we can associate a natural "limit." We change the
coordinate of $U_n$ to $U_{n0}=[0, T^n]\times S^1$ and change
$\tilde{U}_n$ to $\tilde{U}_{n0}$ accordingly.  When $n$ goes to
infinity, $U_{n0}$ converges to a punctural disc $U_0-\{0\}$.
$\LL^n_i$ extends uniquely to a flat orbifold line bundle
$\LL_{i0}=(\widetilde{U}_0)^d\times_{e^{2\pi c_i/d}}\C$ over
$(\widetilde{U}_0,\Z_d)$. Now we change coordinates of $U_n$ to
$U_{n1}=[-T^n, 0]\times S^1$. The same argument obtains a limiting
orbifold line bundle $\LL_{i1}=(\widetilde{U}_1)^d\times_{e^{-2\pi
c_i/d}}\C$ over $(\widetilde{U}_1,\Z_d)$.  Finally, since
$\LL_{i0}, \LL_{i1}$ are really the limit of the same flat line
bundle, there is a canonical identification $\LL_{i0}|_0\cong
\LL_{i1}|_0$. Therefore, we obtain a $W$-structure $\LL_{i}$ on
the nodal curve $\cC$ locally represented by $U_0\wedge U_1$.
Furthermore, $\varphi^n_j$ converges to an isomorphism
$\varphi_{j}: W(\LL_{i},\dots, \LL_{N})\rTo \C$.

When $\mbox{gcd}(c_1,\dots, c_t,d)=a>1$, the orbifold structure
$(\widetilde{U}_0,\Z_d)$ is redundant. The group action is not
faithful; hence we can modify the uniformizing system of $\LL_0$
by redefining the local group to be $\Z_{\frac{d}{a}}$.

Hence  degeneracy of the underlying curves $\frkc^n$ along a
circle induces a geometric limit $(\LL_{1},\dots,
\LL_{N},\varphi_{1},\dots, \varphi_{N})$ on $\cC$.

\begin{thm}
$(\LL_{1}^i,\dots, \LL_{N}^i,\varphi_{i1},\dots, \varphi_{iN})$
converges to $(\LL_{1},\dots, \LL_{N},\varphi_{1},\dots,
\varphi_{N})$ in $\MMr_{g,k}$.
    \end{thm}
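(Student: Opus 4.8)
The plan is to show that the geometric limit $(\LL_1,\dots,\LL_N,\varphi_1,\dots,\varphi_N)$ on $\cC$ constructed above is precisely the limit of $\frkc^n$ in the topology of $\MMr_{g,k,W}$ as described by the local model of Proposition \ref{deli-mumf}. Recall that a neighbourhood of the nodal rigidified $W$-curve $\frkc=(\cC,\z,\LL_1,\dots,\LL_N,\varphi_1,\dots,\varphi_N)$ in $\MMr_{g,k,W}$ is obtained from the local model $\frac{V_x\times(\oplus_z([T_0,\infty]\times S^1)_z)}{\aut(\cC,\z)}$ of $\MM_{g,k}$ by endowing each glued curve $\cC_{y,\zeta}$ with the $W$-structure gotten by identifying the fibres of $\LL_i$ at the node and extending that identification over the cylindrical neck by the canonical trivialization of $\LL_i$. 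So it suffices to check that, along the subsequence already extracted, $\frkc^n$ is the point of this local model whose underlying curve is $\cC^n=\cC_{0,\zeta^n}$, whose $V_x$-component tends to $x$, whose gluing parameter is $\zeta^n=(s^n,\theta^n)$ with $s^n\to\infty$, and whose $W$-structure is obtained from $(\LL_1,\dots,\LL_N,\varphi_1,\dots,\varphi_N)$ by exactly that gluing recipe.

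First I would pin down, near the node $z$, the flat trivializations of $\LL_i^n$ on the two halves of the neck coming from the holonomy presentation $\LL_i^n=(\widetilde{U_n})^d\times_{e^{2\pi c_i/d}}\C$; by the Remark on canonical trivializations on cylindrical coordinates these are the canonical trivializations attached to the rigidifications, once one records that on Ramond coordinates $|e_i|=1$ and on Neveu--Schwarz coordinates the orbifold trivialization is the flat one determined by the fixed holonomy $e^{2\pi c_i/d}$. Since $\varphi_j^n$ carries $W_j$ of this basis to $dz/z$ for every $n$, the identification across the neck of length $2T^n$ that defines the glued $W$-structure in the local model is, for each finite $n$, the same as the identification obtained by gluing the two limiting orbifold bundles $\LL_{i0},\LL_{i1}$ along the canonical isomorphism $\LL_{i0}|_0\cong\LL_{i1}|_0$ and then restricting the neck to length $2T^n$; so the $n\to\infty$ analytic construction of $(\LL_i,\varphi_j)$ agrees with reading off the $\zeta_z=\infty$ limit in the local model. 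Convergence of the remaining (smooth) data is then routine: by hypothesis $\cC^n\to\cC$ in $\MM_{g,k}$ degenerating along the single circle over $z$, so in the local model of $\MM_{g,k}$ the $V_x$-component of $\cC^n$ tends to $x$ and the gluing parameter $\zeta^n=(s^n,\theta^n)$ has $s^n\to\infty$, while away from the neck the bundles are honest, the $\varphi_j^n$ are fixed, and convergence of the underlying curve lifts to convergence of the $W$-structure because $\MMr_W(\Gamma)$ carries a universal family over each stratum. Combining these, $\frkc^n\to\frkc$ in $\MMr_{g,k,W}$.

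The main obstacle is the middle step: verifying that the canonical identification $\LL_{i0}|_0\cong\LL_{i1}|_0$ used to build the limit really coincides with the identification implicit in the moduli-space gluing, i.e.\ that passing to the limit introduces no spurious element of $G$ at the node. This is exactly where one must use that the isomorphisms $\varphi_j^n$ themselves converge (not merely the underlying flat bundles), together with the fixed choice of the gluing isomorphism $I$ with $W(I(x))=-W(x)$ and the constancy of the rigidifications $\psi_i$ at the tails; once the $\varphi_j$ are pinned down, the residual $G$-ambiguity among compatible identifications at the node is rigidified away, which forces the limit to be the geometric one. The case of several circles degenerating simultaneously follows by iterating the argument one node at a time, and an arbitrary change of combinatorial type is reduced to this after passing, as in the preceding discussion, to the subsequence on which $\Gamma^n$, the tail decorations $\gamma_\tau$, and the rigidifications $\psi_i$ are all constant.
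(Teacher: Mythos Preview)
Your proposal is correct and follows essentially the same approach as the paper: both arguments verify that the local-model gluing of Proposition~\ref{deli-mumf}, applied to the limit $W$-structure $(\LL_1,\dots,\LL_N,\varphi_1,\dots,\varphi_N)$ with gluing parameter $\zeta^n$, reproduces exactly the flat bundle $\LL_i^n=(\widetilde{U_n})^d\times_{e^{2\pi c_i/d}}\C$ on the neck, so that $\frkc^n$ lies in every local-model neighbourhood of $\frkc$ for $n\gg0$.

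One small point: your invocation of the map $I$ with $W(I(x))=-W(x)$ to kill the residual $G$-ambiguity at the node is misplaced. That map $I$ appears in Section~\ref{sec:rigmoduli} only when one glues two \emph{separately rigidified} $W$-curves at a pair of marked points; it plays no role in the degeneration analysis here. The paper's resolution of the ambiguity you worry about is more direct: since $\LL_{i0}$ and $\LL_{i1}$ are both limits of the \emph{same} flat line bundle $\LL_i^n$ (obtained merely by shifting the neck coordinate), the identification $\LL_{i0}|_0\cong\LL_{i1}|_0$ is canonical from the outset, and it is this canonical identification that the local model uses. So the matching is, as the paper says, ``automatic from our construction,'' and no separate rigidification argument is needed.
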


    \begin{proof} For this purpose, we need to
show that the above sequence is in any neighborhood of
$(\LL_{1},\dots, \LL_{N},\varphi_{1},\dots, \varphi_{N})$ for
$i>>0$.

But this is automatic from our construction. Recall the
construction of a neighborhood of $(\LL_{1},\dots,
\LL_{N},\varphi_{01},\dots, \varphi_{0N})$ in Proposition 2.2.4.
Let $\frkc=(\cC, p_1,\dots, p_k, \LL_{1},\dots,
\LL_{N},\varphi_{1},\dots, \varphi_{N}, \linebreak
\psi_1,\dots,\psi_k)$ be a rigidified $W$-curve having a orbifold
nodal point $z$ with local group $G_z\cong\Z/d$, then we can
construct the nearby rigidified $W$-curves. Let $\zeta=(s_z,
\theta_z), s_z=2T$ be the gluing parameter; then by Proposition
\ref{deli-mumf} there is a nearby curve $\cC_{0, \zeta}$. Take one
component $([T_0,\infty)\times S^1)_k, k=1,2$ with coordinates
$(s_k, \theta_k)$. Let $U_k=([\frac{1}{2}T,\frac{3}{2}T]\times
S^1)_k$ and $\widetilde{U_k}^d$ be the degree-$d$ covering of
$U_k$. A local trivialization of $\LL_{j}$ is given by
$\widetilde{U_k}^d\times \C$ with the group action $e^{2\pi
 i/d}: (z_k, w_j)\mapsto (e^{2\pi i /d}z_k, e^{(-1)^k 2\pi i
 c_j/d}w_j)$. Thus the orbifold line bundle $\LL_{j}$, when
 restricted to $U_k$, is a flat line bundle with holonomy
$$
\rho_{k,j}: \pi_1(U_k)\rTo U(1), k=1,2, j=1,\dots, N,
$$
where $\rho_{k,j}$ sends the generator of $\pi_1(U_k)\cong\Z/d$ to
$e^{(-1)^k 2\pi i c_j/d}w_j$.  Part of the data of a rigidified $W$-structure on a
nodal curve is an identification of $\LL_{0j}$
at nodal point to a compatible $W$-structure.
%check the sentence above.
 Now we use this
identification and the trivialization on $U_k$ to glue the
$W$-structures on $U_1$ and $U_2$ to get a $W$-structure on
$\cC_{0,\zeta}$. In our construction of $\LL_{0j}$, it naturally
 carries a trivialization at the nodal point. The above gluing process gives precisely
 the $\LL_{i}^n=(\widetilde{U_n})^d\times_{e^{2\pi c^n_i/d}}\C$ on the cylinder.
\end{proof}

%% fix broken title
\subsection{Topology of the moduli spaces $\MMr_{g,k}%(\bgamma,\bvkappa)
$ and $\MMrs_{g,k}%(\bgamma, \bvkappa)
$
}

\
\subsubsection{The perturbed Witten map over $\MMr_{g,k}(\bgamma)$}\label{sect-7.2.1}

\

\subsubsection*{Fr\'echet stratified orbibundles over $\MMr_{g,k}(\bgamma)$} By
the  algebraic geometric construction in \cite{FJR2}, there exists
the universal rigidified $W$-curve $\cC^{rig}_{g,k}\rTo
\MMr_{g,k}(\bgamma)$. Also, by Proposition \ref{deli-mumf}, we can
obtain the required family of smooth metrics on $\cC^{rig}_{g,k}$.
We first construct the stratified Fr\'echet orbibundles $B^0$ and
$B^{0,1}$ over $\MMr_{g,k}(\bgamma)$.

Let $\frkc=(\cC, p_1,\dots, p_k, \LL_{1},\dots,
\LL_{N},\varphi_{1},\dots, \varphi_{N}, \psi_1,\dots,
\psi_k)=:(\cC, \LL, \Psi)$ be a rigidified $W$-curve representing an
element $[\frkc]$ in $\MMr_{g,k}(\bgamma)$. $\cC$ can be decomposed
into irreducible components: $\cC=\cup_\nu \cC_\nu$. Denote by
$\pi_\nu: \cC_\nu\rTo \cC$ the projection map from the
renormalized component $\cC_\nu$ to the nodal curve $\cC$. The
automorphism group of $\frkc$ is $\aut(\frkc),$ which is the
extension of $\aut(\cC)$ along $\aut_\cC(\LL, \Psi)$. We have the
$\aut(\cC)$-invariant metric of $\cC$ and the induced metrics on the
line bundles $\LL_i$. Define
\begin{align*}
&C^\infty(\cC, \LL_j):=\{(u_{j,\nu})\in \oplus_\nu
C^\infty(\cC_\nu, \LL_j)| u_{j,\nu}(p_\nu)=u_{j,\mu}(p_\mu),
\;\text{if}\;\pi_\nu(p_\nu)=\pi_\mu(p_\mu)\}\\
&C^\infty(\cC, \LL_j\otimes \Lambda^{0,1}):=\{(u_{j,\nu})\in
\oplus_\nu C^\infty(\cC_\nu, \LL_j\otimes \Lambda^{0,1})\},
\end{align*}
and let $L^p_1(\cC,\LL_j)$ and $L^p(\cC,\LL_j\otimes \Lambda^{0,1})$
be the $p$-integrable Sobolev spaces with respect to $C^\infty(\cC,
\LL_j)$ and $C^\infty(\cC, \LL_j\otimes \Lambda^{0,1})$
respectively. We always take $p>2$ to ensure the continuity of the
functions in $L^p_1(\cC,\LL_j)$. Let $B_\frkc^0$ represent the
Fr\'echet spaces $L^p_1(\cC,\LL_1)\times \dots L^p_1(\cC,\LL_N)$ or
$C^\infty(\cC, \LL_1)\times \dots C^\infty(\cC, \LL_N)$ and let
$B_\frkc^{0,1}$ represent $L^p(\cC,\LL_1\otimes \Lambda^{0,1})\times
\dots L^p(\cC,\LL_N\otimes \Lambda^{0,1})$ or $C^\infty(\cC,
\LL_1\otimes \Lambda^{0,1})\times \dots C^\infty(\cC, \LL_N\otimes
\Lambda^{0,1})$. The automorphism group acts naturally on
$B_\frkc^0$ and $B^{0,1}_\frkc$. For example, if $(\tau, g)\in
\aut(\cC)\times \aut_\cC(\LL,\Psi)$ is an element in $\aut(\frkc)$
and $u\in B^0$, then $(\tau,g)\cdot u(z)=gu(\tau\cdot z)$. Now
$B^0_\frkc$ (resp. $B^{0,1}_\frkc$) become the fiber at $[\frkc]$ of
the Fr\'echet orbibundle $B^0$ (resp. $B^{0,1}$). In fact, we can
give a description of the uniformizing system of $B^0$ around
$[\frkc]$. Let $(U, \aut(\cC))$ be a uniformizing system of
$[\cC]\in \MM_{g,k}$ such that $[\cC]\in U/\aut(\cC)$; then the
uniformizing system of $[\frkc]\in \MMr_{g,k}(\bgamma)$ is given by
$(U, \aut(\frkc))$. Hence the uniformizing system of $B^0$ around
$[\frkc]$ is $(U\times B^0_\frkc, \aut(\frkc))$. The action of $g\in
\aut(\frkc)$ on $B^0|_U$ is an isometry from the Fr\'echet space
$B^0_{\frkc'}$ to $B^0_{g\cdot\frkc'}$. Actually a local
trivialization is given by the gluing map $Glue$ in Lemma
\ref{lm-appro-1}. Similarly, one can get the description of the
uniformizing system of $B^{0,1}$.

\subsubsection*{Witten map} Since the projection map $\pi: B^0\rTo
\MMr_{g,k}(\bgamma)$ is an orbifold morphism, we have the pull-back
Fr\'echet orbibundle $\pi^*B^{0,1}$ over $B^0$. The Witten map is
defined as the section from $B^0$ to $\pi^* B^{0,1}$:
\begin{align*}
&\widetilde{WM}(\frkc,\bu)=\widetilde{WM}_\frkc(u_1,\dots, u_N)\\
&=\left(\bpat_{\cC} u_1+\tilde{I}_1\left(\overline{\frac{\pat
W}{\pat u_1}}\right),\dots, \bpat_{\cC}
u_N+\tilde{I}_1\left(\overline{\frac{\pat W}{\pat
u_N}}\right)\right).
\end{align*}
Note that the Witten map is actually defined between uniformizing
systems; hence it is required to be $\aut(\frkc)$-equivariant.
This equivariance can be easily seen from the equation.

\begin{rem} It is not hard to see that Witten map descends to the moduli space of
$W$-structures $\WW_{g,k}(\bgamma)$. However, the perturbed Witten
map is only defined over $\MMr_{g,k}(\bgamma)$. This is the main
reason that we use $\MMr_{g,k}(\bgamma)$. Alternatively, one can
think that the perturbed Witten map is multi-valued over
$\WW_{g,k}(\bgamma)$.
\end{rem}

\subsubsection*{The perturbed Witten map}  Since the nonlinear
term appearing in the Witten map is degenerate, it is hard to get
the asymptotic behavior of the solutions lying in the zero locus.
We need to perturb the Witten map near the broad marked or broad
nodal points. We will treat the two cases in different ways.

Let $\frkc=(\cC,\LL, \Psi)$ be a rigidified $W$-curve. Choosing a
rigidification $\psi$ at a marked point $p$ means fixing a basis
$e=(e_1,\dots,e_N)$ of the line bundles $\LL_1\times \dots
\LL_N$ near $p$ such that
$$
W(e_1,\dots,e_N)=dz/z.
$$
The group $G$ acts on $e$. We can fix a basis $e^0$ and call the
corresponding rigidification $\psi^0$ the \emph{ standard
rigidification}. Then the set of rigidifications at $p$ is $g\cdot
\psi^0$. Let $(\psi^0, \bb)$ be a pair of data, where
$\bb=(b_1,\dots,b_N)$ is the perturbation parameter. We define
the group action of $G$ by $g\cdot(\psi^0,\bb)=(g\cdot\psi^0,
g\cdot \bb)$.

For any rigidification $\psi=g\cdot\psi^0$, we take the pair
$(\psi, g\cdot \bb)$. As the first step we will construct a
perturbed map over $\frkc$ based on the data $(\psi, g\cdot\bb)$.
If $p$ is a narrow marked point, then we leave the Witten
map unchanged. We only perturb the Witten map near a broad marked
point $p$. Let $[0,\infty]\times S^1$ be the cylindrical
neighborhood of $p$. Since $p$ is broad, the line bundles $\LL_j$
are classified into broad line bundles and narrow line
bundles. As shown in Section \ref{sec:witten}, the group action of
$\gamma$ at $p$ determines $W_{\gamma}$, the sum of partial
monomials of $W$, and thus determines the choice of perturbed
function $W_{0,\gamma}$. As done in Section \ref{sec:witten}, we
choose a number $\bar{T}_0>T_0$, where $T_0$ is the number related
to the gluing parameter occurring in Proposition \ref{deli-mumf},
and a section $\beta_j$ whose derivative has compact support in
$[\bar{T}_0,\bar{T}_0+1]\times S^1$ of some line bundle such that
$\beta_j \in \Omega(\LL_j^{-1}\otimes K_{log})$. After choosing
$\bar{T}_0$, we fix $\bar{T}_0$ in this paper. Hence the perturbed
map on $\frkc$ is defined as
\begin{equation}\label{pert-witt-oper}
\widetilde{WM}^{(\psi,g\cdot\bb)}_\frkc(u_1, \dots,
u_N):=\left(\bpat_{\cC} u_1+\tilde{I}_1\left(\overline{\frac{\pat
(W+\beta_1 W_{0,\gamma})}{\pat u_1}}\right),\dots, \bpat_{\cC}
u_N+\tilde{I}_1\left(\overline{\frac{\pat (W+\beta_N
W_{0,\gamma})}{\pat u_N}}\right)\right).
\end{equation}

One can easily verify that
\begin{equation}
\widetilde{WM}^{(g\cdot\psi_0,g\cdot\bb)}_\frkc=\widetilde{WM}^{(\psi_0,\bb)}_\frkc.
\end{equation}

We can perturb the Witten map near each broad marked point in
this way and take $\widetilde{WM}^{(\Psi,\bb)}_\frkc$ as the
global perturbed map whose restriction near each marked $p$ is
$\widetilde{WM}^{(\psi_p,g_p\cdot\bb)}_\frkc$.

\begin{lm} $\widetilde{WM}^{(\Psi,\bb)}$ is a section from $B^0$ to $\pi^* B^{1,0}$.
\end{lm}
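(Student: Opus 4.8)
The plan is to verify the three conditions that turn a fiberwise-defined map into a section of the Fr\'echet orbibundle $\pi^*B^{0,1}\to B^0$: first, that for every rigidified $W$-curve $\frkc$ and every $\bu\in B^0_\frkc$ the tuple $\widetilde{WM}^{(\Psi,\bb)}_\frkc(\bu)$ lies in the fiber $B^{0,1}_\frkc$; second, that the assignment is $\aut(\frkc)$-equivariant, so that it is a genuine map of uniformizing systems; and third, that it depends continuously (in the $C^\infty$ picture, smoothly) on $[\frkc]$, so that it is compatible with the local trivializations of $B^0$ and $B^{0,1}$. Combining these three gives the statement.

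For the first point I would write $\widetilde{WM}^{(\Psi,\bb)}_\frkc=\widetilde{WM}_\frkc+(\text{perturbation})$, where the perturbation is supported in the cylindrical ends of the Ramond marked points and has $j$-th component $\tilde{I}_1(\overline{\partial W_{0,\beta}/\partial u_j})$. That $\widetilde{WM}_\frkc$ lands in $B^{0,1}_\frkc$ is already part of the construction of the unperturbed Witten map, the behaviour near the marked points being exactly the computation of Section~\ref{sec:witten} showing $\tilde{I}_1(\overline{\partial W/\partial u_j})\in\Omega(\cC,\LL_j\otimes\Lambda^{0,1})$ with the correct regularity. Since on the fixed point set $W_{0,\gamma}=\sum_k b_k x_k$ is linear, $\partial W_{0,\beta}/\partial u_j=b_j\beta_j$ does not depend on $\bu$; and because $\beta_j\in\Omega^0(\LL_j^{-1}\otimes K_{\log})$ is smooth and, in the cylindrical metric (where $|dz/z|=1$), bounded together with all its derivatives, $\tilde{I}_1(\overline{b_j\beta_j})$ is a fixed smooth section of $\LL_j\otimes\Lambda^{0,1}$ lying in $B^{0,1}_\frkc$. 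Hence so is the sum, in both the $C^\infty$ and the $L^p$ models.

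Equivariance of $\widetilde{WM}_\frkc$ under $\aut(\frkc)$ follows directly from the equation, since $\bpat$, complex conjugation and $\tilde{I}_1$ are all natural; equivariance of the perturbation is precisely the identity $\widetilde{WM}^{(g\cdot\psi_0,\,g\cdot\bb)}_\frkc=\widetilde{WM}^{(\psi_0,\bb)}_\frkc$ verified just above, which holds because the perturbation parameter is transported by the twisted action $g\cdot(\psi_0,\bb)=(g\cdot\psi_0,g\cdot\bb)$, so that the perturbation built from any rigidification in a $G$-orbit coincides with the one built from the standard rigidification and the global map is canonically attached to $\frkc$. For the continuity/smoothness, by Proposition~\ref{deli-mumf} the metrics on $\cC$ and on the $\LL_i$, the isomorphisms $\varphi_j$, and hence the cut-off sections $\beta_j$ (which live in the fixed $\bar{T}_0$-collar, depending only on the gluing data) all vary smoothly with $[\frkc]$; the local trivializations of $B^0$ and $B^{0,1}$ are given by the gluing maps $Glue$ of Lemma~\ref{lm-appro-1}; and $\widetilde{WM}^{(\Psi,\bb)}$ is assembled from these by algebraic operations in $\bu$ followed by $\bpat_{\cC}$, conjugation and $\tilde{I}_1$, so in the trivialization it depends smoothly on $[\frkc]$. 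I expect the only real work to be in this last point near the boundary strata, where $\cC$ degenerates along a circle: there one uses the local model of Proposition~\ref{deli-mumf} together with the fact that $\bar{T}_0>T_0$ was fixed once and for all, so that the support of every $\beta_j$ stays in the region of the cylindrical end untouched by the gluing; this makes the perturbation compatible with the gluing construction and patches it into a genuine section over all of $\MMr_{g,k,W}(\bgamma)$.
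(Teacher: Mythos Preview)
Your three-step plan (fiber membership, equivariance, continuity over the base) is more thorough than the paper's proof, which addresses only the equivariance and treats the other two points as routine. Your arguments for the first and third points are fine.

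For equivariance, however, the identity $\widetilde{WM}^{(g\cdot\psi_0,\,g\cdot\bb)}_\frkc=\widetilde{WM}^{(\psi_0,\bb)}_\frkc$ is not what is needed. That identity concerns the action of $G$ on \emph{rigidifications}: it says the perturbed map is insensitive to which rigidification in the $G$-orbit one uses, provided the parameter $\bb$ is transported along. But $\aut(\frkc)$-equivariance asks something different: for $\tau=(\sigma,g)\in\aut(\cC)\ltimes\aut_\cC(\LL,\Psi)$, one needs $\widetilde{WM}^{(\Psi,\bb)}_\frkc(\tau\cdot\bu)=\tau\cdot\widetilde{WM}^{(\Psi,\bb)}_\frkc(\bu)$ with the \emph{same} $(\Psi,\bb)$. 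Since the perturbation term is independent of $\bu$, this amounts to showing that the fixed section $\tilde{I}_1(\overline{b_j\beta_j})$ is $\aut_\cC(\LL,\Psi)$-invariant, i.e.\ that every $g\in\aut_\cC(\LL,\Psi)$ acts trivially on the Ramond factors at each marked point. The paper proves exactly this: on each component $\cC_i$ one has $\aut_\cC(\LL,\Psi)\subset\prod_i\aut_{\cC_i}(\LL_i,\Psi_i)$, and since such a $g$ preserves the rigidification $\Psi$, its restriction at a marked point $p$ lies in $G_p$; but $G_p$ acts trivially on the Ramond bundles by definition, so the perturbation (which is supported only in the Ramond directions) is fixed. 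Your argument can be completed by inserting this observation; the rigidification-invariance identity alone does not supply it.
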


\begin{proof} Assume that $\cC$ has several components $\cC_i$; then the group $\aut_\cC(\LL,
\Psi)\subset \prod_i \aut_{\cC_i}(\LL_i,\Psi_i)$. Each
$\aut_{\cC_i}(\LL_i,\Psi_i)$ acts trivially on the broad line
bundles at each marked point on $\cC_i$. Hence
$\widetilde{WM}^{(\Psi,\bb)}$ is $\aut(\frkc)$-equivariant.
\end{proof}

Although we obtained the perturbed map
$\widetilde{WM}^{(\Psi,\bb)}$, we still need to perturb this map at
the broad nodal points. There is no natural rigidification at
nodes. Here, we run into the same problem as when we defined the 
perturbed
Witten map over $\WW_{g,k}(\bgamma)$. In this case, we simply treat
it as a multi-valued perturbation or multi-section. We notice that
Fukaya-Ono \cite{FO} has used the same idea already in their
construction of virtual fundamental cycles.  The reader can find the
definition and properties of  multisection in subsection
\ref{subsec:MK}.

Now assume that $p$ is the only broad nodal point connecting two
components $\cC_\nu$ and $\cC_\mu$ of $\cC$. We want to define the
new perturbed map over $\frkc$. First we give the standard
rigidification $\psi^0$ to the two half edges represented by $p$
in the $\mu$ and $\nu$ components. Denote them by $\psi^0_+$ and
$\psi^0_-$ respectively. Take the standard pair $(\psi^0_+,\bb)$.
If we take the map $\widetilde{WM}^{(\Psi,\bb)}$ as the Witten map
$\widetilde{WM}$, then we can define the new perturbed map over
the component $\mu$:
$WM^{(\psi^0_+,\bb)}_{\frkc_\mu}:=(\widetilde{WM}^{(\bb)})^{(\psi^0_+,\bb)}_{\frkc_\mu}$,
where the latter operator has the same form as
(\ref{pert-witt-oper}). The new perturbed map on the
$\nu$-component is not independent and it is defined as
$WM^{(\psi^0_-,-I(\bb))}_{\frkc_\nu}:=(\widetilde{WM}^{(\bb)})^{(\psi^0_-,-I(\bb))}_{\frkc_\nu}$.
Here $I=\diag(\xi^{k_1},\dots,\xi^{k_N})$ and $\xi^d=-1$. This is
the requirement of the compatibility from the gluing operation.
The details will be discussed later in Remark
\ref{rem-wequ-equal}. Thus we have the new perturbed map over the
two components:
$$
WM^{(\psi^0,\bb)}_{\frkc}:=WM^{(\psi^0_+,\bb)}_{\frkc_\mu}\#WM^{(\psi^0_-,-I(\bb))}_{\frkc_\nu}.
$$
Here we denote $(\psi^0,\bb):=(\psi^0_+,\psi^0_-, \bb, -I(\bb))$.

Now we consider two cases:
\begin{enumerate}

\item[(1)] \emph{Tree case}\quad Let $g=(g_1,g_2)\in
\aut_\cC(\Psi, \LL)=\aut_{\cC_\mu}(\Psi,
\LL)\times_{G/<\gamma_p>}\aut_{\cC_\nu}(\Psi, \LL)$, then there
exists a $\delta \in <\gamma_p>$ such that $g_1=g_2\delta$. We
define $g\cdot(\psi^0,\bb)=$
$(g\cdot\psi^0,g\cdot\bb):=(g_1\cdot\psi^0_+,g_2\cdot\psi^0_-,g_1\cdot\bb,
-g_2\cdot I(\bb))$. So we can define
$WM^{(g_1\cdot\psi^0_+,g_1\cdot\bb)}_{\frkc_\mu}$ and
$WM^{(g_2\cdot\psi^0_-,-g_2\cdot I(\bb))}_{\frkc_\nu}$. The latter
map equals  $WM^{(g_1\cdot\psi^0_-,-g_1\cdot I(\bb))}_{\frkc_\nu}$
since $\delta$ fixes the rigidification $\psi^0$ and it acts
trivially on the perturbed term. Hence the compatibility condition
for the two maps is satisfied and we obtain
$WM^{(g\cdot\psi^0,g\cdot\bb)}_{\frkc}$.

\item[(2)]\emph{Loop case}\quad In this case, we have
$\aut_\cC(\Psi, \LL)=<\gamma_p>\cap(\cap_{i=1}^k <\gamma_i>)$. Let
$g\in \aut_\cC(\Psi, \LL)$, then the following map is well defined
$$
WM^{(g\cdot\psi^0,g\cdot\bb)}_{\frkc}=WM^{(g\cdot\psi^0_+,g\cdot\bb)}_{\frkc_\mu}\#WM^{(g\cdot\psi^0_-,-g\cdot
I(\bb))} _{\frkc_\mu}.
$$
\end{enumerate}
Thus, in either case, for any $g\in \aut_\cC(\LL,\Psi)$ the map
$WM^{(g\cdot\psi^0,g\cdot\bb)}_{\frkc}$ is well defined and equal
to $WM^{(\psi^0,\bb)}_{\frkc}$.

\begin{lm}\label{pert-witt-oper1} If $g\in \aut_\cC(\LL, \Psi)$, then for any $\bu\in B^0_\frkc$, there is
$WM^{(\psi^0,\bb)}_\frkc(g\cdot \bu)=g\cdot
WM^{(g^{-1}\psi^0,\bb)}_\frkc(\bu)=g\cdot
WM^{(\psi^0,g\cdot\bb)}_\frkc(\bu)$.
\end{lm}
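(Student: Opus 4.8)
The plan is to verify the identity locally near each Ramond marked point and each Ramond node and then patch, since away from the Ramond locus the perturbed operator $WM^{(\psi^0,\bb)}_\frkc$ agrees with the unperturbed Witten map $\widetilde{WM}_\frkc$ (the cutoff sections $\beta_i$ vanish there), and $\widetilde{WM}_\frkc$ is manifestly $\aut(\frkc)$-equivariant — as already noted after its definition — while $\psi^0$ and $\bb$ enter nowhere, so on that region all three expressions in the statement coincide automatically. Thus fix a cylindrical neighborhood $[0,\infty]\times S^1$ of a Ramond marked point $p$ with orbifold element $\gamma$ and work there.

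On that neighborhood write $WM^{(\psi^0,\bb)}_\frkc(\bu)=\widetilde{WM}_\frkc(\bu)+P^{(\psi^0,\bb)}_\frkc(\bu)$, where the $i$th component of $P$ is $\tilde{I}_1\!\left(\overline{\beta_i\,\partial W_{0,\gamma}/\partial u_i}\right)$ and $W_{0,\gamma}=\sum_j b_j x_j$ is read off in the coordinates determined by the basis $e^0$ attached to $\psi^0$. For the unperturbed summand one uses the $G$-invariance of $W$: differentiating $W(g\cdot x)=W(x)$ gives $(\partial W/\partial x_i)(g\cdot\bu)=g_i^{-1}(\partial W/\partial x_i)(\bu)$, and since $g$ acts on the fibers of the $\LL_i$ by diagonal unitary scalars (elements of $G\subseteq U(1)^N$) it commutes with $\bpat_\cC$ and with the metric-preserving isomorphism $\tilde{I}_1$ (here $\overline{g_i^{-1}}=g_i$ because $g_i\in U(1)$); hence $\widetilde{WM}_\frkc(g\cdot\bu)=g\cdot\widetilde{WM}_\frkc(\bu)$, and since $\widetilde{WM}_\frkc$ carries no rigidification data this is also the $g$-image of the unperturbed map with rigidification $g^{-1}\psi^0$ applied to $\bu$.

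It then remains to establish the covariance of the perturbation term, $P^{(\psi^0,\bb)}_\frkc(g\cdot\bu)=g\cdot P^{(g^{-1}\psi^0,\bb)}_\frkc(\bu)$. The mechanism is that replacing $\psi^0$ by $g^{-1}\psi^0$ changes the basis $e^0$ by the scalars of $g$, so the coordinate expression of $\bu$ with respect to the new basis equals the coordinate expression of $g\cdot\bu$ with respect to $e^0$; combined with the fact that $W_{0,\gamma}$ is linear (so $\partial W_{0,\gamma}/\partial u_i=b_i$ is constant) and with the compensating transformations of the dual basis appearing in $\beta_i$ and of $\tilde{I}_1$, all the $g$- and $g^{-1}$-twists cancel except for an overall $g$ coming from $\tilde{I}_1$ intertwining the $U(1)^N$-actions on $\bar{\LL}_i^{-1}\otimes\Lambda^{0,1}$ and $\LL_i\otimes\Lambda^{0,1}$. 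Adding the two summands gives the first equality $WM^{(\psi^0,\bb)}_\frkc(g\cdot\bu)=g\cdot WM^{(g^{-1}\psi^0,\bb)}_\frkc(\bu)$. The second equality, $WM^{(g^{-1}\psi^0,\bb)}_\frkc(\bu)=WM^{(\psi^0,g\cdot\bb)}_\frkc(\bu)$, is nothing but the diagonal invariance $WM^{(h\cdot\psi,h\cdot\bb)}_\frkc=WM^{(\psi,\bb)}_\frkc$ established right before the lemma, applied with $h=g$ and $\psi=g^{-1}\psi^0$. Finally one repeats the local computation at each Ramond node, working on both incident components and using that the perturbation parameters there were chosen compatibly as $(\bb,-I(\bb))$, which is exactly what makes the two half-edge contributions transform consistently under $g\in\aut_\cC(\LL,\Psi)$; patching the neighborhoods with the Ramond-free region yields the global identity.

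I expect the main obstacle to be the covariance of the perturbation term: keeping straight how the rigidification-dependent ingredients (the coordinates in which the coefficients $b_j$ of $W_{0,\gamma}$ are read off, the basis $e^0$, and the dual basis hidden in $\beta_i$) transform under $g$, together with the conjugate-linearity concealed in $\tilde{I}_1$, so that the twists combine to a single factor of $g$ rather than its inverse or some unwanted power; one must also check this bookkeeping is consistent across a Ramond node, where the $-I(\bb)$ on the second component enters.
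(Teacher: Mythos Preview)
Your proposal is correct and follows essentially the same approach as the paper. The paper's proof is slightly more compressed: it works directly in the tree case at a Ramond node, writes the $i$-th component of $WM^{(\psi^0_+,\bb)}_{\frkc_\mu}(\bu)$ explicitly as $\bpat_{\cC} u_i+\tilde{I}_1(\overline{\partial W/\partial u_i})+\bar{b}_i\beta_T e_i\otimes d\bar{z}/\bar{z}$, sets $g_1=(\lambda^{k_1},\dots,\lambda^{k_N})$ with $\lambda^d=1$, and verifies the transformation in one line rather than separating the unperturbed and perturbation summands as you do; your reduction of the second equality to the previously established invariance $WM^{(h\cdot\psi,h\cdot\bb)}_\frkc=WM^{(\psi,\bb)}_\frkc$ is exactly what the paper intends.
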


\begin{proof} We only treat the tree case; the proof of the loop case is left to the reader.
Assume that $g=(g_1,g_2)$; then
$(g^{-1}\psi^0,\bb)=(g_1^{-1}\psi^0_+,g_2^{-1}\psi^0_-,\bb,-I(\bb))$.

Let $e=(e_1,\dots,e_N)$ be the basis corresponding to the
rigidification $\psi^0_+$; then the $i$-th component of
$WM^{(\psi^0_+,\bb)}_{\frkc_\mu}(\bu)$ is
\begin{equation}
\bpat_{\cC} u_i+\tilde{I}_1\left(\overline{\frac{\pat W }{\pat
u_i}}\right)+\bar{b}_i\beta_T e_i\otimes \frac{d\bar{z}}{\bar{z}}.
\end{equation}
Since $g_1\in G$, there is a number $\lambda,\lambda^d=1$ such
that $g_1=(\lambda^{k_1},\dots, \lambda^{k_N})$. Hence it is easy
to check that the $i$-th component of
$WM^{(\psi^0_+,\bb)}_{\frkc_\mu}(\lambda^{k_1}u_1,\dots,\lambda^{k_N}u_N)$
is $\lambda^{k_i}WM^{(g_1^{-1}\psi^0_+,\bb)}_{\frkc_\mu}(\bu)$. We
have a conclusion  similar to the $\nu$ component. Hence
\begin{align*}
&WM^{(\psi^0,\bb)}_\frkc(g\cdot
\bu)=\left(WM^{(\psi^0_+,\bb)}_{\frkc_\mu}(g_1\cdot
\bu_1),WM^{(\psi^0_-,-I(\bb))}_{\frkc_\nu}(g_2\cdot
\bu_2)\right)\\
&=\left(g_1\cdot WM^{(g_1^{-1}\psi^0_+,\bb)}_{\frkc_\mu}(\bu_1),
g_2\cdot
WM^{(g_2^{-1}\cdot\psi^0_-,-I(\bb))}_{\frkc_\nu}(\bu_2)\right)=g\cdot
WM^{(g^{-1}\psi^0,\bb)}_\frkc(\bu).
\end{align*}
\end{proof}

\begin{df} \emph{The perturbed Witten map $WM_\frkc(u)$ over $\frkc$} is
the multisection $[WM^{(g\cdot\psi^0,\bb)}_\frkc: g\in
\aut_\cC(\LL,\Psi)]$ from $B_\frkc^0$ to $B^{0,1}_\frkc$ which is
$\aut(\frkc)$-equivariant by Lemma \ref{pert-witt-oper1}. In
general, if $\frkc$ has $n$ broad nodal points, then we have the
multiple index $(\psi^0, \bb):=(\psi^0_+(p_i),\psi^0_-(p_i),
\bb_i, -I(\bb_i),i=1,\dots, n),$ and the group
$\aut_\cC(\LL,\Psi)$ acts naturally on it. The perturbed Witten
map can be defined as
$$
WM_\frkc(u):=[WM^{(g\cdot\psi^0,\bb)}_\frkc: g\in
\aut_cC(\LL,\Psi)],
$$
which is also $\aut(\frkc)$-equivariant. Here
$WM^{(g\cdot\psi^0,\bb)}_\frkc$ is called a branch map of
$WM_\frkc$. The zero locus of $WM_\frkc$ is defined as the union
of the zero locus of all of its branch maps, which is also an
$\aut(\frkc)$-invariant set.
\end{df}

Since the deformation domain in $\frkc$ has empty intersection with
the perturbation domain, the perturbed Witten map is naturally
defined over $\M^{rig}_{g,k,W}(\Gamma;\bgamma)$ for any
combinatorial type $\Gamma$. However, to construct a global
perturbed Witten map over $\MMr_{g,k}(\bgamma)$, we need to modify
the existing perturbed Witten maps over the strata such that the
compatibility condition holds for these multisections. We construct
such a map by induction with respect to the order $\succ$ of the
dual graph.

As the first step, we consider the minimal stratum in
$\MMr_{g,k}(\bgamma)$. The dual graph $(\Gamma, (g_\nu), o)$ is
minimal if
$$
g_\nu =0, \;\; k_\nu=3,\;\; \forall \nu.
$$
In this case, the stratum $\MM_{g,k}(\Gamma)$ consists of only one
point, and $\MMr_{g,k}(\Gamma)$ as the covering space consists of
only finitely isolated points. Then the perturbed Witten map
$WM_\frkc$ is well-defined over $\MMr_{g,k}(\Gamma)$. The second
step is to redefine the perturbed Witten map on the nearby curves.
Define a smooth and monotone increasing function $\varpi: \R^+\times
S^1\rTo \R$ such that
\begin{align*}
\varpi(s,\theta):=\varpi(s)=\left\{
\begin{array}{ll}
0,\;& s\le 8\bar{T}_0\\
1, \;& s\ge 9\bar{T}_0.
\end{array}
\right.
\end{align*}

Without loss of generality, we show how to redefine the perturbed
Witten map on nearby curves of $\frkc$ which have only one broad
nodal point $p$ connecting two components $\frkc_\mu$ and
$\frkc_\nu$. Then a neighborhood of $\frkc$ in $\MMr_{g,k}(\gamma)$
is given by
$$
\frac{V_{\cC}\times ([T_0,\infty]\times S^1)_p}{\aut(\frkc)},
$$
where $(V_{\cC}\times ([T_0,\infty]\times S^1)_p, \aut(\cC))$ is a
uniformizing system of $[\cC]\in \MM_{g,k}(\Gamma)$.

The rigidified $W$-curve $\frkc_{y,\zeta}, \zeta=({s_p,
\theta_p})$ is constructed by gluing the $W$-structures on the
corresponding domains $[\frac{1}{4}s_p, \frac{3}{4}s_p]$ of two
components of $\frkc$. The redefined perturbed Witten map on
$\frkc_{y,\zeta}$ is defined as
$$
WM_{\frkc_{y,\zeta}}:=[WM^{(g\cdot\psi^0,\bb)}_{\frkc_{y,\zeta}}:g\in
G],
$$
where
\begin{equation}
WM^{(\psi^0,\bb)}_{\frkc_{y,\zeta}}(u_1, \dots,
u_N):=\left(\bpat_{\cC_{y,\zeta}}
u_1+\tilde{I}_1\left(\overline{\frac{\pat (W+\varpi(\zeta)\beta_1
W_{0,\gamma})}{\pat u_1}}\right),\dots, \bpat_{\cC_{y,\zeta}}
u_N+\tilde{I}_1\left(\overline{\frac{\pat (W+\varpi(\zeta)\beta_N
W_{0,\gamma})}{\pat u_N}}\right)\right).
\end{equation}
It is easy to see that the redefined map $WM$ is
$\aut(\frkc)$-equivariant on $V_{\cC}\times ([T_0,\infty]\times
S^1)_p$.

So in this way we can redefine the perturbed Witten map on the
nearby curves of $[\frkc]\in \MMr_{g,k}(\Gamma)$. Note that the
perturbation term $\varpi\beta_j W_{0,\gamma}$ will disappear if the
gluing domain approaches $2\bar{T}_0\times S^1$; thus the redefined
perturbed Witten map will be compatible with the original perturbed
Witten map defined on $\M^{rig}_{g,k,W}(\Gamma')$ satisfying
$\Gamma\prec \Gamma'$.

Now we do the induction assumption. Take the space
$\MMr_{g,k}(\Gamma')$ and assume that for any dual graph $\Gamma$
such that $\Gamma\prec\Gamma'$ we have already redefined the
perturbed Witten map on the nearby curves of $\MMr_{g,k}(\Gamma)$.
We want to define the perturbed Witten map on the nearby curves of
$\MMr_{g,k}(\Gamma')$. Take a finite open covering $\U$ of
$\MMr_{g,k}(\Gamma)$ in $\MMr_{g,k}(\Gamma')$ such that any point in
the open set of this covering has the redefined perturbed Witten
map. The perturbed Witten map was already constructed over any point
in the compact complement $\MMr_{g,k}(\Gamma')-\U$. We redefine the
perturbed Witten map on nearby curves around those points as done in
the first step. Now it is possible that for a nearby curve
$[\hat{\frkc}]$ there are two definitions of the perturbed Witten
maps; one comes from the original definition in $\MMr_{g,k}(\Gamma)$
and the other one from the new definition. But these two definitions
are identical since the deformation domain and the resolution domain
are separated. One can either deform the complex structure first and
then redefine the perturbed Witten map or redefine the perturbed
Witten map first and then deform the complex structure. Thus we
construct the redefined perturbed Witten map on the nearby curves of
$\MMr_{g,k}(\Gamma')$. By induction, we can define the global
perturbed Witten map at any point of $\MMr_{g,k}(\bgamma)$.

This global multisection $WM: B^0\rTo \pi^* B^{0,1}$ is
called \emph{ the perturbed Witten map over $\MMr_{g,k}(\bgamma)$}.

\begin{df} Let $WI_{\frkc}$ represent the branch $WM^{(\psi^0,\bb)}_\frkc$ of the perturbed Witten map
$WM_\frkc$; then \emph{the perturbed Witten equation over $\frkc$}
is defined as
\begin{equation}\label{witt-equa-noda}
WI_{\frkc}(u_1,\dots, u_N)=0.
\end{equation}
In the following, we also call the branch map $WI_{\frkc}$
the \emph{perturbed Witten map.}
\end{df}

This means that on each component $\frkc_\nu$, we have
\begin{equation}\label{witt-equa-noda1}
\bpat_{\cC_\nu} u_{i,\nu}+\tilde{I}_1\left(\overline{\frac{\pat(
W+W_{0,\beta})}{\pat u_{i,\nu}}}\right)=0, \forall i=1,\dots,N,
\end{equation}
where $u_{i,\nu}$ is the $\nu$-component of the section $u_i$ and
$W_{0,\beta}$ represents the perturbed term.

In view of the definitions, we have
\begin{equation}
WM_\frkc^{-1}(0)=\aut_{\frkc}\cdot WI_\frkc^{-1}(0).
\end{equation}

Because of the group action, we have $WM^{-1}(0)=\cup_{\frkc\in
\MMr_{g,k}(\bgamma)}WI_\frkc^{-1}(0)/\aut(\frkc)\subset B^0$. This
set is rather complicated because  transversality does not hold. Its
topology is weak and can't be characterized by the strong topology
from the Banach bundle. But we can give it the Gromov-Hausdorff
topology, and prove the Gromov compactness theorem. Finally, we can
show that it carries an orientable Kuranishi structure if the
perturbation is strongly regular. This method to construct the
virtual cycle has already been used in the proof of the Arnold
conjecture and in the construction of Gromov-Witten invariants in
general symplectic manifolds (see \cite{FO,LiT,LT,R,Sb} etc.).

\begin{rem}\label{rem-wequ-equal} Now we discuss the gluing of the Witten equations
on two components connected by a nodal point. Let $\cC$ be a nodal
curve with one broad nodal point $p$ connecting two components
$\cC_\nu$ and $\cC_\mu$. Let $(e_{i,\nu}), z_\nu)$ and
$(e_{i,\mu}, z_\mu)$ be the standard basis and coordinates of line
bundles $\LL_i$ on $\cC_\nu$ and $\cC_\mu$, respectively, such
that
$$
W(e_{1,\nu}, \dots, e_{N,\nu})=\frac{d z_\nu}{z_\nu}=-\frac{d
z_\mu}{z_\mu}=-W(e_{1,\mu}, \dots, e_{N,\mu}).
$$
Then the perturbations we choose in the  $\mu$-component and
$\nu$-component are not independent. Now the relation between the
two families of perturbation parameters is shown as the conclusion
of the following facts.

As before, we take the cylindrical coordinates
$z_\nu=e^{\zeta_\nu}$ and $z_\mu=e^{\zeta_\mu}$. Then we have the
relation
\begin{align*}
&W(e_{1,\nu}, \dots, e_{N,\nu})=-d\zeta_\nu,\;\zeta_\nu\in
[0,\infty)\\
&W(e_{1,\mu}, \dots, e_{N,\mu})=-d\zeta_\mu,\;\zeta_\mu\in
[0,\infty),
\end{align*}
and $d\zeta_\nu=-d\zeta_\mu, \zeta_\nu+\zeta_\mu=\zeta_p$, where
$\zeta_p$ is the gluing parameter if we want to do the gluing
operation.

We have the following facts:

\begin{enumerate}
\item In the coordinate system $(z_\nu, e_{i,\nu})$, the perturbed
polynomial
$$
W_{0,\nu}=\sum_j b_{j,\nu}\beta_{j,\nu}u_{j,\nu}\in
\Omega(\frac{dz_\nu}{z_\nu}).
$$
A similar expression holds on the $\mu$ component.

\item Assume that $u_{i,\nu}=\tilde{u}_{i,\nu}e_{i,\nu}$; then the
perturbed Witten equation has the form
\begin{equation}\label{equ-nu}
\bpat_{\zeta_\nu} \tilde{u}_{i,\nu}-\overline{\frac{2\pat W}{\pat
\tilde{u}_{i,\nu}}}-\overline{2 b_{i,\nu}}=0.
\end{equation}
In $(z_\mu,e_{i,\mu})$ coordinates, we have
$u_{i,\mu}=\tilde{u}_{i,\mu}e_{i,\mu}$ and the corresponding
equation
\begin{equation}\label{equ-mu}
\bpat_{\zeta_\mu} \tilde{u}_{i,\mu}-\overline{\frac{2\pat W}{\pat
\tilde{u}_{i,\mu}}}-\overline{2 b_{i,\mu}}=0.
\end{equation}

\item Let $\xi^d=-1$ and $\hat{e}_{i,\mu}=\xi^{k_i}e_{i,\mu}$, we
have
$$
W(\hat{e}_{1,\mu}, \dots, \hat{e}_{N,\mu})=d\zeta_\mu.
$$
When we do gluing,  first we need to identify the coordinate
$\zeta_\mu=\zeta_p-\zeta_\nu$ and so $d\zeta_\nu=-d\zeta_\mu$.
Secondly, we should identify the line bundles on two components by
identifying the basis $e_{i,\nu}$ with $\hat{e}_{i,\mu}$.

\item The local expression of the perturbed Witten equation in the
coordinate system $(\zeta_\mu, \hat{e}_{i,\mu})$ is given as
below. We have
$$
u_{i,\mu}=\hat{u}_{i,\mu}\hat{e}_{i,\mu}=\tilde{u}_{i,\mu}e_{i,\mu},
$$
and so
$$
\hat{u}_{i,\mu}=\xi^{-k_i}\tilde{u}_{i,\mu}.
$$
Substituting the above equality into Equation (\ref{equ-mu}), one
has
$$
\bpat_{\zeta_\mu}(\xi^{k_i} \hat{u}_{i,\mu})-\overline{\frac{2\pat
W}{\pat \hat{u}_{i,\mu}}(\xi^{k_1}\hat{u}_{1,\mu},\dots,
\xi^{k_N}\hat{u}_{N,\mu})}-\overline{2 b_{i,\mu}}=0.
$$
This is equivalent to
\begin{equation}
\bpat_{\zeta_\mu} \hat{u}_{i,\mu}+\overline{\frac{2\pat W}{\pat
\hat{u}_{i,\mu}}}-\overline{2 \xi^{k_i}b_{i,\mu}}=0.
\end{equation}
So in $(\zeta_\nu, \hat{e}_{i,\mu})$ coordinates, we have
\begin{equation}\label{equ-mu-change}
\bpat_{\zeta_\nu} \hat{u}_{i,\mu}-\overline{\frac{2\pat W}{\pat
\hat{u}_{i,\mu}}}+\overline{2 \xi^{k_i}b_{i,\mu}}=0.
\end{equation}

\item After transformation, Equation (\ref{equ-mu-change}) should
be the same as Equation (\ref{equ-nu}). So we obtain the relation
between two parameter groups:
\begin{equation}
b_{i,\nu}=-\xi^{k_i}b_{i,\mu}.
\end{equation}
And at the nodal point $p$ one has
$$
e_{i,\nu}=\hat{e}_{i,\mu},
\tilde{u}_{i,\nu}(+\infty)=\hat{u}_{i,\mu}(+\infty).
$$

\item Let $I:\C^N\rTo \C^N$ be the map defined by the
multiplication of the diagonal matrix $\diag(\xi^{k_1},
\dots,\xi^{k_N})$. If $W+\xi^{d+k_i}\sum_j
b_{i,\mu}\hat{u}_{i,\mu}$ has critical point $\hat{\kappa}^j$ and
corresponding critical value $\hat{\alpha}^j$, it is easy to show
that $(I(\hat{\kappa}^j), -\hat{\alpha}^j)$ is the critical point
and critical value of $W+\sum_j b_{i,\mu}\hat{u}_{i,\mu}$.

\item If the rigidifications over the two components are not
standard but arbitrary, we can still glue the perturbed Witten
equations. However the identification map $I$ is replaced by the
composition of $I$ and the inverse of the corresponding
rigidifications.
\end{enumerate}
\end{rem}

\begin{df} Sections $(u_1,\dots, u_t)$ of $\LL_1\times \LL_2\times\dots\times\LL_t$ on $\cC$
are said to be  solutions of the perturbed Witten equation
(\ref{witt-equa-noda}) if they satisfy the following conditions:
\begin{enumerate}

\item for each $j, u_{j,\nu}\in
L^2_{1,loc}(\cC_\nu\setminus\{z_1,\dots,z_{k_\nu}\},
\LL_j|_{\cC_\nu}),I_1\left(\overline{\frac{\partial W}{\partial
u_{j,\nu}}+\frac{\partial W_{0,\beta}}{\partial
u_{j,\nu}}}\right)\in
L^2_{loc}(\cC\setminus\{z_1,\dots,z_{k_\nu}\}, \LL_j\otimes
\Lambda^{0,1}|_{\cC_{\nu}})$, where $u_{j,\nu}$ is the component
of $u_j$ and $k_\nu$ is the number of marked points and nodal
points on $\cC_\nu$;

\item $(u_{1,\nu},\dots,u_{t,\nu})$ satisfy the perturbed
Witten-equation (\ref{witt-equa-noda1}) on $\cC_\nu$ almost
everywhere;

\item near each marked or nodal point, the integral
$$
\sum_j\int^\infty_0\int_{S^1} |\frac{\pat u_{j,\nu}}{\pat s}|^2
d\theta ds<\infty.
$$

\item If $p$ is a broad nodal point of $\cC$ between two
components $\cC_\nu$ and $\cC_\mu$, then
$$
\lim_{s_\nu\to +\infty} (u_{1,\nu}(s_\nu, \theta_\nu),\dots,
u_{t,\nu}(s_\nu, \theta_\nu))=\lim_{s_\mu\to +\infty}
(u_{1,\mu}(s_\mu, \theta_\mu),\dots, u_{t,\mu}(s_\mu,
\theta_\mu)).
$$
\end{enumerate}
\end{df}

\begin{rem} Using the notations from Remark \ref{rem-wequ-equal}, we
discuss the condition (4) in the above definition. The condition
(4) is given in section form. Assume that on the $\nu$-component,
$\tilde{u}_{i,\nu}$ satisfies Equation (\ref{equ-nu}) with
$\tilde{u}_{i,\nu}(+\infty)=\kappa^j_i$, where $\kappa^j,
\alpha^j$ are the $j$-th critical point and critical value of
$W+\sum b_{i,\nu} \tilde{u}_{i,\nu}$. Now the condition (4)
implies that the corresponding value in the $\mu$-coordinate
should satisfy
$$
\tilde{u}_{i,\mu}(+\infty)=\xi^{k_i} \kappa^j_i,
$$
for a local solution $\tilde{u}_{i,\mu}$ of Equation
(\ref{equ-mu}).
\end{rem}

On the component $\frkc_\nu$, the equation (\ref{witt-equa-noda1})
has a corresponding equation defined on the resolved curve
$\tilde{\cC}$:
\begin{equation}\label{witt-equa-reso}
\bpat_{\cC_\nu} g_{i,\nu}+I_1\left(\overline{\frac{\pat(
W+W_{0,\beta})}{\pat g_{i,\nu}}}\right)=0, \forall i=1,\dots,N,
\end{equation}
where $g_{i,\nu}\in \Omega(\tilde{\cC_\nu}, |\LL_i|_{\cC_\nu})$.
From  subsection \ref{subsub-witten-orbi}, we know that there is a
one-to-one correspondence between the solutions of these two
equations. In fact, let $p$ be a marked or nodal point on the
component $\frkc_\nu$, and take a uniformizing system
$(\Delta\times \C, G_p)$ of the orbifold line bundle $\LL_i$ near
$p$. If $(z, u_{i,\nu}(z))$ be the local solution of
(\ref{witt-equa-noda1}), then
$u_{i,\nu}(z)=g_{i,\nu}(z^m)(z^m)^{\Theta_i^{\gamma_p}}$. The
properties of the solutions of Equation (\ref{witt-equa-reso})
have been fully studied in Section 3. Hence we can easily get the
following theorems:

\begin{thm}\label{thm-solu-noda}

 The solutions of the perturbed Witten equation
(\ref{witt-equa-noda}) satisfy the following conclusions:
\begin{enumerate}
\item Interior estimate: for any ball $B_{2R}$ lying outside the
cylindrical neighborhood of any marked or nodal points, there
exists a constant $C$ depending only on $R, m\in {\mathbb Z},$ and
the metric in $B_{2R}$ such that
$$
||u_i||_{C^m(B_{2R})}\le C.
$$

\item Boundedness estimate on the cylinder: Let $p$ be a marked or
nodal point with cylindrical neighborhood $[0,\infty)\times S^1$.
Then for any compact set $K\subset [0, \infty)\times S^1$, there
is a constant depending only on $m\in \Z, K$, and the perturbation
parameters $b$ in $W_{0,\gamma_p}$ such that
$$
||u_i||_{C^m(K)}\le C.
$$
Here the derivatives are taken with respect to the cylindrical
coordinates $(s,\theta)$.

\item Asymptotic behaviors: Without loss of generality, we assume
that for $1\le i\le N_p$, the solutions $u_i$'s are sections of
broad line bundles, and for $N_p+1\le i\le N$, the solutions
$u_i$'s are sections of narrow line bundles. Then we have

\begin{itemize}

\item Let $W_{0,\gamma_p}(u_1,\dots, u_{N_l})=\sum_{k=1}^{N_l}
b_k u_k$ be any $W_{\gamma_p}$-regular polynomial at $p$. Then if
$\sum_{k=1}^{N_l} |b_k|$ is sufficiently small, there exist
constants $C, T,\delta>0$ such that for any $(s,\theta)\in
S^1\times [T,\infty)$,

\begin{align*}
&|u_i-\kappa_i|< C e^{-\delta s},
\text{for}\;i=1,\dots,N_p,\\
\end{align*}
where $(\kappa_1,\dots,\kappa_{N_p})$ is some critical point of
$W_{\gamma_p}+W_{0,\gamma_p}$.

\item If $N_p+1\le i\le t$, then there exist constants $C,
T,\delta>0$ such that for any $(s,\theta)\in S^1\times
[T,\infty)$,
\begin{equation}
||u_i||_{C^1([T,\infty))}\le C e^{-\delta s}.
\end{equation}

\end{itemize}

\item Continuity: Assume that $p$ is a nodal point connecting two
components $\frkc_\nu$ and $\frkc_\mu$; then
$u_{i,\nu}(p)=0=u_{i,\mu}(p)$ naturally hold for narrow
sections $u_i, N_p+1\le i\le N$. For broad sections $u_i,1\le
i\le N_p$, we have $u_{i,\nu}(p)=\lim_{s_\nu\to \infty}
u_{i,\nu}(s_\nu)=\kappa_i=\lim_{s_\mu\to \infty}
u_{i,\mu}(s_\mu)=u_{i,\mu}(p)$, which is required by the
definition of solutions.  Here
$\varkappa=(\kappa_1,\dots,\kappa_{N_p})$ is some critical point
of $W+W_{\gamma_p}$.

\end{enumerate}
\end{thm}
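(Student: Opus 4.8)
The plan is to reduce each of the four assertions to the corresponding statement on a smooth (orbi-)curve, which was already established in Section \ref{sec:witten}, and then to patch the pieces back together across the node. Write $\cC=\bigcup_\nu \cC_\nu$ for the normalization into irreducible components. By the definition of a solution, $(u_{1,\nu},\dots,u_{N,\nu})$ satisfies the perturbed Witten equation \eqref{witt-equa-noda1} on each $\cC_\nu$, and every marked point and every half-node of $\cC_\nu$ is a (smooth or orbifold) marked point of the smooth orbicurve $\cC_\nu$. Moreover, by subsection \ref{subsub-witten-orbi}, passing to the resolution of $\cC_\nu$ and to the coordinate functions $g_{i,\nu}$ of $|\LL_i|$ converts \eqref{witt-equa-noda1} into an equation of exactly the form \eqref{plgw-equ} (equivalently \eqref{witt-equa-reso}) on a smooth curve, with the same perturbation data. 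Hence every estimate proved in Section \ref{sec:witten} applies, component by component, near each marked or nodal point.

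For (1), the ball $B_{2R}$ lies away from all cylindrical neighborhoods, hence inside a single component and away from the orbifold points; Theorem \ref{bdd-inn-thm} applied to that component gives $\|u_i\|_{C^m(B_{2R})}\le C$. For (2), a marked or nodal point $p$ has a cylindrical neighborhood contained in a single component (in the nodal case, one of the two meeting at $p$, after resolution); on that cylinder the section satisfies \eqref{plgw-equ2}, and Theorem \ref{bdd-bdry-thm} gives $\|u_i\|_{C^m(K)}\le C$ on compact subcylinders $K$, with $C$ depending only on $m$, $K$ and the perturbation parameters. For (3), at each marked or half-nodal point $p$ of a component: the Neveu--Schwarz sections decay to $0$ with all derivatives by Theorem \ref{conv-n-solu}; the Ramond sections converge to a critical point $\kappa$ of $W_{\gamma_p}+W_{0,\gamma_p}$ by Theorem \ref{conv-r-solu} (with the energy identity of Theorem \ref{lm-witt-noda} in the background); and the exponential rate is the one supplied by Lemma \ref{expo-decay-salamon}, with the hypothesis that $\sum_k|b_k|$ be small entering exactly as in the exponential-decay theorem of Section \ref{sec:witten} to guarantee invertibility of $J\partial_\theta+S^\infty$.

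For (4), the continuity at a node $p$ joining $\frkc_\nu$ and $\frkc_\mu$: for a Neveu--Schwarz section $u_i$ the orbifold line bundle $\LL_i$ has nontrivial isotropy at $p$, so its fiber over $p$ is the single point $0$, and the exponential decay from (3) forces $u_{i,\nu}(p)=0=u_{i,\mu}(p)$. For a Ramond section $u_i$, (3) gives that $u_{i,\nu}(s_\nu,\theta_\nu)$ converges, uniformly in $\theta_\nu$ and exponentially fast, to a constant $\kappa_i$ as $s_\nu\to\infty$, and likewise on the $\mu$ side; condition (4) in the definition of a solution is precisely the statement that these two limits agree, so the glued section is continuous at the node and $(\kappa_1,\dots,\kappa_{N_p})$ is a critical point of $W_{\gamma_p}+W_{0,\gamma_p}$.

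The main obstacle I anticipate is the bookkeeping at a Ramond node: one must check that the ``obvious'' matching of limit values is consistent with how the two local perturbed equations are glued. By Remark \ref{rem-wequ-equal} the perturbation parameters on the two sides are related by $b_{i,\nu}=-\xi^{k_i}b_{i,\mu}$ and the identification of fibers at the node is $e_{i,\nu}=\hat e_{i,\mu}=\xi^{k_i}e_{i,\mu}$, so the critical point seen on the $\mu$-side is $I(\hat\kappa)$ while the imaginary part of the critical value changes sign. One has to verify that with these identifications the two one-sided limits in condition (4) do define a single element $\kappa$ of the fiber $(\LL_1\oplus\cdots\oplus\LL_{N_p})|_p$, and that $\kappa$ is indeed a critical point of $W_{\gamma_p}+W_{0,\gamma_p}$; this is the content of items (6)--(7) of Remark \ref{rem-wequ-equal}. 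Once that is in place, the remainder is a routine transcription of the smooth-curve estimates of Section \ref{sec:witten} to each component.
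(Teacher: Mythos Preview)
Your proposal is correct and follows essentially the same approach as the paper: the paper's entire argument is the paragraph immediately preceding the theorem, which observes that via subsection \ref{subsub-witten-orbi} the equation on each component $\frkc_\nu$ corresponds to equation \eqref{witt-equa-reso} on the resolved curve $\tilde{\cC}_\nu$, and then states that ``the properties of the solutions of Equation \eqref{witt-equa-reso} have been fully studied in Section 3,'' so the theorem follows. Your write-up is more explicit about which Section \ref{sec:witten} result yields which conclusion and about the node bookkeeping via Remark \ref{rem-wequ-equal}, but the strategy is identical.
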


\begin{rem} The above conclusions hold for any solutions of the
perturbed Witten equations at any point $\frkc\in
\MMr_{g,k}(\bgamma)$. One may worry about the equations on
$\frkc_{y, {\boldsymbol \zeta}}$, which is the resolution curve of
$\frkc$. In this case, the perturbed Witten map has a perturbation
term appear in the interior of the glued curve. Now the interior
estimate on $\frkc_{y, {\boldsymbol \zeta}}$ is the combination of
the interior estimate and the bounded estimate of $\frkc$.
\end{rem}

From the description of Theorem \ref{thm-solu-noda}, we know that
any solutions of the perturbed Witten equation actually lie in the
space $C^\infty(\cC, \LL_1)\times \dots\times C^\infty(\cC,
\LL_N)$.

\subsubsection{Soliton space}

Let $\frkc=(\cC, p_1,\dots, p_k, \LL_{1},\dots,
\LL_{N},\varphi_{1},\dots, \varphi_{s},\psi_1,\dots,\psi_k)$ be
a rigidified semistable $W$-curve with $k$ marked points. The
$W$-structure induces a group element $\gamma_{p_i}\in G$ at
$p_i$. The action of this local group determines whether the line
bundles are broad or narrow at $p_i$, and hence determines
the choice of the polynomial $W_{\gamma_{p_i}}$ and the perturbed
polynomial $W_{0,\gamma_{p_i}}$. Choose the coefficients such that
each $W_{0,\gamma_{p_i}}$ is $W_{\gamma_{p_i}}$-regular and then
fix $W_{0,\gamma_{p_i}}$. Under the rigidification, (i.e.,
choosing the local bases around the marked point), we obtain
$\deg(W_{\gamma_{p_i}})-1$ critical points of the polynomial
$W_{0,\gamma_{p_i}}+W_{\gamma_{p_i}}$ in $\C^N$, which are all
non-degenerate. We use $\varkappa_i$ to denote the critical points
at the marked point $p_i$. Remember $\varkappa_i$ can take
$deg(W_{\gamma_{p_i}})-1$ values. Denote
$\bvkappa:=(\varkappa_1,\dots,\varkappa_k)$.

Now we study a special solution space related to a marked point
$p$. The data attached to $p$ is $\gamma\in G$, the image of the
generator of local group $G_p$ in $G$, and $\varkappa_p$, some
nondegenerate critical points of
$W_{0,\gamma_{p}}+W_{\gamma_{p}}$. Let $N_p$ be the number of
broad line bundles at $p$.  We give an order to the set of these
critical points $\kappa^1, \dots, \kappa^{\deg(W_{\gamma_p})-1}$
in $\C^{N_p}$ space such that
$\mbox{Re}((W_\gamma+W_{0,\gamma})(\kappa^i))\ge
\mbox{Re}((W_\gamma+W_{0,\gamma})(\kappa^j))$ if $i>j$.

Let $\LL_1,\dots,\LL_N$ be  flat orbifold line bundles defined on
the infinitly long cylinder $\R\times S^1$, where the monodromy
representation is given by
$$
\rho_p: 1\in \Z\cong \pi_1(\R\times S^1)\mapsto \gamma\in
G\subset U(1)^N.
$$
At the  point $-\infty$, the orbifold action on $\LL_1,\dots,
\LL_N$ is given by $\gamma^{-1}$ and at $\infty$ it is given by
$\gamma$. We have the perturbed Witten equation (in cylindrical
coordinates) defined on ${\mathbb R}\times S^1$:
\begin{equation}\label{soli-witt-equa}
\frac{\bpat u_i}{\pat \bar{\xi}}-2\overline{\frac{\pat
(W+W_{0,\gamma})}{\pat u_i}}=0.
\end{equation}

By the asymptotic analysis in Section \ref{sec:witten}, we know
that $\bu$ takes values $(\kappa^{\pm}, 0)$ at $\mp\infty$ (notice
the sign!), where $\kappa^{+}$ or $\kappa^-$ are two critical
points of $W_\gamma+W_{0,\gamma}$. The solution of this equation
is called {\bf the soliton solution} of type $\gamma_p$ connecting
$\kappa^{+}$ and $\kappa^-$ and is denoted by
$(\bu_{\kappa^+,\kappa^-},\gamma_p)$.

We have the following Witten lemma.

\begin{lm} If $p$ is a narrow point, then the related
soliton equation (\ref{soli-witt-equa}) at $p$ has only the zero
solution.
\end{lm}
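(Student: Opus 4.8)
The plan is to reduce the soliton equation at a Neveu-Schwarz point to a situation where the Witten Lemma (Theorem~\ref{lm-witt-noda}) applies directly, or rather to mimic its integration-by-parts argument on the infinite cylinder $\R\times S^1$. First I would observe that at a Neveu-Schwarz point $p$ the group element $\gamma$ fixes no coordinate axis, so every line bundle $\LL_i$ is genuinely twisted: in cylindrical coordinates each $\Theta_i^{\gamma}\in(0,1)$, and in particular the perturbation polynomial $W_{0,\gamma}$ is trivial (there are no Ramond variables to perturb), so equation~(\ref{soli-witt-equa}) is really just the unperturbed soliton equation $\bpat_\xi u_i - 2\overline{\partial W/\partial u_i}=0$. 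The asymptotic analysis of Section~\ref{sec:witten} (Theorem~\ref{conv-n-solu}, applied at both ends $\pm\infty$) then tells us that $\hat v_i=u_i e^{-\Theta_i^\gamma s}$ and all its derivatives decay exponentially as $s\to+\infty$, and symmetrically as $s\to-\infty$; hence $u_i$ itself tends to $0$ (the only $\gamma$-fixed point, namely the degenerate critical point of $W_\gamma$, which is the origin) at both ends, with exponential rate.

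Next I would run the energy identity. Multiplying the soliton equation by $\partial W/\partial u_i$ and summing over $i$ gives $\partial_{\bar\xi}W = 2\sum_i |\partial W/\partial u_i|^2$ on each contractible piece of $\R\times S^1$. Since $W(u_1(\xi),\dots,u_N(\xi))$ is a globally well-defined function on the cylinder (the monomial relations $\sum_s b_{js}\Theta_s^\gamma\equiv 0 \bmod \Z$ guarantee this), I can integrate over $S^1\times[-T,T]$ and apply Stokes' theorem for the $\bpat$-operator to obtain
\begin{equation*}
2\int_{-T}^{T}\!\!\int_{S^1}\sum_i\Bigl|\frac{\partial W}{\partial u_i}\Bigr|^2\,ds\,d\theta = \int_{S^1}W(T,\theta)\,d\theta - \int_{S^1}W(-T,\theta)\,d\theta.
\end{equation*}
Letting $T\to\infty$ and using that $u_i\to 0$ exponentially at both ends (so $W\to 0$ at both ends), the right-hand side vanishes, whence $\partial W/\partial u_i\equiv 0$ for all $i$. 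By the non-degeneracy of $W$ this forces $u_i\equiv 0$: the only point of $\C^N$ at which all partials of a non-degenerate quasi-homogeneous polynomial vanish is the origin. One must also check that $\bpat u_i=0$ then follows from the equation, so $u_i$ is holomorphic; combined with the exponential decay and the twisted (orbifold) boundary conditions — a nonzero holomorphic section of a line bundle with $\Theta_i^\gamma>0$ that decays at infinity cannot exist — this again gives $u_i\equiv 0$, consistent with the first argument.

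The main obstacle I anticipate is a careful justification of the boundary-term analysis at the \emph{two} ends of the infinite cylinder simultaneously, together with the integrability needed to apply Stokes. The convergence results of Section~\ref{sec:witten} (Lemma~\ref{inte-1}, Theorem~\ref{conv-n-solu}) are stated for the half-cylinder $S^1\times[0,\infty)$ attached to a marked point; here I need the symmetric statement at $-\infty$, which should follow by the reflection $s\mapsto -s$ but requires noting that the orbifold data at $-\infty$ is $\gamma^{-1}$ rather than $\gamma$, so the roles of the $\Theta_i$ are replaced by $1-\Theta_i$ (still strictly in $(0,1)$ in the Neveu-Schwarz case), which is exactly why the argument still closes. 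A secondary technical point is that $W(u(\xi))$ need not a priori be bounded, so I should first establish uniform $C^m$ bounds on $u_i$ on the whole cylinder — but these come from the interior estimate (Theorem~\ref{bdd-inn-thm}) and the boundary estimate (Theorem~\ref{bdd-bdry-thm}) under the running hypothesis $q_i<1/2$. Once these are in place the computation is routine and parallels the proof of Theorem~\ref{lm-witt-noda} verbatim.
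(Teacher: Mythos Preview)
Your proposal is correct and follows exactly the approach the paper has in mind: the paper's proof is the single sentence ``This is a special case of the Witten lemma we proved in section~\ref{sec:witten},'' and what you have written is precisely the unpacking of that lemma (Theorem~\ref{lm-witt-noda}) on the infinite cylinder with both ends Neveu-Schwarz. The energy identity plus the decay at $\pm\infty$ forcing $\partial W/\partial u_i\equiv 0$, then non-degeneracy of $W$ forcing $u_i\equiv 0$, is exactly the mechanism of that theorem in the ``no Ramond marked point'' case.
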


\begin{proof}
This is a special case of the Witten lemma we proved in section
\ref{sec:witten}.
\end{proof}

By the above lemma, we only need to consider the soliton equation
related to broad marked points. Set $p$ as a broad marked point.
Due to the proof of Lemma \ref{inte-1}, we have the following lemma.

\begin{lm} Let $(\bu_{\kappa^+,\kappa^-},\gamma)$ be a soliton; then
we have
$$
(W_\gamma+W_{0,\gamma})(\kappa^-)-(W_\gamma+W_{0,\gamma})(\kappa^+)=2\int^{+\infty}_{-\infty}\int_{S^1}
\sum_i \left| \frac{\pat(W+W_{0,\gamma})}{\pat u_i} \right|^2.
$$
\end{lm}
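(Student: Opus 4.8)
The plan is to repeat the energy identity from the proof of Lemma~\ref{inte-1}, but now on the whole infinite cylinder $\R\times S^1$; the key simplification is that the perturbation $W_{0,\gamma}$ is present everywhere on the soliton cylinder (there is no cut-off $\beta_T$), so none of the correction terms that appear in Lemma~\ref{inte-1} are produced.

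First I would multiply the soliton equation~(\ref{soli-witt-equa}) by $\frac{\pat(W+W_{0,\gamma})}{\pat u_i}$ and sum over $i$. Since $W+W_{0,\gamma}$ is a polynomial (hence holomorphic) in $u_1,\dots,u_N$ and $W_{0,\gamma}$ is linear, the chain rule gives
$$
\pat_{\bar\xi}\big((W+W_{0,\gamma})(u_1,\dots,u_N)\big)=\sum_i\frac{\pat(W+W_{0,\gamma})}{\pat u_i}\,\pat_{\bar\xi}u_i=2\sum_i\Big|\frac{\pat(W+W_{0,\gamma})}{\pat u_i}\Big|^2 .
$$
Exactly as in Lemma~\ref{inte-1}, for the Neveu--Schwarz indices $i>N_p$ the $u_i$ are only locally defined, but $(W+W_{0,\gamma})(u_1,\dots,u_N)$ and each $\frac{\pat(W+W_{0,\gamma})}{\pat u_i}$ (carrying the appropriate unit-modulus twisting factor, so the right-hand side is unchanged) are honest functions on $\R\times S^1$, so the displayed identity is globally meaningful.

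Next I would integrate this identity over $[-T,T]\times S^1$. The $\sqrt{-1}\,\pat_\theta$ part of $\pat_{\bar\xi}$ integrates to zero over $S^1$ by periodicity, and Stokes' theorem for the $\bpat$-operator (as used in Lemma~\ref{inte-1}) converts the left-hand side into a boundary term:
$$
2\int_{-T}^{T}\!\!\int_{S^1}\sum_i\Big|\frac{\pat(W+W_{0,\gamma})}{\pat u_i}\Big|^2=\int_{S^1}(W+W_{0,\gamma})(T,\theta)\,d\theta-\int_{S^1}(W+W_{0,\gamma})(-T,\theta)\,d\theta .
$$
Then I would let $T\to\infty$. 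By the asymptotic analysis of Section~\ref{sec:witten} (Theorems~\ref{conv-n-solu} and~\ref{conv-r-solu}, applied at each end of the cylinder) the soliton $\bu$ tends in $C^0(S^1)$ to $(\kappa^{+},0)$ as $s\to-\infty$ and to $(\kappa^{-},0)$ as $s\to+\infty$; since every monomial of $W_N$ contains a Neveu--Schwarz variable and $W_{0,\gamma}$ depends only on the Ramond variables, $(W+W_{0,\gamma})(\pm T,\theta)\to (W_\gamma+W_{0,\gamma})(\kappa^{\mp})$ uniformly in $\theta$, so the right-hand side converges to $(W_\gamma+W_{0,\gamma})(\kappa^{-})-(W_\gamma+W_{0,\gamma})(\kappa^{+})$. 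The left-hand side is nondecreasing in $T$ (the integrand is $\ge 0$), hence has a limit in $[0,\infty]$; by the identity that limit is finite and equal to the number just computed, which simultaneously gives the integrability of $\sum_i|\pat(W+W_{0,\gamma})/\pat u_i|^2$ over $\R\times S^1$ and the asserted formula.

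The computation is routine; the only step needing genuine input is the behaviour of the two boundary integrals as $T\to\infty$, i.e.\ the uniform convergence of $\bu(\pm T,\cdot)$ to the critical points at the two ends. This is precisely the asymptotic/exponential-decay content of Section~\ref{sec:witten}, now invoked at both $+\infty$ and $-\infty$ (the statement that $\bu$ takes the values $(\kappa^{\pm},0)$ at $\mp\infty$ is the one recorded just before the lemma).
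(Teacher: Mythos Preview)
Your proposal is correct and is exactly what the paper intends: it simply refers back to the proof of Lemma~\ref{inte-1}, and your write-up spells out precisely that argument on the full cylinder, noting that the absence of the cut-off $\beta_T$ kills the correction term $(\sum_j b_j v_j)\pat_{\bar\xi}\beta_T$ and that the boundary values at $\pm\infty$ are supplied by the asymptotic analysis already recorded just before the lemma.
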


\begin{crl} There is no soliton solution connecting
$\kappa^-$ and $\kappa^+$, if
$\mbox{Im}(W_\gamma+W_{0,\gamma})(\kappa^-)\neq
\mbox{Im}(W_\gamma+W_{0,\gamma})(\kappa^+)$, and  only the trivial
solution if  $\kappa^-=\kappa^+$.
\end{crl}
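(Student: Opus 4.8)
The plan is to read everything off the energy identity of the preceding lemma, combined with the soliton equation (\ref{soli-witt-equa}).

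First I would note that the right-hand side of
\[
(W_\gamma+W_{0,\gamma})(\kappa^-)-(W_\gamma+W_{0,\gamma})(\kappa^+)=2\int^{+\infty}_{-\infty}\int_{S^1}\sum_i\left|\frac{\pat(W+W_{0,\gamma})}{\pat u_i}\right|^2
\]
is a non-negative real number; hence so is the left-hand side. In particular its imaginary part vanishes, which forces
\[
\mbox{Im}(W_\gamma+W_{0,\gamma})(\kappa^-)=\mbox{Im}(W_\gamma+W_{0,\gamma})(\kappa^+).
\]
Contraposing, if the two critical values have distinct imaginary parts there can be no soliton connecting $\kappa^-$ and $\kappa^+$, which is the first assertion.

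For the second assertion, suppose $\kappa^-=\kappa^+$. Then the left-hand side of the identity is $0$, so the (non-negative) integrand vanishes identically, i.e. $\frac{\pat(W+W_{0,\gamma})}{\pat u_i}(\bu(\xi))=0$ for every $i$ and every $\xi\in\R\times S^1$. Substituting this into (\ref{soli-witt-equa}) gives $\frac{\bpat u_i}{\pat\bar\xi}=0$, so each component $u_i$ is holomorphic on the cylinder $\R\times S^1$, and it is bounded by the uniform $C^m$ estimates of Section \ref{sec:witten}. Since $\R\times S^1$ is biholomorphic to $\C^*$, a bounded holomorphic function on it extends to a bounded entire function and hence is constant by Liouville's theorem; alternatively, $\bu$ is a continuous map from the connected cylinder into the finite critical set of the holomorphic Morse function $W_\gamma+W_{0,\gamma}$, hence constant. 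By the asymptotic analysis of Section \ref{sec:witten}, the constant value of the Neveu--Schwarz components ($i>N_p$) must be $0$ and that of the Ramond components ($i\le N_p$) must equal $\kappa^+=\kappa^-$. Thus $\bu$ is the constant (trivial) soliton, as claimed.

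I do not expect any serious obstacle: once the energy identity of the previous lemma is in hand, the corollary is essentially a one-line consequence together with the Liouville-type statement for bounded holomorphic functions on the cylinder. The only point requiring a little care is the appeal to the boundedness of $\bu$ and to its limits at $\mp\infty$, both of which are already established in Section \ref{sec:witten}.
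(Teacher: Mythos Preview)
Your approach is exactly what the paper intends: the corollary is stated without proof, as an immediate consequence of the energy identity. Your argument for the first assertion is correct as written.

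For the second assertion your two routes each need a small repair. The Liouville step treats every $u_i$ as a bounded holomorphic function on $\C^*$, but for $i>N_p$ the $u_i$ are sections of \emph{nontrivial} flat line bundles on the cylinder, not functions; one should observe separately that a bounded holomorphic section of such a bundle on $\C^*$ vanishes identically (pull back to the universal cover and apply the same Liouville reasoning to the twisted-periodic coefficient function). In your alternative argument you invoke the critical set of $W_\gamma+W_{0,\gamma}$, but the vanishing of the integrand gives $\nabla(W+W_{0,\gamma})(\bu)=0$ for the \emph{full} $W$, and since $W_N$ may involve Ramond variables one cannot directly conclude that the Ramond part of $\bu$ lands in $\mathrm{Crit}(W_\gamma+W_{0,\gamma})$; rather, $\bu$ takes values in the finite set $(\nabla W)^{-1}(-b_1,\dots,-b_{N_p},0,\dots,0)\subset\C^N$ (finite because $W$ is nondegenerate quasi-homogeneous, so $\nabla W$ is a finite morphism), and then connectedness of the cylinder forces constancy. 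Either fix completes the argument.
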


According to this corollary, the soliton solution connecting two
different critical points $\kappa_i$ and $\kappa_j$ exist, only if
$\mbox{Im}(W_\gamma+W_{0,\gamma})(\kappa^i)
=\mbox{Im}(W_\gamma+W_{0,\gamma})(\kappa^j)$.

\begin{crl} Let $p$ be a broad marked point on a $W$-curve. If
the perturbed polynomial $W_{0,\gamma_p}$ is strongly
$W_\gamma$-regular, then the related soliton equation has no
nontrivial solution.
\end{crl}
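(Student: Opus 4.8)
The plan is to derive the statement directly from the energy identity for soliton solutions and the two corollaries immediately preceding it, so that essentially no new analysis is needed. Write $\gamma=\gamma_p$ for the local monodromy at $p$. By the asymptotic analysis of Section \ref{sec:witten} (in particular Theorem \ref{conv-r-solu} and Theorem \ref{conv-n-solu}, applied at the two ends of the infinite cylinder $\R\times S^1$), any solution $\bu$ of the soliton equation (\ref{soli-witt-equa}) at the Ramond point $p$ converges, as $\xi$ tends to the two ends of the cylinder, to $(\kappa^{+},0)$ and $(\kappa^{-},0)$ respectively, where $\kappa^{+}$ and $\kappa^{-}$ are critical points of the holomorphic Morse function $W_{\gamma}+W_{0,\gamma}$. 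The first step is simply to split into the two cases $\kappa^{+}=\kappa^{-}$ and $\kappa^{+}\neq\kappa^{-}$.

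If $\kappa^{+}=\kappa^{-}$, I would apply the energy identity
$$
(W_\gamma+W_{0,\gamma})(\kappa^{-})-(W_\gamma+W_{0,\gamma})(\kappa^{+})=2\int^{+\infty}_{-\infty}\int_{S^1}\sum_i\left|\frac{\pat(W+W_{0,\gamma})}{\pat u_i}\right|^2,
$$
whose left-hand side then vanishes; hence $\frac{\pat(W+W_{0,\gamma})}{\pat u_i}\equiv 0$ for every $i$, so $\bu$ is the constant solution equal to $(\kappa^{+},0)$, i.e.\ the trivial soliton. If instead $\kappa^{+}\neq\kappa^{-}$, I would invoke the hypothesis that $W_{0,\gamma}$ is \emph{strongly} $W_\gamma$-regular: by definition this forces $\mbox{Im}(W_\gamma+W_{0,\gamma})(\kappa^{+})\neq\mbox{Im}(W_\gamma+W_{0,\gamma})(\kappa^{-})$, and the corollary ruling out any soliton whenever the endpoint critical values have different imaginary parts then shows that no such $\bu$ exists. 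Combining the two cases, the only solutions of (\ref{soli-witt-equa}) are the trivial constant ones, which is the assertion.

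Since the two genuinely analytic ingredients — convergence of soliton solutions to critical points at the two ends, and the energy identity tying the change in critical value to the $L^2$-norm of $\pat(W+W_{0,\gamma})/\pat u_i$ — are already in hand, I do not expect a serious obstacle; the argument is essentially bookkeeping. The one point that warrants a careful sentence is that the case dichotomy is exhaustive, i.e.\ that every soliton really does limit to a \emph{pair} of critical points with the Neveu--Schwarz components decaying to $0$ at both ends, which is exactly the content of the asymptotic theorems cited above. It is also worth remarking, as the preceding lemma already does, that at a Neveu--Schwarz marked point the soliton equation has only the zero solution, so the strong-regularity hypothesis is needed only at Ramond points — which is precisely the scope of the corollary being proved.
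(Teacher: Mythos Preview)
Your proposal is correct and matches the paper's approach: the corollary is stated in the paper without proof because it is immediate from the preceding corollary (no soliton when the imaginary parts of the endpoint critical values differ, and only the trivial soliton when $\kappa^{+}=\kappa^{-}$), together with the definition of strong $W_\gamma$-regularity. Your write-up simply unpacks that implication, and the case split you give is exactly the intended argument.
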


Now we consider a kind of special soliton solution which is
independent of the angle $\theta$. Assume as before that the first
$N_p$ components of $\bu$ are broad sections and the last
components are narrow sections. Since the narrow
line bundles are nontrivial bundles,  there is no nontrivial
section which is independent of $\theta$. Hence the equation
(\ref{soli-witt-equa}) becomes
\begin{equation}
\frac{\pat u_i}{\pat s}-2\overline{\frac{\pat
(W_\gamma+W_{0,\gamma})}{\pat u_i}}=0, \;s=1,\dots, N_p.
\end{equation}

This special solution is a called BPS soliton with respect to the
superpotential $W_\gamma+W_{0,\gamma}$ in  Landau-Ginzburg theory
in physics. By the above equation, we can easily get
$$
\pat_s (W_\gamma+W_{0,\gamma})(s)=\left|\frac{\pat
(W_\gamma+W_{0,\gamma})}{\pat u_i}\right|^2.
$$
This shows that the imaginary part of $(W_\gamma+W_{0,\gamma})(s)$
is invariant under the evolution of the flow, and the real part
increases monotonically.

\subsubsection*{Stable manifolds and vanishing cycles} So to count the
number of solitons connecting two critical points, e.g.,
$\kappa^1$ and $\kappa^2,$ we need to study the intersection
behavior of the stable manifold at $\kappa^2$ and the unstable
manifold at $\kappa^1$. Here we define the unstable manifold at
$\kappa^1$ to be the following set in $\C^{N_p}$:
$$
\C^u(\kappa^1):=\{(u_1,\dots, u_{N_p})\in \C^{N_p}|((u_1,\dots,
u_{N_p}))\cdot s\to \kappa^1, \;\text{as}\;s\to -\infty
\},
$$
where $(u_1,\dots, u_{N_p})\cdot s$ represents the flow line
going through $(u_1,\dots, u_{N_p})$ at time $s=0$. Similarly, we
can define the stable manifold of $\kappa^2$:
$$
\C^s(\kappa^2):=\{(u_1,\dots, u_{N_p})\in \C^{N_p}|((u_1,\dots,
u_{N_p}))\cdot s\to \kappa^2, \;\text{as}\;s\to +\infty
\}.
$$
It is wellknown that $\C^s(\kappa^2)$ and $\C^u(\kappa^1)$ are all
real   dimension $n$. They are  submanifolds of the
$(2n-1)$-dimensional subspace
$$
\{(u_1,\dots, u_{N_p})\in \C^{N_p}|
Im(W_\gamma+W_{0,\gamma})(u_1,\dots,u_{N_p})=Im(W_\gamma+W_{0,\gamma})(\kappa^1))\}.
$$
So for generic parameters $b_1,\dots, b_{N_p}$ such that
$\mbox{Im}(W_\gamma+W_{0,\gamma})(\kappa^1))=\mbox{Im}(W_\gamma+W_{0,\gamma})(\kappa^2)$,
the geometric orbits connecting $\kappa^1$ and $\kappa^2$ are
finitely many. Actually this can be given by the intersection
numbers of two vanishing cycles which represent the two critical
points respectively. In fact, take a point
$w=(W_\gamma+W_{0,\gamma})((u_1,\dots,u_{N_p})\cdot (s_0))$ on
the segment connecting
$\mbox{Im}(W_\gamma+W_{0,\gamma})(\kappa^1))$ and
$\mbox{Im}(W_\gamma+W_{0,\gamma})(\kappa^2))$; then the two
$(n-1)$-dimensional intersection submanifolds
$\Delta^1:=(W_\gamma+W_{0,\gamma})^{-1}(w)\cap \C^u(\kappa^1)$ and
$\Delta^2:=(W_\gamma+W_{0,\gamma})^{-1}(w)\cap \C^s(\kappa^2)$ are
just vanishing cycles representing $\kappa^1$ and $\kappa^2$. For
example, when $s_0\to -\infty$,
$(W_\gamma+W_{0,\gamma})^{-1}(w)\cap \C^u(\kappa^1)$ will shrink
to the critical point $\kappa^1$.

We have the well-known result from Picard-Lefschetz theory:

\begin{thm} The number of BPS solitons connecting $\kappa^1$ and
$\kappa^2$ is given by the intersetion number $\Delta^1\circ
\Delta^2$.
\end{thm}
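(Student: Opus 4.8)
The plan is to recognise the BPS soliton equation as an honest gradient flow for a holomorphic Morse function and then to invoke classical Picard--Lefschetz theory. Write $f := W_\gamma + W_{0,\gamma}$, a holomorphic Morse function on $\C^{N_p}$. The first step is to record that the BPS equation $\partial_s u_i = 2\,\overline{\partial f/\partial u_i}$ is, up to the positive constant $2$, the gradient flow of $\operatorname{Re} f$ for the flat K\"ahler metric, since the Euclidean gradient of $\operatorname{Re} f$ has $i$-th complex component $\overline{\partial f/\partial u_i}$. Along any trajectory $\partial_s f = \sum_i (\partial f/\partial u_i)\,\partial_s u_i = 2\sum_i |\partial f/\partial u_i|^2$, so $\operatorname{Im} f$ is constant and $\operatorname{Re} f$ is non-decreasing, strictly increasing away from critical points, as already noted in Section \ref{sec:witten}. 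Hence a soliton from $\kappa^1$ to $\kappa^2$ is a trajectory inside the level set $M_c = \{\operatorname{Im} f = c\}$, with $c := \operatorname{Im} f(\kappa^1) = \operatorname{Im} f(\kappa^2)$, running from $\kappa^1$ (where $\operatorname{Re} f$ is smallest) to $\kappa^2$; it meets the regular fibre $X_w := f^{-1}(w)$, with $w$ the chosen regular value on the horizontal segment from $f(\kappa^1)$ to $f(\kappa^2)$, in exactly one point, where $\operatorname{Re} f = \operatorname{Re} w$.

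Next I would set up the correspondence between solitons and intersection points of the vanishing cycles. The union of all solitons from $\kappa^1$ to $\kappa^2$ is precisely the flow-invariant set $\C^u(\kappa^1)\cap\C^s(\kappa^2)$, which for generic perturbation parameters $b_1,\dots,b_{N_p}$ (part of the standing hypothesis) is a disjoint union of finitely many trajectory lines. Slicing by $X_w$, and using that $\C^u(\kappa^1)\cap X_w=\Delta^1$ and $\C^s(\kappa^2)\cap X_w=\Delta^2$ are the $(N_p-1)$-dimensional fibres of the two Lefschetz thimbles over $w$ --- i.e.\ the vanishing cycles that collapse onto $\kappa^1$ and $\kappa^2$ as $w$ runs to the respective critical value along the horizontal ray --- the map ``soliton $\mapsto$ its unique point in $X_w$'' is a bijection onto
$$\big(\C^u(\kappa^1)\cap X_w\big)\cap\big(\C^s(\kappa^2)\cap X_w\big)=\Delta^1\cap\Delta^2 ,$$
with inverse sending $q\in\Delta^1\cap\Delta^2$ to the gradient trajectory through $q$ (which lies in $\C^u(\kappa^1)\cap\C^s(\kappa^2)$, hence flows from $\kappa^1$ to $\kappa^2$). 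In particular the soliton set is finite, and no issue of broken trajectories arises.

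It remains to match signs. Because a BPS soliton is independent of $\theta$, the linearisation of the perturbed Witten operator along it restricts on $\theta$-independent sections to the linearised gradient operator $\partial_s - 2\,\overline{\nabla^2 f}$ on $M_c$ --- cf.\ the matrix $S^\infty$ of Lemma \ref{expo-decay-cond} --- whose determinant line, via the usual spectral-flow comparison governed by the stable/unstable splittings at the two ends, I would identify with $\det(T_qX_w)\otimes\det(T_q\Delta^1\oplus T_q\Delta^2)^{-1}$, the flow direction and the $\operatorname{Re} f$-direction cancelling out of the comparison. Orienting $X_w$, $\Delta^1$ and $\Delta^2$ by their complex structures then shows the sign carried by the soliton equals the local intersection sign of $\Delta^1$ with $\Delta^2$ at $q$; transversality of $\Delta^1$ and $\Delta^2$ in $X_w$ holds for generic $b_i$, and the orientation bookkeeping is exactly the content of the Picard--Lefschetz formula, for which I would cite Milnor or Arnol'd--Gusein-Zade--Varchenko rather than reprove it. Summing over $q\in\Delta^1\cap\Delta^2$ yields the signed count $\Delta^1\circ\Delta^2$.

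The step I expect to be the main obstacle is this last one: fixing the orientation of each soliton so that it is simultaneously the spectral-flow orientation of the linearised Witten operator and compatible with the orientation conventions used later to orient the Kuranishi structures (Section \ref{sec:construct}), and then checking it against the topological intersection sign. Everything else --- the gradient-flow reinterpretation, the passage to a regular fibre, transversality, and finiteness of the soliton set --- is either soft or already available from the asymptotic analysis of Section \ref{sec:witten} and the energy identity for solitons established above.
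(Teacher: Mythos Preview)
The paper does not prove this theorem: it is stated as ``the well-known result from Picard--Lefschetz theory'' and left without proof, so there is no argument to compare against. Your outline is essentially the classical one and the unsigned part is correct: the BPS equation is the gradient flow of $\operatorname{Re} f$, trajectories stay in a level set of $\operatorname{Im} f$, and slicing by a regular fibre $X_w$ gives a bijection between geometric solitons and points of $\Delta^1\cap\Delta^2$. This is exactly the identification the paper sets up in the paragraph preceding the theorem.

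There is one slip in your sign discussion: the vanishing cycles $\Delta^1,\Delta^2\subset X_w$ are real $(N_p-1)$-spheres (totally real, Lagrangian for the K\"ahler form on the Milnor fibre), not complex submanifolds, so you cannot orient them ``by their complex structures''. Their orientations are the ones inherited from the Lefschetz thimbles (equivalently, from the stable/unstable discs of the gradient flow), and it is this choice that makes the local intersection sign in $X_w$ match the Morse--Smale sign of the trajectory. Since you already intend to defer to \cite{Ar} or Milnor for the orientation bookkeeping, this is a minor correction rather than a gap; just drop the phrase about complex structures on $\Delta^1,\Delta^2$ and instead say the thimble orientations induce orientations on their boundary spheres.
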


The computation of the number of BPS soliton is an important part
of classical singularity theory.

\begin{df} Let $\{\kappa^1,\dots, \kappa^{\deg(W_\gamma)-1}\}$ be the set of
nondegenerate critical points of $W_\gamma+W_{0,\gamma}$. Let
$i_0<j_1<\dots< j_p<i_1$, define
$S_\gamma(\kappa^{i_0},\kappa^{j_1},\dots, \kappa^{j_p},
\kappa^{i_1})$ to be the space of broken soliton, $\{\bu_{i_0j_1},
\bu_{j_1j_2},\dots, \bu_{j_p i_1}\}$ , where $\bu_{i_0j_1}$ is
the soliton of type $\gamma$ connecting $\kappa^{i_0}$ and
$\kappa^{j_1}$, and so on. We denote by $S_\gamma(\kappa^{i_0},
\kappa^{i_1})$ the space of all kinds of solitons of type $\gamma$
(including broken solitons) from $\kappa^{i_0}$ to
$\kappa^{i_1}$.
\end{df}

Obviously $S_\gamma(\kappa^{i_0},\kappa^{j_1}, \kappa^{j_p},
\kappa^{i_1})$ is not empty if and only if
$\mbox{Im}(W_\gamma+W_{0,\gamma})(\kappa^{i_0}))=\mbox{Im}(W_\gamma+W_{0,\gamma})(\kappa^{i_1})$.

The group $\C$ acts on $S_\gamma(\kappa^i, \kappa^{i+1})$ by
$(s_0, \theta_0)\cdot\bu(\cdot,
\cdot)=\bu(\cdot+s_0,\cdot+\theta_0)$ and the group $\C^{p+1}$
also acts on $S_\gamma(\kappa^{i_0},\kappa^{j_1},\dots,
\kappa^{j_p}, \kappa^{i_1})$ in an obvious way.

If the space $S_\gamma(\kappa^{i_0},\kappa^{j_1},\dots,
\kappa^{j_p}, \kappa^{i_1})$ is not empty, the soliton
$\{\bu_{i_0j_1}, \bu_{j_1j_2},\dots, \bu_{j_p i_1}\}$ can be
viewed as the broken trajectory connecting the periodic solutions
$\kappa^{i_0}$ and $\kappa^{i_1}$ in some sense. So imitating the
construction of a trajectory space in symplectic geometry, one can
consider the following equivalence relation:
$$
\{\bu_{i_0j_1}, \bu_{j_1j_2},\dots, \bu_{j_p i_1}\}\sim
\{\bu'_{i_0j_1}, \bu'_{j_1j_2},\dots, \bu'_{j_p i_1}\}
$$
iff there exist real constants $s_1,\dots, s_{p+1}$ such that
$$
(s_1,\dots, s_{p+1})\cdot \{\bu_{i_0j_1}, \bu_{j_1j_2},\dots,
\bu_{j_p i_1}\}=\{\bu'_{i_0j_1}, \bu'_{j_1j_2},\dots, \bu'_{j_p
i_1}\}.
$$
Then we define the moduli space of "geometrical" solitons as
$\hat{S}_\gamma(\kappa^{i_0},\kappa^{j_1},\dots, \kappa^{j_p},
\kappa^{i_1})=S_\gamma(\kappa^{i_0},\kappa^{j_1},\dots,
\kappa^{j_p},\kappa^{i_1})/\sim$. There is an $S^1$ action on the
space $\hat{S}_\gamma(\kappa_i,\kappa_{i+1})$. The fixed points of
this space are just the BPS solitons. However, in our setting, we
will not care about the space
$\hat{S}_\gamma(\kappa_i,\kappa_{i+1})$, but the quotient space
$\bar{S}_\gamma(\kappa_i,\kappa_{i+1}):=\hat{S}_\gamma(\kappa_i,\kappa_{i+1})/S^1$.
 BPS solitons become the singularities of the $S^1$ action.

\subsubsection{Moduli space $\MMr_{g,k}(\bgamma,\varkappa)$}

\

Define the section $\bu:=(u_1,\dots, u_N)\in C^\infty(\cC,
\LL_1\times\dots \LL_N):=
 C^\infty(\cC, \LL_1)\times \dots\times C^\infty(\cC, \LL_N)$.

\begin{df} Let $\frkc=(\cC, p_1,\dots, p_k, \LL_{1},\dots,
\LL_{N},\varphi_{1},\dots, \varphi_{s},\psi_1,\dots,\psi_k)$ be
a rigidified stable $W$-curve, and $\bu$ be a solution of the
perturbed Witten equation on $\frkc$. Then the tuple $(\frkc,
\bu)$ is said to be a stable $W$-section.
\end{df}

\begin{df} The automorphism group of $(\frkc, \bu)$ is defined as
\begin{align*}
\aut(\frkc, \bu):=&\{\tau\in \aut(\frkc)|\tau(\bu)=\bu \}.
\end{align*}
\end{df}

Since $\aut(\frkc,\bu)$ is a subgroup of $\aut(\frkc)$, we have the
following lemma.
\begin{lm} The automorphism group $\aut(\frkc, \bu)$ is a finite
group if $\frkc$ is a $W$-stable curve.
\end{lm}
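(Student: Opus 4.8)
The plan is to reduce the statement to the finiteness of $\aut(\frkc)$ itself. Indeed, by the preceding definition $\aut(\frkc,\bu)=\{\tau\in\aut(\frkc)\mid\tau(\bu)=\bu\}$ is a subgroup of $\aut(\frkc)$, and a subgroup of a finite group is finite, so the only thing to prove is that the automorphism group of a rigidified stable $W$-curve is finite. Solution of the perturbed Witten equation plays no role here.

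First I would invoke the structural description recalled in Section \ref{sec:rigmoduli} (see also \cite{FJR1}): $\aut(\frkc)$ is an extension of (a subgroup of) the automorphism group $\aut(\cC,\z)$ of the underlying pointed nodal curve by the group $\aut_\cC(\LL,\Psi)$ of automorphisms of the tuple $(\LL_1,\dots,\LL_N)$ covering the identity on $\cC$ and compatible with the $W$-structure isomorphisms $\varphi_j$ and the rigidifications $\psi_l$. Since an extension of a finite group by a finite group is finite, it suffices to establish finiteness of each of the two pieces. For the quotient, the hypothesis that $\frkc$ is a $W$-stable curve means the underlying pointed curve $(\cC,\z)$ is Deligne--Mumford stable, and it is classical that $\aut(\cC,\z)$ is then finite.

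For the kernel, I would argue component by component: an automorphism of a line bundle on a proper connected curve covering the identity is multiplication by a nonzero constant, so $\aut_\cC(\LL,\Psi)$ embeds into $\prod_{\nu}(\C^*)^N$, the product running over the finitely many irreducible components $\cC_\nu$ of $\cC$. Compatibility with the isomorphisms $\varphi_j:W_j(\LL_1,\dots,\LL_N)\irightarrow K_{\cC,log}$ forces, on each component, the scalars $(\lambda_1,\dots,\lambda_N)$ to satisfy $\prod_i\lambda_i^{b_{ij}}=1$ for every monomial $W_j$ of $W$; in other words $(\lambda_1,\dots,\lambda_N)$ lies in the diagonal symmetry group $G_W$ of $W$. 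Nondegeneracy of the quasi-homogeneous polynomial $W$ guarantees that $G_W$ is finite, so
\[
\aut_\cC(\LL,\Psi)\subseteq\prod_{\nu}G_W
\]
is finite; the rigidification conditions only cut this group down further. Combining the two finiteness assertions shows $\aut(\frkc)$ is finite, hence so is its subgroup $\aut(\frkc,\bu)$.

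The argument is essentially formal once the extension structure of $\aut(\frkc)$ is in hand. The only step requiring genuine care --- and the closest thing to an obstacle --- is recording precisely why $\aut_\cC(\LL,\Psi)$ is finite, namely that compatibility with the $\varphi_j$ pins the componentwise scalars into the finite group $G_W$; everything else is standard Deligne--Mumford theory together with the trivial observation that subgroups of finite groups are finite.
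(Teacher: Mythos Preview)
Your proof is correct and follows exactly the paper's approach: the paper simply observes (in the sentence immediately preceding the lemma) that $\aut(\frkc,\bu)$ is a subgroup of $\aut(\frkc)$ and concludes finiteness from that. You have supplied more detail than the paper does, spelling out why $\aut(\frkc)$ itself is finite via the extension structure and the containment $\aut_\cC(\LL,\Psi)\subseteq\prod_\nu G_W$; the paper treats this as already known from \cite{FJR1} and the fact that $\MMr_{g,k,W}$ is a Deligne--Mumford stack.
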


%\begin{rem}
%Suppose that $\xi: \frkc\rightarrow \xi(\frkc)$ is an isomorphism
%between two rigidified $W$-curves. Then we have
%$\aut(\frkc)\cong\aut(\xi(\frkc))$. Similarly if $(\frkc, \bu)$ is
%a stable $W$-section, then $(\xi(\frkc), \xi(\bu))$ is also a
%stable $W$-section, moreover we have $\aut(\frkc,\bu)\cong
%\aut(\xi(\frkc), \xi(\bu))$. Here we only need to remember that if
%the perturbed Witten equation is defined on $\frkc$, then it would
%be well-defined on any equivalent $W$-curve in a canonical way
%(see Section \ref{sect-7.2.1}).
%\end{rem}

\begin{df} We say that a stable section $(\frkc, \bu)$ is of type
$(\bgamma, \bvkappa)$ if at each marked point $p_i$ of $\frkc$ the
generator of the local group $G_{p_i}$ is $\bgamma_{p_i}$ and the
section $\bu$ which is viewed as the section in $\C^N$ by the
standard rigidification at $p_i$ takes the given value
$\varkappa_i$ for $1\le i\le k$.
\end{df}

\begin{df} The moduli space of $\MMr_{g,k}(\bgamma, \bvkappa)$ is
the space consisting of  isomorphism classes of all  sections
$(\frkc, \bu)$ of type $(\bgamma, \bvkappa)$ under the action of the
automorphism group $\aut((\frkc, \bu))$. The subspace
$\MMr_{g,k}(\Gamma;\bgamma,\bvkappa)$ of $\MMr_{g,k}(\bgamma,
\bvkappa)$ is the space consisting of the elements in
$\MMr_{g,k}(\bgamma, \bvkappa)$ having dual graph $\Gamma$.
\end{df}

If the perturbed polynomial $W_{0,\gamma_{p_i}}$ at {\bf any broad
marked or nodal point} $p_i$ is strongly $W_{\gamma_i}$-regular,
{\bf {we call the moduli space $\MMr_{g,k}(\bgamma, \bvkappa)$
strongly regular}}. However, when $\MMr_{g,k}(\bgamma, \bvkappa)$ is
not strongly regular, then it is not compact with respect to the
Gromov convergence which will be defined later. The loss of
compactness is due to the existence of solitions. So we need to add
the corresponding limits to our moduli space.

\begin{df} Let $(\frkc, \bu)$ be a stable $W$-section of type $(\bgamma,
\bvkappa)$, where $\bgamma=(\gamma_1,\dots, \gamma_k)$ and $
\bvkappa=(\varkappa_1,\dots,\varkappa_k)$.  Take a marked point
$p_{i_0}$ of $\frkc$ such that
$\psi_{i_0}(\bu)(p_{i_0})=\varkappa_{i_0}^{j_0}$, where
$\psi_{i_0}$ is the rigidification at $p_{i_0}$ and
$\varkappa_{i_0}^{j_0}$ is the $j_0$th critical point of the
perturbed polynomial $W_{\gamma_{i_0}}+W_{\gamma_{i_0}}$. We
simply write it as $\bu(p_{i_0})=\varkappa_{i_0}^{j_0}$ if no
ambiguity can occur. Let $(\bu_{j_0,j_1},\gamma_{i_0})$ be a
soliton in $S_{\gamma_{i_0}}(\varkappa^{j_0}_{i_0},
\varkappa^{j_1}_{i_0})$. We define $\frkc\#_{p_{i_0}}(\R\times
S^1)$ to be the connected sum of the $W$-curve $\frkc$ and the
$W$-curve $\R \times S^1$ by identifying the marked point
$p_{i_0}$ in $\frkc$ and the $-\infty$ point of $\R\times S^1$.
Similarly, on the new $W$-curve, we can define the connected sum
$\bu\#_{p_{i_0}} \bu_{j_0,j_1}$ in a natural way. We call the pair
$(\frkc\#_{p_{i_0}}(\R\times S^1),\bu\#_{p_{i_0}} \bu_{j_0,j_1} )$
a {\bf soliton $W$-section}. If $(\frkc',\bu')$ is another stable
$W$-curve having a marked point $p_{i_1}$ labelled by
$\gamma_{i_1}=\gamma_{i_0}^{-1}$ and
$\bu'(p_{i_1})=\varkappa^{j_1}_{i_0}=\bu_{j_0,j_1}(+\infty)$, we
can construct another pair $(\frkc\#_{p_{i_0}}(\R\times
S^1)\#_{p_{i_1}}\frkc',\bu\#_{p_{i_0}}
\bu_{j_0,j_1}\#_{p_{i_1}}\bu' )$. This pair is also called a
soliton $W$-section. In the same way, one can continue to
construct new soliton $W$-sections if the  two glued soliton
$W$-sections satisfy the compatibility conditions at the gluing
point.
\end{df}

There is a natural group action on the soliton $W$-curve. For
example, assume that $(\frkc^1,\bu^1)\in
\MMr_{g_1,k_1,W}(\bgamma_1, \bvkappa_1)$, $(\frkc^2,\bu^2 )\in
\MMr_{g_2,k_2,W}(\bgamma_2, \bvkappa_2)$ and
$(\bu_{j_1,j_2},\gamma_{p_{i_0}})\in
S_{\gamma_{i_0}}(\varkappa^{j_1}_{i_0}, \varkappa^{j_2}_{i_0})$
satisfy the compatibility conditions at marked points $p_{i_0}$ on
$\frkc^1$ and $p_{i_1}$ on $\frkc^2$. Then we can get a soliton
$W$-section $(\frkc^1\#_{p_{i_0}}(\R\times
S^1)\#_{p_{i_1}}\frkc^2,
\bu^1\#_{p_{i_0}}\bu_{j_1,j_2}\#_{p_{i_1}}\bu^2)$. Let $g_i\in
\aut(\frkc^i, \bu^i), i=1,2$ and $(s_0,\theta_0)\in \C$; the
action of $(g_1,(s_0,\theta_0),g_2)$ of the above soliton
$W$-section is defined in an obvious way and we can define its
automorphism group. Obviously the $W$-curve
$\frkc^1\#_{p_{i_0}}(\R\times S^1)\#_{p_{i_1}}\frkc^2$ has genus
$g_1+g_2$, $k_1+k_2-2$ marked points  and may not be a stable
$W$-curve. The $k_1+k_2-2$ marked points are labelled by the set
$\widetilde{\gamma}=:\{\bgamma_1,\bgamma_2\}-\{\gamma_{p_{i_0}},\gamma_{p_{i_1}}\}$
and
$\widetilde{\varkappa}=:\{\bvkappa_1,\bvkappa_2\}-\{\varkappa_{p_{i_0}},\varkappa_{p_{i_1}}\}$.
We say this soliton $W$-section is of type $(\widetilde{\gamma},
\widetilde{\varkappa})$.

\begin{df} The moduli space of $\MMrs_{g,k}(\bgamma, \bvkappa)$\glossary{MMrsgkW@$\MMrs_{g,k}(\bgamma,\bvkappa)$ & The space of soliton $W$-sections of type $(\bgamma,\bvkappa)$}
 is
the space consisting of the isomorphism classes of all  soliton
$W$-sections of type $(\bgamma, \bvkappa)$ under the action of its
automorphism group.
\end{df}

\subsubsection*{Combinatorial types of soliton $W$-sections}

We have the relation $\MMr_{g,k}(\bgamma)=\coprod_i
\MMr_{g,k}(\Gamma_i)$, where the summation is taken over all
possible $G$-decorated dual graphs. Note that each half-edge $\tau$
of a $G$-decorated dual graph is decorated with a group element
$\gamma_\tau\in G$. The number of these $\Gamma$ is finite. Each
$\MMr_{g,k}(\Gamma)$ is a multiple covering of $\MM_{g,k}(\Gamma)$
under the stable map $\str$ and the degree of $\str$ is finite
because of different rigidified $W$-structures. Define
$Comb(g,k,W;\bgamma)$ to be the combinatorial types of $W$-curves in
$\MMr_{g,k}(\bgamma)$. Then this set is a finite set. The partial
order $\succ$ in $Comb(g,k)$, the set of combinatorial types in
$\MM_{g,k}$, induces a more refined partial order $\succ$ (involving
the information of rigidified $W$-structures) in
$\MMr_{g,k}(\bgamma)$ such that it becomes a stratified space.

Now we consider the combinatorial types in $\MMrs_{g,k}(\bgamma,
\bvkappa)$. There are new combinatory types in $\MMrs_{g,k}(\bgamma,
\bvkappa)$ compared to those in $\MMr_{g,k}(\bgamma)$ because of the
existence of soliton components. We always assume that there exists
only one group element $\hat{\gamma}\in G$ such that the perturbed
polynomial $W_{\hat{\gamma}}+W_{\hat{\gamma}, 0}$ has only  two
critical points $\kappa^\pm$ with the property
$\mbox{Im}(W_{\hat{\gamma}}+W_{\hat{\gamma},
0})(\kappa^+)=\mbox{Im}(W_{\hat{\gamma}}+W_{\hat{\gamma},
0})(\kappa^-)$. So if a dual graph $\Gamma\in Comb(g,k,W;\bgamma)$
has a half-edge $\tau$ decorated with $\hat{\gamma}$, then we
replace the edge $\tau$ by the dual graph of a soliton of type
$\hat{\gamma}$. We denote the new graph by $\Gamma^s(\tau)$ and call
it a soliton graph. Define $\Gamma\succ\Gamma^s(\tau)$. Furthermore,
if $\Gamma$ has multiple edges decorated by $\hat{\gamma}$, then we
can replace each edge by the dual graph of a soliton of type
$\hat{\gamma}$ to get a new graph $\Gamma^s$. We define the order
$\Gamma\succ\Gamma^s$. Since the number of half-edges $\tau$
possibly decorated by $\gamma_\tau$ is finite, the number of soliton
graphs is finite. Let $Comb(g,k,W;\bgamma, \bvkappa)$ be the set of
combinatorial types of dual graphs in $\MMrs_{g,k}(\bgamma,
\bvkappa)$. We have a partial order relation $\succ$ in
$Comb(g,k,W;\bgamma,\bvkappa)$. This set gives a stratification to
the moduli space $\MMrs_{g,k}(\bgamma, \bvkappa)$. We will use the
partial order to glue our Kuranishi neighborhoods to obtain a
Kuranishi structure.

Now we begin our construction of the topology in our moduli space.
We first define the topology in $\MMr_{g,k}(\bgamma, \bvkappa)$ when
it is strongly regular.

Let $(\frkc^n, \bu^n)$ be a sequence of isomorphism classes in
$\MMr_{g,k}(\bgamma,\bvkappa)$. Suppose that $\frkc^n \in
\MMr_{g,k}(\bgamma)$ converges to $\frkc \in \MMr_{g,k}(\Gamma)$.
This means that there exists a sequence of $\frkc^n_\Gamma\in
\MMr_{g,k}(\Gamma)$ such that $\frkc^n_\Gamma \to \frkc$ in
$\MMr_{g,k}(\Gamma)$ and there is a sequence of gluing parameters
${\boldsymbol \zeta}^n=(\zeta^n_1,\dots, \zeta^n_{\#E(\Gamma)})$
such that $\frkc^n=(\frkc^n_\Gamma)_{0,{\boldsymbol \zeta}^n}$ and
${\boldsymbol \zeta}^n\to \infty$.

\begin{df}[Neck region] Suppose that $\frkc^n \in
\MMr_{g,k}(\bgamma)$ converges to $\frkc \in \MMr_{g,k}(\Gamma)$.
Let $z$ be any nodal point of the underlying curve $\cC$ of $\frkc$
connecting two components $\cC_\nu$ and $\cC_\mu$. Given $\hat{T}\ge
10\bar{T}_0$ (where $\bar{T}_0$ comes from the definition of cut-off
function $\beta$), the neck region of the underlying curve $\cC$ of
$\frkc$ at the nodal point $z$ is defined as
$$
N_{n,z}(\hat{T}):= ([\hat{T}, {T}^n_z]\times S^1)_\nu \cup
([\hat{T}, {T}^n_z]\times S^1)_\mu.
$$
Here $\zeta^n_z=(s^n_z,\theta_z)$ and $s^n_z=2T^n_z$.
\end{df}

Note that $\cC^n$ is obtained from $\cC^n_\Gamma$ by gluing the
corresponding domains $[\frac{1}{2}T^n_z, \frac{3}{2}T^n_z]$ in
two components connected at $z$. In view of the definition,we can
identify the regions $\cC^n-\cup_{z\in E(\Gamma)}
N_{n,z}(\hat{T})$ in $\cC^n$ with  $\cC^n_\Gamma-\cup_{z\in
E(\Gamma)} N_{n,z}(\hat{T})$ in $\cC_\Gamma^n$ and with
$\cC-\cup_{z\in E(\Gamma)} N_{n,z}(\hat{T})$ in $\cC$. So the
section $\bu^n$ can be viewed as defined on $\frkc^n_\Gamma$, and
by the pull-back of the deformation map we also can assume that
the $ \bu^n$ are defined on the region $\cC-\cup_{z\in E(\Gamma)}
N_{n,z}(\hat{T})$ in $\cC$.

\begin{df}[Gromov convergence]\label{conv-grom-1} We say that $(\frkc^n,\bu^n)\to (\frkc,\bu)$ in
$\MMr_{g,k}(\bgamma,\bvkappa)$ as $n\to \infty$ if
\begin{enumerate}
\item for each $\hat{T}\ge 10\bar{T}_0$, $\bu^n$ converges in the
$C^\infty$ topology to $\bu$ on $\cC-\cup_{z\in E(\Gamma)}
N_{n,z}(\hat{T})$.

\item $\lim_{\hat{T}\to
\infty}\overline{\lim}_{n\to\infty} \mbox{Diam} (\bu^n(\cup_z
N_{n,z}(\hat{T})))=0$.

\end{enumerate}
\end{df}

If $\MMr_{g,k}(\bgamma, \bvkappa)$ is strongly regular, then
$\MMrs_{g,k}(\bgamma, \bvkappa)=\MMr_{g,k}(\bgamma, \bvkappa),$ and
we can prove below that $\MMr_{g,k}(\bgamma, \bvkappa)$ is compact
with respect to Gromov convergence. Now we consider a special case.
We assume that the perturbation polynomials
$W_{\gamma_{p_i}}+W_{0,\gamma_{p_i}}$ are all strongly
$W_{\gamma_{p_i}}$-regular except at the marked (or nodal point)
$p_0$, the perturbation polynomial
$W_{\gamma_{p_0}}+W_{0,\gamma_{p_0}}$ has  only  two critical points
$\varkappa^{j_1}, \varkappa^{j_2}$ such that $\mbox{Im}
(W_{\gamma_{p_0}}+W_{0,\gamma_{p_0}})(\varkappa^{j_1})=\mbox{Im}
(W_{\gamma_{p_0}}+W_{0,\gamma_{p_0}})(\varkappa^{j_2})$. We want to
define the convergence of $\MMrs_{g,k}(\bgamma, \bvkappa)$ in this
case. Without loss of generality, we can assume that $p_0$ is a
broad nodal point.

\begin{df} Take $(\frkc^n,\bu^n), (\frkc,\bu)$ in $\MMrs_{g,k}(\bgamma,
\bvkappa)$. If for sufficiently large $n$, $p_0$ is not any of the
nodal points of $(\frkc^n,\bu^n)$, then we define that
$(\frkc^n,\bu^n)\to (\frkc,\bu)$ in
$\MMrs_{g,k}(\bgamma,\bvkappa)$ as in the Definition
\ref{conv-grom-1}. Now assume that
$(\frkc^n,\bu^n)=(\frkc^{n,1}\#_{p_0}\frkc^{n,2},
\bu^{n,1}\#_{p_0}\bu^{n,2})$, where
$\bu^{n,1}(p_0)=\varkappa^{j_2}=\bu^{n,2}(p_0)$, and
$(\frkc,\bu)=(\frkc^1\#_{p_{0,1}}(\R\times S^1)\#_{p_{0,2}}\frkc^2,
\bu^{1}\#_{p_{0,1}}\#\bu_{j_1,j_2}\#_{p_{0,2}}\bu^{2})$, where
$\bu_{j_1,j_2}(-\infty)=\varkappa^{j_1}=\bu^1(p_{0,1}),
\bu_{j_1,j_2}(+\infty)=\varkappa^{j_2}=\bu^2(p_{0,2})$. We say that
$(\frkc^n,\bu^n)\to (\frkc,\bu)$ in
$\MMrs_{g,k}(\bgamma,\bvkappa)$ if the following conditions hold:
\begin{enumerate}
\item $(\frkc^{n,2},\bu^{n,2})\to (\frkc^2,\bu^2)$ as in
Definition \ref{conv-grom-1}.

\item Let $N(\hat{T})$ be the neck region on
$\frkc^1\#_{p_{0,1}}(\R\times S^1)$ for any $\hat{T}\ge
10\bar{T}$. Then $(\frkc^{n,1}-([\hat{T},\infty)\times S^1),
\bu^{n,1})$ converges to $(\frkc^{1}-([\hat{T},\infty)\times S^1),
\bu^{1})$ in the sense of Definition \ref{conv-grom-1}.

\item There is a sequence of positive numbers $s_n\to
\infty$ such that $\tilde{\bu}^n:=\bu^n(\cdot+s_n, \cdot)$
converges in $\C^\infty_{local}$ to $\bu_{j_1,j_2}$ on $\R\times
S^1$.

\item $\lim_{\hat{T}\to
\infty}\overline{\lim}_{n\to\infty} \mbox{Diam} (\bu^n([\hat{T},
-\hat{T}+s_n]\times S^1))=0$.

\end{enumerate}

\end{df}

\begin{rem} Since the polynomial $W_\gamma$ is only determined by the group action of $\gamma\in G$,
the perturbation polynomial $W_\gamma+W_{0,\gamma}$ can be chosen
to depend only on $\gamma$ but not on the special marked or nodal
points. Hence if there are more than two broad marked or nodal
points  labelled by the same group action and satisfying the same
requirement of critical values as in the above definition of
convergence, we can still define the Gromov convergence in this
case.
\end{rem}

\subsection{The Gromov compactness theorem }\

\
\begin{thm}\label{thm-gromov-comp}[Gromov Compactness theorem] 
Assume that the perturbation polynomials
$W_{\gamma}+W_{0,\gamma}$ are all strongly $W_{\gamma}$-regular
except possibly for only one group element $\gamma_0\in G$. Assume also that the
perturbation polynomial $W_{\gamma_{0}}+W_{0,\gamma_{0}}$ has only
 two critical points $\varkappa^{j_1}, \varkappa^{j_2}$ such that
$\mbox{Im}
(W_{\gamma_{p_0}}+W_{0,\gamma_{p_0}})(\varkappa^{j_1})=\mbox{Im}
(W_{\gamma_{p_0}}+W_{0,\gamma_{p_0}})(\varkappa^{j_2})$. Then the
moduli space $\MMrs_{g,k}(\bgamma,\bvkappa)$ is a compact Hausdorff
space for any $\bgamma, \bvkappa$. In particular, if
$\MMr_{g,k}(\bgamma,\bvkappa)$ is strongly regular, then
$\MMr_{g,k}(\bgamma,\bvkappa)$ is a compact Hausdorff space.
\end{thm}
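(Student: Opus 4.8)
The plan is to run the standard Gromov compactness scheme, adapted to the perturbed Witten equation, feeding in the a priori estimates of Section~\ref{sec:witten} and the degeneration analysis of the present section. Let $(\frkc^n,\bu^n)$ be a sequence in $\MMrs_{g,k,W}(\bgamma,\bvkappa)$. First I would forget the sections and apply Deligne--Mumford compactness together with the convergence results for $W$-structures proved above: after passing to a subsequence we may assume that all $\frkc^n$ lie in a single stratum, that $\str(\frkc^n)$ converges in $\MM_{g,k}$, and that the underlying rigidified $W$-structures degenerate to a limiting $W$-structure on a (possibly more degenerate) nodal curve $\cC_\infty$ carrying a rigidified $W$-curve $\frkc_\infty\in\MMr_W(\Gamma)$; concretely $\frkc^n=(\frkc^n_\Gamma)_{0,{\boldsymbol\zeta}^n}$ with $\frkc^n_\Gamma\to\frkc_\infty$ and ${\boldsymbol\zeta}^n\to\infty$ along the nodes being created. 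By the Remark reducing to a single bad group element, I may also assume the perturbation is strongly $W_\gamma$-regular at every Ramond marked or nodal point except those of type $\gamma_0$, and treat each such $\gamma_0$-point separately.

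Next, using the Witten Lemma (the energy identity) I would bound $\sum_i\|\bar\partial u_i^n\|_{L^2(\cC^n)}^2$ uniformly in $n$: it equals $\pi\sum_l(W_{\gamma_l}+W_0)(\varkappa_l)$ up to the boundary terms involving $\partial_s\beta_T$, which are controlled by Theorems~\ref{bdd-inn-thm} and \ref{bdd-bdry-thm}, so since the type $\bvkappa$ is fixed this is a uniform bound. On the complement $\cC_\infty-\cup_z N_{n,z}(\hat T)$ of the neck regions, the interior and cylinder estimates of Theorem~\ref{thm-solu-noda} give uniform $C^m$ bounds for every $m$, so by Arzel\`a--Ascoli and a diagonal argument I extract a further subsequence with $\bu^n\to\bu_\infty$ in $C^\infty_{loc}$ on $\cC_\infty$ away from its nodes, where $\bu_\infty$ solves the perturbed Witten equation on $\frkc_\infty$; the asymptotic analysis of Section~\ref{sec:witten} shows $\bu_\infty$ attains the prescribed values $\varkappa$ at the marked points, so $(\frkc_\infty,\bu_\infty)$ is a stable $W$-section of the correct type before solitons are inserted.

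The heart of the argument is the analysis on each long neck $N_{n,z}(\hat T)=[\hat T,T^n_z]\times S^1$. Here I would show that exactly one of two things happens: either $\mathrm{Diam}\bigl(\bu^n(N_{n,z}(\hat T))\bigr)\to 0$ as $\hat T\to\infty$ uniformly in $n$ (which occurs automatically at strongly regular nodes, where by the soliton Witten Lemma and the distinct-imaginary-part hypothesis the section over a long sub-cylinder is exponentially close to a single critical point), or the node has type $\gamma_0$ and, choosing translation parameters $s_n\to\infty$, $\bu^n(\cdot+s_n,\cdot)$ converges in $C^\infty_{loc}(\R\times S^1)$ to a nontrivial soliton $\bu_{j_1,j_2}$ connecting $\varkappa^{j_1}$ and $\varkappa^{j_2}$, the two remaining sub-cylinders again satisfying the diameter collapse. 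The technical inputs are energy quantization from the soliton Witten Lemma (a nonconstant soliton carries a definite amount of energy, so only finitely many — in fact at most one per $\gamma_0$-neck — can form, and none at strongly regular nodes), the exponential decay Lemma~\ref{expo-decay-salamon} applied on sub-cylinders where $\bu^n$ stays near one critical point (forcing $\mathrm{Diam}\sim e^{-\delta s}$, using invertibility of $J\partial_\theta+S^\infty$ at a nondegenerate critical value), and the usual ``small energy on a long cylinder implies exponential decay along it'' estimate to rule out energy concentrating without producing a bubble. Assembling these, the energy lost in the necks is accounted for precisely by a soliton component inserted at $p_0$, and $(\frkc^n,\bu^n)$ converges in the Gromov sense to an element of $\MMrs_{g,k,W}(\bgamma,\bvkappa)$. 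Hausdorffness then follows from uniqueness of the limit: the $C^\infty_{loc}$ convergence off the necks pins down $(\frkc_\infty,\bu_\infty)$, and each soliton bubble is determined up to the $\C$-translation (resp.\ $S^1$-rotation) already quotiented out in the definition of $\MMrs$.

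I expect the neck analysis of the third paragraph — verifying the diameter-collapse condition uniformly in $n$, and ruling out ``phantom'' energy loss not captured by a soliton — to be the main obstacle; the surrounding steps amount to bookkeeping over the finitely many strata indexed by $\comb(g,k,W;\bgamma,\bvkappa)$.
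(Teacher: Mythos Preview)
Your proposal is correct and follows essentially the same strategy as the paper: reduce to a fixed combinatorial type, use compactness of $\MMr_{g,k,W}(\bgamma)$ to get a limit $W$-curve, extract $C^\infty_{loc}$ convergence of the sections away from the necks via the uniform $C^m$ estimates (Theorem~\ref{thm-solu-noda}), and then analyze each neck by a rescaling/blow-up dichotomy according to whether the decoration is $\gamma_0$ or not. The one notable difference is packaging: you route the neck analysis through a global energy bound (via the Witten Lemma) together with energy quantization for solitons, whereas the paper never needs a global energy bound at all---the $C^m$ estimates of Theorems~\ref{bdd-inn-thm} and \ref{bdd-bdry-thm} are purely local and already uniform, so the paper runs a direct contradiction argument instead (if $\bu^n$ fails to stay $\varepsilon$-close to a single critical point on a long sub-cylinder, translate and pass to a limit to produce a nontrivial soliton, which is forbidden unless the node has type $\gamma_0$). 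Your energy-quantization framing is the standard Gromov--Witten idiom and works fine, but the paper's shortcut is worth noting: because the nonlinearity is superlinear in $u$ (all weights $q_i<1/2$), the equation itself forces uniform pointwise bounds, so one can bypass the usual ``bounded energy $\Rightarrow$ compactness modulo bubbling'' machinery and argue more directly.
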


\begin{proof} Let $(\frkc^n,\bu^n)\in
\MMrs_{g,k}(\bgamma,\bvkappa)$; we need only  find a convergent
subsequence. Since $\frkc^n \in \MMr_{g,k}(\bgamma)$ and
$\MMr_{g,k}(\bgamma)$ is compact, we can assume that $\frkc^n$ has
the same combinatorial type for each $n$ and converges to $\frkc \in
\MMr_{g,k}(\Gamma)$. By Theorem \ref{thm-solu-noda}, for each
$\hat{T}$, $\bu^n$ has a subsequence (we always use the same
notation if we take subsequences) such that it converges to a
$C^\infty$ section $\bu$ on $\cC-\cup_{z\in E(\Gamma)}
N_{n,z}(\hat{T})$. When restricting to each component of $\frkc$,
$\bu$ is a solution of the perturbed Witten equation.

Now we consider the convergence. There are several cases:

\begin{enumerate}

\item  $p$ is a nodal point of $\frkc$ and all $\frkc^n$
connecting two components $\frkc_\nu$ and $\frkc_\mu$ such that
the group element $\gamma_p\neq \gamma_0$. We assume that
$\bu_\nu^n$ converges to $\bu_\nu$ in the interior part of
$\frkc_\nu$.

Now we claim in this case that for any $\varepsilon>0$, there
exist a constant $\hat{T}$ and a critical point $\varkappa_\nu$
such that for any sufficiently large $n$ and $(s,\theta)\in
[\hat{T}, \infty)\times S^1$, the following holds:
\begin{equation}\label{comp-proof-1}
|\bu^n_\nu(s,\theta)-\varkappa_\nu|<\varepsilon,
\end{equation}
and
\begin{equation}\label{comp-proof-2}
|\bu^n_\mu(s,\theta)-\varkappa_\mu|<\varepsilon.
\end{equation}

If the conclusion is not true, then there exists a positive number
$\varepsilon_0$ and a sequence $s_n\to\infty, \theta_n\in S^1$
such that for any critical points $\varkappa$ of
$W_{\gamma_p}+W_{0,\gamma_{p}}$ the following holds
$$
|\bu^n_\nu(s_n,\theta_n)-\varkappa|\ge \varepsilon_0.
$$
Let $\tilde{\bu}^n_\nu(s,\theta)=\bu^n_\nu(s_n+s,\theta_n)$; then
$\tilde{\bu}^n_\nu$ is the solution of the perturbed Witten
equation defined on $[-s_n, \infty)$ satisfying the condition
$|\tilde{\bu}^n_\nu(0,\theta_n)-\varkappa|\ge \varepsilon_0$.

By Theorem \ref{thm-solu-noda}, $\tilde{\bu}^n_\nu$ will
$C^\infty_{local}$ converge to a solution $\bu$ such that for some
$\theta_0\in S^1$,
$$
|\bu(0,\theta_0)-\varkappa|\ge \varepsilon_0,
$$
for any critical point $\varkappa$. Hence $\bu$ is a nontrivial
solution of the perturbed Witten equation connecting two different
critical points. However, we know this is not true because
$W_{0,\gamma_{p}}$ is strongly $W_{\gamma_p}$-regular.

Now by (\ref{comp-proof-1}) and (\ref{comp-proof-2}), we know that
$$
\bu^n_\nu(+\infty)=\varkappa_\nu=\bu^n_\mu(+\infty)=\varkappa_\mu
$$
and
$$
\bu_\nu(+\infty)=\bu_\mu(+\infty)=\varkappa_\nu.
$$
Hence we have proved in this case that $(\frkc^n, \bu^n)$
converges to $(\frkc, \bu)$.

\item  $p$ is a marked point such that $\gamma_p\neq \gamma_0$.
Proof of compactness in this case is the same as in (1) except we
need only consider one component.

\item  $p$ is a nodal point of $\frkc$ and all $\frkc^n$
connecting two components $\frkc_\nu$ and $\frkc_\mu$ satisfy the
relation $\gamma_p=\gamma_0$.

Assume that $\bu^n_\nu(\infty)=\varkappa=\bu^n_\mu(\infty)$.

There are also some cases:

\begin{enumerate}
\item  $\varkappa\neq \varkappa^{j_1},\varkappa^{j_2}$. We know
that in the interior part of each component, $\bu_\nu^n$ and
$\bu_\mu^n$ converge to $\bu_\nu$ and $\bu_\mu$ respectively. In
this case we can show in the same way as (1) that
$\bu_\nu(+\infty)=\bu_\mu(+\infty)$, and $(\frkc^n,\bu^n)\to
(\frkc,\bu)$ in $\WW_{g,k}^s(\bgamma,\bvkappa)$.

\item $\varkappa= \varkappa^{j_2}(\;\text{or}\;\varkappa^{j_1})$.

Assume the sequence $(\frkc^n,\bu^n)$ satisfies the following property: for any
$\varepsilon>0$ there exists a constant $\hat{T}$ and a
critical point $\varkappa_\nu$ such that for any sufficiently
large $n$ and $(s,\theta)\in [\hat{T}, \infty)\times S^1$, the
following holds:
\begin{equation}
|\bu^n_\nu(s,\theta)-\varkappa_\nu|<\varepsilon,
\end{equation}
and
\begin{equation}
|\bu^n_\mu(s,\theta)-\varkappa_\mu|<\varepsilon.
\end{equation}
Then we know that $\varkappa_\nu=\varkappa_\mu=\varkappa$,
$\bu_\nu(\infty)=\bu_\mu(\infty)=\varkappa$ and $(\frkc^n,\bu^n)$
converges to $(\frkc, \bu)$ in $\MMrs_{g,k}(\bgamma,\bvkappa)$.

If the condition described above does not hold, then the argument of (1)
shows that there exists a sequence $s_n\to\infty$ such that the
reparametrized sequence
$\tilde{\bu}^n_\nu(s,\theta):=\bu^n_\nu(s_n+s,\theta_n)$
$C^\infty_{local}$ converges to a nontrivial solution
$\bu_{j_1,j_2}$ on $\R\times S^1$ connecting the uniquely possible
two critical points $\varkappa^{j_1}$ and $\varkappa^{j_2}$. In
fact, it is easy to see that $\bu_\mu(\infty)=\varkappa^{j_2}$ and
$\bu_\nu(+\infty)=\varkappa^{j_1}$, because there is no other kind
of trajectory connecting two critical points. Hence we obtain a
soliton $W$-section $(\frkc_\nu\#_{p_{0,1}}(\R\times
S^1)\#_{p_{0,2}}\frkc_\mu,\bu_\nu\#_{p_{0,1}}\bu_{j_1,j_2}\#_{p_{0,2}}\bu_\mu)$.
Now it is easy to show that $(\frkc^n, \bu^n)\to
(\frkc_\nu\#_{p_{0,1}}(\R\times
S^1)\#_{p_{0,2}}\frkc_\mu,\bu_\nu\#_{p_{0,1}}\bu_{j_1,j_2}\#_{p_{0,2}}\bu_\mu)$
in Gromov convergence.
\end{enumerate}
\item In the convergence of the $W$-curve, one component can
degenerate into a nodal curve. So without loss of generality we
assume that $\frkc^n$ is an irreducible curve and
$\frkc^n\to \frkc_\nu\#_{p_0}\frkc_\mu$ in
$\MMr_{g,k}(\bgamma)$. If $\gamma_{p_0}\neq \gamma_0$, then one can
show as before that $(\frkc^n,\bu^n)$ will converges to a stable
$W$-section $(\frkc_\nu\#_{p_0}\frkc_\mu, \bu_\nu\#_{p_0}\bu_\mu)$.
If $\gamma_{p_0}=\gamma_0$, then $(\frkc^n,\bu^n)$ may converge to a
stable $W$-section or a soliton $W$-section in Gromov convergence.

\end{enumerate}
In summary, we have shown that $\MMrs_{g,k}(\bgamma,\bvkappa)$ is a
compact space in the Gromov topology. This is also a Hausdorff
space, since the limit $W$-section $(\frkc, \bu)$ is uniquely
determined by the approximating sequence.
\end{proof}

\section{Construction of the virtual cycle}\label{sec:construct}

The aim of this section is to construct a Kuranishi structure of the
moduli space $\MMrs_{g,k}(\bgamma,\bvkappa)$, the space of soliton
$W$-section. We know that if this moduli space is a strongly
perturbed space, then there is no soliton $W$-section and
$\MMrs_{g,k}(\bgamma,\bvkappa)=\MMr_{g,k}(\bgamma,\bvkappa)$. As the
first step, we construct the Fredholm theory of the perturbed Witten
map and do some preparations.

\subsection{Fredholm theory and the implicit function theorem}\

\

Note that we have defined for any
$\frkc=(\cC,p_1,\dots,p_k,\LL_1,\dots,\LL_N,\varphi_1,\dots,\varphi_s,\psi_1,\dots,\psi_k)\in
\MMr_{g,k}(\bgamma)$ the corresponding Witten map:
$$
WI_\frkc: C^\infty(\cC,\LL_1)\times \dots \times C^\infty(\cC,
\LL_N)\rTo C^\infty(\cC, \LL_1\otimes\Lambda^{0,1})\times
\dots \times C^\infty(\cC,\LL_N\otimes \Lambda^{0,1}),
$$
which has the following form:
$$
WI_\frkc(\bu)=(\bpat_\cC u_1+\tilde{I}_1\left(\overline{\frac{\pat
(W+W_{0,\beta})}{\pat u_1}}\right),\dots, \bpat_\cC
u_N+\tilde{I}_1\left(\overline{\frac{\pat (W+W_{0,\beta})}{\pat
u_N}}\right)).
$$
Here the perturbation term $W_{0,\beta}$ has the form
$\varpi(\zeta)\beta_i W_{0,\gamma}$ which is determined by the
combinatorial type of $\cC$ and the group element $\gamma$ and the
cut-off section $\beta_i$.

Define
$$
C^\infty_0(\cC,\LL_1\times\dots\LL_N):=\{\bu=(u_1,\dots,u_N)\in
C^\infty(\cC,\LL_1)\times \dots \times C^\infty(\cC, \LL_N)|
u_{j,\nu}(p_\nu)=u_{j,\mu}(p_\mu)=0,\;\text{if}\;\pi_\nu(p_\nu)=\pi_\mu(p_\mu)\},
$$
and
\begin{align}
&L^p_1(\cC,\LL_1\times\dots\times\LL_N)=\oplus_\nu L^p_1(\cC_\nu,
\LL_1\times\dots\times \LL_N)\\
&L^p(\cC, \LL_i\otimes \Lambda^{0,1})=\oplus_\nu L^p(\cC_\nu,
\LL_i\otimes\Lambda^{0,1}), i=1,\dots,N.
\end{align}
The metric used to define the $L^p$ norm is the cylindrical metric
near each marked or nodal point. We will always set $p>2$ in our
discussion.

We have the linearized operator $D_{\frkc,\bu}WI$ of $WI_\frkc$ at
$\bu$:
\begin{align}
&D_{\frkc,\bu}WI(\bphi):=D_{\frkc,\bu}WI(\phi_1,\dots,\phi_N):=\nonumber\\
&\left(\bpat_\cC
\phi_1+\sum_j\tilde{I}_1\left(\overline{\frac{\pat^2
(W+W_{0,\beta})}{\pat u_1\pat u_j}\phi_j}\right),\dots, \bpat_\cC
\phi_N+\sum_j\tilde{I}_1\left(\overline{\frac{\pat^2
(W+W_{0,\beta})}{\pat u_N\pat u_j}\phi_j}\right)\right).
\end{align}

$D_{\frkc,\bu}WI$ is a map from
$C^\infty_0(\cC,\LL_1\times\dots\times\LL_N)$ to $(\oplus_\nu
C^\infty(\cC_\nu,\LL_1))\times \dots \times (\oplus_\nu
C^\infty(\cC_\nu, \LL_N))$ or from
$L^p_1(\cC,\LL_1\times\dots\times\LL_N)$ to $L^p(\cC,
\LL_1\otimes \Lambda^{0,1})\times \dots L^p(\cC, \LL_N\otimes
\Lambda^{0,1})$.

When restricted to the cylindrical neighborhood, the linearized
operator has the local form

\begin{align}
&D_{\frkc,\bu}WI(\bphi):=D_{\frkc,\bu}WI(\phi_1,\cdot,\phi_N):=\nonumber\\
&\left(\bpat_\xi \phi_1-2\sum_j\overline{\frac{\pat^2
(W+W_{0,\beta})}{\pat u_1\pat u_j}\phi_j},\dots, \bpat_\xi
\phi_N-2\sum_j\overline{\frac{\pat^2 (W+W_{0,\beta})}{\pat u_N\pat
u_j}\phi_j}\right),
\end{align}
where $\bu, \phi_i's$ are local sections or can be understood as
functions satisfying the twisted periodic conditions.

\begin{thm}\label{ind-witt-oper} Let $(\frkc,\bu)\in \MMrs_{g,k,w}(\bgamma,\bvkappa)$ and assume that
$\frkc$ is connected. Then its linearized operator
$D_{\frkc,\bu}WI:
L^p_1(\cC,\LL_1\times\dots\times\LL_N)\rTo L^p(\cC,
\LL_1\otimes \Lambda^{0,1})\times \dots \times L^p(\cC,
\LL_N\otimes \Lambda^{0,1})$ is a real linear Fredholm operator of
index $2\hat{c}_W(1-g)-\sum_\tau
2\iota(\gamma_\tau)-\sum_{\tau=1}^k N_{\gamma_\tau}$, where
$\hat{c}_W=\sum_i(1- 2q_i)$,
$\iota(\gamma_\tau)=\sum_{i}(\Theta_i^{\gamma_\tau}-q_i)$ and
$N_{\gamma_\tau}=\dim \C^N_{\gamma_\tau}$ ( if
$\C^N_{\gamma_\tau}=\{0\}$, we set $N_{\gamma_\tau}=0$).
\end{thm}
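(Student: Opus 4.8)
The plan is to compute the index by reducing to a sum of local model contributions, using the standard strategy for Cauchy--Riemann-type operators on nodal curves: first establish that $D_{\frkc,\bu}WI$ is Fredholm by showing it is a compact perturbation of a model operator whose Fredholm property is known, and then compute the index by deformation invariance together with a splitting (excision) argument over the neck regions.

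First I would argue Fredholmness. Away from the marked and nodal points the operator $D_{\frkc,\bu}WI$ is $\bpat_\cC$ plus a zeroth-order term (the Hessian of $W+W_{0,\beta}$ contracted against $\bphi$), so it is elliptic there. Near each marked or nodal point, after passing to the cylindrical coordinate $\xi=s+i\theta$ and using the trivializations from Section~\ref{sec:rigmoduli}, the operator takes the form $\bpat_\xi + S(s,\theta)$ with $S(s,\theta)\to S^\infty$ as $s\to\infty$ by Lemma~\ref{expo-decay-cond} (for Ramond directions the asymptotic operator $J\pat_\theta + S^\infty$ is the linearization at a nondegenerate critical point of $W_\gamma+W_0$, hence invertible on $L^2(S^1)$; for Neveu--Schwarz directions one gets the shifted operator $J\pat_\theta + \Theta_N + \cdots$, again invertible because $0<\Theta_i^\gamma<1$). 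Thus on each end the operator is asymptotic to an invertible self-adjoint operator, so the standard theory of translation-invariant elliptic operators on cylinders (as in the references cited after Lemma~\ref{expo-decay-salamon}, e.g. Salamon's lectures, Lockhart--McOwen) gives that the operator on the cylindrical end with $L^p_1$-weighting is Fredholm; patching these with the interior elliptic estimate via a partition of unity yields that $D_{\frkc,\bu}WI$ is Fredholm on $L^p_1(\cC,\LL_1\times\cdots\times\LL_N)$.

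Next I would compute the index. Since the index is invariant under homotopies of the operator through Fredholm operators (and under continuous deformation of $\bu$, $\frkc$, and the perturbation, as long as the asymptotic operators stay invertible), I can deform $W_{0,\beta}\to 0$ away from the neck cutoffs and scale the Hessian term to zero on the interior, reducing $D_{\frkc,\bu}WI$ to a direct sum of the $\bpat$-operators $\bpat_{\LL_i}$ on the orbicurve $\cC$, each acting on sections of the orbifold line bundle $\LL_i$, but with the prescribed asymptotic (boundary) conditions at the marked points dictated by the decorations $\gamma_\tau$ (Ramond directions carry an evaluation-type condition at the critical point, Neveu--Schwarz directions carry the decay condition). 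The real index of $\bpat_{\LL_i}$ on a genus-$g$ orbicurve with these conditions is computed by orbifold Riemann--Roch: for a line bundle of degree-type data $\LL_i$ with $\LL_i^{\otimes(\text{monomial})}\cong K_{\log}$, the index contributes $2(\deg|\LL_i| + 1 - g)$ corrected by the local terms $-2(\Theta_i^{\gamma_\tau}-q_i)$ at each marked point and, for Ramond directions, an extra $-1$ coming from the codimension of the evaluation condition $u_i(p)=\kappa_i$. Summing over $i$ and using $\deg$-relations, $\sum_i q_i$-type identities, the definitions $\hat c_W=\sum_i(1-2q_i)$, $\iota(\gamma_\tau)=\sum_i(\Theta_i^{\gamma_\tau}-q_i)$, and $N_{\gamma_\tau}=\#\{i:\ x_i \text{ fixed by }\gamma_\tau\}$ (the number of Ramond directions), the total index collapses to $2\hat c_W(1-g) - \sum_\tau 2\iota(\gamma_\tau) - \sum_{\tau=1}^k N_{\gamma_\tau}$.

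The main obstacle I anticipate is bookkeeping the contributions correctly at the Ramond marked/nodal points: one must carefully match the $L^p_1$ weighting (equivalently, which Fourier modes on $S^1$ are allowed to survive) against the evaluation condition $u_i(p)=\kappa_i$ versus the decay condition, and verify that the asymptotic operator is genuinely invertible in the chosen function space rather than merely having trivial kernel. A secondary subtlety is the orbifold Riemann--Roch computation on $\cC$ with nontrivial isotropy, where the fractional degree shifts $\Theta_i^{\gamma_\tau}$ must be handled via the correspondence (established in Section~\ref{subsub-witten-orbi}) between orbifold sections of $\LL_i$ and ordinary sections of $|\LL_i|$ twisted by the divisor $\sum_\tau(\Theta_i^{\gamma_\tau}-q_i)z_\tau$; getting the signs and the $\log$-canonical twist right is where errors would creep in. Everything else — ellipticity, the patching argument, deformation invariance of the index — is routine once the local models are pinned down.
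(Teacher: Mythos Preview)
Your Fredholmness argument is fine and matches the paper in spirit. The gap is in the index step. You propose to ``scale the Hessian term to zero'' and reduce to a direct sum of $\bpat_{\LL_i}$'s, then invoke orbifold Riemann--Roch with an ad hoc ``evaluation condition'' correction of $-1$ at each Ramond direction. The problem is that at a Ramond marked point the asymptotic operator on $S^1$ is exactly $J\partial_\theta$ plus the Hessian of $W_\gamma+W_0$ at $\kappa$; if you deform that Hessian to zero, the asymptotic operator $J\partial_\theta$ acquires a kernel (the constants) and the operator on the cylinder ceases to be Fredholm in $L^p_1$. So the homotopy you want does not stay inside the Fredholm locus, and your appeal to deformation invariance fails precisely at the Ramond ends --- which is where the term $-\sum_\tau N_{\gamma_\tau}$ comes from. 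Your ``evaluation condition'' heuristic is pointing at the right number, but there is no actual operator in your setup to which Riemann--Roch applies together with that condition.

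The paper resolves this by a different device: instead of deforming to $\bpat$ on the whole curve, it excises the cylindrical ends. One cuts $\cC$ into a compact surface with boundary $\cC_b$ and half-infinite cylinders, imposes totally real (Lagrangian) boundary conditions $e^{2\pi i\Theta_i^{\gamma_\tau}\theta}\R$ on the boundary circles, and uses an index gluing formula $\ind D=\ind D(\cC_b)+\sum_\tau\ind D^\infty(\tau)$. On each half-cylinder the operator again splits as Ramond $\oplus$ Neveu--Schwarz; the Ramond piece (with its nondegenerate $A_R$) has exponentially decaying kernel/cokernel, so one may pass to a weighted space and drop the zeroth-order term to find $\ind D_R(\tau)=0$. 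The Neveu--Schwarz piece is conjugated by $e^{-i\Theta_i\theta}$ to $\bpat+\Theta_i$, whose half-tube index equals that of $\bpat^{orb}$ on a disc with the corresponding orbifold point. Re-gluing the discs onto $\cC_b$ gives the closed-orbifold $\bpat_i$, and now orbifold Riemann--Roch (Chen--Ruan) applies cleanly. The $-N_{\gamma_\tau}$ then appears transparently as the discrepancy between the Ramond half-cylinder index ($=0$) and the index of the trivial disc cap ($=1$) for each $i$ with $\Theta_i^{\gamma_\tau}=0$. In short: the key idea you are missing is the Lagrangian boundary excision that lets one separate the end contribution from the closed-orbifold Riemann--Roch term without ever passing through a non-Fredholm operator.
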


\begin{proof} Let $p_\tau$ be a marked point. Define the limit matrix of $A(\tau)$ as:
$A_{ij}(\tau)=\lim_{s\to
+\infty}-2\overline{\frac{\pat^2(W+W_{0,\beta})}{\pat u_i\pat
u_j}}$. We define a new real linear operator $D$ such that in the
cylindrical neighborhood $([1,\infty)\times S^1)_{p_\tau}$, $D$
has the form
$$
D\phi=\left(\bpat \phi_1+\sum_j\overline{A_{1j}\phi_j}, \dots,
\bpat \phi_N+\sum_j\overline{A_{Nj}\phi_j}\right).
$$
Note the facts that by Theorem \ref{thm-solu-noda} $\bu$ decays
exponentially to zero, so $D$ is a compact perturbation of
$D_{\frkc,\bu}WI$. To show that $D$ is a Fredholm operator and
compute its index, we need some preparation.\

\subsubsection*{Index of $\bpat^\infty$ in a half tube}

Let $(([0,\infty)\times S^1)\times \C, F)$ be a bundle pair, i.e.,
$\C$ is a trivial line bundle and $F$ is the Lagrangian subbundle
given by the fiber $F_{e^{i\theta}}=e^{i\frac{k}{d}\theta}\R$.
Notice that we take the coordinate $\xi=s+i\theta$ in the half
tube $[0,\infty)\times S^1$. Define the weighted Sobolev spaces of
sections with weight $\delta>0$ and $p>2$ as follows:
\begin{align*}
&L^{p,\delta}_{1,b}:=\{\phi|, \int (|\nabla \phi|^p+|\phi|^p)
e^{\delta s}ds d\theta<\infty, \phi(0,\theta)\in
F_{e^{i\theta}}\}\\
&L^{p,\delta}:=\{\phi d\bar{\xi}|, \int |\phi|^p e^{\delta s}ds
d\theta<\infty\}.
\end{align*}
We have the standard Cauchy-Riemann operator:
$\bpat^\infty:=\bpat: L^{p,\delta}_{1,b}\rTo L^{p,\delta}$.

\begin{lm}\label{ind-inftube} $\bpat^\infty$ is a Fredholm operator. Let $k_{\delta,p}=[\frac{\delta}{p}]$. Then
$$
\begin{cases}
\ind\bpat^\infty=0& if |k| \le k_{\delta,p}\\
\ind\bpat^\infty=-2k+2k_{\delta,p}+1& if k>k_{\delta,p}\\
\ind\bpat^\infty=-2k-2k_{\delta,p}-1& if k<-k_{\delta,p}.
\end{cases}
$$
\end{lm}

\begin{proof} This lemma is possibly not new, since there is a
lot of discussion in the literature (see \cite{MS}, \cite{Do} and
so on). But they are seldom in our required form. We follow Joel
Robbin's method in the appendix of \cite{MS}.\

\

\noindent Step 1: We lift this problem to the $d$-sheeted covering
$\widetilde{([0,\infty)\times S^1)}$ and get the bundle pair
$(\tilde{\C}, \tilde{F})$. Meanwhile, we have the lifting operator
$\tilde{\bpat}$. It is easy to see that $\ind\bpat=\ind
\tilde{\bpat}$. Hence in the following  we always assume that
$d=1$.\

\

\noindent Step 2: Prove $\coker \bpat=\{\psi d\bar{z}| \pat_s \psi
-i \pat_\theta \psi=0, \psi(0,\theta)\in i F,\; \psi\in
L^{p,\delta} \;\text{and is a smooth function}\}$.

In fact, taking any $\phi\in L^{p,\delta}_{1,b}, \psi\in
L^{p,\delta}\subset L^2$, we have
\begin{align*}
&\langle \psi d\bar{z}, \bpat \phi\rangle=\mbox{Re}(
\int_0^\infty\int_{S^1} \bar{\psi}(\pat_s \phi+i\pat_\theta
\phi)ds
d\theta)\\
&=\mbox{Re}( \int_0^\infty\int_{S^1} (\overline{\pat_s
\psi-i\pat_\theta\psi})\phi ds d\theta)+\mbox{Re}(\int_{S^1}
\overline{\psi(0,\theta)}\phi(0,\theta)).
\end{align*}
Therefore we have the conclusion.\

\

\noindent Step 3: If $k\le k_{\delta, p}$, then $\bpat$ is
injective. If $k> k_{\delta, p}$, then $\dim\ker
\bpat=2k-2k_{\delta, p}-1$.

The solution of $\bpat \phi=0$ has the Laurent series
$$
\phi(s,\theta)=\sum_n a_n e^{n(s+i\theta)}.
$$
The coefficient is given by
$$
a_n=\frac{1}{2\pi}\int_{S^1}\phi(0,\theta) e^{-in\theta}d\theta.
$$
By the equality $\overline{\phi(0,\theta)}=\phi(0, \theta)
e^{-2ik\theta}$ we have
\begin{align*}
&\overline{a_n}=\frac{1}{2\pi}\int_{S^1} \overline{\phi(0,\theta)}
e^{i n\theta}d\theta\\
&=\frac{1}{2\pi}\int_{S^1} \phi(0,\theta)e^{-i(2k-n)\theta}\\
&=a_{2k-n}.
\end{align*}
Now by integrability we know that $a_n=0, \forall n\ge
-k_{\delta,p}$. So $\phi\equiv 0$ if $k\ge -k_{\delta,p}$, and
this shows that $\bpat$ is an injection. If $k<-k_{\delta,p}$ we
have
$$
\phi(s,\theta)=a_{2k+k_{\delta,p}+1}e^{(2k+k_{\delta,p}+1)(s+i\theta)}+\dots+
a_{-k_{\delta,p}-1} e^{-(k_{\delta,p}+1)(s+i\theta)},
$$
and so $\dim\ker\bpat=-2k-2k_{\delta,p}-1$.\

\

\noindent Step 4. If $k\le k_{\delta, p}$, then $\bpat$ is
surjective. If $k>k_{\delta,p}$, then $\dim\coker
\bpat=2k-2k_{\delta,p}-1$.

Let $\psi\in \coker \bpat$; then $\bpat(i\overline{\psi})=0$ and
$i\overline{\psi(0,\theta)}\in e^{-ik\theta}\R$. Now using the
result of Step 3, we have the conclusion.

\end{proof}

\subsubsection*{Index of $\bpat^D$ in a disc}

We can also consider the bundle pair $(D\times \C, F)$ on a flat
disc $D$, where the Lagrangian subbundle $F:=e^{ik\theta}\R$. We
have the Fredholm map
$$
\bpat^D:=\bpat: L^p_1(D,\C)\rTo L^p(D, \Lambda^{0,1}).
$$
The following lemma is proved in the appendix of \cite{MS}:

\begin{lm}\label{ind-disc}
$\ind \bpat^D=1+2k$.
\end{lm}

\subsubsection*{Index of $\bpat^{orb}$ in a disc with origin an orbifold point}

Assume $(D\times \C, \Z_d)$ to be a chart of orbifold line bundle
$\C$ at the origin, where the group action is given by
$$
e^{2\pi i/d}(z, w)=(e^{2\pi i/d}z, e^{2\pi k i /d}w).
$$
Since, near the boundary $\pat D$, the orbifold structure gives a
line bundle structure, we can associate a Lagrangian subbundle $F$
to $\pat D$ defined as $F_{e^{\i \theta}}:=e^{i k\theta/d}\R$. We
can define the Sobolev spaces of the orbifold sections:
$L^{p,orb}_1(D, \C)$ and $L^{p,orb}$. We have the Fredholm
operator $\bpat^{orb}:=\bpat:L^{p,orb}_1(D, \C)\rTo
L^{p,orb}$.

\begin{lm}\label{ind-orb}
$$
\begin{cases}
\ind \bpat^{orb}=1& if k=0\\
\ind \bpat^{orb}=2k-1& if k\neq 0.\\
\end{cases}
$$
\end{lm}

\begin{proof} If $k=0$, then the orbifold line bundle is a trivial
non-orbifold bundle, and the index can be obtained from Lemma
\ref{ind-disc}. If $k\neq 0$, then the orbifold line bundle is
nontrivial and the group action forces all continuous sections to
vanish at the origin. Hence the first and the last coefficients in
a Taylor expansion similar to what we computed in the case of the
half-infinite long tube will disappear. So, compared to the case
of the disc, the index will drop by real dimension 2.
\end{proof}

Firstly we assume that $\cC$ has only one component, and let
$\cC_b=\cC\setminus \cup_{\tau=1}^k ([1,+\infty]\times
S^1)_{p_\tau}$, where the $p_\tau$ are marked points. Let
$S^1_\tau=(\{1\}\times S^1)_{p_\tau}$ be the oriented circle with
the induced orientation from $([1,+\infty)\times S^1)_{p_\tau}$.
So $\pat \cC_b=\coprod_\tau (-S^1_\tau)$, where $-$ means the
anti-orientation. Notice that the orbifold structure of $\LL_i$
near the marked point $p_\tau$ induces a monodromy representation
of the line bundle $\LL_i$:
$$
\rho_{\tau,i}: \pi_1(([0,\infty]\times S^1)_{p_\tau})\rTo
G\subset U(1),
$$
which is given by $\rho_{\tau,i}(1)=\Theta^{\gamma_\tau}_i$. Let
$\R^\tau_i=-e^{2\pi \sqrt{-1}\Theta^{\gamma_\tau}_i\theta}\R$ be
the Lagrangian subbundle on $S^1_\tau$. Then we obtain the bundle
pairs $(\cC_b\times \LL_i, \overline{\R^1_i}\times \dots \times
\overline{\R^1_i})$ and $(([1,+\infty)\times S^1)_{p_\tau}\times
\LL_i,\R^\tau_i)$. Define
$$
C^\infty_b(\cC_b, \LL_i):=\{s\in C^\infty_b(\cC_b,
\LL_i)|s|_{S^1_\tau}\in \R^\tau_i\}.
$$
Let $L^p_{1,b}(\cC_b, \LL_i)$ be the closure of $C^\infty_b(\cC_b,
\LL_i)$ under the $L^p_1$ norm and $L^p(\cC_b, \LL_i\times
\Lambda^{0,1})$ be the closure of $ C^\infty(\cC_b, \LL_i\times
\Lambda^{0,1})$. Then it is well-known that
$$
\bpat_i(\cC_b): L^p_{1,b}(\cC_b, \LL_i)\rTo L^p(\cC_b,
\LL_i\times \Lambda^{0,1})
$$
is a self-dual Fredholm operator in this boundary value problem.

Similarly we can define another boundary value problem on
$[1,\infty)\times S^1$ such that
$$
\bpat_i^\infty(\tau): L^p_{1,b}(([1,\infty)\times S^1)_{p_\tau},
\LL_i)\rTo L^p(([1,\infty)\times S^1)_{p_\tau}, \LL_i\times
\Lambda^{0,1})
$$
is also a self-dual Fredholm operator.

Since $D$ is a real linear Cauchy-Riemann type operator, we can
similarly define the following maps:
$$
D(\cC_b):=D: L^p_{1,b}(\cC_b, \LL_1\times \dots \times
\LL_N)\rTo L^p(\cC_b, \times_i(\LL_i\times \Lambda^{0,1}))
$$
and
$$
D^\infty(\tau): L^p_{1,b}(([1,\infty)\times S^1)_{p_\tau},
\LL_1\times \dots \times \LL_t)\rTo L^p(([1,\infty)\times
S^1)_{p_\tau}, \times_i(\LL_i\times \Lambda^{0,1})).
$$

We have the index gluing formula which can be proved in exactly
the same way as in the appendix of \cite{MS}.

\begin{lm}\label{ind-prooflm-1} Assume that $D(\cC_b), D_i^\infty(\tau),
\tau=1,\dots, k$ are defined as above; then we have
$$
\ind(D)=\ind{D(\cC_b)}+\sum_{\tau=1}^k D^\infty(\tau).
$$
\end{lm}

Since $D(\cC_b)$ is a compact perturbation of
$\bpat_1(\cC_b)\oplus\dots\oplus\bpat_i(\cC_b)$, we have
\begin{equation}\label{ind-inner}
\ind D(\cC_b)=\sum_i \ind \bpat_i(\cC_b).
\end{equation}

Notice that the operator
$$
D^\infty(\tau)(\phi)=\left(\bpat
\phi_1+\sum_j\overline{A_{1j}\phi_j}, \dots, \bpat
\phi_N+\sum_j\overline{A_{Nj}\phi_j}\right)
$$
actually split into two parts: the broad part and the
narrow
part. If we write $$ A=\begin{pmatrix} A_R & 0\\
0& A_N\end{pmatrix},
$$
then
$$
D^\infty(\tau)=\begin{pmatrix} D_R(\tau) & 0\\
0& D_N(\tau)\end{pmatrix}= \begin{pmatrix} \bpat\cdot+\overline{A_R\cdot} & 0\\
0& \bpat\cdot+\overline{A_N\cdot}\end{pmatrix},
$$
where $A_R$ is a non-degenerate complex symmetric matrix with
small eigenvalues whose absolute value is less than $1$, and $A_N$
is a small complex symmetric matrix.\

Since the dual operator of $D_R(\tau)$ is
$-\pat\cdot+\overline{A_R\cdot}$, the sections in the kernel and
cokernel are all exponentially decaying and the exponents depend
on the absolute values of the eigenvalues of $A_R$. There exists a
small $\delta>0$ such that the index of $D_R$ in the usual Sobolev
space is the same to the index in the weighted Sobolev space
$L^{p,\delta}_{1,b}$. Since this Sobolev space 
has the compact
embedding theorem,  we can drop  the $0$-term without changing the
index. We have
\begin{equation}
\ind D_R(\tau)=\sum_{i:\Theta^{\gamma_\tau}_i=0}\ind
\bpat^\infty_i(\tau)=0,
\end{equation}
where the last equality is from Lemma \ref{ind-inftube}.

$D_N(\tau)$ is a Fredholm operator in the usual Sobolev space of
sections, since each narrow bundle $\LL_i$ is a nontrivial
line bundle. Actually we can define a desingularization
$$
\eta(\Theta^{\gamma_\tau}_i): \phi(s, e^{i\theta}) \in
C^\infty(([1,\infty)\times S^1)_{p_\tau}, \LL_i)\rTo e^{-i
\Theta^{\gamma_\tau}_i \theta} \phi(s, e^{i\theta})\in
C^\infty(([1,\infty)\times S^1)_{p_\tau}, \C).
$$
This map provides the isomorphisms of Banach spaces from
$L^p_{1,b}(([1,\infty)\times S^1)_{p_\tau}, \LL_i)$ to
$L^p_{1,b}(([1,\infty)\times S^1)_{p_\tau}, \C)$ and
$L^p(([1,\infty)\times S^1)_{p_\tau}, \LL_i)$ to
$L^p(([1,\infty)\times S^1)_{p_\tau}, \C)$. Now the operator
$\eta(\Theta^{\gamma_\tau}_i)\circ D_N(\tau)\circ
\eta(\Theta^{\gamma_\tau}_i)^{-1}
=\bpat\cdot+\diag(\Theta^{\gamma_\tau}_i\dots)\cdot+\overline{A_N\cdot}.$
Hence we know that $D_N(\tau)$ is also a Fredholm operator and
$$
\ind D_N(\tau)=\ind \left(\eta(\Theta^{\gamma_\tau}_i)\circ
D_N(\tau)\circ \eta(\Theta^{\gamma_\tau}_i)^{-1}\right).
$$
For the same reason, the sections in the kernel and cokernel of
$\eta(\Theta^{\gamma_\tau}_i)\circ D_N(\tau)\circ
\eta(\Theta^{\gamma_\tau}_i)^{-1}$ are also exponentially
decaying; we can compute the index in a weighted Sobolev space and
then drop the $0$-term. By Lemma \ref{ind-inftube} and Lemma
\ref{ind-orb}, we have
\begin{equation}
\ind D_N(\tau)=\sum_{i:\Theta^{\gamma_\tau}_i\neq 0}\ind
\bpat^\infty_i(\tau)=\sum_{i:\Theta^{\gamma_\tau}_i\neq 0}\ind
\bpat^{orb}_i(\tau).
\end{equation}

Notice that in Lemma \ref{ind-inftube} we take the coordinate
$\xi=s+i\theta$, but in Lemma \ref{ind-orb} we take the coordinate
$z=e^{-\xi}$. For a different choice of coordinates, there is a
minus sign difference in the corresponding Lagrangian subbundles.

Therefore we can obtain
\begin{equation}\label{ind-bdry-sum}
\ind D^\infty(\tau)=\ind D_R(\tau)+\ind
D_N(\tau)=\sum_{i:\Theta^{\gamma_\tau}_i\neq 0}\ind
\bpat^{orb}_i(\tau).
\end{equation}

By Lemma \ref{ind-prooflm-1}, (\ref{ind-inner}) and
(\ref{ind-bdry-sum}), there holds
\begin{equation}
\ind D=\sum_i \ind \bpat_i(\cC_b)+\sum_{\tau=1}^k
\sum_{i:\Theta^{\gamma_\tau}_i\neq 0}\ind \bpat^{orb}_i(\tau).
\end{equation}

Furthermore, by Lemma \ref{ind-orb} and the index gluing formula,
we have
\begin{align}\label{ind-proof-4}
&\ind D=\sum_i \ind \bpat_i(\cC_b)+\sum_{\tau=1}^k \sum_{i}\ind
\bpat^{orb}_i(\tau)-\sum_{\tau=1}^k \sum_{i:\Theta^{\gamma_\tau}_i=0} 1\nonumber\\
&=\sum_i \ind \bpat_i-\sum_{\tau=1}^k
\sum_{i:\Theta^{\gamma_\tau}_i =0} 1.
\end{align}

Therefore now the index computation is changed to the index
computation of $\bpat_i$ on the closed orbifold with orbifold line
bundle $\LL_i$.

The following lemma is from Proposition 4.2.2. of \cite{CR1}.

\begin{lm}\label{ind-prooflm-3}
\begin{equation}
\ind \bpat_i=\ind_{\R} \bpat_i=2C_1(|\LL_i|)(cl(\cC))+2(1-g),
\end{equation}
where $cl(\cC)$ means the closed orbifold.
\end{lm}

Combining this Lemma (\ref{ind-proof-4}) and the following degree
equality which was obtained before:
$$
C_1(|\LL_i|)(cl(\cC))=\deg(\LL_i)=q_i(2g-2+k)-\sum_{\tau=1}^k
\Theta^{\gamma_\tau}_i,
$$
we have
\begin{align}\label{ind-proof-5}
\ind D=&2\sum_i \left( q_i(2g-2+k)-\sum_{\tau=1}^k
\Theta^{\gamma_\tau}_i\right)+2\sum_i(1-g)-\sum_{\tau=1}^k
\sum_{i:\Theta^{\gamma_\tau}_i =0} 1\nonumber\\
=&2(1-g)(\sum_i(1- 2q_i))-2\sum_\tau (\sum_{i}(
\Theta^{\gamma_\tau}_i-q_i))-\sum_\tau N_{\gamma_\tau}\nonumber\\
=&2\hat{c}_W(1-g)-\sum_\tau 2\iota(\gamma_\tau)-\sum_{\tau=1}^k
N_{\gamma_\tau},
\end{align}
where $\hat{c}_W=\sum_i(1- 2q_i)$ and
$\iota(\gamma_\tau)=\sum_{i}( \Theta_i^{\gamma_\tau}-q_i)$.

If $\frkc$ has more than one connected component, i.e., nodal
points appear, than we use the   relation
$$
\iota(\gamma_\tau)+\iota(\gamma_\tau^{-1})+N_{\gamma_\tau}=\hat{c}_W
$$
to obtain the general result. We have  finished the proof of
Theorem \ref{ind-witt-oper}.
\end{proof}

It is easy to get the following conclusion:
\begin{crl} Let $(\bu_{j_1,j_2},\gamma)\in
S_\gamma(\kappa^{j_1},\kappa^{j_2})$. Then the linearized operator
$D_{\bu_{j_1,j_2}}(WI)$ is a real linear Fredholm operator of
index $0$ on $\R\times S^1$.
\end{crl}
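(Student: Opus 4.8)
The plan is to recognize $D_{\bu_{j_1,j_2}}(WI)$ as the linearized perturbed Witten operator on a genus-$0$ orbicurve with two marked points, and then specialize the index computation of Theorem~\ref{ind-witt-oper}. Concretely, via the coordinate change $z=e^{-\xi}$ the cylinder $\R\times S^1$ compactifies to $\CP^1$ with two orbifold points: at $z=0$ (the end $s\to+\infty$) decorated by $\gamma$, and at $z=\infty$ (the end $s\to-\infty$) decorated by $\gamma^{-1}$. The bundles $\LL_1,\dots,\LL_N$ are the flat orbifold line bundles with monodromy $\rho_p$, and the soliton equation (\ref{soli-witt-equa}) together with its linearization are exactly the perturbed Witten equation and its linearized operator for this two-pointed orbicurve. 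Thus the corollary is the $g=0$, $k=2$, $\bgamma=(\gamma,\gamma^{-1})$ instance of Theorem~\ref{ind-witt-oper}, and the only thing to verify is that the ingredients of that proof survive in this (non-stable, $\C$-reparametrization invariant) situation.

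First I would check Fredholmness. As in the proof of Theorem~\ref{ind-witt-oper}, by the asymptotic analysis of Section~\ref{sec:witten} (see also Theorem~\ref{thm-solu-noda}) the soliton decays exponentially to $(\kappa^{j_1},0)$ and $(\kappa^{j_2},0)$ at the two ends, so $D_{\bu_{j_1,j_2}}(WI)$ is a compact perturbation of a Cauchy--Riemann type operator with constant asymptotic matrices $A_{ij}=\lim -2\overline{\frac{\pat^2(W+W_{0,\gamma})}{\pat u_i\pat u_j}}$. Its Ramond block is the Hessian of $W_\gamma+W_{0,\gamma}$ at a critical point, hence nondegenerate since $W_\gamma+W_{0,\gamma}$ is holomorphic Morse (and of norm $<1$ when the perturbation is small, cf.\ Lemma~\ref{expo-decay-cond}), so the associated asymptotic operator $J\pat_\theta+S^\infty$ on $L^2(S^1)$ is invertible; the Neveu--Schwarz block lives on nontrivial orbifold bundles and is handled exactly as in Theorem~\ref{ind-witt-oper}. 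Hence all asymptotic operators are invertible and $D_{\bu_{j_1,j_2}}(WI)$ is Fredholm on the $L^p_1$-spaces. I would stress the conceptual point here: it is precisely the perturbation $W_{0,\gamma}$ — which moves the asymptotic Hessian from the degenerate critical point $0$ of $W_\gamma$ to the nondegenerate critical points $\kappa^{j_1},\kappa^{j_2}$ — that makes the soliton operator Fredholm.

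Then I would run the index bookkeeping of Theorem~\ref{ind-witt-oper} verbatim: decompose $\CP^1\setminus\{0,\infty\}$ into a compact piece $\cC_b$ and two half-tubes, apply the gluing formula $\ind D=\ind D(\cC_b)+\ind D^\infty(0)+\ind D^\infty(\infty)$ of Lemma~\ref{ind-prooflm-1}, and use Lemmas~\ref{ind-inftube}, \ref{ind-disc}, \ref{ind-orb}, \ref{ind-prooflm-3} together with the degree formula $C_1(|\LL_i|)(cl(\cC))=q_i(2g-2+k)-\sum_\tau\Theta_i^{\gamma_\tau}$ exactly as in the derivation of (\ref{ind-proof-5}). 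The outcome is the same closed expression, now with $g=0$, $k=2$:
\begin{equation*}
\ind D_{\bu_{j_1,j_2}}(WI)=2\hat{c}_W-2\iota(\gamma)-2\iota(\gamma^{-1})-N_\gamma-N_{\gamma^{-1}}.
\end{equation*}
Finally, since $N_{\gamma^{-1}}=N_\gamma$ and $\iota(\gamma_\tau)+\iota(\gamma_\tau^{-1})+N_{\gamma_\tau}=\hat{c}_W$ (the very relation invoked at the end of the proof of Theorem~\ref{ind-witt-oper}), the right-hand side collapses to $2\hat{c}_W-2\hat{c}_W=0$.

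The main obstacle is the one already flagged: the soliton is not literally an element of $\MMrs_{g,k,W}(\bgamma,\bvkappa)$ as a stable $W$-curve — it is $\C$-invariant and its asymptotic values are nonzero critical points — so the index formula of Theorem~\ref{ind-witt-oper} cannot be quoted as a black box but must be re-derived on $\R\times S^1$. Once the exponential decay and the invertibility of the asymptotic operators are granted, however, every step of that proof applies without change, and the remaining computation is purely arithmetic.
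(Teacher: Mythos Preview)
Your proposal is correct and follows the same approach the paper intends: the corollary is stated immediately after Theorem~\ref{ind-witt-oper} with the remark ``It is easy to get the following conclusion,'' meaning one specializes the index formula to $g=0$, $k=2$, $\bgamma=(\gamma,\gamma^{-1})$ and uses the relation $\iota(\gamma)+\iota(\gamma^{-1})+N_\gamma=\hat{c}_W$ together with $N_{\gamma^{-1}}=N_\gamma$. Your additional care in noting that the soliton component is not a stable $W$-curve, and in verifying that the Fredholm and index arguments of Theorem~\ref{ind-witt-oper} nonetheless carry over verbatim to $\R\times S^1$, is a welcome elaboration of what the paper leaves implicit.
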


In the following, we introduce the implicit function theorem in
our required form, which is called the "Kuranishi model" in
\cite{CR2}. Here we have done a small modification.

Suppose $F$ is a $C^1$ Fredholm map from a neighborhood of $0$ in
the Banach space $X$ to a neighborhood of $0$ in the Banach space
$Y$. Then $DF(0)$ has a finite-dimensional kernel and cokernel.
Write $F(x)=F(0)+DF(0)x+G(x)$.

Assume that $E$ is a finite-dimensional space such that $Y=E+\im
DF(0)$, and let $V=\{x\in X|DF(0)\cdot x\in E\}$. Then $V$ is a
finite-dimensional subspace in $X$, and $\dim V-\dim E=\ind
DF(0)$. There exist subspaces $V'$ and $E'$ in $X$ and $Y$
respectively such that $X=V\oplus V'$ and $Y=E\oplus E'$. Let
$P_E:Y\rTo E'$ and $P_V:X\rTo V'$ be the projection
map. The operator $DF(0): V'\rTo E'$ is invertible. We
denote its inverse by $Q$.

\begin{lm}\label{lm:model}

Consider the ball $U_{2r}$ in $X$ of radius $2r$ such that $x\in
U_{2r}$ satisfies the condition $\forall x_1, x_2\in U_{2r}$,
$$
||P_E\cdot G(x_1)-P_E\cdot G(x_2)||\le
C(||x_1||+||x_2||)||x_1-x_2||.
$$
Let $0<r<\frac{1}{8C||Q||}$. If $||P_E\cdot F(0)||\le \epsilon r$,
where $\epsilon<\frac{1}{2||Q||}$, then for any $v\in V\cap U_r$,
there is a unique $v'(v)\in V'\cap U_r$ such that $F(v+v'(v))\in
E$, and $\Psi: v\mapsto v+v'(v)$ is continuous and one to one.
On the other hand, for any $x\in U_{r/||I-P_V||}$ such that
$F(x)\in E$, there is a unique $v\in V\cap U_r$ such that
$x=v+v'(v)$. In particular, let $s: V\cap
\Psi^{-1}(U_{r/||I-P_V||})\rTo E$ be defined by
$s(v)=F(v+v'(v))$; then $s$ is continuous and $s^{-1}(0)$ is
homeomorphic to the zero set $F^{-1}(0)\cap U_{r/||I-P_V||}$ by
$v\mapsto v+v'(v)$. $(U_r, E, s, \Psi)$ is called the
Kuranishi model of $F$ at $0$.

\end{lm}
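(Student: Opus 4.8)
The plan is to run the standard contraction-mapping / Lyapunov-Schmidt argument, since the statement is really a quantitative version of the implicit function theorem for Fredholm maps. First I would set up the splitting: given that $DF(0):X\to Y$ is Fredholm, choose a finite-dimensional $E\subseteq Y$ with $Y=E\oplus\im DF(0)$, set $V=(DF(0))^{-1}(E)$, pick complements $X=V\oplus V'$, $Y=E\oplus E'$, and let $Q:E'\to V'$ be the bounded inverse of the (now invertible) restriction $DF(0):V'\to E'$. Then for fixed $v\in V$, solving $P_{E'}F(v+v')=0$ for $v'\in V'$ is equivalent to the fixed-point equation
\begin{equation}
v' = -Q\,P_{E'}\bigl(F(0)+G(v+v')\bigr)=:T_v(v'),
\end{equation}
where $G(x)=F(x)-F(0)-DF(0)x$ is the nonlinear remainder.

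Next I would verify that $T_v$ is a contraction on the ball $V'\cap U_r$ and maps it into itself, using exactly the hypotheses as stated. The quadratic estimate $\|P_{E'}G(x_1)-P_{E'}G(x_2)\|\le C(\|x_1\|+\|x_2\|)\|x_1-x_2\|$ on $U_{2r}$ gives, for $v,v_1',v_2'$ all of norm $\le r$, that $\|T_v(v_1')-T_v(v_2')\|\le \|Q\|\cdot C\cdot(4r)\|v_1'-v_2'\|\le \tfrac12\|v_1'-v_2'\|$ because $r<\tfrac1{8C\|Q\|}$; and the self-map property follows from $\|T_v(0)\|\le \|Q\|\bigl(\|P_{E'}F(0)\|+\|P_{E'}G(v)\|\bigr)\le \|Q\|(\epsilon r + Cr^2)<r$ for $\epsilon<\tfrac1{2\|Q\|}$ and $r$ small, after absorbing the $Cr^2$ term (shrinking $r$ or $\epsilon$ slightly). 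The Banach fixed-point theorem then produces the unique $v'(v)\in V'\cap U_r$ with $F(v+v'(v))\in E$, and continuity of $v\mapsto v'(v)$ is the usual consequence of the uniform contraction (continuity of $T_v$ in $v$, which is immediate since $G$ is continuous). Injectivity of $\Psi:v\mapsto v+v'(v)$ is automatic because $\Psi(v)$ determines $v=P_V\Psi(v)$.

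Then I would prove the converse/surjectivity onto the local zero-and-$E$ locus: if $x\in U_{r/\|I-P_V\|}$ satisfies $F(x)\in E$, write $x=v+w$ with $v=P_Vx\in V$, $w=P_{V'}x\in V'$; the bound on $x$ forces $\|w\|=\|(I-P_V)x\|\le r$ and $\|v\|\le$ (something controlled), so $w$ lies in the ball on which the fixed point is unique, and since $w$ solves the same equation $P_{E'}F(v+w)=0$, uniqueness gives $w=v'(v)$. Finally, defining $s(v)=F(v+v'(v))\in E$ and noting that $F(x)=0$ with $x=v+v'(v)$ iff $s(v)=0$, the map $v\mapsto v+v'(v)$ restricts to a homeomorphism $s^{-1}(0)\xrightarrow{\ \sim\ }F^{-1}(0)\cap U_{r/\|I-P_V\|}$, with inverse $P_V$; continuity of $s$ follows from continuity of $v\mapsto v'(v)$ and of $F$. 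This gives the Kuranishi model $(U_r,E,s,\Psi)$.

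The only genuinely delicate bookkeeping — and the step I expect to be the main obstacle — is tracking the various radii ($r$, $2r$, $r/\|I-P_V\|$, $\epsilon r$) and the projection-norm constants so that all three domains ($U_{2r}$ for the quadratic estimate, $U_r$ for the fixed point, $U_{r/\|I-P_V\|}$ for the converse) are mutually compatible and the self-map inequality closes after absorbing the quadratic term; this is routine but must be done carefully to get the clean statement. Everything else is the textbook Lyapunov–Schmidt reduction, and no input beyond the Fredholm property of $DF(0)$ (already established in Theorem \ref{ind-witt-oper} for the relevant operators) and elementary Banach-space functional analysis is needed.
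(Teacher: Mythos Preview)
Your approach is correct and is precisely what the paper does: the paper's proof consists of a single sentence citing Lemma 3.2.1 of \cite{CR2} and specifying the contraction map $B_v(x)=-Q\cdot P(F(0)+G(v+x))$, which is exactly your $T_v$. Your concern about the bookkeeping of the radii is legitimate but, as you note, routine; the paper simply absorbs it into the reference.
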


\

\begin{proof}

The proof is similar to the proof of Lemma 3.2.1.in \cite{CR2}, if
we put $B_v(x)=-Q\cdot P(F(0)+G(v+x))$ there.

\end{proof}

\begin{rem}

In the construction of the local charts, we will see that $F(x)$
represents the nonlinear Witten map and $F(0)$ represents the
approximating map.
\end{rem}

\subsection{Multisections and Kuranishi theory}\label{subsec:MK}

\

This section provides the machinery, multisections and Kuranishi
theory, to produce the virtual cycles. All of the contexts we
summarize below can be found in \cite{FO}. We cite the context
here only for the convenience of the reader.

\subsubsection*{Multisection} For a space $Z$, let $\mathscr{S}^k(Z):=Z^k/\mathscr{S}_k$
be the $k$-th symmetric power of $Z$, where $\mathscr{S}_k$ is the
permutation group of order $k!$. If $Z$ is an orbifold, then
$\mathscr{S}^k(Z)$ is also an orbifold.

\begin{df} Let $(V\times \R^k, \Gamma, pr)$ be a local model of
smooth orbibundle of rank $k$ over $(V,\Gamma)$ and $l$ be a
positive integer. An \emph{$l$-multisection} of $(V\times \R^k,
\Gamma, pr)$ is a continuous map $s:V\rTo
\mathscr{S}^l(\R^k)$ which is $\Gamma$-equivariant.
\end{df}

There is a canonical map $tm_{l'}: \mathscr{S}^l(Z)\rTo
\mathscr{S}^{ll'}(Z)$ for each $l,l'$ defined by
$$
tm_{l'}[x_1,\dots,x_l]=[{x_1,\dots,x_1},\dots,{x_l,\dots,x_l}],
$$
where we write $x_i$ $l'$ times.

It is easy to see if $s$ is a $l$-multisection, then $tm_{l'}\circ
s$ is an $l'l$-multisection and the restriction of $s$ is still an
$l$-multisection.

\begin{df} Let $pr: E\rTo X$ be an orbibundle. A multisection
is an isomorphism class of the following objects $(\{(V_i\times
\R^k, \Gamma_i,pr)\}, \{s_i\})$ such that
\begin{enumerate}
\item[(1)] $\{(V_i\times \R^k, \Gamma_i,pr)\}$ is a family of
charts of $E$ such that $\cup_i V_i/\Gamma_i=X$. \item[(2)] $s_i$
is an $l_i$-multisection of $(V_i\times \R^k, \Gamma_i,pr)$.
\item[(3)] In the overlap $V_i/\Gamma_i\cap V_j/\Gamma_j$, the
section $tm_{l_j}\circ s_i$ equals  $tm_{l_i}\circ s_j$ under the
transition map.
\end{enumerate}

We say two multisections $$(\{(V'_i\times \R^k, \Gamma'_i,pr)\},
\{s'_i\})$$ and $$(\{(V_i\times \R^k, \Gamma_i,pr)\}, \{s_i\})$$
are equivalent if the sections $tm_{l_j}\circ s'_i$ and
$tm_{l_i}\circ s_j$ satisfy the compatibility conditions on the
overlaps.
\end{df}
For a locally liftable multisection $(\{(V_i\times \R^k,
\Gamma_i,pr)\}, \{s_i\})$, $s_{i_j}$ is called a branch of
$s_{i}=[s_{i_j},1\le j\le l_i]$. The sum of an $l$-multisection
$s^{(1)}$ and $l'$-multisection $s^{(2)}$ is an $l'l$-multisection
$s^{(1)}+s^{(2)}$ and its branches consist of all the possible
sums of branches of $s^{(1)}$ and $s^{(2)}$. We can also naturally
define the multiplication of a function with a multisection. For
an open set $\Omega\subset X$, we can define the space
$C^0_m(\Omega,E)$ of continuous liftable multisections. It is a
$C^0(\Omega)$-module. Similarly, one can define the
$C^k$-differentiable space $C^k_m(\Omega,E)$.

Using the concept of multisection, Fukaya and Ono have constructed
the transversality approximation theorems for orbifold
sections (see Theorem 1 and Lemma 3.14 in \cite{FO}).

\subsubsection*{Kuranishi theory}
Let $X$ be a compact, metrizable topological space.

\begin{df} Let $V$ be an open subset of $X$. A Kuranishi or virtual
neighborhood of $V$ is a system $(U,E,G, s, \Psi)$ where

\begin{enumerate}
\item $\tilde{U}=U /G$ is an orbifold, and $E\rTo U$ is a
$G$-equivariant bundle.

\item $s$  is a $G$-equivariant continuous section of $E$. \item
$\Psi$ is a homeomorphism from $s^{-1}(0)$ to $V$ in $X$.
\end{enumerate}
We call $E$ the {\em obstruction bundle} and $s$ the {\em
Kuranishi map}. We say  $(U,E,G, s, \Psi)$ is a Kuranishi
neighborhood of a point $p\in X$ if $p$ has a neighborhood $V$
carrying a Kuranishi neighborhood.

\end{df}

\begin{df}\label{kura-stru}
Let $(U_i,E_i,G_i,s_i,\Psi_i)$ be a Kuranishi neighborhood of
$V_i$ and $f_{21}: V_1\rTo V_2$ be an open embedding. A
morphism
$$\{\phi\}: (U_1, E_1, G_1, s_1,\Psi_1)\rTo (U_2, E_2, G_2,
s_2,\Psi_2)$$ covering $f$ is a family of open embeddings
$$\phi_{21}: U_1\rTo U_2, \hat{\phi}_{21}:
E_1\rTo E_2, \lambda_{21}: G_1\rTo G_2, \Phi_{21}:
\phi_{21}^*TU_2/ TU_1\rTo \phi_{21}^*E_2/E_1
$$ (called injections) such that
\begin{enumerate}
\item $\phi_{21}, \hat{\phi}_{21}$ are $\lambda_{21}$-equivariant
and commute with bundle projection.

\item $\lambda_{21}$ induces an isomorphism from $\ker(G_1)$ to
$\ker(G_2)$, where $\ker(G)$ is the subgroup acting trivially.

\item $s_2\phi_{21}=\hat{\phi}_{21}s_1$ and $\phi_{21}$ covers
$f_{21}: V_1\rTo V_2$; $\;\Psi_2\phi_{21}=\Psi_1$.

\item If $g\phi_{21}(U_1)\cap \phi_{21}(U_1)\neq \emptyset$ for
some $g\in G_2$, then $g$ is in the image of $\lambda_{21}$.

\item $G_2$ acts on the set $\{\phi_{21}\}$ transitively, where
$g(\phi_{21}, \hat{\phi}_{21}, \lambda_{21})=(g\phi_{21},
g\hat{\phi}_{21}, g\lambda_{21} g^{-1})$.

\item $\Phi_{21}$ is an $G$-equivariant bundle isomorphism.
\end{enumerate}
\end{df}

\begin{df}\label{df-kran-stru}
A Kuranishi structure of dimension $n$ on $X$ is an open cover
$\V$ of $X$ such that
\begin{enumerate}
\item Each $V\in \V$ has a Kuranishi neighborhood $(U, E, G,
s,\Psi)$ such that $\dim U-\dim E=n$.

\item If $V_2\subset V_1$, the inclusion map $i_{12}:
V_2\rTo V_1$ is covered by a morphism between their
Kuranishi neighborhoods.

\item For any $x\in V_1\cap V_2, V_1, V_2\in \V$, there is a
$V_3\in \V$ such that $x\in V_3\subset V_1\cap V_2$.

\item The composition of injections is an injection.
\end{enumerate}
\end{df}

We can also define morphisms between two spaces $X$ and $Y$
carrying Kuranishi structures. However the morphism we prefer is a
special map from $X$ which carries a Kuranishi structure to $Y$
which is an orbifold. The following definition is given in
Definition 6.6 of \cite{FO}.

\begin{df} Let $X$ be a space carrying a Kuranishi structure $\V$ and
$Y$ be an orbifold. $f:X\rTo Y$ is called a strongly
continuous map if $f$ is a family of continuous maps $\{f_{V}\}$
for $V\in \V$ such that $f_{V_2}\circ \phi_{V_2,V_1}=f_{V_1}$. $f$
is said to have maximal rank if for each $V\in \V$, $f_V$ is
smooth and $\forall p\in V$ we have $\rk d f_{V}|_p=\min \{\dim V,
\dim Y\}$.
\end{df}

\begin{rem} The concept of Kuranishi structure was introduced in symplectic geometry by
Fukaya and Ono (\cite{FO}) to define the Gromov-Witten invariants
and prove the Arnold conjecture in general symplectic manifolds.
The reason is that the moduli space of  stable maps is not an
orbifold in general but it can still carry a Kuranishi structure.
The existence of this structure is sufficient for defining the
virtual cycle.

Fukaya-Ono used the concept of a germ to define the Kuranishi
structure; then they proved that there is a good coordinate
system, i.e., a finite covering of Kuranishi neighborhoods. Here
we define the Kuranishi structure in a different but equivalent
style. In the definition of morphism, we included the map $\Phi$
as our data and required $\Phi: \phi^*TU_2/ TU_1\rTo
\phi^*E_2/E_1$ to be an isomorphism. In definition
\ref{df-kran-stru}, $\Phi$ is required to satisfy the injection
composition rule. If we assume that $U_3\subset U_2\subset U_1$,
this is equivalent to the following commutative diagram:
$$
\begin{CD}
0@>>>\frac{\phi^*_{23}TU_2}{TU_3}@>>>\frac{\phi^*_{13}TU_1}{TU_3}@>>>\phi^*_{23}(\frac{\phi^*_{12}TU_1}{TU_2})@>>>0\\
@. @VV{\Phi_{23}}V @VV{\Phi_{13}}V @VV{\Phi_{12}}V\\
0@>>>\frac{E_2}{E_3}@>>>\frac{E_1}{E_3}@>>>\frac{E_1}{E_2}@>>>0.
\end{CD}
$$
This is just the definition of ``tangent bundle" in \cite{FO}. In
our definition, we require the existence of tangent bundle to be a
part of the definition of Kuranishi structure.
\end{rem}

To treat the orientation problem of the moduli problem, we follow
Fukaya-Ono's construction of the ``bundle system."

\begin{df} Assume that the space $X$ has a Kuranishi structure
$\{(U, E, G, s,\Psi)\}$. We change these data into a new
composition $\{(U, F_1, F_2, G, \Psi^F)\}$ (forget the information
about $E,s,\Psi$) such that they satisfy the following conditions:

\begin{enumerate}
\item For each open set $V_q$ carrying a Kuranishi neighborhood,
$F_{1,q}$ and $F_{2,q}$ are orbifold bundles over $U_q/G$. \item
For any embedding $\phi_{pq}: U_q\rTo U_p$ covering
$f_{pq}:V_q\rTo V_p$, there are embeddings of orbibundles:
$\Psi^F_{i,pq}: F_{i,q}\rTo \phi^*_{pq}F_{i,p}, i=1,2$, and
an isomorphism
$$
\Psi^F_{pq}: \frac{\phi^*_{pq}F_{1,p}}{F_{1,q}}\rTo
\frac{\phi^*_{pq}F_{2,p}}{F_{2,q}}.
$$
\item Let $V_r\subset V_q\cap V_p$ be three open sets in $X$ all
of which carry the Kuranishi neighborhoods, then we have the
commutative diagram:
$$
\begin{CD}
0@>>>\frac{\phi^*_{qr}F_{1,q}}{F_{1,r}}@>>>\frac{\phi^*_{pr}F_{1,p}}{F_{1,r}}@>>>\phi^*_{qr}(\frac{\phi^*_{pq}F_{1,p}}
{F_{1,q}})@>>>0\\
@. @VV{\Psi^F_{qr}}V @VV{\Psi^F_{pr}}V @VV{\Psi^F_{pq}}V\\
0@>>>\frac{\phi^*_{qr}F_{2,q}}{F_{2,r}}@>>>\frac{\phi^*_{pr}F_{2,p}}{F_{2,r}}@>>>\phi^*_{qr}(\frac{\phi^*_{pq}F_{2,p}}
{F_{2,q}})@>>>0.
\end{CD}
$$
\end{enumerate}
$(F_1,F_2, \Psi^F)$ is called a bundle system related to the
Kuranishi structure $\{(U, E, G, s,\Psi)\}$ of $X$.
\end{df}

We say two bundle systems $(F_1,F_2, \Psi^F)$ and $(F'_1,F'_2,
\Psi^{F'})$ are isomorphic if there exist isomorphisms between
$F_1$ and $F'_1$, $F_2$ and $F'_2$ in each chart of $X$ such that
they commute with the morphisms $\Psi^F$'s and $\Psi^{F'}$'s.

\begin{ex} $(TU, E, \Phi)$ is an intrinsic bundle system related
to the Kuranishi structure  $\{(U, E, G, s, \Psi)\}$.
\end{ex}

\begin{df} A bundle system is said to be orientable if for any point $p\in X$,
$F_{1,p}$ and $F_{2,p}$ are orientable as orbifold bundles on a
small Kuranishi neighborhood $U_p$ of $p$ and if $U_q\subset U_p$,
then
$$
\Psi_{pq}: \frac{\phi^*_{pq}F_{1,p}} {F_{1,q}}\rTo
\frac{\phi^*_{pq}F_{2,p}} {F_{2,q}}
$$
is orientation preserving.
\end{df}

Fukaya-Ono \cite{FO} then defined the K-theory of the bundle
system related to a Kuranishi structure of a space $X$. They
defined the concepts of oriented bundle system, complex bundle
system, stably oriented and stably almost complex. They showed
that a bundle system is stably oriented iff it is oriented. Hence
if one proves that a bundle system is stably almost complex, then
it is stably orientable and so an orientable system.

Once the definitions of Kuranishi structure and orientation are
constructed, Fukaya-Ono have the following theorem:

\begin{thm}\label{thm-vir-cycl} Let $X$ be a topological space carrying an $n$-dimensional
orientable Kuranishi structure $\{(U, E, G, s,\Psi)\}$. Then the
section $s$ can be perturbed to a multisection $\tilde{s}$ which
is transverse to the zero section, and the zero set
$\tilde{s}^{-1}(0)$ is an oriented cycle. The cobordism class
$[\tilde{s}^{-1}(0)]$ of this cycle is independent of the choice
of the multisection $\tilde{s}$. Furthermore, if $Y$ is an
orbifold and there is a strongly continuous map $f: X\rTo
Y$, then $f_*[\tilde{s}^{-1}(0)]$ is a homology class in
$H_n(Y;\Q)$.
\end{thm}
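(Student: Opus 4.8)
The statement to prove is Theorem \ref{thm-vir-cycl}, which is attributed to Fukaya--Ono. Since the excerpt explicitly says ``All of the contexts we summarize below can be found in \cite{FO}'' and ``We cite the context here only for the convenience of the reader,'' the expected ``proof'' is really a reduction to the machinery of \cite{FO} together with the preparatory results established earlier in the paper (the Fredholm theory of Theorem \ref{ind-witt-oper}, the Kuranishi model Lemma \ref{lm:model}, the Gromov compactness Theorem \ref{thm-gromov-comp}, and the multisection/bundle-system formalism of Section \ref{subsec:MK}). So my plan is to assemble these pieces rather than reprove the abstract transversality theorem from scratch.

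\medskip

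The plan is the following. First I would recall that $X$ carries a compact, metrizable topology (this is exactly Theorem \ref{thm-gromov-comp} in the cases of interest) and an $n$-dimensional oriented Kuranishi structure $\{(U,E,G,s,\Psi)\}$, so that the finitely many Kuranishi neighborhoods can be organized into a \emph{good coordinate system} in the sense of \cite{FO} --- a finite partially ordered cover $\{(U_i, E_i, G_i, s_i, \Psi_i)\}$ with compatible injections on overlaps. This step uses compactness crucially: without it there is no finite subcover and the inductive patching of perturbations fails. Second, I would invoke the Fukaya--Ono transversality theorem for multisections (Theorem 3.11 and Lemma 3.14 of \cite{FO}): starting from the top stratum of the good coordinate system and proceeding downward by induction on the partial order $\succ$, one perturbs each $s_i$ to a multisection $\tilde s_i$ transverse to the zero section, extending compatibly over the injections using a partition of unity; the $C^0$-smallness of the perturbation guarantees $\tilde s^{-1}(0)$ stays in a prescribed neighborhood of $s^{-1}(0)$ and hence remains compact. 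Third, transversality of a multisection to zero makes each branch's zero locus a smooth manifold of dimension $\dim U_i - \operatorname{rank} E_i = n$ with rational weights (the reciprocals of the multiplicities $l_i$ and the orders of the isotropy groups), so $\tilde s^{-1}(0)$ is a weighted branched $n$-manifold, i.e.\ a rational singular cycle. Fourth, orientability of the Kuranishi structure --- which by the bundle-system discussion in Section \ref{subsec:MK} follows once one exhibits a stably almost complex structure on $(TU, E, \Phi)$ --- orients this cycle, so $[\tilde s^{-1}(0)] \in H_n(X, \Q)$ is defined.

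\medskip

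Fifth, for independence of the perturbation: given two transverse multisections $\tilde s^0, \tilde s^1$ obtained from the same Kuranishi data, one runs the same construction on $X \times [0,1]$ with the interpolated Kuranishi structure and a multisection restricting to $\tilde s^0, \tilde s^1$ at the two ends and transverse in the interior; its zero set is an oriented weighted branched cobordism between $(\tilde s^0)^{-1}(0)$ and $(\tilde s^1)^{-1}(0)$, so the homology classes agree. Finally, for the pushforward statement: if $f\colon X \to Y$ is strongly continuous with $Y$ an orbifold, then $f$ is represented by a compatible family $\{f_{U_i}\}$ on the good coordinate system, and since $\tilde s^{-1}(0)$ is a compact rational cycle in $X$ we may push forward $f|_{\tilde s^{-1}(0)}$ in the usual way (after a further small perturbation of the multisection making $f$ restricted to each branch smooth, if needed, which does not change the cobordism class) to obtain a well-defined class $f_*[\tilde s^{-1}(0)] \in H_n(Y, \Q)$; cobordism invariance upstairs gives well-definedness downstairs.

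\medskip

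The only genuinely substantive point here is verifying the hypotheses of the Fukaya--Ono package for our specific moduli space $\MMrs_{g,k,W}(\bgamma,\bvkappa)$ --- namely that it actually \emph{carries} an oriented $n$-dimensional Kuranishi structure with $n$ equal to the index computed in Theorem \ref{ind-witt-oper} --- but that verification is the content of the \emph{earlier} subsections (construction of Kuranishi charts from the Kuranishi model Lemma \ref{lm:model}, gluing via the partial order, and the stably-almost-complex bundle system), not of this theorem, whose proof is simply: ``apply \cite{FO}, Theorem 6.4 (and the surrounding results), whose hypotheses we have now verified.'' Thus the main obstacle is not in this proof at all but in having correctly set up the Kuranishi and orientation data; modulo that, the argument is a citation. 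I would state it compactly as: \emph{This is Theorem 6.4 of \cite{FO}; the hypotheses --- compactness of $X$ (Theorem \ref{thm-gromov-comp}), existence of an $n$-dimensional Kuranishi structure (from Lemma \ref{lm:model} and the gluing construction above), and orientability of the associated bundle system (via the complex structure on $(TU,E,\Phi)$) --- have all been established.}
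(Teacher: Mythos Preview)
Your identification of this as a citation to Fukaya--Ono is correct, and in fact the paper gives \emph{no proof whatsoever}: the theorem is simply stated, preceded by the sentence ``Once the definitions of Kuranishi structure and orientation are constructed, Fukaya-Ono have the following theorem,'' and the text then moves on. So your sketch of the good-coordinate-system/inductive-perturbation/cobordism argument, while accurate as a summary of \cite{FO}, goes well beyond what the paper does.

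One point of confusion in your write-up: you frame the proof as ``verifying the hypotheses for our specific moduli space $\MMrs_{g,k,W}(\bgamma,\bvkappa)$,'' invoking Theorem~\ref{thm-gromov-comp}, Theorem~\ref{ind-witt-oper}, and Lemma~\ref{lm:model}. But Theorem~\ref{thm-vir-cycl} as stated is a purely \emph{abstract} result about an arbitrary compact metrizable $X$ carrying an oriented Kuranishi structure; it is not a statement about the Witten-equation moduli space. The verification that $\MMr_{g,k,W}(\bgamma,\bvkappa)$ actually satisfies those hypotheses is the content of the \emph{subsequent} sections (culminating in Theorem~\ref{thm-stro-kura}), where the abstract Theorem~\ref{thm-vir-cycl} is then applied. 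So your final summary sentence --- ``This is Theorem~6.4 of \cite{FO}; the hypotheses \dots\ have all been established'' --- belongs to the proof of Theorem~\ref{thm-stro-kura}, not here. For Theorem~\ref{thm-vir-cycl} itself, the paper's ``proof'' is simply: see \cite{FO}.
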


\subsubsection*{Kuranishi structure with boundary}\

Let $X$ be a compact metric space. We can also define an
$n$-dimensional Kuranishi structure with boundary on $X$. We only
need a minor change in Definition \ref{kura-stru}. We modify the
definition of Kuranishi neighborhood $(U, E, G, s,\Psi)$ and don't
change the other conditions. $U$ is required to be a $G$-invariant
open neighborhood of $0$ in $\R^n$ or $\R^n\times [0,\infty)$ and
$G$ is required to be a finite group with a linear representation
to $\R^n$ or $\R^{n-1}$ respectively. A point $p$ is said to be a
boundary point of $X$ if there is a chart $(U_p^{n-1}\times
[0,\varepsilon_p), E_p, G_p,s_p,\Psi_p)$, where $U_p^{n-1}$ is a
small neighborhood of $0$ in $\R^{n-1}$, such that $p\in
U_p^{n-1}/G$. Let $\pat X$ be the boundary of $X$, which consists
of all the boundary points of $X$. Obviously $\pat X$ is a space
with $(n-1)$-dimensional Kuranishi structure.

Similarly, we can define the notions of bundle systems,
orientation, etc. on $X$ carrying a Kuranishi structure with
boundary.

\subsection{Construction of the local chart of an inner point in {$\MMrs_{g,k}%(\bgamma, \bvkappa)
$}}\

If a soliton $W$-section $(\frkc_\sigma, \bu_\sigma)$ in
$\MMrs_{g,k}(\bgamma,\bvkappa)$ has no BPS soliton component, then
we show that it is actually an interior point of the moduli space.
In this part, we will construct the local chart of an interior
point.

A non-BPS $W$-section $(\frkc_\sigma,\bu_\sigma)$ can still have a
soliton component, for example, we denote one of them by
$(\bu_{j_1,j_2}, \gamma_p)$, where $p$ is a marked or nodal point
of $\frkc$. Since this soliton is not a BPS soliton,
$\bu_{j_1,j_2}$ has at least one regular value and this guarantees
that the automorphism group $\aut(\frkc_\sigma, \bu_\sigma)$ is a
finite group.

For simplicity, we define
$$
D_\sigma(WI):=D_{\bu_\sigma}((WI)_{\frkc_\sigma});
\aut(\sigma):=\aut(\frkc_\sigma, \bu_\sigma).
$$

Using the standard method one can easily find a finite dimensional
space $E_\sigma\subset L^p(\cC,\times_i (\LL_i\otimes
\Lambda^{0,1}))$ satisfying the following properties:
\begin{enumerate}
\item $E_\sigma+\im D_\sigma(WI)=L^p(\cC,\times_i (\LL_i\otimes
\Lambda^{0,1}))$.

\item $E_\sigma$ is complex linear and $\aut(\sigma)$-invariant.

\item There exists a compact set $K_{obst}$ away from the marked
or nodal points containing the support of all elements in
$E_\sigma$.

\item $E_\sigma$ is a finite-dimensional space consisting of
smooth sections.

\end{enumerate}

Let $E'_\sigma\oplus E_\sigma=L^p(\cC,\times_i (\LL_i\otimes
\Lambda^{0,1}))$, and let $\Vm\oplus V'_\sigma=L^p_1(\cC, \times_i
\LL_i)$, where $\Vm=(D_\sigma(WI))^{-1}(E_\sigma)$. Thus
$D_\sigma(WI): V'_\sigma\rTo E'_\sigma$ is a bounded
invertible operator. Its inverse is denoted by $Q_\sigma$.

Let $\frkc_\sigma=(\cC_\sigma, p_1,\dots, p_k,
\LL_1,\dots,\LL_N,\varphi_1,\dots,\varphi_s,\psi_1,\dots,\psi_k
)$. Then a neighborhood of $[\cC_\sigma,p_1,\dots, p_k]$ in
$\MM_{g,k}$ can be parametrized by the following neighborhood of
${0\times\infty}$:
$$
\frac{\Vd\times\Vr}{\aut{\cC}},
$$
where $\Vd$ is a small neighborhood of $0$ in $\prod_{\cC_\nu
\text{is stable}} \C^{3g_\nu-3+k_\nu}$ and
$$\Vr=\oplus_{\text{$z$ is
nodal point}}([T_0, \infty]\times S^1)_z.
$$
$\Vd$ and $\Vr$ are the deformation domain and resolution domain
respectively. Now a uniformizing system of $\frkc_\sigma$ in
$\MMr_{g,k}(\bgamma)$ is given by
$$
\frac{\Vd\times\Vr}{\aut{\frkc}}.
$$
So for each data $(y,\zeta)\in \Vd\times \Vr$, we can obtain a
nearby curve $\frkc_{y,\zeta}\in \MMr_{g,k}(\bgamma)$. They are also
$\aut(\sigma)$-invariant, i.e., for any $g\in \aut(\sigma)$ we have
$g\cdot\frkc_{y,\zeta}=\frkc_{g\cdot(y,\zeta)}$.\

Now we begin the construction of the Kuranishi neighborhood of
$[\frkc_\sigma, \bu_\sigma]$ in $\MMrs_{g,k}(\bgamma)$.

\subsubsection*{The approaximation solutions}

Note that there are some parameters used in the neighborhood of a
nodal point. They are fixed numbers $T_0, \bar{T}_0, \hat{T}$
which have the relation $T_0< 10\bar{T}_0<\hat{T}$. Recall the
meaning of these parameters: $T_0$ gives the range of the gluing
parameter $\zeta_z=(s_z, \theta_z)\in ([T_0,\infty)\times S^1)$.
The supports of the derivatives of the cut-off function $\beta,
\varpi$ which are used to define the perturbed Witten map
 is in $[\bar{T}_0, \bar{T}_0+1]$ and in $[8\bar{T}_0, 9\bar{T}_0]$ respectively.
$\hat{T}$ appears in the definition of the neck region
$N_{n,z}(\hat{T})=([\hat{T}, T^n_z]\times S^1)_\nu\cup ([\hat{T},
T^n_z]\times S^1)_\mu$, where $\zeta^n_z=(s^n_z,\theta_z)$ is the
gluing parameter and $s^n_z=2T^n_z$.

Now we define some sets near a nodal point $z$. Let
$1/2<\delta<1$. Take the gluing parameter $\zeta_z=(s_z=2T_z,
\theta_z)$ and $T_z>4\hat{T}$.

\begin{align*}
&N_{z_\nu}(\delta, T_z)=([(1-\delta)T_z, T_z]\times S^1)_\nu\\
&N_z(\delta,T_z)=N_{z_\nu}(\delta, T_z)\cup N_{z_\mu}(\delta,
T_z)\\
&N_z(\delta, T_z, \infty)=([(1-\delta)T_z, \infty]\times
S^1)_\nu\cup ([(1-\delta)T_z, \infty]\times S^1)_\mu\\
&N_z(T_z,\infty)=([T_z, \infty]\times S^1)_\nu\cup
([T_z,\infty]\times S^1)_\mu
\end{align*}

Notice that $([\frac{T_z}{2}, \frac{3T_z}{2}]\times S^1)_\nu\cup
([\frac{T_z}{2}, \frac{3T_z}{2}]\times S^1)_\mu$ is the gluing
domain. We can take $T_z$ to be sufficiently large such that
$K_{obst}$ is disjoint from the gluing domain but contains the
definition domain of $\beta,\varpi$.

Take a section $\bphi_\sigma\in
C^\infty(\cC_\sigma,\LL_1)\times\dots
C^\infty(\cC_\sigma,\LL_t)$; we want to define an approximating
section $\papp$ on the nearby curve $\frkc_{y,\zeta}$. We will
only modify the solution $\bphi_\sigma$ near each nodal point $z$.
So without loss of generality, we assume that $\cC$ has only one
nodal point $z$ connecting the components $\cC_\nu$ and $\cC_\mu$.
Assume that $\bphi_\nu(\infty)=\bvkappa=\bphi_\mu(\infty)$ (under
the same rigidification, i.e., viewed in $\C^N$).

Let $\beta_{\delta}$ be a smooth function satisfying

$$
\beta_{\delta}(s, \theta)=
\begin{cases}
0&, if \;s\le 0\\
1&, if \; s\ge \delta,
\end{cases}
$$
and $|\nabla \beta_{\delta}|\le C/\delta$.

Let $(s_\nu,\theta_\nu)$ and $(s_\mu,\theta_\mu)$ be the
cylindrical coordinates of the two components respectively. On the
gluing domain of $\cC_{y,\zeta}$ they satisfy the relation
$(s_\nu,\theta_\nu)=(s_z,\theta_z)-(s_\mu,\theta_\mu)$.

Define a section $\papp$ on $\cC_{y,\zeta}$. When outside the
gluing domain in $\cC_{y,\zeta}$, $\papp\equiv \bphi$. On the
gluing domain, we define
$$
\papp(s_\mu,\theta_\mu):=\bphi_\mu(s_\mu,\theta_\mu)+(1-\beta_{\delta}(\frac{T_z-s_\mu}{T_z}))
(\bphi_\nu(2T_z-s_\mu, \theta_z-\theta_\mu)-\bvkappa)
$$
in the $\mu$ coordinate or in the $\nu$ coordinate in the
following form:
$$
\papp(s_\nu,\theta_\nu):=\bphi_\nu(s_\nu,\theta_\nu)+(1-\beta_{\delta}(\frac{T_z-s_\nu}{T_z}))
(\bphi_\mu(2T_z-s_\nu, \theta_z-\theta_\nu)-\bvkappa).
$$
One can show this section is well defined since on $\{T_z\}\times
S^1$ there holds $\papp(s_\nu,\theta_\nu)=
\bphi_\nu(T_z,\theta_\nu)+\bphi_\mu(T_z,
\theta_z-\theta_\nu)-\bvkappa=\papp(s_\mu,\theta_\mu)$.

Define
\begin{equation}
Glue_{y,\zeta}(\bphi):=\papp.
\end{equation}

$Glue_{y,\zeta}$ is a map from $C^\infty(\cC_\sigma)$ to
$C^\infty(\cC_{y,\zeta})$ or from $L^p_1(\cC_\sigma)$ to
$L^p_1(\cC_{y,\zeta})$.

Let $\bu_\sigma$ be a solution of the perturbed Witten equation
$(WI)_{\cC_\sigma}(\bu)=0$;  we can obtain the approximate
solution $\uapp=Glue_{y,\zeta}(\bu)$.

If we consider the map $Glue_{y,\zeta}$ as a map between Sobolev
spaces $L^p_1$, we have the following lemma.

\begin{lm}\label{lm-appro-1} Assume that $\bphi\in \Vm$. For any $\varepsilon,
0<\varepsilon<1$, the following holds for sufficiently large
$\zeta$ and sufficiently small $y$:
\begin{equation}
(1+\varepsilon)||\bphi||_{L^p_1(\cC_\sigma)}\ge
||Glue_{y,\zeta}(\phi)||_{L^p_1(\cC_{y,\zeta})}\ge
(1-\varepsilon)||\bphi||_{L^p_1(\cC_\sigma)}.
\end{equation}
\end{lm}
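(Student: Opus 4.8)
The statement is a quantitative comparison of the $L^p_1$-norm of a section $\bphi$ on the stable curve $\cC_\sigma$ with the $L^p_1$-norm of its glued-up version $Glue_{y,\zeta}(\bphi)$ on the nearby curve $\cC_{y,\zeta}$; the key point is that for $\bphi\in\Vm$ (i.e.\ in the finite-dimensional ``deformation'' space $\Vm=D_\sigma(WI)^{-1}(E_\sigma)$) the gluing map is an $L^p_1$-near-isometry once the gluing parameter $\zeta$ is large and $y$ is small. The plan is to localize the estimate to the neck regions, since outside the gluing domain $Glue_{y,\zeta}(\bphi)\equiv\bphi$ by construction, so the norms differ only by what happens on the cylinders $([\tfrac12 T_z,\tfrac32 T_z]\times S^1)_\nu\cup([\tfrac12 T_z,\tfrac32 T_z]\times S^1)_\mu$ around each nodal point $z$. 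Thus it suffices to show that the $L^p_1$-norm of $Glue_{y,\zeta}(\bphi)$ restricted to the gluing domain is bounded above and below by $\varepsilon$-small multiples of $\|\bphi\|_{L^p_1(\cC_\sigma)}$, and that the $L^p_1$-norm of $\bphi$ itself restricted to the cylindrical ends $N_z(T_z,\infty)$ on $\cC_\sigma$ tends to $0$ as $T_z\to\infty$.

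\textbf{Key steps.} First I would establish the exponential-decay input: every $\bphi\in\Vm$ decays exponentially on the cylindrical ends toward the asymptotic limit (here the limit is $\bvkappa$ at the Ramond directions and $0$ at the Neveu--Schwarz directions), with a uniform rate and constant over the (finite-dimensional, hence after rescaling to the unit sphere, compact) set of such $\bphi$. The decay for the Neveu--Schwarz components follows from Lemma \ref{homo-int-esti} / Lemma \ref{expo-decay-salamon} applied to the linearized operator, whose asymptotic coefficient matrix is nondegenerate at the critical point $\bvkappa$ by the strong regularity assumptions built into $\Vm$; the Ramond components decay because $D_\sigma(WI)$ restricted to the Ramond directions has nondegenerate asymptotic matrix $A_R$ with eigenvalues of absolute value $<1$, exactly as in the proof of Theorem \ref{ind-witt-oper}. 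Second, using this decay, I would estimate the correction term $(1-\beta_\delta(\tfrac{T_z-s}{T_z}))(\bphi_{\text{other}}(2T_z-s,\cdot)-\bvkappa)$ appearing in the definition of $Glue_{y,\zeta}$: its $L^p_1$-norm over the gluing domain is controlled by $e^{-\delta' T_z}$ times the fixed norm $\|\bphi\|$, where the cut-off derivative contributes a harmless factor $C/\delta$ against the exponentially small tail. Third, I would note that $\|\bphi\|_{L^p_1(N_z(T_z,\infty),\cC_\sigma)}\le Ce^{-\delta' T_z}\|\bphi\|_{L^p_1(\cC_\sigma)}$ by the same decay, so the piece of $\|\bphi\|$ that ``disappears'' when we pass to $\cC_{y,\zeta}$ is also exponentially small. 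Combining: $\big|\,\|Glue_{y,\zeta}(\bphi)\|_{L^p_1(\cC_{y,\zeta})}-\|\bphi\|_{L^p_1(\cC_\sigma)}\,\big|\le Ce^{-\delta' T_z}\|\bphi\|_{L^p_1(\cC_\sigma)}$, and since $T_z\to\infty$ as $\zeta\to\infty$, the right side is $\le\varepsilon\|\bphi\|_{L^p_1(\cC_\sigma)}$ for $\zeta$ large; a final standard estimate shows the metric $\mu_{y,\zeta}$ depending continuously (and with uniformly bounded distortion) on $(y,\zeta)$ only perturbs things by a further $\varepsilon$, which can be absorbed by shrinking the neighborhood of $y=0$. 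The two-sided inequality follows by the triangle inequality.

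\textbf{Main obstacle.} The routine part is the cut-off bookkeeping; the genuinely delicate point is getting the exponential decay rate $\delta'$ and the constant $C$ \emph{uniform} over $\bphi\in\Vm$ and \emph{uniform} in $(y,\zeta)$. Uniformity over $\Vm$ is fine because $\Vm$ is finite-dimensional, so after normalizing $\|\bphi\|_{L^p_1}=1$ we work on a compact set and the decay constants depend continuously on $\bphi$; but one must be careful that the asymptotic limit $\bvkappa$ and the asymptotic matrix are the same for all elements of $\Vm$ (they are, because they are determined by the linearization at $\bu_\sigma$ and the fixed critical point), and that the $p>2$ Sobolev setting is compatible with the exponential-weight arguments of Lemmas \ref{homo-int-esti}--\ref{expo-decay-salamon}, which were stated for $L^2$ — this requires the elementary observation that on a half-cylinder $L^p_1$ decay with rate $\delta'$ follows from $L^2_1$ decay with the same rate plus the interior Schauder/$L^p$ estimates already established in Theorem \ref{bdd-bdry-thm} and Theorem \ref{thm-solu-noda}. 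Finally, one must check that the gluing construction and the metric $\mu_{y,\zeta}$ genuinely agree with $\cC_\sigma$'s metric outside a compact set independent of $\zeta$, which is guaranteed by Proposition \ref{deli-mumf}; with that in hand the estimate is local to the necks and the proof closes.
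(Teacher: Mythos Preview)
Your approach is correct and matches the paper's: both localize to the neck region, use that $D_\sigma(WI)(\bphi)=0$ exactly on the cylindrical ends (since the generators of $E_\sigma$ have compact support in $K_{obst}$ away from the nodes), deduce exponential decay of $\bphi$ there via the nondegenerate asymptotic coefficient matrix of the linearized operator, and then bound the cut-off correction term and the discarded tail $\|\bphi\|_{L^p_1(N_z(T_z,\infty))}$ by small multiples of $\|\bphi\|_{L^p_1(\cC_\sigma)}$. One small slip: elements of $\Vm\subset L^p_1$ decay to $0$ (not to $\bvkappa$) on \emph{all} cylindrical ends, Ramond included --- $\bvkappa$ is the asymptotic value of the solution $\bu_\sigma$, not of the tangent vector $\bphi$ --- but this does not affect your argument, since the exponential rate is still governed by the linearization at $\bu_\sigma$ and is therefore automatically uniform over the linear space $\Vm$ (so your compactness-of-the-unit-sphere argument, while valid, is not actually needed).
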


\begin{proof} It suffices to prove it on one component near a nodal point $z$.
Since $\bphi\in \Vm$, it satisfies the equation
\begin{equation}
D_\sigma(WI)(\bphi)=0 \mod E_\sigma.
\end{equation}
Note that when $\zeta$ is sufficiently large, the gluing domain
does not intersect  $K_{obst}$. So in $N_z(\delta, T, \infty)$, we
have
\begin{equation}\label{glui-equa}
D_\sigma(WI)(\bphi)=\left(\bpat_{\cC_\sigma}
\phi_1+\sum_j\tilde{I}_1\left(\overline{\frac{\pat^2
(W+W_{0,\beta})}{\pat u_1\pat u_j}\phi_j}\right),\dots,
\bpat_{\cC_\sigma}
\phi_N+\sum_j\tilde{I}_1\left(\overline{\frac{\pat^2
(W+W_{0,\beta})}{\pat u_N\pat u_j}\phi_j}\right) \right)=0.
\end{equation}

Note that all the sections here are smooth orbifold sections. Let
$e_i$ be the basis of $\LL_i$ and let $\tilde{u}_i,\phi_i$ be the
coordinate functions of $u_i,\phi_i$ with respect to this basis.
If we take the transformation $\hat{\bu}_\sigma=(\tilde{u}_1
e^{-\Theta^{\gamma_z}_1(s+i\theta)}), \dots, \tilde{u}_N
e^{-\Theta^{\gamma_z}_N(s+i\theta)})$ and
$\hat{\bphi}_\sigma=(\tilde{\phi}_1
e^{-\Theta^{\gamma_z}_1(s+i\theta)}, \dots, \tilde{\phi}_N
e^{-\Theta^{\gamma_z}_N(s+i\theta)})$, then the equation
(\ref{glui-equa}) becomes the equation on the resolved $W$-curve
$|\frkc_\sigma|$:
\begin{equation}
\left(\bpat_{|\cC|_\sigma} \hat{\phi}_1+\sum_j
I_1\left(\overline{\frac{\pat^2 (W+W_{0,\beta})}{\pat
\hat{u}_1\pat \hat{u}_j}\hat{\phi}_j}\right),\dots,
\bpat_{|\cC|_\sigma} \hat{\phi}_N+\sum_j
I_1\left(\overline{\frac{\pat^2 (W+W_{0,\beta})}{\pat
\hat{u}_N\pat \hat{u}_j}\hat{\phi}_j}\right) \right)=0.
\end{equation}
By the analysis in Section 6.4, we know that $\hat{\phi}_i
e^{\Theta^{\gamma_z}_1(s+i\theta)}$ and any of its derivatives are
of exponential decay. This just means that $\phi_i$ and its
derivatives are of exponential decay. Thus for sufficiently large
$\zeta_z$, we have
\begin{equation}\label{glui-isom-proof1}
||\bphi||_{L^p_1(N_z(\delta, T, \infty))}\le \frac{\varepsilon}{6}
||\bphi||_{L^p_1(\cC_\sigma)}.
\end{equation}
Since $\papp=\bphi$ in $\cC_{y,\zeta}\setminus N_z(\delta,T_z)$,
for sufficiently small deformation parameter $y$, we have
\begin{equation}\label{glui-isom-proof2}
(1-\frac{\varepsilon}{2})||\bphi||_{L^p_1(\cC_\sigma\setminus
N_z(\delta,T_z,\infty))}\le
||\papp||_{L^p_1(\cC_{y,\zeta}\setminus N_z(\delta,T_z))}\le
(1+\frac{\varepsilon}{2})||\bphi||_{L^p_1(\cC_\sigma\setminus
N_z(\delta,T_z,\infty))}.
\end{equation}
On the other hand, we have
\begin{align*}
&||\papp||_{L^p_1(N_z(\delta,T_z))}=||\papp||_{L^p}+||\nabla\papp||_{L^p}\nonumber\\
&\le 2||\bphi||_{L^p(N_z(\delta,T_z,\infty))}
+2||\nabla\bphi||_{L^p(N_z(\delta,T_z,\infty))}+2||\nabla\beta_\delta
\frac{1}{T_z}\bphi||_{L^p(N_z(\delta,T_z,\infty))}\nonumber\\
&\le 3||\bphi||_{L^p_1(N_z(\delta,T_z,\infty))}.
\end{align*}
By (\ref{glui-isom-proof1}), we obtain for sufficiently large
$\zeta$:
\begin{equation}\label{glui-isom-proof3}
||\papp||_{L^p_1(N_z(\delta,T_z))}\le \frac{\varepsilon}{2}
||\bphi||_{L^p_1(\cC_\sigma)}.
\end{equation}
Combining the results (\ref{glui-isom-proof2}) and
(\ref{glui-isom-proof3}), we get the conclusion.
\end{proof}

\subsubsection*{Obstruction bundle on $\cC_{y,\zeta}$}

The deformation map from $\cC_\sigma$ to $\cC_{y,\zeta}$ provides
a bundle isomorphism when restricted to the domain $K_{obst}$:
$$
\theta_{y,\zeta}: \times_i(\LL_{i,\sigma}\otimes
\Lambda^{0,1}|_{K_{obst}})\rTo
\times_i(\LL_{i,y,\zeta}\otimes \Lambda^{0,1}|_{K_{obst}}),
$$
which induces the isomorphism of sections:
$$
\theta_{y,\zeta}: C^\infty(K_{obst},\times_i(\LL_{i,\sigma}\otimes
\Lambda^{0,1}|_{K_{obst}}))\rTo
C^\infty(K_{obst},\times_i(\LL_{i,y,\zeta}\otimes
\Lambda^{0,1}|_{K_{obst}})).
$$

Since the support of each section in $E_\sigma$ is contained in
$K_{obst}$, we define $E_{y,\zeta}:=\theta_{y,\zeta}(E_\sigma)$.

Set $D_{y,\zeta}(WI):=D_{\uapp}((WI)_{y,\zeta})$. Our aim is to
find the solution of the following equations:
\begin{equation}\label{glui-equa-yz}
D_{y,\zeta}(WI)(\bphi)=0 \mod E_{y,\zeta}.
\end{equation}

\begin{lm}\label{lm-localchart1} Let $\bphi\in \Vm$; then for sufficiently large $\zeta$ we have
\begin{equation}
||D_{y,\zeta}(WI)\circ Glue_{y,\zeta}(\bphi)-\theta_{y,\zeta}\circ
D_\sigma(WI)(\bphi)||_{L^p(\cC_{y,\zeta})}\le
C(|y|+\frac{1}{T_z}+e^{-\delta_0
 T_z})||\bphi||_{L^p_1(\cC_{y,\zeta})},
\end{equation}
where $C$ is a constant depending on $\bu_\sigma, \delta$ and the
decay exponent $\delta_0$ which is from Theorem
\ref{thm-solu-noda}.
\end{lm}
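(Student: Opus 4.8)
The plan is to estimate the difference of the two sides of the claimed inequality locally, separating the contribution of the region where $Glue_{y,\zeta}$ is literally the identity (namely $\cC_{y,\zeta}\setminus N_z(\delta,T_z)$, together with the purely deformed part of $\cC_{y,\zeta}$ away from the nodes) from the contribution of the gluing collar $N_z(\delta,T_z)$. Over the first region $\papp=\bphi$ and $\theta_{y,\zeta}$ is the deformation isomorphism of bundles; the only discrepancy between $D_{y,\zeta}(WI)\circ Glue_{y,\zeta}$ and $\theta_{y,\zeta}\circ D_\sigma(WI)$ comes from (i) the variation of the complex structure and of the K\"ahler metric as $y$ moves inside $V_{deform,\sigma}$, which is controlled by $C|y|$ because all the structures vary smoothly by Proposition \ref{deli-mumf}, and (ii) the variation of the zeroth-order coefficients $\tilde I_1\bigl(\overline{\frac{\pat^2(W+W_{0,\beta})}{\pat u_i\pat u_j}}\bigr)$, which depends on $\uapp$ versus $\bu_\sigma$; since $\uapp-\bu_\sigma$ is supported in the collar and decays like $e^{-\delta_0 T_z}$ there by the exponential decay in Theorem \ref{thm-solu-noda}, this piece is absorbed into $Ce^{-\delta_0 T_z}\|\bphi\|_{L^p_1}$.

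The main work is the collar $N_z(\delta,T_z)$. Here I would write $\papp=\bphi_\mu+(1-\beta_\delta)(\bphi_\nu(2T_z-\cdot)-\bvkappa)$ and apply $D_{y,\zeta}(WI)$, i.e.\ $\bpat$ plus the zeroth-order term. The $\bpat$ acting through the cut-off produces a commutator term $(\bpat\beta_\delta)\otimes(\bphi_\nu-\bvkappa)$; since $|\nabla\beta_\delta|\le C/\delta$ and, crucially, $\beta_\delta$ in this construction is rescaled so that its derivative lives on a band of length $\delta T_z$, the $L^p$ norm of $(\bpat\beta_\delta)(\bphi_\nu-\bvkappa)$ is bounded by $C T_z^{-1}\|\bphi\|_{L^p_1}$ plus an exponentially small term, using once more the exponential decay of $\bphi_\nu$ toward the critical value $\bvkappa$ established in Section \ref{sec:witten} (Theorem \ref{conv-n-solu}, Theorem \ref{conv-r-solu}, and the exponential-decay theorem). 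Because each $\bphi\in\Vm$ solves $D_\sigma(WI)(\bphi)=0\bmod E_\sigma$ and $E_\sigma$ is supported in $K_{obst}$, which is disjoint from the collar for $T_z$ large, $\bphi$ satisfies the \emph{homogeneous} linearized equation on $N_z(\delta,T_z,\infty)$; this is exactly what makes $\bphi_\nu$ and $\bphi_\mu$ (and their derivatives) decay exponentially in $s$, as was already used in the proof of Lemma \ref{lm-appro-1}, so $\|\bphi\|_{L^p_1(N_z(\delta,T_z,\infty))}\le C e^{-\delta_0 T_z}\|\bphi\|_{L^p_1(\cC_\sigma)}$, and by Lemma \ref{lm-appro-1} the right-hand side may be replaced by $\|\bphi\|_{L^p_1(\cC_{y,\zeta})}$ up to a harmless constant.

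The remaining terms in the collar are the cross terms coming from plugging the glued section into the \emph{quadratic} nonlinearity's derivative: one must compare $\frac{\pat^2(W+W_{0,\beta})}{\pat u_i\pat u_j}(\uapp)$ with the corresponding coefficient built from $\bphi_\mu$ alone, and the mismatch is again governed by $|\uapp-\bu_\sigma|$ plus $|\bphi_\nu-\bvkappa|$ on the collar, hence exponentially small in $T_z$. Collecting the three types of estimates — $C|y|$ from the deformation, $CT_z^{-1}$ from the cut-off gradient, $Ce^{-\delta_0 T_z}$ from asymptotic decay — yields the asserted bound, with $C$ depending on $\bu_\sigma$, on $\delta$, and on the decay exponent $\delta_0$ from Theorem \ref{thm-solu-noda}. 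I expect the genuine obstacle to be bookkeeping: making the "purely deformed'' and "glued'' regions match up cleanly on their common overlap so that no term is counted twice, and checking that the constant $C$ genuinely does not depend on $T_z$ — this forces one to be careful that every appearance of $\bphi$ in a collar estimate is paired with a decay factor before Lemma \ref{lm-appro-1} is invoked to pass back to the norm on $\cC_{y,\zeta}$.
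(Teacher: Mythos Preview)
Your approach is essentially the same as the paper's: split into the collar $N_z(\delta,T_z)$ and its complement, use that $\bphi\in\Vm$ satisfies the homogeneous linearized equation outside $K_{obst}$ (hence decays exponentially, as in the proof of Lemma \ref{lm-appro-1}), and collect the $|y|$, $T_z^{-1}$, and $e^{-\delta_0 T_z}$ contributions from the deformation of complex structure, the cut-off gradient, and the coefficient mismatch $A(\uapp)-A(\bu_\sigma)$ respectively. One small slip worth cleaning up: $\bphi\in\Vm$ is a tangent vector, so its asymptotic value is $0$, not $\bvkappa$, and Theorems \ref{conv-n-solu}, \ref{conv-r-solu} concern the nonlinear solution $\bu_\sigma$ rather than $\bphi$; the decay you actually need for $\bphi$ is the linearized decay argument from the proof of Lemma \ref{lm-appro-1}, which you do invoke correctly a few lines later.
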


\begin{proof}
We discuss the integral on $\cC_{y,\zeta}\setminus N_z(\delta,
T_z)$ and on $N_z(\delta, T_z)$ respectively. For simplicity, we
write the operator as
$$
D_{y,\zeta}(WI)(\bphi):=\bpat_{y}\bphi+\overline{A(\uapp,y)\cdot
\bphi},
$$
where $A(\uapp,y)$ is the corresponding matrix depending on
$\uapp$ and the deformation parameter $y$, since the
metric-preserving isomorphism $\tilde{I}_1$ also depends on $y$
which is induced by the $W$-spin structure. This shows that the
operator only depends on the deformation parameter if the
resolution parameter is sufficiently large (because the function
$\varpi\equiv 1$ for large $\zeta$).

Let $\bphi_\nu$ and $\bphi_\mu$ represent the $\nu$ and $\mu$
component of $\bphi$. On $N_z(\delta, T_z)$ we have
\begin{align}
&||D_{y,\zeta}(WI)\circ Glue_{y,\zeta}(\bphi)-0||_{L^p(N_z(\delta,
T_z))}\nonumber\\
&=||\bpat_y(\bphi_\nu+(1-\beta_\delta)\bphi_\mu)+
\overline{A(\uapp,y)\cdot(\bphi_\nu+(1-\beta_\delta)\bphi_\mu)}
||_{L^p}\nonumber\\
&\le
||(\bpat_y-\bpat_\sigma)\bphi_\nu+(1-\beta_\delta)(\bpat_y-\bpat_\sigma)\bphi_\mu||+
||\bpat_y(1-\beta_\delta)\bphi_\mu||\nonumber\\
&+||(A(\uapp,y)-A(\bu,\sigma))(\bphi_\nu+(1-\beta_\delta)\bphi_\mu)||\nonumber\\
&\le C|y|
||\nabla\bphi||_{L^p}+\frac{C}{T_z}||\bphi||_{L^p}+(||A(\uapp,y)-A(\bu,y)||+||A(\bu,y)-A(\bu,\sigma)||)
||\bphi||_{L^p}\nonumber\\
&\le C(|y|+\frac{1}{T_z}+e^{-\delta_0 T_z})||\bphi||_{L^p_1}.
\end{align}
Here we have used the definition of $\uapp$ and the property of
exponential decay of $\bu$ on $[T_z,\infty)\times S^1$ when $T_z$
is large enough.

On $\cC_{y,\zeta}\setminus N_z(\delta, T_z)$, we have
\begin{align}
&||D_{y,\zeta}(WI)\circ
Glue_{y,\zeta}(\bphi)-\theta_{y,\zeta}\circ
D_\sigma(WI)(\bphi)||_{L^p(\cC_{y,\zeta}\setminus N_z(\delta,
T_z))}\nonumber\\
&\le
||(D_{y,\zeta}(WI)-D_\sigma(WI))\bphi||+||(I-\theta_{y,\zeta})D_\sigma(WI)\bphi||\nonumber\\
&\le  C(|y|+\frac{1}{T_z})||\bphi||_{L^p_1}.
\end{align}

Combining the above two inequalities, we obtain the result.
\end{proof}

\subsubsection*{Existence of right inverse and its uniform upperbound}

Let $V_{y,\zeta}=(D_{y,\zeta}(WI))^{-1}(E_{y,\zeta})$. Define
$E'_{y,\zeta}$ to be the complementary subspace of $E_{y,\zeta}$
and $V'_{y,\zeta}$ to be the complementary subspace of
$V_{y,\zeta}$. To solve the equation (\ref{glui-equa-yz}) is
equivalent to proving the existence of the right inverse of
$D_{y,\zeta}(WI)$. Define a map
$$
I_{y,\zeta}: L^p(\cC_{y,\zeta}, \times_i(\LL_{i,y,\zeta}\otimes
\Lambda^{0,1}(\cC_{y,\zeta})))\rTo L^p(\cC_\sigma,
\times_i(\LL_{\sigma}\otimes \Lambda^{0,1}(\cC_{\sigma})))
$$
as
$$
I_{y,\zeta}(\bphi)(z):=
\begin{cases}
\theta_{y,\zeta}^{-1}\circ\phi(z)&,
\;if\;z\in \cC_{\sigma}\setminus N_z(T_z,\infty)\\
0,&\;if\;z\in N_z(T_z,\infty).
\end{cases}
$$

We claim that the composition map
$Q'_{app,y,\zeta}:=Glue_{y,\zeta}\circ Q_\sigma\circ I_{y,\zeta}:
E'_{y,\zeta}\rTo L^p_1$ is an approximating right inverse
of $D_{y,\zeta}(WI): V'_{y,\zeta}\rTo E'_{y,\zeta}$. This
is known from the following lemma.

\begin{lm} If the gluing parameter $\zeta$ is sufficiently large
and the deformation parameter $y$ is sufficiently small, then
\begin{equation}
||D_{y,\zeta}(WI)\circ
Q'_{app,y,\zeta}(\bphi)-\bphi||_{L^p(\cC_{y,\zeta})}\le
\frac{1}{2}||\bphi||_{L^p(\cC_{y,\zeta})}.
\end{equation}
\end{lm}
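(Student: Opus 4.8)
The statement to prove is the quantitative estimate
\begin{equation}
\|D_{y,\zeta}(WI)\circ Q'_{app,y,\zeta}(\bphi)-\bphi\|_{L^p(\cC_{y,\zeta})}\le \tfrac12\|\bphi\|_{L^p(\cC_{y,\zeta})}
\end{equation}
for $\zeta$ large and $y$ small. The plan is to unwind the definition $Q'_{app,y,\zeta}=Glue_{y,\zeta}\circ Q_\sigma\circ I_{y,\zeta}$ and reduce the difference $D_{y,\zeta}(WI)\circ Q'_{app,y,\zeta}(\bphi)-\bphi$ to terms that are controlled by the previous two lemmas, namely Lemma \ref{lm-localchart1} (the error between $D_{y,\zeta}(WI)\circ Glue_{y,\zeta}$ and $\theta_{y,\zeta}\circ D_\sigma(WI)$ on elements of $\Vm$) and Lemma \ref{lm-appro-1} (the gluing map is an approximate isometry). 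First I would set $\bpsi:=Q_\sigma\circ I_{y,\zeta}(\bphi)\in V'_\sigma\subset \Vm$, so that $D_\sigma(WI)(\bpsi)=I_{y,\zeta}(\bphi)$ by definition of $Q_\sigma$ as the inverse of $D_\sigma(WI):V'_\sigma\to E'_\sigma$. Then $Q'_{app,y,\zeta}(\bphi)=Glue_{y,\zeta}(\bpsi)$, and I would write
\begin{equation}
D_{y,\zeta}(WI)\circ Glue_{y,\zeta}(\bpsi)-\bphi = \big(D_{y,\zeta}(WI)\circ Glue_{y,\zeta}(\bpsi)-\theta_{y,\zeta}\circ D_\sigma(WI)(\bpsi)\big) + \big(\theta_{y,\zeta}\circ I_{y,\zeta}(\bphi)-\bphi\big).
\end{equation}
The first bracket is estimated directly by Lemma \ref{lm-localchart1} applied to $\bpsi$: its $L^p$-norm is at most $C(|y|+\tfrac{1}{T_z}+e^{-\delta_0 T_z})\|\bpsi\|_{L^p_1(\cC_{y,\zeta})}$, and $\|\bpsi\|_{L^p_1(\cC_{y,\zeta})}$ is comparable to $\|\bpsi\|_{L^p_1(\cC_\sigma)}$ by Lemma \ref{lm-appro-1}, which in turn is bounded by $\|Q_\sigma\|\cdot\|I_{y,\zeta}(\bphi)\|_{L^p(\cC_\sigma)}\le \|Q_\sigma\|\cdot\|\bphi\|_{L^p(\cC_{y,\zeta})}$ since $I_{y,\zeta}$ is norm-nonincreasing (it is essentially a restriction composed with the isometry $\theta_{y,\zeta}^{-1}$).

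The second bracket $\theta_{y,\zeta}\circ I_{y,\zeta}(\bphi)-\bphi$ measures the failure of $\theta_{y,\zeta}\circ I_{y,\zeta}$ to be the identity; by the definitions, $\theta_{y,\zeta}(I_{y,\zeta}(\bphi))$ agrees with $\bphi$ outside the neck region $N_z(T_z,\infty)$ and is zero on it, so this term is exactly (minus) the restriction of $\bphi$ to the far neck $N_z(T_z,\infty)$. Here I would need an a priori control showing that $\|\bphi\|_{L^p(N_z(T_z,\infty))}$ is a small fraction of $\|\bphi\|_{L^p(\cC_{y,\zeta})}$ — but this is not automatic for an \emph{arbitrary} $\bphi\in E'_{y,\zeta}$, so the argument actually has to be organized slightly differently: one shows that $D_{y,\zeta}(WI)\circ Q'_{app,y,\zeta}$ is close to the identity by testing against the already-established near-isometry and near-intertwining, and absorbing the neck contribution into the small constant. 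More precisely, since $E'_{y,\zeta}$ consists of sections supported (after the $I_{y,\zeta}$ composition and back) away from the gluing region except for the genuinely spread-out $\bphi$, I would instead bound the neck term by noting $\|\bphi\|_{L^p(N_z(T_z,\infty))}\le \|\bphi\|_{L^p(\cC_{y,\zeta})}$ trivially and then use that the \emph{composition} forces cancellation: the right way is to observe that on $N_z(T_z,\infty)$ one has $D_{y,\zeta}(WI)\circ Glue_{y,\zeta}(\bpsi)$ itself small there (by the exponential decay of $\bpsi$ established in the proof of Lemma \ref{lm-appro-1}), so the contribution of the far neck to the total error is $O(e^{-\delta_0 T_z})\|\bphi\|$.

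\textbf{Main obstacle.} The delicate point is the bookkeeping of the cutoff region: making sure the error terms from (i) changing the complex/metric structure ($\bpat_y$ vs. $\bpat_\sigma$ and $A(\uapp,y)$ vs. $A(\bu_\sigma)$), (ii) the cutoff derivative $\nabla\beta_\delta$ of size $C/T_z$, and (iii) the exponential decay tails all assemble into a single bound $C(|y|+\tfrac{1}{T_z}+e^{-\delta_0 T_z})\|Q_\sigma\|\,\|\bphi\|_{L^p(\cC_{y,\zeta})}$, which is $\le \tfrac12\|\bphi\|_{L^p(\cC_{y,\zeta})}$ once $|y|$ is small and $T_z$ (equivalently $\zeta$) is large. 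Once this is in place, the estimate follows by the triangle inequality. I would also remark at the end that, combined with the standard Neumann-series argument, this lemma yields that $D_{y,\zeta}(WI):V'_{y,\zeta}\to E'_{y,\zeta}$ is surjective with a right inverse $Q_{y,\zeta}$ of norm bounded uniformly in $(y,\zeta)$, which is what is needed to apply the Kuranishi model of Lemma \ref{lm:model} at $[\frkc_\sigma,\bu_\sigma]$.
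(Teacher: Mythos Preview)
Your decomposition into two brackets is the same as the paper's, but you have a genuine confusion about the second bracket $\theta_{y,\zeta}\circ I_{y,\zeta}(\bphi)-\bphi$. You claim it equals the restriction of $\bphi$ to the ``far neck'' $N_z(T_z,\infty)$, and then spend the rest of the argument trying to control that term. But $N_z(T_z,\infty)$ is a subset of $\cC_\sigma$, not of $\cC_{y,\zeta}$; the section $\bphi$ lives on $\cC_{y,\zeta}$, so its restriction to $N_z(T_z,\infty)$ is not even defined. In fact the composite $\theta_{y,\zeta}\circ I_{y,\zeta}$ is \emph{exactly} the identity on $L^p(\cC_{y,\zeta})$: the map $I_{y,\zeta}$ transplants $\bphi$ to $\cC_\sigma\setminus N_z(T_z,\infty)$ and extends by zero on $N_z(T_z,\infty)$, and then $\theta_{y,\zeta}$ pushes forward from $\cC_\sigma\setminus N_z(T_z,\infty)$ onto all of $\cC_{y,\zeta}$ (the two half-necks $[0,T_z]_\nu$ and $[0,T_z]_\mu$ cover the glued neck exactly). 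This is precisely the paper's first sentence, ``$\bphi=\theta_{y,\zeta}\circ D_\sigma(WI)\circ Q_\sigma\circ I_{y,\zeta}(\bphi)$''. So your second bracket vanishes identically and no further estimate is needed; your attempted salvage via exponential decay of $\bpsi$ is both unnecessary and invalid (the decay in Lemma~\ref{lm-appro-1} was proved only for elements of $\Vm$, not for $\bpsi\in V'_\sigma$).

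There is a second, smaller imprecision: you propose to estimate the first bracket by invoking Lemma~\ref{lm-localchart1} directly, but that lemma is stated for inputs in $\Vm$, whereas $\bpsi=Q_\sigma\circ I_{y,\zeta}(\bphi)$ lies in $V'_\sigma$. The proof of Lemma~\ref{lm-localchart1} used that $D_\sigma(WI)(\cdot)$ vanishes on the neck, which fails for $\bpsi$. The paper handles this by redoing the same computation with the replacement observation that $D_\sigma(WI)(\bpsi)=I_{y,\zeta}(\bphi)$ vanishes on $[T_z,(1+\delta)T_z]\times S^1$ (by the extension-by-zero in $I_{y,\zeta}$), which is what makes the neck estimate go through. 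You should state and use this observation rather than citing Lemma~\ref{lm-localchart1} as a black box.
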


\begin{proof} Note that $\bphi=\theta_{y,\zeta}\circ
D_\sigma(WI)\circ Q_\sigma\circ I_{y,\zeta}(\bphi)$; we only need
to prove that for $\psi=Q_\sigma\circ I_{y,\zeta}(\bphi)$ the
following inequality holds:
\begin{equation}
||D_{y,\zeta}(WI)\circ Glue_{y,\zeta}(\psi)-\theta_{y,\zeta}\circ
D_\sigma(WI)(\psi)||_{L^p}\le C(|y|+\frac{1}{T_z}+e^{-\delta_0
T_z})||\psi||_{L^p_1}.
\end{equation}
Now the proof is the same as the proof of Lemma
\ref{lm-localchart1} while observing that
$$
D_\sigma(WI)(\psi)=0
$$
on $([T_z, (1+\delta)T_z]\times S^1)$.
\end{proof}

This lemma implies that the right inverse $Q_{y,\zeta}$ exists and
$Q_{y,\zeta}=Q'_{app,y,\zeta}\circ(D_{y,\zeta}(WI)\circ
Q'_{app,y,\zeta})^{-1}$.

Now it is easy to obtain the following lemma.
\begin{lm}\label{lm-kuramodel-0} For sufficiently large $\zeta$ and sufficiently small
$y$, $Q_{y,\zeta}$ has uniform upper bound:
$$
||Q_{y,\zeta}||\le \tilde{C}_1.
$$
\end{lm}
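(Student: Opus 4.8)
The plan is to derive the uniform bound on $Q_{y,\zeta}$ directly from the formula $Q_{y,\zeta}=Q'_{app,y,\zeta}\circ\bigl(D_{y,\zeta}(WI)\circ Q'_{app,y,\zeta}\bigr)^{-1}$, which is available from the previous lemma, together with the fact that $Q'_{app,y,\zeta}=Glue_{y,\zeta}\circ Q_\sigma\circ I_{y,\zeta}$. The strategy is to bound each of the three factors composing $Q'_{app,y,\zeta}$ uniformly, and then to invoke the previous lemma to invert $D_{y,\zeta}(WI)\circ Q'_{app,y,\zeta}$ with a uniform bound on the inverse.

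First I would bound $\|Q'_{app,y,\zeta}\|$. The map $I_{y,\zeta}$ is essentially a restriction composed with the bundle isomorphism $\theta_{y,\zeta}^{-1}$, so $\|I_{y,\zeta}\|\le C$ uniformly (the deformation map $\theta_{y,\zeta}$ is metric-preserving on $K_{obst}$ and uniformly controlled elsewhere for small $y$, large $\zeta$). The operator $Q_\sigma$ is a fixed bounded operator on $\frkc_\sigma$ with norm $\|Q_\sigma\|$ depending only on $\sigma$. For $Glue_{y,\zeta}$ I would invoke Lemma \ref{lm-appro-1}: for $\bphi\in\Vm$ one has $\|Glue_{y,\zeta}(\bphi)\|_{L^p_1(\cC_{y,\zeta})}\le(1+\varepsilon)\|\bphi\|_{L^p_1(\cC_\sigma)}$ for large $\zeta$, small $y$, so $\|Glue_{y,\zeta}\|\le 2$ say. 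Since the image of $Q_\sigma\circ I_{y,\zeta}$ lies in $V'_\sigma\subset\Vm$, Lemma \ref{lm-appro-1} applies, and composing the three estimates gives $\|Q'_{app,y,\zeta}\|\le 2\|Q_\sigma\|\cdot C =: C_0$, a bound independent of $(y,\zeta)$ in the admissible range.

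Next, the previous lemma shows $\|D_{y,\zeta}(WI)\circ Q'_{app,y,\zeta}-\mathrm{Id}\|_{L^p\to L^p}\le\tfrac12$ for $\zeta$ large and $y$ small. Hence $D_{y,\zeta}(WI)\circ Q'_{app,y,\zeta}$ is invertible on $E'_{y,\zeta}$ by the Neumann series, with $\bigl\|\bigl(D_{y,\zeta}(WI)\circ Q'_{app,y,\zeta}\bigr)^{-1}\bigr\|\le 2$. Therefore
\begin{equation*}
\|Q_{y,\zeta}\|\le\|Q'_{app,y,\zeta}\|\cdot\bigl\|\bigl(D_{y,\zeta}(WI)\circ Q'_{app,y,\zeta}\bigr)^{-1}\bigr\|\le 2C_0=:\tilde C_1,
\end{equation*}
which is exactly the claimed uniform bound. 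The main obstacle, and the step requiring the most care, is making sure that the uniformity of $\|I_{y,\zeta}\|$ and of the Sobolev-norm comparison in Lemma \ref{lm-appro-1} really holds over the full admissible parameter range — in particular that the cut-off $\beta_\delta$ contributes a factor $1/T_z\to 0$ rather than a growing constant, and that the deformation parameter $y$ enters only through metric distortions that tend to the identity. Both of these are already embedded in Lemma \ref{lm-appro-1} and in the construction of $\theta_{y,\zeta}$, so the argument reduces to assembling these facts; no new analysis is needed beyond tracking the constants.
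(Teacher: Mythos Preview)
Your overall strategy is exactly what the paper has in mind: the paper gives no proof at all beyond ``Now it is easy to obtain the following lemma,'' and the intended argument is precisely to bound $Q'_{app,y,\zeta}$ uniformly and combine this with the Neumann-series bound $\|(D_{y,\zeta}(WI)\circ Q'_{app,y,\zeta})^{-1}\|\le 2$ coming from the preceding lemma.

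There is, however, one genuine slip. You write ``the image of $Q_\sigma\circ I_{y,\zeta}$ lies in $V'_\sigma\subset\Vm$,'' but $V'_\sigma$ is \emph{not} a subspace of $\Vm$; by construction $\Vm\oplus V'_\sigma = L^p_1(\cC_\sigma,\times_i\LL_i)$, so they are complementary. Hence Lemma~\ref{lm-appro-1}, which is stated only for $\bphi\in\Vm$, cannot be invoked as written for $\psi=Q_\sigma\circ I_{y,\zeta}(\bphi)\in V'_\sigma$. The fix is easy and does not change your argument materially: the \emph{upper} inequality in Lemma~\ref{lm-appro-1} does not actually require the exponential decay coming from $\bphi\in\Vm$. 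From the explicit formula for $Glue_{y,\zeta}$ (with $\bvkappa=0$ since elements of $V'_\sigma$ vanish at the node) one gets directly
\[
\|Glue_{y,\zeta}(\psi)\|_{L^p_1(\cC_{y,\zeta})}\le (1+\tfrac{\varepsilon}{2})\|\psi\|_{L^p_1(\cC_\sigma\setminus N_z)}+3\|\psi\|_{L^p_1(N_z(\delta,T_z,\infty))}\le C\|\psi\|_{L^p_1(\cC_\sigma)}
\]
with a uniform constant $C$ (not close to $1$, but that is irrelevant here). With this correction your chain of bounds goes through and yields $\|Q_{y,\zeta}\|\le 2C\|Q_\sigma\|\cdot\|I_{y,\zeta}\|=:\tilde C_1$.
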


\subsubsection*{Kuranishi model on $\frkc_{y,\zeta}$}

Define the map
$F_{y,\zeta}(\bphi):=WI_{y,\zeta}(\uapp+\bphi):L^p_1(\cC_{y,\zeta},
\times_i \LL_i)\rTo L^p(\cC_{y,\zeta}, \times_i
(\LL_i\otimes \Lambda^{0,1}))$. We want to apply Lemma
\ref{lm:model} to the nonlinear operator $F_{y,\zeta}$ and hope to
get a Kuranishi model centered at $\uapp$.

We have
$$
F_{y,\zeta}(0)=WI_{y,\zeta}(\uapp),\;DF_{y,\zeta}(0)=D_\uapp((WI)_{y,\zeta})=D_{y,\zeta}(WI).
$$

Let
$G_{y,\zeta}(\bphi):=F_{y,\zeta}(\bphi)-F_{y,\zeta}(0)-DF_{y,\zeta}(0)\bphi$.

Define the projection maps $P_{E_{y,\zeta}}:
L^p(\cC_{y,\zeta})\rTo E'_{y,\zeta}$ and $P_{V_{y,\zeta}}:
L^p_1(\cC_{y,\zeta}) \rTo V'_{y,\zeta}$. We need two lemmas
when applying Lemma \ref{lm:model}.

\begin{lm}\label{lm-kuramodel-1} For sufficiently large $\zeta$ and sufficiently small
$y$, we have
\begin{equation}
|| P_{E_{y,\zeta}}\circ F_{y,\zeta}(0)||_{L^p}\le
C(|y|+\frac{1}{T_z}+e^{-\delta_0 T_z})||\bu||_{L^p_1}.
\end{equation}
\end{lm}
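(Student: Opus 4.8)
The plan is to estimate $\|P_{E_{y,\zeta}}\circ F_{y,\zeta}(0)\|_{L^p}$ directly from the definition $F_{y,\zeta}(0)=WI_{y,\zeta}(\uapp)$, where $\uapp=Glue_{y,\zeta}(\bu_\sigma)$ and $\bu_\sigma$ is an honest solution of the perturbed Witten equation on $\frkc_\sigma$. The key observation is that $\uapp$ already solves the equation exactly \emph{away} from the gluing neck: on $\cC_{y,\zeta}\setminus N_z(\delta,T_z)$ we have $\papp\equiv\bphi$ (i.e., $\uapp$ coincides with the deformed copy of $\bu_\sigma$), so the only contributions to $WI_{y,\zeta}(\uapp)$ come from (i) the discrepancy between the deformed operator $WI_{y,\zeta}$ and $\theta_{y,\zeta}\circ WI_\sigma$ on the region $K_{obst}\cup(\text{complex-structure deformation region})$, which is $O(|y|)$ since the $W$-structure, metric, and isomorphism $\tilde I_1$ all depend smoothly on $y$ and reduce to the original ones at $y=0$; and (ii) the neck region $N_z(\delta,T_z)$, where $\uapp$ is a genuine cut-off interpolation between $\bu_\nu$ and $\bu_\mu$.

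First I would split $\|WI_{y,\zeta}(\uapp)\|_{L^p(\cC_{y,\zeta})}$ into the integral over $\cC_{y,\zeta}\setminus N_z(\delta,T_z)$ and over $N_z(\delta,T_z)$. On the complement, since $WI_\sigma(\bu_\sigma)=0$ identically, we get $\|WI_{y,\zeta}(\uapp)-\theta_{y,\zeta}WI_\sigma(\bu_\sigma)\|= \|WI_{y,\zeta}(\uapp)\|\le C|y|\,\|\bu_\sigma\|_{L^p_1}$ by the same smooth-dependence-on-$y$ argument used in Lemma \ref{lm-localchart1} (plus the $\tfrac1{T_z}$ term coming from the transition/identification region). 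On the neck, write $\papp=\bu_\nu+(1-\beta_\delta)(\bu_\mu-\bvkappa)$ in the $\nu$-coordinate; then $WI_{y,\zeta}(\papp)$ is a sum of: the error $\bpat(1-\beta_\delta)\cdot(\bu_\mu-\bvkappa)$, whose $L^p$-norm is controlled by $\tfrac{C}{T_z}\|\bu_\mu-\bvkappa\|_{L^p(N_z)}$ because $|\nabla\beta_\delta|\le C/(\delta T_z)$; and the nonlinear cross-terms in $\tfrac{\pat W}{\pat u_i}$ evaluated at the perturbed argument versus at $\bu_\nu$ alone, which by the mean value inequality are bounded by the sup of $|\bu_\mu-\bvkappa|$ on the neck times $\|\cdot\|_{L^p}$. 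By the exponential decay from Theorem \ref{thm-solu-noda} (asymptotic behavior item (3)), both $|\bu_\nu-\bvkappa|$ and $|\bu_\mu-\bvkappa|$ are $\le Ce^{-\delta_0 s}$ on the cylindrical ends, so on $N_z(\delta,T_z)$ (where $s\ge(1-\delta)T_z$) these quantities are $\le Ce^{-\delta_0(1-\delta)T_z}$; absorbing the constant $(1-\delta)$ into a relabelled $\delta_0$ gives the $e^{-\delta_0 T_z}$ term. Finally, I would apply $P_{E_{y,\zeta}}$, which is bounded (its norm is uniformly controlled since $E_{y,\zeta}=\theta_{y,\zeta}(E_\sigma)$ is a fixed finite-dimensional space transported by an isometry-like identification), and note $\|\bu_\sigma\|_{L^p_1}$ is a fixed constant, to conclude
$$
\|P_{E_{y,\zeta}}\circ F_{y,\zeta}(0)\|_{L^p}\le C\Bigl(|y|+\tfrac{1}{T_z}+e^{-\delta_0 T_z}\Bigr)\|\bu_\sigma\|_{L^p_1},
$$
which, after renaming $\bu_\sigma$ as $\bu$, is exactly the claimed inequality.

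The routine parts are the three error estimates just described; the main obstacle — really the only point requiring care — is verifying that the perturbation term $\varpi(\zeta)\beta_j W_{0,\gamma}$ hidden inside $WI_{y,\zeta}$ does not spoil the estimate. Here one uses that the support of $d\beta_j$ and $d\varpi$ lies in a fixed compact region $K_{obst}$ disjoint from the gluing domain once $T_z$ is large (as arranged in the construction), so $\varpi\equiv1$ on the neck and the perturbation contributes to $WI_{y,\zeta}(\uapp)$ only where $\uapp$ equals the exact solution $\bu_\sigma$ up to the $O(|y|)$ deformation, hence contributes only to the $|y|$-term. One also should check that at a \emph{Ramond} nodal point the limiting value $\bvkappa$ is genuinely a common critical point for both components, which is built into the definition of a solution of \eqref{witt-equa-noda} (condition (4)) and of the approximating section $\papp$; this is what makes the interpolation $\papp$ decay toward a \emph{constant} on the neck, so that $\bpat\bvkappa=0$ and only $\bu_\mu-\bvkappa$ (exponentially small) appears in the cut-off error.
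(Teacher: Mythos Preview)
Your proposal is correct and follows essentially the same approach as the paper: bound $\|P_{E_{y,\zeta}}F_{y,\zeta}(0)\|$ by $\|WI_{y,\zeta}(\uapp)-WI_\sigma(\bu)\|$ (the subtracted term being zero), then split into neck and complement and use the smooth $y$-dependence off the neck together with the exponential decay of $\bu_\sigma-\bvkappa$ on the neck from Theorem~\ref{thm-solu-noda}. The paper's proof is much terser---it writes the difference $\bpat_y\uapp+\overline{B(\uapp,y)}-\bpat_\sigma\bu-\overline{B(\bu,\sigma)}$ and simply cites the decay property---so your explicit treatment of the cut-off term, the nonlinear cross-terms, and the role of the perturbation support is a faithful elaboration of what the paper leaves implicit.
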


\begin{proof} We have
\begin{align*}
&|| P_{E_{y,\zeta}}\circ F_{y,\zeta}(0)||_{L^p}\le
||WI_{y,\zeta}(\uapp)-WI_\sigma(\bu)||_{L^p}\\
&=||\bpat_y \uapp+\overline{B(\uapp,y)}-\bpat_\sigma
\bu-\overline{B(\bu,\sigma)}||,
\end{align*}
where $B(\bu, \sigma)=(\tilde{I}_1(\overline{\frac{\pat
W+W_\beta}{\pat u_1}}),\dots, \tilde{I}_1(\overline{\frac{\pat
W+W_\beta}{\pat u_N}}))$ is a $t$-dimensional vector.

Then using the decay property of $\bu$ on $[T_z,\infty)\times
S^1$, we obtain
$$
|| P_{E_{y,\zeta}}\circ F_{y,\zeta}(0)||_{L^p}\le
C(|y|+\frac{1}{T_z}+e^{-\delta_0 T_z})||\bu||_{L^p_1}.
$$
\end{proof}

\begin{lm}\label{lm-kuramodel-2}For sufficiently large $\zeta$ and sufficiently small
$y$, we have
\begin{equation}
||P_{E_{y,\zeta}}\circ G_{y,\zeta}(\bphi_1)-P_{E_{y,\zeta}}\circ
G_{y,\zeta}(\bphi_2)||_{L^p}\le
\tilde{C}_2(||\bphi_1||_{L^p_1}+||\bphi_2||_{L^p_1})||\bphi_1-\bphi_2||_{L^p_1},
\end{equation}
where $\tilde{C}_2$ depends only on $\bu$.
\end{lm}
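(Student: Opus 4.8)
The plan is to show that the quadratic remainder $G_{y,\zeta}(\bphi) = F_{y,\zeta}(\bphi)-F_{y,\zeta}(0)-DF_{y,\zeta}(0)\bphi$ is Lipschitz with the stated quadratic modulus of continuity, uniformly in the gluing/deformation parameters. First I would write out $G_{y,\zeta}$ explicitly. Since $WI_{y,\zeta}(\uapp+\bphi)$ has the form $\bpat_y(\uapp+\bphi) + \tilde{I}_1\big(\overline{\partial(W+W_{0,\beta})/\partial u_i}(\uapp+\bphi)\big)$, the $\bpat_y$ term is linear and cancels entirely in $G_{y,\zeta}$; only the nonlinear term $\tilde{I}_1\big(\overline{\partial(W+W_{0,\beta})/\partial u_i}\big)$ survives. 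Writing $B_i(\bu) = \tilde{I}_1\big(\overline{\partial(W+W_{0,\beta})/\partial u_i}\big)$, we have
\begin{equation}
G_{y,\zeta}(\bphi)_i = B_i(\uapp+\bphi) - B_i(\uapp) - \sum_j \frac{\partial B_i}{\partial u_j}(\uapp)\,\bphi_j,
\end{equation}
which by Taylor's theorem with integral remainder equals $\sum_{j,k}\bphi_j\bphi_k \int_0^1 (1-t)\,\partial^2_{jk} B_i(\uapp+t\bphi)\,dt$ (plus the conjugate-linear bookkeeping coming from $\tilde I_1$, which is metric-preserving and hence harmless). So
\begin{equation}
G_{y,\zeta}(\bphi_1) - G_{y,\zeta}(\bphi_2) = \int_0^1 \big(D^2 B(\uapp + t\bphi_1)[\bphi_1,\bphi_1] - D^2 B(\uapp+t\bphi_2)[\bphi_2,\bphi_2]\big)(1-t)\,dt,
\end{equation}
and the integrand is bounded pointwise by $C\big(|\bphi_1|+|\bphi_2|\big)|\bphi_1-\bphi_2|$ times $\sup|D^2 B|$ plus $C\,|\bphi_2|^2\sup|D^3 B|\,|\bphi_1-\bphi_2|$, using that $W$ is a polynomial so $B$ is smooth with derivatives of all orders; since $\bphi_1,\bphi_2$ range in a fixed bounded neighborhood, $\sup|D^2 B|$ and $\sup|D^3 B|$ along the segment are controlled by a constant depending only on $\bu$ (through $\|\uapp\|_{C^0}$, which by Lemma \ref{lm-appro-1} and the $C^0$-boundedness of $\bu_\sigma$ from Theorem \ref{thm-solu-noda} is uniform in $y,\zeta$).

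The second ingredient is the Sobolev multiplication estimate: for $p>2$, $L^p_1(\cC_{y,\zeta})$ is a Banach algebra and embeds in $C^0$, so $\|fg\|_{L^p}\le \|fg\|_{L^p_1}\le C\|f\|_{L^p_1}\|g\|_{L^p_1}$, and more precisely $\|fg\|_{L^p}\le C\|f\|_{C^0}\|g\|_{L^p}\le C\|f\|_{L^p_1}\|g\|_{L^p_1}$. Applying this with the pointwise bound above, and noting $P_{E_{y,\zeta}}$ has operator norm $1$ (it is a projection onto a finite-dimensional subspace spanned by fixed smooth sections, and one checks the complementary splitting is uniformly bounded), one gets
\begin{equation}
\|P_{E_{y,\zeta}} G_{y,\zeta}(\bphi_1) - P_{E_{y,\zeta}} G_{y,\zeta}(\bphi_2)\|_{L^p} \le \tilde C_2\big(\|\bphi_1\|_{L^p_1}+\|\bphi_2\|_{L^p_1}\big)\|\bphi_1-\bphi_2\|_{L^p_1}.
\end{equation}
The one delicate point is that the Sobolev embedding constant and the multiplication constant for $L^p_1(\cC_{y,\zeta})$ must be uniform as $\zeta\to\infty$ (the curve develops long necks). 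This is standard for cylindrical metrics — the relevant constants depend only on the geometry on a fixed-size ball and on the cylindrical structure, both of which are uniform — but it is where I would be most careful; I would invoke the uniform elliptic estimates already used in the construction of $Glue_{y,\zeta}$ and in Lemma \ref{lm-appro-1}, where exactly this kind of uniformity was tacitly established.

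The main obstacle, then, is not the algebra (routine Taylor expansion plus Banach-algebra estimates) but verifying that every constant — the $C^0$ bound on $\uapp$, the Sobolev/multiplication constants on the degenerating family $\cC_{y,\zeta}$, and the norm of $P_{E_{y,\zeta}}$ — is genuinely independent of $y$ and $\zeta$ in the allowed range (large $\zeta$, small $y$). Given the earlier results, this is bookkeeping rather than new analysis: $\|\uapp\|_{C^0}$ is uniform by Lemma \ref{lm-appro-1} together with the asymptotic decay in Theorem \ref{thm-solu-noda}; the Sobolev constants are uniform because the metric is cylindrical with fixed profile near the nodes; and the uniform boundedness of the projections follows from the uniform lower bound on $D_{y,\zeta}(WI)$ restricted to $V'_{y,\zeta}$, i.e. from the uniform upper bound on $Q_{y,\zeta}$ established in Lemma \ref{lm-kuramodel-0}. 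Assembling these, Lemma \ref{lm:model} then applies with $F = F_{y,\zeta}$, which is the purpose of this and the preceding lemma.
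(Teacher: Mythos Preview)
Your proposal is correct and follows essentially the same approach as the paper, which simply states that this is ``a direct computation for which, among other things, the Sobolev embedding theorem and the interpolation formula of $L^p$ spaces are used.'' You have supplied the details the paper omits --- the Taylor expansion of the polynomial nonlinearity, the Sobolev multiplication/embedding estimate, and the careful tracking of uniformity in $(y,\zeta)$ --- and your use of the Banach-algebra property of $L^p_1$ for $p>2$ is a clean substitute for (and closely related to) the $L^p$ interpolation the paper invokes.
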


\begin{proof} This is a direct computation for which, among other things, the Sobolev
embedding theorem and the interpolation formula of $L^p$ spaces
are used.
\end{proof}

Now the hypothesis of Lemma \ref{lm:model} is satisfied by Lemma
\ref{lm-kuramodel-0}, \ref{lm-kuramodel-1}, \ref{lm-kuramodel-2}.
Therefore we have the following lemma.

\begin{lm} Take $y$ small enough and $\zeta$ large enough. Let
$\tilde{C}_1, \tilde{C}_2$ be the constants from Lemma
\ref{lm-kuramodel-0} and Lemma \ref{lm-kuramodel-2} respectively.
Choose $0<r<\frac{1}{8\tilde{C}_1\tilde{C}_2}$ and let
$U_{y,\zeta}(r)$ be a ball centered at the origin with radius $r$
in $L^p_1(\cC_{y,\zeta})$. Then for any $\bphi\in V_{y,\zeta}\cap
U_{y,\zeta}(r)$, there is a unique $v'(\bphi)\in V'_{y,\zeta}\cap
U_{y,\zeta}(r)$ such that $F_{y,\zeta}(\bphi+v'(\bphi))\in
E_{y,\zeta}$. On the other hand, for any $\tilde{\bphi}\in
U_{y,\zeta}(r/||I-P_{V_{y,\zeta}}||)$ such that
$F_{y,\zeta}(\tilde{\bphi})\in E_{y,\zeta}$, there is a unique
$\bphi\in V_{y,\zeta}$ such that $\tilde{\bphi}=\bphi+v'(\bphi)$.
Define $\Psi_{y,\zeta}: \bphi\to \bphi+v'(\bphi)$, and let
$s_{y,\zeta}: V_{y,\zeta}\cap
\Psi^{-1}(U_{y,\zeta}(r/||I-P_{V_{y,\zeta}}||))\to E_{y,\zeta}$ be
defined by $s_{y,\zeta}(\bphi):=F(\bphi+v'(\bphi))$. Then
$(U_{y,\zeta}(r), E_{y,\zeta}, s_{y,\zeta}, \Psi_{y,\zeta})$ forms
a Kuranishi model, where $s_{y,\zeta},\Psi_{y,\zeta}$ are
continuous and $\Psi_{y,\zeta}$ is a one to one map.
\end{lm}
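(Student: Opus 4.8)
The plan is to apply the abstract implicit function theorem, Lemma~\ref{lm:model}, directly to the nonlinear map $F_{y,\zeta}:L^p_1(\cC_{y,\zeta},\times_i\LL_i)\to L^p(\cC_{y,\zeta},\times_i(\LL_i\otimes\Lambda^{0,1}))$ defined by $F_{y,\zeta}(\bphi)=WI_{y,\zeta}(\uapp+\bphi)$. For this we must produce the data $(X,Y,E,V,\dots)$ required by Lemma~\ref{lm:model} and verify its three quantitative hypotheses, with constants uniform in the gluing/deformation parameters. First I would set $X=L^p_1(\cC_{y,\zeta})$, $Y=L^p(\cC_{y,\zeta},\times_i(\LL_i\otimes\Lambda^{0,1}))$, $E=E_{y,\zeta}=\theta_{y,\zeta}(E_\sigma)$, and identify $V=V_{y,\zeta}=(D_{y,\zeta}(WI))^{-1}(E_{y,\zeta})$, with the splittings $X=V_{y,\zeta}\oplus V'_{y,\zeta}$ and $Y=E_{y,\zeta}\oplus E'_{y,\zeta}$ already fixed above. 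The operator $DF_{y,\zeta}(0)=D_{y,\zeta}(WI)$ restricted to $V'_{y,\zeta}\to E'_{y,\zeta}$ is invertible with inverse $Q_{y,\zeta}$, whose norm is bounded by the fixed constant $\tilde C_1$ by Lemma~\ref{lm-kuramodel-0}; this gives the control on $\|Q\|$ demanded by Lemma~\ref{lm:model}.

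Next I would check the two remaining hypotheses of Lemma~\ref{lm:model}. The quadratic estimate on $G_{y,\zeta}=F_{y,\zeta}(\cdot)-F_{y,\zeta}(0)-DF_{y,\zeta}(0)(\cdot)$, namely $\|P_{E_{y,\zeta}}G_{y,\zeta}(\bphi_1)-P_{E_{y,\zeta}}G_{y,\zeta}(\bphi_2)\|\le \tilde C_2(\|\bphi_1\|+\|\bphi_2\|)\|\bphi_1-\bphi_2\|$, is exactly Lemma~\ref{lm-kuramodel-2}, with $\tilde C_2$ depending only on $\bu_\sigma$. The smallness of $\|P_{E_{y,\zeta}}F_{y,\zeta}(0)\|$ is Lemma~\ref{lm-kuramodel-1}, which bounds it by $C(|y|+\tfrac1{T_z}+e^{-\delta_0 T_z})\|\bu\|_{L^p_1}$; since this quantity tends to zero as $y\to 0$ and $\zeta\to\infty$, we may fix $\epsilon<\tfrac1{2\|Q\|}=\tfrac1{2\tilde C_1}$ and then choose $y$ small enough and $\zeta$ large enough that $\|P_{E_{y,\zeta}}F_{y,\zeta}(0)\|\le\epsilon r$ for the chosen radius. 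Choosing $0<r<\tfrac1{8\tilde C_1\tilde C_2}$ then satisfies $r<\tfrac1{8C\|Q\|}$ with $C=\tilde C_2$, so all three hypotheses of Lemma~\ref{lm:model} hold simultaneously. Lemma~\ref{lm:model} then outputs, for each $\bphi\in V_{y,\zeta}\cap U_{y,\zeta}(r)$, a unique $v'(\bphi)\in V'_{y,\zeta}\cap U_{y,\zeta}(r)$ with $F_{y,\zeta}(\bphi+v'(\bphi))\in E_{y,\zeta}$, the converse uniqueness statement on $U_{y,\zeta}(r/\|I-P_{V_{y,\zeta}}\|)$, the continuity and injectivity of $\Psi_{y,\zeta}:\bphi\mapsto \bphi+v'(\bphi)$, and the continuous Kuranishi map $s_{y,\zeta}(\bphi)=F_{y,\zeta}(\bphi+v'(\bphi))$ whose zero set is homeomorphic to the local zero set of $WI_{y,\zeta}$ near $\uapp$. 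This is precisely the asserted conclusion.

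The one point requiring a little care—and what I expect to be the main obstacle—is the uniformity of the constants $\tilde C_1,\tilde C_2$ and of the smallness bound across the whole parameter region, so that a single radius $r$ works for all $(y,\zeta)$ in a neighborhood of $(0,\infty)$; this is what allows the local Kuranishi models to later be patched. Fortunately Lemmas~\ref{lm-kuramodel-0}, \ref{lm-kuramodel-1}, \ref{lm-kuramodel-2} were already stated with this uniformity built in (the constants depend only on $\bu_\sigma$ and $\delta$, and the error terms are explicit in $|y|,\tfrac1{T_z},e^{-\delta_0 T_z}$), so the proof reduces to assembling them and invoking Lemma~\ref{lm:model}. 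I would also remark that all the sections involved are in fact smooth, by Theorem~\ref{thm-solu-noda} and elliptic regularity, so the $L^p_1$ Kuranishi model and the $C^\infty$ description of the moduli space agree; and that the construction is $\aut(\sigma)$-equivariant because $E_\sigma$ was chosen $\aut(\sigma)$-invariant and $Glue_{y,\zeta}$, $\theta_{y,\zeta}$, $Q_\sigma$ intertwine the group actions, which is needed for this to descend to a genuine Kuranishi neighborhood of $[\frkc_\sigma,\bu_\sigma]$ in the next subsection.
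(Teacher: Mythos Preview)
Your proposal is correct and matches the paper's approach exactly: the paper states immediately before the lemma that ``the hypothesis of Lemma~\ref{lm:model} is satisfied by Lemma~\ref{lm-kuramodel-0}, \ref{lm-kuramodel-1}, \ref{lm-kuramodel-2}'' and then simply records the conclusion, so the proof is indeed nothing more than assembling those three estimates and invoking the abstract Kuranishi model Lemma~\ref{lm:model}. Your additional remarks on uniformity and equivariance are correct and anticipate what the paper uses in the subsequent construction of the Kuranishi neighborhood (Theorem~\ref{thm:knbhd}), but they are not needed for the bare statement of this lemma.
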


\subsubsection*{Kuranishi neighborhood at $(\frkc_\sigma, \bu_\sigma)$}

Now we know that starting from the point
$\sigma=(\frkc_\sigma,\bu_\sigma)$ in the interior of
$\MMrs_{g,k}(\bgamma,\bvkappa)$ we can construct the Kuranishi model
$(U_{y,\zeta}(r), E_{y,\zeta}, s_{y,\zeta}, \Psi_{y,\zeta})$ on any
nearby curve $\frkc_{y,\zeta}$. To construct the Kuranishi
neighborhood, we only need to construct a family of isomorphisms
$\eta_{y,\zeta}: \Vm \rTo V_{y,\zeta}$ with uniformly bounded
norms. For $\bphi\in \Vm$, we define
$$
\eta_{y,\zeta}(\bphi)=Glue_{y,\zeta}(\bphi)-Q_{y,\zeta}\circ
D_{y,\zeta}(WI)\circ Glue_{y,\zeta}(\bphi).
$$
\begin{lm} For sufficiently large $\zeta$ and sufficiently small
$y$, $\eta_{y,\zeta}$ is an isomorphism.
\end{lm}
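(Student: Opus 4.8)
The plan is to realize $\eta_{y,\zeta}$ as a small perturbation of the almost-isometric gluing map $Glue_{y,\zeta}|_{\Vm}$, deduce from this that it is injective with uniform two-sided bounds, and then match dimensions via the index theorem to conclude that it is a linear isomorphism. First I would check that $\eta_{y,\zeta}$ really maps $\Vm$ into $V_{y,\zeta}=(D_{y,\zeta}(WI))^{-1}(E_{y,\zeta})$: writing $w:=Glue_{y,\zeta}(\bphi)$ and using the convention $Q_{y,\zeta}=Q_{y,\zeta}\circ P_{E_{y,\zeta}}$ together with $D_{y,\zeta}(WI)\circ Q_{y,\zeta}=\mathrm{id}$ on $E'_{y,\zeta}$ (valid because $D_{y,\zeta}(WI):V'_{y,\zeta}\to E'_{y,\zeta}$ is invertible, as established above through $Q'_{app,y,\zeta}$), one computes
\[
D_{y,\zeta}(WI)\big(\eta_{y,\zeta}(\bphi)\big)=\big(I-P_{E_{y,\zeta}}\big)\,D_{y,\zeta}(WI)\,w\in E_{y,\zeta},
\]
so $\eta_{y,\zeta}(\Vm)\subset V_{y,\zeta}$ as needed.

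The quantitative heart is the estimate $\|\eta_{y,\zeta}(\bphi)-Glue_{y,\zeta}(\bphi)\|_{L^p_1}=o(1)\,\|\bphi\|_{L^p_1}$ as $y\to 0$ and $\zeta\to\infty$. To obtain it I would note that for $\bphi\in\Vm$ we have $D_\sigma(WI)(\bphi)\in E_\sigma$, hence $\theta_{y,\zeta}D_\sigma(WI)(\bphi)\in E_{y,\zeta}$; subtracting this off in Lemma~\ref{lm-localchart1} gives
\[
\big\|P_{E_{y,\zeta}}\,D_{y,\zeta}(WI)\,Glue_{y,\zeta}(\bphi)\big\|_{L^p}\le C\big(|y|+\tfrac{1}{T_z}+e^{-\delta_0 T_z}\big)\|\bphi\|_{L^p_1}.
\]
Applying $Q_{y,\zeta}$ and the uniform bound $\|Q_{y,\zeta}\|\le\tilde C_1$ of Lemma~\ref{lm-kuramodel-0} yields the claimed estimate. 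Combined with the almost-isometry inequality of Lemma~\ref{lm-appro-1}, the triangle inequality produces, for $y$ small and $\zeta$ large,
\[
(1-\varepsilon')\|\bphi\|_{L^p_1}\le\|\eta_{y,\zeta}(\bphi)\|_{L^p_1}\le(1+\varepsilon')\|\bphi\|_{L^p_1},\qquad \varepsilon'\to 0.
\]
The lower bound forces $\eta_{y,\zeta}$ to be injective and bounds $\|\eta_{y,\zeta}^{-1}\|$ by $(1-\varepsilon')^{-1}$, while the upper bound bounds $\|\eta_{y,\zeta}\|$, both uniformly in $(y,\zeta)$ — which is exactly the uniform control needed for the subsequent Kuranishi-neighborhood construction.

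To upgrade injectivity to bijectivity I would count dimensions. Since $E_{y,\zeta}=\theta_{y,\zeta}(E_\sigma)$ we have $\dim E_{y,\zeta}=\dim E_\sigma$, and by the Kuranishi-model setup $\dim\Vm-\dim E_\sigma=\mathrm{ind}\,D_\sigma(WI)$ and $\dim V_{y,\zeta}-\dim E_{y,\zeta}=\mathrm{ind}\,D_{y,\zeta}(WI)$. By Theorem~\ref{ind-witt-oper} the Fredholm index of the linearized perturbed Witten operator depends only on the decorations, and the standard index-gluing formula (Lemma~\ref{ind-prooflm-1}, together with the identity $\iota(\gamma)+\iota(\gamma^{-1})+N_\gamma=\hat c_W$ used there to pass between a curve with a Ramond node and its smoothing) gives $\mathrm{ind}\,D_{y,\zeta}(WI)=\mathrm{ind}\,D_\sigma(WI)$. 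Hence $\dim V_{y,\zeta}=\dim\Vm$, and the injective linear map $\eta_{y,\zeta}$ between finite-dimensional spaces of equal dimension is an isomorphism.

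The main obstacle I expect is precisely this dimension match at the Ramond node: one must verify that the $N_\gamma$-dimensional matching condition imposed on the Ramond components of a $W$-section at the node is exactly compensated when the two half-edge index contributions are summed upon smoothing, so that $\dim V_{y,\zeta}$ does not jump. A secondary technical point, needed to make the estimates above uniform, is to arrange the complements $E'_{y,\zeta}$ and $V'_{y,\zeta}$ compatibly with the gluing — e.g.\ as the images of fixed complements on $\cC_\sigma$ under $\theta_{y,\zeta}$ and $Glue_{y,\zeta}$, corrected by $Q_{y,\zeta}$ — so that $\|P_{E_{y,\zeta}}\|$ and $\|I-P_{V_{y,\zeta}}\|$ remain bounded as $\zeta\to\infty$.
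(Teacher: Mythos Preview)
Your argument is correct and follows essentially the same route as the paper: the paper also obtains the two-sided bound on $\|\eta_{y,\zeta}(\bphi)\|_{L^p_1}$ by writing $Q_{y,\zeta}\circ D_{y,\zeta}(WI)\circ Glue_{y,\zeta}(\bphi)=Q_{y,\zeta}\circ\bigl(D_{y,\zeta}(WI)\circ Glue_{y,\zeta}(\bphi)-\theta_{y,\zeta}\circ D_\sigma(WI)(\bphi)\bigr)$ and invoking Lemmas~\ref{lm-appro-1}, \ref{lm-localchart1}, and \ref{lm-kuramodel-0}. Your explicit verification that $\eta_{y,\zeta}(\Vm)\subset V_{y,\zeta}$ and the dimension count $\dim V_{y,\zeta}=\dim\Vm$ via Theorem~\ref{ind-witt-oper} fill in steps the paper leaves implicit when it passes directly from the two-sided estimate to ``is an isomorphism.''
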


\begin{proof} By Lemmas \ref{lm-appro-1}, \ref{lm-kuramodel-2},
there exists a uniform constant $C$ independent of $y,\zeta$ such
that
$$
||\eta_{y,\zeta}(\bphi)||_{L^p_1}\le C||\phi||.
$$
On the other hand, we have
\begin{align*}
||\eta_{y,\zeta}(\bphi||_{L^p_1}\ge&
||Glue_{y,\zeta}(\bphi)||_{L^p_1}-||Q_{y,\zeta}\circ(
D_{y,\zeta}(WI)\circ Glue_{y,\zeta}(\bphi)-\theta_{y,\zeta}\circ
D_\sigma(WI)(\bphi))||\\
&\ge (1-\varepsilon)||(\bphi)||_{L^p_1}-C\varepsilon ||
\bphi||_{L^p_1},
\end{align*}
where we have used Lemma \ref{lm-localchart1} and
\ref{lm-kuramodel-0}. Hence if the parameters $\zeta,y$ satisfy
our requirement, then $\eta_{y,\zeta}$ is an isomorphism.
\end{proof}

For convenience, we identify the gluing parameter space $\Vr$ with
a small neighborhood of $\prod_{\text{$z$ is nodal point}}\C_z$ by
the map $e^{-z}$.

Before formulating the main result, we have to consider the action
of automorphisms of curves. Because of the existence of unstable
soliton components, the automorphism group is of positive
dimension. It is complex $1$-dimensional for each unstable
component. For example, if $(\R\times S^1,\bu_{j_1,j_2})$ is a
soliton component (non-BPS soliton by assumption), then the field
$\lambda_s\frac{\pat}{\pat s}+\lambda_\theta\frac{\pat}{\pat
\theta}$ generates the automorphism group, the transition group.
%following sentence is odd.
Also, $d\bu(\frac{\pat}{\pat s}), d\bu(\frac{\pat}{\pat \theta})$
generates a complex one-dimensional subspace in $\Vm$.

To eliminate the action of the transition group in the unstable
component, we use a normalization technique used in \cite{FO}.

We add one narrow marked point $z_\nu$ with the trivial
orbifold structure (i.e., the group action is given by $\exp 2\pi
i$) in the unstable component $(\R\times S^1)_\nu$ such that if
there exists a map in $\aut(\sigma)$ that maps a unstable
component $\cC_{\nu_1}$ to $\cC_{\nu_2}$, then this map will send
the extra marked point $z_{\nu_1}$ to $z_{\nu_2}$. Let
$\z'_0=\{z'_1,\dots, z'_l\}$ be the set of extra marked points on
$\cC_\sigma$ chosen in this way. We can also assume that these
marked points are chosen such that $\bu_\sigma$ is an immersion
near these points.

For each new marked point $z'_j$, take an $(2N-2)$-dimensional disk
$\D_{z'_j}$ in $\C^N$ which is transversal to $\im (\bu)$ at
$\bu(z'_j)$. We assume that $\D_{g\cdot z'_j}=\D_{z'_j}$ when
$z'_j$ and $g\cdot z'_j$ are marked points when $g\in
\aut(\sigma)$.

Define the parameter space:
\begin{equation*}
V^+_\sigma=\Vd\times \Vr\times \Vm.\\
\end{equation*}

\begin{thm}\label{thm:knbhd} Let $(\frkc_\sigma, \bu_\sigma)$ be a non-BPS soliton $W$-section in
$\MMrs_{g,k}(\bgamma,\bvkappa)$. Let the set $\Vd\times \Vr$ be
small enough such that for any $(y,\zeta)\in \Vd\times
\Vr$($(0,0)\equiv \sigma$) the operators
$\theta_{y,\zeta},\eta_{y,\zeta},s_{y,\zeta},\Psi_{y,\zeta}$ are
well defined on $\frkc_{y,\zeta}$. Define the set
$$
Z_r=\cup_{(y,\zeta)\in \Vd\times \Vr}\{\bphi|F_{y,\zeta}(\bphi)\in
E_{y,\zeta}, ||\bphi||_{L^p_1(\cC_{y,\zeta}, \times_i \LL_i)}\le r
\}.
$$
Define the map $\Psi_\sigma: V^+_\sigma\rTo Z_r$ by
$$
\Psi_\sigma(y,\zeta,\bphi):=\Psi_{y,\zeta}(\eta_{y,\zeta}(\bphi)),
$$
and the map $\tilde{s}_\sigma: V^+_\sigma\rTo E_\sigma$ by
$$
\tilde{s}_\sigma(y,\zeta,\bphi):=\theta^{-1}_{y,\zeta}\circ
s_{y,\zeta}\circ \eta_{y,\zeta}(\bphi).
$$
Then for sufficiently small $r>0$, we can obtain a
$\aut(\sigma)$-invariant open neighborhood $U^+_\sigma$ of $0$
which is contained in $V^+_\sigma$ such that when restricting to
the domain $U^+_\sigma$ the following conclusions hold:
\begin{enumerate}
\item $\Psi_\sigma$ is an $\aut(\sigma)$-equivariant continuous
map, which is one to one and onto its image, and for fixed
$(y,\zeta)$, $\Psi(y,\zeta,\cdot)$ is a homeomorphism.

\item The map $s_\sigma: U^+_\sigma/\aut{\sigma}\rTo
E_\sigma/\aut(\sigma)$ defined by its lifting map
$\tilde{s}_\sigma$ is continuous.

\item Define a closed set in $V^+_\sigma$:
\begin{align*}
&\Vmt:=\{(y,\zeta,\bphi)\in V^+_\sigma| (y,\zeta)\in \Vd\times
\Vr,
\bphi\in \Vm, \\
&\Psi_\sigma(y,\zeta,\phi)(z'_j) \;\text{is tangential to
}\;\D_{z'_j},\forall j=1,\dots, l\}.
\end{align*}
Then there is an $\aut(\sigma)$ action on $\Vmt$, and if we let
$U_\sigma=U^+_\sigma\cap \Vmt$, $\Psi_\sigma$ induces a
homeomorphism between $s_\sigma^{-1}(0)\subset
U_\sigma/\aut{\sigma}$ and a neighborhood of $\sigma$ in a branch
containing $\sigma$ in $\MMrs_{k,g,W}(\bgamma,\bvkappa)$.
\end{enumerate}
The data $(U_\sigma, E_\sigma, s_\sigma, \Psi_\sigma)$ forms a
Kuranishi neighborhood of $(\frkc_\sigma,\bu_\sigma)$ in
$\MMrs_{g,k}(\bgamma,\bvkappa)$ of real dimension
$6g-6+2k-2D-\sum_{i=1}^k N_{\gamma_i}$, where
$D=\hat{c}_W(g-1)+\sum_\tau \iota(\gamma_\tau)$.
\end{thm}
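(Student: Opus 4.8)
The plan is to assemble the Kuranishi neighborhood from the local Kuranishi models $(U_{y,\zeta}(r), E_{y,\zeta}, s_{y,\zeta}, \Psi_{y,\zeta})$ constructed on the nearby curves $\frkc_{y,\zeta}$, glue them together over the deformation-resolution parameter space $\Vd\times\Vr$, and then cut down by the extra transversal slices $\D_{z'_j}$ to eliminate the noncompact automorphism directions coming from the unstable soliton components. First I would fix, as in the statement, the obstruction space $E_\sigma\subset L^p(\cC_\sigma,\times_i(\LL_i\otimes\Lambda^{0,1}))$ with the four listed properties, and verify that everything transports: since the support $K_{obst}$ of $E_\sigma$ is disjoint from the gluing domain once $\zeta$ is large, the deformation isomorphism $\theta_{y,\zeta}$ carries $E_\sigma$ to $E_{y,\zeta}$, and Lemma \ref{lm-appro-1} gives that $Glue_{y,\zeta}$ is a near-isometry on $\Vm$. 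Combining the uniform right-inverse bound (Lemma \ref{lm-kuramodel-0}), the smallness of $P_{E_{y,\zeta}}F_{y,\zeta}(0)$ (Lemma \ref{lm-kuramodel-1}), and the quadratic estimate on $G_{y,\zeta}$ (Lemma \ref{lm-kuramodel-2}), the abstract Kuranishi model Lemma \ref{lm:model} applies uniformly in $(y,\zeta)$ for $r<\frac{1}{8\tilde C_1\tilde C_2}$, producing $\Psi_{y,\zeta}$ and $s_{y,\zeta}$. The isomorphisms $\eta_{y,\zeta}:\Vm\to V_{y,\zeta}$ then let me parametrize all the local data by the single finite-dimensional space $V^+_\sigma=\Vd\times\Vr\times\Vm$, and I would define $\Psi_\sigma$ and $\tilde s_\sigma$ by the formulas in the statement.

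Next I would check the three enumerated conclusions. For (1), $\aut(\sigma)$-equivariance is inherited from the equivariance of $E_\sigma$, of the gluing construction, and of the perturbed Witten map (Lemma \ref{pert-witt-oper1}); injectivity for fixed $(y,\zeta)$ is the homeomorphism statement in Lemma \ref{lm:model}, and joint continuity/injectivity follows because distinct $(y,\zeta)$ give genuinely distinct underlying curves in $\MMr_{g,k,W}(\bgamma)$ (or distinct resolution parameters), so the images do not collide. For (2), continuity of $s_\sigma$ on the quotient is just continuity of $\tilde s_\sigma$ together with equivariance. For (3), I need to see that imposing the $l$ tangency conditions $\Psi_\sigma(y,\zeta,\bphi)(z'_j)\in\D_{z'_j}$ is a transversal (codimension-$(2N-2)\cdot l$ — but counted correctly, $2$ real conditions per slice after using that $\bu_\sigma$ is an immersion near $z'_j$) cut, so $\Vmt$ is a manifold of the right dimension, and that this slice exactly kills the $\C$-action on each unstable $(\R\times S^1)$-component; this is the standard Fukaya–Ono normalization and I would verify the slice condition is in ``general position'' using that $\bu_\sigma$ is chosen to be an immersion near each $z'_j$. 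The homeomorphism between $s_\sigma^{-1}(0)$ and a neighborhood of $\sigma$ in the branch through $\sigma$ then follows from Lemma \ref{lm:model} (which identifies $s_{y,\zeta}^{-1}(0)$ with the genuine zero set of $WI_{y,\zeta}$) together with the Gromov compactness/convergence description of the topology of $\MMrs_{g,k,W}(\bgamma,\bvkappa)$ from Section \ref{sec:compact}.

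Finally, the dimension count: $\dim U_\sigma-\dim E_\sigma$ must equal $\ind D_\sigma(WI)$ plus the contributions from the deformation/resolution directions, minus the $2l$ dimensions removed by the slices, plus the $2l$ dimensions of automorphism that were added as extra marked points — these last two cancel, so the virtual dimension is just $\ind D_{\frkc_\sigma,\bu_\sigma}WI + 2\cdot(\#\text{nodes}) + 2\cdot(\dim\Vd)$, which reorganizes via the additivity of the index over components and the relation $\iota(\gamma_\tau)+\iota(\gamma_\tau^{-1})+N_{\gamma_\tau}=\hat c_W$ (used already in Theorem \ref{ind-witt-oper}) into $2((\hat c_W-3)(1-g)+k-\sum_\tau\iota(\gamma_\tau))-\sum_i N_{\gamma_i}=6g-6+2k-2D-\sum_i N_{\gamma_i}$ with $D=\hat c_W(g-1)+\sum_\tau\iota(\gamma_\tau)$. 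The main obstacle I expect is conclusion (3): making precise that the added marked points $z'_j$ plus transversal slices $\D_{z'_j}$ genuinely and globally eliminate the positive-dimensional automorphism group on the unstable soliton components while keeping the whole construction $\aut(\sigma)$-equivariant — in particular that one can choose $\z'_0$ and the slices $\aut(\sigma)$-symmetrically and that $\Vmt$ is genuinely a manifold near $0$, cut out transversally. The uniformity of all estimates over $(y,\zeta)$ near $(0,0)$ is the other technical point, but that is already delivered by the preceding lemmas, so it is bookkeeping rather than a genuine difficulty.
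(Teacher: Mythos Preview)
Your setup through conclusions (1) and (2) is essentially the same as the paper's, and the dimension count is fine. The genuine gap is in conclusion (3): you treat the homeomorphism with a neighborhood of $\sigma$ in the moduli space as a near-formal consequence of Lemma \ref{lm:model} plus the Gromov topology, but both injectivity and surjectivity require real arguments that you have not supplied.

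For injectivity, your claim that ``distinct $(y,\zeta)$ give genuinely distinct underlying curves \dots\ so the images do not collide'' is not correct as stated: when $\cC_\sigma$ has unstable soliton components, two parameters $(y,\zeta)$ and $(y',\zeta')$ can give biholomorphic $W$-curves via a map $\gamma$ that is \emph{not} in $\aut(\sigma)$ but differs from an element of $\aut(\sigma)$ by a translation $\gamma_0$ on the unstable components. The paper decomposes $\gamma=\gamma_1\gamma_0$ with $\gamma_1\in\aut(\sigma)$, passes to $\MM_{g,k+l}$ via the extra marked points $\z'_0$, and then uses the tangency condition to $\D_{z'_j}$ (projecting the identity $\bu_\sigma(z'_j)-\bu_\sigma(\gamma_1\gamma_0 z'_j)=\Psi_\sigma(\cdot,\gamma_1\gamma_0\bphi')-\Psi_\sigma(\cdot,\bphi)$ perpendicular to $\D_{z'_j}$) together with the immersion hypothesis to force $\gamma_0=0$. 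Your sketch asserts the slice ``kills the $\C$-action'' but does not carry out this step.

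For surjectivity, the issue is sharper. Given a sequence $(\frkc^n,\bu^n)\to(\frkc_\sigma,\bu_\sigma)$ in the Gromov topology, one writes $\bu^n=\bu_{app,y_n,\zeta_n}+\phi^n$ and must show $\|\phi^n\|_{L^p_1(\cC_{y_n,\zeta_n})}<r$ so that Lemma \ref{lm:model} applies. Away from the neck this follows from $C^\infty$ convergence, but on the neck $N_{n,z}(\hat T)$ it does not: Gromov convergence only gives that the \emph{diameter} of the image shrinks, not an $L^p_1$ bound. The paper closes this by observing that on the neck $\phi^n$ satisfies $\bar\partial\phi^n+\overline{A\cdot\phi^n}=0$ with $A(\uapp+\phi^n)$ uniformly close to the nondegenerate limit matrix $A(\bu_\sigma(\infty))$, hence $\phi^n$ has uniform exponential decay and $\|\phi^n\|_{L^p_1(N_{n,z}(\hat T))}$ is small. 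There is also a reparametrization step on unstable components (replacing $z'_j$ by a nearby $z''_j$ so that the normalization holds). None of this is in your proposal, and without it you have not shown that nearby points of the moduli space actually lie in the image of $\Psi_\sigma$.
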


\begin{rem} The closed set $\Vmt\subset V^+_\sigma$ is actually a fiber
bundle over $\Vd\times \Vr$, and each fiber is homeomorphic to the
fiber at zero.
\end{rem}

\begin{proof} Since in our construction of $\cC_{y,\zeta}$ and
$E_{y,\zeta}$ they can be required to be
$\aut(\sigma)$-invariant, so the operators
$\theta_{y,\zeta},\eta_{y,\zeta},s_{y,\zeta},\Psi_{y,\zeta}$ can
also be required to be $\aut(\sigma)$-equivariant maps. Thus
$\tilde{s}_\sigma,\Psi_\sigma$ are $\aut(\sigma)$-equivariant
continuous maps. Continuity comes from the implicit function
theorem, Lemma \ref{lm:model}. Furthermore, $\Vmt$ is an
$\aut(\sigma)$-invariant closed set. The dimension is given by
Theorem \ref{ind-witt-oper}.

We need only to prove that $\Psi_\sigma$ indeed induces a
homeomorphism between $s^{-1}_\sigma(0)\cap U_\sigma/\aut(\sigma)$
and a neighborhood of $\sigma=(\frkc_\sigma,\bu_\sigma)\in
\MMrs_{g,k}(\bgamma,\bvkappa)$.

First we prove the injectivity of $\Psi_\sigma$ when restricting
to $s^{-1}_\sigma(0)\cap U_\sigma/\aut(\sigma)$.

Denote $\sigma'=(\cC_\sigma, (\z_\sigma, \z'_0))\in \MM_{g, k+l}$.
We remark that $\Vd=V_{deform,\sigma'}, \Vr=V_{resol,\sigma'}$, and
there is an open embedding
$$
\frac{\Vd\times \Vr}{\aut(\cC_{\sigma'},
(\z_\sigma,\z'_\sigma))}=\frac{V_{deform,\sigma'}\times
V_{resol,\sigma'}}{\aut(\cC_{\sigma'},
(\z_\sigma,\z'_\sigma))}\rTo \MM_{g, k+l}.
$$

Let $\gamma_0=\oplus_{\nu}(s_\nu, \theta_\nu)$ be a transition
moving any point $(s,\theta)$ on the component $(\R\times
S^1)_\nu$ to $(s+s_\nu,\theta+\theta_\nu)$. We can define an
action $\gamma_0\cdot (\cC_\sigma, (\z_\sigma,
\z'_0))=(\cC_{\gamma_0\cdot \sigma}, (\z_\sigma,
\gamma_0(\z'_0)))$, i.e., fixing the marked points $\z_k$ in
stable components but moving the extra marked points $\z'_0$ by
$\gamma_0$.

Similarly, we can define
$$
\gamma_0\cdot (\cC_{y,\zeta}, (\z_\sigma,
\z'_0))=(\cC_{\gamma_0\cdot (y,\zeta)}, (\z_\sigma,
\gamma_0(\z'_0))).
$$
Here one has to do some small modification such that the
transition fixes the gluing domain and only moves the interior
points of the unstable components.

If $\gamma_0$ is small enough, then the action of $\gamma_0$ will
induce an action on $\Vd\times \Vr$. This action is described in
\cite{FO}, and we follow their description. If $\gamma_0$ is small
enough, the surface $\cC_{\gamma_0\cdot (y,\zeta)}$ is still in
the neighborhood of $(\cC_\sigma, (\z_\sigma, \z'_0))\in
\MM_{g,k+l}$, hence there exists $(y_0,\zeta_0)\in
V_{deform,\sigma'}\times V_{resol,\sigma'}=\Vd\times \Vr$ such
that $(\cC_{\gamma_0\cdot (y,\zeta)},(\z_\sigma, \gamma_0(\z'_0)))
$ is equivalent to $(\cC_{y_0,\zeta_0}, (\z_\sigma,
\gamma_0(\z'_0)))$. This map is unique modulo the finite group\\
$\aut((\cC_{y_0,\zeta_0}, (\z_\sigma, \gamma_0(\z'_0))))$. We
define $\gamma_0\cdot(y,\zeta):=(y_0,\zeta_0)$.

Now assume that there are $(y',\zeta',\bphi'), (y,\zeta,\bphi)\in
\tilde{s}^{-1}_\sigma(0)\cap U_\sigma$ such that $(\cC_{y,\zeta},
\bu_{y,\zeta,\bphi})$ is equivalent to $(\cC_{y',\zeta'},
\bu_{y',\zeta',\bphi'})$, where
$\bu_{y,\zeta,\bphi}=\uapp+\Psi_\sigma(y,\zeta,\bphi)$ and
$\bu_{y',\zeta',\bphi'}=\bu_{app,y',\zeta'}+\Psi_\sigma(y',\zeta',\bphi')$
are assumed to be solutions of the perturbed Witten equation
$WI_{y,\zeta}(\bu)=0$. So there exists a biholomorphic map $\gamma
:\cC_{y,\zeta}\rTo \cC_{y',\zeta'}$ such that
$\bu_{y,\zeta,\bphi}=\gamma\cdot\bu_{y',\zeta',\bphi'}$. We want
to prove that $\gamma\in \aut(\sigma)$. Now we have
$(\cC_{y,\zeta}, (\z_\sigma, \z'_0))\in \MM_{g,k+l}$ and
$\gamma\cdot (\cC_{y,\zeta}, (\z_\sigma, \z'_0))=(\cC_{y',\zeta'},
(\z_\sigma, \gamma\cdot\z'_0))$. So $(\cC_{\gamma(y,\zeta)},
(\z_\sigma, \gamma\cdot\z'_0))=(\cC_{y',\zeta'}, (\z_\sigma,
\gamma\cdot\z'_0))$.  We can take $\gamma_1^{-1}\in \aut(\sigma)$
such that $\gamma_1^{-1}\cdot \gamma(\z'_i)$ is close to $\z'_i$,
for $i=1,\dots,l$. Then there exists a unique $\gamma_0=
\oplus_\nu (s_\nu,\theta_\nu)$ such that
$(\cC_{\gamma^{-1}_1(y',\zeta')}, (\z_\sigma,
\gamma_1^{-1}\gamma(\z'_0)))=(\cC_{\gamma_0(y,\zeta)},(\z_\sigma,\gamma_0\cdot
(\z'_0)))$ modulo the group $\aut(\cC_\sigma,(\z_\sigma,\z'_0))$.
Therefore $\gamma_1^{-1}\gamma=\gamma_0$ and $\gamma_1\cdot
\gamma_0(y,\zeta)=(y',\zeta')$. We claim that $\gamma_0=1$ under
our normalization choice. In fact, from the relation
$$
\bu_{y,\zeta,\bphi}=\gamma_1\cdot\gamma_0(\bu_{y',\zeta',\bphi'})
$$
we have at the extra marked point $z'_j$
\begin{align*}
&\bu_\sigma(z'_j)-\bu_\sigma(\gamma_1\cdot\gamma_0\cdot z'_j) =
\gamma_1\cdot\gamma_0(\Psi_\sigma(y',\zeta',\bphi'))-\Psi_\sigma(y,\zeta,\bphi)\\
&=\Psi_\sigma(y,\zeta,\gamma_1\cdot\gamma_0(\bphi'))-\Psi_\sigma(y,\zeta,\bphi')+
(\Psi_\sigma(y,\zeta,\bphi')-\Psi_\sigma(y,\zeta,\bphi)).
\end{align*}
Now if we consider the projection to the perpendicular direction
of $\D_{z'_j}$, we find that the term in the last bracket
vanishes. Since $\bu_\sigma$ is assumed to be an immersion at
$z'_j$, so if our neighborhood $U_\sigma$ is small enough, the
projection equality holds if and only if $\gamma_0=0$ and
$\gamma_1(\bphi')=\bphi$. So we have proved the injectivity.

We begin the proof of surjectivity. Suppose that there is a sequence
$(\frkc^n, \bu^n)\rTo (\frkc, \bu)$ in
$\MMrs_{g,k}(\bgamma,\bvkappa)$, where $(\frkc, \bu)$ is a non-BPS
soliton. To fix the neighborhood of non-BPS solitons in the moduli
space, we add an extra marked point to each soliton component just
as before. There is a sequence of parameters $(y_n,\zeta_n)$
converging to zero as $n\rTo \infty$ such that we have the
convergence of the underlying rigidified $W$-curve
$\frkc^n\rTo \frkc$ in $\MMr_{g,k}(\bgamma)$. On the other
hand, since the section $\bu^n$ converges weakly to $\bu$, if $n$ is
large then $\bu^n$ will lie in the tubular neighborhood of $\bu$.
Hence there is a sequence of sections $\phi^n\in
L^p_1(\cC_{y_n,\zeta_n}, \times_i \LL_i)$ such that
$\bu^n=\uapp+\phi^n$. Given arbitrarily small $r>0$, if we can prove
that $||\phi^n||_{L^p_1}<r$, then $\phi^n$ will lie in the image of
$\Psi_\sigma$. Since $\phi^n$ converges to $0$ in the $C^\infty$
topology outside the neck region, say $N_{n,z}(\hat{T})$, so we need
only to show that $||\phi^n||_{L^p_1(N_{n,z}(\hat{T}))}$ can be
arbitrarily small for sufficiently large $n$.

In this case, we have (without loss of generality, we consider one
nodal point and the two-component case)
$$
\lim_{\hat{T}\to \infty}\mbox{Diam}(\bu^n (N_{n,z}(\hat{T})))=0.
$$
Here $N_{n,z}(\hat{T})=([\hat{T},T^n_z]\times S^1)_\nu\cup
([\hat{T},T^n_z]\times S^1)_\mu$ and $T^n_z$ is a gluing
parameter.

On the neck region we have two equations:
\begin{align*}
&\bpat \uapp-2\overline{\frac{\pat (W+W_\gamma)(\uapp)}{\pat
u_i}}=0\\
&\bpat (\uapp+\phi^n)-2\overline{\frac{\pat
(W+W_\gamma)(\uapp+\phi^n)}{\pat u_i}}=0.
\end{align*}

When taking the difference of the two equations, we obtain the
equation of $\phi^n$:
\begin{equation}
\bpat \phi^n+\overline{A\cdot \phi^n}=0,
\end{equation}
where $A$ is the coefficient matrix which is differentiable and
all of its derivatives are bounded (Notice that the $C^m$
estimates of any solutions are uniformly bounded). Since the
diameter of the image $\bu^n(N_{n,z}(\hat{T}))$ is sufficiently
small if $\hat{T}$ and $n$ are large enough, the $C^0$ norm of
$\phi^n$ is uniformly small on the neck region. So the matrix
$A(\uapp+\phi^n)=A(\bu_\sigma+\phi^n)$ will tend to
$A(\bu(\infty))$, which is a nondegenerate matrix. Thus, for large
$\hat{T}$ and $n$, the absolute value of the eigenvalues of the
matrix $A(\uapp+\phi^n)$ has uniformly positive lower bound. This
means $\phi^n$ have uniformly exponential decay, hence
$||\phi^n||_{L^p_1(N_{n,z}(\hat{T}))}<r$ for large $n$ and
$\hat{T}$. We have to fix $\phi^n$ on an unstable component of
$\frkc$. Since $\bu^n$ is close to $\bu$, there is another unique
point $z''_j$ near $z'_j$ such that $\bu^n(z''_j)-\uapp(z'_j)$ is
tangential to $D_{z'_j}$. So we can redefine
$\phi^n(z)=\bu^n(z-z'_j+z''_j)-\uapp(z)$ on the unstable part.
\end{proof}

\subsection{Interior gluing and
orientation}\label{sec-inter-gluing}\

In this part, we will glue the Kuranishi neighborhoods of the
interior points of $\MMrs_{g,k}(\bgamma,\bvkappa)$ into an open
global Kuranishi structure. If our moduli space is strongly
perturbed, i.e., there is no soliton $W$-section, then this gives a
Kuranishi structure to the moduli space and hence obtains a virtual
cycle by Fukaya-Ono theory if it is orientable. If the moduli space
contains a soliton $W$-section, then we need to define the Kuranishi
neighborhoods near the boundary and then glue the interior Kuranishi
structure with the boundary neighborhoods to form a global structue.
This will be discussed in latter section.

Since there is no canonical way to construct the Kuranishi
structure, we will choose the obstruction bundle step by step and
then patch up these charts by induction. Though the Kuranishi
structure is dependent on the choice of such spaces, the virtual
cycle is well-defined. We will use the partial order $\succ$ in
$\MMrs_{g,k}(\bgamma,\bvkappa)$ defined before to do our induction.

Let $\MMrs_{g,k}(\Gamma;\bgamma,\bvkappa)$ be a minimal (and
nonempty) stratum in $\MMrs_{g,k}(\bgamma,\bvkappa)$ with respect to
the partial order $\succ$. Notice that this minimal stratum is not
unique.

Unlike the stratum in $\MM_{g,k}$,
$\MMrs_{g,k}(\Gamma;\bgamma,\bvkappa)$ is not in general an
orbifold. So we can't construct a universal orbifold obstruction
bundle on it. We have to construct it in a more direct way.

Let $(U_\sigma, E_\sigma, \aut(\sigma), s_\sigma, \Psi_\sigma)$ be
the Kuranishi neighborhood of $\sigma\in
\MMrs_{g,k}(\bgamma,\bvkappa)$ constructed in
Theorem~\ref{thm:knbhd} such that $(s_\sigma)^{-1}(0)$ is
homeomorphic to a neighborhood of $\sigma\in
\MMrs_{g,k}(\Gamma;\bgamma,\bvkappa)$. Let
$\Psi'_{\sigma}:s^{-1}_{\sigma}(0)\cap U_\sigma\rTo
\MMrs_{g,k}(\Gamma;\bgamma,\bvkappa)$ be the restriction of
$\Psi_\sigma$ to $s^{-1}_\sigma(0)\cap U_\sigma$. $\Psi'_\sigma$ is
a homeomorphism by Theorem \ref{thm:knbhd}. Since
$\MMrs_{g,k}(\Gamma;\bgamma,\bvkappa)$ is compact, there are
finitely many points $\tau_i$ in it such that the union of
$\hat{\Omega}_{\tau_i}:=\im \Psi'_{\tau_i}\cap
\MMrs_{g,k}(\Gamma;\bgamma,\bvkappa)$ covers
$\MMrs_{g,k}(\Gamma;\bgamma,\bvkappa)$. Assume
$\Omega_{\tau_i}\subset\hat{\Omega}_{\tau_i}$ are closed subsets and
their interior; also cover $\MMrs_{g,k}(\Gamma;\bgamma,\bvkappa)$.

Fix a representative $(\frkc_{\tau_i}, \bu_{\tau_i})\in \tau_i$,
then we have the bundle $E_{\tau_i}\subset L^p(\cC_{\tau_i};\times_i
(\LL_i\otimes \Lambda^{0,1}(\cC_{\tau_i})))$. For a point $(\frkc,
\bu)$ near $\MMrs_{g,k}(\Gamma;\bgamma,\bvkappa)$ in the big Banach
bundle, we want to embed $E_{\tau_i}$ in a more canonical way in
$L^p(\cC, \times_i(\LL_i\otimes \Lambda^{0,1}(\cC)))$.

\begin{df}

Let $(\frkc, \bu)$ be a point consisting of a map $\bu\in L^p_1(\cC,
\LL_1\times\dots\times \LL_t)$ and a $W$-curve $\frkc\in
\MMr_{g,k}(\bgamma)$. $(\frkc, \bu)$ is said to be closed to
$\sigma$ iff there exist a representative $(\frkc_\sigma,
\bu_\sigma)\in \sigma$, $(y,\zeta)\in \Vd\times\Vr$ and a
biholomorphic map $\theta:\cC\rTo\cC_{\sigma,y,\zeta}$ such
that $\theta$ preserves marked points and is an isomorphism of
$W$-structures, and $\bu\theta^{-1}$ is close to $\bu_{y,\zeta}$ in
the smooth topology on each irreducible component of
$\cC_{\sigma,y,\zeta}$, where $(\frkc_{y,\zeta},
\bu_{y,\zeta})=\Psi'_\sigma(y,\zeta,0)$. This actually gives  a
topology on the space of ``perturbed $W$-sections."
\end{df}

Let $(\frkc, \bu)$ be close to $\sigma\in \Omega_{\tau_i}$. By
taking $\Omega_{\tau_i}$ sufficiently small, we can obtain
$(y,\zeta)\in V_{deform,\tau_i}\times V_{resolv,\tau_i}$ and an
automorphism $\theta:\frkc\rTo \frkc_{\tau_i,y,\zeta}$ such
that
\begin{equation}
\sup_p |\bu_{y,\zeta}\circ \theta(p)-\bu(p)|<\varepsilon.
\end{equation}
So if $\varepsilon$ is small, we can use $\theta$ to define
\begin{equation}
\mbox{emb}_{y,\zeta,\theta}:E_{\tau_i}\rTo
C^\infty(\cC;\times_i(\LL_i\otimes \Lambda^{0,1}(\cC))).
\end{equation}
Therefore, the perturbed Witten equation we should discuss is
\begin{equation}
(WI)_\frkc\equiv 0\;\mbox{mod} \;\oplus_{\sigma\in \Omega{\tau_i}}
\mbox{emb}_{y,\zeta,\theta}(E_{\tau_i})\label{cr}.
\end{equation}

\begin{rem} Note that if $\beta$ is the cut-off section on
$\frkc_{y,\zeta}$ which is used to define the perturbed Witten
map, then we use $\theta^*\beta$ as the corresponding quantity to
define the perturbed Witten map on $\frkc$.
\end{rem}

This means  if $(\frkc, \bu)$ is a $W$-section, then the
obstruction bundle on it is $\oplus_{\sigma\in
\Omega_{\tau_i}}\\
emb_{y,\zeta,\theta}(E_{\tau_i}) .$

However, there is an ambiguity in choosing the embedding
$\mbox{emb}_{y,\zeta,\theta}$, because of the possible existence
of unstable (soliton) components of $\frkc$. To get rid of the
ambiguity, we choose a similar normalization condition as in (3)
of Theorem \ref{thm:knbhd}. We choose the minimal extra marked
points as done in the proof of Theorem \ref{thm:knbhd}. We also
let $\bu_\tau$ (where $\tau$ is some $\tau_i$) be an immersion
near these extra marked points.

For each new marked point $p$, take an embedded
$(2N-2)$-dimensional disk $\D_p$ in $\C^N$, which is transversal
to $\im(\bu_\tau)$ at $\bu_\tau(p)$. We assume that
$\D_{\phi(p)}=\D_p$ when $p$ and $\phi(p)$ are marked points, and
$\phi\in \aut(\tau)$. We choose those $\theta$ such that
\begin{equation}
\bu\circ\theta^{-1}(p)\in \D_p\label{norm2}.
\end{equation}
Now there are only finitely many $\theta$ satisfying
(\ref{norm2}), which is $\aut(\tau)$-invariant and the action of
$\aut(\tau)$ is described in the appendix of \cite{FO}.

Let $\overline{\mbox{emb}_{y,\zeta}}:E_{\tau_i}\rTo
C^\infty(\cC;\times_i(\LL_i\otimes \Lambda^{0,1}(\cC)))$ be the
average of the map $\mbox{emb}_{y,\zeta,\theta}$ defined for
$s\in E_{\tau_i}$, as 
$$
\overline{\mbox{emb}_{y,\zeta}}(s)=\frac{\sum_\theta
\mbox{emb}_{y,\zeta,\theta}(s)}{|\aut(\tau_i)|}.
$$
So the equation (\ref{cr}) can be modified as
\begin{equation}
(WI)_\frkc\equiv 0\;\text{mod}\;E_\frkc\label{norm-eq},
\end{equation}
where
$E_\frkc:=\oplus_{\tau_i}\overline{\mbox{emb}_{y,\zeta}}(E_{\tau_i})$.
Now the definition of the equation (\ref{norm-eq}) is independent
of the choice of $\sigma$.

Using Theorem \ref{thm:knbhd}, we can construct for any point
$\sigma\in \MMrs_{g,k}(\Gamma;\bgamma,\bvkappa)$ a Kuranishi
neighborhood $(U_\sigma,E_\sigma,s_\sigma,\Psi_\sigma)$ with respect
to the equation (\ref{norm-eq}).

We next construct the coordinate change. We choose $\{U_\sigma\}$
satisfying the following condition:
\begin{equation}
\text{if}\;\rho\in \im \Psi'_\sigma \;\text{and if}\;\rho\in
\Omega_{\tau_i},\;\text{then}\;\sigma\in \Omega_{\tau_i}.
\label{inside}
\end{equation}

(\ref{inside}) is true since $\Omega_{\tau_i}$ is closed.
(\ref{inside}) implies if $\rho\in \im\Psi'_\sigma\cap
\MMrs_{g,k}(\Gamma;\bgamma,\bvkappa)$, then $E_\rho\subset
E_\sigma$. Hence if $(\frkc, \bu)$ is a solution of (\ref{norm-eq})
for $\rho$ with the condition (\ref{norm2}), then it solves
(\ref{norm-eq}) for $\sigma$. Thus we find the required embeddings
$\phi_{\sigma\rho}:U_\rho\to U_\sigma$ and
$\hat{\phi}_{\sigma\rho}:E_\rho\rTo E_\sigma$. So
$\{\phi_{\sigma\rho}\}:(U_\rho,E_\rho,\aut(\rho),s_\rho,\Psi_\rho)\to
(U_\sigma,E_\sigma,\aut(\sigma),s_\sigma,\Psi_\sigma)$ becomes the
morphism defined in Definition \ref{df-kran-stru}. It is easy to
check that those data construct a Kuranishi structure of
$\MMrs_{g,k}(\Gamma;\bgamma,\bvkappa)$. By induction, we can assume
we have constructed a Kuranishi structure in a neighborhood of those
strata $\MMrs_{g,k}(\Gamma';\bgamma,\bvkappa)$ for $\Gamma'\prec
\Gamma$.
 Now we want to construct the Kuranishi structure near
 $\MMrs_{g,k}(\Gamma;\bgamma,\bvkappa)$. We already have finitely many $\tau_i$ contained in some
 $\MMrs_{g,k}(\Gamma';\bgamma,\bvkappa)$ with
 $\Gamma'\prec\Gamma$ and maps
 $\Psi'_{\tau_i}: s^{-1}_{\tau_i}(0)\cap U_{\tau_i} \to
 \MMrs_{g,k}(\bgamma,\bvkappa)$ such that
 $\MMrs_{g,k}(\Gamma;\bgamma,\bvkappa)$ minus the union of
 images of $\Psi'_{\tau_i}$ is compact. We then choose finitely
 many $\tau'_i$ on $\MMrs_{g,k}(\Gamma;\bgamma,\bvkappa)$
 such that
 $$
 \cup_i\im \Psi'_{\tau_i}\cup_i\im \Psi'_{\tau'_i}\supset
 \MMrs_{g,k}(\Gamma;\bgamma,\bvkappa).
$$
Choose closed subsets $\Omega_{\tau_i}\subset \im
\Psi'_{\tau_i},\Omega_{\tau'_i}\subset\im\Psi'_{\tau_i}$ such that
their interiors cover $\MMrs_{g,k}(\Gamma;\bgamma,\bvkappa)$. For
each $\sigma\in \MMrs_{g,k}(\Gamma;\bgamma,\bvkappa)$, we put
\begin{equation}
E_\sigma=\oplus_{\sigma\in \Omega_{\tau_i}}E_{\tau_i}\oplus
\oplus_{\sigma\in \Omega_{\tau'_i}}E_{\tau'_i}.
\end{equation}

On this this space we define a similar equation to (\ref{norm-eq})
and also require that condition (\ref{norm2}) holds. So by the same
argument used to study the first stratum, we can extend the
Kuranishi structure to a neighborhood of
$\MMrs_{g,k}(\Gamma;\bgamma,\bvkappa)$.

Finally, we prove that the Kuranishi structure in the interior of
$\MMrs_{g,k}(\Gamma;\bgamma,\bvkappa)$ is orientable. To prove that
it is orientable, Fukaya-Ono showed it is enough to show the tangent
bundle is stably almost complex. The key point in their proof is
that the symbol of the linearization of the Cauchy-Riemann equation
is complex linear and the moduli space $\MM_{g,k}$ is a complex
orbifold which has a complex tangent bundle. Then they use a family
of operators connecting the Cauchy-Riemann operator and its complex
linear part and change the orientation problem of the Cauchy-Riemann
operator to the orientation problem of its complex linear part. In
this way, they proved that the Kuranishi structure is orientable. It
is the same situation in our case. The linearization operator is
only real linear, not complex linear, but the symbol of the first
order term is complex linear. We can prove that the interior of our
moduli space is orientable using their proof almost word for word.
The only difference is that we don't connect our operator directly
to $\bpat$ but to some complex linear operator having $0$-order
terms, since the Sobolev spaces we use are based on a noncompact
surface. The detailed proof can be seen in section 16 of \cite{FO}.

Now if the moduli space $\MMrs_{g,k}(\bgamma,\bvkappa)$ is strongly
regular, we have
$\MMrs_{g,k}(\bgamma,\bvkappa)=\WW_{g,k}(\bgamma,\bvkappa)$. By
abstract Kuranishi theory, we have the following conclusion.

\begin{thm}\label{thm-stro-kura} Suppose that the moduli space $\MMr_{g,k}(\bgamma,\bvkappa)$
is strongly regular, then $\MMr_{g,k}(\bgamma,\bvkappa)$ is a
compact Hausdorff space carrying an orientable Kuranishi structure
$\{U_\sigma, E_\sigma, \aut(\sigma), s_\sigma, \Psi_\sigma\}$ with
(real) dimension $6g-6+2k-2D-\sum_{i=1}^k N_{\gamma_i}$, where
$D=\hat{c}_W(g-1)+\sum_\tau \iota(\gamma_\tau)$, where
$\hat{c}_W=\sum_i(1- 2q_i)$ and $\iota(\gamma_\tau)=\sum_{i}(
\Theta_i^{\gamma_\tau}-q_i)$. $[\MMr_{g,k}(\bgamma,\bvkappa)]^{vir}$
becomes the virtual fundamental cycle of its Kuranishi structure.
\end{thm}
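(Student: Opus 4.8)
The plan is to assemble the proof of Theorem~\ref{thm-stro-kura} by combining the three principal ingredients that have already been established in the paper: the compactness theorem, the Fredholm/index theorem, and the local-to-global Kuranishi construction. First I would invoke Theorem~\ref{thm-gromov-comp}: since $\MMr_{g,k,W}(\bgamma,\bvkappa)$ is assumed strongly regular, all the perturbation polynomials $W_{\gamma}+W_{0,\gamma}$ at every Ramond marked or nodal point are strongly $W_\gamma$-regular, so there are no soliton $W$-sections and $\MMrs_{g,k,W}(\bgamma,\bvkappa)=\MMr_{g,k,W}(\bgamma,\bvkappa)$. Theorem~\ref{thm-gromov-comp} then gives that this space is a compact Hausdorff space in the Gromov topology. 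This disposes of the compactness and Hausdorffness claims immediately.

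Next I would address the Kuranishi structure. In the strongly regular case every point $\sigma=(\frkc_\sigma,\bu_\sigma)$ of the moduli space is a non-BPS (in fact solitonless) $W$-section, so Theorem~\ref{thm:knbhd} produces a Kuranishi neighborhood $(U_\sigma, E_\sigma, \aut(\sigma), s_\sigma, \Psi_\sigma)$ at every point, of real dimension $6g-6+2k-2D-\sum_{i=1}^k N_{\gamma_i}$ with $D=\hat{c}_W(g-1)+\sum_\tau \iota(\gamma_\tau)$ — this dimension count comes from the index computation in Theorem~\ref{ind-witt-oper}, together with the finite-dimensional obstruction space $E_\sigma$ (so $\dim U_\sigma-\dim E_\sigma=\ind D_{\frkc_\sigma,\bu_\sigma}WI$). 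Then I would run the induction in Section~\ref{sec-inter-gluing}: order the strata by $\succ$, choose obstruction bundles over finitely many points $\tau_i$ covering each stratum, use the averaged embeddings $\overline{\mathrm{emb}_{y,\zeta}}$ and the normalization condition at the extra marked points to make the obstruction bundle $E_\frkc:=\oplus_{\tau_i}\overline{\mathrm{emb}_{y,\zeta}}(E_{\tau_i})$ well defined, and verify that the resulting charts satisfy the compatibility axioms of Definition~\ref{df-kran-stru}, producing coordinate changes $\{\phi_{\sigma\rho}\}$ with the injection/tangent-bundle properties. Since the moduli space is compact, only finitely many charts are needed. Finally, orientability follows by the Fukaya--Ono argument sketched at the end of Section~\ref{sec-inter-gluing}: the symbol of $D_{\frkc,\bu}WI$ is complex linear and $\MM_{g,k}$ is a complex orbifold, so one connects $D_{\frkc,\bu}WI$ through a family of Fredholm operators to a complex-linear operator (with $0$-order term, because the Sobolev spaces live on a noncompact surface), showing the tangent bundle system is stably almost complex, hence stably oriented, hence oriented.

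With a compact Hausdorff space carrying an oriented Kuranishi structure of the stated dimension in hand, the last step is a direct appeal to the abstract machinery: Theorem~\ref{thm-vir-cycl} says the Kuranishi map $s=\{s_\sigma\}$ can be perturbed to a transverse multisection $\tilde{s}$, whose zero set $\tilde{s}^{-1}(0)$ is an oriented cycle with cobordism class independent of the perturbation; pushing forward along the strongly continuous map to $\MMr_{g,k,W}$ (or to any target orbifold of interest) yields a rational homology class. I would \emph{define} $[\MMr_{g,k,W}(\bgamma,\bvkappa)]^{vir}$ to be this class, which is exactly the assertion of the theorem.

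I expect the genuine content to lie entirely in the ingredients already proved — Theorems~\ref{thm-gromov-comp}, \ref{ind-witt-oper}, \ref{thm:knbhd} — so the remaining work is organizational: the inductive patching along the partial order $\succ$ and the careful bookkeeping that the averaged embeddings and normalization disks make the obstruction bundles $\aut$-invariant and the coordinate changes compatible. The main obstacle within this proof proper is therefore verifying the compatibility axioms (the injection composition rule and the existence of the tangent bundle) for the glued charts; this is where one must be most careful, but it follows the Fukaya--Ono template closely and I would present it by reduction to that template rather than re-deriving it.
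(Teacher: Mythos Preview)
Your proposal is correct and follows essentially the same approach as the paper: the theorem is stated there as an immediate consequence of the preceding material, with the paper noting only that in the strongly regular case $\MMrs_{g,k,W}(\bgamma,\bvkappa)=\MMr_{g,k,W}(\bgamma,\bvkappa)$ and then invoking ``abstract Kuranishi theory'' (i.e., Theorem~\ref{thm-vir-cycl}) to reach the conclusion. Your write-up is in fact more explicit than the paper's own treatment, correctly identifying Theorems~\ref{thm-gromov-comp}, \ref{ind-witt-oper}, and \ref{thm:knbhd} together with the inductive gluing and Fukaya--Ono orientability argument of Section~\ref{sec-inter-gluing} as the ingredients being assembled.
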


\
\subsection{The neighborhood around a BPS soliton section}\label{kura-neig-sing}\

We always assume that there is only one group elment
$\tilde{\gamma}\in G$ such that the perturbed polynomial
$W_{\tilde{\gamma}}+W_{0,\tilde{\gamma}}$ has only two critical
points $\kappa^\pm$ such that
$\mbox{Im}(W_{\tilde{\gamma}}+W_{0,\tilde{\gamma}})(\kappa^+)=\mbox{Im}(W_{\tilde{\gamma}}+W_{0,\tilde{\gamma}})(\kappa^-
)$. We consider a BPS soliton $W$-section $(\frkc,\bu)$ in
$\MMrs_{g,k}(\bgamma,\bvkappa)$. We say a nodal point $p$ of
$(\frkc,\bu)$ is decorated by $(\tilde{\gamma}, \kappa)$ if there
exists a rigidification $\psi_p$ such that $\psi_p(\bu(p))=\kappa$.
Note that the combinatorial type of this kind of soliton section is
finite. Several cases may occur for a soliton $W$-section in
$\MMrs_{g,k}(\bgamma,\bvkappa)$:
\begin{enumerate}
\item There is only one marked point decorated by
$(\tilde{\gamma}, \kappa^-)$ and there is no nodal point decorated
by $(\tilde{\gamma}, \kappa^-)$.

\item There is no marked point decorated by $(\tilde{\gamma},
\kappa^-)$ and there is only one nodal point decorated by
$(\tilde{\gamma}, \kappa^-)$.

\item There is no marked point decorated by $(\tilde{\gamma},
\kappa^-)$ and there are several nodal points decorated by
$(\tilde{\gamma}, \kappa^-)$.

\item There is one marked point decorated by $(\tilde{\gamma},
\kappa^-)$ and there are several nodal points decorated by
$(\tilde{\gamma}, \kappa^-)$.
\end{enumerate}

To construct the neighborhoods of BPS soliton sections of the
above cases, we need to know more details about a BPS soliton
$\bu_{+-}\in S_{\tilde{\gamma}}(\kappa^+,\kappa^-)$.

\subsubsection*{Kernel and cokernel of $D_{\bu_{+-}}(WI)$}

The linearized operator
$$
D_{\bu_{+-}}(WI): L^p_1(\R\times S^1, \LL_1)\times\dots\times
L^p_1(\R\times S^1\LL_N)\rTo L^p(\R\times S^1,
\LL_1\otimes\Lambda^{0,1})\times\dots\times L^p(\R\times S^1,
\LL_N\otimes\Lambda^{0,1})
$$
is split into the direct sum of two operators:
$D_{\bu_{+-}}(WI)=D^R_{\bu_{+-}}(WI)\oplus D^N_{\bu_{+-}}(WI)$,
where
$$
D^R_{\bu_{+-}}(WI): L^p_1(\R\times S^1,
\C^{N_{\tilde{\gamma}}})\rTo L^p(\R\times S^1,
\C^{N_{\tilde{\gamma}}}\otimes\Lambda^{0,1})
$$
has the form
$$
D^R_{\bu_{+-}}(WI)(\phi_1,\dots,\phi_{N_{\tilde{\gamma}}})
=(\bpat_\xi
\phi_1-2\sum_{j=1}^{N_{\tilde{\gamma}}}\overline{\frac{\pat^2(W_{\tilde{\gamma}}+W_{0,\tilde{\gamma}})}{\pat
u_1\pat u_j}\phi_j},\dots, \bpat_\xi
\phi_{N_{\tilde{\gamma}}}-2\sum_{j=1}^{N_{\tilde{\gamma}}}
\overline{\frac{\pat^2(W_{\tilde{\gamma}}+W_{0,\tilde{\gamma}})}{\pat
u_{N_{\tilde{\gamma}}}\pat u_j}}\phi_j),
$$
and
$$
D^N_{\bu_{+-}}(WI): L^p_1(\R\times
S^1,\LL^{N_{\tilde{\gamma}}+1})\times\dots\times L^p_1(\R\times
S^1,\LL^N)\rTo L^p(\R\times S^1,
\LL^{N_{\tilde{\gamma}}+1}\otimes\Lambda^{0,1})\times L^p(\R\times
S^1, \LL^{N}\otimes\Lambda^{0,1})
$$
has the form
$$
D^N_{\bu_{+-}}(WI)(\phi_{N_{\tilde{\gamma}}+1},\dots,
\phi_t)=(\bpat_\xi
\phi_1-2\sum_{j=N_{\tilde{\gamma}}+1}^{t}\overline{\frac{\pat^2
W_{N}}{\pat u_{N_{\tilde{\gamma}}+1}\pat u_j}\phi_j},\dots,
\bpat_\xi \phi_{t}-2\sum_{j=N_{\tilde{\gamma}}+1}^{N}
\overline{\frac{\pat^2 W_{N}}{\pat u_{N}\pat u_j}\phi_j}).
$$
Note that we have the decomposition $W=W_N+W_{\tilde{\gamma}}$
according to the action of $\tilde{\gamma}$.

\begin{prop}\label{prop-BPS-C0norm} Suppose $\bu_{+-}$ is a BPS soliton. If the perturbation parameters
$b_i$ in $W_{0,\tilde{\gamma}}$ are sufficiently small, then the
$C^0$ norm of $\bu_{+-}$ is also sufficiently small.
\end{prop}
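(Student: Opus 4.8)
The statement is an a priori $C^0$ bound for a BPS soliton $\bu_{+-}\in S_{\tilde\gamma}(\kappa^+,\kappa^-)$ on the infinite cylinder $\R\times S^1$, governed by the flow of the superpotential $W_{\tilde\gamma}+W_{0,\tilde\gamma}$ which, being a BPS (i.e.\ $\theta$-independent) soliton, satisfies the gradient-flow ODE $\pat_s u_i - 2\overline{\pat(W_{\tilde\gamma}+W_{0,\tilde\gamma})/\pat u_i}=0$ with $\bu_{+-}(\mp\infty)=\kappa^\pm$. The plan is to combine three ingredients: (i) the energy identity for BPS solitons already proved in the excerpt, namely $(W_{\tilde\gamma}+W_{0,\tilde\gamma})(\kappa^-)-(W_{\tilde\gamma}+W_{0,\tilde\gamma})(\kappa^+)=2\int\int\sum_i|\pat(W+W_{0,\tilde\gamma})/\pat u_i|^2$, which forces the total energy to go to $0$ as $\sum_i|b_i|\to0$ since all critical points of $W_{\tilde\gamma}+W_{0,\tilde\gamma}$ collapse toward $0$; (ii) the Lemma from \cite{FJR} (Lemma \ref{thm-new} in the excerpt), which bounds $|u_i|$ by a power of $\sum_j|\pat W/\pat u_j|+1$ — note this is a bound for the \emph{unperturbed} $W$, but since $\pat W_{0,\tilde\gamma}/\pat u_i=b_i$ is small and $|\pat W_{\tilde\gamma}/\pat u_i|\le |\pat(W_{\tilde\gamma}+W_{0,\tilde\gamma})/\pat u_i|+|b_i|$, it still controls $|u_i|$ along the flow; (iii) the monotonicity $\pat_s \mathrm{Re}(W_{\tilde\gamma}+W_{0,\tilde\gamma})=\sum_i|\pat(W_{\tilde\gamma}+W_{0,\tilde\gamma})/\pat u_i|^2\ge0$, which pins the trajectory between the two level sets $\mathrm{Re}(W_{\tilde\gamma}+W_{0,\tilde\gamma})=\mathrm{Re}(W_{\tilde\gamma}+W_{0,\tilde\gamma})(\kappa^\pm)$, both of which are near the critical value, itself near $0$.

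First I would argue by contradiction: suppose there is a sequence of perturbation parameters $b^{(n)}$ with $\sum_i|b_i^{(n)}|\to0$ but the corresponding BPS solitons $\bu_{+-}^{(n)}$ have $C^0$-norm bounded below by some $\varepsilon_0>0$. By the energy identity, the energies $\mathcal E(\bu_{+-}^{(n)})\to0$. I would then invoke the interior and boundary estimates already established (Theorem \ref{bdd-inn-thm}, Theorem \ref{bdd-bdry-thm}, and the asymptotic/exponential-decay analysis of Section \ref{sec:witten}) to get uniform $C^1_{loc}$ bounds on $\bu_{+-}^{(n)}$, hence after reparametrizing in $s$ (translating so that the maximum of $|\bu_{+-}^{(n)}|$ is attained near $s=0$) a subsequence converges in $C^\infty_{loc}$ to a limit $\bu_\infty$ on $\R\times S^1$, which solves the limiting BPS equation for the \emph{unperturbed} $W_{\tilde\gamma}$ (since $W_{0,\tilde\gamma}\to 0$) with zero energy. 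By the zero-energy case of the Witten Lemma argument (Lemma \ref{lm-witt-noda} / Corollary after it, or directly integrating the monotonicity identity), a zero-energy soliton for $W_{\tilde\gamma}$ must be constant equal to a critical point of $W_{\tilde\gamma}$; non-degeneracy of $W$ forces this constant to be $0$. But $\|\bu_\infty\|_{C^0}\ge\varepsilon_0$ by construction, a contradiction.

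Alternatively — and this is the cleaner route I would actually pursue — I would give a direct quantitative argument avoiding the compactness contradiction: using the monotonicity of $\mathrm{Re}(W_{\tilde\gamma}+W_{0,\tilde\gamma})$ along the flow, every point $\bu_{+-}(s,\theta)$ lies on a trajectory whose value of $W_{\tilde\gamma}+W_{0,\tilde\gamma}$ has real part between the two critical values $\mathrm{Re}(W_{\tilde\gamma}+W_{0,\tilde\gamma})(\kappa^\pm)$, while $\int_{-\infty}^{s}\pat_\tau|\mathrm{Im}(W_{\tilde\gamma}+W_{0,\tilde\gamma})|$ is controlled by the energy; hence $|(W_{\tilde\gamma}+W_{0,\tilde\gamma})(\bu_{+-}(s,\theta))|$ is bounded by (critical value) $+$ (energy)$^{1/2}\cdot$const, both of which tend to $0$ with $\sum_i|b_i|$. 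Then I would feed $|\pat(W_{\tilde\gamma}+W_{0,\tilde\gamma})/\pat u_i|\le$ (energy density)$^{1/2}$ pointwise — more carefully, bound $\int_{s}^{s+1}\int_{S^1}|\pat(W_{\tilde\gamma}+W_{0,\tilde\gamma})/\pat u_i|^2$ by the total energy and upgrade to a pointwise bound via the interior estimate of Theorem \ref{bdd-bdry-thm} applied to the $\w=\pat_s\hat{\mathbf v}$ equation — into Lemma \ref{thm-new} to conclude $|u_i(s,\theta)|\le C(\text{small quantity}+|b_i|)^{\delta_i}$, which is small. The main obstacle is the last upgrade: translating the smallness of the \emph{energy} (an $L^2$ quantity) into smallness of the \emph{$C^0$ norm of} $\pat W/\pat u$, and then into $C^0$ smallness of $\bu$. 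This requires care because the soliton lives on a non-compact cylinder and $\bu$ need not decay at a uniform rate; the resolution is that the BPS equation is an autonomous ODE system in $s$ with $\theta$ merely a parameter (the Neveu-Schwarz components vanish identically by the earlier argument that nontrivial $\theta$-independent sections of NS bundles don't exist), so one is really estimating a finite-dimensional gradient trajectory lying in a small neighborhood of $0$ determined by the size of the critical points, and the Łojasiewicz-type control from non-degeneracy of $W$ together with Lemma \ref{thm-new} closes it. I would therefore structure the final write-up around the autonomous ODE picture, invoking the already-proven energy and monotonicity identities, rather than the abstract compactness contradiction.
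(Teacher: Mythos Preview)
Your second (preferred) route is essentially the paper's argument, and it is correct. The paper also starts from the energy identity to make $\int_{\R}\sum_i|\partial(W_{\tilde\gamma}+W_{0,\tilde\gamma})/\partial u_i|^2$ small, then passes from small energy to small $C^0$ norm of $\partial W/\partial u_i$, and finally invokes nondegeneracy of $W$ (equivalently Lemma~\ref{thm-new}) to conclude $|u_i|$ is small. The one place where the paper is more direct than your outline is the ``main obstacle'' you flag, the $L^2\to C^0$ upgrade: the paper simply observes that the $C^1$ norm of each $u_i$ is bounded uniformly in the perturbation parameters (Theorem~\ref{thm-solu-noda}, i.e.\ the interior/boundary estimates with constants independent of $b_i<1$), so $|\partial(W_{\tilde\gamma}+W_{0,\tilde\gamma})/\partial u_i|^2$ is uniformly Lipschitz in $s$, and a nonnegative uniformly Lipschitz function with small $L^1$ norm on $\R$ has small $C^0$ norm. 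This replaces your monotonicity/level-set discussion and the appeal to the $\w$-equation, neither of which is needed. Your contradiction/compactness alternative would also work and is a genuinely different route; it trades the elementary Lipschitz argument for a limiting procedure plus the Witten lemma for the unperturbed $W_{\tilde\gamma}$, which is heavier but equally valid.
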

\begin{proof} Since $(WI)(\bu_{+-})=0$, we have
$$
\mbox{Re}(W_{\tilde{\gamma}}+W_{0,\tilde{\gamma}})(\kappa^-)-\mbox{Re}(W_{\tilde{\gamma}}+W_{0,\tilde{\gamma}})(\kappa^+)
=\int^\infty_{-\infty} \sum_i |\frac{\pat
(W_{\tilde{\gamma}}+W_{0,\tilde{\gamma}})}{\pat u_i}|^2.
$$
Hence if the perturbation is small enough, the integral of the
right hand side is small enough, which implies that the pointwise
norm $|\frac{\pat (W_{\tilde{\gamma}}+W_{0,\tilde{\gamma}})}{\pat
u_i}|$ is small enough. This is because  the $C^1$ norm of each
section $u_i$ can be uniformly bounded by a constant which is
independent of the perturbation parameters $b_i$ in
$W_{0,\tilde{\gamma}}$ when all $b_i$ are less than $1$ (cf.
Theorem \ref{thm-solu-noda}). Therefore $|\frac{\pat W}{\pat
u_i}|=|\frac{\pat W_{\tilde{\gamma}}}{\pat u_i}|$ is sufficiently
small. By nondegeneracy of $W$, the absolute value of each section
$u_i$ should be sufficiently small.
\end{proof}

\begin{lm}\label{lm-soli-1} If the perturbation parameters
$b_i$ in $W_{0,\tilde{\gamma}}$ are sufficiently small, then the
linearized operator $D^N_{\bu_{+-}}(WI)$ is an isomorphism.
\end{lm}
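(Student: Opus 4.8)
The plan is to conjugate $D^N_{\bu_{+-}}(WI)$ to a constant‑coefficient Cauchy–Riemann type operator on the infinite cylinder, prove that model operator is an \emph{isomorphism} (not merely Fredholm), and then absorb the remaining zeroth‑order term by a Neumann series. First I would desingularize: each Neveu–Schwarz bundle $\LL_i$, $i>N_{\tilde{\gamma}}$, restricted to $\R\times S^1$ is a nontrivial flat bundle with monodromy $e^{2\pi\sqrt{-1}\Theta_i^{\tilde{\gamma}}}$, $0<\Theta_i^{\tilde{\gamma}}<1$, so the twisting map $\eta(\Theta_i^{\tilde{\gamma}})\colon \phi(s,e^{i\theta})\mapsto e^{-i\Theta_i^{\tilde{\gamma}}\theta}\phi(s,e^{i\theta})$ used in the proof of Theorem \ref{ind-witt-oper} conjugates $D^N_{\bu_{+-}}(WI)$ to an operator $\widehat D$ on ordinary $\C^{N-N_{\tilde{\gamma}}}$‑valued functions over $\R\times S^1$ of the form $\widehat D(\bphi)=\bpat_\xi\bphi+\diag(\Theta^{\tilde{\gamma}}_i)\,\bphi+\overline{B(s,\theta)\,\bphi}$, where $B$ is the (twisted) Hessian matrix $-2\,\pat^2 W_N/\pat u_i\pat u_j$ evaluated along $\bu_{+-}$; by Theorem \ref{thm-solu-noda} this $B$ decays exponentially as $s\to\pm\infty$. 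Since invertibility is preserved under this Banach‑space conjugation, it suffices to treat $\widehat D$.

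Next I would analyze the model operator $\widehat D_0(\bphi):=\bpat_\xi\bphi+\diag(\Theta^{\tilde{\gamma}}_i)\,\bphi$ obtained by dropping $B$. Componentwise this is $\pat_s+\sqrt{-1}\pat_\theta+\Theta$ with $0<\Theta<1$, and the Fourier expansions used in Lemma \ref{homo-int-esti} and Lemma \ref{ind-inftube} show that on the \emph{whole} cylinder the homogeneous equation has no nonzero $L^p_1$ solution: the modes $e^{-(n+\Theta)\xi}$ that are $L^p$ near $s=+\infty$ are those with $n+\Theta>0$, those $L^p$ near $s=-\infty$ have $n+\Theta<0$, and these conditions are mutually exclusive. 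A similar computation for the formal adjoint $-\pat_s+\sqrt{-1}\pat_\theta+\Theta$ shows the cokernel is trivial as well. Finally, the two asymptotic operators of $\widehat D_0$ at $s=\pm\infty$, namely $J\pat_\theta+\diag(\Theta_i^{\tilde{\gamma}},\Theta_i^{\tilde{\gamma}})$ on $L^2(S^1,\R^{2(N-N_{\tilde{\gamma}})})$, are invertible because $0<\Theta_i^{\tilde{\gamma}}<1$ keeps $0$ out of their spectra; this is exactly the situation in which a Cauchy–Riemann type operator on $\R\times S^1$ is Fredholm (of index $0$) in the \emph{unweighted} Sobolev spaces, as already noted in the proof of Theorem \ref{ind-witt-oper}. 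Fredholm with trivial kernel and cokernel gives that $\widehat D_0\colon L^p_1(\R\times S^1)\to L^p(\R\times S^1)$ is an isomorphism with bounded inverse $Q_0$.

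It then remains to control $B$. Under the standing assumption $q_i<1/2$ for every $i$, no monomial of $W$, hence none of $W_N$, can be a pure quadratic $u_iu_j$ in two Neveu–Schwarz variables, since quasi‑homogeneity would force $q_i+q_j=1$. Consequently $\pat^2 W_N/\pat u_i\pat u_j$ for Neveu–Schwarz indices $i,j$ vanishes at the origin, so it is small wherever $\bu_{+-}$ is small; by Proposition \ref{prop-BPS-C0norm}, taking $\sum_i|b_i|$ small forces $\|\bu_{+-}\|_{C^0}$ small, hence $\|B\|_{C^0(\R\times S^1)}$ as small as we please. Since $\bphi\mapsto\overline{B\,\bphi}$ is a bounded real‑linear map $L^p_1\to L^p$ with operator norm $\le C\|B\|_{C^0}$, for $\sum_i|b_i|$ small enough we get $\|Q_0(\widehat D-\widehat D_0)\|<1$, so $\widehat D=\widehat D_0\bigl(I+Q_0(\widehat D-\widehat D_0)\bigr)$ is invertible by a Neumann series, and therefore so is $D^N_{\bu_{+-}}(WI)$.

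The main obstacle is the middle step: showing that the model operator $\widehat D_0$ is genuinely invertible on the noncompact cylinder, which requires pinning down both that the range is closed (from invertibility of the asymptotic operators) and that kernel and cokernel actually vanish (from the explicit Fourier description, using crucially $0<\Theta_i^{\tilde{\gamma}}<1$). Once that is in place, the smallness of $B$ and the perturbation argument are routine; one should only keep in mind the minor bookkeeping that $\widehat D_0$ is complex linear while $\overline{B\,\cdot\,}$ is merely conjugate linear, which does not affect any of the norm estimates.
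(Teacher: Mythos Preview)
Your proposal is correct and follows essentially the same strategy as the paper: conjugate by the desingularization $\eta(\Theta_N)$, reduce to the model operator $\bpat_\xi+\diag(\Theta_i^{\tilde\gamma})$, and then absorb the zeroth-order term as a small perturbation using Proposition~\ref{prop-BPS-C0norm}. The paper's proof is considerably terser---it simply asserts that $\bpat_\xi+\Theta_N$ is an isomorphism and that $\overline{A^N\cdot}$ is a small perturbation---whereas you supply the details the paper omits: the Fourier-mode argument that the model operator has trivial kernel and cokernel on the full cylinder, and the observation that the standing hypothesis $q_i<1/2$ forbids pure quadratics in Neveu--Schwarz variables, which is what makes the Hessian of $W_N$ vanish at the origin and hence makes $B$ genuinely small along a small $\bu_{+-}$.
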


\begin{proof} If we set $\bphi^N:=(\phi_{N_{\tilde{\gamma}}+1},\dots,
\phi_N)^T$ and $A^N(s):=(-2\frac{\pat^2 W_{N}}{\pat u_{i}\pat
u_j})_{N_{\tilde{\gamma}}+1\le i,j\le N}$, then
$$
D^N_{\bu_{+-}}(WI)(\bphi^N)=\bpat_\xi
\bphi^N+\overline{A^N(s)\cdot \bphi^N}.
$$
Here $\xi=s+i\theta$ and the matrix $A^N$ depends on $\bu_{+-}$.
Define matrices
\begin{align*}
&\Theta_N=\diag(\Theta^{\tilde{\gamma}}_{N_{\tilde{\gamma}}+1},\dots,
\Theta^{\tilde{\gamma}}_N)\\
&\eta(\Theta_N)=\diag(e^{-i\theta\Theta^{\tilde{\gamma}}_{N_{\tilde{\gamma}}+1}},\dots,
e^{-i\theta\Theta^{\tilde{\gamma}}_N}).
\end{align*}
Then the multiplication $\eta(\Theta_N)$ from $L^p_1(\R\times
S^1,\LL^{N_{\tilde{\gamma}}+1})\times\dots\times L^p_1(\R\times
S^1,\LL^N)$ to $L^p_1(\R\times S^1, \C^{N-N_{\tilde{\gamma}}})$
and from $L^p(\R\times S^1,
\LL^{N_{\tilde{\gamma}}+1}\otimes\Lambda^{0,1})\times L^p(\R\times
S^1, \LL^{N}\otimes\Lambda^{0,1})$ to $L^p(\R\times S^1,
\C^{N-N_{\tilde{\gamma}}}\otimes\Lambda^{0,1}) $ are isomorphisms.
The operator $\eta(\Theta_N)\circ D^N_{\bu_{+-}}(WI)\circ
\eta(\Theta_N)^{-1}$ is changed to the form $\bpat_\xi \cdot+
\Theta_N\cdot+\overline{A^N\cdot}$, which is a small perturbation
of the operator $\bpat_\xi \cdot+ \Theta_N\cdot$. Since $\bpat_\xi
\cdot+ \Theta_N\cdot$ is an isomorphism, then $\bpat_\xi \cdot+
\Theta_N \cdot+\overline{A^N\cdot}$ is also an isomorphism by
Proposition \ref{prop-BPS-C0norm} if the perturbation parameters
$b_i$ are sufficiently small. Thus we know that
$D^N_{\bu_{+-}}(WI)$ is an isomorphism.

\end{proof}

Now we study the transversality of $D^R_{\bu_{+-}}(WI)$. Define
$$
H_{+-}:=\{(u_1,\dots,u_{N_{\tilde{\gamma}}})\in \C^{n_{\gamma}}|
\mbox{Im}(W_{\tilde{\gamma}}+W_{0,\tilde{\gamma}})(u_1,\dots,u_{N_{\tilde{\gamma}}})
=\mbox{Im}(W_{\tilde{\gamma}}+W_{0,\tilde{\gamma}})(\kappa^+)\}.
$$
This is a real codimension 1 submanifold in
$\C^{N_{\tilde{\gamma}}}$.

\begin{lm}\label{lm-mors-smal} Suppose the BPS soliton $\bu_{+-}$ to be a Morse-Smale
flow on the manifold $H_{+-}$. If the perturbation parameters
$b_i$ in $W_{0,\tilde{\gamma}}$ are sufficiently small, then the
kernel $V_{+-}$ and cokernel $E_{+-}$ of the linearized operator
$D^R_{\bu_{+-}}(WI): L^p_1(\R\times S^1,
\C^{N_{\tilde{\gamma}}})\rTo L^p(\R\times S^1,
\C^{N_{\tilde{\gamma}}}\otimes\Lambda^{0,1})$ are $1$-dimensional
and are generated by $\bu_s$ and $i\bu_s$ respectively.
\end{lm}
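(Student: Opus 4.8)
The plan is to reduce the statement to a standard fact about the linearization of a gradient/Morse-Smale flow along a single connecting orbit, exactly as one does in Floer theory. First I would observe that after the desingularization map $\eta(\Theta_N)$ used in Lemma \ref{lm-soli-1} is irrelevant here (the operator $D^R_{\bu_{+-}}(WI)$ already involves only the Ramond variables, which carry no orbifold structure on $\R\times S^1$), so we may work directly with sections of the trivial bundle $\C^{N_{\tilde\gamma}}$. The key point, established in the earlier subsection on BPS solitons, is that a BPS soliton $\bu_{+-}$ is $\theta$-independent and satisfies $\pat_s\bu_{+-}=2\overline{\pat(W_{\tilde\gamma}+W_{0,\tilde\gamma})/\pat u}$; hence it is a negative gradient trajectory of the function $H_{R0}=2\operatorname{Re}(W_{\tilde\gamma}+W_{0,\tilde\gamma})$ restricted to the level hypersurface $H_{+-}$, running from $\kappa^+$ to $\kappa^-$. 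I would then split the problem: differentiating the soliton equation in $s$ shows $\bu_s:=\pat_s\bu_{+-}$ lies in $\ker D^R_{\bu_{+-}}(WI)$, and a short computation with the formal adjoint $-\pat_\xi\cdot+\overline{\operatorname{Hess}\cdot}$ shows that $i\bu_s$ (equivalently $J\bu_s$) spans its cokernel, since the adjoint equation is again the linearized trajectory equation with the sign of $s$ reversed.

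The substantive content is that these are the \emph{only} kernel/cokernel elements, i.e. that the operator, as a Fredholm operator of index $0$ (by the Corollary following Theorem \ref{ind-witt-oper}), has exactly one-dimensional kernel. Here I would invoke the Morse--Smale hypothesis: transversality of the stable and unstable manifolds $\C^s(\kappa^-)$ and $\C^u(\kappa^+)$ inside $H_{+-}$ at the orbit $\bu_{+-}$ is precisely the statement that the linearized operator of the \emph{reduced} flow on $H_{+-}$ is surjective, hence (index $-1$ on the reduced space, $+1$ after including the $\pat_s$-translation direction) has one-dimensional kernel. The passage from the finite-dimensional Morse--Smale transversality to surjectivity of the PDE operator $\bpat_\xi + \overline{A_R(s)\cdot}$ on $\R\times S^1$ is the classical argument of Floer/Salamon: since $\bu_{+-}$ and the coefficient matrix $A_R(s)$ are independent of $\theta$, one performs a Fourier decomposition in $\theta$; the zero-mode equation reduces to the linearized ODE along the flow line, whose Fredholm and transversality properties are governed by the Conley--Zehnder / spectral-flow index of the asymptotic Hessians at $\kappa^\pm$, while all nonzero modes are uniformly invertible for small perturbation (the asymptotic operators $J\pat_\theta + S^{\pm\infty}$ have no kernel on the nonzero Fourier modes once $\|S^{\pm\infty}\|<1$, which holds by Proposition \ref{prop-BPS-C0norm} when the $b_i$ are small enough, exactly as in the last lemma of the exponential-decay subsection).

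Concretely the steps, in order, are: (1) reduce to sections of $\C^{N_{\tilde\gamma}}$ and rewrite $D^R_{\bu_{+-}}(WI)$ as $\bpat_\xi\phi + \overline{A_R(s)\phi}$ with $A_R(s)\to A_R^{\pm\infty} = -2\,\overline{\operatorname{Hess}(W_{\tilde\gamma}+W_{0,\tilde\gamma})(\kappa^\pm)}$; (2) verify $\bu_s\in\ker$ by differentiating the BPS equation, and $i\bu_s\in\operatorname{coker}$ by checking it solves the adjoint equation with the boundary/decay conditions (using exponential decay of $\bu_s$ from the exponential-decay subsection, which also guarantees $\bu_s\neq 0$ unless the soliton is constant, excluded since $\kappa^+\neq\kappa^-$); (3) Fourier-decompose in $\theta$, observe nonzero modes contribute nothing to kernel or cokernel for small $b_i$ (invertibility of $J\pat_\theta+S^{\pm\infty}$ on those modes), so kernel and cokernel of $D^R_{\bu_{+-}}(WI)$ coincide with those of the reduced operator $\pat_s + A_R^{0}(s)$ along the flow line; (4) identify that reduced operator's kernel with $T_{\bu_{+-}}(\C^u(\kappa^+)\cap\C^s(\kappa^-))/\R$-type data and apply the Morse--Smale assumption to conclude it is $1$-dimensional, with cokernel $0$ on $H_{+-}$ but $1$-dimensional in the ambient $\C^{N_{\tilde\gamma}}$-valued setting — matching index $0$; (5) since $D^R_{\bu_{+-}}(WI)$ has index $0$, a one-dimensional kernel forces a one-dimensional cokernel, and by step (2) these are spanned by $\bu_s$ and $i\bu_s$. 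The main obstacle is step (3)–(4): making precise the identification of the PDE kernel on the cylinder with the finite-dimensional linearized-flow data and showing the nonzero Fourier modes drop out uniformly; this is routine in the symplectic-Floer literature (Salamon's lectures, cited in the excerpt) but requires care because our "energy hypersurface" $H_{+-}$ is only a real-codimension-one slice of $\C^{N_{\tilde\gamma}}$, so one must track the extra normal direction (the imaginary-part/Hamiltonian direction) which is exactly what produces the cokernel generator $i\bu_s$ rather than a larger kernel.
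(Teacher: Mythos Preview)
Your proposal is correct and follows essentially the same route as the paper: reduce the cylinder operator $\bpat_\xi\bphi^R+\overline{A^R(s)\bphi^R}$ to the $\theta$-independent operator $D^{R,\R}_{\bu_{+-}}(WI)$ on $L^p_1(\R,\bu_{+-}^*T\C^{N_{\tilde\gamma}})$, use the Morse--Smale hypothesis on $H_{+-}$ to get a one-dimensional kernel spanned by $\bu_s$, and then check that $i\bu_s$ solves the adjoint equation $-\pat_s\cdot+\overline{A^R(s)\cdot}$ to produce the one-dimensional cokernel. The only difference is packaging: the paper invokes the reduction from $\R\times S^1$ to $\R$ as a black-box ``well-known fact in symplectic geometry'' (citing Fukaya--Ono p.~1038, Conley--Zehnder, Ono) valid when $\|A^R(s)\|_{C^0}$ is small, whereas you sketch its proof via Fourier decomposition in $\theta$ and invertibility of the nonzero modes --- which is exactly how that fact is established in the cited references.
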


\begin{proof} Set $\bphi^R:=(\phi_1,\dots,
\phi_{N_{\tilde{\gamma}}})^T$ and $A^R(s):=(-2\frac{\pat^2
(W_{\tilde{\gamma}}+W_{0,\tilde{\gamma}}) }{\pat u_{i}\pat
u_j})_{1\le i,j\le N_{\tilde{\gamma}}}$, then
$$
D^R_{\bu_{+-}}(WI)(\bphi^R)=\bpat_\xi
\bphi^R+\overline{A^R(s)\cdot \bphi^R}.
$$
By Proposition \ref{prop-BPS-C0norm}, the $C^0$ norm of $A^R(s)$
is sufficiently small if the $b_i$ in $W_{0,\tilde{\gamma}}$ are
sufficiently small. Now it is a well-known fact in symplectic
geometry that if the $C^0$ norm of $A^R(s)$ is small enough, then
the kernel and cokernel of $D^R_{\bu_{+-}}(WI)$ are the same as
the kernel and cokernel of the operator
$$
D^{R,\R}_{\bu_{+-}}(WI): L^p_1(\R,
\bu_{+-}^*T\C^{N_{\tilde{\gamma}}})\rTo L^p(\R,
\bu_{+-}^*T\C^{N_{\tilde{\gamma}}}).
$$
(see the proof on Page 1038 of \cite{FO} or \cite{CZ, On}).

Since all the BPS solitons connecting $\kappa^+$ and $\kappa^-$
should lie in the hypersurface $H_{+-}$, we have $\ker
D^{R,\R}_{\bu_{+-}}(WI)\subset L^p_1(\R, \bu_{+-}^*T H_{+-})$.
Since we assume that $\bu_{+-}$ is a Morse-Smale flow on $H_{+-}$,
$\ker D^{R,\R}_{\bu_{+-}}(WI)$ is just the $1$-dimensional space
generated by the field $\frac{\pat \bu_{+-}}{\pat s}$. On the
other hand, the dual operator
$(D^{R,\R}_{\bu_{+-}}(WI))^*=-\pat_s\cdot+\overline{A^R(s)\cdot}$.
It is easy to see that $i\frac{\pat \bu_{+-}}{\pat s}$ satisfies
$(D^{R,\R}_{\bu_{+-}}(WI))^*(\bu)=0$. Therefore it generates the
$1$-dimensional cokernel.
\end{proof}

Let $H^{\tilde{\gamma}}_{para}\in \C^{N_{\tilde{\gamma}}}$ be the
set in the parameter space of $b_i$ such that there exist two
critical points $\kappa^+$ and $\kappa^-$ of
$W_{\tilde{\gamma}}+W_{0,\tilde{\gamma}}$ satisfying
$\mbox{Im}(W_{\tilde{\gamma}}+W_{0,\tilde{\gamma}})(\kappa^-)
=\mbox{Im}(W_{\tilde{\gamma}}+W_{0,\tilde{\gamma}})(\kappa^+)$. By
Theorem \ref{thm-para-chamber}, $H^{\tilde{\gamma}}_{para}$ is a
union of finitely many real hypersurfaces.

\begin{thm}\label{thm-soli-kura-nbhd} For generic points on $H^{\tilde{\gamma}}_{para}$, the
kernel and cokernel spaces of $D_{\bu_{+-}}(WI)$ are just the
$1$-dimensional spaces $V_{+-}$ and $E_{+-}$. We can choose a
Kuranishi neighborhood of $(\R\times S^1, \bu_{+-})$ as $(\{pt\},
E_{+-}, s\equiv 0)$.
\end{thm}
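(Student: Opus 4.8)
The plan is to combine the decomposition $D_{\bu_{+-}}(WI) = D^R_{\bu_{+-}}(WI) \oplus D^N_{\bu_{+-}}(WI)$ with the two lemmas just established. First I would invoke Lemma \ref{lm-soli-1}: when the perturbation parameters $b_i$ in $W_{0,\tilde{\gamma}}$ are sufficiently small, $D^N_{\bu_{+-}}(WI)$ is an isomorphism, so it contributes nothing to kernel or cokernel. Then I would invoke Lemma \ref{lm-mors-smal}: provided the BPS soliton $\bu_{+-}$ is a Morse--Smale flow on the hypersurface $H_{+-}$ and the $b_i$ are small, $\ker D^R_{\bu_{+-}}(WI) = V_{+-}$ is one-dimensional, generated by $\bu_s := \pat_s \bu_{+-}$, and $\coker D^R_{\bu_{+-}}(WI) = E_{+-}$ is one-dimensional, generated by $i\bu_s$. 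Adding the two summands gives $\ker D_{\bu_{+-}}(WI) = V_{+-}$ and $\coker D_{\bu_{+-}}(WI) = E_{+-}$, both one-dimensional.

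The remaining point is the genericity statement: I need to know that for a generic point of $H^{\tilde{\gamma}}_{para}$ the soliton $\bu_{+-}$ actually is a Morse--Smale flow on $H_{+-}$, so that Lemma \ref{lm-mors-smal} applies. Here I would use the Picard--Lefschetz picture developed earlier in the ``Stable manifolds and vanishing cycles'' subsection: the BPS solitons connecting $\kappa^+$ and $\kappa^-$ are the gradient-type trajectories of $\mathrm{Re}(W_{\tilde{\gamma}}+W_{0,\tilde{\gamma}})$ restricted to the level set $H_{+-}$, and the stable/unstable manifolds $\C^s(\kappa^-)$, $\C^u(\kappa^+)$ are each of real dimension $N_{\tilde{\gamma}}$ inside the $(2N_{\tilde{\gamma}}-1)$-dimensional $H_{+-}$. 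For generic $b_i$ on $H^{\tilde{\gamma}}_{para}$ these intersect transversally in $H_{+-}$ — this is exactly the transversality established in the proof of Theorem \ref{thm-para-chamber} for the relevant derivatives — which is precisely the Morse--Smale condition along $\bu_{+-}$. By Theorem \ref{thm-para-chamber}, $H^{\tilde{\gamma}}_{para}$ is a finite union of real hypersurfaces, so ``generic point of $H^{\tilde{\gamma}}_{para}$'' makes sense as an open dense subset of (the smooth locus of) that union.

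Finally, to produce the Kuranishi neighborhood $(\{pt\}, E_{+-}, s \equiv 0)$, I would note that the isolated soliton $(\R\times S^1, \bu_{+-})$ has a zero-dimensional moduli (after the normalization removing the $\C$-translation, or rather, since this is a BPS soliton the relevant model records it as a single point), that $D_{\bu_{+-}}(WI)$ has index $0$ by the corollary to Theorem \ref{ind-witt-oper}, and that the obstruction space is $E_{+-}$. Applying the Kuranishi model of Lemma \ref{lm:model} with $V = V_{+-}$, $E = E_{+-}$, and observing that on this isolated piece the Kuranishi map vanishes identically (there is no deformation direction surviving after quotienting by the translation action whose image in $V_{+-}$ is exactly $V_{+-}$), we get the claimed chart. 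The main obstacle I expect is the genericity/Morse--Smale verification: making precise that transversality of $\C^s(\kappa^-)$ and $\C^u(\kappa^+)$ inside $H_{+-}$ holds for generic points of the wall $H^{\tilde{\gamma}}_{para}$, and that this is the hypothesis under which Lemma \ref{lm-mors-smal} was stated; everything else is assembly of results already in hand.
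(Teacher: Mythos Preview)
Your approach is essentially the same as the paper's: the paper's proof is just two sentences, asserting that for generic points on $H^{\tilde{\gamma}}_{para}$ the function $-2\mbox{Re}(W_{\tilde{\gamma}}+W_{0,\tilde{\gamma}})$ is Morse on $H_{+-}$ and $\bu_{+-}$ is a Morse--Smale flow, then invoking Lemmas~\ref{lm-soli-1} and~\ref{lm-mors-smal}. You have correctly unpacked the decomposition $D_{\bu_{+-}}(WI) = D^R_{\bu_{+-}}(WI) \oplus D^N_{\bu_{+-}}(WI)$ and applied the two lemmas to the respective summands.

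One small misattribution: the transversality you want (stable and unstable manifolds meeting transversally in $H_{+-}$, i.e.\ the Morse--Smale condition) is \emph{not} what is established in the proof of Theorem~\ref{thm-para-chamber}. That theorem shows that the walls $(f_i-f_j)^{-1}(0)$ in parameter space are smooth real hypersurfaces, which is a statement about the chamber structure of the perturbation parameters, not about transversality of invariant manifolds of the gradient flow. The paper simply asserts the Morse--Smale genericity as a standard fact from Morse theory (genericity of Morse--Smale gradient flows under perturbation of the function), without proof or citation; you should do the same rather than point to Theorem~\ref{thm-para-chamber}.
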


\begin{proof} For generic points on $H^{\tilde{\gamma}}_{para}$,
the function $-2Re(W_{\tilde{\gamma}}+W_{0,\tilde{\gamma}})$ is a
Morse function on $H_{+-}$ such that $\bu_{+-}$ is a Morse-Smale
flow on $H_{+-}$. By Lemmas \ref{lm-soli-1} and
\ref{lm-mors-smal}, we finish the proof.
\end{proof}

\begin{rem} The cokernel space $E_{+-}$ can be generated by a
compactly supported element in $L^p(\R,
\bu_{+-}^*T\C^{N_{\tilde{\gamma}}})$, since we can multiply the
section $i\frac{\pat \bu}{\pat s}$ by a cut-off function with
sufficiently large compact support.
\end{rem}

Now we begin the construction of the neighborhoods of BPS soliton
sections case by case.

Case (1).  Suppose that $\bvkappa=(\bvkappa',\kappa^+)$. Take
$(\frkc_1,\bu_1)\in \MMrs_{g,k}(\bgamma,\bvkappa)$ and a BPS soliton
$\bu_{+-}\in S_{\tilde{\gamma}}(\kappa^+,\kappa^-)$. Then
$(\frkc_1\#(\R\times S^1), \bu_1\#\bu_{+-})\in
\MMrs_{g,k}(\bgamma,\bvkappa',\kappa^-)$. We define a map:
$$
Glue^m_{\bu_{+-}}:
\MMrs_{g,k}(\bgamma,\bvkappa',\kappa^+)\rTo
\MMrs_{g,k}(\bgamma,\bvkappa',\kappa^-)
$$
by
$$
Glue^m_{\bu_{+-}}(\bu_1):=\bu_1\#\bu_{+-}.
$$
Here we want to say words about the gluing. Note that $\bu_{+-}$
is defined in $\C^N$ space. To do the gluing, we firstly map
$\bu_1$ to $\C^N$ by the rigidification and then do gluing in
$\C^N$ space. The obtained element inherits the same
rigidification.

By Theorem \ref{thm-stro-kura}, we can give an oriented Kuranishi
structure to each strongly regular moduli space
$\MMr_{g,k}(\bgamma,\bvkappa)$. We repeat the procedures in the last
section to construct the neighborhood of $Glue^m_{\bu_{+-}}(\bu_1)$.
Let $(U_\sigma, E_\sigma, s_\sigma)$ be a Kuranishi neighborhood of
$\bu_1$ in $\MMrs_{g,k}(\bgamma,\bvkappa',\kappa^+)$. By Theorem
\ref{thm-soli-kura-nbhd}, we can take the Kuranishi neighborhood of
$(\R\times S^1, \bu_{+-})$ to be $(\{pt\}, E_{+-}, s\equiv 0)$. Now
we can use our gluing construction again:
\begin{enumerate}
\item Assume that $U_\sigma=\Vd\times \Vr\times \Vm$. Let
$\beta\in \Vd\times \Vr$ and $\zeta\in D_\varepsilon(0)\in \C$. We
can get the nearby curve $\frkc_{\beta,\zeta}$.

\item Take the obstruction bundle $E_{\beta,\zeta}=E_\sigma\oplus
E_{+-}$.

\item Use the implicit function theorem to construct the Kuranishi
map $s_{\sigma, \zeta}$ on $U_\sigma\times D_\varepsilon(0)$.
Because of the $S^1$ action, the ``Kuranishi" neighborhood of the
boundary point $(\frkc_{1,\sigma}\#(\R\times S^1),
\bu_{1,\sigma}\#\bu_{+-})$ is $((U_\sigma\times
[0,\varepsilon_\sigma]), E_\sigma\oplus E_{+-}, s_{\sigma,
\zeta})$, where the length $\varepsilon_\sigma$ of the cone may
depend on the point $\sigma$. We understand that the section
$s_{\sigma, \zeta}$ depends only on the first coordinate of
$\zeta$.
\end{enumerate}

Now we have the following lemma.

\begin{lm}\label{lm-BPS-kura1} Let $(U_\sigma, E_\sigma, s_\sigma)$ be a Kuranishi neighborhood of
 $(\frkc_1,\bu_1)\in \MMrs_{g,k}(\bgamma,\bvkappa',\kappa^+)$,
then $(U_\sigma\times [0,\varepsilon_\sigma], E_\sigma\oplus
E_{+-}, s_{\sigma, \zeta})$ is a Kuranishi neighborhood of
$Glue^m_{\bu_{+-}}(\bu_1)$ in
 $\MMrs_{g,k}(\bgamma,\bvkappa',\kappa^-)$. 
There exists a
 sequence of smooth
 multisections $s_{\sigma, \zeta, n}$, transversal to the zero
 section, such that it converges to $s_{\sigma, \zeta}$ in the $C^0$
 topology. For sufficiently large $n$, the zero set $s_{\sigma, \zeta,
 n}^{-1}(0)$ is diffeomorphic to $s_{\sigma, 0, n}^{-1}(0)\times
 [0,\varepsilon]$.
\end{lm}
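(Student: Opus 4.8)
The plan is to follow the same gluing-and-implicit-function-theorem machinery developed in Section \ref{sec-inter-gluing} (specifically Theorems \ref{thm:knbhd} and \ref{thm-stro-kura}), but carried out at the boundary point $\frkc_1\#(\R\times S^1)$ where the extra glued-on piece is the BPS soliton $\bu_{+-}$. First I would recall that by Theorem \ref{thm-soli-kura-nbhd} the soliton factor contributes a Kuranishi datum $(\{pt\}, E_{+-}, s\equiv 0)$ with $E_{+-}$ one-dimensional (spanned by a compactly-supported representative of $i\pat_s\bu_{+-}$, as in the Remark following Theorem \ref{thm-soli-kura-nbhd}), and that $D^N_{\bu_{+-}}(WI)$ is an isomorphism by Lemma \ref{lm-soli-1}, so the only obstruction and the only extra modulus come from the Ramond part. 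The gluing parameter $\zeta\in D_\varepsilon(0)\subset\C$ decomposes as $\zeta = (t,\vartheta)$ with $t = |\zeta| \in [0,\varepsilon_\sigma)$ the gluing length and $\vartheta$ the angular parameter; the $S^1$-action generated by $\pat_\theta$ (which is a genuine symmetry of the soliton equation on $\R\times S^1$) acts freely on the $\vartheta$-coordinate, and after quotienting it out one is left precisely with the half-open interval factor $[0,\varepsilon_\sigma]$. This is why the Kuranishi neighborhood acquires a boundary: the point $t=0$ is exactly $Glue^m_{\bu_{+-}}(\bu_1)$ itself (the broken configuration), and $t>0$ corresponds to genuinely glued irreducible $W$-sections.

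Next I would verify that the pre-gluing construction goes through verbatim: form $\frkc_{\beta,\zeta}$ by the cylinder-gluing of Proposition \ref{deli-mumf} applied to the node joining $\frkc_1$ and $\R\times S^1$; transport the obstruction space to $E_{\beta,\zeta} = E_\sigma \oplus E_{+-}$ via the deformation isomorphism $\theta_{\beta,\zeta}$ (noting the supports are disjoint from the gluing neck for $\zeta$ small, exactly as in the interior case); establish the approximate right inverse with uniform bound (the analogues of Lemmas \ref{lm-appro-1}, \ref{lm-localchart1}, \ref{lm-kuramodel-0}) using that both $\bu_1$ and $\bu_{+-}$ decay exponentially to $\kappa^+$ by Theorem \ref{thm-solu-noda} and the exponential-decay theorem in Section \ref{sec:witten}; and then apply the Kuranishi model Lemma \ref{lm:model} to the nonlinear map $F_{\beta,\zeta}(\bphi) = WI_{\beta,\zeta}(\bu_{app,\beta,\zeta}+\bphi)$ to get the section $s_{\sigma,\zeta}$ on $U_\sigma\times D_\varepsilon(0)$. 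The key bookkeeping point is that $s_{\sigma,\zeta}$ depends on $\zeta$ only through $|\zeta| = t$ — this follows because the soliton cokernel direction $i\pat_s\bu_{+-}$ is $S^1$-equivariant and the gluing construction is itself $S^1$-equivariant in the angular parameter, so the whole Kuranishi model is invariant under rotation of $\zeta$. Hence $(U_\sigma\times[0,\varepsilon_\sigma], E_\sigma\oplus E_{+-}, s_{\sigma,\zeta})$ is a legitimate Kuranishi neighborhood with boundary in the sense of the subsection ``Kuranishi structure with boundary''; that $\Psi_\sigma$ restricted to the zero set is a homeomorphism onto a neighborhood of $Glue^m_{\bu_{+-}}(\bu_1)$ in $\MMrs_{g,k,W}(\bgamma,\bvkappa',\kappa^-)$ is proved exactly as the injectivity/surjectivity argument in Theorem \ref{thm:knbhd}, using Gromov compactness (Theorem \ref{thm-gromov-comp}) to produce the gluing parameters for any convergent sequence.

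For the multisection statement: by the transversality approximation theorem of Fukaya–Ono (Theorem 3.11 / Lemma 3.14 in \cite{FO}, cited in subsection \ref{subsec:MK}) and the orientability established at the end of Section \ref{sec-inter-gluing}, there exists a sequence of smooth multisections $s_{\sigma,\zeta,n}$ converging in $C^0$ to $s_{\sigma,\zeta}$ and transverse to the zero section. Because $s_{\sigma,\zeta}$ is independent of the angular part of $\zeta$, one chooses the perturbations $s_{\sigma,\zeta,n}$ likewise independent of the $S^1$-direction, so that $s_{\sigma,\zeta,n}^{-1}(0)$ fibers over the $t$-interval $[0,\varepsilon]$; a standard implicit-function-theorem / Ehresmann-type argument over the compact interval then shows that for $n$ large this fibration is trivial, giving a diffeomorphism $s_{\sigma,\zeta,n}^{-1}(0)\cong s_{\sigma,0,n}^{-1}(0)\times[0,\varepsilon]$. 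The main obstacle I anticipate is not any single estimate but rather the careful verification that the $S^1$-symmetry is preserved through every stage of the gluing — in particular that the cutoff data ($\beta$, $\varpi$) and the averaged embeddings $\overline{\mbox{emb}_{y,\zeta}}$ can be chosen rotationally invariant in the gluing angle, so that the claimed $t$-only dependence of $s_{\sigma,\zeta}$ genuinely holds; once that is in place, the diffeomorphism-to-a-product conclusion and hence the identification of the boundary stratum is essentially formal.
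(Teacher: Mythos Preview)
Your proposal is correct and follows the same overall strategy as the paper, but you should be aware that the paper's own proof is a single line: ``The approximating sequence of multisections $s_{\sigma, \zeta, n}$ is given by Lemma 17.4 in \cite{FO}.'' This is because the paper has already carried out the entire gluing construction (your first two paragraphs) in the three enumerated steps \emph{preceding} the lemma statement, so by the time the lemma is stated the Kuranishi neighborhood $(U_\sigma\times[0,\varepsilon_\sigma], E_\sigma\oplus E_{+-}, s_{\sigma,\zeta})$ is already in hand and only the multisection claim remains. Your reconstruction of that preamble is accurate and matches the paper's setup.

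The one substantive difference is the Fukaya--Ono citation: you invoke Theorem 3.11 and Lemma 3.14 of \cite{FO} (the general transversality approximation results) and then add an Ehresmann-type argument to get the product structure $s_{\sigma,\zeta,n}^{-1}(0)\cong s_{\sigma,0,n}^{-1}(0)\times[0,\varepsilon]$, whereas the paper appeals directly to Lemma 17.4 of \cite{FO}, which is an extension lemma for multisections from a boundary stratum and delivers the product-with-interval structure in one stroke. Your route is slightly longer but equally valid; the paper's shortcut buys economy at the cost of assuming the reader knows what Lemma 17.4 provides. Your worry about preserving $S^1$-equivariance through the cutoff data and averaged embeddings is legitimate and is indeed something the paper leaves implicit.
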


\begin{proof}  The approximating sequence of multisections $s_{\sigma, \zeta,
n}$ is given by Lemma 17.4 in \cite{FO}.
\end{proof}

Case (2). There are still two types of  gluing operations called
tree gluing and loop gluing in this case.

\subsubsection*{Tree gluing}\quad Suppose that $g=g_1+g_2,
k=k_1+k_2, \bgamma=(\bgamma_1,\bgamma_2)$ and
$\bvkappa=(\bvkappa_1,\bvkappa_2)$. Let $(\frkc_1,\bu_1)\in
\MMr_{g_1,k_1+1,W}(\bgamma_1,\tilde{\gamma};\bvkappa_1,\kappa^+),
(\frkc_2,\bu_2)\in
\MMr_{g_2,k_2+1,W}(\bgamma_2,\tilde{\gamma}^{-1};\bvkappa_2,\kappa^-)$
and let $\bu_{+-}\in S_{\tilde{\gamma}}(\kappa^+,\kappa^-)$ be a BPS
soliton. Then $(\frkc_1\#(\R \times S^1)\#\frkc_2),
\bu_1\#\bu_{+-}\#\bu_2)$ is a BPS soliton $W$-section in
$\MMrs_{g,k}(\bgamma;\bvkappa)$. We define the map from
$\MMr_{g_1,k_1+1,W}(\bgamma_1,\tilde{\gamma};\bvkappa_1,\kappa^+)
\times\MMr_{g_2,k_2+1,W}(\bgamma_2,\tilde{\gamma}^{-1};\bvkappa_2,\kappa^-)$
to $\MMrs_{g,k}(\bgamma,\bvkappa)$ to be
$Glue^n_{\bu_{+-}}(\bu_1,\bu_2):=\bu_1\#\bu_{+-}\#\bu_2$.

Let $A$ be a space; we define the cone based on $A$ with length
$\varepsilon$ to be $C_\varepsilon(A)$.

Let $(U_{\sigma_i}, E_{\sigma_i}, s_{\sigma_i})$ be a Kuranishi
neiborhood of $(\frkc_i,\bu_i)$ for $i=1,2$. Then
$(U_{\sigma_1}\times U_{\sigma_2}, E_{\sigma_1}\oplus
E_{\sigma_2}, s_{\sigma_1}\oplus s_{\sigma_2})$ is a neighborhood
of $\bu_1\times \bu_2$. Let $U_{\sigma_i}=V_{deform,i}\times
V_{resol,i}\times V_{map,i}$. We can begin our gluing procedures:

\begin{enumerate}
\item The parameter space for the curves is $V_{deform,1}\times
V_{resol,1}\times V_{deform,2}\times V_{resol,2}\times
D_\varepsilon(0)\times D_\varepsilon(0)$. We take a point
$(\beta_1,\beta_2,\zeta_1,\zeta_2)$, then we can get a nearby curve
$\frkc_{\beta_1,\beta_2,\zeta_1,\zeta_2}\in \WW_{g,k}(\bgamma)$,
where $\beta_i\in V_{deform,i}\times V_{resol,i}$ and
$(\zeta_1\times \zeta_2)\in D_\epsilon(0)\times D_\epsilon(0)$.

\item Set the obstruction bundle on
$\cC_{\beta_1,\beta_2,\zeta_1,\zeta_2}$ to be $E_{\sigma_1}\oplus
E_{\sigma_2}\oplus E_{+-}$.

\item From $\bu_1,\bu_{+-},\bu_2$ we get the approximating
solution $\bu_{app,\beta_1,\beta_2,\zeta_1,\zeta_2}$. Use the
implicit function theorem to obtain the Kuranishi map
$s_{\sigma_1,\sigma_2,\zeta_1,\zeta_2}$ on $U_{\sigma_1}\times
U_{\sigma_2}\times D_\varepsilon(0)\times D_\varepsilon(0)$. Here
the radius of these domains may shrink. Since $\bu_{+-}$ is
$S^1$-invariant, the approximating solution is also $S^1$-invariant.
Hence the moduli space we need should be modulo the $S^1$ action.
The $S^1$ group gives a diagonal action to the neighborhood
$D_\varepsilon(0)\times D_\varepsilon(0)$ while keeping the  other
parameter space fixed. We take the neighborhood
$C_\varepsilon(S^3)\subset D_\varepsilon(0)\times D_\varepsilon(0)$.
We have $C_\varepsilon(S^3)/S^1=C_\varepsilon(S^2)$.  Hence we
obtain a neighborhood of the point $\bu_1\#\bu_{+-}\#\bu_2$ in
$\MMrs_{g,k}(\bgamma,\bvkappa)$:
$$
(U_{\sigma_1}\times U_{\sigma_2}\times
C_{\varepsilon_\sigma}(S^2), E_{\sigma_1}\oplus E_{\sigma_2}\oplus
E_{+-}, s_{\sigma_1,\sigma_2,[\zeta_1,\zeta_2]}),
$$
where $\varepsilon_\sigma$ depends on the point
$\sigma_1\times\sigma_2$, and $[\zeta_1,\zeta_2]$ is the element
in the quotient space
$C_{\varepsilon_\sigma}(S^2)=C_{\varepsilon_\sigma}(S^3)/S^1\subset
\C\times \C$.
\end{enumerate} Using the same method  as Lemma \ref{lm-BPS-kura1}, we
can chose a uniform $\varepsilon>0$ for the Kuranishi structure on
$\MM_{g_1,k_1+1,W}(\bgamma_1,\tilde{\gamma};\bvkappa_1,\kappa^+)
\times\MM_{g_2,k_2+1,W}(\bgamma_2,\tilde{\gamma}^{-1};\bvkappa_2,\kappa^-)$.

Similar to Lemma \ref{lm-BPS-kura1}, we have the following lemma.

\begin{lm}[\textbf{Tree}]\label{lm-BPS-kura2-Tree} Let $(U_{\sigma_1}\times U_{\sigma_2}, E_{\sigma_1}\oplus E_{\sigma_2},
s_{\sigma_1}\oplus s_{\sigma_2})$ be a Kuranishi neighborhood of
$(\bu_1,\bu_2)\in
\MMr_{g_1,k_1+1,W}(\bgamma_1,\tilde{\gamma};\bvkappa_1,\kappa^+)
\times\MMr_{g_2,k_2+1,W}(\bgamma_2,\tilde{\gamma}^{-1};\bvkappa_2,\kappa^-)$.
Then $(U_{\sigma_1}\times U_{\sigma_2}\times C_{\varepsilon}(S^2),
E_{\sigma_1}\oplus E_{\sigma_2}\oplus E_{+-},
s_{\sigma_1,\sigma_2,[\zeta_1,\zeta_2]})$ is a Kuranishi
neighborhood of $Glue^n_{\bu_{+-}}(\bu_1,\bu_2)$ in
 $\MMrs_{g,k}(\bgamma,\bvkappa',\kappa^-)$. There exists a
 sequence of smooth multisections $s_{\sigma_1,\sigma_2,[\zeta_1,\zeta_2],n}$ which is transversal to the zero
 section such that it converges to $s_{\sigma_1,\sigma_2,[\zeta_1,\zeta_2]}$ in the $C^0$
 topology. For sufficiently large $n$, the zero set $s_{\sigma_1,\sigma_2,[\zeta_1,\zeta_2],n}^{-1}(0)$
 is diffeomorphic to $s_{\sigma_1,\sigma_2,[0,0],n}^{-1}(0)\times
 C_\varepsilon(S^2)$.
\end{lm}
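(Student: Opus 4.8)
The plan is to reduce the statement to the interior gluing construction of Section \ref{sec-inter-gluing} (Theorem \ref{thm:knbhd} and its patching) together with the explicit local description of a BPS soliton from Theorem \ref{thm-soli-kura-nbhd}, and then to track the extra $S^1$-symmetry. First I would set up the approximate solution: given $(\frkc_1,\bu_1)\in\MMr_{g_1,k_1+1,W}(\bgamma_1,\tilde\gamma;\bvkappa_1,\kappa^+)$, $(\frkc_2,\bu_2)\in\MMr_{g_2,k_2+1,W}(\bgamma_2,\tilde\gamma^{-1};\bvkappa_2,\kappa^-)$, and the fixed BPS soliton $\bu_{+-}\in S_{\tilde\gamma}(\kappa^+,\kappa^-)$, I would form the glued curve $\frkc_{\beta_1,\beta_2,\zeta_1,\zeta_2}$ (using the two independent gluing parameters $\zeta_1,\zeta_2\in D_\varepsilon(0)$ for the two necks attaching $\R\times S^1$ to $\frkc_1$ and $\frkc_2$), and the pre-glued section $\bu_{app,\beta_1,\beta_2,\zeta_1,\zeta_2}$ obtained by the cutoff-gluing procedure of Lemma \ref{lm-appro-1}. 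The obstruction space is $E_{\sigma_1}\oplus E_{\sigma_2}\oplus E_{+-}$, where $E_{+-}$ is the one-dimensional cokernel from Lemma \ref{lm-mors-smal}, represented by a compactly supported section per the Remark following Theorem \ref{thm-soli-kura-nbhd} so that it embeds stably into the glued curve.

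Next I would carry out the Fredholm/implicit-function step exactly as in the interior case: the estimates of Lemmas \ref{lm-localchart1}, \ref{lm-kuramodel-0}, \ref{lm-kuramodel-1}, \ref{lm-kuramodel-2} go through uniformly in the gluing parameters, so Lemma \ref{lm:model} yields, for $|\beta_i|$ small and $|\zeta_i|$ small, a Kuranishi map $s_{\sigma_1,\sigma_2,\zeta_1,\zeta_2}$ on $U_{\sigma_1}\times U_{\sigma_2}\times D_\varepsilon(0)\times D_\varepsilon(0)$ whose zero set is homeomorphic to the relevant part of $\MMrs_{g,k,W}(\bgamma,\bvkappa)$. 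The key structural point is the $S^1$-invariance: since $\bu_{+-}$ is $\theta$-independent, the reparametrization group of the soliton tube contains $S^1$ (translation in $\theta$), and this acts diagonally on the neck parameters $(\zeta_1,\zeta_2)$ while fixing $\beta_1,\beta_2$ and the map components; the Kuranishi data can be chosen $S^1$-equivariantly. Quotienting the parameter neighborhood $C_\varepsilon(S^3)\subset D_\varepsilon(0)\times D_\varepsilon(0)$ by this $S^1$ gives $C_\varepsilon(S^2)$, whence the chart $(U_{\sigma_1}\times U_{\sigma_2}\times C_\varepsilon(S^2),\ E_{\sigma_1}\oplus E_{\sigma_2}\oplus E_{+-},\ s_{\sigma_1,\sigma_2,[\zeta_1,\zeta_2]})$. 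I would then invoke the argument of Lemma \ref{lm-BPS-kura1} (which in turn uses Lemma 17.4 of \cite{FO}) to pass from the pointwise $\varepsilon_\sigma$ to a uniform $\varepsilon>0$ across the compact factor.

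Finally, for the transversal approximation: by Fukaya--Ono's multisection perturbation theorem there is a sequence $s_{\sigma_1,\sigma_2,[\zeta_1,\zeta_2],n}$ of multisections transverse to zero, converging in $C^0$ to $s_{\sigma_1,\sigma_2,[\zeta_1,\zeta_2]}$. Because $s_{\sigma_1,\sigma_2,[\zeta_1,\zeta_2]}$ depends, near the soliton stratum, only on the radial coordinate of $[\zeta_1,\zeta_2]\in C_\varepsilon(S^2)$ — indeed on that stratum the gluing is unobstructed in the $S^2$-directions — the perturbation can be taken compatible with the cone structure, so that $s_{\sigma_1,\sigma_2,[\zeta_1,\zeta_2],n}^{-1}(0)\cong s_{\sigma_1,\sigma_2,[0,0],n}^{-1}(0)\times C_\varepsilon(S^2)$ for large $n$; this is the same conclusion and the same proof scheme as Lemma \ref{lm-BPS-kura1}, now with an extra $C_\varepsilon(S^2)$ rather than an interval, reflecting that two necks (rather than one) are being opened.

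The main obstacle I expect is the $S^1$-equivariance bookkeeping: one must check that all the gluing operators $Glue_{\beta,\zeta}$, $\theta_{\beta,\zeta}$, $\eta_{\beta,\zeta}$, the right inverse $Q_{\beta,\zeta}$, and hence $s_{\sigma_1,\sigma_2,\zeta_1,\zeta_2}$, can be chosen to intertwine the diagonal $S^1$-action on $(\zeta_1,\zeta_2)$ with the trivial action elsewhere, and that the compactly supported representative of $E_{+-}$ together with the normalization (extra marked point with transversal disk $\D$ on the soliton component) genuinely kills the residual translation freedom so that the quotient by $S^1$ is the only collapse. Everything else is a routine transcription of Theorem \ref{thm:knbhd}, Lemma \ref{lm-BPS-kura1}, and the Fukaya--Ono machinery to the two-neck situation, with the index/dimension count supplied by Theorem \ref{ind-witt-oper} and its Corollary.
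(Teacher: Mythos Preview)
Your proposal is correct and follows the same approach as the paper: the construction of the Kuranishi chart via pre-gluing, obstruction bundle $E_{\sigma_1}\oplus E_{\sigma_2}\oplus E_{+-}$, implicit function theorem, and $S^1$-quotient of $D_\varepsilon(0)\times D_\varepsilon(0)$ is exactly what the paper lays out in the paragraphs preceding the lemma. The paper's actual proof of the lemma is a single sentence addressing only the multisection approximation and the product structure of the zero set.

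The one point where the paper is sharper than your write-up is the approximation step. You argue via ``compatibility with the cone structure'' and $S^1$-equivariance bookkeeping; the paper instead simply observes that $C_\varepsilon(S^2)$ is homeomorphic to a closed ball $N_\varepsilon\subset\R^3$ (the cone on $S^2$ is a $3$-disk, with no singular cone point), so one may apply the Fukaya--Ono approximation theorem directly on the smooth manifold $U_{\sigma_1}\times U_{\sigma_2}\times N_\varepsilon$. This removes the need to verify any special equivariance or cone-compatibility of the perturbed multisections, and the product-with-$C_\varepsilon(S^2)$ structure of the zero set then follows immediately. Your approach would also work, but the paper's observation short-circuits the issue you flagged as ``the main obstacle.''
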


\begin{proof} Since the cone $C_\varepsilon(S^2)$ is homeomorphic to
a closed small neighborhood $N_\varepsilon$ of $\R^3$, we can use
the approximation theorem on the  section
$s_{\sigma_1,\sigma_2,[\zeta_1,\zeta_2]}$ over $U_{\sigma_1}\times
U_{\sigma_2}\times N_\varepsilon$.
\end{proof}

\subsubsection*{Loop gluing}\quad Let $(\frkc_1,\bu_1)\in
\MMr_{g,k+2,W}(\bgamma,\tilde{\gamma},\tilde{\gamma}^{-1};\bvkappa,
\kappa_-,\kappa_+)$ and $(\R\times S^1,\bu_{+-})\in
S^{\tilde{\gamma}}(\kappa_+,\kappa_-)$. Then we can glue
$(\frkc_1,\bu_1)$ and $(\R\times S^1,\bu_{+-})$ between the two
marked points on $\frkc_1$ decorated by $(\tilde{\gamma}, \kappa_-)$
and $(\tilde{\gamma}^{-1},,\kappa_+)$ to obtain an element in
$\MMrs_{g,k}(\bgamma,\bvkappa)$. Denote this element by
$Glue^n_{\bu_{+-}}(\bu_1)$. In the same way, we have the following:

\begin{lm}[\textbf{Loop}]\label{lm-BPS-kura2-loop} Let $(U_{\sigma_1}, E_{\sigma_1},
s_{\sigma_1})$ be a Kuranishi neighborhood of
$\MMr_{g,k+2,W}(\bgamma,\tilde{\gamma},\tilde{\gamma}^{-1};\bvkappa,
\kappa_-,\kappa_+)$. Then $(U_{\sigma_1}\times
C_{\varepsilon}(S^2), E_{\sigma_1}\oplus E_{+-},
s_{\sigma_1,[\zeta_1,\zeta_2]})$ is a Kuranishi neighborhood of
$Glue^n_{\bu_{+-}}(\bu_1)$ in
 $\MMrs_{g,k}(\bgamma,\bvkappa',\kappa^-)$. There exists a
 sequence of smooth multisections $s_{\sigma_1,[\zeta_1,\zeta_2],n}$ which is transversal to the zero
 section such that it converges to $s_{\sigma_1,[\zeta_1,\zeta_2]}$ in the $C^0$
 topology. For sufficiently large $n$, the zero set $s_{\sigma_1,[\zeta_1,\zeta_2],n}^{-1}(0)$
 is diffeomorphic to $s_{\sigma_1,[0,0],n}^{-1}(0)\times
 C_\varepsilon(S^2)$.
\end{lm}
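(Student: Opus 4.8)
The argument is the loop-gluing analogue of the proof of Lemma~\ref{lm-BPS-kura2-Tree}, and I would run it essentially verbatim. The only combinatorial change is that, instead of attaching two $W$-curves to the two ends of the infinite cylinder, one attaches the single curve $\frkc_1$ to \emph{both} ends of $\R\times S^1$ at its marked points decorated by $(\tilde\gamma,\kappa_-)$ and $(\tilde\gamma^{-1},\kappa_+)$, which closes up a loop in the dual graph via the canonical gluing of rigidified $W$-structures of Section~\ref{sec:rigmoduli} (compatible with the involution $I$ fixed there). So: starting from a Kuranishi chart $U_{\sigma_1}=V_{deform,1}\times V_{resol,1}\times V_{map,1}$ of $(\frkc_1,\bu_1)$ and a pair of gluing parameters $\zeta_1,\zeta_2$ for the two new nodes, I build the family of rigidified $W$-curves $\frkc_{\beta_1,\zeta_1,\zeta_2}\in\MMr_{g,k,W}(\bgamma)$ and the approximate solution $\uapp$ by the gluing map of Lemma~\ref{lm-appro-1} applied at both ends; the exponential decay of $\bu_1$ and of $\bu_{+-}$ to the common limits $\kappa_\mp$ (Theorem~\ref{thm-solu-noda}) makes $\uapp$ a genuine approximate solution of the perturbed Witten equation.

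Next I transplant the obstruction space $E_{\sigma_1}\oplus E_{+-}$ onto $\frkc_{\beta_1,\zeta_1,\zeta_2}$ through the deformation and embedding isomorphisms of Section~\ref{sec-inter-gluing}, using that $E_{+-}$ may be represented by a compactly supported section (the remark following Theorem~\ref{thm-soli-kura-nbhd}), and then apply the Kuranishi model Lemma~\ref{lm:model} to $F_{\beta_1,\zeta_1,\zeta_2}(\bphi):=WI_{\frkc_{\beta_1,\zeta_1,\zeta_2}}(\uapp+\bphi)$. Its three hypotheses --- a uniform upper bound for the approximate right inverse, smallness of the $E$-component of $F(0)$, and the quadratic estimate for the nonlinear remainder --- are checked exactly as in Lemmas~\ref{lm-kuramodel-0}, \ref{lm-kuramodel-1} and \ref{lm-kuramodel-2}; the only new inputs are the invertibility of $D^N_{\bu_{+-}}(WI)$ (Lemma~\ref{lm-soli-1}) and the one-dimensional kernel and cokernel $V_{+-}$, $E_{+-}$ of $D^R_{\bu_{+-}}(WI)$, spanned by $\bu_s$ and $i\bu_s$ (Lemma~\ref{lm-mors-smal}), which together make $E_{\sigma_1}\oplus E_{+-}$ a complement of $\im D_{\uapp}(WI)$. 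This yields a continuous section $s_{\sigma_1,\zeta_1,\zeta_2}$ over $U_{\sigma_1}\times D_\varepsilon(0)\times D_\varepsilon(0)$ whose zero locus is homeomorphic to the space of perturbed $W$-sections near $Glue^n_{\bu_{+-}}(\bu_1)$ before quotienting.

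It then remains to divide out the reparametrizations of the soliton component. The $\R$-translation is killed by the normalization device of Theorem~\ref{thm:knbhd} (an extra Neveu--Schwarz marked point on the soliton component together with a transverse slice $\D$), while the $S^1$-rotation acts diagonally on $(\zeta_1,\zeta_2)$ and trivially on the base $V_{deform,1}\times V_{resol,1}\times V_{map,1}$. Restricting the gluing parameters to a cone $C_\varepsilon(S^3)\subset D_\varepsilon(0)\times D_\varepsilon(0)$ and passing to the quotient gives $C_\varepsilon(S^3)/S^1=C_\varepsilon(S^2)$, with the cone vertex $[\zeta_1,\zeta_2]=[0,0]$ corresponding to the locus where the soliton component persists; as in Lemma~\ref{lm-BPS-kura1}, a compactness argument over the base lets one pick $\varepsilon$ uniformly. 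This produces the Kuranishi neighborhood $(U_{\sigma_1}\times C_\varepsilon(S^2),\,E_{\sigma_1}\oplus E_{+-},\,s_{\sigma_1,[\zeta_1,\zeta_2]})$ of $Glue^n_{\bu_{+-}}(\bu_1)$ in $\MMrs_{g,k,W}(\bgamma,\bvkappa)$, with boundary at the cone vertex. Finally, since $C_\varepsilon(S^2)$ is homeomorphic to a closed neighborhood of $0$ in $\R^3$, I apply the Fukaya--Ono approximation theorem (Lemma~17.4 of \cite{FO}) to $s_{\sigma_1,[\zeta_1,\zeta_2]}$ over $U_{\sigma_1}\times C_\varepsilon(S^2)$, obtaining transversal multisections $s_{\sigma_1,[\zeta_1,\zeta_2],n}$ that $C^0$-converge to $s_{\sigma_1,[\zeta_1,\zeta_2]}$; since this section depends to leading order only on the cone coordinate and is a product away from the vertex, for large $n$ its zero set fibers over the $C_\varepsilon(S^2)$-factor, giving $s_{\sigma_1,[\zeta_1,\zeta_2],n}^{-1}(0)\cong s_{\sigma_1,[0,0],n}^{-1}(0)\times C_\varepsilon(S^2)$.

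I expect the only real difficulty to be the equivariant/combinatorial bookkeeping rather than the analysis: checking that the two gluing parameters of the loop organize into $C_\varepsilon(S^3)$ with a diagonal $S^1$-action whose quotient is the cone on $S^2$, that the resulting datum is a bona fide Kuranishi neighborhood with boundary exactly at the cone vertex, and that $\varepsilon$ can be chosen uniformly over the compact base so these charts glue with the interior Kuranishi structure of Section~\ref{sec-inter-gluing}. The gluing estimates themselves are a word-for-word transcription of the interior case (Lemmas~\ref{lm-localchart1}--\ref{lm-kuramodel-2}).
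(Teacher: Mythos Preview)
Your construction is correct and matches the paper's approach; the paper itself gives no separate proof of this lemma beyond ``In the same way, we have'', referring back to the Tree case (whose proof is the single observation that $C_\varepsilon(S^2)\cong N_\varepsilon\subset\R^3$ so the Fukaya--Ono approximation theorem applies).

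One point to correct, however: you write that the resulting Kuranishi neighborhood has ``boundary exactly at the cone vertex''. This is backwards. The cone $C_\varepsilon(S^2)=(S^2\times[0,\varepsilon])/(S^2\times\{0\})$ is homeomorphic to the closed $3$-ball, with the vertex $[0,0]$ mapping to the \emph{center} of the ball --- an interior point --- and the boundary sphere sitting at the $\{\varepsilon\}$-end. This is precisely the content of the Definition following Lemma~\ref{lm-BPS-kura4} and of Theorem~\ref{thm-sing-neib}(1): the loop-glued BPS soliton sections are \emph{cone points}, hence interior points of $\MMrs_{g,k,W}(\bgamma,\bvkappa)$, in contrast with Cases~(1) and~(4) where the gluing parameter space is $[0,\varepsilon]$ and the soliton sits at the genuine boundary $\{0\}$. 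Your own identification $C_\varepsilon(S^2)\cong N_\varepsilon\subset\R^3$ in the last paragraph already implies this, so the slip does not affect the analysis; but the distinction between interior cone points and boundary points is exactly what drives the wall-crossing argument in Theorem~\ref{thm-cobo-sing}, so it is worth getting straight.
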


Case (3) The gluing operation will become more complicated because
of the possible tree gluing and the possible loop gluing. Here we
only consider the simplest gluing which does not contain any loop
gluing.

Let $\bu_{+-}^i, i=1,\dots, l$ be $l$ BPS solitons in
$S^{\tilde{\gamma}}(\kappa^+,\kappa^-)$. Assume that
$g=g_1+\dots+g_{l+1}, k=k_1+\dots+k_{l+1}$ and the index group $
(\bgamma, \bvkappa)$ is divided into $l+1$ parts:
$(\bgamma_1,\bvkappa_1),\dots, (\bgamma_{l+1},\bvkappa_{l+1})$.
Take $(\frkc_1,\bu_1)\in
\MMr_{g_1,k_1+1,W}(\bgamma_1,\tilde{\gamma};\bvkappa_1,\kappa^+),
(\frkc_2,\bu_2)\in \MMr_{g_2,k_2+2,W}(\bgamma_2,\tilde{\gamma}^{-1},
\tilde{\gamma};\bvkappa_2,\kappa^-,\kappa^+),\dots,
(\frkc_{l+1},\bu_{l+1})\in
\MMr_{g_{l+1},k_{l+1}+1,W}(\bgamma_{l+1},\tilde{\gamma}^{-1};\bvkappa_{l+1},\kappa^-)$;
we can define the gluing map from
$\MMr_{g_1,k_1+1,W}(\bgamma_1,\tilde{\gamma};\bvkappa_1,\kappa^+)\times
\dots\times
\MMr_{g_{l+1},k_{l+1}+1,W}(\bgamma_{l+1},\tilde{\gamma}^{-1};\bvkappa_{l+1},\kappa^-)$
to $ \MMr_{g,k}(\bgamma,\bvkappa)$ as
$$
Glue^n_{\bu_{+-}^i}(\bu_1,\dots,\bu_{l+1}):=(\frkc_1\#(\R\times
S^1)\#\dots\#\frkc_{l+1},
\bu_1\#\bu^1_{+-}\#\dots\#\bu^l_{+-}\#\bu_{l+1}).
$$

Similarly, we have the following conclusion.

\begin{lm}\label{lm-BPS-kura3} Let $(U_{\sigma_1}\times\dots\times U_{\sigma_{l+1}}, E_{\sigma_1}\oplus\dots\oplus E_{\sigma_{l+1}},
s_{\sigma_1}\oplus\dots\oplus s_{\sigma_{l+1}})$ be a Kuranishi
neighborhood of
$(\bu_1,\dots,\bu_{l+1})\in\MMr_{g_1,k_1+1,W}(\bgamma_1,\tilde{\gamma};\bvkappa_1,\kappa^+)
\times \dots\times
\MMr_{g_{l+1},k_{l+1}+1,W}(\bgamma_{l+1},\tilde{\gamma}^{-1};\bvkappa_{l+1},\kappa^-)$.
Then $(U_{\sigma_1}\times\dots\times
U_{\sigma_{l+1}}\times\\
\substack{\underbrace{C_{\varepsilon}(S^2)\times C_{\varepsilon}(S^2)}\\
l}, E_{\sigma_1}\oplus\dots\oplus E_{\sigma_{l+1}}\oplus
\substack{\underbrace{E_{+-}\oplus\dots\oplus E_{+-}}\\ l},
s_{\sigma_1,\dots,\sigma_{l+1},[\zeta_1,\zeta_2]_1,\dots,[\zeta_1,\zeta_2]_l})$
is a Kuranishi neighborhood of
$Glue^n_{\bu_{+-}^i}(\bu_1,\dots,\bu_{l+1})$ in
$\MMrs_{g,k}(\bgamma,\bvkappa)$. There exists a
 sequence of smooth multisections $s_{\sigma_1,\dots,\sigma_{l+1},[\zeta_1,\zeta_2]_1,\dots,[\zeta_1,\zeta_2]_l,n}$ which are transversal to the zero
 section such that it converges to $s_{\sigma_1,\dots,\sigma_{l+1},[\zeta_1,\zeta_2]_1,\dots,[\zeta_1,\zeta_2]_l}$ in the $C^0$
 topology. For sufficiently large $n$, the zero set $s_{\sigma_1,\dots,\sigma_{l+1},[\zeta_1,\zeta_2]_1,\dots,[\zeta_1,\zeta_2]_l,n}^{-1}(0)$
 is diffeomorphic to $s_{\sigma_1,\dots,\sigma_{l+1},[0,0]_1,\dots,[0,0]_l,n}^{-1}(0)\times
 \substack{\underbrace{C_{\varepsilon}(S^2)\times C_{\varepsilon}(S^2)}\\ l}$.
\end{lm}

Case (4). Like in Case (3), we only give the description of the
simplest treegluing.

Take the same notations as in Case (3) except that we require
$$
\bu_{l+1}\in\MMr_{g_{l+1},k_{l+1}+2,W}(\bgamma_{l+1},\tilde{\gamma}^{-1},
\tilde{\gamma};\bvkappa_{l+1},\kappa^-, \kappa^+).
$$
Let $\bu_{+-}^{l+1}\in S^{\tilde{\gamma}}(\kappa^+,\kappa^-)$.
Then we define the gluing operation:
$$
Glue^m_{\bu_{+-}^i}(\bu_1,\dots,\bu_{l+1}):=Glue^m_{\bu^{l+1}_{+-}}((Glue^n_{\bu_{+-}^i}(\bu_1,\dots,\bu_{l+1}))).
$$

\begin{lm}\label{lm-BPS-kura4}Let $(U_{\sigma_1}\times\dots\times U_{\sigma_{l+1}},
E_{\sigma_1}\oplus\dots\oplus E_{\sigma_{l+1}},
s_{\sigma_1}\oplus\dots\oplus s_{\sigma_{l+1}})$ be a Kuranishi
neighborhood of
$(\bu_1,\dots,\bu_{l+1})\in\MMr_{g_1,k_1+1,W}(\bgamma_1,\tilde{\gamma};\bvkappa_1,\kappa^+)
\times \dots\times
\MMr_{g_{l+1},k_{l+1}+1,W}(\bgamma_{l+1},\tilde{\gamma}^{-1};\bvkappa_{l+1},\kappa^-)$.
Then $(U_{\sigma_1}\times\dots\times
U_{\sigma_{l+1}}\times\\
\substack{\underbrace{C_{\varepsilon}(S^2)\times C_{\varepsilon}(S^2)}\\
l}\times [0, \varepsilon_\sigma], E_{\sigma_1}\oplus \dots\oplus
E_{\sigma_{l+1}}\oplus
\substack{\underbrace{E_{+-}\oplus\dots\oplus E_{+-}}\\ l+1},
s_{\sigma_1,\dots,\sigma_{l+1},[\zeta_1,\zeta_2]_1,\dots,[\zeta_1,\zeta_2]_l,
\zeta_{l+1}})$ is a Kuranishi neighborhood of
$Glue^m_{\bu_{+-}^i}(\bu_1,\dots,\bu_{l+1})\in
\MMrs_{g,k}(\bgamma,\bvkappa',\kappa^-)$. There exists a
 sequence of smooth multisections $s_{\sigma_1,\dots,\sigma_{l+1},[\zeta_1,\zeta_2]_1,\dots,[\zeta_1,\zeta_2]_l,\xi_{l+1},n}$ which are transversal to the zero
 section such that it converges to $s_{\sigma_1,\dots,\sigma_{l+1},[\zeta_1,\zeta_2]_1,\dots,[\zeta_1,\zeta_2]_l,\xi_{l+1}}$ in the $C^0$
 topology. For sufficiently large $n$, the zero set
 $s_{\sigma_1,\dots,\sigma_{l+1},[\zeta_1,\zeta_2]_1,\dots,[\zeta_1,\zeta_2]_l,\xi_{l+1},n}^{-1}(0)$
 is diffeomorphic to $s_{\sigma_1,\dots,\sigma_{l+1},[0,0]_1,\dots,[0,0]_l,0,n}^{-1}(0)\times
 \substack{\underbrace{C_{\varepsilon}(S^2)\times C_{\varepsilon}(S^2)}\\ l}\times [0,\varepsilon].$
\end{lm}

\begin{df} If the BPS soliton section $(\frkc,\bu)\in
\MMrs_{g,k}(\bgamma,\bvkappa)$ satisfies Case 2 or Case 3, we call
it a cone point in $\MMrs_{g,k}(\bgamma,\bvkappa)$.
\end{df}

By Lemma \ref{lm-BPS-kura2-Tree}, \ref{lm-BPS-kura2-loop}, and Lemma
\ref{lm-BPS-kura3}, we know that the cone point also carries a
Kuranishi neighborhood and hence is actually an interior point of
the moduli space $\MMrs_{g,k}(\bgamma,\bvkappa)$.

\begin{thm}\label{thm-sing-neib} Suppose that the moduli space $\MMrs_{g,k}(\bgamma,\bvkappa)$
is regular but not strongly regular. Then we have the following
conclusions:
\begin{enumerate}
\item if $(\bgamma, \bvkappa)$ does not contain the pair
$(\tilde{\gamma},\kappa^-)$, then $\MMrs_{g,k}(\bgamma,\bvkappa)$ is
a compact Hausdorff space carrying an orientable Kuranishi
structure. Its (real) dimension is $6g-6+2k-2D-\sum_{i=1}^k
N_{\gamma_i}$, where $D=\hat{c}_W(g-1)+\sum_\tau
\iota(\gamma_\tau)$, where $\hat{c}_W=\sum_i(1- 2q_i)$ and
$\iota(\gamma_\tau)=\sum_{i}( \Theta_i^{\gamma_\tau}-q_i)$.

\item if $(\bgamma, \bvkappa)$ contains the pair
$(\tilde{\gamma},\kappa^-)$, then $\MMrs_{g,k}(\bgamma,\bvkappa)$ is
a compact Hausdorff space carrying an orientable Kuranishi structure
with boundary. Its (real) dimension is $6g-6+2k-2D-\sum_{i=1}^k
N_{\gamma_i}$, where $D$ is given as above. The boundary points
consist of those BPS soliton $W$-sections satisfying Case (1) and
(4). Their neighborhoods in $\MMrs_{g,k}(\bgamma,\bvkappa)$ are
characterized by Lemma \ref{lm-BPS-kura1} and \ref{lm-BPS-kura4}.
The boundary of $\MMrs_{g,k}(\bgamma,\bvkappa)$ together with its
Kuranishi structure is diffeomorphic to that of
$\MMrs_{g,k}(\bgamma',\tilde{\gamma}^{-1},\bvkappa', \kappa^+)\#
(S^{\tilde{\gamma}}(\kappa^+,\kappa^-)/\R)^{S^1}$, where
$(S^{\tilde{\gamma}}(\kappa^+,\kappa^-)/\R)^{S^1}$ is the
$S^1$-invariant set of $(S^{\tilde{\gamma}}(\kappa^+,\kappa^-))/\R$,
i.e., the geometrical set of BPS solitons connecting $\kappa^+$ and
$\kappa^-$.
\end{enumerate}
\end{thm}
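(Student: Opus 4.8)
\textbf{Proof proposal for Theorem \ref{thm-sing-neib}.}
The plan is to assemble the two assertions from the local models established in the preceding subsections, together with the Gromov compactness theorem (Theorem \ref{thm-gromov-comp}) and the interior gluing/orientation argument of Section \ref{sec-inter-gluing}. Compactness and Hausdorffness are not new: they are precisely the content of Theorem \ref{thm-gromov-comp} under the standing hypothesis that $W_\gamma+W_{0,\gamma}$ is strongly $W_\gamma$-regular for every $\gamma$ except the single element $\tilde\gamma$, for which it has exactly two critical points $\kappa^\pm$ with equal imaginary part. The real content is the Kuranishi structure (with or without boundary) and the identification of the boundary.

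First I would classify the points of $\MMrs_{g,k,W}(\bgamma,\bvkappa)$. By the Witten Lemma and the asymptotic analysis of Section \ref{sec:witten}, a point is either an ordinary stable $W$-section (no soliton component), or a soliton $W$-section. If it has no BPS soliton component then by Theorem \ref{thm:knbhd} it is an interior point carrying a Kuranishi neighborhood of the stated dimension $6g-6+2k-2D-\sum_i N_{\gamma_i}$, and the interior gluing construction of Section \ref{sec-inter-gluing} patches these into a global Kuranishi structure. If it has a BPS soliton component, it falls into one of Cases (1)--(4) of subsection \ref{kura-neig-sing}. Cases (2) and (3) — no marked point but one or several nodal points decorated by $(\tilde\gamma,\kappa^-)$ — are the cone points: Lemma \ref{lm-BPS-kura2-Tree}, Lemma \ref{lm-BPS-kura2-loop} and Lemma \ref{lm-BPS-kura3} give them genuine (boundaryless) Kuranishi neighborhoods of the form $U_{\sigma_1}\times\cdots\times C_\varepsilon(S^2)\times\cdots$, so they are interior points. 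Cases (1) and (4) — exactly one marked point decorated by $(\tilde\gamma,\kappa^-)$ — are by Lemma \ref{lm-BPS-kura1} and Lemma \ref{lm-BPS-kura4} modeled on $U_\sigma\times[0,\varepsilon_\sigma]$, i.e. boundary charts. Thus in part (1), when $(\bgamma,\bvkappa)$ does not contain $(\tilde\gamma,\kappa^-)$, Cases (1) and (4) cannot occur, every chart is interior, and we get a closed Kuranishi structure; in part (2) the boundary charts do occur and we get a Kuranishi structure with boundary. The dimension count is read off from Theorem \ref{ind-witt-oper} exactly as in Theorem \ref{thm-stro-kura} (the BPS soliton contributes a $1$-dimensional cokernel $E_{+-}$ balanced against the $1$-dimensional transition $S^1$, so the gluing parameters $C_\varepsilon(S^2)$ or $[0,\varepsilon]$ keep the virtual dimension unchanged). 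Orientability follows verbatim from the stably-almost-complex argument recalled at the end of Section \ref{sec-inter-gluing}, applied here with boundary; one must check that the chosen trivializations of $E_{+-}$ and of the cone/interval parameters are compatible along overlaps, which is routine since $E_{+-}$ is spanned by the compactly supported section $i\bu_s$.

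The one step requiring genuine care — and the main obstacle — is the identification of the boundary in part (2): one must show that $\pat\MMrs_{g,k,W}(\bgamma,\bvkappa)$, together with its induced Kuranishi structure, is diffeomorphic to $\MMrs_{g,k,W}(\bgamma',\tilde\gamma^{-1},\bvkappa',\kappa^+)\#(S^{\tilde\gamma}(\kappa^+,\kappa^-)/\R)^{S^1}$. Here I would argue that a boundary point is exactly a BPS soliton $W$-section in Case (1) or (4), that capping off the half-infinite cylinder carrying the BPS soliton recovers a $W$-section with an extra $(\tilde\gamma^{-1},\kappa^+)$-decorated marked (or nodal) point, and that the BPS soliton itself, modulo the $\R$-translation already absorbed into the gluing parameter, is an $S^1$-fixed point of $S^{\tilde\gamma}(\kappa^+,\kappa^-)/\R$ by the very definition of BPS (the flow being $\theta$-independent). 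The boundary collar structure of Lemma \ref{lm-BPS-kura1} and Lemma \ref{lm-BPS-kura4} — the $[0,\varepsilon_\sigma]$ factor — matches the normal direction of this inclusion, and the multisection approximation (Lemma 17.4 of \cite{FO}) quoted in those lemmas shows the perturbed zero loci restrict correctly to the boundary. The delicate point is to check that the gluing maps $Glue^m_{\bu_{+-}}$ are compatible with the coordinate changes of the two Kuranishi structures being glued, so that the diffeomorphism is one of Kuranishi structures and not merely of underlying spaces; this is handled, as in the interior case, by the normalization via the transversal slices $\D_{z'_j}$ and the averaging construction $\overline{\mbox{emb}_{y,\zeta}}$, eliminating the $S^1$-ambiguity on the unstable component and making the boundary identification canonical.
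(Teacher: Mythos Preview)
Your proposal is correct and follows essentially the same route as the paper: compactness and Hausdorffness from Theorem~\ref{thm-gromov-comp}, classification of points into interior (non-BPS solitons via Theorem~\ref{thm:knbhd}, cone points of Cases~(2)--(3) via Lemmas~\ref{lm-BPS-kura2-Tree}, \ref{lm-BPS-kura2-loop}, \ref{lm-BPS-kura3}) versus boundary (Cases~(1) and~(4) via Lemmas~\ref{lm-BPS-kura1} and~\ref{lm-BPS-kura4}), then the inductive gluing of Section~\ref{sec-inter-gluing}, the dimension from Theorem~\ref{ind-witt-oper}, and orientability from the stably-almost-complex argument. The one logical dependency worth making explicit is that in part~(2) the boundary charts of Lemmas~\ref{lm-BPS-kura1} and~\ref{lm-BPS-kura4} take as input a Kuranishi structure on $\MMrs_{g,k,W}(\bgamma',\bvkappa',\kappa^+)$, and the existence of that structure is supplied by part~(1) (since replacing $\kappa^-$ by $\kappa^+$ at the marked point removes the boundary-producing decoration); the paper invokes this bootstrapping step explicitly, and your argument uses it implicitly through those lemmas.
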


\begin{proof}
Proof of (1). $\MMrs_{g,k}(\bgamma,\bvkappa)$ is a compact Hausdorff
space, which is proved by Theorem \ref{thm-gromov-comp}. It is a
stratified space stratified by the partial order $\succ$. Note that
the decoration $(\tilde{\gamma},\kappa^-)$ may still occur at some
nodal points. The minimum strata are composed of soliton sections.
To construct the Kuranishi structure of
$\MMrs_{g,k}(\bgamma,\bvkappa)$, we first construct the Kuranishi
neighborhoods of those soliton $W$-sections. If the soliton section
is a non-BPS soliton, then we have already constructed its
neighborhood. If the soliton is a BPS soliton section, then its
neighborhood has been constructed by Lemmas \ref{lm-BPS-kura2-loop},
\ref{lm-BPS-kura2-Tree} and \ref{lm-BPS-kura3}. Subsequently, we
extend them to the interior part of the moduli space. We use the
gluing method described in the last section to get a global
Kuranishi structure. Notice that those cone points are also interior
points of the moduli space. The dimension is given by Theorem
\ref{ind-witt-oper}.

One can argue as in Section \ref{sec-inter-gluing} that this
Kuranishi structure is also stably almost complex and hence is
orientable.

Proof of (2). In this case, the moduli space
$\MMrs_{g,k}(\bgamma,\bvkappa)$ is a compact Hausdorff space with
boundary $\MMrs_{g,k}(\bgamma',\tilde{\gamma}^{-1},\bvkappa',
\kappa^+)\# (S^{\tilde{\gamma}}(\kappa^+,\kappa^-)/\R)^{S^1}$.
Boundary points consists of those BPS soliton sections satisfying
Case (1) and (4). To construct the Kuranishi structure with boundary
over $\MMrs_{g,k}(\bgamma,\bvkappa)$, we begin our construction of
neighborhoods of the cone points and the boundary points.
Neighborhoods of cone points are constructed by Lemma
\ref{lm-BPS-kura2-loop}, \ref{lm-BPS-kura2-Tree} and
\ref{lm-BPS-kura3}. By (1), we can choose an oriented Kuranishi
structure $\{(U_\sigma, E_\sigma, s_\sigma)\}$ of
$\MMrs_{g,k}(\bgamma,\bvkappa',\kappa^+)$ such that the induced
orientation of Kuranishi structure $\{(U_\sigma\times
[0,\varepsilon_\sigma], E_\sigma\oplus E_{+-}, s_{\sigma, \zeta})\}$
is compatible with the orientation of the neighborhoods of the
interior points. After construction of neighborhoods of these
special points, we can use the same gluing method shown in Section
\ref{sec-inter-gluing} to build an oriented Kuranishi structure of
the whole moduli space.
\end{proof}

\section{Proof of the axioms}\label{sec:proveAx}

\subsection{The virtual cycle and quantum Picard-Lefschetz theory}\

\

In this part, we will define the virtual cycle
$[\MMr_{g,k}(\bgamma,\bvkappa)]^{vir}$ in $H_*(\MMr_{g,k}(\bgamma))$
which is given by its oriented Kuranishi structure. The Kuranishi
structure may depend on many choices, but we will show that the
virtual cycle depends only on $g,k,W$ and the Lefschetz thimbles and
is independent of the other choices used to construct the Kuranishi
structure. The interesting thing is that we can see another version
of Picard-Lefschetz theory at the level of moduli space.

Let $\MMr_{g,k}(\bgamma,\bvkappa)$ be a strongly regular moduli
space, then by Theorem \ref{thm-stro-kura}
$\MMr_{g,k}(\bgamma,\bvkappa)$ carries an oriented Kuranishi
structure $\{(U_\sigma, E_\sigma, s_\sigma)\}$(even almost stably
complex). By Theorem \ref{thm-vir-cycl}, the section $s_\sigma$ can
be perturbed to a transversal smooth multisection
$\{\tilde{s}_\sigma\}$ such that
$\cup_\sigma\tilde{s}_\sigma^{-1}(0)$ forms a cycle which is
independent of the choice of the perturbed multisection
$\{\tilde{s}_\sigma\}$.

We define a map $\Pi:\MMr_{g,k}(\bgamma,\bvkappa)\rTo
\MMr_{g,k}(\bgamma)$ by
$$
[(\frkc, \bu)]\longrightarrow [(\frkc)].
$$
Note that $\Pi$ is a system of map germs $\Pi_\sigma:
U_\sigma\rTo \MMr_{g,k}(\bgamma)$.

It is easy to see that this map is strongly continuous. By Theorem
\ref{thm-stro-kura} and Theorem \ref{thm-vir-cycl}, we have the
following definition.

\begin{df} We define the homology class
$[\MMr_{g,k}(\bgamma,\bvkappa)]^{vir}:= [\Pi(\tilde{s}^{-1}(0))]\in
H_*(\MMr_{g,k}(\bgamma))$. Its (real) dimension is
$6g-6+2k-2D-\sum_{i=1}^k
N_{\gamma_i}=2((\hat{c}_W-3)(1-g)+k-\sum_{\tau=1}^k\iota(\gamma_\tau))-\sum_{i=1}^k
N_{\gamma_i}$. Furthermore, if $\Gamma$ is a decorated stable
$W$-graph, then we have the homology class $[\MMr_{g,k}(\Gamma;
\bgamma,\bvkappa)]^{vir}:=[\Pi(\tilde{s}_\Gamma^{-1}(0)))]\in
H_*(\MMr_{g,k}(\Gamma;\bgamma))$, where $\tilde{s}_\Gamma$ is the
perturbed multisection over the Kuranishi structure of
$\MMr_{g,k}(\Gamma;\bgamma,\bvkappa)$. Its real dimension is
$6g-6+2k-2D_\Gamma-\sum_{i=1}^k
N_{\gamma_i}-2\#E(\Gamma)=2((\hat{c}_W-3)(1-g)+k-\sum_{\tau\in
T(\Gamma)}\iota(\gamma_\tau)-\#E(\Gamma))-\sum_{i=1}^k
N_{\gamma_i}$, where $E(\Gamma)$ is the set of  edges of $\Gamma$.
\end{df}

For each $\gamma\in G $, we have a perturbed polynomial
$W_{\gamma}+W_{0,\gamma}$, where $W_{0,\gamma}$ is assumed to be
$W_\gamma$-regular. Recall that
$W_{0,\gamma}(\bu)=\sum_{i=1}^{N_\gamma} \beta_j u_j$, where
$N_\gamma$ is the number of  broad sections with respect to the
action of $\gamma$. Now we consider a path of the perturbation
$\lambda: [-1,1]\rTo
W_{0,\gamma}^\lambda(\bu)=\sum_{i=1}^{N_\gamma} (\lambda) u_j$
such that the path of the perturbation is still
$W_\gamma$-regular. Such a path is generic in the path space of
the perturbation parameter space. Assume that the $i$-th critical
point of the path is
$\kappa^i(\lambda)=(\kappa^i_1(\lambda),\dots,\kappa_{N_\gamma}^i(\lambda))$,
for $i=1,\dots, \mu_\gamma$, where $\mu_\gamma$ is the
multiplicity of $W_{\gamma}+W_{0,\gamma}$. These critical points
are all nondegenerate critical points. We know from Section 3 that
there are real hypersurfaces in the perturbed coefficient space
$\C^{N_\gamma}$ separating $\C^{N_\gamma}$ into a system of
chambers. When the path $(b_1(\lambda),\dots,
b_{N_\gamma}(\lambda))$ crosses one hypersurface, e.g., at
$\lambda=0$, then correspondingly there exist two critical points
$\kappa^{+}$ and $\kappa^{-}$ such that
\begin{equation}\label{axim-proof-1}
\mbox{Im}(W_{\gamma}+W_{0,\gamma})(\kappa^{+})=\mbox{Im}(W_{\gamma}+W_{0,\gamma})(\kappa^{-}).
\end{equation}
Since for different $\gamma\in G$, we can take different
perturbations,  in the following discussion, we always fix the
perturbations about all group elements $\gamma$ except for one
group element $\tilde{\gamma}$. For $\tilde{\gamma}$ we choose a
path of perturbation such that all of its critical points are
fixed except for one depending on $\lambda$. We denote it by
$\kappa^+(\lambda)$. We assume that the perturbation crosses only
one chamber at $\lambda=0$. Namely, if $\lambda\neq 0$, the
perturbation is always strongly regular, and at $\lambda=0$, there
exists only one critical point $\kappa^{-}$ satisfying
(\ref{axim-proof-1}). The existence of this perturbation path is
generic in the path space. We call this path of perturbation the \emph{
generic crossing path.}

Now our Kuranishi structure depends on (1) the metric we chose near
the marked points (we choose the cylindrical metric), (2) the cut-off
functions $\beta_j, \varpi$ used to define the perturbed Witten
map, (3) the obstruction bundle $E_\sigma$ and (4) the partition of unity.
There are two natural ways to choose the metrics near marked
points which correspond to the ``Smooth theory" and the ``Cylindrical theory,"
as mentioned in the introduction. The different choice of the
metrics will significantly change the Witten map and the Witten
equation. In physics, people think these two theories are
equivalent in some sense. This is just the conjecture we proposed
in the introduction. Here we fix the cylindrical metric, and
consider the influence of other choices.

Let $\bgamma=(\bgamma',\tilde{\gamma})$ and
$\bvkappa^\pm=(\bvkappa',\kappa^\pm)$. We fix $\bvkappa=(\bvkappa',
\kappa^-)$ and choose a generic crossing path of perturbation with
the crossing point at $\lambda=0$ and satisfying
$\kappa^+(\lambda)<\kappa^-$, i.e.,
$\mbox{Re}(W_{\gamma}+W_{0,\gamma})(\kappa^{+})<\mbox{Re}(W_{\gamma}+W_{0,\gamma})(\kappa^{-}).
$ Then we get a family of perturbed Witten equations
$(WI)(\lambda)(\bu)=0$ , and this family induces a family of moduli
spaces $\MMr_{g,k}(\bgamma',\tilde{\gamma};
\bvkappa',\kappa^-;\lambda)$ for $\lambda\in
[-1+\varepsilon,1-\varepsilon]$.

Since $\MMr_{g,k}(\bgamma',\tilde{\gamma};
\bvkappa',\kappa^-;\pm(1-\varepsilon))$ are strongly regular moduli
spaces, they have oriented (stable almost complex) Kuranishi
structures $\V_\pm=\{(U_{\sigma}(\pm), E_{\sigma}(\pm),
s_{\sigma}(\pm))\}$. The Kuranishi structure can be trivially
extended to the spaces $\MMr_{g,k}(\bgamma',\tilde{\gamma};
\bvkappa',\kappa^-; -1+\varepsilon)\times [-1,-1+\varepsilon]$ and
$\MMr_{g,k}(\bgamma',\tilde{\gamma}; \bvkappa',\kappa^-;
1-\varepsilon)\times [1-\varepsilon,1]$.

Define

$$
\WW_{g,k}^\lambda(\kappa^-)=\cup_{\lambda\in
[-1,1]}\{\lambda\}\times \MMrs_{g,k}(\bgamma',\tilde{\gamma};
\bvkappa',\kappa^-;\lambda).
$$

We can define the Gromov convergence on
$\WW_{g,k}^\lambda(\kappa^-)$ in the same way as in Section 7. The
sole difference is that the sequence $(\frkc^n, \bu^n)$ may depend
on the extra parameter $\lambda$. Similarly, we can prove that
$\WW_{g,k}^\lambda(\kappa^-)$ is a compact Hausdorff space.

Our aim is to examine the change of the virtual cycle
$\MMr_{g,k}(\bgamma',\tilde{\gamma}; \bvkappa',\kappa^-;\lambda)$
when $\lambda$ changes from positive to negative.

\begin{thm}\label{thm-cobo-sing} $\WW_{g,k}^\lambda(\kappa^-)$ is a compact Hausdorff space
carrying an orientable Kuranishi structure with boundaries. The
boundaries appear at $\lambda=\pm 1$ and $\lambda=0$. When
$\lambda=\pm 1$, the boundaries correspond to
$\MMr_{g,k}(\bgamma',\tilde{\gamma}; \bvkappa',\kappa^-;\pm 1)$.
When $\lambda=0$, the boundary is
$\MMrs_{g,k}(\bgamma',\tilde{\gamma}; \bvkappa',\kappa^+)\#
S^{\tilde{\gamma}}(\kappa^+,\kappa^-)$, where
$S^{\tilde{\gamma}}(\kappa^+,\kappa^-)$ is the space of $BPS$
solitons flowing from the critical point $\kappa^+$ to $\kappa^-$.
\end{thm}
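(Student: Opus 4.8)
The plan is to assemble $\MM_{g,k,W}^\lambda(\kappa^-)$ out of the pieces already constructed and to glue their Kuranishi structures along the one-parameter family. First I would observe that compactness is essentially Theorem~\ref{thm-gromov-comp} applied fiberwise: for $\lambda\ne 0$ the perturbation $W_{\tilde\gamma}+W_{0,\tilde\gamma}^\lambda$ is strongly $W_{\tilde\gamma}$-regular, so each fiber $\MMrs_{g,k,W}(\bgamma',\tilde\gamma;\bvkappa',\kappa^-;\lambda)=\MMr_{g,k,W}(\bgamma',\tilde\gamma;\bvkappa',\kappa^-;\lambda)$ is compact, and at $\lambda=0$ the hypothesis of Theorem~\ref{thm-gromov-comp} is met by construction of the generic crossing path (only the pair $(\tilde\gamma,\kappa^+),(\tilde\gamma,\kappa^-)$ has equal imaginary parts). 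A Gromov-convergence argument with the extra parameter $\lambda$, verbatim to the proof of Theorem~\ref{thm-gromov-comp} plus continuity in $\lambda$ of the critical data and of the perturbed Witten map, gives sequential compactness and the Hausdorff property of the total space; one must only check that a sequence with $\lambda_n\to 0$ either stays in the strongly-regular regime and converges as before, or bubbles off a BPS soliton exactly at $\lambda=0$, which is forced by the Picard-Lefschetz corollary (no soliton exists unless $\mathrm{Im}(W_{\tilde\gamma}+W_{0,\tilde\gamma})(\kappa^+)=\mathrm{Im}(W_{\tilde\gamma}+W_{0,\tilde\gamma})(\kappa^-)$).

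Next I would produce the Kuranishi structure with boundary. Over the open set $\lambda\in(-1,1)\setminus\{0\}$ and over the interior of each fiber, the construction of Section~\ref{sec:construct} (Theorems~\ref{ind-witt-oper}, \ref{thm:knbhd} and the interior-gluing procedure of Section~\ref{sec-inter-gluing}) applies, now with one extra real parameter $\lambda$ added to the finite-dimensional factor $V^+_\sigma$; the index count of Theorem~\ref{ind-witt-oper} shows the total space has dimension one more than each fiber. Near $\lambda=\pm1$ the structure is the trivial product extension $\MMr_{g,k,W}(\cdots;\pm1)\times[\pm1,\pm1\mp\varepsilon]$, giving genuine boundary strata. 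Near $\lambda=0$ the new phenomenon is a BPS soliton $\bu_{+-}\in S^{\tilde\gamma}(\kappa^+,\kappa^-)$ splitting off; here I would invoke the neighborhood constructions of Section~\ref{kura-neig-sing} — Lemmas~\ref{lm-BPS-kura1}, \ref{lm-BPS-kura2-Tree}, \ref{lm-BPS-kura2-loop}, \ref{lm-BPS-kura3}, \ref{lm-BPS-kura4} — which describe precisely how a Kuranishi chart for a BPS soliton $W$-section looks (a cone factor $C_\varepsilon(S^2)$ in the loop/multiple case, and an interval factor $[0,\varepsilon_\sigma]$ producing a boundary in the marked-point case). Matching the $\lambda$-direction with the cone/interval direction identifies the $\lambda=0$ boundary stratum with $\MMrs_{g,k,W}(\bgamma',\tilde\gamma;\bvkappa',\kappa^+)\#S^{\tilde\gamma}(\kappa^+,\kappa^-)$, because gluing a BPS soliton to a curve decorated by $\kappa^+$ produces a curve decorated by $\kappa^-$. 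Orientability follows as in Section~\ref{sec-inter-gluing}: the linearized operator has complex-linear symbol, so the tangent bundle system is stably almost complex, hence the Kuranishi structure with boundary is orientable; one takes the orientation on the $\kappa^+$-side compatibly with the cobordism as in the proof of Theorem~\ref{thm-sing-neib}(2).

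Finally I would stitch these local models into a global Kuranishi structure with boundary by the same inductive patching over the partial order $\succ$ used in Section~\ref{sec-inter-gluing}, choosing obstruction spaces and partitions of unity compatibly in the $\lambda$-parameter; the compatibility of the orientation across $\lambda=0$ is the content to be checked carefully. The main obstacle is precisely this last point — showing that the BPS-soliton charts from Section~\ref{kura-neig-sing} glue to the interior charts \emph{in an orientation-coherent way across the wall}, i.e., that the boundary orientation induced at $\lambda=0$ agrees (up to the expected sign) with the orientation of $\MMrs_{g,k,W}(\bgamma',\tilde\gamma;\bvkappa',\kappa^+)\#S^{\tilde\gamma}(\kappa^+,\kappa^-)$ coming from Theorem~\ref{thm-sing-neib}. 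This requires tracking the cokernel generator $E_{+-}$ (generated by $i\,\bu_s$) through the gluing and comparing with the $\lambda$-derivative direction; everything else is a routine adaptation of arguments already in the paper.
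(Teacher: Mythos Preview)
Your overall plan matches the paper's: compactness via the parametrized version of Theorem~\ref{thm-gromov-comp}, trivial product charts near $\lambda=\pm1$, BPS-soliton charts from Section~\ref{kura-neig-sing} near $\lambda=0$, and global patching by the inductive procedure of Section~\ref{sec-inter-gluing}. Two points need correction, however.

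First, the phrase ``matching the $\lambda$-direction with the cone/interval direction'' misreads the geometry. In the paper's construction the interval $[0,\varepsilon_\sigma]$ is the gluing parameter $|\zeta|$ (the $S^1$-quotient of $D_\varepsilon(0)$), \emph{not} the $\lambda$-direction; the two are independent parameters in the chart. The boundary sits at $\zeta=0$, and it lies over $\lambda=0$ only because that is where the BPS soliton exists. Second, you misidentify the main obstacle. Orientation coherence is handled by the routine stably-almost-complex argument you cite; the paper spends no effort there. What the paper actually isolates as the key step is why the Kuranishi model near a BPS soliton works \emph{uniformly in $\lambda$}. The crucial observations are: (i) since $W_{0,\tilde\gamma}$ is linear in $u$, the Hessian $\partial^2(W+W_{0,\tilde\gamma})/\partial u_i\partial u_j$ is independent of the perturbation coefficients, hence the linearized operator $D_{y,\zeta}(WI(\lambda))$ does not depend on $\lambda$; and (ii) since the marked-point value $\kappa^-$ is fixed along the path, the approximating solution $\bu_{app,\beta,\zeta,\lambda}\equiv\bu_{app,\beta,\zeta}$ is also $\lambda$-independent. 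These two facts make Lemmas~\ref{lm-appro-1}--\ref{lm-kuramodel-2} go through with constants uniform in $\lambda$ near $0$, which is exactly what is needed to build the collar. Simply invoking Lemmas~\ref{lm-BPS-kura1}--\ref{lm-BPS-kura4} is not enough: those were proved at a fixed parameter, and without the $\lambda$-independence of the linearization you have no mechanism to carry them over the family.
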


\begin{proof} The proof of $\WW_{g,k}^\lambda(\kappa^-)$ to be a compact Hausdorff space
under Gromov convergence is similar to the proof of Theorem
\ref{thm-gromov-comp}. The next thing is to construct an oriented
Kuranishi structure over $\WW_{g,k}^\lambda(\kappa^-)$.

We  already have the Kuranishi structure at the boundary points
corresponding to $\lambda=\pm 1$. When $\lambda=0$, the
perturbation of $W_{\tilde{\gamma}}$ is regular but not strongly
regular. The BPS soliton $W$-section may occur. We now consider
the neighborhoods near those BPS soliton sections. We still use
the notation from Section \ref{kura-neig-sing}.

By Theorem \ref{thm-sing-neib} we can give an oriented Kuranishi
structure to the regular but not strongly regular moduli space
$\MMr_{g,k}(\bgamma,\tilde{\gamma};\bvkappa', \kappa^+)$.

The BPS soliton sections in $\WW_{g,k}^\lambda(\kappa^-)$ can also
be divided into four cases according to Section
\ref{kura-neig-sing}. We only construct the Kuranishi neighborhoods
for those BPS soliton sections satisfying Case 1, and the other
cases can be treated in the same way.

Case 1. Let $(\frkc_1,\bu_1)\in
\MMrs_{g,k}(\bgamma,\bvkappa',\kappa^+)$ be a non-soliton section
and let $\bu_{+-}$ be a BPS soliton. Then $(\frkc_1\#(\R\times S^1),
\bu_1\#\bu_{+-})\in \WW_{g,k}^\lambda(\kappa^-)$ is a BPS soliton
section satisfying Case 1 of Section \ref{kura-neig-sing}.

Let $(U_\sigma, E_\sigma, s_\sigma)$ be a Kuranishi neighborhood of
$\bu_1$ in $\MMrs_{g,k}(\bgamma,\bvkappa',\kappa^+)$. By Theorem
\ref{thm-soli-kura-nbhd}, we can take the Kuranishi neighborhood of
$(\R\times S^1, \bu_{+-})$ to be $(\{pt\}, E_{+-}, s\equiv 0)$. Now
we can use our gluing construction again:
\begin{enumerate}
\item Assume that $U_\sigma=\Vd\times \Vr\times \Vm$. Let
$\beta\in \Vd\times \Vr$ and $\zeta\in D_\varepsilon(0)\in \C$. We
can get the nearby curve $\frkc_{\beta,\zeta}$ and the
approximating solution $\bu_{app, \beta,\zeta,\lambda}\equiv
\bu_{app, \beta,\zeta},$ which is defined as before since the
boundary values at marked points are fixed.

\item Take the obstruction bundle
$E_{\beta,\zeta}=\theta_{\beta,\zeta}(E_\sigma\oplus E_{+-})$,
where $\theta_{\beta,\zeta}$ is defined as before. Notice that the
obstruction bundle is independent of $\lambda$. Now the equation
we study is
$$
D_{y,\zeta}(WI(\lambda))(\bphi)=0 \mod E_{\beta,\zeta}.
$$
However, we find that the linearized operator
$D_{y,\zeta}(WI(\lambda))$ is independent of the perturbation
term, i.e., independent of $\lambda$. Thus all the Lemmas from
\ref{lm-appro-1} to \ref{lm-kuramodel-0} hold without any change.

\item Now we want to apply the implicit function theorem to the
operator $WI(\lambda)$ on $\frkc_{y,\zeta}$. We define
$$
F_{\beta,\zeta,\lambda}(\bphi):=WI_{\beta,\zeta,\lambda}(\uapp+\bphi).
$$
We have
$$
F_{\beta,\zeta,\lambda}(0)=WI_{\beta,\zeta,\lambda}(\uapp),\;DF_{\beta,\zeta,\lambda}(0)=
D_\uapp((WI)(0)_{\beta,\zeta})=D_{\beta,\zeta}(WI(0)).
$$
Actually, we only need to modify Lemmas \ref{lm-kuramodel-1} and
\ref{lm-kuramodel-2}. After a simple computation, we find that we
can obtain Lemmas \ref{lm-kuramodel-1} and \ref{lm-kuramodel-2}
with the constants there independent of $\lambda$ near $0$. Then
we can construct our Kuranishi neighborhood $(\Vd\times \Vr\times
D_\varepsilon(0)\times \Vm, E_\sigma\oplus E_{+-},
\hat{s}_\sigma)$ by the same technique. Here the set $\Vm$ is
obtained by using the implicit function theorem and hence should
depend on $\lambda$. However, because of uniform estimates, we can
take the same $\Vm$. Since $S^1$ only acts on $D_\varepsilon(0)$,
when modulo the $S^1$ action, we can obtain the ``Kuranishi"
neighborhood $((U_\sigma\times [0,\varepsilon_\sigma]),
E_\sigma\oplus E_{+-}, \hat{s}_{\sigma})$, where
$U_\sigma=\Vd\times \Vr\times
\Vm,\hat{s}_{\sigma}(\beta,\bphi,0)=s_\sigma(\bphi)$, and the
length $\varepsilon_\sigma$ of the cone may depend on the point
$\sigma$.
\end{enumerate}

Note that the Kuranishi structure $(U_\sigma, E_\sigma, s_\sigma)$
and orientation on $\MMrs_{g,k}(\bgamma,\bvkappa',\kappa^+)$ are
well defined; we get a Kuranishi structure $\{(U_\sigma\times
[0,\varepsilon_\sigma], E_\sigma\oplus E_{+-}, s_{\sigma, \zeta})\}$
with natural orientation of a neighborhood of
$\mbox{Im}(Glue^m_{\bu_{+-}})\in
\MMrs_{g,k}(\bgamma,\bvkappa',\kappa^-)$.

Case 3. If
$(\frkc_1,\bu_1)\in\MMrs_{g,k}(\bgamma,\bvkappa',\kappa^+)$ is a BPS
soliton section , then $(\frkc_1\#(\R\times S^1), \bu_1\#\bu_{+-})$
satisfies Case 3 of Section \ref{kura-neig-sing}. We can still
construct the neighborhoods near those points.

Now we can take a good coordinate system of
$\MMrs_{g,k}(\bgamma,\bvkappa',\kappa^+)$ (see Lemma 6.3 of
\cite{FO} for the existence) which is a finite covering of Kuranishi
neighborhoods. Therefore, we can take the length $\varepsilon_\sigma$
to be the minimal length $\varepsilon$.

Thus one collar of the boundary of $\WW_{g,k}^\lambda(\kappa^-) $ at
$\lambda=0$ is $\MMrs_{g,k}(\bgamma,\bvkappa',\kappa^+)\times
[0,\varepsilon]$.

The cone points of $\WW_{g,k}^\lambda(\kappa^-)$ at $\lambda=0$ can
also occur; one can construct their neighborhoods as done in Case 1,
which is also characterized by Lemma \ref{lm-BPS-kura4}. Now we
constructed the Kuranishi neighborhoods of the boundary points and
the cone points occurring at $\lambda=0$. Using the gluing argument
as before, we can extend the Kuranishi structure covering the
compact set of  boundary points and cone points to the whole space
$\WW_{g,k}^\lambda(\kappa^-)$ and construct an oriented Kuranishi
structure (See the proof of Theorem 17.11 of \cite{FO} for the
extension reason).
\end{proof}

We still take the above special perturbation path. Define
$$
\WW_{g,k}^\lambda(\kappa^+)=\cup_{\lambda\in
[-1,1]}\{\lambda\}\times \MMrs_{g,k}(\bgamma',\tilde{\gamma};
\bvkappa',\kappa^+(\lambda);\lambda).
$$

\begin{thm}\label{thm-cobo-nonsing} $\WW_{g,k}^\lambda(\kappa^+)$ is a compact Hausdorff space
carrying an orientable Kuranishi structure with boundaries. The
boundaries appear only at $\lambda=\pm 1$. When $\lambda=\pm 1$, the
boundaries correspond to $\MMr_{g,k}(\bgamma',\tilde{\gamma};
\bvkappa',\kappa^+(\pm 1))$.
\end{thm}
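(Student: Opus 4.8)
The plan is to mirror the proof of Theorem~\ref{thm-cobo-sing}, but now the situation is genuinely simpler because along the \emph{entire} path the perturbation $W_{0,\tilde{\gamma}}^\lambda$ is \emph{strongly} $W_{\tilde{\gamma}}$-regular, including at $\lambda=0$. The key point is that we track the critical point $\kappa^+(\lambda)$, which is precisely the one whose imaginary part participates in the wall-crossing at $\lambda=0$: at $\lambda=0$ the critical value $(W_{\tilde{\gamma}}+W_{0,\tilde{\gamma}}^\lambda)(\kappa^+(0))$ coincides in imaginary part with that of $\kappa^-$, but since we are attaching $\kappa^+(\lambda)$ rather than $\kappa^-$ to the marked point, the relevant BPS soliton would have to flow \emph{out of} $\kappa^+$, and by the corollary following the BPS-soliton estimate (the one stating $\pat_s(W_\gamma+W_{0,\gamma})$ has a sign, so the real part increases monotonically along a BPS soliton) no nonconstant BPS soliton can emanate from $\kappa^+$ into $\kappa^-$ when $\kappa^+(\lambda)<\kappa^-$ \emph{and the soliton must start at $\kappa^+$}; more precisely, the only solitons at a marked point decorated by $(\tilde\gamma,\kappa^+)$ would connect $\kappa^+$ to some critical point with the same imaginary part, but at $\lambda=0$ the only such pair is $(\kappa^+,\kappa^-)$, and the monotonicity forces any such soliton to flow from the critical point of smaller real part to the larger; since we have $\mbox{Re}(\kappa^+)<\mbox{Re}(\kappa^-)$, such a soliton would flow $\kappa^+\to\kappa^-$, but the marked point records the $-\infty$ limit (by the asymptotic analysis of Section~\ref{sec:witten}, the $-\infty$ end is decorated by $\gamma^{-1}$ and one takes the value $\kappa^+$), so gluing would produce a section decorated by $\kappa^-$ at the tail, not $\kappa^+$. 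Hence no BPS soliton section appears in $\MM_{g,k,W}^\lambda(\kappa^+)$ for any $\lambda\in[-1,1]$, and in particular the fiber over $\lambda=0$ is the strongly regular space $\MMr_{g,k,W}(\bgamma',\tilde\gamma;\bvkappa',\kappa^+(0);0)$.

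First I would establish compactness and Hausdorffness of $\MM_{g,k,W}^\lambda(\kappa^+)$ exactly as in the proof of Theorem~\ref{thm-gromov-comp}, the only modification being that the convergence in $\MMr_{g,k,W}(\bgamma)$ is accompanied by convergence of the extra parameter $\lambda\in[-1,1]$, which is automatic by compactness of the interval. Here one must check that no degeneration produces a soliton component: this is where the argument of the previous paragraph enters. In Case~(3) of the Gromov-compactness proof (nodal point decorated by $\gamma_0=\tilde\gamma$), the only way a soliton appears is if $\varkappa$ equals one of $\kappa^+,\kappa^-$; but the path is strongly regular at every $\lambda$ (including $\lambda=0$, since $\kappa^+(0)$ still has distinct imaginary part from all \emph{fixed} critical points, and it is the wall-crossing of $\kappa^+$ against $\kappa^-$ that we have arranged away by decorating with $\kappa^+$), so by the corollary ``if $W_{0,\gamma_p}$ is strongly $W_\gamma$-regular, then the related soliton equation has no nontrivial solution'' there is no soliton to bubble off, and the limit is an honest stable $W$-section.

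Next I would construct the oriented Kuranishi structure with boundary. By Theorem~\ref{thm-stro-kura}, each fiber $\MMr_{g,k,W}(\bgamma',\tilde\gamma;\bvkappa',\kappa^+(\lambda);\lambda)$ is strongly regular and carries an oriented (stably almost complex) Kuranishi structure; the $\lambda$-parameter merely adds a collar direction. The construction proceeds as in Section~\ref{sec-inter-gluing}: choose obstruction bundles $E_\sigma$ at finitely many points (now including the parameter $\lambda$ among the deformation data), build Kuranishi charts of the form $(U_\sigma\times(\text{$\lambda$-interval}),E_\sigma,s_\sigma)$, and patch by induction on the partial order $\succ$. Orientability follows by the same argument used in Theorem~\ref{thm-sing-neib}(1): the linearization has complex-linear symbol and $\MM_{g,k}\times[-1,1]$ has a complex (times a real interval) tangent bundle, so the bundle system is stably almost complex, hence orientable. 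The boundary is then precisely $\partial\MM_{g,k,W}^\lambda(\kappa^+)=\MMr_{g,k,W}(\bgamma',\tilde\gamma;\bvkappa',\kappa^+(1);1)\sqcup\MMr_{g,k,W}(\bgamma',\tilde\gamma;\bvkappa',\kappa^+(-1);-1)$, with no contribution at $\lambda=0$ because that fiber is a regular interior fiber.

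The main obstacle I expect is the careful bookkeeping at $\lambda=0$: one must verify that the fiber over $\lambda=0$ really does \emph{not} acquire a boundary, i.e.\ that the potential wall-crossing phenomenon is genuinely invisible to the cycle $[\MMr_{g,k,W}(\kappa^+)]^{vir}$ because the soliton that appears in Theorem~\ref{thm-cobo-sing} attaches to $\kappa^-$, not $\kappa^+$. This requires pinning down the direction conventions (the $\mp\infty$ sign in the asymptotic analysis of solitons, the decoration $\gamma$ versus $\gamma^{-1}$ at the two ends) and invoking the monotonicity of $\mbox{Re}(W_{\tilde\gamma}+W_{0,\tilde\gamma})$ along BPS flows together with the ordering $\kappa^+(\lambda)<\kappa^-$. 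Once this is settled, the rest is a routine repetition of the Kuranishi-gluing machinery of Theorem~\ref{thm-cobo-sing}, with all estimates (Lemmas~\ref{lm-appro-1}--\ref{lm-kuramodel-2}) carrying over verbatim since the linearized operator is independent of the perturbation term $\lambda$.
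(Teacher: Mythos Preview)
Your proposal contains a genuine gap. You assert that the perturbation $W_{0,\tilde\gamma}^\lambda$ is strongly $W_{\tilde\gamma}$-regular at every $\lambda$, including $\lambda=0$. This is false: by construction of the generic crossing path, at $\lambda=0$ the two critical values satisfy $\mbox{Im}\,\alpha^+(0)=\mbox{Im}\,\alpha^-$, so condition~(ii) of strong regularity fails. Strong regularity is a property of the perturbation polynomial alone, independent of which critical point decorates which marked point; decorating the tail by $\kappa^+$ rather than $\kappa^-$ does not restore strong regularity.

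Because of this, BPS soliton $W$-sections \emph{do} appear in the fiber over $\lambda=0$, at \emph{nodal} points decorated by $(\tilde\gamma,\kappa^\pm)$. These are the cone points of Cases~(2) and~(3) in Section~\ref{kura-neig-sing}, and your Gromov-compactness argument (your appeal to ``the path is strongly regular at every $\lambda$'' in Case~(3)) does not exclude them. What is true, and what you correctly argue, is that no BPS soliton can attach at the \emph{marked} point decorated $\kappa^+$: that would correspond to Cases~(1) or~(4), and since the marked point carries $\kappa^+$ rather than $\kappa^-$ those cases are vacuous here. So the boundary is indeed only at $\lambda=\pm1$; but you still must construct Kuranishi neighborhoods around the cone points at $\lambda=0$ and verify they are interior, exactly as in Lemmas~\ref{lm-BPS-kura2-Tree}--\ref{lm-BPS-kura3}. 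The paper's proof makes this explicit: ``when $\lambda=0$, only the cone points appear. We need to construct the local charts of the ordinary interior points as well as the local charts of cone points.''

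A second, smaller gap: you claim the estimates of Lemmas~\ref{lm-appro-1}--\ref{lm-kuramodel-2} carry over verbatim because the linearized operator is independent of $\lambda$. But the critical point $\kappa^+(\lambda)$ is \emph{moving}, so the approximating solution $\bu_{app,y,\zeta,\lambda}$ near the marked point must be adjusted by $\bu_\sigma-\kappa^+ +\kappa^+(\lambda)$, and $\lambda$ then enters the nonlinear term $A(\bu_{app,y,\zeta,\lambda},y)$. The paper handles this by replacing $|y|$ by $|y|+|\lambda|$ throughout the relevant lemmas; it is routine but not literally verbatim.
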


\begin{proof} The proof is analogous to the proof of Theorem
\ref{thm-cobo-sing} except there is no soliton $W$-section
satisfying Case 1 and 3 of Section \ref{kura-neig-sing}. Therefore
when $\lambda=0$, only the cone points appear. We need to
construct the local charts of the ordinary interior points as well
as the local charts of cone points. There is a subtle difference
in the present construction when compared to the previous
construction of local charts: the critical point
$\kappa^+(\lambda)$ is movable. We should modify our previous
argument.

For example, we consider $\lambda\in (-\varepsilon,\varepsilon)$ and
$\bu_\sigma \in \MMrs_{g,k}(\bgamma',\tilde{\gamma};
\bvkappa',\kappa^+)$  a non-solitoon section. The approximating
solution $\bu_{app,y,\zeta,\lambda}$ on $\cC_{y,\zeta}$ is defined
as follows: near the nodal points we define
$\bu_{app,y,\zeta,\lambda}$ as before; near the marked point
labelled by $\tilde{\gamma}$, we let
$$
\bu_{app,y,\zeta,\lambda}=\bu_\sigma-\kappa^+ +\kappa^+(\lambda).
$$

The linearized operator at the point $(y,\zeta,\lambda)$ is
$$
D_{y,\zeta,\lambda}(WI)(\bphi):=D_{\bu_{app,y,\zeta,\lambda}}((WI)_{y,\zeta})(\bphi)=
\bpat_y \bphi+\overline{A(\bu_{app,y,\zeta,\lambda},y)\bphi}.
$$

The parameter $\lambda$ appears in the nonlinear term. Then we
modify Lemmas \ref{lm-appro-1}--\ref{lm-kuramodel-2} by a trick:
we replace the symbol $y$ representing the deformation parameter
by $y,\lambda$ and $|y|$ by $|y|+|\lambda|$. Then all those lemmas
hold if $y,\zeta,\lambda$ are sufficiently small. Using the
implicit function theorem, we can find a Kuranishi chart for
$\bu_\sigma$.

Similarly, after a small modification, we can construct the
Kuranishi neighborhoods of cone points as done in Lemma
\ref{lm-BPS-kura4}.

Then we have constructed the local chart of each interior point
(including cone points).

We can extend the given Kuranishi structure of a collar of the
boundaries of $\WW_{g,k}^\lambda(\kappa^+)$ at $\lambda=\pm 1$ to
the whole space as done  before to obtain an oriented Kuranishi
structure.
\end{proof}

\subsubsection*{Classical Picard-Lefschetz theory} We will give a
simple description of vanishing cycles, Lefschetz thimbles and
related transformation groups. It has already become a classical
theory. The interested reader can see \cite{Ar, E}, etc.

Now we assume that the perturbation polynomial
$W_{0,\tilde{\gamma}}$ is strongly $W_{\tilde{\gamma}}$-regular
and sufficiently small such that there are exactly
$\mu_{\tilde{\gamma}}$ critical points of
$W_{\tilde{\gamma}}+W_{0,\tilde{\gamma}}$ inside a small ball $B$
centered at $0$. Let $\alpha^i$ be the critical value of
$W_{\tilde{\gamma}}+W_{0,\tilde{\gamma}}$ which lies inside a
small neighborhood $T\subset \C$ corresponding to $B$.
Furthermore, we can require the order of these critical values to
satisfy $\mbox{Im}(\alpha_i)<\mbox{Im}(\alpha_j)$ if $i<j$. Let
$\alpha_*\in \pat T$ be a regular value. We take $T$ small enough
and define
$Y=(W_{\tilde{\gamma}}+W_{0,\tilde{\gamma}})^{-1}(T)\cap B$ and
$Y_*=(W_{\tilde{\gamma}}+W_{0,\tilde{\gamma}})^{-1}(\alpha_*)$.

Take a system of paths $l_i(\tau): [0,1]\rTo \C$ connecting
$\alpha_*$ and $\alpha_i$ such that
\begin{enumerate}
\item the paths $l_i$ have no self-intersections; \item the paths
$l_i$ and $l_j$ intersect only for $\tau=0$, at the point
$\alpha_*=l_i(0)=l_j(0)$; \item the paths $l_i$ are ordered by the
requirement that $\arg l'_i(0)<\arg l'_j(0)$ if $i<j$.
\end{enumerate}

For each path $l_i$, there exists a corresponding {\em simple
loop} $\beta_i$ which goes along $l_i$ from $\alpha_*$ to the
critical value $\alpha_i$, then goes 
counterclockwise around
$\alpha_i$ and finally returns to $\alpha_*$ along $l_i$. The
system of these paths $l_i$'s is called distinguished if the
corresponding system of simple loops $\beta_i$ generates the group
$\pi_1(T',\alpha_*)$, where
$T'=T-\{\alpha_1,\dots,\alpha_{\mu_{\tilde{\gamma}}}\}$.

Each path $l_i$ induces a unique vanishing cycle $\Delta_i\in
H_{N_{\tilde{\gamma}}-1}(Y_*)$ or a Lefschetz thimble $S_i\in
H_{N_{\tilde{\gamma}}}(Y,Y_*)$ up to orientation. In singularity
theory, the set of these cycles or thimbles forms a basis of the
homology group $H_{N_{\tilde{\gamma}}-1}(Y_*)\cong
\Z^{\mu_{\tilde{\gamma}}}$, or the relative homology group
$H_{N_{\tilde{\gamma}}}(Y,Y_*)$, which is called a distinguished
basis of vanishing cycles or thimbles respectively. Let $D_*$
represent the set of all the distinguished bases of vanishing
cycles (or thimbles).

Assume that the boundary of the relative cycle $S_i$ is just
$\Delta_i$; then when taking compatible orientations we have the
connecting isomorphism:
$$
\pat_*: H_{N_{\tilde{\gamma}}}(Y,Y_*)\rTo
H_{N_{\tilde{\gamma}}-1}(Y_*)
$$
such that $\pat_*(S_i)=\Delta_i$.

Each simple loop $\beta_i$ induces the monodromy operators
$$
h_{\Delta_i}: H_{N_{\tilde{\gamma}}-1}(Y_*)\rTo
H_{N_{\tilde{\gamma}}-1}(Y_*)
$$
and
$$
h_{S_i}: H_{N_{\tilde{\gamma}}}(Y,Y_*)\rTo
H_{N_{\tilde{\gamma}}}(Y,Y_*),
$$
which have action given by Picard-Lefschetz theory as follows:
\begin{equation}\label{monodromy-action}
h_{\Delta_i}(\Delta_j)=\Delta_j+R_{j,i}\Delta_i, \forall j
\end{equation}
and
\begin{equation}
h_{S_i}(S_j)=S_j+R_{j,i}S_i, \forall j,
\end{equation}
where
$R_{j,i}=(-1)^{N_{\tilde{\gamma}}(N_{\tilde{\gamma}}+1)/2}\Delta_j\circ
\Delta_i$ and $\Delta_j\circ \Delta_i$ is the intersection number.

The map $\beta_i\mapsto h_{\Delta_i} (h_{S_i})$ induces a
homomorphism $\pi_1(T',\alpha_*)\rTo \aut
H_{N_{\tilde{\gamma}}-1}(Y_*) (\aut
H_{N_{\tilde{\gamma}}}(Y,Y_*))$. The image of the homomorphism is
called the (relative) monodromy group of the singularity
$W_{\tilde{\gamma}}$. It can be proved (shown in \cite{Ar}) that
the (relative) monodromy group only depends on the type of the
singular point of $W_{\tilde{\gamma}}$. The set $\{h_{\Delta_i},
i=1,\dots,\mu_{\tilde{\gamma}}\}$ ($\{h_{S_i},
i=1,\dots,\mu_{\tilde{\gamma}}\}$) generates the (relative)
monodromy group.

There are several groups acting on the set $D_*$(cf. \cite{E}).
Let $(\delta_1,\dots,\delta_{\mu_{\tilde{\gamma}}})$ be a
distinguished basis of vanishing cycles or thimbles. We have
\begin{enumerate}
\item action of $(\Z/2\Z)^{\mu_{\tilde{\gamma}}}$ (change of
orientation)
$$
Or_j(\delta_1,\dots,\delta_{\mu_{\tilde{\gamma}}})=(\delta_1,\dots,\delta_{j-1},-\delta_j,\delta_{j+1},\dots,
\delta_{\mu_{\tilde{\gamma}}}).
$$
\item action of monodromy groups
$$
h_i(\delta_1,\dots,\delta_{\mu_{\tilde{\gamma}}})=(h_i(\delta_1),\dots,h_i(\delta_{\mu_{\tilde{\gamma}}})).
$$
\item action of braid group $Br(\mu_{\tilde{\gamma}})$. Let
$br_1,\dots,br_{\mu_{\tilde{\gamma}}-1}$ be the standard
generators of $Br((\mu_{\tilde{\gamma}}))$, then
\begin{equation}
br_j(\delta_1,\dots,\delta_{\mu_{\tilde{\gamma}}})=(\delta_1,\dots,\delta_{j-1},
h_j(\delta_{j+1}),\delta_j,\delta_{j+2},\dots,\delta_{\mu_{\tilde{\gamma}}}).
\end{equation}

\item action of the symmetric group $Sym_{\mu_{\tilde{\gamma}}}$
$$
\sigma(\delta_1,\dots,\delta_{\mu_{\tilde{\gamma}}})=(\delta_{\sigma(1)},\dots,
\delta_{\sigma(\mu_{\tilde{\gamma}})}), \;\sigma\in
Sym_{\mu_{\tilde{\gamma}}}.
$$

\item Gabrielov thansformations
$$
G_i(j)(\delta_1,\dots,\delta_{\mu_{\tilde{\gamma}}})=
(\delta_1,\dots,\delta_{j-1}, h_i(\delta_j),\delta_{j+1},\dots,
\delta_{\mu_{\tilde{\gamma}}}).
$$
\end{enumerate}

One can also discuss the Dynkin diagrams corresponding to a
distinguished basis. The above group actions also act on the
Dynkin diagrams. The following proposition was proved by Gabrielov
(cf. \cite{E}).

\begin{prop}\label{prop-picar-base} Any element in $D_*$ can be obtained from a fixed element by iterations of
transformations (1) and (3).
\end{prop}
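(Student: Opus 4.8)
The statement is Gabrielov's classical theorem on distinguished bases (Proposition \ref{prop-picar-base}), so the natural approach is to reduce everything to the already-cited Picard--Lefschetz machinery rather than prove it from scratch. The plan is to show that the five transformation groups listed (change of orientation, monodromy, braid, symmetric group, Gabrielov transformations) are not independent: braid moves together with sign changes already generate all of the others up to the effects one expects. Concretely, I would first record that the braid group action and the monodromy operators $h_{\Delta_i}$ are related by the Picard--Lefschetz formula (\ref{monodromy-action}): the generator $br_j$ is built from $h_j$, so repeated braid moves produce all monodromy operators $h_i$ acting on all basis elements simultaneously. This recovers transformation (2). Then I would show that a Gabrielov transformation $G_i(j)$, which applies $h_i$ to a single entry $\delta_j$, can be realized by conjugating a braid generator by a sequence of other braid generators that first move $\delta_i$ and $\delta_j$ to adjacent positions; this recovers transformation (5).

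The second main step is to handle the symmetric group action (4). A transposition of adjacent entries $(\delta_j,\delta_{j+1})$ is, up to applying $h_j^{\pm1}$ to one of them, exactly the braid move $br_j$; hence $br_j^2$ differs from the identity only by a monodromy operator, and composing $br_j$ with the appropriate $G_j(\cdot)$ yields the honest transposition. Since adjacent transpositions generate $Sym_{\mu_{\tilde\gamma}}$, combining this with the first step gives the full symmetric group inside the group generated by (1) and (3). Finally, orientation changes (1) are already among the allowed moves, so nothing further is needed there; one only has to observe that the braid and sign moves together act transitively on the set $D_*$, which follows from a connectedness/genericity argument on the space of distinguished systems of paths $\{l_i\}$: any two distinguished systems are connected by a path in the space of configurations of the critical values $\alpha_i$ together with the choice of non-self-intersecting ordered arcs, and crossing a wall in this configuration space induces precisely a braid generator, while passing a critical value around $\alpha_*$ induces a monodromy (hence Gabrielov) transformation.

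I expect the genuine obstacle to be the transitivity statement — i.e., that \emph{every} distinguished basis arises from a fixed one by these moves, not merely that the moves are consistent with one another. This requires understanding the fundamental groupoid of the space of ordered systems of paths from $\alpha_*$ to the critical values in $T'$, and showing its generators are exactly the elementary arc-swaps (braids) and loop-insertions (monodromies). Here I would lean entirely on the cited sources \cite{Ar, E}: the key input is that $\pi_1(T',\alpha_*)$ is generated by the simple loops $\beta_i$, so any ambient monodromy of the configuration is a product of the $h_{\Delta_i}$, and the "shape" data of a distinguished system (the combinatorics of which arc lies above which) is changed only by transpositions of adjacent arcs. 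Assembling these two facts — the algebraic one about $\pi_1$ and the geometric one about arc configurations — is what pins down the result; the rest is bookkeeping with (\ref{monodromy-action}). Since this is Gabrielov's theorem verbatim, the cleanest honest "proof" is to assemble these citations into the above reduction and refer the reader to \cite{E} for the transitivity core.
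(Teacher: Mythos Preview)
The paper does not prove this proposition at all: it simply attributes the result to Gabrielov and cites \cite{E}. Your proposal ultimately lands in the same place, so in that narrow sense it matches.

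However, the intermediate strategy you outline is both unnecessary and partly ill-posed. Your first step --- reducing transformations (2), (4), (5) to compositions of (1) and (3) --- does not advance the argument: even if the group generated by (1)--(5) coincided with the group generated by (1) and (3), you would still need to prove that this group acts transitively on $D_*$, which is exactly the content of the proposition. Worse, not all of the listed transformations preserve $D_*$ in the first place: an arbitrary permutation (transformation (4)) of a distinguished basis is in general \emph{not} distinguished, since the property depends on the cyclic ordering of the arcs $l_i$ at $\alpha_*$. So there is no meaningful sense in which you can ``recover'' the full symmetric group action inside the braid-plus-sign group acting on $D_*$, and the whole reduction program collapses.

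The genuine content, which you do correctly isolate in your final paragraph, is purely geometric: any two distinguished systems of paths in $T'$ can be connected by a finite sequence of elementary isotopies, each of which exchanges two adjacent arcs and corresponds to a braid generator $br_j^{\pm 1}$ (with a possible sign correction). Gabrielov proves this by induction on the number of transverse intersection points between the two path systems. Since the paper itself only cites \cite{E} for this, your concluding sentence --- defer to \cite{E} --- is the right resolution; the preceding reduction steps should simply be dropped.
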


\subsubsection*{Quantum Picard-Lefschetz theory}

By a cobordism argument, the classical Picard-Lefschetz theory can
be seen at the level of the moduli space.

By Theorem \ref{thm-cobo-nonsing}, we have the following corollary.

\begin{crl}\label{crl-cobo-nonsing} The virtual cycle
$[\MMr_{g,k}(\bgamma,\bvkappa)]^{vir}$ in $H_*(\MMr_{g,k}(\bgamma))$
is independent of the  various choices we made to construct the
Kuranishi structure, which include the cut-off functions
$\beta,\varpi$, the obstruction bundle $E_\sigma$ and the partition
of unity. Assume that $\MMr_{g,k}(\bgamma;\bvkappa(\lambda))$ is a
family of moduli spaces depending on a perturbation path. If for
each $\lambda$, the moduli space
$\MMr_{g,k}(\bgamma;\bvkappa(\lambda))$ is strongly regular, then
each $[\MMr_{g,k}(\bgamma;\bvkappa(\lambda))]^{vir}$ defines the
same cohomology class in $H_*(\MMr_{g,k}(\bgamma))$.
\end{crl}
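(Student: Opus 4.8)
\textbf{Proof proposal for Corollary \ref{crl-cobo-nonsing}.}

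The plan is to deduce everything from the cobordism statement of Theorem \ref{thm-cobo-nonsing}, applying it twice: once to the ``$\lambda=\pm1$'' deformation of auxiliary choices, and once to a perturbation path that stays strongly regular. First I would recall the abstract fact (contained in Theorem \ref{thm-vir-cycl}) that the virtual cycle attached to an oriented Kuranishi structure is a cobordism invariant: if a compact space $X$ carries an $(n{+}1)$-dimensional oriented Kuranishi structure with boundary and a strongly continuous map to an orbifold $Y$, then the pushforward of the boundary's virtual cycle vanishes in $H_n(Y;\Q)$; hence two oriented Kuranishi structures on the same space (or two spaces) that together bound give equal virtual cycles. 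The point of Theorem \ref{thm-cobo-nonsing} is precisely that $\MM_{g,k,W}^\lambda(\kappa^+)$ provides such a bounding Kuranishi structure, with boundary the disjoint union of $\MMr_{g,k,W}(\bgamma',\tilde\gamma;\bvkappa',\kappa^+(1))$ and $\MMr_{g,k,W}(\bgamma',\tilde\gamma;\bvkappa',\kappa^+(-1))$ (with opposite induced orientations), and \emph{no other boundary components}, because along this path no BPS soliton section of Case 1 or Case 3 can appear --- the critical point $\kappa^+(\lambda)$ simply moves without its imaginary part colliding with that of another critical point in a way that produces a non-BPS-free degeneration.

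The key steps, in order, are: (i) set up the cobordism $\MM^\lambda_{g,k,W}(\kappa^+)$ over the interval $[-1,1]$ and check that the strongly continuous map $\Pi$ extends over it, mapping each slice $\{\lambda\}\times\MMr_{g,k,W}(\bgamma)$ compatibly; this is immediate since $\Pi$ forgets the section and the section's perturbation data. (ii) Apply the boundary-vanishing principle: $\partial_*[\MM^\lambda_{g,k,W}(\kappa^+)]^{vir}=0$ in $H_*(\MMr_{g,k,W}(\bgamma);\Q)$, which reads
\begin{equation*}
[\MMr_{g,k,W}(\bgamma',\tilde\gamma;\bvkappa',\kappa^+(1))]^{vir}
= [\MMr_{g,k,W}(\bgamma',\tilde\gamma;\bvkappa',\kappa^+(-1))]^{vir}.
\end{equation*}
Since any two strongly regular perturbations can be joined by a generic path crossing only ``harmless'' walls (walls at which the moving critical point does not produce a BPS-connecting configuration) --- here one invokes Theorem \ref{thm-para-chamber} to see the wall set is a union of real hypersurfaces so a generic path meets it transversally and one at a time --- a chain of such cobordisms shows $[\MMr_{g,k,W}(\bgamma;\bvkappa(\lambda))]^{vir}$ is the same class for all $\lambda$ in a strongly regular family. (iii) The independence from the cut-off functions $\beta,\varpi$, the obstruction bundles $E_\sigma$, and the partition of unity is the standard Kuranishi argument: two choices of these data, say indexed by $0$ and $1$, can be interpolated by a one-parameter family, giving an oriented Kuranishi structure with boundary on $\MMr_{g,k,W}(\bgamma,\bvkappa)\times[0,1]$ whose two boundary slices carry the two structures; the same cobordism invariance of Theorem \ref{thm-vir-cycl} forces the two virtual cycles to agree. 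One must check the interpolated structure is again orientable (indeed stably almost complex), which follows word for word from the orientation discussion in Section \ref{sec-inter-gluing}, since the symbol of the linearization is unchanged by these choices.

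The main obstacle I expect is not any single estimate but the bookkeeping of orientations: one must verify that the orientation induced on the boundary slices of $\MM^\lambda_{g,k,W}(\kappa^+)$ (and of the interpolation cobordism for auxiliary data) matches the orientation used to define $[\MMr_{g,k,W}(\bgamma,\bvkappa)]^{vir}$, with the correct relative sign between the two ends, so that the boundary relation genuinely says ``equal'' rather than ``equal up to sign.'' This is where I would be most careful, appealing to the compatibility of the collar structures asserted in Theorem \ref{thm-cobo-nonsing} and to Fukaya--Ono's conventions for orienting Kuranishi structures with boundary; the analytic inputs (compactness, existence of the Kuranishi structure, index formula) are already supplied by Theorems \ref{thm-gromov-comp}, \ref{thm-cobo-nonsing}, and \ref{ind-witt-oper}, so no new PDE work is needed.
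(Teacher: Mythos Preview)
Your proposal is correct and follows the same approach the paper intends: the paper states the corollary with the single line ``By Theorem \ref{thm-cobo-nonsing}, we have,'' and your steps (i)--(iii) spell out exactly the standard cobordism-plus-pushforward argument that this line encodes, together with the interpolation-in-auxiliary-data argument for the first assertion.

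One small imprecision worth cleaning up: in step (ii) you speak of a generic path ``crossing only harmless walls.'' Under the hypothesis of the corollary the path is strongly regular for every $\lambda$, so it crosses \emph{no} walls at all (the walls are precisely the locus where strong regularity fails). The cobordism $\MM^\lambda_{g,k,W}$ then has no cone points and no interior boundary for the trivial reason that no soliton can appear, making the situation strictly easier than the generic-crossing case treated in Theorem \ref{thm-cobo-nonsing}. Your invocation of Theorem \ref{thm-para-chamber} to arrange transversal wall crossings is therefore unnecessary here; it belongs instead to the wall-crossing Theorem \ref{crl-cobo-sing}.
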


Now assume $\MMr_{g,k}(\bgamma',\tilde{\gamma};\bvkappa',\kappa^i)$
to be a fixed strongly regular moduli space. Here $\kappa^i$ is one
of the critical points of $W_{\tilde{\gamma}}+W_{0,\tilde{\gamma}}$.
Then we obtain $\mu_{\tilde{\gamma}}$ virtual cycles $
[\MMr_{g,k}(\bgamma',\tilde{\gamma};\bvkappa',\kappa^i)]^{vir},
i=1,\dots,\mu_{\tilde{\gamma}}$ in $H_*(\MMr_{g,k}(\bgamma))$. Let
$V_{\tilde{\gamma}}\subset H_*(\MMr_{g,k}(\bgamma))$ be the vector
space generated by these virtual cycles.

Let $\alpha^i$ be the corresponding critical value of $\kappa^i$.
We can further require that the order of the critical values
satisfies $\mbox{Im}(\alpha^i)<\mbox{Im}(\alpha^j)$ if $i<j$. This
order determines a basis of $V_{\tilde{\gamma}}$.

The perturbation parameter $(b_1,\dots, b_{N_{\tilde{\gamma}}})$
lies inside one chamber of $\C^{N_{\tilde{\gamma}}}$. By Corollary
\ref{crl-cobo-nonsing}, the virtual cycles do not change if we
only move the point $(b_1,\dots, b_{N_{\tilde{\gamma}}})$ inside
the same chamber.

However, when the point $(b_1,\dots, b_{N_{\tilde{\gamma}}})$
crosses the wall between two chambers, the basis of
$V_{\tilde{\gamma}}$ is transformed to another basis. The change
is given by the following wall crossing formula.

\begin{thm}[{\bf Wall crossing formula}]\label{crl-cobo-sing} Let
$(b_1(\lambda),\dots, b_{N_{\tilde{\gamma}}}(\lambda)),
\lambda\in [-1,1]$ be a generic crossing path in
$\C^{N_{\tilde{\gamma}}}$. Let
$\{\kappa^1(\pm),\dots,\kappa^i(\pm),\kappa^{i+1}(\pm),\dots,\kappa^{\mu_{N_{\tilde{\gamma}}}}(\pm)\}$
be the set of ordered critical points at $\lambda=\pm 1$. We can
assume that $\kappa^j(\pm)=\kappa^j$ is fixed for $j\neq i$,
$\kappa^i(\pm)=\kappa^i(\lambda=\pm 1)$ and
$\mbox{Im}(\alpha^i(\lambda=0))=\mbox{Im}(\alpha^{i+1})$.\

If the perturbation satisfies $\mbox{Re}
\alpha^i(\lambda)<\mbox{Re} \alpha^{i+1}$, we have the
left-transformation:
\begin{align}
&[\MMr_{g,k}(\bgamma',\tilde{\gamma};\bvkappa',\kappa^j(+))]^{vir}=
[\MMr_{g,k}(\bgamma',\tilde{\gamma};\bvkappa',\kappa^j(-))]^{vir},\;\forall j\neq i,i+1\label{wall-proof-1}\\
&[\MMr_{g,k}(\bgamma',\tilde{\gamma};\bvkappa',\kappa^i(+))]^{vir}=
[\MMr_{g,k}(\bgamma',\tilde{\gamma};\bvkappa',\kappa^{i+1}(-))]^{vir}+\nonumber\\
&R_{i,i+1}\cdot[\MMr_{g,k}(\bgamma',\tilde{\gamma};\bvkappa',\kappa^i(-))]^{vir}\label{wall-proof-2}\\
&[\MMr_{g,k}(\bgamma',\tilde{\gamma};\bvkappa',\kappa^{i+1}(+))]^{vir}=
[\MMr_{g,k}(\bgamma',\tilde{\gamma};\bvkappa',\kappa^i(-))]^{vir},\label{wall-proof-3}
\end{align}
where $R_{i,i+1 }$ is the intersection number defined as above.\

If the perturbation satisfies $\mbox{Re}
\alpha^i(\lambda)>\mbox{Re} \alpha^{i+1}$, we have the
right-transformation:
\begin{align}
&[\MMr_{g,k}(\bgamma',\tilde{\gamma};\bvkappa',\kappa^j(+))]^{vir}=
[\MMr_{g,k}(\bgamma',\tilde{\gamma};\bvkappa',\kappa^j(-))]^{vir},\;\forall j\neq i,i+1\\
&[\MMr_{g,k}(\bgamma',\tilde{\gamma};\bvkappa',\kappa^{i}(+))]^{vir}=
[\MMr_{g,k}(\bgamma',\tilde{\gamma};\bvkappa',\kappa^{i+1}(-))]^{vir},\\
&[\MMr_{g,k}(\bgamma',\tilde{\gamma};\bvkappa',\kappa^{i+1}(+))]^{vir}=
[\MMr_{g,k}(\bgamma',\tilde{\gamma};\bvkappa',\kappa^{i}(-))]^{vir}+\nonumber\\
&R_{i,i+1}\cdot[\MMr_{g,k}(\bgamma',\tilde{\gamma};\bvkappa',\kappa^{i+1}(-))]^{vir}.
\end{align}
\end{thm}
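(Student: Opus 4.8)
The plan is to derive the wall-crossing formula as a direct consequence of the cobordism results already established, namely Theorems \ref{thm-cobo-sing} and \ref{thm-cobo-nonsing}, together with the classical Picard--Lefschetz description of the monodromy action on Lefschetz thimbles. The key observation is that, as $\lambda$ varies along a generic crossing path, the moduli spaces $\MMrs_{g,k,W}(\bgamma',\tilde{\gamma};\bvkappa',\kappa^j(\lambda);\lambda)$ fit together into the one-parameter families $\MM^\lambda_{g,k,W}(\kappa^j)$, and each such family carries an orientable Kuranishi structure with boundary. For critical points $\kappa^j$ with $j\neq i,i+1$ the family $\MM^\lambda_{g,k,W}(\kappa^j)$ is governed by Theorem \ref{thm-cobo-nonsing}: its only boundary components occur at $\lambda=\pm 1$, so the cobordism invariance of the virtual cycle gives \eqref{wall-proof-1} immediately. (Here one must observe that the critical point $\kappa^j$ stays strongly regular along the path, so no BPS soliton degenerations of the $j$-th type occur; the only subtlety is that the moving critical point $\kappa^i(\lambda)$ could momentarily collide in imaginary part with $\kappa^j$, but genericity of the path rules this out.)

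\textbf{The critical indices.} For the indices $i$ and $i+1$ the relevant family is $\MM^\lambda_{g,k,W}(\kappa^-)$, where at $\lambda=0$ the two critical values $\alpha^i(\lambda)$ and $\alpha^{i+1}$ have equal imaginary part. By Theorem \ref{thm-cobo-sing} this family has, in addition to the boundary pieces at $\lambda=\pm1$, a boundary at $\lambda=0$ given by $\MMrs_{g,k,W}(\bgamma',\tilde{\gamma};\bvkappa',\kappa^+)\#S^{\tilde{\gamma}}(\kappa^+,\kappa^-)$. The strategy is then: (a) push everything forward to $H_*(\MMr_{g,k,W}(\bgamma))$ via the strongly continuous forgetful map $\Pi$; (b) apply $\partial[\MM^\lambda_{g,k,W}(\kappa^-)]^{vir}=0$ (the boundary of a cobordism cycle is null-homologous), which expresses $[\MMr_{g,k,W}(\dots;\kappa^i(+))]^{vir}$ in terms of $[\MMr_{g,k,W}(\dots;\kappa^{i+1}(-))]^{vir}$ plus the contribution of the $\lambda=0$ boundary; (c) identify that $\lambda=0$ boundary contribution, after fiber-integration over the geometric BPS soliton space $(S^{\tilde{\gamma}}(\kappa^+,\kappa^-)/\R)^{S^1}$, with $R_{i,i+1}\cdot[\MMr_{g,k,W}(\dots;\kappa^i(-))]^{vir}$, where $R_{i,i+1}=(-1)^{N_{\tilde{\gamma}}(N_{\tilde{\gamma}}+1)/2}\Delta_i\circ\Delta_{i+1}$ is exactly the intersection number of vanishing cycles. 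Equation \eqref{wall-proof-3} is then obtained symmetrically, by running the same argument with the roles of $\kappa^i$ and $\kappa^{i+1}$ interchanged (or equivalently by tracking the family $\MM^\lambda_{g,k,W}(\kappa^+)$ in Theorem \ref{thm-cobo-nonsing}, whose only boundary is at $\lambda=\pm1$, giving that the cycle attached to the \emph{moving} critical point $\kappa^i(\lambda)$ is cobordism-invariant, and at the endpoints $\kappa^i(+1)$ and $\kappa^i(-1)$ sit on opposite sides of the wall). The right-transformation case $\mbox{Re}\,\alpha^i(\lambda)>\mbox{Re}\,\alpha^{i+1}$ is handled identically, with the direction of the BPS soliton flow reversed, which swaps which of $\kappa^i,\kappa^{i+1}$ plays the role of the ``target'' critical point and hence which equation picks up the $R_{i,i+1}$ term.

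\textbf{The main obstacle.} The crux is step (c): precisely matching the geometric count coming from the $\lambda=0$ boundary component of the Kuranishi cobordism with the classical Picard--Lefschetz coefficient $R_{i,i+1}$. This requires two things. First, one must show that the gluing construction of the $\lambda=0$ boundary neighborhoods (Lemmas \ref{lm-BPS-kura1}--\ref{lm-BPS-kura4}, and the Case-1/Case-3 analysis inside the proof of Theorem \ref{thm-cobo-sing}) realizes the boundary stratum as a fiber product of $\MMrs_{g,k,W}(\dots;\kappa^+)$ with the space of BPS solitons, compatibly with orientations — so that fiber-integrating the virtual cycle over this soliton space produces an honest integer multiple of $[\MMr_{g,k,W}(\dots;\kappa^i(-))]^{vir}=[\MMr_{g,k,W}(\dots;\kappa^+)]^{vir}$. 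Second, one must identify that integer: by the Picard--Lefschetz theorem on BPS soliton counts stated earlier in the excerpt (the number of BPS solitons connecting two critical points equals the intersection number $\Delta^1\circ\Delta^2$ of the associated vanishing cycles), the count is $\Delta_i\circ\Delta_{i+1}$ up to the standard sign $(-1)^{N_{\tilde{\gamma}}(N_{\tilde{\gamma}}+1)/2}$ coming from the orientation conventions in \eqref{monodromy-action}. Getting this sign right, and verifying that the $S^1$-quotient in $(S^{\tilde{\gamma}}(\kappa^+,\kappa^-)/\R)^{S^1}$ does not introduce a spurious factor, is where the real bookkeeping lies; everything else is an application of cobordism invariance of Kuranishi virtual cycles as in Fukaya--Ono.
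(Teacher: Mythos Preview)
Your proposal is correct and follows essentially the same route as the paper: cobordism invariance via Theorem \ref{thm-cobo-nonsing} for the indices $j\neq i,i+1$ and for the moving critical point $\kappa^i$, and via Theorem \ref{thm-cobo-sing} for $\kappa^{i+1}$, whose extra $\lambda=0$ boundary is identified with $R_{i,i+1}$ copies of the $\kappa^i$-cycle using the Picard--Lefschetz count of BPS solitons. The one step the paper spells out more explicitly than you do is the intermediate identity $[\MMrs_{g,k,W}(\bgamma',\tilde{\gamma};\bvkappa',\kappa^{i};\lambda=0)]^{vir}=[\MMr_{g,k,W}(\bgamma',\tilde{\gamma};\bvkappa',\kappa^{i}(-))]^{vir}$ (equation \eqref{wall-proof-4}), obtained by running the Theorem \ref{thm-cobo-nonsing} cobordism on a half-interval; this is what converts the $\lambda=0$ boundary contribution into the $(-)$-endpoint cycle, and you invoke it only implicitly in step (c).
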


\begin{proof} It suffices to prove
the left transformation formula.\

At first, we shall prove (\ref{wall-proof-1}). Define the moduli
space
$$
\WW_{g,k}^\lambda(\kappa^j)=\cup_{\lambda\in
[-1,1]}\{\lambda\}\times \MMrs_{g,k}(\bgamma',\tilde{\gamma};
\bvkappa',\kappa^j;\lambda).
$$
By Theorem \ref{thm-cobo-nonsing}, this moduli space is a compact
Hausdorff space carrying an orientable Kuranishi structure with
boundaries. Its boundaries consist of two parts:
$\MMrs_{g,k}(\bgamma',\tilde{\gamma}; \bvkappa',\kappa^j(+))$ and
$\MMrs_{g,k}(\bgamma',\tilde{\gamma}; \bvkappa',\kappa^j(-))$, which
are strongly regular moduli spaces. Let $\{(U_\sigma(\pm),
E_{\sigma}(\pm), s_\sigma(\pm))\}$ be a good coordinate system of
Kuranishi neighborhoods of $\MMrs_{g,k}(\bgamma',\tilde{\gamma};
\bvkappa',\kappa^j(\pm))$ respectively. Then by Theorem 6.4 of
\cite{FO}, the section $\{s_\sigma(\pm)\}$ can be approximated by a
sequence of multisections $\{\tilde{s}^\pm_{\sigma,n}\}$. By Lemma
17.4 of \cite{FO}, we can extend the multisections
$\{\tilde{s}^\pm_{\sigma,n}\}$ to the multisections
$\{\tilde{s}^\lambda_{\sigma,n}\}$ over
$\WW_{g,k}^\lambda(\kappa^j)$ such that the restriction
$\tilde{s}^\lambda_{\sigma,n}|_{\lambda=\pm
1}=\tilde{s}^\pm_{\sigma,n}$. By the proof of Theorem 4.9 of
\cite{FO}, the zero set $(\tilde{s}^\lambda_{\sigma,n})^{-1}(0)$ is
a singular chain satisfying
$$
\pat
(\tilde{s}^\lambda_{\sigma,n})^{-1}(0)=(\tilde{s}^+_{\sigma,n})^{-1}(0)-(\tilde{s}^-_{\sigma,n})^{-1}(0).
$$

Define the map $\Pi: \WW_{g,k}^\lambda(\kappa^j)\rTo
\MMr_{g,k}(\bgamma)$ by
$$
[(\frkc, \bu)]\mapsto [(\frkc_0)],
$$
where $\frkc_0$ is the $W$ curve obtained from $\frkc$ by
shrinking the soliton components.

Since the map $\Pi: \WW_{g,k}^\lambda(\kappa^j)\rTo
\MMr_{g,k}(\bgamma)$ is strongly continuous, we have
$$
\pat
\Pi_*((\tilde{s}^\lambda_{\sigma,n})^{-1}(0))=\Pi_*((\tilde{s}^+_{\sigma,n})^{-1}(0))
-\Pi_*((\tilde{s}^-_{\sigma,n})^{-1}(0)).
$$
Therefore, we obtain
$$
[\MMr_{g,k}(\bgamma',\tilde{\gamma};
\bvkappa',\kappa^j(+))]^{vir}=[\MMr_{g,k}(\bgamma',\tilde{\gamma};
\bvkappa',\kappa^j(-))]^{vir}.
$$
In particular, the cobordism argument can be applied to the moduli
space
$$
\cup_{\lambda\in [0,1]}\{\lambda\}\times
\MMrs_{g,k}(\bgamma',\tilde{\gamma}; \bvkappa',\kappa^j;\lambda)
$$
to obtain the following relation:
\begin{equation}
[\MMr_{g,k}(\bgamma',\tilde{\gamma};
\bvkappa',\kappa^j(+))]^{vir}=[\MMrs_{g,k}(\bgamma',\tilde{\gamma};
\bvkappa',\kappa^j; \lambda=0)]^{vir}.
\end{equation}
So we have finished the proof of (\ref{wall-proof-1}).

The proof of (\ref{wall-proof-3}) is almost the same as the proof
of (\ref{wall-proof-1}). The sole difference is that the decorated
index $\kappa^i$ is moving when $\lambda$ changes from $+1$ to
$-1$. In particular, one can also prove
\begin{equation}\label{wall-proof-4}
[\MMrs_{g,k}(\bgamma',\tilde{\gamma};\bvkappa',\kappa^{i};
\lambda=0)]^{vir}=[\MMr_{g,k}(\bgamma',\tilde{\gamma};\bvkappa',\kappa^{i}(-))]^{vir}.
\end{equation}

To prove equality (\ref{wall-proof-2}), we consider the following
moduli space:
$$
\WW_{g,k}^\lambda(\kappa^{i+1})=\cup_{\lambda\in
[-1,1]}\{\lambda\}\times \MMr_{g,k}(\bgamma',\tilde{\gamma};
\bvkappa',\kappa^{i+1};\lambda).
$$
By Theorem \ref{thm-cobo-sing}, this moduli space can carry an
oriented Kuranishi structure with boundaries. The boundaries
consists of three parts:$\MMr_{g,k}(\bgamma',\tilde{\gamma};
\bvkappa',\kappa^{i+1};\lambda=+1)$,
$\MMr_{g,k}(\bgamma',\tilde{\gamma};
\bvkappa',\kappa^{i+1};\lambda=-1)$ and
$\MMrs_{g,k}(\bgamma',\tilde{\gamma};
\bvkappa',\kappa^{i};\lambda=0)\#S^{\tilde{\gamma}}(\kappa^i(0),\kappa^{i+1})$.

If the perturbation path of the parameter $b(\lambda)$ is generic,
then the solitons of
$S^{\tilde{\gamma}}(\kappa^i(0),\kappa^{i+1})$ are Morse-Smale
flows and there are exactly $R_{i,i+1}$ elements, where
$R_{i,i+1}$ is the intersection number of the vanishing cycles
representing the critical points $\kappa^i(0)$ and $\kappa^{i+1}$.

If $\bu_{+-}\in S^{\tilde{\gamma}}(\kappa^i(0),\kappa^{i+1})$ and
$\bu_1\in \MMrs_{g,k}(\bgamma',\tilde{\gamma};
\bvkappa',\kappa^{i};\lambda=0)$, then their neighborhoods in
$\WW_{g,k}^\lambda(\kappa^{i+1})$ are exactly the same as those
described by Lemmas \ref{lm-BPS-kura1} and \ref{lm-BPS-kura4}. Now
using a similar cobordism argument, one can show that
$[\MMr_{g,k}(\bgamma',\tilde{\gamma};
\bvkappa',\kappa^{i+1};\lambda=+1)]^{vir}$ is cobordant to
$[\MMr_{g,k}(\bgamma',\tilde{\gamma};
\bvkappa',\kappa^{i+1};\lambda=-1)]^{vir}$ and $R_{i,i+1}$ pieces of
$[\MMrs_{g,k}(\bgamma',\tilde{\gamma};
\bvkappa',\kappa^{i};\lambda=0)]^{vir}$. Notice that
\begin{align*}
&[\MMr_{g,k}(\bgamma',\tilde{\gamma};
\bvkappa',\kappa^{i+1};\lambda=+1)]^{vir}=[\MMr_{g,k}(\bgamma',\tilde{\gamma};
\bvkappa',\kappa^{i}(+)]^{vir},\\
&[\MMr_{g,k}(\bgamma',\tilde{\gamma};
\bvkappa',\kappa^{i+1};\lambda=-1)]^{vir}=[\MMr_{g,k}(\bgamma',\tilde{\gamma};
\bvkappa',\kappa^{i+1}(-))]^{vir},
\end{align*}
and
$$
[\MMrs_{g,k}(\bgamma',\tilde{\gamma};
\bvkappa',\kappa^{i};\lambda=0)]^{vir}=[\MMr_{g,k}(\bgamma',\tilde{\gamma};\bvkappa',\kappa^{i}(-))]^{vir}
$$
by relation (\ref{wall-proof-4}), and thus we obtain
(\ref{wall-proof-2}).
\end{proof}

\subsubsection*{Correspondence} With the given order of the
critical points $\{\kappa^1,\dots,
\kappa^{\mu_{\tilde{\gamma}}}\}$ or the critical values
$\{\alpha_1,\dots, \alpha_{\mu_{\tilde{\gamma}}}\}$, we have the
corresponding order of the system of paths
$\{l_1,\dots,l_{\mu_{\tilde{\gamma}}}\}$. This system of paths
induces a distinguished basis of vanishing cycles
$\{\Delta_1,\dots, \Delta_{\mu_{\tilde{\gamma}}}\}$ in
$H_{N_{\tilde{\gamma}}-1}$ or thimbles $\{S_1,\dots,
S_{\mu_{\tilde{\gamma}}}\}$ in $H_{N_{\tilde{\gamma}}}(Y, Y_*)$.
For simplicity, in the following we only discuss the
correspondence between virtual cycles in $V_{\tilde{\gamma}}$ and
thimbles in $H_{N_{\tilde{\gamma}}}(Y, Y_*)$. The result is the
same for vanishing cycles.

We define a linear map
$$
\MMr_{g,k}(\bgamma',\tilde{\gamma};\bvkappa',\cdot):
H_{N_{\tilde{\gamma}}}(Y, Y_*)\rTo V_{\tilde{\gamma}}\subset
H_*(\MMr_{g,k}(\bgamma))
$$
by setting the map between bases:
$$
\MMr_{g,k}(\bgamma',\tilde{\gamma};\bvkappa',S_i):=[\MMr_{g,k}(\bgamma',\tilde{\gamma};\bvkappa',\kappa^i)]^{vir}.
$$
In particular, we define
$$
\MMr_{g,k}(\bgamma',\tilde{\gamma};\bvkappa',-S_i):=-[\MMr_{g,k}(\bgamma',\tilde{\gamma};\bvkappa',\kappa^i)]^{vir}.
$$
Theorem \ref{crl-cobo-sing} implies that the generic crossing path
of perturbation provides the action of the braid group on
$V_{\tilde{\gamma}}$:
\begin{align}
&br^L_j\cdot
([\MMr_{g,k}(\bgamma',\tilde{\gamma};\bvkappa',\kappa^1)]^{vir},\dots,
[\MMr_{g,k}(\bgamma',\tilde{\gamma};\bvkappa',\kappa^{\mu_{\tilde{\gamma}}})]^{vir})\\
&=([\MMr_{g,k}(\bgamma',\tilde{\gamma};\bvkappa',\kappa^1)]^{vir},\dots,
[\MMr_{g,k}(\bgamma',\tilde{\gamma};\bvkappa',\kappa^{j-1})]^{vir},\nonumber\\
&h_j([\MMr_{g,k}(\bgamma',\tilde{\gamma};\bvkappa',\kappa^{j+1})]^{vir}),
 [\MMr_{g,k}(\bgamma',\tilde{\gamma};\bvkappa',\kappa^{j})]^{vir},\dots,
 [\MMr_{g,k}(\bgamma',\tilde{\gamma};\bvkappa',\kappa^{\mu_{\tilde{\gamma}}})]^{vir}).
\end{align}
Here $br^L_j$ means the action of the braid group which is given
by left-transformation.\

Hence, we have the following proposition.
\begin{prop}\label{prop-linear} The map
$\MMr_{g,k}(\bgamma',\tilde{\gamma};\bvkappa',\cdot):H_{N_{\tilde{\gamma}}}(Y,
Y_*)\rTo V_{\tilde{\gamma}}\subset H_*(\MMr_{g,k}(\bgamma))
 $ is a
homomorphism of $Br_(\mu_{\tilde{\gamma}})$-modules. Furthermore,
if $\{S'_i\}$ is another distinguished basis of
$H_{N_{\tilde{\gamma}}}(Y, Y_*)$ and $S=\sum_i x'_i S'_i$, then
$$
\MMr_{g,k}(\bgamma',\tilde{\gamma};\bvkappa',S)=\sum_i
x'_i\MMr_{g,k}(\bgamma',\tilde{\gamma};\bvkappa',S'_i).
$$
\end{prop}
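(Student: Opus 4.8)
The plan is to establish Proposition \ref{prop-linear} in two parts, corresponding to its two assertions, and to reduce everything to the wall-crossing formula of Theorem \ref{crl-cobo-sing} together with the standard description of the braid-group action on the set of distinguished bases from classical Picard-Lefschetz theory (Proposition \ref{prop-picar-base} and the preceding discussion).

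First I would prove that $\MMr_{g,k,W}(\bgamma',\tilde{\gamma};\bvkappa',\cdot)$ is a homomorphism of $Br(\mu_{\tilde{\gamma}})$-modules. The map is defined on a fixed distinguished basis $\{S_i\}$ by $S_i\mapsto[\MMr_{g,k,W}(\bgamma',\tilde{\gamma};\bvkappa',\kappa^i)]^{vir}$ and extended $\Z$-linearly; so $Br(\mu_{\tilde{\gamma}})$-equivariance amounts to checking that the generator $br_j$ of the braid group acts compatibly on both sides. On the thimble side, $br_j$ acts by $(S_1,\dots,S_{\mu_{\tilde{\gamma}}})\mapsto(S_1,\dots,S_{j-1},h_j(S_{j+1}),S_j,S_{j+2},\dots)$ where $h_{S_j}(S_{j+1})=S_{j+1}+R_{j+1,j}S_j$ by the Picard-Lefschetz formula \eqref{monodromy-action}. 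On the virtual-cycle side, one realizes $br_j$ geometrically by choosing a generic crossing path of perturbation $b(\lambda)$ that exchanges the critical values $\alpha^j$ and $\alpha^{j+1}$ while fixing the others; the left-transformation formulas \eqref{wall-proof-1}--\eqref{wall-proof-3} of Theorem \ref{crl-cobo-sing} then compute the effect of crossing the wall on the ordered tuple of virtual cycles, and this effect is exactly $br^L_j$. Comparing the two descriptions (matching $R_{i,i+1}$ in the wall-crossing formula with the intersection number $R_{i+1,i}$ in the Picard-Lefschetz formula, up to the sign conventions fixed in the excerpt) gives equivariance on generators, hence on all of $Br(\mu_{\tilde{\gamma}})$. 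Here I would note that by Corollary \ref{crl-cobo-nonsing} the virtual cycle is unchanged under moving the perturbation inside a chamber, so the tuple of virtual cycles depends only on the chamber, i.e. on the distinguished basis; this is what makes the assignment well defined before we even begin.

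Second I would prove the ``furthermore'': if $\{S'_i\}$ is another distinguished basis and $S=\sum_i x'_i S'_i$, then $\MMr_{g,k,W}(\bgamma',\tilde{\gamma};\bvkappa',S)=\sum_i x'_i\MMr_{g,k,W}(\bgamma',\tilde{\gamma};\bvkappa',S'_i)$. The point is that a priori the notation $\MMr_{g,k,W}(\bgamma',\tilde{\gamma};\bvkappa',\cdot)$ is \emph{defined} relative to one fixed distinguished basis, so this identity is the statement that the linear map does not depend on which distinguished basis was used to define it. By Proposition \ref{prop-picar-base}, any distinguished basis is obtained from a fixed one by a sequence of sign changes $Or_j$ (action of $(\Z/2\Z)^{\mu_{\tilde{\gamma}}}$) and braid moves $br_j$. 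Sign changes are handled by the explicit definition $\MMr_{g,k,W}(\dots,-S_i):=-[\MMr_{g,k,W}(\dots,\kappa^i)]^{vir}$ in the excerpt, and braid moves are handled by the equivariance just established together with the corresponding wall-crossing for the virtual cycles. So one proceeds by induction on the length of a word in $Or_j$'s and $br_j$'s expressing the change of basis: at each step the linear map defined via the new partial basis agrees with the one defined via the old, because both are computed by the same transformation formula on both sides. This shows the linear map is intrinsic, which is precisely the displayed identity.

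The main obstacle I expect is bookkeeping of orientations and signs: matching the sign $R_{i,i+1}$ appearing in the wall-crossing formula \eqref{wall-proof-2} (an oriented count of BPS solitons coming from the Kuranishi cobordism) with the Picard-Lefschetz coefficient $R_{j,i}=(-1)^{N_{\tilde{\gamma}}(N_{\tilde{\gamma}}+1)/2}\Delta_j\circ\Delta_i$, and making sure the orientation on the moduli cobordism $\MM^\lambda_{g,k,W}(\kappa^{i+1})$ induces the boundary decomposition with the correct sign so that $br^L_j$ on virtual cycles is literally the braid action and not its inverse or a twisted version. Once the orientation conventions are pinned down (inheriting them, as the excerpt does, from the stably almost complex structure on the Kuranishi structures and from the fixed identification $I$ and the standard rigidifications), the rest is a formal consequence of Theorem \ref{crl-cobo-sing}, Corollary \ref{crl-cobo-nonsing}, and Proposition \ref{prop-picar-base}.
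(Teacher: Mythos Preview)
Your proposal is correct and follows essentially the same approach as the paper. The paper's own proof is a single sentence---``The second conclusion is obtained by Proposition \ref{prop-picar-base}''---because the first assertion is regarded as immediate from the displayed formula for $br^L_j$ just before the proposition (which is exactly your comparison of the wall-crossing left-transformation \eqref{wall-proof-1}--\eqref{wall-proof-3} with the Picard-Lefschetz braid action), and the second assertion is then reduced, as you do, to Gabrielov's result that any distinguished basis is reached from a fixed one by iterated sign changes $Or_j$ and braid moves $br_j$. Your concern about matching the sign $R_{i,i+1}$ with the Picard-Lefschetz coefficient is legitimate but is not addressed separately in the paper; it is absorbed into the orientation conventions fixed earlier in the construction of the Kuranishi cobordism.
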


\begin{proof} The second conclusion is obtained by
Proposition \ref{prop-picar-base}.
\end{proof}

\subsection{Axioms for the virtual cycle on the space of rigidified $W$-curves}

\begin{thm}\label{thm:main-rigid}
Let $\Gamma$ be a decorated stable $W$-graph (not necessarily
connected) with each tail $\tau\in T(\Gamma)$ decorated by an
element $\gamma_\tau \in \grp$.  Denote by $k:=|T(\Gamma)|$  the
number of tails of $\Gamma$.   There exists a \emph{virtual cycle}
$$\left[\MMr(\Gamma)\right]^{vir} \in
H_*(\MMr(\Gamma),\Q)\otimes \prod_{\tau \in T(\Gamma)}
H_{N_{\gamma_{\tau}}}(\C^N_{\gamma_{\tau}},W^{\infty}_{\gamma_\tau},
\Q)
$$
satisfying the axioms below. When $\Gamma$ has a single vertex of
genus g, k tails, and no edges (i.e, $\Gamma$ is a corolla), we
denote the virtual cycle by
$\left[\MMr_{g,k}(\bgamma)\right]^{vir}$, where $\bgamma
:=(\gamma_1, \dots, \gamma_k)$.

The following axioms are satisfied:
\begin{enumerate}
\item \textbf{Dimension:}\label{ax:rig-dimension} If $D_\Gamma$ is not
an integer,  
%a half-integer (i.e., if $D_\Gamma\not\in\frac{1}{2}\Z$), 
then $\left[\MMr(\Gamma)\right]^{vir}=0$. Otherwise, the cycle
$\left[\MMr(\Gamma)\right]^{vir}$ has degree
\begin{equation}\label{eq:dimension}
6g-6+2k-2\#E(\Gamma)-2D_{\Gamma}=2\left((\chat-3)(1-g) + k -\#E(\Gamma) - \sum_{\tau\in
T(\Gamma)} \iota_{\tau}\right).
\end{equation}
So the cycle lies in $H_r(\MMr(\Gamma),\Q)\otimes \prod_{\tau \in
T(\Gamma)}
H_{N_{\gamma_{\tau}}}(\C^N_{\gamma_{\tau}},W^{\infty}_{\gamma_\tau},
\Q),$ where
$$
r:=6g-6+2k -\#E(\Gamma)-2D-\sum_{\tau\in
T(\Gamma)}N_{\gamma_\tau} =
2\left((\hat{c}-3)(1-g)+k-\#E(\Gamma)-\sum_{\tau\in
T(\Gamma)}\iota(\gamma_{\tau})- \sum_{\tau\in
T(\Gamma)}\frac{N_{\gamma_\tau}}{2}\right).
$$

\item \label{ax:rig-symm}\textbf{Symmetric group invariance} There
is a natural $S_k$-action on $\MMr_{g,k}$ obtained by permuting the
tails. This action also induces the permutation of relative homology
and cohomology groups. So for any $\sigma \in S_k$ the induced map
$$
\sigma_*: H_*(\MMr_{g,k},\Q)\otimes \prod_i
H_{N_{\gamma_{i}}}(\C^N_{\gamma_i}, W^{\infty}_{\gamma_i}, \Q) \to
H_*(\MMr_{g,k},\Q)\otimes \prod_i
H_{N_{\gamma_{\sigma(i)}}}(\C^N_{\gamma_{\sigma(i)}},
W^{\infty}_{\gamma_{\sigma(i)}}, \Q)
$$
satisfies
$$
<\sigma_*\left[\MMr(\Gamma)\right]^{vir},
\sigma_*(\alpha)>=<\left[\MMr(\Gamma)\right]^{vir}, \alpha>,
$$
for any $\alpha\in \prod_i H^{N_{\gamma_{i}}}(\C^N_{\gamma_i},
W^{\infty}_{\gamma_i}, \Q)$.

\item \textbf{Disconnected graphs:} Let $\Gamma =\coprod_{i}
\Gamma_i$ be a stable, decorated $W$-graph which is the disjoint
union of connected $W$-graphs $\Gamma_i$. The classes
$\left[\MMr(\Gamma)\right]^{vir}$ and
$\left[\MMr(\Gamma_i)\right]^{vir}$ are related by
\begin{equation}
\left[\MMr(\Gamma)\right]^{vir}= \left[\MMr(\Gamma_1)\right]^{vir}
\times \dots \times \left[\MMr(\Gamma_d)\right]^{vir}.
\end{equation}

\item \textbf{Degenerating connected graphs:}

Let $\Gamma$ be a connected, genus-$g$, stable, decorated $W$-graph
with a single edge $e$ and let $\tilde{i} : \MMr(\Gamma) \to
\MMr_{g,k}(\bgamma)$ denote the canonical inclusion map. Then there
holds
$$
[\MMr_{g,k}(\bgamma)]^{vir}\cap \MMr(\Gamma)=[\MMr(\Gamma)]^{vir},
$$
or in cohomology form
$$[\MMr(\Gamma)]^{vir}=\tilde{i}^*[\MMr_{g,k}(\bgamma)]^{vir}.$$
\smallskip

\item \textbf{Topological Euler class for the Narrow sector:}
Suppose that all the decorations on tails of $\Gamma$ are
  \emph{narrow}, meaning that $\C^N_{\gamma_i}=0$, and so we can
  omit $H_{N_{\gamma_{i}}}(\C^N_{\gamma_i}, W^{\infty}_{\gamma_i}, \Q)=\Q$ from our notation.

Consider the universal rigidified $W$-structure
$(\LL_1,\dots,\LL_N)$ on the universal curve $\pi:\cC \to
\MMr(\Gamma)$ and the two-term complex of sheaves
$$\pi_*(\LL_i){\to} R^1\pi_*(\LL_i).$$
There is a family of maps
$$W_i=\frac{\partial W}{\partial x_i}: \pi_*(\bigoplus_j \LL_j)
\to \pi_*(K\otimes \LL_i^*)\cong R^1\pi_*(\LL_i)^*.$$ The above
two-term complex is quasi-isomorphic to a complex of vector bundles
\cite{PV}
$$E^0_i\xrightarrow{d_i} E^1_i$$
such that
$$\ker( d_i)\to \coker(d_i)$$
is isomorphic to the original two-term complex.
    $W_i$ is naturally extended (denoted by the same notation) to
    $$\bigoplus_i E^0_i\to (E^1_i)^*.$$
    Choosing an Hermitian metric on $E^1_i$ defines an
    isomorphism $\bar{E}^{1*}_i\cong E^1_i.$ Define the \emph{Witten map} to be the following:
    $$\wit=\bigoplus (d_i+\bar{W}_i):
    \bigoplus_i E^0_i\to\bigoplus_i \bar{E}^{1*}_i\cong \bigoplus_i E^1_i.$$
 Let $\pi_j: \bigoplus_i E^j_i\to \MM$ be
    the projection map. The Witten map defines a proper section
    (denoted by the same notation)
    of the bundle $\wit: \bigoplus_i E^0_i\to \pi^*_0 \bigoplus_i
    E^1_i.$ The above data  defines a topological Euler
    class $\eul(\wit: \bigoplus_i E^0_i\to \bigoplus_i
    E^1_i)$. Then,
    $$[\MMr(\Gamma)]^{vir}=\eul(\wit: \pi^*_0 \bigoplus_i E^1_i\to \bigoplus_i
    E^0_i)\cap [\MMr(\Gamma)].$$

    The above axiom implies two subcases.
\begin{enumerate}
\item{\bf Concavity}:\label{ax:convex}\footnote{This axiom was
  called \emph{convexity} in \cite{JKV1} because the original form of
  the construction outlined by Witten in the $A_{r-1}$ case involved
  the Serre dual of $\LL$, which is convex precisely when our $\LL$ is
  concave.}

Suppose that all tails of $\Gamma$ are narrow.  If
$\pi_*\left(\bigoplus_{i=1}^t\LL_i\right)=0$, then the virtual cycle
is given by capping the top Chern class of  $\left(R^1 \pi_*
\left(\bigoplus_{i=1}^t\LL_i\right)\right)$ with the usual
fundamental cycle of the moduli space:
\begin{equation}\begin{split}
\left[\MMr(\Gamma)\right]^{vir}& =
c_{top}\left(\left(R^1\pi_*\bigoplus_{i=1}^t\LL_i \right)\right)
\cap \left[\MMr(\Gamma)\right]\\
&=c_{D}\left(R^1\pi_*\bigoplus_{i=1}^t\LL_i \right) \cap
\left[\MMr(\Gamma)\right].
\end{split}
\end{equation}
\item   {\bf  Index zero:} \label{ax:wittenmap} Suppose that  $\dim (\MMr(\Gamma))=0$
and all the decorations on tails  are narrow.

If the pushforwards $\pi_* \left(\bigoplus\LL_i\right)$ and
$R^1\pi_* \left(\bigoplus \LL_i\right)$ are both vector bundles of
the same rank, then the virtual cycle is just the degree
$\deg(\wit)$ of the Witten map times the fundamental cycle:
$$\left[\MMr(\Gamma)\right]^{vir} = \deg(\wit)\left[\MMr(\Gamma)\right].$$
\end{enumerate}

\item \textbf{Forgetting tails:}\label{ax:rig-tails}
\begin{enumerate}
    \item Let $\Gamma$
have its $i$th tail decorated with $J^{-1}$, where $J$ is the
exponential grading element of $G$. Further, let $\Gamma'$ be the
decorated $W$-graph obtained from $\Gamma$ by forgetting the $i$th
tail and its decorations.  Assume that $\Gamma'$ is stable, and
denote the forgetting tails morphism by $$\vtr: \MMr(\Gamma) \to
\MMr(\Gamma').$$  We have
\begin{equation}
[\MMr_{g,k}(\Gamma)]^{vir}=(\vtr)^*([\MMr_{g,k-1}(\Gamma')]^{vir}).\end{equation}

\item (1)
$[\MMr_{0,3}(\gamma_1,\gamma_2,J^{-1})]^{vir}=\emptyset$ if
$\gamma_1\neq \gamma_2^{-1}$.\

(2) If $S_i$ and $S^-_i$ are arbitrary  bases in
$H_{N_\gamma}(\C^N_\gamma, (W_\gamma)^{\infty},\Q)$ and
$H_{N_\gamma}(\C^N_\gamma, (W_\gamma)^{-\infty},\Q)$, respectively,
and if $(\eta^{ij})$ is the inverse matrix of $(<S_i, S^-_j>)$,
then
$$
[\MMr_{0,3}(\gamma,\gamma^{-1},J^{-1})]^{vir}=\sum_{i,j}\frac{|G|}{|<\gamma>|}\times
\frac{|G|}{|<J^{-1}>|} \eta^{ij} S_i\otimes S^-_j.
$$
Here $\sum_{i,j}\eta^{ij} S_i\otimes S^-_j$ is called the Casimir
element of the intersection pairing. Furthermore, for any
$\alpha\in H^{N_\gamma}(\C^N_\gamma, (W_\gamma)^{-\infty},\Q) $
and $\beta\in H^{N_\gamma}(\C^N_\gamma, (W_\gamma)^{\infty},\Q)$,
there holds
$$
[\MMr_{0,3}(\gamma,\gamma^{-1},J^{-1})]^{vir}(\alpha,\beta,e_{J^{-1}})=\frac{|G|}{|<\gamma>|}\times
\frac{|G|}{|<J^{-1}>|}<\alpha,\beta>,
$$
where $<\cdot,\cdot>$ is the pairing induced by the intersection
pairing.
\end{enumerate}

\item\textbf{Composition law:}\label{ax:rig-cutting} Given any
genus-$g$ decorated stable $W$-graph $\Gamma$ with $k$ tails, and
given any edge $e$ of $\Gamma$, let $\hGamma$ denote the graph
obtained by ``cutting'' the edge $e$ and replacing it with two
unjoined tails $\tau_+$ and $\tau_-$.  Further, denote by
$$\widetilde{\rho}:\WW(\hGamma) \to \WW(\Gamma)$$ the gluing
loops or gluing trees morphism corresponding to gluing $\tau_+$ to
$\tau_-$, as described in Section~\ref{sec:rigmoduli} Equations
(\ref{eq:glueTreeW}) and (\ref{eq:glueLoopW}).

The virtual cycle $\left[\MMr({\hGamma})\right]^{vir}$ lies in
$$
H_*(\MMr(\hGamma),\Q)\otimes \prod_{\tau \in T(\Gamma)}
H_{N_{\gamma_{\tau}}}(\C^N_{\gamma_{\tau}},W^{\infty}_{\gamma_\tau},
\Q)\otimes
H_{N_{\gamma_{+}}}(\C^N_{\gamma_{+}},W^{\infty}_{\gamma_+}, \Q)
\otimes H_{N_{\gamma_{-}}}(\C^N_{\gamma_{-}},W^{\infty}_{\gamma_-},
\Q),
$$
and the homology
$H_{N_{\gamma_{+}}}(\C^N_{\gamma_{+}},W^{\infty}_{\gamma_+}, \Q)$ is
naturally isomorphic to
$H_{N_{\gamma_{-}}}(\C^N_{\gamma_{-}},W^{\infty}_{\gamma_-}, \Q)$.
So the intersection pairing
$$\langle\, ,
\rangle:H_{N_{\gamma_{+}}}(\C^N_{\gamma_{+}},W^{\infty}_{\gamma_+},
\Q) \otimes
H_{N_{\gamma_{-}}}(\C^N_{\gamma_{-}},W^{\infty}_{\gamma_-}, \Q),\rTo
\C
$$
can be applied to contract these and give a map
\begin{align*}
\langle\, \rangle_{\pm}:&H_*(\MMr(\hGamma),\Q)\otimes \prod_{\tau
\in T(\Gamma)}
H_{N_{\gamma_{\tau}}}(\C^N_{\gamma_{\tau}},W^{\infty}_{\gamma_\tau},
\Q)\otimes
H_{N_{\gamma_{+}}}(\C^N_{\gamma_{+}},W^{\infty}_{\gamma_+}, \Q)
\otimes H_{N_{\gamma_{-}}}(\C^N_{\gamma_{-}},W^{\infty}_{\gamma_-},
\Q)\\
&\rTo H_*(\MMr(\hGamma),\Q)\otimes \prod_{\tau \in T(\Gamma)}
H_{N_{\gamma_{\tau}}}(\C^N_{\gamma_{\tau}},W^{\infty}_{\gamma_\tau},
\Q).
\end{align*}
We have
\begin{equation}\label{eq:composition}
\frac{|G|}{|<\gamma_+>|}\left[\MMr({\Gamma})\right]^{vir}
=\widetilde{\rho}_*
\left\langle\left[\MMr({\hGamma})\right]^{vir}\right\rangle_{\pm}.
\end{equation}

\item \textbf{Sums of Singularities:}

If $W_1 \in \C[z_1,\dots,z_{N_1}]$ and $W_2 \in
C[z_{N_1+1},\dots,z_{N_1+N_2}]$ are two quasi-homogeneous
polynomials with diagonal automorphism groups $G_1$ and $G_2$,
then the diagonal automorphism group of $W=W_1+W_2$ is $G = G_1
\times G_2$. For any $\gamma_1\in G_1$ and $\gamma_2\in G_2$, it
is known \cite{AGV, E} that the following isomorphism holds:
$$
H_{mid}(\C^{N_1+N_2}_{(\gamma_1,\gamma_2)},
W^{\infty}_{(\gamma_1,\gamma_2)}, \Q) \cong
H_{mid}(\C^{N_1}_{\gamma_1}, W^{\infty}_{\gamma_1}, \Q) \otimes
H_{mid}(\C^{N_2}_{\gamma_2}, W^{\infty}_{\gamma_2}, \Q).
$$
Further, since any $G$-decorated stable graph $\Gamma$ is
equivalent to the choice of a $G_1$-decorated graph $\Gamma_1$ and
a $G_2$-decorated graph $\Gamma_2$ which are based on the
underlying graph $\barGamma$, we have the fiber bundle
\begin{equation}
\MMr(\Gamma) = (\WW_1^{rig})(\Gamma_1) \times_{\MM(\barGamma)}
(\WW_2^{rig})(\Gamma_2).
\end{equation}
The natural inclusion
$$\MMr_{g,k}(\Gamma) = (\WW_1^{rig})_{g,k}(\Gamma_1) \times_{\MM_{g,k}(\barGamma)}
(\WW_2^{rig})_{g,k}(\Gamma_2)\rInto^{\Delta}
(\WW_1^{rig})_{g,k}(\Gamma_1) \times (\WW_2^{rig})_{g,k}(\Gamma_2),$$
together with the isomorphism of middle homology, induces a
homomorphism
\begin{multline*}
\Delta^*:\left(H_*((\WW_1^{rig})_{g,k}(\Gamma_1),\Q)\otimes
\prod_{i=1}^k
H_{N_{\gamma_{i,1}}}(\C^N_{\gamma_{i,1}},(W_1)^{\infty}_{\gamma_{i,1}},
\Q)\right) \otimes
\left(H_*((\WW_2^{rig})_{g,k}(\Gamma_2),\Q)\otimes \prod_{i=1}^k
H_{N_{\gamma_{i,2}}}(\C^N_{\gamma_{i,2}},(W_2)^{\infty}_{\gamma_{i,2}},
\Q)\right) \\
\rTo H_*((\overline{\W_1+\W_2})^{rig}_{g,k}(\Gamma),\Q)\otimes
\prod_{i=1}^k
H_{N_{(\gamma_{i,1},\gamma_{i,2})}}(\C^{N_1+N_2}_{(\gamma_{i,1},\gamma_{i,2})},
(W_1+W_2)^{\infty}_{(\gamma_{i,1},\gamma_{i,2})}, \Q),
\end{multline*}
such that the virtual cycle satisfies
\begin{equation}
\Delta^*\left(\left[(\WW_1^{rig})_{g,k}(\Gamma_1)\right]^{vir}\otimes
\left[(\WW_2^{rig})_{g,k}(\Gamma_2)\right]^{vir}\right) =
\left[(\overline{\W_1+\W_2})^{rig}_{g,k}(\Gamma)\right]^{vir}.
\end{equation}

\item \textbf{Deformation Invariance:} Let $W_\tau\in
\C[z_1,\dots,z_N]$ be a family of non-degenerate quasi-homogeneous
polynomials depending smoothly on the parameter $\tau\in S$, where
$S$ is a path connected domain in $\C^m$. Suppose that $G$ is the
common automorphism group of $W_\tau$, then the virtual cycle
$[\MMr_\tau(\Gamma)]^{vir}$ associated to $(W_\tau,G)$ is
independent of $\tau$.

\item \textbf{$G_{max}$-Invariance:} The virtual cycle $[\MMr(\Gamma)]^{vir}$ associated to $(W,G)$ is
$G_{max}$-invariant (refer to the proof of this axiom for the
explanation of the $G_{max}$ action).
\end{enumerate}
\end{thm}

\begin{rem} By a Liouville type theorem for $A_1$-singularity, we see that the $A_1$-theory is
uniquely determined by these axioms and it agrees with the theory
of $2$-spin curves as described in \cite[\S4.5]{JKV1}.
\end{rem}

\subsection{Proof of Theorem \ref{thm:main-rigid} and Theorem \ref{thm:main}}\

In this section, we will first define the virtual cycle, prove the
axioms given by Theorem \ref{thm:main-rigid} and finally prove
Theorem \ref{thm:main}.

\subsubsection*{Definition of virtual cycle $[\MMr_{g,k}(\Gamma)]^{vir}$}

Fix $\bgamma=\{\gamma_1,\dots,\gamma_k\}$ and choose the moduli
space $\MMr_{g,k}(\bgamma,\bvkappa)$ to be strongly regular. For
each $\gamma\in G$, choose the basis $\{S^-_j(\gamma), j=1,\dots,
\mu_\gamma\}$ in $H_{N_\gamma}(\C^N_\gamma,
(W\gamma+W_{0,\gamma})^{-\infty}, \Q)$ corresponding to the critical
points of $W_\gamma+W_{0,\gamma}$ and the dual basis $\{S_j(\gamma),
j=1,\dots, \mu_\gamma\}$ in $H_{N_\gamma}(\C^N_\gamma,
(W_\gamma+W_{0,\gamma})^{\infty}, \Q)$. Then each combination
$\left(S^-_{j_1}(\gamma_1),\dots,S^-_{j_k}(\gamma_k)\right)$
corresponds to the combination of $k$ critical points,
$\bvkappa_{j_1\dots
j_k}:=\left(\kappa^-_{j_1}(\gamma_1),\dots,\kappa^-_{j_k}(\gamma_k)\right)$.
We obtain the virtual cycle
$[\MMr_{g,k}(\Gamma;\bgamma,\bvkappa_{j_1\dots j_k})]^{vir}
=:[\MMr_{g,k}(\Gamma;\bgamma,S^-_{j_1}(\gamma_1),\dots,S^-_{j_k}(\gamma_k))]^{vir}$.
Now we fix a strongly regular parameter $(b_i^0)$; the Gauss-Manin
connection provides the isomorphisms
$$
GM_{(b_i^0)}: H_{N_\gamma}(\C^N_\gamma,
(W\gamma+W_{0,\gamma})^{\pm\infty}, \Q)\rTo
H_{N_\gamma}(\C^N_\gamma, (W_\gamma)^{\pm\infty}, \Q).
$$
Using the isomorphisms we can identify $H_{N_\gamma}(\C^N_\gamma,
(W\gamma+W_{0,\gamma})^{\pm\infty}, \Q)$ with
$H_{N_\gamma}(\C^N_\gamma, (W_\gamma)^{\pm\infty}, \Q)$.

Define
\begin{align}
&\left[\MMr(\Gamma)\right]^{vir}:=\sum_{j_1,\dots,
j_k}\left(\left[\MMr_{g,k}(\Gamma; \bgamma,\bvkappa_{j_1\dots
j_k} )\right]^{vir}\otimes \prod_{i=1}^k S_{j_i}(\gamma_i)\right)\\
&\in H_*(\MMr_{g,k}(\Gamma))\otimes \prod_{\tau \in T(\Gamma)}
H_{N_{\gamma_{\tau}}}(\C^N_{\gamma_{\tau}},W^{\infty}_{\gamma_\tau},
\Q).
\end{align}

By Proposition \ref{prop-linear} We have

\begin{prop}\label{prop:cycle-indep-para} The virtual cycle $\left[\MMr(\Gamma)\right]^{vir}$
is independent of the choice of the basis $\{S_{j_i}(\gamma_i)\}$ of
$H_{N_\gamma}(\C^N_\gamma, (W_\gamma)^{\pm\infty}, \Q)$ at each
marked point $p_i$.
\end{prop}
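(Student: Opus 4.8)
<br>

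The statement to prove is that the virtual cycle $[\MMr_W(\Gamma)]^{vir}$, defined as the sum over critical-point combinations $\sum_{j_1,\dots,j_k}[\MMr_{g,k,W}(\Gamma;\bgamma,\bvkappa_{j_1\cdots j_k})]^{vir}\otimes\prod_i S_{j_i}(\gamma_i)$, does not depend on the chosen distinguished basis $\{S_{j_i}(\gamma_i)\}$ of $H_{N_{\gamma_i}}(\C^N_{\gamma_i},(W_{\gamma_i})^\infty,\Q)$ used at each marked point. Let me sketch how I would argue this.

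The plan is to reduce to a single marked point, since the construction is a tensor product over the $k$ tails and the independence question at the $i$th tail is completely decoupled from the choices at the others. So fix all data except the basis at one marked point $p_i$ decorated by $\gamma := \gamma_i$. The key structural input is Proposition \ref{prop-linear}: the assignment $S_j(\gamma)\mapsto [\MMr_{g,k,W}(\bgamma',\gamma;\bvkappa',\kappa^j)]^{vir}$ extends to a homomorphism $\MMr_{g,k,W}(\bgamma',\gamma;\bvkappa',\cdot)\colon H_{N_\gamma}(Y,Y_*)\to V_\gamma\subset H_*(\MMr_{g,k,W}(\bgamma))$ of $Br_{\mu_\gamma}$-modules, which moreover satisfies the linearity relation $\MMr_{g,k,W}(\bgamma',\gamma;\bvkappa',S)=\sum_i x_i'\,\MMr_{g,k,W}(\bgamma',\gamma;\bvkappa',S_i')$ whenever $S=\sum_i x_i'S_i'$ in another distinguished basis $\{S_i'\}$. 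Now here is the conceptual point: the sum $\sum_j [\MMr(\bvkappa_j)]^{vir}\otimes S_j(\gamma)$ is exactly the image under this homomorphism (tensored with the identity on the dual factor) of a \emph{basis-independent} element — namely the ``identity'' element of $\Hom(H_{N_\gamma}(Y,Y_*)^\vee, H_{N_\gamma}(Y,Y_*))\cong H_{N_\gamma}(Y,Y_*)\otimes H_{N_\gamma}(Y,Y_*)^\vee$, i.e. the element $\sum_j S_j(\gamma)\otimes S_j^-(\gamma)$ where $\{S_j^-(\gamma)\}$ is the basis dual to $\{S_j(\gamma)\}$ under the intersection pairing. The critical point $\kappa^j$ (equivalently $S_j^-(\gamma)$) is the index used to build the moduli space, while $S_j(\gamma)$ is the cohomology label; the pairing between $H_{N_\gamma}(\C^N_\gamma,(W_\gamma)^{-\infty})$ and $H_{N_\gamma}(\C^N_\gamma,(W_\gamma)^\infty)$ makes $\sum_j S_j^-(\gamma)\otimes S_j(\gamma)$ the Casimir/canonical element, independent of the basis choice.

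Concretely, I would proceed as follows. First I would make explicit that changing the distinguished basis from $\{S_j(\gamma)\}$ to $\{S_j'(\gamma)\}$ is governed by a matrix $A=(a_{jl})$ with $S_j'=\sum_l a_{jl}S_l$, and the dual bases transform contravariantly: $S_j^{-\prime}=\sum_l (A^{-1})_{lj}S_l^-$. Since $\bvkappa_{j_1\cdots j_k}$ is determined by the critical points, i.e. by the dual-basis side, the moduli-space factor transforms as $[\MMr(\bvkappa'_{j})]^{vir}=\sum_l (A^{-1})_{lj}[\MMr(\bvkappa_l)]^{vir}$ — this is precisely the content of the linearity clause of Proposition \ref{prop-linear}, applied at the $i$th marked point. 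Then
\[
\sum_j [\MMr(\bvkappa'_j)]^{vir}\otimes S_j'(\gamma)
=\sum_{j,l,m}(A^{-1})_{lj}\,a_{jm}\,[\MMr(\bvkappa_l)]^{vir}\otimes S_m(\gamma)
=\sum_{l,m}\delta_{lm}\,[\MMr(\bvkappa_l)]^{vir}\otimes S_m(\gamma)
=\sum_l [\MMr(\bvkappa_l)]^{vir}\otimes S_l(\gamma),
\]
since $\sum_j (A^{-1})_{lj}a_{jm}=\delta_{lm}$. This is the desired equality. Iterating the argument over all $k$ marked points (or, equivalently, doing it simultaneously since the factors are independent) finishes the proof.

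The main obstacle — and the place where one must be careful rather than where there is genuine difficulty — is matching conventions: one must check that the critical point $\kappa^j$ really corresponds to the \emph{dual} basis element $S_j^-(\gamma)$ of the Lefschetz-thimble homology (or the vanishing-cycle homology), so that the moduli-space index transforms by $A^{-1}$ exactly when the cohomology label transforms by $A$. Here one uses that the Gauss–Manin identification $GM_{(b_i^0)}$ and the duality between $H_{N_\gamma}(\C^N_\gamma,(W_\gamma+W_{0,\gamma})^{\infty})$ and $H_{N_\gamma}(\C^N_\gamma,(W_\gamma+W_{0,\gamma})^{-\infty})$ are both functorial in the sense required. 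The only other point worth spelling out is that Proposition \ref{prop-linear} is stated for the cycle associated to a corolla with a single moving marked point, so one should observe that fixing the decorated graph $\Gamma$ and the other tails' data does not affect its applicability — the wall-crossing/cobordism argument behind Proposition \ref{prop-linear} is local near the relevant marked point and goes through verbatim in the presence of a fixed edge structure, exactly as in the Degenerating-connected-graphs axiom. With those conventions pinned down, the proof is the short linear-algebra computation above, which is why the paper can dispatch it with a one-line reference to Proposition \ref{prop-linear}.
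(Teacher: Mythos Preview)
Your proposal is correct and follows exactly the approach the paper intends: the paper's proof consists of the single line ``By Proposition~\ref{prop-linear}'', and what you have written is precisely the linear-algebra unpacking of why that proposition yields basis-independence via the canonical (Casimir) element $\sum_j S_j^-\otimes S_j$. Your caveats about matching conventions and about Proposition~\ref{prop-linear} being stated only for a moving tail on a corolla are well placed, but as you note these are bookkeeping issues rather than new ideas.
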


Since the parallel transport induced by the Gauss-Manin connection
preserves the inner product of the homology bundle, the above
proposition justifies the definition of the virtual cycle
$\left[\MMr(\Gamma)\right]^{vir}$.

\subsubsection*{Proof of Dimension axiom} The dimension of the virtual cycle
$[\MMr_{g,k}(\Gamma;\bgamma,\bvkappa_{j_1\dots j_k})]^{vir}$ was
already calculated  in Theorem \ref{ind-witt-oper}. The real
dimension of the tensor product of the Lefschetz thimbles
$\prod_{i=1}^k S_{j_i}(\gamma_i)$ is $\sum_{i=1}^k N_{\gamma_i}$. So
we obtain the dimension of the virtual cycle
$\left[\MMr(\Gamma)\right]^{vir}$. Notice that its real dimension
can be an odd number. This is a new result compared to the previous
research in $A_r$-spin curves.

\subsubsection*{Proof of symmetric group invariance}

Since the construction of the virtual cycle
$\left[\MMr(\Gamma;\bgamma,\bvkappa)\right]^{vir}$ is independent of
the order of the tails, hence if we denote by $\sigma\Gamma$  the
graph obtained by applying $\sigma$ to the tails of $\Gamma$, then we
have
$$
\sigma_*\left[\MMr(\Gamma)\right]^{vir}=\left[\MMr(\sigma\Gamma)\right]^{vir}=\left[\MMr(\Gamma)\right]^{vir}
$$
as homology classes.

\subsubsection*{Proof of Disconnected graphs} This is obvious.

\subsubsection*{Proof of Degenerating connected graphs} It suffices to prove the following conclusion:
$$
[\MMr_{g,k}(\bgamma,\kappa)]^{vir}\cap
\MMr(\Gamma)=[\MMr(\Gamma;\kappa)]^{vir}.
$$
Let $\{(\Vd\times \Vr\times \Vm, E_\sigma, \aut(\sigma),
s_\sigma)\}$ be an oriented Kuranishi structure of
$\MMr_{g,k}(\bgamma,\kappa)$. The restriction $\{(\Vd\times
\{0\}\times \Vm, E_\sigma, \aut(\sigma), s_\sigma|_\Gamma)\}$
provides a Kuranishi structure of $\MMr(\Gamma;\kappa)$. We can take
a smooth sequence of multisections $s_\sigma^n$ such that
$s^n_\sigma$  approximates $s_\sigma$ and the restriction
$s^n_\sigma|_\Gamma$ approximates $s_\sigma|_\Gamma$. Hence
\begin{align*}
&[\MMr(\Gamma;\kappa)]^{vir}=\Pi\left(\cup_\sigma \left(
(s^{n}_\sigma|_\Gamma)^{-1}(0)/\aut(\sigma)\right)\right)\\
&=\Pi\left(\cup_\sigma \left(((s^{n}_\sigma)^{-1}(0)\cap
(\Vd\times \{0\}\times\Vm))/\aut(\sigma)\right)\right)\\
&=[\MMr_{g,k}(\bgamma,\kappa)]^{vir}\cap \MMr(\Gamma;\kappa),
\end{align*}
where $\Pi: \MMr(\Gamma;\kappa)\rTo \MMr(\Gamma)$ is the
forgetful map introduced before.

\subsubsection*{Proof of the axiom of Topological Euler class for the
narrow sector:}

Since all the marked points are narrow, the perturbation
terms of the perturbed Witten equation only occur near the broad
nodal points. Let $\bb$ be the perturbation parameter, then the
virtual cycle
$$
[\MMr(\Gamma;0)]^{vir}\in H_*(\MMr(\Gamma;0))
$$
is independent of $\bb$.

Let $\lambda\in [0,1]$, and consider the $\lambda$-parameterized
perturbed Witten equation:

\begin{equation}\label{witt-equa-para1}
\bpat_{\cC} u_{i}+\tilde{I}_1\left(\overline{\frac{\pat( W+\lambda
W_{0,\beta})}{\pat u_{i}}}\right)=0, \forall i=1,\dots,N.
\end{equation}

Then we have a family of moduli spaces
$\MMrs_{g,k}(\Gamma;0;\lambda)$. Define
$$
\WW^\lambda_{g,k}=\cup_{\lambda\in [0,1]}\{\lambda\}\times
\MMrs_{g,k}(\Gamma;0;\lambda).
$$

By Corollary \ref{crl-wittenlemma1}, as $\lambda\to 0$, the
solutions $\bu^\lambda$ of the equation (\ref{witt-equa-para1}) will
tend to $0$ solution of the non-perturbed Witten equation. Therefore
the moduli space $\WW^\lambda_{g,k}$ is a compact Hausdorff space.
One can prove in the same way as in the proof of Wall-crossing
formula that it carries an oriented Kuranishi structure
$\{(U_\sigma, E_\sigma, s_\sigma)\}$. Notice that the hypothesis
that all the marked points are narrow is important because it guarantees that the linearized equations of the perturbed and
non-perturbed Witten equations have the same Fredholm index in the
same Sobolev spaces. Since all the marked points are narrow
points, there is no wall-crossing phenomenon and the virtual cycle
$[\MMr(\Gamma;0;\lambda=1)]^{vir}$ is cobordant to the virtual cycle
$[\MMr(\Gamma;0;\lambda=0)]^{vir}$, where the latter is defined using
the non-perturbed Witten equation. Since the moduli space $
\MMr(\Gamma;0;\lambda=0)$ is compact and can be covered by finitely
many Kuranishi neighborhoods, we can take two complex orbifold
vector bundles $E_i^0$ and $E_i^1$ such that the two-term complex
$$
d_i:E_i^0\to E_i^1
$$
is quasi-isomorphic to
$$
U_\sigma^+/\aut(\sigma)\to E_\sigma/\aut(\sigma),
$$
and then to
$$
\ker(d_i)\to \coker(d_i)
$$
on each chart.

Therefore, we have
$$[\MMr(\Gamma)]^{vir}=[\MMr(\Gamma;0;\lambda=1)]^{vir}=[\MMr(\Gamma;0;\lambda=0)]^{vir}=\eul(\wit: \pi^*_0
\bigoplus_i E^1_i\to \bigoplus_i
    E^0_i)\cap [\MMr_{\Gamma}].
$$
The concavity and dimension properties are the natural conclusions
of the topological Euler class axiom.

\subsubsection*{Proof of forgetting tails axiom} (a) Without loss of generality, we assume that $i=k$ and
\begin{align*}
&\gamma_k=J^{-1}=(e^{2\pi i q_1},\dots, e^{2\pi i q_t}),\\
&\gamma^{-1}_k=J=(e^{2\pi i (1-q_1)},\dots, e^{2\pi i (1-q_t)}).
\end{align*}

Since $\C^N_{\mop}=\{0\}$, it suffices to prove
\begin{equation}
(\vtr)_*\left([\MMr_{g,k}(\Gamma;\bgamma',\gamma_k,\bvkappa',0)]\right)=\frac{|G|}{|<J^{-1}>|}
[\MMr_{g,k-1}(\Gamma';\bgamma',\bvkappa')]^{vir}.
\end{equation}
Consider the semi-stable $W$-curve $\frkc_J:=(\C P^1, z_1,z_2,
J,J^{-1}, \LL_1,\dots, \LL_t, \varphi_1,\dots, \varphi_s)$.

By the degree formula, we have
$$
\deg(|\LL_j|)=-1.
$$
The Witten map on $\frkc_J$ has no perturbation term and the index
$$
\ind(WI_{\frkc_J})=0.
$$
Let $\frkc$ be a rigidified $W$-curve having $k$ marked points
with the $k$-th marked point decorated by $\gamma_k$. We want to
study the Witten equations near this narrow point
$(z_k,\gamma_k)$.

If we take the cylindrical metric near $z_k$ (i.e., let
$|\frac{dz}{z}|=1$), then its neighborhood is viewed as an
infinitely long cylinder and $z_k$ is viewed as the infinite point.
Let $u_j=\tilde{u}_j e_j$; then the Witten equation is
\begin{equation}
\frac{\pat \tilde{u}_i}{\pat \bar{z}}+\overline{\frac{\pat
W(\tilde{u}_1,\dots,\tilde{u}_t)}{\pat
\tilde{u}_i}\frac{1}{z}}=0.
\end{equation}
Let $z=e^w$; then we have
\begin{equation}
\frac{\pat \tilde{u}_i}{\pat \bar{w}}+\overline{\frac{\pat
W(\tilde{u}_1,\dots,\tilde{u}_t)}{\pat \tilde{u}_i}}=0, \;\forall
w\in [-\infty,0]\times S^1.
\end{equation}

If we take the smooth metric (i.e., let $|dz|=1$), then the Witten
equation over the orbicurve has the following form:
\begin{equation}\label{WI-orbi-equa}
\frac{\pat \tilde{u}_i}{\pat \bar{z}}+\sum_j\overline{\frac{\pat
W_j(\tilde{u}_1,\dots,\tilde{u}_t)}{\pat
\tilde{u}_i}\frac{1}{z}}|z|^{2q_i}=0.
\end{equation}
We can resolve this equation near $z_k$, and the corresponding
function $\hat{u}$ satisfies the equation
\begin{equation}\label{WI-reso-equa}
\frac{\pat \hat{u}_i}{\pat \bar{z}}+\overline{\frac{\pat
W(\hat{u}_1,\dots,\hat{u}_t)}{\pat \hat{u}_i}}=0.
\end{equation}

Consider the moduli space $\MMr_{g,k}(\Gamma;\bgamma,\bvkappa)$.
Assume that the $k$-th marked point is decorated by
$(\gamma_k,\bone_{\mop})$. Here we also assume that we take the
cylindrical metric near each marked point. On the other hand, we can
set the metric near $z_k$ to be the smooth metric and then the
Witten equation has the form (\ref{WI-orbi-equa}). After resolution,
the solution $\tilde{u}$ corresponds 1-1 to the solution $\hat{u}$
of (\ref{WI-reso-equa}). In particular, the marked point $z_k$
becomes an ordinary point if we only consider the equation.

Let $\MMr_{g,k}(\Gamma;\bgamma', \gamma_k,\bvkappa', sm)$ be the
moduli space consisting of the isomorphism classes of $W$-sections
$(\frkc, \bu)$, where $\bu$ is the solution of the perturbed Witten
equation. Here the symbol "$sm$" means that near the $k$-th marked
point, the metric is set to be the smooth metric. As before, we can
give the Gromov topology to this moduli space. Moreover, we will
show below that it can carry an oriented Kuranishi structure which
is induced by the Kuranishi structure of
$\MMr_{g,k-1}(\Gamma';\bgamma', \bvkappa')$.

The following diagram is commutative:
$$
\begin{CD}
\MMr_{g,k}(\Gamma;\bgamma', \gamma_k,\bvkappa', sm)@>\vtr
>>\MMr_{g,k-1}(\Gamma';\bgamma',
\bvkappa')\\
@VVV   @VVV\\
\MMr_{g,k}(\Gamma;\bgamma)@>\vtr
>>\MMr_{g,k-1}(\Gamma';\bgamma').
\end{CD}
$$
Here $\vtr$ is the forgetful map. This map exists if and only if the
marked point $z_k$ is decorated by the group element $J^{-1}$.

Let $\{(U_\sigma, \Gamma_\sigma, E_\sigma, s_\sigma)\}$ be a
Kuranishi structure of $\MMr_{g,k-1}(\Gamma';\bgamma', \bvkappa')$.
We can choose the obstruction bundle $E_\sigma$ such that its
generating functions have compact support disjoint from the $k-1$
marked points and the special point $z_k$. For a point
$\sigma=(\frkc_\sigma, \bu_\sigma)$, we can take a neighborhood
$(U_\sigma, \Gamma_\sigma, E_\sigma, s_\sigma)$. The class
$[\frkc_\sigma]\in \MMr_{g,k-1}(\Gamma';\bgamma')$ has a
neighborhood $(\hat{U}_\sigma, \Lambda_\sigma)$. Then the embedding
$U_\sigma\hookrightarrow \hat{U}_\sigma$ is a
$\Gamma_\sigma\hookrightarrow \Lambda_\sigma$ equivariant. There is
a universal family of rigidified $W$-curves
$\tilde{U}_\sigma\rTo \hat{U}_\sigma$ on which
$\Lambda_\sigma$ acts. Let $(\widetilde{\frkc}, \bu)$ be a point in
$\MMr_{g,k}(\Gamma;\bgamma', \gamma_k,\bvkappa', sm)$ such that
$\vtr([\widetilde{\frkc}])=[\frkc_\sigma]$; then there is a
neighborhood $\tilde{U}_\sigma/\Lambda_\sigma$ of
$[\widetilde{\frkc}]$ such that
$\vtr(\tilde{U}_\sigma/\Lambda_\sigma)=\hat{U}_\sigma/\Lambda_\sigma$.
Now
$(U_\sigma\times_{\hat{U}_\sigma}\tilde{U}_\sigma)/\Gamma_\sigma$
becomes a neighborhood of $(\widetilde{\frkc}, \bu)$. The
obstruction bundle $E_\sigma$ and the section $s_\sigma$ induce the
obstruction bundle $\widetilde{E}_\sigma$ and the section
$\ts_\sigma$. It is straightforward to prove that
$\{(U_\sigma\times_{\hat{U}_\sigma}\tilde{U}_\sigma), \Gamma_\sigma,
\widetilde{E}_\sigma, s_\sigma\}$ is an oriented Kuranishi structure
of $\MMr_{g,k}(\Gamma;\bgamma', \gamma_k,\bvkappa', sm)$.
Furthermore if $\{s^n_\sigma\}$ is an approximation sequence of the
continuous section $\{s_\sigma\}$ such that the multi-section
$s^n_\sigma$ is a transversal section, then $[(s^n_\sigma)^{-1}(0)]$
defines the virtual cycle $[\MMr_{g,k-1}(\Gamma';\bgamma',
\bvkappa')]^{vir}$. Now the induced multisection $\{\ts^n_\sigma\}$
satisfies the relation
$(\ts^n_\sigma)^{-1}(0)=(s^n_\sigma)^{-1}(0)\times_{\MMr_{g,k-1}
(\Gamma';\bgamma')}\MMr_{g,k}(\Gamma;\bgamma)$. Therefore we have
$$
[\MMr_{g,k}(\Gamma;\bgamma,
\bvkappa,sm)]^{vir}=(\vtr)^*([\MMr_{g,k-1}(\Gamma';\bgamma',\bvkappa']^{vir}).$$

Now to prove this axiom we only need to show the following
identity:
\begin{equation}\label{forg-axiom-cobo}
[\MMr_{g,k}(\Gamma;\bgamma', \gamma_k,\bvkappa',
sm)]^{vir}=[\MMr_{g,k}(\Gamma;\bgamma,\bvkappa)]^{vir}.
\end{equation}

Actually, as the first step one can show after a simple computation
that the two moduli spaces have the same virtual dimension.

The difference between the two moduli spaces  originates from the
choice of different metric around $z_k$. When using the
cylindrical metric, $z_k$ is the infinite point with a infinitely
long cylindrical neighborhood. When using the smooth metric, $z_k$ is
just an ordinary point with a disc neighborhood. So to obtain the
neighborhood of $z_k$ from the infinitely long cylinder one has to
cap a disc at the infinitely far end. The inverse process is just
the degeneration course of a $W$-curve along a circle centered at
$z_k$.

Let $(\frkc, \bu)\in \MMr_{g,k}(\Gamma;\bgamma,\bvkappa)$, and let
$\frkc_J$ be a semi-stable $W$-curve shown in (2). To construct the
Witten map over $\frkc_J$, we can choose the cylindrical metric near
$z_1$ which is decorated by $J$ and choose the smooth metric around
$z_2$. So $z_2$ is actually an ordinary point and one can use the
Witten lemma to show that the Witten equation over $\frkc_J$ has
only the zero solution. When identifying the point $z_k$ on $\frkc$
with $z_1$ on $\frkc_J$, $\frkc\#_{z_k=z_1}\frkc_J$ becomes a nodal
$W$-curve. Notice that this curve is not in the space
$\MMr_{g,k}(\Gamma;\bgamma)$ because the $\frkc_J$ component is not
stable. However, one can also do the gluing operation. The gluing
parameter $\zeta_k$ is a complex number. However we only take a
small real interval $[0,\varepsilon]$ as our gluing parameter.

When $\zeta_k=0$, we do nothing. When $\zeta_k\neq 0$, we obtain a
$W$-curve $\frkc(\zeta_k)$ from $\frkc$. Therefore from the moduli
space $\MMr_{g,k}(\Gamma;\bgamma)$ of rigidified $W$-curves, we
obtain another moduli space $\MMr_{g,k}(\Gamma;\bgamma,\zeta_k)$ by
the gluing operation. These two spaces are totally equivalent as
orbifolds. Let $\MMr_{g,k}(\Gamma;\bgamma', \gamma_k,\bvkappa'
,\zeta_k, sm)$ be the moduli space of $W$-sections over
$\MMr_{g,k}(\Gamma;\bgamma,\zeta_k)$.

Define
$$
\WW=\cup_{\zeta_k\in [0,\varepsilon]} \MMr_{g,k}(\Gamma;\bgamma',
\gamma_k,\bvkappa',\zeta_k, sm).
$$
Using the decay estimate of the Witten equation, one can prove that
this space is compact in the Gromov topology. One can construct an
oriented Kuranishi structure over $\WW$ from the Kuranishi structure
of $\MMr_{g,k}(\Gamma;\bgamma,\bvkappa)$ and
$\MMr_{g,k}(\Gamma;\bgamma', \gamma_k,\bvkappa', sm)$.

Let $(U_\sigma, E_\sigma, s_\sigma)$ be a chart of $(\frkc, \bu)\in
\MMr_{g,k}(\Gamma;\bgamma,\bvkappa)$. For any gluing parameter
$\zeta_k\in (0,\varepsilon]$, we can obtain the $W$-curve
$\frkc(\zeta_k)$. By an index computation and the fact that the
generators of the bundle $E_\sigma$ have compact support away from
$z_k$, we can view $E_\sigma$ as the obstruction bundle over
$\frkc(\zeta_k)$. Now by the implicit function theorem, there exists
a section $s_{\sigma, \zeta_k}: U_\sigma\times
[0,\varepsilon/3]\rTo E_\sigma$. On the other hand, we let
$\MMr_{g,k}(\Gamma;\bgamma', \gamma_k,\bvkappa',\zeta_k=\varepsilon,
sm)=\MMr_{g,k}(\Gamma;\bgamma', \gamma_k,\bvkappa',sm)$ and extend
the Kuranishi structure to $\cup_{\zeta_k\in
[2\varepsilon/3,\varepsilon]} \MMr_{g,k}(\Gamma;\bgamma',
\gamma_k,\bvkappa',\zeta_k, sm)$. By the extension theorem, we can
construct an oriented Kuranishi structure $\{(U_\sigma\times
[0,\varepsilon], E_\sigma, s_{\sigma,\zeta_k})\}$ over $\MM$. So by
a cobordism argument we have (\ref{forg-axiom-cobo}) and then the
final conclusion.

(b) $\MMr_{0,3}(\gamma_1,\gamma_2,J^{-1})$ is empty if $\gamma_1\neq
\gamma_2^{-1}$ for degree reasons. So we only need to consider the
moduli space $\MMr_{0,3}(\gamma,\gamma^{-1},J^{-1})$ and the virtual
cycle $[\MMr_{0,3}(\gamma,\gamma^{-1},J^{-1})]^{vir}$ for any
$\gamma\in G$. The moduli space
$\MMr_{0,3}(\gamma,\gamma^{-1},J^{-1})$ is $0$-dimensional by the
dimension formula. There is only one point in $\MM_{0,3}$ and
$\WW_{0,3}$ respectively. For the space of rigidified $W$-curves, we
have the decomposition
\begin{equation}
\MMr_{0,3}(\gamma,\gamma^{-1},J^{-1},\kappa_{i}^1,\kappa_{j}^2, 0
)=\coprod_{\psi_1,\psi_2,\psi_3}
\MMr_{0,3}(\gamma,\gamma^{-1},J^{-1}, \kappa_{i}^1,\kappa_{j}^2, 0
,\psi_1,\psi_2,\psi_3),
\end{equation}
where $\kappa_{i}^1,\kappa_{j}^2$ are critical points of the
perturbed polynomial $W_\gamma+W_{0,\gamma}$ and
$W_{\gamma^{-1}}+W_{0,\gamma^{-1}}$. Recall that to define the
perturbed Witten equation, we first fix a standard rigidification
and then choose the perturbation term with respect to this
rigidification. After that the perturbation parameters with
respect to the other rigidification is naturally defined, since
the perturbed Witten equation is a globally defined equation and
the solutions are independent of the choice of the rigidification.
Assume that $\psi^+$ and $\psi^-$ are the standard rigidifications
attached to $\gamma$ and $\gamma^{-1}$. We have

\begin{lm} If the perturbation is strongly regular, then
\begin{equation}
[\MMr_{0,3}(\gamma,\gamma^{-1},J^{-1}, \kappa_{i}^1,\kappa_{j}^2, 0
,\psi^+,\psi^-,\psi_3)]^{vir}=\left\{\begin{array}{ll}
1&\;\text{if }\;I^{-1}(\kappa_{i}^1)=\kappa_{j}^2\\
0&\;\text{otherwise},
\end{array}
\right.
\end{equation} where $I=\diag(\xi^{k_1}, \dots,\xi^{k_N})$ and
$\xi^d=-1$.
\end{lm}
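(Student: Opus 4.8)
The moduli space $\MMr_{0,3,W}(\gamma,\gamma^{-1},J^{-1},\kappa_i^1,\kappa_j^2,0,\psi^+,\psi^-,\psi_3)$ lies over the single point of $\MM_{0,3}$: its underlying rigidified $W$-curve is the (rigid) datum on $(\CP^1;z_1,z_2,z_3)$ carrying the prescribed decorations and the standard rigidifications. Because $\C^N_{J^{-1}}=\{0\}$, near $z_3$ the perturbed Witten equation is unperturbed and every solution decays exponentially to $0$ there (asymptotic analysis of Section~\ref{sec:witten}). Using $\deg(|\LL_i|)=q_i(2g-2+k)-\sum_\tau\Theta_i^{\gamma_\tau}$ from the proof of Theorem~\ref{ind-witt-oper} one computes, on $\CP^1$, $\deg(|\LL_i|)=0$ in the $\gamma$-Ramond directions $i\le N_\gamma$ and $\deg(|\LL_i|)=-1$ in the $\gamma$-Neveu--Schwarz directions; together with $\iota(J^{-1})=0$ and $N_{J^{-1}}=0$ this makes the virtual dimension $0$ by Theorem~\ref{ind-witt-oper}. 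So the class is a number, and it suffices to exhibit the zero locus of the perturbed Witten map on this fixed curve as a single positively oriented transverse point when $I^{-1}(\kappa_i^1)=\kappa_j^2$ and as empty otherwise.

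The plan is to reduce to a soliton count. I would first cap off the $J^{-1}$-decorated end $z_3$, replacing $(\CP^1;z_1,z_2,z_3)$ by the infinite cylinder between $z_1$ and $z_2$; this is legitimate at the level of oriented Kuranishi structures by exactly the capping argument in the proof of part~(a) of the forgetting-tails axiom above (which applies in the present case, where forgetting the $J^{-1}$-tail leaves the unstable $2$-pointed $\CP^1$), using that $\C^N_{J^{-1}}=\{0\}$ and the exponential decay at $z_3$. After the substitution $v_i=u_i z^{\Theta_i^{\gamma}}$ and the identification $I$ of the two ends (Remark~\ref{rem-wequ-equal}(3)--(7); here the two rigidifications are the standard ones $\psi^+,\psi^-$, so the identification is governed directly by $I$), the equation on the cylinder is precisely the soliton equation of type $\gamma$ for the potential $W_\gamma+W_{0,\gamma}$, with one end limiting to $\kappa_i^1$ and the other to $I(\kappa_j^2)$, a critical point of $W_\gamma+W_{0,\gamma}$ with critical value $-\,(W_{\gamma^{-1}}+W_{0,\gamma^{-1}})(\kappa_j^2)$. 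Thus it remains to count solitons of type $\gamma$ joining $\kappa_i^1$ and $I(\kappa_j^2)$.

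Here I would invoke the soliton energy identity of Section~\ref{sec:compact} (which rests on Lemma~\ref{inte-1}): for such a soliton,
\[ (W_\gamma+W_{0,\gamma})(I(\kappa_j^2))-(W_\gamma+W_{0,\gamma})(\kappa_i^1)=2\int_{-\infty}^{\infty}\!\!\int_{S^1}\sum_i\Bigl|\frac{\partial(W+W_{0,\gamma})}{\partial u_i}\Bigr|^2\ \ge\ 0. \]
Since the left-hand side is a non-negative real number, its imaginary part vanishes, so $\mathrm{Im}(W_\gamma+W_{0,\gamma})(\kappa_i^1)=\mathrm{Im}(W_\gamma+W_{0,\gamma})(I(\kappa_j^2))$; as $W_{0,\gamma}$ is strongly regular and both points are critical points of $W_\gamma+W_{0,\gamma}$, this forces $I(\kappa_j^2)=\kappa_i^1$, i.e.\ $I^{-1}(\kappa_i^1)=\kappa_j^2$. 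If this fails there is no solution and the virtual cycle is $0$. If it holds, the right-hand side vanishes, so $\partial(W+W_{0,\gamma})/\partial u_i\equiv 0$ and the soliton is the constant critical point $\kappa_i^1$, the unique solution. Its linearized operator is $\bpat$ plus the constant, nondegenerate, conjugate-linear $0$th-order term determined by the Hessian of $W_\gamma+W_{0,\gamma}$ at the Morse point $\kappa_i^1$; it has no kernel or cokernel, so the index-$0$ moduli point is isolated and transverse. Finally, since the symbol of the linearization is complex-linear, the Kuranishi structure carries the stably-almost-complex orientation of Section~\ref{sec-inter-gluing}, under which this transverse point contributes $+1$. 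Hence $[\MMr_{0,3,W}(\gamma,\gamma^{-1},J^{-1},\kappa_i^1,\kappa_j^2,0,\psi^+,\psi^-,\psi_3)]^{vir}=1$ exactly when $I^{-1}(\kappa_i^1)=\kappa_j^2$.

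The main obstacle is the Kuranishi-level capping of the $J^{-1}$-end: one must show that this Neveu--Schwarz end genuinely contributes nothing, so that the three-pointed problem on $\CP^1$ is equivalent, as a space with oriented Kuranishi structure, to the soliton problem between the two Ramond ends. This extends, with some care, the gluing argument of the forgetting-tails axiom and is where the bulk of the analytic work lies; capping first has the bonus of yielding the clean soliton energy identity, so no perturbation cut-off bookkeeping is needed. Verifying that the complex-linear symbol produces the sign $+1$ is the remaining, minor, technical point.
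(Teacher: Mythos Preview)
Your approach is essentially the same as the paper's: cap off the $J^{-1}$-end by the cobordism argument from the forgetting-tails axiom (a), reduce to the perturbed Witten equation on the cylinder between the two Ramond ends (using the identification $I$ from Remark~\ref{rem-wequ-equal}), deform away the cut-off functions, and apply the energy identity to conclude that solutions exist iff $I^{-1}(\kappa_i^1)=\kappa_j^2$, in which case the unique solution is the constant one. The paper is terser about the transversality and orientation at the constant solution, simply asserting the count, whereas you spell these out; otherwise the arguments coincide.
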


\begin{proof} Take the rigidified $W$-curve $\frkc=(\R\times, S^1, p_1, p_2, p_3, \LL, \psi^+, \psi^-,
\psi_3)$ where we can set $p_1$ $p_2$ to be infinitely far points
at $-\infty$ and $+\infty$ and $p_3=(0,0)$. The rigidification
$\psi^\pm$ actually gives the basis
$e^\pm=(e_1^\pm,\dots,e_N^\pm)$ on open half cylinders $U_i,
i=1,2$, where we require that $U_1\cup U_2=\R\times S^1$ and
$U_1\cap U_2=U_3$. Now by the disingularization operation and a
cobordism argument similar to the proof in (a), we can suppose
$p_3$ is an ordinary smooth point (not an infinitely far point)
with the trivial orbifold structure without changing the virtual
cycle. Consider the perturbed Witten equation over $\frkc$:
\begin{equation}
\bpat u_i-\overline{\frac{\pat(W+W_{0,\beta})}{\pat u_i}}=0,
\end{equation}
where $W_{0,\beta}=\sum_i b_i^{\pm}\beta_i^{\pm} u_i$. Let
$(\zeta^+,\theta^+)$ and $(\zeta^-,\theta^-)$ be the local
coordinates on $U_1$ and $U_2$ respectively. On the overlap $U_3$,
we can do the following transformation:
\begin{itemize}
\item Coordinate transformation:
$(\zeta^+,\theta^+)=-(\zeta^-,\theta^-)$; \item Corresponding
transformation of local basis: $I(e^+)=e^-$.
\end{itemize}
If $u$ is a solution of the equation, and
$u=\tilde{u}^{\pm}e^{\pm}$, then we have the transformation:
$I^{-1}(\tilde{u}^+)=\tilde{u}^-$. In particular, we have the
relation:
\begin{equation}\label{forg-proof-b0}
I^{-1}(\tilde{u}^+(-\infty))=\tilde{u}^-(-\infty),
\end{equation}
and the equation in the uniform coordinates and basis
$(\zeta^-,\theta^-), e^-$ has the form
\begin{equation}
\bpat \tilde{u}_i^--\overline{\frac{\pat(W+W_{0,\beta})}{\pat
\tilde{u}_i^-}}=0.
\end{equation}
Note that the perturbation terms near the two ends have a cut-off
function. Actually, we can consider a family of perturbed
equations continuously changing to the following perturbed Witten
equation without changing the cycle:
\begin{equation}\label{forg-proofb1}
\bpat \tilde{u}^-_i-\overline{\frac{\pat(W+W_{0,\gamma})}{\pat
\tilde{u}^-_i}}=0.
\end{equation}
Therefore, to compute the virtual cycle is equivalent to
considering the existence of the solutions of  Equation
(\ref{forg-proofb1}). Multiplying  Equation (\ref{forg-proofb1})
by $\frac{\pat(W+W_{0,\gamma})}{\pat \tilde{u}^-_i}$ and
integrating it over $\R\times S^1$, we have
\begin{equation}
(W_{\gamma^{-1}}+W_{0,\gamma^{-1}})(\tilde{u}^-)|^{+\infty}_{-\infty}=\sum_i\iint
\left|\frac{\pat(W+W_{0,\gamma})}{\pat
\tilde{u}^-_i}\right|^2dsd\theta.
\end{equation}
Since the perturbation is strongly regular, by the above integral
equality  Equation (\ref{forg-proofb1}) has solutions if and only
if $\tilde{u}^-(+\infty)=\tilde{u}^-(-\infty)$. By
(\ref{forg-proof-b0}), this is just
$$
I^{-1}(\kappa_{i}^1)=\kappa_{j}^2.
$$
\end{proof}

\begin{crl}\label{crl-proof-b2} If the perturbation is strongly regular, then
\begin{equation}
[\MMr_{0,3}(\gamma,\gamma^{-1},J^{-1}, \kappa_{i}^1,\kappa_{j}^2,
0)]^{vir}=\left\{\begin{array}{ll}
\frac{|G|}{|<\gamma>|}\times \frac{|G|}{|<J^{-1}>|}&\;\text{if }\;I^{-1}(\kappa_{i}^1)=\kappa_{j}^2\\
0&\;\text{otherwise}.
\end{array}
\right.
\end{equation}
\end{crl}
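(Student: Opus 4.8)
The plan is to deduce Corollary \ref{crl-proof-b2} from the preceding Lemma by summing over the rigidification data at the three marked points. First I would invoke the decomposition of the rigidified moduli space recalled in Section \ref{sec:rigmoduli},
\[
\MMr_{0,3,W}(\gamma,\gamma^{-1},J^{-1},\kappa_i^1,\kappa_j^2,0)=\coprod_{\psi_1,\psi_2,\psi_3}\MMr_{0,3,W}(\gamma,\gamma^{-1},J^{-1},\kappa_i^1,\kappa_j^2,0,\psi_1,\psi_2,\psi_3),
\]
together with the fact that at the marked point decorated by $\gamma_l$ the set of rigidifications is a torsor under $G/\langle\gamma_l\rangle$, hence of cardinality $|G|/|\langle\gamma_l\rangle|$; in our situation these cardinalities are $|G|/|\langle\gamma\rangle|$ at $p_1$, $|G|/|\langle\gamma^{-1}\rangle|=|G|/|\langle\gamma\rangle|$ at $p_2$, and $|G|/|\langle J^{-1}\rangle|$ at $p_3$. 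Since the virtual cycle of a disjoint union is the sum of the virtual cycles of the pieces, it suffices to evaluate each summand.

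Next I would reduce each summand to the standard case already handled in the preceding Lemma. Writing $\psi_1=g_1\cdot\psi^+$, $\psi_2=g_2\cdot\psi^-$, $\psi_3=g_3\cdot\psi_3^0$, the $G$-equivariance of the perturbed Witten map (Lemma \ref{pert-witt-oper1}) and the gluing conventions of Remark \ref{rem-wequ-equal} identify the space of solutions of the perturbed Witten equation built from $(\psi_1,\psi_2,\psi_3)$ with the $(g_1,g_2,g_3)$-rescaling of the space built from $(\psi^+,\psi^-,\psi_3^0)$. Under this identification the boundary conditions $\bu(p_l)=\kappa_l$, read off through the standard rigidifications, and the matching relation $I^{-1}(\bu(p_1))=\bu(p_2)$ of the Lemma transform into the corresponding twisted relations, the critical points of the twisted perturbation polynomial being permuted by the $g_l$. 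Hence each summand equals $0$ or $1$, and I would record exactly which $(g_1,g_2,g_3)$ produce $1$.

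Finally I would carry out the count. The point $p_3$ being Neveu--Schwarz ($\C^N_{J^{-1}}=\{0\}$), the solution count --- and hence the summand --- is insensitive to the choice of $\psi_3$, so all $|G|/|\langle J^{-1}\rangle|$ values of $\psi_3$ contribute equally. For fixed $\psi_1$ the matching relation $I^{-1}(\bu(p_1))=\bu(p_2)$ singles out a unique admissible $\psi_2$, and by the preceding Lemma the resulting summand is $1$ precisely when $I^{-1}(\kappa_i^1)=\kappa_j^2$ in the standard coordinates and $0$ otherwise. Letting $\psi_1$ run over its $|G|/|\langle\gamma\rangle|$ values and summing, one obtains $\frac{|G|}{|\langle\gamma\rangle|}\cdot\frac{|G|}{|\langle J^{-1}\rangle|}$ when $I^{-1}(\kappa_i^1)=\kappa_j^2$ and $0$ otherwise, as claimed.

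The step I expect to be the main obstacle is the equivariance bookkeeping in the middle paragraph: one must track precisely how the $G$-action transports the perturbation polynomial $W_{0,\gamma}$, its critical points $\kappa^l$, and the isomorphism $I$, so that the standard-rigidification Lemma applies verbatim on every twisted branch, and one must reconcile the automorphisms of the $W$-curve (which also act on the rigidification data) with the factor $1/|\aut|$ in the virtual count so that the fractional contributions assemble into the asserted integer without double counting. A useful consistency check is that the answer is symmetric under $\gamma\leftrightarrow\gamma^{-1}$ because $|\langle\gamma\rangle|=|\langle\gamma^{-1}\rangle|$, and that dividing by $\deg(\so)$ recovers the single-point contribution on $\MM_{0,3,W}(\gamma,\gamma^{-1},J^{-1})=\BG$.
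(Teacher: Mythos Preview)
Your approach matches the paper's: both deduce the Corollary from the preceding Lemma by running over all rigidifications $(\psi_1,\psi_2,\psi_3)$, transporting the Lemma to each piece via $G$-equivariance, and then counting. The paper's proof is very terse---it writes $\psi_1=g_1\psi^+$, $\psi_2=g_2\psi^-$, replaces $I$ by $\hat I=g_1g_2^{-1}I$, observes that $\hat I^{-1}(g_1\kappa_i^1)=g_2\kappa_j^2$ holds iff $I^{-1}(\kappa_i^1)=\kappa_j^2$, and then declares that ``the virtual cycle is just the number of different $\hat I$, which is $\frac{|G|}{|\langle\gamma\rangle|}\times\frac{|G|}{|\langle J^{-1}\rangle|}$''.

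One point to tighten in your count: because $I$ and every $g\in G$ are diagonal and hence commute, the matching condition $\hat I^{-1}(g_1\kappa_i^1)=g_2\kappa_j^2$ is equivalent to $I^{-1}(\kappa_i^1)=\kappa_j^2$ for \emph{every} pair $(g_1,g_2)$, not just a unique $g_2$ per $g_1$. So the assertion ``for fixed $\psi_1$ the matching relation singles out a unique admissible $\psi_2$'' is not the right mechanism; rather, each of the nonisomorphic rigidified $W$-structures contributes $1$, and what produces the stated product is that rigidified $W$-curves related by the global $G$-action on the line bundles are isomorphic, collapsing the naive count $\frac{|G|}{|\langle\gamma\rangle|}\cdot\frac{|G|}{|\langle\gamma\rangle|}\cdot\frac{|G|}{|\langle J^{-1}\rangle|}$ by a factor of $\frac{|G|}{|\langle\gamma\rangle|}$. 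This is precisely the automorphism bookkeeping you flagged as the main obstacle, and it is also what the paper's phrase ``number of different $\hat I$'' is implicitly invoking. Once you replace the ``unique $\psi_2$'' step by this identification of isomorphic rigidified structures, your argument and the paper's coincide.
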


\begin{proof} For any pair rigidification $(\psi_1,\psi_2)$, there
exist $g_1,g_2\in G$ such that $\psi_1=g\cdot\psi^+$ and
$\psi_2=g_2\cdot \psi^-$. If $I^{-1}(\kappa_{i}^1)=\kappa_{j}^2$,
we can choose $\hat{I}=g_1g_2^{-1}I$ satisfying the condition
$\hat{I}^{-1}(g_1\kappa_{i}^1)=(g_2\kappa_{j}^2)$ such that the
corresponding equation has a unique solution. Thus the virtual
cycle is just the number of different $\hat{I}$, which is
$\frac{|G|}{|<\gamma>|}\times \frac{|G|}{|<J^{-1}>|}$.
\end{proof}

We begin the proof of (b). Let $S_j^-, j=1,\dots,\mu_\gamma$ be a
basis of $H_{N_\gamma}(\C^N_\gamma, (W_\gamma)^{-\infty},\Q)$
which is identified with $H_{N_\gamma}(\C^N_\gamma,
(W_\gamma+W_{0,\gamma})^{-\infty},\Q)$ and let $\{S_j\}$ be the
dual basis in $H_{N_\gamma}(\C^N_\gamma,
(W_\gamma)^{\infty},\Q)\cong H_{N_\gamma}(\C^N_\gamma,
(W_\gamma+W_{0,\gamma})^{\infty},\Q)\cong
H_{N_\gamma}(\C^N_\gamma,(W_{\gamma^{-1}}+W_{0,\gamma^{-1}})^{-\infty},\Q)$.
We have
$$
[\MMr_{0,3}(\gamma,\gamma^{-1},J^{-1})]^{vir}=\sum_{i,j}
[\MMr_{0,3}(\gamma,\gamma^{-1},J^{-1}, \kappa_{i}^1,\kappa_{j}^2,
0)]^{vir}\otimes S_i\otimes S^-_j.
$$
The map $I$ is a 1-1 map from the set of critical points of
$W_\gamma+W_{0,\gamma}$ to that of $W_\gamma+W_{0,\gamma^{-1}}$
(cf. Remark \ref{rem-wequ-equal}) which induces a map
$$
I_*:H_{N_\gamma}(\C^N_\gamma,
(W_\gamma+W_{0,\gamma})^{-\infty},\Q)\rTo
H_{N_\gamma}(\C^N_\gamma,(W_{\gamma^{-1}}+W_{0,\gamma^{-1}})^{-\infty},\Q)
$$
such that $I_*(S_i^-)=S_i$. Thus by Corollary \ref{crl-proof-b2},
we have
$$
[\MMr_{0,3}(\gamma,\gamma^{-1},J^{-1})]^{vir}=\sum_{i}\frac{|G|}{|<\gamma>|}\times
\frac{|G|}{|<J^{-1}>|} S_i\otimes S^-_i.
$$
In general, if $S_i$ and $S^-_i$ are two arbitrary  bases in
$H_{N_\gamma}(\C^N_\gamma, (W_\gamma)^{-\infty},\Q)$ and
$H_{N_\gamma}(\C^N_\gamma, (W_\gamma)^{-\infty},\Q)$ respectively,
and if we denote by $(\eta^{ij})$ the inverse matrix of $(<S_i,
S^-_j>)$, then
\begin{equation}
[\MMr_{0,3}(\gamma,\gamma^{-1},J^{-1})]^{vir}=\sum_{i,j}\frac{|G|}{|<\gamma>|}\times
\frac{|G|}{|<J^{-1}>|} \eta^{ij} S_i\otimes S^-_j.
\end{equation}
Let $\alpha\in H^{N_\gamma}(\C^N_\gamma, (W_\gamma)^{-\infty},\Q)
$ and $\beta\in H^{N_\gamma}(\C^N_\gamma,
(W_\gamma)^{\infty},\Q)$; then
\begin{align}
&[\MMr_{0,3}(\gamma,\gamma^{-1},J^{-1})]^{vir}(\alpha,\beta,e_{J^{-1}})=\sum_i
\frac{|G|}{|<\gamma>|}\times \frac{|G|}{|<J^{-1}>|}
\alpha(S_i)\beta(S_i^-)\\
&=\frac{|G|}{|<\gamma>|}\times
\frac{|G|}{|<J^{-1}>|}<\alpha,\beta>,
\end{align}
where $<\cdot,\cdot>$ is the pairing induced by the intersection
pairing.

\subsubsection*{Proof of Composition law}\label{ax:cutting} We can
choose two bases $S_i^\pm, i=1\dots, \mu_\gamma$ in
$H_{N_{\gamma_\pm}}(\C^N_{\gamma_\pm},
(W_{\gamma_\pm})^{-\infty},\Q)\cong
H_{N_{\gamma_\pm}}(\C^N_{\gamma_\pm},
(W_{\gamma_\pm}+W_{0,\gamma_\pm})^{-\infty},\Q)$ such that
$S_i^\pm$ corresponds to the critical point $\kappa_i$ of
$W_{\gamma_+}+W_{0,\gamma_+}$ and $I(\kappa_i)$ of
$W_{\gamma_-}+W_{0,\gamma_-}$, respectively. We can write the
virtual cycle as
$$
[\MMr_{g,k}(\hat{\Gamma})]^{vir}=\sum_{i,j}
[\MMr_{g,k}(\hat{\Gamma}, S^+_i, S^-_j)]^{vir}\otimes S^-_i\otimes
S^+_j.
$$
Hence we have
$$
\left<[\MMr_{g,k}(\hat{\Gamma})]^{vir} \right>_{\pm}=\sum_{i}
[\MMr_{g,k}(\hat{\Gamma}, \kappa^+_i, I(\kappa^+_i))]^{vir}
$$
and then
$$
\tilde{\rho}_*\left<[\MMr_{g,k}(\hat{\Gamma})]^{vir}
\right>_{\pm}=\frac{|G|}{|<\gamma_+>|}[\MMr_{g,k}(\Gamma)]^{vir},
$$
since we have $\frac{|G|}{|<\gamma_+>|}$ ways to obtain a rigidified
$W$-curve in $\MMr_{g,k}(\Gamma)$ (without rigidification at the
nodal point).

\subsubsection*{Proof of Sums of singularity axiom} Let $\bgamma_i=(\gamma_{i,1},\dots,
\gamma_{i,k})$ for $i=1,2$ and \\
$\bgamma=((\gamma_{1,1},
\gamma_{2,1}),\dots,(\gamma_{1,k},\bgamma_{2,k}))$. Assume that
$H_{N_{\gamma_{1,i}}}(\C^{N_1}_{\gamma_{1,i}},
W_{1,\gamma_{1,i}}^{-\infty}, \Q)$ is generated by a basis
$S^-_{j_i}, j_i=1,\dots,\mu_{\gamma_{1,i}}$ for each $i$ and
$H_{N_{\gamma_{2,i}}}(\C^{N_2}_{\gamma_{2,i}},
W_{2,\gamma_{2,i}}^{-\infty}, \Q)$ is generated by a basis
$\hat{S}^-_{l_i}, l_i=1,\dots,\mu_{\gamma_{2,i}}$. Then the group
$H_{N_{\gamma_{1,i}}+N_{\gamma_{2,j}}}(\C^{N_1+N_2}_{(\gamma_{1,i},\gamma_{2,1})},
W^{-\infty}, \Q)$ is generated by the basis $\{S^-_{j_i}\otimes
\hat{S}^-_{l_j}, j_i=1,\dots, \mu_{\gamma_{1,i}}, l_j=1,\dots,
\mu_{2,j}\}$.

Now we consider the case that $\Gamma$ is a decorated stable
$W$-graph with each tail decorated by the Lefschetz thimble
$\{S^-_{j_i}\otimes \hat{S}^-_{l_i}, i=1,\dots, k\}$. We have the
moduli spaces
$\WW_1^{rig}(\Gamma_1;\bgamma_1,S^-_{j_i}):=\WW_1^{rig}(\Gamma_1;\bgamma_1,S^-_{j_1},\dots
S^-_{j_k})$,
$\WW_2^{rig}(\Gamma_2;\bgamma_2,\hat{S}^-_{l_i}):=\WW_2^{rig}(\Gamma_2;\bgamma_2,\hat{S}^-_{l_1},\dots
\hat{S}^-_{l_k})$, and $\WW^{rig}(\Gamma;\bgamma,(S^-_{j_i},
\hat{S}^-_{l_i})):= \WW^{rig}(\Gamma;\bgamma,(S^-_{j_1},
\hat{S}^-_{l_1}), \dots, (S^-_{j_k},\hat{S}^-_{l_k}))$. These are
compact Hausdorff spaces when given the Gromov topology and have the
relation
$$
\WW^{rig}(\Gamma, \bgamma,(S^-_{j_i}, \hat{S}^-_{l_i}))
=\WW_1^{rig}(\Gamma_1;\bgamma_1,S^-_{j_i})\times_{\MM_{\bar{\Gamma}}}\WW_2^{rig}(\Gamma_2;\bgamma_2,\hat{S}^-_{l_i})).
$$
By Corollary \ref{crl-cobo-nonsing}, the moduli space
$\WW^{rig}(\Gamma, \bgamma,(S^-_{j_i}, \hat{S}^-_{l_i}))$ carries an
orientable Kuranishi structure. Consider its local charts. For any
point $\sigma\in \WW^{rig}(\Gamma)$, the local chart is given by
$(\Vd\times \Vr\times\Vm, E_\sigma,
\aut(\sigma),s_\sigma,\Psi_\sigma)$. Let $\{\hat{s}_\sigma^n\}$ be a
family of multisections approximating $s_\sigma$; then for $n$ large
enough $[\WW^{rig}(\Gamma)]^{vir}$ is the pushdown of
$[(\hat{s}_\sigma^n)^{-1}(0)]$. Since $W=W_1+W_2$, we have the
splitting $\Vm=V_{map,\sigma_1}\times V_{map,\sigma_2},
\aut(\sigma)=\aut(\sigma_1)\times\aut(\sigma_2)$ and
$E_\sigma=E_{\sigma_1}\otimes E_{\sigma_2}$, where $(\Vd\times
\Vr\times V_{map,\sigma_i} , E_{\sigma_i},
\aut(\sigma_i),s_{\sigma_i},\Psi_{\sigma_i})$ for $i=1,2$ is the
local chart of $\WW_i^{rig}(\Gamma_i)$. Thus
$$
[(\hat{s}_{\sigma}^n)^{-1}(0)]=[(\hat{s}_{\sigma_1}^n)^{-1}(0)]
\times_{\MM_{\bar{\Gamma}}}[(\hat{s}_{\sigma_2}^n)^{-1}(0)],
$$
i.e.,
\begin{equation}\label{sum-sing}
[\WW^{rig}(\Gamma, \bgamma,(S^-_{j_i}, \hat{S}^-_{l_i}))]^{vir}
=[\WW_1^{rig}(\Gamma_1;\bgamma_1,S^-_{j_i})]^{vir}\times_{\MM_{\bar{\Gamma}}}
[\WW_2^{rig}(\Gamma_2;\bgamma_2,\hat{S}^-_{l_i})]^{vir}.
\end{equation}

On the other hand, we have the decomposition of the dual spaces
$$
H_{N_{\gamma_{1,i}}+N_{\gamma_{2,j}}}(\C^{N_1+N_2}_{(\gamma_{1,i},\gamma_{2,1})},
W^{\infty}, \Q)=H_{N_{\gamma_{1,i}}}(\C^{N_1}_{\gamma_{1,i}},
W_{1,\gamma_{1,i}}^{\infty}, \Q)\otimes
H_{N_{\gamma_{2,i}}}(\C^{N_2}_{\gamma_{2,i}},
W_{2,\gamma_{2,i}}^{-\infty}, \Q).
$$
This decomposition combined with the identity (\ref{sum-sing})
induces the conclusion.

\subsubsection*{Proof of Deformation invariance axiom} This comes simply
from the cobordism argument for the oriented Kuranishi
structure for the big moduli space involving the parameter $\tau$.

\subsubsection*{Proof of $G_{max}$-Invariance axiom} Note that
\begin{align}
&\left[\MMr(\Gamma)\right]^{vir}:=\sum_{j_1,\dots,
j_k}\left(\left[\MMr_{g,k}(\Gamma; \bgamma,\bvkappa_{j_1\dots
j_k} )\right]^{vir}\otimes \prod_{i=1}^k S_{j_i}(\gamma_i)\right)\\
&\in H_*(\MMr_{g,k}(\Gamma))\otimes \prod_{\tau \in T(\Gamma)}
H_{N_{\gamma_{\tau}}}(\C^N_{\gamma_{\tau}},W^{\infty}_{\gamma_\tau},
\Q).
\end{align}
For $\hat{\gamma}\in G_{max}$, the action of $\hat{\gamma}$ sends
the virtual cycle $\left[\MMr_{g,k}(\Gamma;
\bgamma,\bvkappa_{j_1\dots j_k} )\right]^{vir}$ based on the data
$(\bgamma,\bvkappa_{j_1\dots j_k} )$ to the virtual cycle
$\left[\MMr_{g,k}(\Gamma; \hat{\gamma}\cdot
\bgamma,\hat{\gamma}\cdot\bvkappa_{j_1\dots j_k} )\right]^{vir}$
which is based on the data
$(\hat{\gamma}\cdot\bgamma,\bvkappa_{j_1\dots
j_k}(\hat{\gamma}\cdot\bgamma))$. The tuple of $k$ critical points
$\bvkappa_{j_1\dots
j_k}(\hat{\gamma}\cdot\bgamma)=\left(\kappa^-_{j_1}(\hat{\gamma}\cdot\gamma_1),
\dots,\kappa^-_{j_k}(\hat{\gamma}\cdot\gamma_k)\right)$. Each
critical point $\kappa^-_{j_i}(\hat{\gamma}\cdot\gamma_i)$
corresponds to a Lefschetz thimble
$S^-_{j_i}(\hat{\gamma}\cdot\gamma_i)$ in the middle dimensional
homology class determined by $\hat{\gamma}\cdot\gamma_i$. By
Wall-crossing formula, the action of $\hat{\gamma}$ to the virtual
cycle is realized by the corresponding braid group action (ref.
Proposition \ref{prop-linear}). On the other hand, the
$\hat{\gamma}$ also gives a dual action to the dual Lefschetz
thimbles $S^+_{j_i}(\hat{\gamma}\cdot\gamma_i)$ and so gives a dual
action to the state space $\ch_{W,G}$. Hence, the group $G_{max}$
acts on the virtual cycle $\left[\MMr(\Gamma)\right]^{vir}$ and the
virtual cycle is $G_{max}$-invariant by Proposition
\ref{prop:cycle-indep-para}.

\begin{proof}[{\bf Proof of Theorem~\ref{thm:main}}]

The primary complication is the numerical
   factors.

   Recall following lemma (Lemma 4.2.3 of \cite{FJR2}).
   \begin{lm}\label{lm:RamifCover}
For a diagram of schemes or DM stacks,
    \begin{equation}\label{eq:PushPullDiag}\begin{diagram}
    W&\rTo^{i} &Y\\
    \dTo^{ p} & &\dTo^{ q}\\
    Z &\rTo^{j} & X
    \end{diagram},\end{equation}
where $i$ and $j$ are regular imbeddings of the same codimension,
$p$ and $q$ are finite morphisms, and there exists a finite
surjective morphism $f:W \rTo Z\times_{X}Y$ with $p = pr_1\circ f$
and $i=pr_2 \circ f$, or such that there exists a finite
surjective morphism $f:Z\times_{X}Y\rTo W$ with $pr_1 = p\circ f$
and $pr_2=i\circ f$, then for any $c\in H_*(M_2,\Q)$ we have
\begin{equation}\frac{p_*i^*(c)}{\deg(p)}=\frac{j^*q_*(c)}{\deg(q)}.\end{equation}
\end{lm}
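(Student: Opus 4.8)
The plan is to prove Lemma \ref{lm:RamifCover} by an elementary excess-intersection argument, reducing the two stated hypotheses (the existence of $f:W\rTo Z\times_X Y$, or of $f:Z\times_X Y\rTo W$, each finite surjective and compatible with the projections) to the single key fact that the fiber product $Z\times_X Y$ carries a refined Gysin map compatible with proper pushforward. Concretely, since $j:Z\rInto X$ and $i:Y\rInto X$ are regular imbeddings of the same codimension, say $c$, the square \eqref{eq:PushPullDiag} (completed by $Z\times_X Y$) admits a refined Gysin homomorphism $j^!:H_*(Y,\Q)\rTo H_{*-2c}(Z\times_X Y,\Q)$, and the compatibility of refined Gysin maps with proper pushforward (Fulton, \emph{Intersection Theory}, Thm.~6.2(a)) gives $j^*q_*(c) = (pr_1)_*\, j^!(c)$ for the projection $pr_1:Z\times_X Y\rTo Z$, together with $i^*(c) = $ image of $j^!(c)$ under the natural map when one uses instead the Gysin map for $i$; the point is that the two refined classes built from $j^!$ and $i^!$ agree up to the appropriate pushforward. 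I would set $P:=Z\times_X Y$ with projections $a:P\rTo Z$, $b:P\rTo Y$, so that $\deg(a)=\deg(q)$ and $\deg(b)=\deg(p)$ generically (more precisely $a,b$ are finite of those degrees, which I will record as a preliminary observation using that $p,q$ are finite).

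The first main step is the case with no auxiliary $W$, i.e. $W=Z\times_X Y=P$: here I would show directly
\begin{equation*}
\frac{a_*\, b^*(c)}{\deg(a)} = \frac{j^*\, q_*(c)}{\deg(q)}
\end{equation*}
for all $c\in H_*(Y,\Q)$. Since $\deg(a)=\deg(q)$, this is the identity $a_*b^*(c) = j^*q_*(c)$, which is precisely the base-change/projection formula for the Cartesian square obtained by regular imbeddings: $j^*q_* = a_* b^*$ as operators on homology (again Fulton Thm.~6.2). The second step is to deduce the general case from this one. Given the finite surjective $f:W\rTo P$ with $p = a\circ f$ and $i = b\circ f$, push forward along $f$: because $f$ is finite surjective of some degree $\deg(f)$, we have $f_*\circ f^* = \deg(f)\cdot \mathrm{id}$ on $H_*(P,\Q)$, hence $p_*\, i^*(c) = a_*\, f_*\, f^*\, b^*(c) = \deg(f)\cdot a_*\, b^*(c)$; also $\deg(p) = \deg(f)\cdot \deg(a)$. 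Dividing, $p_*i^*(c)/\deg(p) = a_*b^*(c)/\deg(a)$, and then the first step finishes it. The opposite hypothesis, with $f:P\rTo W$ finite surjective and $a = p\circ f$, $b = i\circ f$, is handled symmetrically: $f^*f_* = \deg(f)$ is not available in general for the wrong direction, so instead I would use $p_* i^* = p_* i^*$ and the identity $f_*f^* = \deg(f)\cdot\mathrm{id}$ on $H_*(W)$ together with $a_* = p_* f_*$, $b^* = f^* i^*$ to get $a_* b^*(c) = p_* f_* f^* i^*(c) = \deg(f)\cdot p_* i^*(c)$ and $\deg(a)=\deg(f)\cdot\deg(p)$, again yielding the claim.

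I would record explicitly the two small facts used repeatedly: (i) for a finite surjective morphism $h$ of DM stacks of degree $d$ between spaces of the same dimension, $h_*\circ h^* = d\cdot\mathrm{id}$ on rational (co)homology — this is the standard trace/transfer identity and is where working with $\Q$-coefficients is essential; and (ii) $\deg$ is multiplicative along composites of finite surjective morphisms. Both are routine but should be stated so that the numerical bookkeeping is transparent. I would also note at the outset that one needs $i$ and $j$ to have the \emph{same} codimension precisely so that $b^*$ (resp. $i^*$) and $j^*$ (resp. $a^*$) shift degrees by the same amount and the equation makes sense as an identity of homology classes of a fixed degree.

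The main obstacle I anticipate is not the algebra of degrees but making the base-change identity $j^*q_* = a_*b^*$ rigorous in the DM-stack setting used in \cite{FJR1}: the moduli spaces here are orbifolds (non-effective, even), and one must ensure that Gysin pullback for the regular imbeddings $i,j$ and proper pushforward for the finite maps are defined on the Chow or Borel–Moore homology of these stacks and satisfy the expected compatibilities with $\Q$-coefficients. The cleanest route is to invoke the theory of Chow groups of DM stacks (Vistoli, Kresch) where refined Gysin maps and proper pushforward exist and the square of a Cartesian diagram of regular imbeddings behaves as in the scheme case; alternatively, since \cite{FJR1} presumably already sets this up (the lemma is quoted from there), I would simply cite that framework and carry out the degree computation above within it. If a fully stack-theoretic Gysin formalism is unavailable, the fallback is to pass to a presentation of each stack as a quotient $[U/G]$, work $G$-equivariantly on the schemes $U$, and descend, which introduces no new ideas but lengthens the verification.
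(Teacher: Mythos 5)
The first thing to say is that this paper contains no proof of Lemma~\ref{lm:RamifCover}: it is quoted verbatim from \cite{FJR1} (Lemma~4.2.3 there) and immediately applied, so there is no in-paper argument to compare yours against. That said, your strategy --- introduce $P:=Z\times_X Y$, use the compatibility of the refined Gysin map with proper pushforward to get $j^*q_*(c)=(pr_1)_*\,j^!(c)$, identify $j^!$ with $(pr_2)^*$ via the zero-excess case of the excess intersection formula, and then transfer along the finite surjective $f$ using $f_*f^*=\deg(f)\cdot\mathrm{id}$ and multiplicativity of degrees --- is the natural argument, and your degree bookkeeping in both directions of the hypothesis (whether $f$ maps $W$ to $P$ or $P$ to $W$) is carried out correctly. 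Under the hypotheses that actually hold in every application in this paper (smooth compact DM stacks, vertical maps finite and flat in the orbifold sense, $\Q$-coefficients), the argument goes through.

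Two steps, however, are asserted rather than proved, and they are exactly where the content of the lemma sits. First, the equality $\deg(pr_1)=\deg(q)$, which your Step 1 needs: the degree of the base change $pr_1\colon P\to Z$ of a finite surjective morphism $q$ equals $\deg(q)$ only when $q$ is flat at the generic points of $Z$ (for finite non-flat $q$ the fiber length over a special locus can jump, and then $(pr_1)_*[P]\neq\deg(q)[Z]$). You call this ``generic'' and move on, but it is not a consequence of the stated hypotheses; it should be derived from flatness, or from smoothness of the ambient spaces plus miracle flatness, or simply added as a hypothesis. Second, in Step 2 you write $i^*=f^*\circ(pr_2)^*$ and invoke the trace identity $f_*f^*=\deg(f)\cdot\mathrm{id}$; but $f$ is only finite surjective, not flat or l.c.i., so $f^*$ on Borel--Moore homology or Chow groups is not defined without extra input (e.g.\ Poincar\'e duality on smooth stacks). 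The identity you actually need is the single equation $f_*(i^*c)=\deg(f)\cdot j^!(c)$ in $H_*(P,\Q)$, a compatibility of two Gysin classes that does not follow formally from the trace identity and is where the hypothesis that $i$ and $j$ have the \emph{same} codimension is used (vanishing of the excess bundle). You do flag the stack-theoretic foundations at the end, which covers the second point in the smooth case, but the first point is independent of foundations and should be addressed explicitly before the lemma can be considered proved in the stated generality.
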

We will use this lemma several times. All the diagrams in the
proof satisfy the conditions of Lemma~\ref{lm:RamifCover}.

Recall that
$$[\WW(\Gamma)]^{vir}=\frac{1}{\deg
so_{\Gamma}}(so_{\Gamma})_*[\MMr(\Gamma)]^{vir}.$$

   The Dimension and Symmetric group invariance axioms follows
   trivially from the corresponding axioms for the  rigidified space.

   Let's prove the Degenerating connected graph axioms.

  Consider diagram
   $$\begin{array}{ccc}
   \MMr(\Gamma)&\stackrel{\tilde{j}}{\to}& \MMr_{g,k}(\bgamma)\\
   \downarrow \so_{\Gamma}&   &\downarrow \so\\
   \WW(\Gamma) &\stackrel{\tilde{j}}{\to}& \WW_{g,k}(\bgamma)
   \end{array}.$$
   It implies that
   $$\frac{1}{\deg(\so_{\Gamma})}(\so_{\Gamma})_*\tilde{j}^*c=\frac{1}{\deg(\so)}\tilde{j}^*\so_*c$$
   for any homology class $c\in H_*(\MMr_{g,k}(\bgamma), \Q).$

   Let $c=[\MMr_{g,k}(\bgamma)]^{vir}$. We obtain
   $$\tilde{j}^*\so_*[\MMr_{g,k}(\bgamma)]^{vir}=\frac{\deg
   (\so)}{\deg(\so_{\Gamma})}(\so_{\Gamma})_*\tilde{j}^*[\MMr_{g,k}(\bgamma)]^{vir}.$$
   Using the formula $[\MMr(\Gamma)]^{vir}=\tilde{j}^*[\MMr_{g,k}(\bgamma)]^{vir}$
   from the  corresponding axiom for the rigidified space, we obtain
   $$\tilde{j}^*[\WW_{g,k}(\bgamma)]^{vir}=\frac{1}{\deg
   (\so)}\tilde{j}^*\so_*[\MMr_{g,k}(\bgamma)]^{vir}=\frac{1}{\deg(\so_{\Gamma})}(\so_{\Gamma})_*[\MMr(\Gamma)]^{vir}
   =[\WW(\Gamma)]^{vir}.$$

   The disconnected graph axiom and the topological Euler class axiom for the narrow sector follow
   trivially from the corresponding axioms of rigidified space.

    Slightly more attention is required for the Composition axiom.
   It is easy to check that
   $$\so_{\Gamma}^*[\WW(\Gamma)]^{vir}=\frac{1}{\deg(\so(\Gamma))}
   \so_{\Gamma}^*(\so_{\Gamma})_*[\MMr(\Gamma)]^{vir}=[\MMr(\Gamma)]^{vir}.$$
   Hence,
   $$\frac{1}{deg
   (\so_{\Gamma_{cut}})}(\so_{\Gamma_{cut}})_*( \so_{\Gamma}\circ\tilde{\rho})^*[\WW(\Gamma)]^{vir}
   =\frac{1}{\deg(\so_{\Gamma_{cut}})}(\so_{\Gamma_{cut}})_*\tilde{\rho}^*[\MMr(\Gamma)]^{vir}$$
   $$=\frac{1}{\deg(\so_{\Gamma_{cut}})}(\so_{\Gamma_{cut}})_*<[\MMr(\Gamma_{cut})]^{vir}>_{\pm}
   =<[\WW(\Gamma_{cut})]^{vir}>_{\pm}.$$
   Recall that $F=\WW(\Gamma_{cut})\times_{\MM(\Gamma)}
   \WW(\Gamma).$ There is a natural map
   $$\tau: \MMr(\Gamma_{cut})\rTo F$$
   such that
   $$\so_{\Gamma}\circ \tilde{\rho}=pr_2\circ \tau,
   \so_{\Gamma_{cut}}=q\circ \tau.$$
   Hence,
   $$\frac{1}{\deg
   (\so_{\Gamma_{cut}})}(\so_{\Gamma_{cut}})_*( \so_{\Gamma}\circ\tilde{\rho})^*[\WW(\Gamma)]^{vir}
   =\frac{1}{\deg
   (\so_{\Gamma_{cut}})}q_*\tau_*\tau^*pr^*_2
   [\WW(\Gamma)]^{vir}$$
   $$=\frac{\deg(\tau)}{\deg
   (\so_{\Gamma_{cut}})}q_*pr^*_2[\WW(\Gamma)]^{vir}=\frac{1}{\deg(q)}q_*pr^*_2[\WW(\Gamma)]^{vir}.$$

   Let's consider the forgetting tail axiom. We have the fiber diagram
    $$\begin{array}{ccc}
    \MMr(\Gamma)&\stackrel{\vartheta}{\to}&\MMr(\Gamma')\\
    \downarrow \so_{\Gamma} &&       \downarrow \so_{\Gamma'}\\
    \WW(\Gamma)&\stackrel{\vartheta}{\to}&\WW(\Gamma')
    \end{array}.$$

    Hence,
   $$\vartheta^*[\WW(\Gamma')]^{vir}=\frac{1}{\deg(\so_{\Gamma'})}\vartheta^*
   (\so_{\Gamma'})_*[\MMr(\Gamma')]^{vir}=\frac{1}{\deg(\so_{\Gamma})}(\so_{\Gamma})_*
   \vartheta^*[\MMr(\Gamma)]^{vir}$$
   $$=\frac{1}{\deg(\so_{\Gamma})}(\so_{\Gamma})_*[\MMr(\Gamma)]^{vir}=[\WW(\Gamma)]^{vir}.$$
   For the second part of the forgetting tail axiom, we  observe
   $\frac{1}{|G/<\gamma>|^2}\sum_{\psi_1, \psi_2}\sum_{i,j}\alpha_1\eta^{ij}_{\psi_1\circ\psi^{-1}_2}\beta_j$
   corresponds to the Casimir element of pairing of invariant
   homology
   $$<,>:
   H_{N_{\gamma}}(\C^N_{\gamma}, W^{-\infty}_{\gamma},
   \Q)^{G}\otimes H_{N_{\gamma^{-1}}}(\C^N_{\gamma^{-1}}, W^{-\infty}_{\gamma^{-1}},
   \Q)^{G}\to \Q.$$
   Therefore, $[\MMr_{0,3}(\gamma, \gamma^{-1}, J^{-1})]^{vir}$ is
   $\frac{\left|\frac{G}{<J^{-1}>}\right|\left|\frac{G}{<\gamma>}\right|^2}{|G|}$ times the
   Casimir element of the pairing on invariant homology. On the other hand, the degree of $\so$ is
   $\left|\frac{G}{<J^{-1}>}\right|\left|\frac{G}{<\gamma>}\right|^2$. Therefore,
   $[\WW_{0,3}(\gamma, \gamma^{-1}, J^{-1})]^{vir}=\frac{1}{|G|}$ times the Casimir element of invariant
   homology.

   The proof of the Sum of Singularities axiom follows from the fact
   that all the relevant maps are fiber products. The deformation
   invariance axiom and the $G_{max}$-invariance axiom are obvious.

 Let $\tilde{W}=W+Z$ where $Z$ has no common monomials with $W$.
There is a natural inclusion map of components
  $$i:  \overline{\tilde{\W}}_{g,k}\rTo \WW_{g,k}(\bgamma).$$
  Furthermore, there is a natural isomorphism
  $$H_{N_{\gamma_i}}(\C^N_{\gamma_i},
  (\tilde{W})^{\infty}_{\gamma_i}, \Q)\cong H_{N_{\gamma_i}}(\C^N_{\gamma_i},
  (W)^{\infty}_{\gamma_i}, \Q).$$
We can restrict the moduli space to the components
$\overline{\tilde{\W}}_{g,k}(\bgamma)$. For the Witten equation, we
consider a family of quasi-homogeneous polynomials
$\tilde{W}_t=W+tZ$. Then, a simple cobordism argument  yields

\begin{thm}
$[\WW_{g,k,G}(\bgamma)]^{vir}=\overline{\tilde{\W}}_{g,k}(\bgamma)\cap
[\WW_{g,k}(\bgamma)]^{vir}$.
\end{thm}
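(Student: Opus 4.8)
The plan is to prove the corresponding identity on the moduli of \emph{rigidified} $W$-curves and then descend to $\MM_{g,k,W}(\bgamma)$ by exactly the argument used for the other axioms in the proof of Theorem~\ref{thm:main} (the fibre-square descent via $\so$ and Lemma~\ref{lm:RamifCover}), so that all numerical factors take care of themselves. On the rigidified level, since a $\tilde{W}$-structure contains the isomorphisms $\varphi_j$ for the monomials of $W$ it is in particular a $W$-structure, so there is a canonical map $\tilde{i}\colon\MMr_{g,k,\tilde{W}}(\bgamma)\to\MMr_{g,k,W}(\bgamma)$ realizing the source as (a finite unramified cover of) a closed-and-open substack. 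Moreover $\tilde{W}=W+Z$ is quasi-homogeneous with the same weights $q_i<1/2$ as $W$, so for every $\gamma$ the restrictions $W_\gamma$ and $\tilde{W}_\gamma$ to $\C^{N_\gamma}_\gamma$ are quasi-homogeneous of the same weights and Milnor number $\mu_\gamma=\prod_i(1-q_i)/q_i$, and the Gauss--Manin connection along the affine family $W_\gamma+sZ_\gamma$ gives the isomorphism $H_{N_{\gamma_i}}(\C^N_{\gamma_i},\tilde{W}^\infty_{\gamma_i},\Q)\cong H_{N_{\gamma_i}}(\C^N_{\gamma_i},W^\infty_{\gamma_i},\Q)$ implicit in the statement. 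With this identification it suffices to prove
\[
\tilde{i}^{*}\left[\MMr_{g,k,W}(\bgamma)\right]^{vir}=\left[\MMr_{g,k,\tilde{W}}(\bgamma)\right]^{vir},
\]
hence, after fixing a strongly regular perturbation and the associated $\bvkappa$, the analogous identity for each $\left[\MMr_{g,k,W}(\bgamma,\bvkappa)\right]^{vir}$.

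First I would carry out a \emph{restriction} step. Starting from the oriented Kuranishi structure $\{(U_\sigma,E_\sigma,\aut(\sigma),s_\sigma)\}$ on $\MMr_{g,k,W}(\bgamma,\bvkappa)$ from Theorem~\ref{thm-stro-kura}, whose obstruction bundles are generated by sections supported away from the marked and nodal points, one restricts it to the sub-Kuranishi-structure lying over $\tilde{i}(\MMr_{g,k,\tilde{W}}(\bgamma))$; a transversal multisection restricts to a transversal multisection, and pushing forward its zero locus computes both $\tilde{i}^{*}\left[\MMr_{g,k,W}(\bgamma,\bvkappa)\right]^{vir}$ and the virtual cycle of
\[
\MMr_{g,k,W}(\bgamma,\bvkappa)\times_{\MMr_{g,k,W}(\bgamma)}\MMr_{g,k,\tilde{W}}(\bgamma),
\]
i.e.\ of the moduli space of solutions of the perturbed Witten equation \emph{of $W$} over the $\tilde{W}$-locus. (If $\tilde{i}$ is an inclusion of components this is immediate; in general it is the argument used for the Degenerating connected graphs axiom, item~(4) of Theorem~\ref{thm:main-rigid}.) The problem is thereby reduced to comparing the Witten equations of $W$ and of $\tilde{W}$, both now considered over $\MMr_{g,k,\tilde{W}}(\bgamma)$, where each of them --- indeed the whole family $\tilde{W}_\tau$ below --- is a globally defined equation because the $\tilde{W}$-structure supplies all the isomorphisms $\tilde{I}_1$ needed to write it down.

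Then I would run a cobordism in the deformation parameter. Choose a generic path $\{\tilde{W}_\tau\}_{\tau\in[0,1]}$ of the form $\tilde{W}_\tau=W+t(\tau)Z$ with $t\colon[0,1]\to\C$, $t(0)=0$, $t(1)=1$, routed around the finitely many $t$ for which $W+tZ$ is degenerate (so that every $\tilde{W}_\tau$ is nondegenerate with weights $q_i$), together with a generic accompanying path $W_0(\tau)$ of linear perturbations with the two prescribed endpoint values, so that $\tilde{W}_{\tau,\gamma}+W_0(\tau)_\gamma$ is holomorphic Morse for all $\tau$ and strongly regular for all but finitely many $\tau$ --- possible because, the perturbation parameters being complex, the bifurcation set where two critical points collide is of real codimension $2$ and is missed by a generic real path, whereas the walls of Theorem~\ref{thm-para-chamber} where the imaginary parts of two critical values coincide are of real codimension $1$ and are crossed finitely often. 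Form
\[
\MM^{[0,1]}:=\bigcup_{\tau\in[0,1]}\{\tau\}\times\MMrs_{g,k,\tilde{W}_\tau}(\bgamma,\bvkappa(\tau)),
\]
with $\bvkappa(\tau)$ the Gauss--Manin transport of the chosen critical points, topologized by Gromov convergence with the extra parameter $\tau$. Because the weights $q_i<1/2$ are fixed and the perturbation parameters stay bounded, the interior and cylinder estimates and the exponential decay (Theorems~\ref{bdd-inn-thm}, \ref{bdd-bdry-thm}, \ref{thm-solu-noda}) hold with $\tau$-independent constants, so $\MM^{[0,1]}$ is compact Hausdorff by the argument of Theorem~\ref{thm-gromov-comp}, and the Fredholm index (Theorem~\ref{ind-witt-oper}) is independent of $\tau$ since it depends only on the weights and on $\bgamma$. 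Gluing the Kuranishi charts exactly as in Section~\ref{sec-inter-gluing} and in the proofs of Theorems~\ref{thm-cobo-sing} and \ref{thm-cobo-nonsing}, $\MM^{[0,1]}$ carries an oriented Kuranishi structure with boundary whose only boundary strata lie over $\tau=0$ and $\tau=1$: the finitely many interior $\tau$ at which strong regularity fails contribute only BPS-soliton $W$-sections, whose neighborhoods are cone points and hence \emph{interior} points of $\MM^{[0,1]}$ (Theorems~\ref{thm-sing-neib}, \ref{thm-cobo-sing}), and carrying the Lefschetz-thimble decorations along these walls leave the assembled cycle unchanged by the wall-crossing formula and Corollary~\ref{crl-cobo-nonsing}. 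The cobordism argument of Theorem~\ref{crl-cobo-sing} then shows that the $\tau=0$ cycle --- which by the restriction step is $\tilde{i}^{*}\left[\MMr_{g,k,W}(\bgamma,\bvkappa)\right]^{vir}$ --- and the $\tau=1$ cycle $\left[\MMr_{g,k,\tilde{W}}(\bgamma,\bvkappa)\right]^{vir}$ are homologous in $H_*(\MMr_{g,k,\tilde{W}}(\bgamma),\Q)$; summing over $\bvkappa$ against the thimbles and descending via $\so$ and Lemma~\ref{lm:RamifCover} gives the statement as stated.

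The main obstacle is assembling the uniform-in-$\tau$ package --- Gromov compactness \emph{and} an oriented Kuranishi structure with boundary --- for $\MM^{[0,1]}$, together with checking that the induced orientation restricts correctly to the two ends; concretely, the delicate point is verifying that the genericity of the chosen paths really does confine all degenerations along $[0,1]$ to the already-understood crossing walls (whose local models are the BPS-soliton birth/death situations of Section~\ref{sec:construct}) and to the two endpoints. Everything else is a parametrized repetition of the constructions of Sections~\ref{sec:witten}--\ref{sec:construct} with $W$ replaced by the family $\tilde{W}_\tau$, so no genuinely new analytic ingredient is required beyond bookkeeping for $\tau$.
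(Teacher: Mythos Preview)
Your proposal is correct and follows essentially the same approach as the paper: the paper's proof is the single sentence ``For the Witten equation, we consider a family of quasihomogeneous polynomials $\tilde{W}_t=W+tZ$. Then, a simple cobordism argument yields [the theorem],'' and you have spelled out exactly that cobordism (restriction to the $\tilde W$-locus followed by a parametrized family $\MM^{[0,1]}$ carrying an oriented Kuranishi structure with boundary), including the precaution of routing the path around possibly degenerate values of $t$ and handling the wall-crossings, which the paper's sketch leaves implicit.
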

\end{proof}

\

\bibliographystyle{amsplain}

\providecommand{\bysame}{\leavevmode\hbox
to3em{\hrulefill}\thinspace}

\end{document}